\theoremstyle{plain}
\newtheorem{Theorem}{Theorem}[section]
\newtheorem{lemma}[Theorem]{Lemma}
\newtheorem{corollary}[Theorem]{Corollary}
\newtheorem{proposition}[Theorem]{Proposition}
\newtheorem{claim}[Theorem]{Claim}
\newtheorem{conjecture}[Theorem]{Conjecture}
\newtheorem{conjecture2*}{Conjecture}
\newtheorem{remark2*}[conjecture2*]{Remark}
\newtheorem{Theorem2*}[conjecture2*]{Theorem}
\newtheorem{proposition2*}[conjecture2*]{Proposition}
\newtheorem{claim2*}[conjecture2*]{Claim}
\newtheorem{example2*}[conjecture2*]{Example}
\newtheorem{lemma2*}[conjecture2*]{Lemma}
\theoremstyle{definition}
\newtheorem{example}[Theorem]{Example}
\newtheorem{definition}[Theorem]{Definition}
\newtheorem{remark}[Theorem]{Remark}
\newtheorem{pkt}[Theorem]{}
\newtheorem{construction}[Theorem]{Construction}
\newcommand{\sA}{{\mathcal A}}
\newcommand{\sB}{{\mathcal B}}
\newcommand{\sC}{{\mathcal C}}
\newcommand{\sD}{{\mathcal D}}
\newcommand{\sE}{{\mathcal E}}
\newcommand{\sF}{{\mathcal F}}
\newcommand{\sH}{{\mathcal H}}
\newcommand{\sI}{{\mathcal I}}
\newcommand{\sL}{{\mathcal L}}
\newcommand{\sM}{{\mathcal M}}
\newcommand{\sN}{{\mathcal N}}
\newcommand{\sO}{{\mathcal O}}
\newcommand{\sP}{{\mathcal P}}
\newcommand{\sQ}{{\mathcal Q}}
\newcommand{\sR}{{\mathcal R}}
\newcommand{\sV}{{\mathcal V}}
\newcommand{\sW}{{\mathcal W}}
\newcommand{\sX}{{\mathcal X}}
\newcommand{\sY}{{\mathcal Y}}
\newcommand{\sZ}{{\mathcal Z}}
\newcommand{\A}{{\mathbb A}}
\newcommand{\B}{{\mathbb B}}
\newcommand{\C}{{\mathbb C}}
\newcommand{\E}{{\mathbb E}}
\newcommand{\F}{{\mathbb F}}
\newcommand{\G}{{\mathbb G}}
\newcommand{\KK}{{\mathbb K}}
\newcommand{\LL}{{\mathbb L}}
\newcommand{\N}{{\mathbb N}}
\newcommand{\bP}{{\mathbb P}}
\newcommand{\Q}{{\mathbb Q}}
\newcommand{\R}{{\mathbb R}}
\newcommand{\BS}{{\mathbb S}}
\newcommand{\V}{{\mathbb V}}
\newcommand{\W}{{\mathbb W}}
\newcommand{\Z}{{\mathbb Z}}
\newcommand{\id}{{\rm id}}
\newcommand{\Hg}{{\rm Hg}}
\newcommand{\MT}{{\rm MT}}
\newcommand{\SL}{{\rm SL}}
\newcommand{\GL}{{\rm GL}}
\newcommand{\PGL}{{\rm PGL}}
\newcommand{\Mon}{{\rm Mon}}
\newcommand{\Sp}{{\rm Sp}}
\newcommand{\tr}{{\rm tr}}
\newcommand{\lcm}{{\rm lcm}}
\newcommand{\SU}{{\rm SU}}
\newcommand{\SO}{{\rm SO}}
\newcommand{\PU}{{\rm PU}}
\newcommand{\U}{{\rm U}}
\newcommand{\GSp}{{\rm GSp}}
\newcommand{\Hol}{{\rm Hol}}
\newcommand{\ad}{{\rm ad}}
\newcommand{\der}{{\rm der}}
\newcommand{\Gal}{{\rm Gal}}
\newcommand{\Eig}{{\rm Eig}}
\newcommand{\diag}{{\rm diag}}
\newcommand{\fg}{\mathfrak{g}}
\newcommand{\fh}{\mathfrak{h}}
\newcommand{\fsu}{\mathfrak{su}}
\newcommand{\fsl}{\mathfrak{sl}}
\newcommand{\fsp}{\mathfrak{sp}}
\newcommand{\fso}{\mathfrak{so}}
\newcommand{\fX}{\mathfrak{X}}
\begin{document}

\thispagestyle{empty}
\vspace*{2.5cm}
\begin{flushleft}
{\Huge \underline{Cyclic coverings},\\[0.5cm]
\underline{Calabi-Yau manifolds and}\\[0.5cm]
\underline{Complex multiplication}}\\[1.0cm]

{\Large with concrete examples of Calabi-Yau 3-manifolds with complex multiplication}\\[9.0cm]

{\LARGE Jan Christian Rohde}\\[0.3cm]
{\large Universit\"at Duisburg-Essen\\
Fachbereich Mathematik\\
jan.rohde@uni-due.de}
\end{flushleft}

\newpage

\chapter*{Acknowledgments}

This paper bases on my doctoral thesis, which has been supported by the funds of the DFG
(``Graduiertenkolleg mathematische und ingenieurs wissenschaftliche Methoden f\"ur
sichere Daten\"ubertragung und Informationsvermittlung'' and the
Leibniz-Preis of H\'el\`ene Esnault and Eckart Viehweg).

I am very grateful to Eckart Viehweg for giving me the subject of my thesis and 
his excellent guidance and support, not only during the work on this topic, but from the
beginning of my mathemathical studies.

Moreover, I would like to thank especially Martin M\"oller for many fruitful and stimulating hints
and discussions, and for reading this thesis and pointing out several mistakes.

I want to thank Kang Zuo and Stefan M\"uller-Stach for the hint to the
essay \cite{Voi2} of C. Voisin, which provided the main idea of the construction of many examples
of $CMCY$ families of 3-manifolds and finally of the construction of the Borcea-Voisin tower.

Many thanks to Ulrich G\"ortz for his instructive course about Shimura varieties at Mainz and
to Gebhard B\"ockle for his interesting reading course about Shimura curves.

Moreover, I would like to thank Bernd Siebert and Claire Voisin for stimulating discussions about
maximality and the Voisin mirror families in Oberwolfach during the conference ``Komplexe Algebraische Geometrie''
at the end of the summer 2007.

I wish to thank Irene Bouw, Juan Cervino, Andre Chatzistamatiou, Jochen Heinloth, Stefan Kukulies
and Kay R\"ulling for many fruitful discussions, and Pooja Singla for the careful reading of the
introduction.

Finally, I want to thank my father very much for his support during the last years and making
untroubled studies possible.

\tableofcontents

\chapter*{Introduction}\addcontentsline{toc}{chapter}{Introduction}
\markboth{Introduction}{Introduction}
We search for examples of families of Calabi-Yau manifolds with dense set of
complex multiplication fibers and for examples of families of curves with dense set of complex
multiplication fibers. This essay bases on the authors doctoral thesis. In addition we will
give some concrete examples of fibers with complex multiplication.

By string theoretical considerations, one is interested in Calabi-Yau manifolds, since Calabi-Yau
3-manifolds provide conformal field theories (CFT). One is especially interested in Calabi-Yau
3-manifolds with complex multiplication, since such a manifold has many symmetries and mirror
pairs of Calabi-Yau 3-manifolds with complex multiplication yield rational conformal field
theories (RCFT) (see \cite{GV}). Moreover S. Gukov and C. Vafa \cite{GV} ask for the existence of
infinitely many Calabi-Yau manifolds with complex multiplication of fixed dimension $n$.

For a Calabi-Yau manifold $X$ of dimension $n$ with $n \leq 3$, the condition of complex
multiplication is equivalent to the property that for all $k$ the Hodge group of $H^k(X,\C)$ is
commutative. We will call any family of Calabi-Yau $n$-manifolds, which has a dense set of
fibers satisfying the latter property with respect to the Hodge groups, a $CMCY$ family
of $n$-manifolds. \index{$CMCY$ family of $n$-manifolds} The author uses this condition for
technical reasons and hopes that such
a $CMCY$ family of $n$-manifolds in an arbitrary dimension may be interesting for its mathematical
beauty, too. Here we will give some examples of $CMCY$ families of $3$-manifolds and explain
how to construct $CMCY$ families of $n$-manifolds in an arbitrarily high dimension. Moreover we will explicitely
determine some fibers with complex multiplication (see Example $\ref{3cm3}$, Section $7.4$, Remark $\ref{cm3}$,
Remark $\ref{4fall}$ and Remark $\ref{cm20}$).

Starting with a family of cyclic covers of $\bP^1$
with a dense set of $CM$ fibers, E. Viehweg and K. Zuo \cite{VZ5} have constructed a $CMCY$ family of
$3$-manifolds. This construction of E. Viehweg and K. Zuo \cite{VZ5} is given by a tower of cyclic
coverings, which will be explained in Section $7.3$. In Chapter 8 we will give a modified version
of a Viehweg-Zuo tower for one of our new examples.

Hence we are interested in the examples of families of curves with a dense set of $CM$ fibers by our
search for $CMCY$ families of $n$-manifolds. But there is an other motivation given by an open
question in the theory of curves, too. In \cite{co} R. Coleman formulated the following conjecture:

\begin{conjecture2*}
Fix an integer $g \geq 4$. Then there are only finitely many complex
algebraic curves $C$ of genus $g$ such that ${\rm Jac}(C)$ is of
$CM$ type.
\end{conjecture2*}

Let $\sP_n$ denote the configuration space of $n+3$ points in $\bP^1$. One can endow these $n+3$
points in $\bP^1$ with local monodromy data and use these data for the construction of a family
$\sC \to \sP_n$ of cyclic covers onto
$\bP^1$ (see Construction $\ref{hiercr}$).

The action of $\PGL_2(\C)$ on $\bP^1$ yields a quotient $\sM_n = \sP_n/\PGL_2(\C)$. By fixing 3
points on $\bP^1$, the quotient $\sM_n$ can also be considered as a subspace of $\sP_n$.

\begin{remark2*}
In \cite{DeJN} J. de Jong and R. Noot gave counterexamples for $g =4$ and $g = 6$ to
the conjecture above. In
\cite{VZ5} E. Viehweg and K. Zuo gave an additional counterexample for $g = 6$. The
counterexamples are given by families $\sC \to \sP_n$ of cyclic covers of $\bP^1$ with infinitely
many $CM$ fibers. Here we will find additional families $\sC \to \sP_n$ of cyclic genus 5 and
genus 7 covers of $\bP^1$ with dense sets of complex multiplication fibers, too.
\end{remark2*}

All new examples $\sC \to \sP_n$ of the preceding remark have a variation $\sV$ of Hodge
structures similar to the examples of J. de Jong and R. Noot \cite{DeJN},  and of E. Viehweg and
K. Zuo \cite{VZ5}, which we call pure $(1,n)-VHS$. Let $\Hg(\sV)$ denote the generic Hodge group
of $\sV$ and let $K$ denote an arbitrary maximal compact subgroup of $\Hg^{\ad}(\sV)(\R)$.
In Section $4.4$ we prove that a pure $(1,n)-VHS$ induces an open (multivalued) period map to the
symmetric domain associated with ${\Hg}^{\ad}(\sV)(\R)/K$, which yields the dense sets of complex
multiplication fibers. We obtain the following result in Chapter 6:

\begin{Theorem2*}
There are exactly 19 families $\sC \to \sP_n$  of cyclic covers of $\bP^1$, which have a pure
$(1,n)-VHS$ (including all known and new examples).
\end{Theorem2*}

We will use the fact that the monodromy group $\Mon^0(\sV)$ is a subgroup of the derived group
$\Hg^{\der}(\sV)$ and we will study $\Mon^0(\sV)$. Let $\psi$ be a generator of the Galois group
of $\sC \to \sP_n$ and $C(\psi)$ be the centralizer of $\psi$ in the symplectic group with respect
to the intersection pairing on an arbitrary fiber of $\sC$. In Chapter 4 we obtain the result,
which will be useful for our study of $\Hg^{\der}(\sV)$ and $\Mon^0(\sV)$:

\begin{lemma2*}
The monodromy group $\Mon^0(\sV)$ and the derived Hodge group $\Hg^{\der}(\sV)$ are contained in
$C(\psi)$.
\end{lemma2*}

Unfortunely we will not be able to determine $\Mon^0(\sV)$ for all families $\sC \to \sP_n$ of
cyclic covers onto $\bP^1$. But we obtain for example the following results in Chapter 5:

\begin{proposition2*}
Let $\sC \to \sP_n$ be a family of cyclic covers of degree $m$ onto $\bP^1$. Then one has:
\begin{enumerate}
\item If the degree $m$ is a prime number $\geq 3$, the algebraic groups $C^{\der}(\psi)$,
$\Mon^0(\sV)$ and $\Hg^{\der}(\sV)$ coincide.
\item If $\sC \to \sP_{2g+2}$ is a family of hyperelliptic curves, one obtains
$$\Mon^0(\sV) = \Hg(\sV) \cong \Sp_{\Q}(2g).$$
\item In the case of a family of covers onto $\bP^1$ with 4 branch points, we need a pure
$(1,1)-VHS$ to obtain an open period map to the symmetric domain associated with
${\Hg}^{\ad}(\sV)(\R)/K$.
\end{enumerate}
\end{proposition2*}

By our new examples of Viehweg-Zuo towers, we will only obtain
$CMCY$ families of 2-manifolds. C. Voisin \cite{Voi2} has
described a method to obtain Calabi-Yau 3-manifolds by using
involutions on $K3$ surfaces. C. Borcea \cite{Bc2} has independently
arrived at a more general version of the latter method, which allows to
construct Calabi-Yau manifolds in arbitrary
dimension. By using this method, we obtain in Section $7.2$: 

\begin{proposition2*}
For $i = 1,2$ assume that $\sC^{(i)} \to V_i$ is a $CMCY$ family of $n_i$-manifolds endowed with
the $V_i$-involution $\iota_i$ such that for all $p \in V_i$ the
ramification locus $(R_i)_p$ of $\sC^{(i)}_p \to \sC^{(i)}_p/\iota_i$ consists of smooth disjoint
hypersurfaces. In addition assume that $V_i$ has a dense set of points $p \in V_i$ such that for
all $k$ the Hodge groups $\Hg(H^k(\sC^{(i)}_p,\Q))$ and $\Hg(H^k((R_i)_p,\Q))$ are commutative.
By blowing up the singular locus of $\sC^{(1)} \times \sC^{(2)}/\langle(\iota_1, \iota_2)\rangle$,
one obtains a $CMCY$ family of $n_1+n_2$-manifolds over $V_1 \times V_2$ endowed with an
involution satisfying the same assumptions as $\iota_1$ and $\iota_2$.
\end{proposition2*}

\begin{remark2*}
By the preceding proposition, one can apply the construction of C. Borcea and C. Voisin for families
to obtain an infinite tower of $CMCY$ families of $n$-manifolds, which we call a Borcea-Voisin
tower.
\end{remark2*}

\begin{example2*} \label{exa1}
The family $\sC \to \sM_1$ given by
$$\bP^2 \supset V(y_1^4- x_1(x_1 - x_0)(x_1 - \lambda x_0)x_0) \to \lambda \in \sM_1$$
has a pure $(1,1)-VHS$. Hence by the construction of Viehweg and Zuo \cite{VZ5}, one concludes
that the family $\sC_2$ given by
\begin{equation} \label{exa}
\bP^3 \supset V(y_2^4+y_1^4- x_1(x_1 - x_0)(x_1 - \lambda x_0)x_0) \to \lambda \in \sM_1
\end{equation}
is a $CMCY$ family of 2-manifolds.

This family has many $\sM_1$-automorphisms. The quotients by some of these automorphisms yield new
examples of $CMCY$ families of 2-manifolds.
Moreover there are some involutions on $\sC_2$, which make this family and its quotient families
of $K3$-surfaces suitable for the construction of a Borcea-Voisin tower (see Section $7.4$ for
the construction of $\sC_2$, and for the automorphism group and the quotient families of $\sC_2$
see Section $9.3$, Section $9.4$ and Section $9.5$).
\end{example2*}

\begin{example2*}
The family $\sC \to \sM_3$ given by
$$\bP(2,1,1) \supset V(y_1^3- x_1(x_1 - x_0) (x_1 - a x_0) (x_1 - b x_0)(x_1 - c x_0)x_0) \to
\lambda \in \sM_1$$
has a pure $(1,3)-VHS$. The desingularisation $\tilde \bP(2,2,1,1)$ of the weighted projective
space $\bP(2,2,1,1)$ is given by
blowing up the singular locus. By a modification of the construction of Viehweg and Zuo,
the family $\sW$ given by
\begin{equation}\label{nis}
\tilde \bP(2,2,1,1) \supset \tilde V(y_2^3+y_1^3- x_1(x_1 - x_0)
(x_1 - a x_0) (x_1 - b x_0)(x_1 - c x_0)x_0) \to \lambda \in \sM_3
\end{equation}
is a $CMCY$ family of 2-manifolds. The family $\sW$ has a degree 3 quotient, which yields a $CMCY$
family of 2-manifolds. Moreover it has an involution, which makes it and its degree 3 quotient
suitable for the construction of a Borcea-Voisin tower (see Chapter 8 for the construction of
$\sW$ and Section $9.1$ for its degree 3 quotient).
\end{example2*}

By using the preceding example, we will obtain (see Section $9.2$ for the construction and Section
$10.3$ for the maximality):

\begin{Theorem2*} \label{nunuu}
Let $\alpha_{\F_3}$ denote a generator of the Galois group of a degree 3 cover
$\F_3 \to \bP^1$. The family $\sW$ has an $\sM_3$-automorphism $\alpha'$ of order 3 such that
the quotient $\sW \times \F_3/\langle(\alpha',\alpha_{\F_3})\rangle$ has a desingularisation,
which is a $CMCY$ family $\sQ \to \sM_3$ of 3-manifolds. Moreover the family $\sQ$ is maximal.
\end{Theorem2*}

By using the V. V. Nikulins classification of involutions on $K3$ surfaces \cite{Niku}
and the construction of C. Voisin \cite{Voi2}, we obtain in Chapter 11:

\begin{Theorem2*}
For each integer $1 \leq r \leq 11$ there exists a maximal holomorphic $CMCY$ family of algebraic
3-manifolds with Hodge number $h^{2,1} = r$.
\end{Theorem2*}

The first three chapters explain well-known facts and yield an introduction
of the notations. Chapter 1 is an introduction to Hodge Theory with a special view towards complex
multiplication. We consider cyclic covers of $\bP^1$ in Chapter 2. Moreover Chapter 3 introduces
everything that we need to describe families of cyclic covers of $\bP^1$ and their variations of
Hodge structures.

In Chapter 4 we consider the Galois group action of a cyclic cover onto $\bP^1$ and we state
first results for the generic Hodge group of a family $\sC \to \sP_n$. Moreover we will give a sufficient criterion for the
existence of a dense set of $CM$ fibers given by the pure $(1,n)-VHS$. In Chapter 5 we compute
$\Mon^0(\sV)$, which provides many information about $\Hg(\sV)$. We will see that $\Mon^0(\sV)$
coincides with $C^{\der}(\psi)$ in infinitely many cases. In Chapter 6 we classify the
examples of families of cyclic covers onto $\bP^1$ providing a pure $(1,n)-VHS$.

The basic methods of the construction of $CMCY$-families in higher
dimension will be explained in Chapter 7.
We introduce the Borcea-Voisin tower and the Viehweg-Zuo tower and realize that only a small number
of families of cyclic covers of $\bP^1$ are suitable to start the construction of a Viehweg-Zuo
tower. We will also discuss some methods to find concrete $CM$ fibers at the end of this chapter. In Chapter 8
we will give a modified version of the method of E. Viehweg and K. Zuo to
construct the $CMCY$ family of 2-manifolds given by $\eqref{nis}$. We consider the automorphism
groups of our examples given by $\eqref{exa}$ and $\eqref{nis}$ in Chapter 9. This yields the
further quotients of the families given by $\eqref{exa}$ and $\eqref{nis}$, which are $CMCY$
families of 2-manifolds. We will see that these quotients are endowed with involutions, which
make them suitable for the construction of a Borcea-Voisin tower. Moreover we will construct the
family $\sQ$ of Theorem $\ref{nunuu}$ in Chapter 9. The next chapter is devoted to
the {\em length} of the Yukawa couplings of our examples families (motivated by the question of rigidity)
and the Hodge numbers of their fibers. We finish this chapter with an outlook
onto the possibilities to construct $CMCY$ families of 3-manifolds by quotients of higher order.
In Chapter 11 we use directly the mirror construction of C. Voisin to obtain maximal holomorphic
$CMCY$ families of 2-manifolds, which are suitable for the construction of a holomorphic
Borcea-Voisin tower.

\newpage

\chapter{An introduction to Hodge structures and Shimura varieties}
In this chapter we recall the general facts about Hodge structures and Shimura varieties,
which are needed in the sequel.

\section{The basic definitions}

\begin{definition} \index{Hodge structure}
Let $R$ be a Ring such that $\Z \subseteq R \subseteq \R$. An $R$-Hodge structure is given by an
$R$-module $V$ and a decomposition
$$V\otimes_R \C = \bigoplus\limits_{p,q \in \Z} V^{p,q}$$
such that $\overline{ V^{p,q}} = V^{q,p}$.
\end{definition}

Now let $\BS := Res_{\C/\R}\G_{m,\C}$ be the Deligne torus \index{$\BS$} given by the Weil
restriction of $\G_{m,\C}$.

\begin{proposition} \label{delig}
Let $V$ be an $\R$-vector space. Each real Hodge structure on $V$
defines by
$$z\cdot \alpha^{p,q} = z^{p}\bar z^{q} \alpha^{p,q}$$
for all $\alpha^{p,q} \in V^{p,q}$ an action of $\BS$ on $V \otimes \C$
such that one has an $\R$-algebraic homomorphism
$h: \BS \to \GL(V)$. Moreover by the eigenspace decomposition of $V_{\C}$ with respect to the
characters of $\BS$, any representation given by an algebraic homomorphism $h: \BS \to \GL(V)$
corresponds to a real Hodge structure on $V$.
\end{proposition}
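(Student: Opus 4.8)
The plan is to establish the two directions of the correspondence separately, the common technical core being a concrete description of $\BS_\C$ and of its characters together with the action of complex conjugation on them. First I would recall that for any $\R$-algebra $R$ one has $\BS(R) = (R \otimes_\R \C)^\times$, and that base change along $\R \hookrightarrow \C$ together with the splitting $\C \otimes_\R \C \cong \C \times \C$, $a \otimes b \mapsto (ab, a\bar b)$, yields an isomorphism $\BS_\C \cong \G_{m,\C} \times \G_{m,\C}$. The two projections furnish the two characters of $\BS_\C$; on the real points $\BS(\R) = \C^\times$ they restrict to $z \mapsto z$ and $z \mapsto \bar z$, so that the characters of $\BS_\C$ are indexed by pairs $(p,q) \in \Z^2$ acting through $z^p \bar z^q$. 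The essential book-keeping is that the nontrivial element of $\Gal(\C/\R)$ acts on $\BS_\C$ by swapping the two factors, hence sends the character $(p,q)$ to $(q,p)$.

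For the first assertion, given a real Hodge structure I would define $h(z)$ to act on $V^{p,q}$ by the scalar $z^p\bar z^q$. Since each monomial $z^p \bar z^q$ is a regular function on $\G_{m,\C}\times\G_{m,\C}$, this is an algebraic representation of $\BS_\C$ on $V_\C$; the only point requiring verification is that it descends to an $\R$-algebraic homomorphism $h\colon\BS\to\GL(V)$, i.e. that $h(z)$ preserves the real form $V\subseteq V_\C$. This follows from the condition $\overline{V^{p,q}}=V^{q,p}$: writing $v\in V$ as $\sum_{p,q}\alpha^{p,q}$ one deduces $\overline{\alpha^{p,q}}=\alpha^{q,p}$ from $\bar v = v$, and a short computation (conjugating $h(z)v$ and reindexing $p\leftrightarrow q$) then gives $\overline{h(z)v}=h(z)v$, so $h(z)v\in V$.

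For the converse, I would base change a given $\R$-algebraic homomorphism $h\colon\BS\to\GL(V)$ to $\C$ and use that a representation of the split torus $\BS_\C\cong\G_{m,\C}\times\G_{m,\C}$ is diagonalisable into character eigenspaces, giving $V_\C=\bigoplus_{p,q}V^{p,q}$ with $\BS$ acting on $V^{p,q}$ through $z^p\bar z^q$. To recover the reality constraint I would exploit that $h$ is defined over $\R$: its real-linearity means the complexified operators $h(z)$ commute with the conjugation $\sigma$ on $V_\C$. Thus for $v\in V^{p,q}$ one computes $h(z)\sigma(v)=\sigma(h(z)v)=\overline{z^p\bar z^q}\,\sigma(v)=z^q\bar z^p\,\sigma(v)$, which places $\sigma(v)=\overline v$ in $V^{q,p}$; hence $\overline{V^{p,q}}\subseteq V^{q,p}$, and equality follows by symmetry.

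The step I expect to be the main obstacle is the first one: pinning down $\BS_\C\cong\G_{m,\C}\times\G_{m,\C}$ and the precise way complex conjugation permutes the characters, since everything else is formal once this dictionary between the two factors and the symbols $z,\bar z$ is fixed. Getting the real-structure compatibility exactly right — rather than merely up to the swap $(p,q)\leftrightarrow(q,p)$ — is what forces the defining condition $\overline{V^{p,q}}=V^{q,p}$ in both directions, so I would state the conjugation action explicitly before running either computation.
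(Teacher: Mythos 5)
Your proof is correct. The paper itself offers no argument for this proposition --- it simply cites Deligne (\cite{Dat}, 1.1.1) --- and what you have written is precisely the standard proof found there: split $\BS_{\C}\cong \G_{m,\C}\times\G_{m,\C}$, index the characters by pairs $(p,q)$ acting as $z^{p}\bar z^{q}$, observe that the nontrivial element of $\Gal(\C/\R)$ swaps the two factors and hence the eigenspaces $V^{p,q}\leftrightarrow V^{q,p}$, and use this in both directions (descent of $h$ to $\R$ via Zariski density of $\BS(\R)$, and recovery of $\overline{V^{p,q}}=V^{q,p}$ from the fact that the complexified operators commute with conjugation).
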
 
\begin{proof}
(see \cite{Dat}, 1.1.1\footnote{Note that P. Deligne writes
$$z\cdot \alpha^{p,q} = z^{-p}\bar z^{-q} \alpha^{p,q} \  \  \mbox{instead of} \  \
z\cdot \alpha^{p,q} = z^{p}\bar z^{q} \alpha^{p,q}$$ in \cite{Dat}. But this is only a matter of
the chosen conventions.} )
\end{proof} 

From now on let $V$ be a $\Q$-vector space and let
$$h: \BS \to \GL(V_{\R})$$
be the algebraic homomorphism corresponding to a Hodge structure on $V$. Note that $\BS$ is given
by ${\rm Spec}(\R[x,y,t]/(t(x^2 + y^2) = 1))$ and $S^1$ \index{$S^1$} is the algebraic subgroup
given by
${\rm Spec}(\R[x,y]/(x^2 + y^2 = 1))$. This yields
$$S^1(\R) = \{z \in \C : z \bar z = 1\} \subset \C^*.$$
We consider the exact sequence
$$0 \to \R^* \stackrel{\rm id}{\hookrightarrow} \C^*
\stackrel{z \to z/\bar z}{\longrightarrow} S^1(\R) \to 0,$$
which can be obtained by an exact sequence
\begin{equation} \label{hss}
0 \to \G_{m,\R} \stackrel{w}{\hookrightarrow} \BS \rightarrow S^1
\to 0
\end{equation}
of $\R$-algebraic groups.

\begin{remark}
The homomorphism given by $h \circ w$ is called weight homomorphism.  There exists a
$k \in \Z$ such that $V^{p,q} = 0$ for all $p+q \neq k$, if and only if $h \circ w$ is given by
$r \to r^{k}$. A Hodge structure is of weight $k$, if $h \circ w$ is given by
$r \to r^{k}$.
\end{remark}

\begin{remark}
By Proposition $\ref{delig}$, any (real) Hodge structure on $V_{\R}$
of weight $k$ determines a unique morphism $h_1 :S^1 \to GL(V_{\R})$ given by
$$S^1 \hookrightarrow \BS \stackrel{h}{\to} \GL(V_{\R}).$$
Since $\BS = \G_{m,\R} \cdot S^1$, one can reconstruct $h$ from $h|_{S^1}$ and the weight
homomorphism. By using Proposition $\ref{delig}$ again, one can easily see that there is a
correspondence between Hodge structures of weight $k$ on $V_{\R}$ and representations
$h_1: S^1 \to \GL(V_{\R})$ given by
$$z\cdot \alpha^{p,q} = z^{p}\bar z^{q} \alpha^{p,q}$$
for all $\alpha^{p,q} \in V^{p,q}$, which must satisfy $p+q = k$ for all $V^{p,q} \neq 0$.
\end{remark}

\begin{example} An integral Hodge structure of weight $k$ is given by
$$H^k(X,\C) = H^k(X,\Z) \otimes \C = \bigoplus\limits_{p + q = r} H^{p,q}(X) \ \ \mbox{with} \  \ 
H^{p,q}(X) = H^q(X, \Omega^p_X)$$
for any compact K\"ahler manifold $X$.
\end{example}

\begin{definition} \index{Hodge structure!polarized}
A polarized $R$-Hodge structure of weight $k$ is given by an $R$-Hodge
structure of weight $k$ on an $R$-module $V$ and a bilinear
form $Q: V \times V \to R$, which is symmetric, if $k$ is even, alternating otherwise, and whose
extension on $V\otimes_R \C$ satisfies:
\begin{enumerate}
\item The Hodge decomposition is orthogonal for the Hermitian form $i^kQ(\cdot,\bar \cdot)$.
\item For all $ \alpha \in V^{p,q} \setminus \{0\}$ one has
$$i^{p-q}(-1)^{\frac{k(k-1)}{2}}Q(\alpha ,\bar \alpha ) > 0.$$
\end{enumerate}
\end{definition}

\begin{example}
Let $X$ be a compact K\"ahler manifold. Recall that for $k \leq dim(X)$ the primitive cohomology
$H^{k}(X,\R)_{\rm prim}$ is the kernel of the Lefschetz operator
$H^{k}(X,\R) \to H^{2n-k+2}(X,\R)$ given by
$$\alpha \to \wedge^{n-k+1}(\omega)\wedge \alpha,$$
where $n := dim(X)$, $\omega$ denotes the chosen K\"ahler form and $\alpha \in H^{k}(X,\R)$. By
$$(\alpha ,\beta ) := \int_{X} \wedge^{n-k}(\omega)\wedge \alpha \wedge \beta,$$
one obtains a polarization on $H^{k}(X,\Z)_{\rm prim}$ and hencefore a polarized integral Hodge
structure on $H^{k}(X,\Z)_{\rm prim}$, if $[\omega] \in H^2(X, \Z)$
(see \cite{Voi}, 7.1.2)\footnote{
There is a more general definition of a polarized Hodge structure (see \cite{Dat}, 1.1.10). But
here we will mainly consider Hodge structures given by the primitive cohomology on a
K\"ahler manifold. Moreover we obtain $H^{n}(X,\R)_{\rm prim} = H^{n}(X,\R)$, if $X$ is a curve or
if $X$ is a Calabi-Yau 3-manifold. Hence in these two
cases of our main interest $H^{n}(X,\R_{\rm prim})$ is independent by the chosen K\"ahler form.
Moreover by its definition, the corresponding polarization is independent of the K\"ahler form, if
$k = n$. Thus in these two cases the integral polarized Hodge structure depends only on
the isomorphism class of $X$.}.
\end{example}

\begin{definition}
Let $\Q \subseteq K \subseteq \R$ be a field and $V$ be a $K$-vector space. The Hodge group
$\Hg_{K}(V,h)$ \index{Hodge group} of a $K$ Hodge structure $(V,h)$ is the
smallest $K$-algebraic subgroup $G$ of $\GL(V)$ such that
$$h(S^1) \subset G\times_{K}\R.$$
The Mumford-Tate group $\MT_{K}(V,h)$ \index{Mumford-Tate group} of a $K$ Hodge structure
$(V,h)$ is the smallest $K$-algebraic subgroup $G$ of $\GL(V)$ such that
$$h(\BS) \subset G\times_{K}\R.$$

For simplicity we will write $\Hg(V,h)$ instead of $\Hg_{\Q}(V,h)$ and $\MT(V,h)$ instead of
$\MT_{\Q}(V,h)$.
\end{definition}

\begin{definition}
Let $F$ be a number field. A compact K\"ahler manifold $X$ of dimension $n$ has complex
multiplication ($CM$) over $F$, \index{complex multiplication} if the Hodge group of the $F$ Hodge
structure on $H^{n}(X,F)$ is a torus. We say that $X$ has complex multiplication, if it has
complex multiplication over $\Q$.
\end{definition}

There is another concept of complex multiplication: An
Abelian variety $A$ is of $CM$ type, if it is isogenous to a
fiberproduct of simple Abelian varieties $X_i$ ($i = 1,
\ldots, n$) such that there are fields $K_i \subset
{\rm End}(X_i)\otimes_{\Z} \Q$, which satisfy
$$[K_i:\Q] \geq 2 \cdot dim(X_i).$$

\begin{remark}
If the Abelian variety $A$ is of $CM$ type, the fields $K_i$ are $CM$ fields (i.e. a totally
imaginary quadratic extension of a totally real number field) and satisfy
$$[K_i:\Q] = 2 \cdot dim(X_i).$$
\end{remark}
\begin{proof}(see \cite{Lang}, Theorem $3.1$ and Lemma $3.2$.)\end{proof}

\begin{lemma}
An Abelian variety $A$ is of $CM$ type, if and only if $\Hg(H^1(A,\Q))$ is a torus algebraic group. 
\end{lemma}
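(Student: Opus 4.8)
The plan is to exploit the standard dictionary between $A$ and the weight-one Hodge structure $V:=H^1(A,\Q)$, together with the minimality built into the definition of $\Hg(V,h)$. The fact I would isolate first is that ${\rm End}(A)\otimes_\Z\Q$ is canonically the centralizer of the Hodge group,
$$ {\rm End}(A)\otimes_\Z\Q \;=\; \{\phi\in{\rm End}_\Q(V)\ :\ \phi\circ g=g\circ\phi\ \text{for all}\ g\in\Hg(V,h)\}. $$
Indeed ${\rm End}(A)\otimes\Q$ is precisely the algebra of endomorphisms of the Hodge structure $V$, i.e. the $\Q$-linear maps respecting the decomposition $V_\C=V^{1,0}\oplus V^{0,1}$, equivalently commuting with $h(S^1)$; since for fixed $\phi$ the elements of $\GL(V)$ commuting with $\phi$ form a $\Q$-algebraic subgroup containing $h(S^1)$, they contain the smallest such subgroup $\Hg(V,h)$, so commuting with $h(S^1)$ is the same as commuting with all of $\Hg(V,h)$. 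I would also use two further standard facts: the group $\Hg(V,h)$ is connected, and under an isogeny decomposition $A\sim\prod_i X_i$ into simple factors the Hodge group $\Hg(A)$ embeds into $\prod_i\Hg(X_i)$ and surjects onto each factor, while isogenous abelian varieties share the same rational Hodge structure and hence the same Hodge group.

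For the implication ``$A$ of $CM$ type $\Rightarrow$ $\Hg(V,h)$ a torus'' I would reduce to the simple case. As $\Hg(A)\subseteq\prod_i\Hg(X_i)$ and a connected closed subgroup of a torus is a torus, it suffices to treat a simple factor $X$ with a field $K\subseteq{\rm End}(X)\otimes\Q$ of degree $[K:\Q]\geq 2\dim X$. Since ${\rm End}(X)\otimes\Q$ acts faithfully on $H^1(X,\Q)$, whose $\Q$-dimension is $2\dim X$, the inequality $[K:\Q]\leq\dim_\Q H^1(X,\Q)=2\dim X$ forces $[K:\Q]=2\dim X$ and makes $H^1(X,\Q)$ a one-dimensional $K$-vector space. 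By the first paragraph $\Hg(X)$ centralizes $K$, hence lies in the centralizer of $K$ in $\GL_\Q(H^1(X,\Q))$, which is $K^{\times}={\rm Res}_{K/\Q}\G_m$, a torus. Being connected, $\Hg(X)$ is then itself a torus, and the reduction finishes this direction.

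For the converse ``$\Hg(V,h)$ a torus $\Rightarrow$ $A$ of $CM$ type'' I would again reduce to a simple factor $X$: each $\Hg(X)$ is a quotient of the torus $\Hg(A)$, hence a torus. Write $D:={\rm End}(X)\otimes\Q$, a division algebra with center $Z$ and $[D:Z]=d^2$. By the first paragraph $D$ is the centralizer of the torus $T:=\Hg(X)$ in ${\rm End}_\Q(H^1(X,\Q))$; extending scalars to $\C$ and diagonalizing $T$, this centralizer becomes $\prod_i{\rm End}_\C(V_{\chi_i})$, a product of matrix algebras indexed by the distinct characters $\chi_i$ of $T$ on $H^1(X,\C)$. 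Comparing with $D\otimes_\Q\C\cong M_d(\C)^{[Z:\Q]}$ shows that every character occurs with the same multiplicity $d$ and that there are exactly $[Z:\Q]$ of them, whence $d\cdot[Z:\Q]=\dim_\C H^1(X,\C)=2\dim X$. A maximal subfield $K$ of $D$ satisfies $[K:Z]=d$, so $[K:\Q]=d\,[Z:\Q]=2\dim X$; thus $X$, and therefore $A$, is of $CM$ type. I expect this last step — matching the character multiplicities of the torus against the central-simple-algebra structure of $D$ to produce a subfield of the exact degree $2\dim X$ — to be the technical heart of the argument; everything else is bookkeeping around the centralizer identity of the first paragraph.
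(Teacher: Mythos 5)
Your argument is correct. Note that the paper itself gives no proof of this lemma --- it simply refers to Mumford \cite{Mum1} --- so there is no in-text argument to compare against; what you have written is essentially the standard proof one finds in that reference. The load-bearing step is your centralizer identity $\mathrm{End}(A)\otimes_{\Z}\Q=\mathrm{End}_{\Hg(V,h)}(V)$, which you justify properly: commuting with $h(S^1)$ is equivalent to preserving the Hodge decomposition (hence to being an endomorphism of the Hodge structure, hence by Riemann's equivalence an element of $\mathrm{End}(A)\otimes\Q$), and by minimality of $\Hg$ it is also equivalent to centralizing the whole algebraic group. The forward direction then correctly forces $[K:\Q]=2\dim X$ and $\dim_K H^1(X,\Q)=1$, so that $\Hg(X)$ sits inside the torus $\mathrm{Res}_{K/\Q}\G_m$; you do need connectedness of $\Hg$ here, which follows from the paper's definition since the identity component of any candidate $G$ already contains $h(S^1)$. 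The converse via matching $D\otimes_\Q\C\cong M_d(\C)^{[Z:\Q]}$ against the isotypic decomposition of $V_\C$ under the torus is also sound (formation of the centralizer commutes with base change because it is a linear condition), and the resulting count $d\,[Z:\Q]=2\dim X$ together with a maximal subfield of $D$ meets the paper's definition of $CM$ type, which only asks for $[K_i:\Q]\geq 2\dim X_i$. The only ingredients you use without proof --- Poincar\'e reducibility, the surjectivity of $\Hg(V\oplus W)\to\Hg(V)$ (the paper's Lemma on Hodge groups of direct sums), that closed connected subgroups and quotients of tori are tori, and that a central division algebra of degree $d$ has maximal subfields of degree $d$ over its center --- are all standard and appropriate to cite.
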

\begin{proof}
(see \cite{Mum1})
\end{proof}

Since the Hodge structures on $H^1(C,\Q)$ and $H^1({\rm Jac}(C),\Q)$ are isomorphic, the relation
of our two concepts of complex multiplication is obvious:

\begin{proposition}
A curve $C$ has complex multiplication, if and only if ${\rm Jac}(C)$ is of $CM$ type.
\end{proposition}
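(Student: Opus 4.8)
The plan is to chain together the defining conditions with the previously stated Lemma characterizing $CM$-type Abelian varieties by their Hodge groups. Since a curve $C$ has dimension $n = 1$, the definition of complex multiplication for a compact K\"ahler manifold specializes to the statement that $C$ has complex multiplication if and only if $\Hg(H^1(C,\Q))$ is a torus. Thus the whole assertion reduces to translating this condition across to the Jacobian.

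First I would invoke the isomorphism of $\Q$-Hodge structures $H^1(C,\Q) \cong H^1({\rm Jac}(C),\Q)$ already recalled in the text. This is the classical identification arising from the description of the Jacobian as ${\rm Jac}(C) = H^0(C,\Omega^1_C)^{\vee}/H_1(C,\Z)$, under which the weight-one Hodge decomposition of $H^1$ of the curve is carried to that of its Jacobian. The crucial observation I would then make is that an isomorphism of $\Q$-Hodge structures identifies the respective Hodge groups: if $\phi \colon (V,h) \to (V',h')$ is an isomorphism of $\Q$-Hodge structures, then conjugation by $\phi$ carries $h(S^1)$ to $h'(S^1)$, and hence carries the smallest $\Q$-algebraic subgroup of $\GL(V)$ whose real points contain $h(S^1)$ isomorphically onto the corresponding subgroup of $\GL(V')$. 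In particular $\Hg(H^1(C,\Q))$ is a torus if and only if $\Hg(H^1({\rm Jac}(C),\Q))$ is a torus, since the property of being a torus is preserved under isomorphism of algebraic groups.

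Finally I would apply the Lemma asserting that an Abelian variety $A$ is of $CM$ type if and only if $\Hg(H^1(A,\Q))$ is a torus, taking $A = {\rm Jac}(C)$. Combining the three equivalences, namely that $C$ has complex multiplication if and only if $\Hg(H^1(C,\Q))$ is a torus, if and only if $\Hg(H^1({\rm Jac}(C),\Q))$ is a torus, if and only if ${\rm Jac}(C)$ is of $CM$ type, yields the claim. The only point requiring genuine care, though it is classical and already asserted in the preceding sentence of the text, is the functoriality of the Hodge group under an isomorphism of Hodge structures; everything else is a direct unwinding of the definitions, so I expect no substantial obstacle.
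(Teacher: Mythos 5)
Your proposal is correct and follows exactly the route the paper intends: the paper states the result as an immediate consequence of the isomorphism of Hodge structures $H^1(C,\Q)\cong H^1({\rm Jac}(C),\Q)$ together with the preceding Lemma characterizing $CM$ type via the Hodge group, which is precisely the chain of equivalences you spell out. Your only addition is to make explicit the (correct) observation that an isomorphism of $\Q$-Hodge structures conjugates one Hodge group onto the other, a step the paper leaves implicit by calling the relation ``obvious.''
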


\section{Jacobians, Polarizations and Riemann's Theorem}

Let $X$ be a K\"ahler manifold. Consider the following exact sequence:
$$0 \to \Z \to \sO_X \to \sO_X^* \to 0$$
This yields the complex torus
$${\rm Pic}^0(X) = H^1(X,\sO_X)/H^1(X,\Z),$$
which isomorphic to the Jacobian ${\rm Jac}(C)$, if $X$ is a curve $C$. The theory of Abelian
varieties, their Hodge structures and their parameterizing spaces contains several features, which
we will need in the sequel.

\begin{pkt} \label{pucky}
On the homology $H_1(C, \Z)$ of a curve $C$ one can define an intersection pairing.
It is compatible with the polarization on$H^1(C,\Z)$ by a canonical monomorphism $\sigma$, which
assigns to each $\gamma \in H_1(C, \Z)$ the $\alpha \in H^1(C, \C)$, which has the property that
$$\int_C \alpha \wedge \beta = \int_{\gamma}\beta$$
for all $\beta \in H^1(C, \C)$. Thus the homology group $H_1(C, \Z)$ is the dual of
$$H^1(C, \Z)=\sigma(H_1(C, \Z)).\footnote{Note that ${\rm Jac}(C)$ is defined by the quotient of
$H^0(\omega_C)^*$ by the period lattice induced by integration over paths in $H_1(C, \Z)$. Thus
the statement that $H^1(C, \Z) =\sigma(H_1(C, \Z))$ is equivalent to the well-known fact that
${\rm Pic}^0(C) \cong {\rm Jac}(C)$.}$$

By integration over $\C$-valued paths in $H_1(C, \C) := H_1(C, \Z) \otimes_{\Z} \C$, the
$\C$-valued homology $H_1(C, \C)$ is a canonical dual of $H^1(C,\C)$. On $H_1(C, \C)$ the dual
Hodge structure of weight $-1$ is given by the Hodge filtration
$$ 0 \subset H^{0,-1}(C) \subset H_1(C,\C) \  \ \mbox{such that} \  \
H^{0,-1}(C) = H^{0,1}(C)^* \  \ \mbox{and} \  \ H^{-1,0}(C) = H^{1,0}(C)^*$$
with $H^{-1,0}(C) = H_{1}(C, \C)/H^{0,-1}(C)$. Moreover one has 
$$\sigma \circ  h_{-1}(z) = h_1(z) \circ \sigma \  \ \mbox{for all}  \  \ z \in S^1(\R),$$
where $h_{-1}$ and $h_{1}$ denote the corresponding embeddings
$$h_{-1}: S^1 \to \GL(H_1(C, \R)) \  \  \mbox{and} \  \ h_{1}: S^1 \to \GL(H^1(C, \R)).$$
Thus the Hodge groups of these Hodge structures on $H_1(C, \Z)$ and $H^1(C, \Z)$ are isomorphic.
Hence for a study of the Hodge structure on $H^1(C, \Z)$, it is sufficient to consider the
corresponding dual Hodge structure on $H_1(C,\Z)$.
\end{pkt}

Next we consider polarizations on Abelian varieties:

\begin{remark} \label{willwill}
Let $A = W/L$ be a complex $g$-dimensional torus.
There is a canonical isomorphism between $H^2(A,\Z)$ and
$\Z$-valued alternating forms on $L = H_1(A, \Z)$. Moreover for an alternating integral form $E$
on $L$, there is a line bundle $\sL$ on $A$ with $c^1(\sL) =E$, if and only if $E(i\cdot,i\cdot) =
E(\cdot,\cdot)$.
By
$$H(u,v) = E(iu,v)+iE(u,v),$$
we get the corresponding Hermitian form $H$ from $E$ and conversely, given $H$ we obtain $E$ by
$E = \Im H$. (See\cite{LB}, Proposition 2.1.6 and Lemma 2.1.7)

A polarization on an Abelian variety is given by a line bundle $\sL$,
whose Hermitian form $H$, which corresponds to its first Chern class $E$, is positive
definite. The alternating form $E$ of the polarization can be given by the matrix

$$\left(\begin{array}{cc}
0 & D_g\\
 -D_g & 0

\end{array} \right)$$
with respect to a symplectic basis of $L$, where $D_g = \diag(d_1, \ldots, d_g)$ with
$d_i|d_{i+1}$ (see \cite{LB}, $3.\S 1$). The matrix $D_g$ depends on the polarization,
and it is called the type of the polarization. The polarization $E$ on $A$ is principal, if
$D_g = E_g$.

A positive definite Hermitian form $H$ on $W$, which has the property that
$\Im H$ is an integral alternating form on $L$, satisfies that $\Im H(i\cdot,i\cdot) =
\Im H(\cdot ,\cdot )$
resp., is a polarization. Since the Chern class of a line bundle $\sL$ is a polarization, if and
only if $\sL$ is ample
(see \cite{LB}, Proposition 4.5.2.), $H$ yields an ample line bundle. By the Theorem of
Chow, $A$ is algebraic in this case. Moreover if $A$ is an Abelian variety, there is a positive definite
Hermitian form $H$ on $W$ such that $\Im H$ is integral on $L$ (see \cite{Mum3}, $\S1$, too). 
\end{remark}

Now let $V$ denote a $\Q$-vector space of dimension $2g$, $Q$ be a rational alternating bilinear
form on $V$, and $J$ be a complex structure on $V_{\R}$ (i.e. an automorphism $J$ with
$J^2 = -{\rm id}$).

\begin{remark} \label{cplxstr}
It is a well-known fact that there is a correspondence between Hodge structures $h$ on $V$ of
type $(1,0),(0,1)$ and complex structures $J$ on $V_{\R}$ via $h(i) = J$.
\end{remark}

\begin{lemma} \label{jupp}
The complex structure $J$ on $V_{\R}$ corresponds to a polarized Hodge structure $(V,h,Q)$ of type
$(1,0),(0,1)$, if and only if it satisfies
$$Q(J\cdot,J\cdot) = Q(\cdot,\cdot) \  \ \mbox{and} \  \ Q(J\tilde v,\tilde v) > 0$$
for all $\tilde v \in V_{\R}$.
\end{lemma}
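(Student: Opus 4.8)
The plan is to prove this equivalence by translating the Hodge-theoretic conditions into conditions on the complex structure $J$, using the dictionary established in Remark $\ref{cplxstr}$. Recall that a Hodge structure of type $(1,0),(0,1)$ means $V_{\C} = V^{1,0} \oplus V^{0,1}$ with $\overline{V^{1,0}} = V^{0,1}$, and that $h(i) = J$. The key observation is that $V^{1,0}$ and $V^{0,1}$ are precisely the $\mp i$-eigenspaces of $J$ acting on $V_{\C}$: by the action formula from Proposition $\ref{delig}$, $J = h(i)$ acts on $\alpha^{p,q}$ by $i^{p}\bar{\imath}^{q}$, so on $V^{1,0}$ it acts by $i^1 (-i)^0 = i$ and on $V^{0,1}$ by $i^0(-i)^1 = -i$. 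First I would extend $Q$ $\C$-bilinearly to $V_{\C}$ and set up the Hermitian form $H_0(\cdot,\cdot) := Q(\cdot, \overline{\cdot})$ suitably normalized; the two defining conditions of a polarization (orthogonality of the Hodge decomposition and the positivity inequality, specialized to weight $k=1$) must then be re-expressed purely in terms of $J$ and $Q$ on $V_{\R}$.

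The forward direction assumes $(V,h,Q)$ is a polarized Hodge structure and derives the two displayed conditions on $J$. For the first condition $Q(J\cdot,J\cdot) = Q(\cdot,\cdot)$: since $Q$ is defined over $\Q$ and $J = h(i)$, I would use that the action of $S^1$ preserves the polarization form, i.e. $Q$ is $h(S^1)$-invariant (this is part of what it means for $Q$ to polarize the Hodge structure, or can be extracted from condition (1) on orthogonality). Evaluating invariance at $z = i$ gives exactly $Q(J\cdot,J\cdot) = Q(\cdot,\cdot)$. For the positivity $Q(J\tilde v, \tilde v) > 0$: I would decompose a real vector $\tilde v \in V_{\R}$ into its $(1,0)$ and $(0,1)$ parts, write $\tilde v = \alpha + \bar\alpha$ with $\alpha \in V^{1,0}$, apply $J$ (which multiplies $\alpha$ by $i$ and $\bar\alpha$ by $-i$), and expand $Q(J\tilde v, \tilde v)$. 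Using $\C$-bilinearity and the orthogonality from condition (1) the cross terms involving $Q(\alpha,\alpha)$ and $Q(\bar\alpha,\bar\alpha)$ survive while the diagonal terms may vanish or not depending on type; the surviving expression should reduce, up to a positive real factor, to the quantity $i^{p-q}(-1)^{k(k-1)/2}Q(\alpha,\bar\alpha)$ with $k=1$, $p-q = 1$, which is positive by condition (2). The bookkeeping of which terms vanish is the delicate part.

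The converse direction assumes the two conditions on $J$ and reconstructs the polarized Hodge structure. Given $J$ with $J^2 = -\id$, Remark $\ref{cplxstr}$ already supplies a Hodge structure $h$ of type $(1,0),(0,1)$ via $h(i)=J$. It remains to verify that $Q$ polarizes it, i.e. that conditions (1) and (2) of the definition of a polarized Hodge structure hold. The invariance $Q(J\cdot,J\cdot)=Q(\cdot,\cdot)$ is used to show orthogonality of $V^{1,0}$ and $V^{0,1}$ under the Hermitian form $i^k Q(\cdot,\overline{\cdot})$: for $\alpha,\beta \in V^{1,0}$ one computes $Q(\alpha,\beta) = Q(J\alpha,J\beta) = Q(i\alpha,i\beta) = i^2 Q(\alpha,\beta) = -Q(\alpha,\beta)$, forcing $Q(\alpha,\beta)=0$, which gives orthogonality of $V^{1,0}$ with itself; a parallel computation handles the other pieces, yielding the Hodge-orthogonality in condition (1). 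Finally the positivity inequality in condition (2) is obtained by reversing the computation of the forward direction, tracing the positive quantity $Q(J\tilde v,\tilde v)>0$ back into the required sign of $i^{p-q}(-1)^{k(k-1)/2}Q(\alpha,\bar\alpha)$ on $V^{1,0}$.

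The main obstacle I anticipate is the careful sign and normalization bookkeeping in the positivity condition: one must correctly match the convention $z\cdot\alpha^{p,q} = z^p\bar z^q\alpha^{p,q}$ (note the footnote warning that Deligne uses the opposite sign) against the factor $i^{p-q}(-1)^{k(k-1)/2}$ appearing in the polarization inequality, and track how a real vector splits as $\alpha + \bar\alpha$ with $\alpha$ in the $+i$-eigenspace of $J$. A clean way to control this is to introduce the projection onto $V^{1,0}$ and express $Q(J\tilde v,\tilde v)$ in terms of $Q(\alpha,\bar\alpha)$ from the outset, so that both directions of the equivalence reduce to the single identity relating $Q(J\tilde v,\tilde v)$ to $2\,\Im$ or $2\,\Re$ of the Hermitian form, whichever the conventions dictate; everything else is then formal.
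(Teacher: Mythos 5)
Your proposal is correct and takes essentially the same route as the paper: both directions write a real vector as $\alpha+\bar\alpha$ with $\alpha\in V^{1,0}$, use isotropy of $V^{1,0}$ and $V^{0,1}$ (extracted from the orthogonality condition of the polarization in one direction, and from $Q(J\cdot,J\cdot)=Q(\cdot,\cdot)$ applied to the $\pm i$-eigenspaces of $J$ in the other), and reduce positivity to $iQ(\alpha,\bar\alpha)>0$ via $Q(J\tilde v,\tilde v)=2iQ(\alpha,\bar\alpha)$. One small correction to your bookkeeping: it is the terms $Q(\alpha,\alpha)$ and $Q(\bar\alpha,\bar\alpha)$ that vanish by isotropy and the cross terms $Q(\alpha,\bar\alpha)$, $Q(\bar\alpha,\alpha)$ that survive (the reverse of what you wrote), but this does not affect the argument.
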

\begin{proof}
Let the complex structure $J$ on $V_{\R}$ be given by a polarized Hodge structure of type
$(1,0),(0,1)$ on $V$. Any $\tilde v, \tilde w \in V_{\R}$ can be given by
$$\tilde v = v + \bar v \  \ and \   \ \tilde w = w + \bar w$$
for some $v, w \in H^{1,0}$, where $H^{1,0}$ and $H^{0,1}$ are totally isotropic with respect to
$Q$. Hence:
$$Q(J\tilde v,J\tilde w) = Q(i v,-i \bar w) + Q(-i \bar v,i w)= Q(v, \bar w) + Q(\bar v, w)
=Q(\tilde  v,\tilde  w)$$
Since the Hermitian form given by
$iQ(v,\bar v)$ is positive definite on $H^{1,0}$, one concludes:
\begin{equation} \label{polly}
Q(J\tilde v,\tilde v)= Q( iv -i \bar v, v+\bar v) = Q(iv, \bar v) + Q(-i \bar v, v)
= 2iQ(v, \bar v) > 0
\end{equation}

Conversely assume that $Q(J\cdot,\cdot) > 0$ and $Q(\cdot,\cdot) = Q(J\cdot,J\cdot)$. Thus one has 
$$Q(v,v) = Q(Jv,Jv) = Q(iv,iv) =-Q(v,v)$$
$$\mbox{resp.,} \  \ Q(v,v) = Q(Jv,Jv) = Q(-iv,-iv) =-Q(v,v)$$
for all $v \in H^{1,0}:= Eig(J,i)$ resp., for all $v \in H^{0,1}:= Eig(J,-i)$. Hence $H^{1,0}$
resp., $H^{0,1}$ is isotropic with respect to $Q$. The same calculation as in
$\eqref{polly}$ implies that $iQ(\cdot,\bar \cdot)$ is positive definite on $H^{1,0}$ and
negative definite on $H^{0,1}$. Hence one gets a polarized Hodge structure of type $(1,0),(0,1)$ by
Remark $\ref{cplxstr}$.
\end{proof}

By the preceding lemma and an easy calculation using that $z = a+ib \in S^1(\R)$ implies
$a^2 + b^2 = 1$,\footnote{Let $v, w \in V_{\R}$. The calculation is given by:
$$Q(zv,zw) = a^2Q(v,w) + b^2Q(v,w) + ab(Q(Jv,w) + Q(v,Jw))=$$
$$= Q(v,w) + ab(Q(Jv,w) + Q(Jv,J(Jw)))= Q(v,w) + ab(Q(Jv,w) + Q(Jv,-w)) = Q(v,w)$$} we obtain:

\begin{proposition} 
A polarized Hodge structure of type $(1,0),(0,1)$ on $V$ (where $Q$ denotes the polarization)
induces a faithful symplectic representation
$$h:S^1 \to \Sp(V_{\R},Q).$$
\end{proposition}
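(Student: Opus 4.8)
The plan is to observe that the homomorphism $h$ attached to such a Hodge structure already takes values in $\GL(V_{\R})$ --- it is the restriction to $S^1$ of the representation furnished by Proposition \ref{delig}, as discussed in the Remarks following it --- and then to check the two defining properties of a faithful symplectic representation: that its image preserves $Q$, and that its kernel is trivial. Since a type $(1,0),(0,1)$ structure is the same datum as a complex structure $J = h(i)$ on $V_{\R}$ by Remark \ref{cplxstr}, and since $Q$ is a polarization, Lemma \ref{jupp} supplies the two identities $Q(J\cdot,J\cdot) = Q(\cdot,\cdot)$ and $Q(J\tilde v,\tilde v) > 0$ that drive everything.

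First I would make the action of $S^1(\R)$ on $V_{\R}$ explicit. Writing $z = a + ib \in S^1(\R)$, so that $a^2 + b^2 = 1$, the restriction of $h$ to $S^1$ exponentiates the generator $J = h(i)$; as $J^2 = -\id$, this yields the concrete formula $h(z) = a\cdot\id + b\cdot J$ as an $\R$-linear operator on $V_{\R}$. This translation from the abstract $S^1$-representation to a polynomial in $J$ is the one point needing care; once it is in place, the remaining steps are purely formal.

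Next I would verify that $h(z) \in \Sp(V_{\R},Q)$, i.e. that $Q(h(z)v, h(z)w) = Q(v,w)$ for all $v, w \in V_{\R}$. Expanding $Q(av + bJv, aw + bJw)$ bilinearly, the diagonal terms contribute $a^2 Q(v,w) + b^2 Q(Jv,Jw) = (a^2+b^2)Q(v,w) = Q(v,w)$ by the $J$-invariance from Lemma \ref{jupp}, while the mixed terms $ab(Q(Jv,w) + Q(v,Jw))$ cancel, because that same invariance gives $Q(v,Jw) = Q(Jv, J(Jw)) = -Q(Jv,w)$. This is exactly the computation carried out in the footnote preceding the statement, so I would simply appeal to it.

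Finally, faithfulness is immediate: $\id$ and $J$ are linearly independent over $\R$, since $J$ cannot be a real scalar multiple of $\id$ (such a multiple squares to a nonnegative scalar, never to $-\id$). Hence $h(z) = a\cdot\id + b\cdot J = \id$ forces $a = 1$ and $b = 0$, i.e. $z = 1$, so $\ker h$ is trivial. The only real obstacle is therefore the explicit description $h(z) = a\cdot\id + b\cdot J$; once that is fixed, both the symplectic invariance and the injectivity follow at once.
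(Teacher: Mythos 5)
Your proposal is correct and follows essentially the same route as the paper: the identity $h(a+ib) = a\cdot\id + b\cdot J$ together with the bilinear expansion of $Q(h(z)v,h(z)w)$ using $Q(J\cdot,J\cdot)=Q(\cdot,\cdot)$ from Lemma \ref{jupp} is exactly the computation the paper carries out in the footnote preceding the statement. The only difference is that you spell out faithfulness (via the linear independence of $\id$ and $J$), which the paper leaves implicit; that addition is sound.
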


\begin{corollary} \label{sympemb}
Let $(V,h,Q)$ be a polarized Hodge structure of type $(1,0),(0,1)$. Then
$$\Hg(V,h) \subset \Sp(V,Q), \  \ \mbox{and} \  \ \MT(V,h) \subset \GSp(V,Q).$$
\end{corollary}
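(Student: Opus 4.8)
The plan is to reduce both containments to the minimality clauses built into the definitions of $\Hg(V,h)$ and $\MT(V,h)$, feeding in the faithful symplectic representation produced by the preceding Proposition. The first thing I would record is that, since $Q$ is a \emph{rational} alternating form on $V$, both $\Sp(V,Q)$ and the similitude group $\GSp(V,Q)$ are $\Q$-algebraic subgroups of $\GL(V)$, being cut out inside $\GL(V)$ by polynomial equations with rational coefficients. Consequently their real points arise by base change, i.e. $\Sp(V,Q)\times_{\Q}\R = \Sp(V_{\R},Q)$ and $\GSp(V,Q)\times_{\Q}\R = \GSp(V_{\R},Q)$. With this identification, each assertion follows by exhibiting $h(S^1)$ (resp. $h(\BS)$) inside the appropriate real group and invoking that $\Hg(V,h)$ (resp. $\MT(V,h)$) is the \emph{smallest} $\Q$-algebraic subgroup $G$ of $\GL(V)$ with $h(S^1)\subset G\times_{\Q}\R$ (resp. $h(\BS)\subset G\times_{\Q}\R$).

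For the first containment the preceding Proposition gives directly $h(S^1)\subset \Sp(V_{\R},Q) = \Sp(V,Q)\times_{\Q}\R$. Thus $\Sp(V,Q)$ is a $\Q$-algebraic subgroup of $\GL(V)$ satisfying the defining condition of the Hodge group, and minimality immediately yields $\Hg(V,h)\subset \Sp(V,Q)$.

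For the Mumford-Tate containment I must control $h$ on all of $\BS$, not merely on $S^1$. Here I would use the decomposition $\BS = \G_{m,\R}\cdot S^1$ recorded earlier, so that it suffices to place the images of both factors inside $\GSp(V_{\R},Q)$. The factor $S^1$ is already handled by the first part, since $\Sp(V_{\R},Q)\subset \GSp(V_{\R},Q)$. For the weight factor, note that a Hodge structure of type $(1,0),(0,1)$ has weight $1$, so by the weight remark $h\circ w$ is the cocharacter $r\mapsto r\cdot\id$ of scalar multiplications on $V_{\R}$; and a scalar $r\cdot\id$ satisfies $Q(rv,rw)=r^2\,Q(v,w)$, hence lies in $\GSp(V_{\R},Q)$ with similitude multiplier $r^2$. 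Therefore $h(\BS)\subset \GSp(V_{\R},Q) = \GSp(V,Q)\times_{\Q}\R$, and minimality of the Mumford-Tate group gives $\MT(V,h)\subset \GSp(V,Q)$.

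I expect the only step that is not purely formal to be the treatment of the weight homomorphism in the Mumford-Tate case: one must verify that the \emph{full} image $h(\BS)$ lands in the similitude group, which amounts to checking that $\BS$ is generated as an $\R$-algebraic group by $w(\G_{m,\R})$ and $S^1$ and that $h$ carries each generating factor into $\GSp(V_{\R},Q)$. Everything else is a direct application of the rationality of $Q$ together with the minimality clauses in the definitions of $\Hg$ and $\MT$.
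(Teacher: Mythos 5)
Your proposal is correct and is exactly the argument the paper intends: the corollary is stated without proof as an immediate consequence of the preceding proposition (which puts $h(S^1)$ inside $\Sp(V_{\R},Q)$) together with the minimality clauses defining $\Hg$ and $\MT$, and your handling of the weight factor via $\BS=\G_{m,\R}\cdot S^1$ and the similitude multiplier $r^2$ is the standard way to complete the Mumford--Tate half. No gaps.
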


\begin{Theorem} [Riemann] \label{ralfsiegel}
There is a correspondence between polarized
Abelian varieties of dimension $g$ and polarized Hodge structures $(L,h,Q)$
of type $(1,0),(0,1)$ on a torsion-free lattice $L$ of rank $2g$.
\end{Theorem}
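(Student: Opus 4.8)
The plan is to recognize that this theorem is essentially a repackaging of the Riemann bilinear relations, and that almost all the linear-algebraic content has already been assembled in Remark \ref{cplxstr}, Lemma \ref{jupp}, and Remark \ref{willwill}. I would therefore construct the correspondence in both directions using the dictionary $h(i) = J$ (Hodge structure $\leftrightarrow$ complex structure) together with the dictionary $Q = \Im H$ (polarization form $\leftrightarrow$ positive definite Hermitian form), and then check that the two constructions are mutually inverse. Since a polarized Hodge structure of odd weight $1$ carries an \emph{alternating} form $Q$, and the type $(1,0),(0,1)$ forces $\dim_\C V^{1,0} = g$ on the $2g$-dimensional space, the dimensions will match a $g$-dimensional torus throughout.

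First I would treat the passage from a polarized Abelian variety to a Hodge structure. Given $A = W/L$ with a polarization, i.e.\ a positive definite Hermitian form $H$ on $W$ whose imaginary part $Q := \Im H$ is an integral alternating form on $L$, I identify $V_\R := L \otimes_\Z \R$ with $W$ as real vector spaces (both have real dimension $2g$) and transport multiplication by $i$ on $W$ to a complex structure $J$ on $V_\R$. By Remark \ref{willwill} the integrality of $Q$ on $L$ gives $Q(J\cdot,J\cdot) = Q(\cdot,\cdot)$, while positive definiteness of $H$ together with $H(v,v) = Q(Jv,v)$ yields $Q(Jv,v) > 0$. These are exactly the two hypotheses of Lemma \ref{jupp}, so via Remark \ref{cplxstr} the complex structure $J$ (equivalently the map $h$ with $h(i)=J$) defines a polarized Hodge structure $(L,h,Q)$ of type $(1,0),(0,1)$.

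For the reverse direction I would start from $(L,h,Q)$, set $J := h(i)$, and take $W := (L\otimes_\Z\R,\, J)$, a $g$-dimensional complex vector space in which $L$ is a lattice of rank $2g$, so that $A := W/L$ is a complex torus. I then define $H(u,v) := Q(Ju,v) + iQ(u,v)$. A one-line verification using the two conditions of Lemma \ref{jupp} shows $H$ is $\C$-linear in the first slot (because $Q(J^2u,v) = -Q(u,v)$), Hermitian (because $Q(J\cdot,J\cdot)=Q(\cdot,\cdot)$ gives $Q(Ju,v)=Q(Jv,u)$), and positive definite (because $H(v,v) = Q(Jv,v) > 0$). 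Since $\Im H = Q$ is integral on $L$, Remark \ref{willwill} identifies $H$ with the first Chern class of an ample line bundle, whence $A$ is projective and so a polarized Abelian variety. Because both constructions use the single correspondence $h \leftrightarrow J$ and $Q \leftrightarrow H$, composing them in either order returns the original data, giving a bijection on isomorphism classes.

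The hard part is not any estimate but the conceptual input hidden in Remark \ref{willwill}: the implication that a positive definite Hermitian $H$ whose imaginary part is \emph{integral on $L$} upgrades the complex torus $A = W/L$ to a \emph{projective} Abelian variety, via ampleness of the associated line bundle and Chow's theorem. Everything else is bilinear algebra already done in Lemma \ref{jupp}. Accordingly, the main point I would be careful about is bookkeeping the integrality hypothesis — keeping $Q$ genuinely $\Z$-valued on $L$ rather than merely $\Q$-valued on $V$ — since it is precisely this integrality that distinguishes an arbitrary polarizable complex torus from an Abelian variety and makes the correspondence land in the class claimed.
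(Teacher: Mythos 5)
Your proposal is correct and follows essentially the same route as the paper: both directions reduce to Lemma $\ref{jupp}$ and Remark $\ref{willwill}$ via the dictionaries $h(i)=J$ and $Q=\Im H$. The only real difference is that you realize the torus as $(L\otimes\R,\,h(i))/L$ directly, whereas the paper projects onto $H^{0,1}$, and the conjugate-linearity of that projection ($f(\lambda v)=\bar\lambda f(v)$) is what forces the sign flips $E=-Q$ and $J=-i$ in the paper's version that your bookkeeping avoids.
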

\begin{proof}
Let $(L,h,Q)$ be a polarized Hodge structure on a torsion-free lattice $L$ of rank $2g$. By 
$$L \otimes \R \hookrightarrow  L \otimes \C \to H^{0,1},$$
one has an isomorphism $f$ of $\R$-vector spaces. The complex structure of the Hodge structure
turns $L_{\R}$ into a $\C$-vector space. One has $f(\lambda v) = \bar \lambda f(v)$. By $f$, $Q$
may be considered as (real) alternating form on $H^{0,1}$. But it satisfies $Q(iv,v) <0$ for all
$v \in H^{0,1}$. Hence let $E = -Q$. Lemma $\ref{jupp}$ implies that $E(i\cdot,i\cdot) =
E(\cdot,\cdot)$ and
$E(iv,v) >0$ for all $v \in H^{0,1}$. Thus the corresponding Hermitian form is positive definite
(see Remark $\ref{willwill}$) and we have a polarization on the complex torus
$H^{0,1}/L$ and hencefore an Abelian variety.

Conversely take a polarized Abelian variety $(A,E)$, where $A = W/L$. Let $Q :=-E$. By $J = -i$,
one has similar to Lemma $\ref{jupp}$ a complex structure corresponding to a polarized Hodge
structure of type $(1,0),(0,1)$ on $L$. Thus we have obviously obtained the desired
correspondence.
\end{proof}

Since a polarized rational  Hodge structure can be considered as polarized integral Hodge
structure with respect to a fixed lattice, if the polarization on this lattice is integral, one
concludes by Lemma $\ref{jupp}$ and Theorem $\ref{ralfsiegel}$:

\begin{corollary} \label{hmg}
There is a bijection between the sets of polarized
Abelian varieties $A = W/L$ and complex structures on $L\otimes \R$ satisfying
$$Q(J\cdot,J\cdot) = Q(\cdot,\cdot) \  \ \mbox{and} \  \ Q(J v, v) > 0$$
for all $ v \in L \otimes \R$ with respect to an integral alternating form $Q$ on $L$.
\end{corollary}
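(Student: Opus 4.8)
The plan is to realize the claimed bijection as a composition of two correspondences already established in the excerpt, so that essentially no new computation is required. The first correspondence comes from Remark \ref{cplxstr}, which identifies complex structures $J$ on $L \otimes \R$ with Hodge structures $h$ of type $(1,0),(0,1)$ on $L$ via the rule $h(i) = J$. Under this identification I would invoke Lemma \ref{jupp}: relative to the fixed alternating form $Q$, the two stated conditions $Q(J\cdot,J\cdot) = Q(\cdot,\cdot)$ and $Q(Jv,v) > 0$ hold if and only if the Hodge structure attached to $J$ is polarized with $Q$ as its polarization. Thus the set of complex structures $J$ on $L \otimes \R$ satisfying the two conditions is in bijection with the set of polarized Hodge structures $(L,h,Q)$ of type $(1,0),(0,1)$.

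For the second correspondence I would apply Theorem \ref{ralfsiegel} (Riemann) directly, which gives a bijection between such polarized Hodge structures $(L,h,Q)$ on the torsion-free lattice $L$ of rank $2g$ and polarized Abelian varieties $A = W/L$. Composing the two bijections then sends a complex structure $J$ satisfying the conditions to the polarized Abelian variety whose underlying complex torus is $(L\otimes\R,\,J)/L$, which is exactly the asserted correspondence. It then remains only to observe that the two maps are mutually inverse, which is immediate once they are written down explicitly.

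The one point that genuinely requires care — and the only place where the hypotheses are really used — is the integrality of the polarization. Riemann's theorem as stated concerns polarized Hodge structures on a lattice, whereas Lemma \ref{jupp} is phrased over $\R$ with an arbitrary real alternating form. The subtle step, as in the sentence preceding the corollary, is to note that because $Q$ is assumed integral on $L$ from the outset, the polarized Hodge structure produced from such a $J$ restricts to a genuine integral polarized Hodge structure on $L$; hence Theorem \ref{ralfsiegel} applies verbatim, with no further positivity or integrality verification needed. I expect this matching of the rational (or real) and integral pictures to be the main obstacle, in the sense that it is the only conceptual content of the argument; everything else is a formal composition of the bijections of Remark \ref{cplxstr}, Lemma \ref{jupp}, and Theorem \ref{ralfsiegel}.
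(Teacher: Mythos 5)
Your proposal is correct and follows essentially the same route as the paper: the text introduces the corollary with exactly the observation you single out (that an integral $Q$ on $L$ lets one regard the polarized rational Hodge structure attached to $J$ via Lemma \ref{jupp} as an integral one), and then cites Lemma \ref{jupp} together with Theorem \ref{ralfsiegel} to conclude. The only slight imprecision is that Lemma \ref{jupp} is stated for a rational (not arbitrary real) alternating form, but this does not affect your argument.
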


\begin{remark}
The Jacobian ${\rm Jac}(C)$ of a curve $C$ is isomorphic to
$${\rm Pic}^0(C) = H^{0,1}(C)/H^1(C,\Z).$$
As in the proof of Riemann's Theorem, the polarization of the integral Hodge structure on
$H^1(C,\Z)$ can be identified with a polarization on ${\rm Jac}(C)$. Since the corresponding
intersection form on $H_1(C,\Z)$ can be given by the matrix
$$\left(\begin{array}{cc}
0 & E_g\\
 -E_g & 0
\end{array} \right)$$
with respect to a fixed symplectic basis (follows by \cite{LB}, Chapter 11, $\S1$ for
example), one concludes that this polarization on ${\rm Jac}(C)$ is principal.\footnote{Following
\cite{LB}, Chapter 11 the principal polarization is $c_1(\Theta)$, where the Theta divisor
$\Theta$ is obtained as the image of the Abel-Jacobi map $C^{g-1} \to {\rm Jac}(C)$ via
$(p_1,\ldots,p_{g-1}) \to \sO_C(p_1 + \ldots + p_{g-1} - (g-1)p_0)$ for an arbitrary $p_0 \in C$.}
\end{remark}

\begin{remark}

Two curves are isomorphic, if their Jacobians are isomorphic as principally
polarized Abelian varieties (see \cite{LB}, Torelli's Theorem 11.1.7).
\end{remark}

\section{Shimura data and Siegel's upper half plane}
From now on let $(L,h,Q)$ be a polarized integral Hodge structure of type $(1,0),(0,1)$ on a
torsion-free lattice $L$ of rank $2g$ and $V := L \otimes \Q$. For simplicity we assume that $Q$
is given by
\begin{equation} \label{einfach} J_0 =\left(\begin{array}{cc}
0 & E_g \\
-E_g & 0
\end{array} \right)\end{equation}
with respect to a symplectic basis of $L$.

Now we construct Siegel's upper half plane $\fh_g$ (at present as homogeneous
space):

\begin{construction} \label{1.25} \index{Siegel's upper half plane}
An embedding $h:S^1 \to \Sp(V,Q)_{\R}$ obtained by a polarized integral Hodge structure $(L,h,Q)$
of type $(1,0),(0,1)$ corresponds via $h(i)$ to a positive complex structure (i.e. a complex
structure $J$ such that $Q(Jv,v) > 0$) $J \in \Sp(V,Q)_{\R}$ for all $v \in V_{\R}$. By conjugation,
$\Sp(V,Q)_{\R}$ acts transitively on the positive complex structures $J \in \Sp(V,Q)_{\R}$ (see
\cite{Milne}, page 67\footnote{A positive complex structure in the sense of our
notation is a negative complex structure in the sense of the notation of \cite{Milne}, and vice
versa (Here $J$ is negative, if $Q(Jv,v) > 0$ for all $v \in V_{\R}$.). But via
$J \leftrightarrow -J$ we have a correspondence between
negative and positive complex structures commuting with the actions of $\Sp(V,Q)_{\R}$ on
positive and negative complex structures.}) and hencefore it acts transitively on the set
of polarized integral Hodge structures $(L,h,Q)$ of type $(1,0),(0,1)$.
Let $K$ be the subgroup of $\Sp(V,Q)(\R)$, which leaves a
fixed $h(S^1)$ stable by conjugation. Then Corollary $\ref{hmg}$ allows to identify the set of
points of the homogeneous space $\mathfrak{h}_g := \Sp(V,Q)(\R)/K$ with the set of principally
polarized Abelian varieties of dimension $g$ with symplectic basis.
\end{construction}

We want to endow $\mathfrak{h}_g$ with the structure of a Hermitian symmetric domain.
But first let us recall some needed facts about groups:

\begin{definition}
A Lie algebra $\mathfrak{g}$ is simple, if $dim(\mathfrak{g}) >1 $ and $\mathfrak{g}$ contains no non-trivial
ideals. A connected Lie group $G$ is simple, if its Lie algebra is simple.
\end{definition}

\begin{remark}
Let $G$ be an algebraic group. The quotient $G^{\ad}$ is the image of the adjoint representation
of $G$ on its Lie algebra $\mathfrak{g}$. It is a well-known fact that $G$ has the following algebraic
subgroups:

The derived group $G^{\der}$ of $G$ is the
subgroup of $G$ generated by its commutators. By $Z(G)$, we denote the center of $G$. The Radical
$R(G)$ is the maximal connected normal solvable subgroup of $G$. Its unipotent radical
$R_u(G)$ is given by
$$R_u(G) :=\{g \in R(G)|g \mbox{ is unipotent}\}.$$
\end{remark}

\begin{definition}
Let $G$ be an algebraic group. The group $G$ is a reductive, if $R_u(G) = \{e\}$, and semisimple,
if $R(G) = \{e\}$.
\end{definition}

\begin{proposition} \label{prodar}
Let $G$ be a connected algebraic group. It is reductive, if and only if it is the almost direct
product of a torus and a semisimple group. These groups can be given by $Z(G)$ and $G^{\der}$.
\end{proposition}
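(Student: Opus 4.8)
The plan is to identify the two factors explicitly as $Z(G)^0$, the identity component of the center (which I claim is a torus), and $G^{\der}$ (which I claim is semisimple), and then to verify in turn that these are normal, commute, generate $G$, and have finite intersection; the converse direction is a short projection argument. So the bulk of the work is the forward implication, where I build the decomposition rather than merely exhibiting it.

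Assume first that $G$ is reductive. My first step is to show that the radical $R(G)$ is a torus. It is connected, solvable and normal, so the structure theorem for connected solvable groups presents it as a semidirect product of a torus with its unipotent part; that unipotent part is a connected normal unipotent subgroup of $G$, hence lies in $R_u(G) = \{e\}$, leaving $R(G)$ a torus. Next I would invoke the rigidity of tori: a connected group acting by conjugation on a torus acts through a discrete automorphism group and therefore trivially, so the normal torus $R(G)$ is central. Combined with the evident inclusion $Z(G)^0 \subseteq R(G)$ (the center is abelian and normal, so its identity component sits inside the radical), this yields $R(G) = Z(G)^0$, a central torus.

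I would then produce the product decomposition and the semisimplicity of the derived group. The quotient $G/R(G)$ has trivial radical, hence is semisimple and therefore perfect, so the image of $G^{\der}$ in $G/R(G)$ is everything; this gives $G = R(G)\cdot G^{\der} = Z(G)^0 \cdot G^{\der}$. For semisimplicity of $G^{\der}$, note that $R(G^{\der})$ is characteristic in the normal subgroup $G^{\der}$, hence is connected, solvable and normal in $G$, so $R(G^{\der}) \subseteq R(G) = Z(G)^0$, whence $R(G^{\der}) \subseteq Z(G)^0 \cap G^{\der}$. This intersection consists of elements central in $G^{\der}$, so it is finite once one knows the center of a semisimple group is finite; a finite group containing the connected group $R(G^{\der})$ forces $R(G^{\der}) = \{e\}$, so $G^{\der}$ is semisimple. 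Finally, $Z(G)^0$ is central and hence commutes with $G^{\der}$, both factors are normal, they generate $G$, and their intersection is finite: this is precisely the assertion that $G$ is the almost direct product of the torus $Z(G)^0$ and the semisimple group $G^{\der}$. For the converse, suppose $G = T\cdot H$ with $T$ a torus and $H$ semisimple, both normal with finite intersection. Under the projection $\pi\colon G \to G/T$ the image $\pi(R_u(G))$ is a connected normal unipotent subgroup of $G/T \cong H/(H\cap T)$, which is semisimple and hence has trivial unipotent radical; thus $R_u(G)\subseteq \ker\pi = T$, and since a torus has no nontrivial unipotent element, $R_u(G) = \{e\}$, so $G$ is reductive.

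The hard part will be the two structural inputs in the forward direction: the rigidity of tori, giving that a normal torus in a connected group is central, and the finiteness of the center of a semisimple group, from which both the finiteness of $Z(G)^0 \cap G^{\der}$ and the semisimplicity of $G^{\der}$ flow. Everything else is bookkeeping with the structure theorem for connected solvable groups and the fact that semisimple groups are perfect.
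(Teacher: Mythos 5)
The paper itself does not prove this proposition---it cites Satake and Borel---so the only question is whether your argument stands on its own. It almost does, but one step is genuinely circular. Everything in the forward direction up to and including $R(G)=Z(G)^0$ and $G=Z(G)^0\cdot G^{\der}$ is fine: the radical is a torus because its unipotent part is a connected normal unipotent subgroup of $G$ and hence lies in $R_u(G)=\{e\}$; rigidity of tori makes the normal torus $R(G)$ central; and perfectness of the semisimple quotient $G/R(G)$ gives the product decomposition. The problem is the semisimplicity of $G^{\der}$. You place $R(G^{\der})$ inside $Z(G)^0\cap G^{\der}\subseteq Z(G^{\der})$ and then declare this finite ``once one knows the center of a semisimple group is finite''---but the only semisimple group to which that fact could be applied here is $G^{\der}$ itself, whose semisimplicity is precisely what is being established. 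Your closing sentence makes the circle explicit: the finiteness of $Z(G)^0\cap G^{\der}$ and the semisimplicity of $G^{\der}$ are each derived from the other, and neither has an independent proof in your write-up.

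The gap is repairable by showing directly that the subtorus $S:=R(G^{\der})$ of the central torus $Z(G)^0$ is trivial; once $G^{\der}$ is known to be semisimple, the finiteness of $Z(G)^0\cap G^{\der}\subseteq Z(G^{\der})$ (hence the ``almost direct'' part) follows exactly as you say. Fix a faithful representation $G\hookrightarrow \GL(V)$ and decompose $V=\bigoplus_{\chi}V_{\chi}$ into weight spaces for $S$. Since $S$ is central, each $g\in G$ preserves every $V_{\chi}$: for $s\in S$ and $v\in V_{\chi}$ one has $s(gv)=g(sv)=\chi(s)\,gv$. Hence $G^{\der}$ lands in $\prod_{\chi}\SL(V_{\chi})$, while $S\subseteq G^{\der}$ acts on $V_{\chi}$ by the scalar $\chi$, forcing $\chi(s)^{\dim V_{\chi}}=1$ for all $s\in S$. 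The character lattice of a torus is torsion free, so every $\chi$ occurring is trivial, and faithfulness of $V$ gives $S=\{e\}$. (In characteristic zero one can instead invoke complete reducibility, but some such input beyond what you have written is indispensable here.) The converse direction as you give it is correct.
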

\begin{proof}
(see \cite{Sat}, Chapter {\bf I}. $\S3$ for the first statement and \cite{Bor} {\bf IV}. 14.2 
for the second statement)
\end{proof}

\begin{remark}

It is a well-known fact that the Lie algebras of an $\R$-algebraic group $G$ and the Lie group
$G(\R)$ coincide. Moreover $G$ is semisimple, if and only if its Lie algebra $\mathfrak{g}$ is a
direct sum of simple Lie algebras.
\end{remark}

\begin{remark}
\begin{enumerate}
\item Let $G$ be a reductive $\Q$-algebraic group with largest
commutative quotient $T$. One has the obvious exact sequences:
$$1 \to G^{\der} \to G \to T \to 1$$
$$1 \to Z(G) \to G \to G^{\ad} \to 1 $$
$$1 \to Z(G^{\der}) \to Z(G) \to T \to 1$$
\item The exact sequences induce a natural isogeny $G^{\der} \to  G^{\ad}$ with kernel
$Z(G^{\der})$ (see \cite{De2}, 1.1.)
\end{enumerate}
\end{remark}

\begin{lemma}
If $G$ is a semisimple connected Lie group with trivial center, then it is isomorphic to a direct
product of simple adjoint groups.
\end{lemma}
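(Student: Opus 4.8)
The plan is to combine the triviality of the center, which lets us replace $G$ by its adjoint image, with the decomposition of a semisimple Lie algebra into simple ideals recorded in the remark preceding the statement.

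First I would pass to the adjoint form. Since the kernel of the adjoint representation $\ad\colon G \to \GL(\fg)$ is exactly $Z(G)=\{e\}$, the map is injective, and by the description of $G^{\ad}$ its image is $G^{\ad}$; hence $G \cong G^{\ad}=\ad(G)$, a connected subgroup of $\mathrm{Aut}(\fg)\subset\GL(\fg)$ whose Lie algebra is $\ad(\fg)\cong\fg$ (the isomorphism holding because $\fg$ is semisimple, so $Z(\fg)=\{0\}$). It therefore suffices to exhibit $\ad(G)$ as a direct product of simple adjoint groups.

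Next I would decompose $\fg=\fg_1\oplus\cdots\oplus\fg_k$ into simple ideals, using the quoted fact that a semisimple Lie algebra is a direct sum of simple ones. Every automorphism of $\fg$ permutes the finite set $\{\fg_1,\dots,\fg_k\}$ of minimal ideals, so the connected group $\ad(G)$ must fix each $\fg_i$. Restriction to $\fg_i$ gives $\pi_i\colon \ad(G)\to\mathrm{Aut}(\fg_i)$, and I set $G_i:=\pi_i(\ad(G))$. Because $[\fg_i,\fg_j]=0$ for $i\neq j$, the factor $\fg_j$ acts trivially on $\fg_i$; hence $\mathrm{Lie}(\pi_i)$ annihilates every $\fg_j$ with $j\neq i$ and is faithful on $\fg_i$, so $\mathrm{Lie}(G_i)=\ad_{\fg_i}(\fg_i)\cong\fg_i$ is simple. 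Each $G_i$ is then centerless: a central element $z$ commutes with $\exp(t\,\ad(X))\in G_i$ for all $X\in\fg_i$ and all $t$, whence $z\,\ad(X)\,z^{-1}=\ad(X)$; as $z$ is an automorphism this reads $\ad(zX)=\ad(X)$, forcing $zX=X$ and thus $z=\mathrm{id}$. So each $G_i$ is a simple adjoint group.

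Finally I would assemble the factors through $\Phi=(\pi_1,\dots,\pi_k)\colon \ad(G)\to\prod_{i=1}^{k}G_i$. This map is injective, since an inner automorphism trivial on every $\fg_i$ is trivial on $\fg$, and by the previous paragraph it induces on Lie algebras the identity of $\bigoplus_i\fg_i$, hence an isomorphism. Therefore $\Phi$ is an open immersion; as $\prod_i G_i$ is connected (each $G_i$ is), its open subgroup $\Phi(\ad(G))$ is everything, so $\Phi$ is an isomorphism and $G\cong\prod_{i=1}^{k}G_i$ is a direct product of simple adjoint groups. I expect the main obstacle to be the structural bookkeeping rather than any hard computation: verifying that the connected group $\ad(G)$ preserves each simple ideal (so that the ideals are not permuted), and upgrading the Lie-algebra isomorphism induced by $\Phi$ to an isomorphism of Lie groups via the open-subgroup argument, while confirming that each projection $G_i$ genuinely has trivial center.
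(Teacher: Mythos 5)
Your proof is correct and follows essentially the same route as the paper: identify $G$ with its adjoint group via the trivial center, decompose $\fg$ into simple ideals, and recover $G$ as the product of the corresponding simple adjoint groups. The only difference is one of explicitness — where the paper appeals to the fact that the adjoint group is determined by its Lie algebra, you construct the isomorphism $\Phi$ directly and verify it is an open immersion onto a connected group, which is a welcome filling-in of detail rather than a new idea.
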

\begin{proof}
By the assumptions and \cite{Helga}, {\bf II}. Corollary $5.2$, $G$ coincides with its adjoint
group $G^{\ad} \cong G/Z(G)$. Since the Lie algebra $\mathfrak{g}$ of $G$ is the direct sum of
simple Lie algebras, $\mathfrak{g}$ is the Lie algebra of a certain direct product of simple groups,
too. Without loss of generality one can assume that these simple Lie groups have trivial centers.
Recall that the adjoint group depends only on the Lie algebra. Thus this product of
simple groups is isomorphic to its adjoint, which is the adjoint of
$G$ coinciding with $G$.
\end{proof}

Let $\mathfrak{g}$ be a complex Lie algebra. By
$\R \hookrightarrow \C$, $\mathfrak{g}$ can be considered as a
real Lie algebra $\mathfrak{g}^{\R}$ with complex structure $J$ given by the scalar multiplication
with $i$. A real form of $\mathfrak{g}^{\R}$ is a
subalgebra $\mathfrak{g}_0$ of $\mathfrak{g}^{\R}$ such that
$$\mathfrak{g}^{\R} = \mathfrak{g}_0 \oplus J\mathfrak{g}_0.$$
A real form is called compact, if its (real) adjoint group is a compact Lie group.

Now let $\mathfrak{g}$ be a semisimple real Lie algebra. Any involution
$\iota$ on $\fg$ endows $\mathfrak{g}$ with a decomposition into two
eigenspaces $\mathfrak{t} = \Eig(\iota,1)$ and $\mathfrak{p} = \Eig(\iota,-1)$. The involution $\iota$
is called a Cartan involution, if $\mathfrak{u} := \mathfrak{t} + i \mathfrak{p}$ is a compact
real form of the complexified semisimple Lie algebra $\mathfrak{g}_{\C}$. An involutive algebraic
automorphism $\tilde \iota$ on a connected $\R$-algebraic group $G$ is a Cartan involution on $G$,
if the (real) Lie subgroup of $G(\C)$ corresponding to $\mathfrak{u} := \mathfrak{t} + i \mathfrak{p}$ is
compact, where again $\mathfrak{t} = \Eig(\tilde \iota,1), \mathfrak{p} = \Eig(\tilde \iota,-1) \subset
Lie(G)$.

\begin{proposition} \label{dxi}
A connected $\R$-algebraic group is reductive, if and only if it has a Cartan involution. Any two
Cartan involutions are conjugate by an inner automorphism.
\end{proposition}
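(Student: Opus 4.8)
The plan is to prove the equivalence by treating its two implications separately, after which the conjugacy statement follows from a convexity argument; throughout I take the standard Cartan involution on $\GL(V)$ as the model. In the model case, fix a basis of $V$ and consider on $\GL(V)_{\R}$ the algebraic involution $\theta_0(g) = (g^{t})^{-1}$. Its differential splits $\mathfrak{gl}(V)$ into the skew-symmetric matrices $\mathfrak{t}$ and the symmetric matrices $\mathfrak{p}$, so that $\mathfrak{u} = \mathfrak{t} + i\mathfrak{p}$ is exactly the Lie algebra of the unitary group $\U(V_{\C})$. As $\U(V_{\C})$ is compact, $\theta_0$ is a Cartan involution of $\GL(V)$ in the sense of the definition above, and this computation will be reused verbatim for subgroups.

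For the direction ``reductive $\Rightarrow$ a Cartan involution exists,'' I would choose a faithful representation $G \hookrightarrow \GL(V)$ and look for an inner product on $V_{\R}$ making $G$ self-adjoint, i.e. stable under $g \mapsto g^{t}$. Granting this, the restriction of $\theta_0$ to $G$ is an algebraic involution whose associated real form $\mathfrak{u} = \mathfrak{t} + i\mathfrak{p}$ sits inside $\mathfrak{u}(V_{\C})$; one checks that the intersection $\mathfrak{g}_{\C} \cap \mathfrak{u}(V_{\C})$ equals $\mathfrak{u}$, so the connected subgroup of $G(\C)$ with Lie algebra $\mathfrak{u}$ is the identity component of the compact group $G(\C) \cap \U(V_{\C})$, hence compact, and $\theta_0|_G$ is a Cartan involution. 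The existence of the self-adjoint embedding is the crux and the main obstacle: this is Mostow's theorem, which I would obtain by using complete reducibility of representations of the reductive group $G$ to manufacture a $G$-invariant configuration of inner products and then conjugating $G$ into self-adjoint position. Here Proposition \ref{prodar}, writing $G$ as an almost direct product of $Z(G)$ and $G^{\der}$, reduces the problem to a central torus and a semisimple factor, on the latter of which an invariant positive definite form exists by averaging over a compact real form.

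For the converse, suppose $\theta$ is a Cartan involution of $G$, so the subgroup of $G(\C)$ with Lie algebra $\mathfrak{u} = \mathfrak{t} + i\mathfrak{p}$ is compact. Then $\mathfrak{u}$ is the Lie algebra of a compact group, hence a compact and in particular reductive real Lie algebra, and its complexification $\mathfrak{g}_{\C} = \mathfrak{u} \oplus i\mathfrak{u}$ is a reductive complex Lie algebra. Since $\mathfrak{g}$ is another real form of the same $\mathfrak{g}_{\C}$, it is reductive as well, and in characteristic zero this forces the unipotent radical $R_u(G)$ to be trivial, so $G$ is reductive.

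Finally, for the conjugacy of two Cartan involutions $\theta_1, \theta_2$, I would run the classical polar-decomposition argument after passing to $G^{\ad}$. Equip $\mathfrak{g}$ with the positive definite form $B_{\theta_1}(X,Y) = -B(X, \theta_1 Y)$ built from the Killing form $B$ of the semisimple part (together with a definite form on the center); relative to this form the automorphism $\phi = \theta_1 \theta_2$ is symmetric and positive definite, so $\phi = \exp(\xi)$ for a unique $\xi$. The relation $\theta_1 \phi \theta_1 = \phi^{-1}$ then yields $\theta_2 = \exp(\tfrac{1}{2}\xi)\,\theta_1\,\exp(-\tfrac{1}{2}\xi)$, exhibiting $\theta_2$ as the conjugate of $\theta_1$ by the inner automorphism $\exp(\tfrac{1}{2}\xi) \in G^{\ad}(\R)$. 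The delicate points are the positivity of $\phi$ for the chosen form and the verification that $\xi$ lies in the relevant eigenspace so that the conjugating element is genuinely inner; both follow from the compactness built into each Cartan involution via the form $B_{\theta_i}$.
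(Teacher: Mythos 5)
The paper does not actually prove this proposition: it simply cites Satake, {\bf I}.~4.3, so your sketch should be measured against the standard argument in the literature rather than against anything in the text. On that score your outline is the classical one and its three pieces (self-adjoint embedding via Mostow's theorem for existence, compactness of the real form for the converse, polar decomposition for conjugacy) are exactly the ingredients of the cited proof; the reduction of the conjugacy statement to the semisimple part via Proposition \ref{prodar} and the Killing-form inner product $B_{\theta_1}$ is also standard and sound, granted the (true but nontrivial) facts you flag about positivity of $\phi=\theta_1\theta_2$ and $\theta_1$-anti-invariance of $\xi$.

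There is, however, one step in your converse direction that would fail as literally stated: from ``$\mathfrak{g}$ is a reductive Lie algebra'' you cannot conclude ``$R_u(G)$ is trivial.'' Reductivity of a Lie algebra only says it is the direct sum of its center and a semisimple ideal, and this does not see unipotence at the group level --- the additive group $\G_a$ has abelian, hence reductive, Lie algebra but is its own unipotent radical. The correct way to finish is to use the compact subgroup itself rather than only its Lie algebra: the subgroup $U\subset G(\C)$ with $Lie(U)=\mathfrak{u}=\mathfrak{t}+i\mathfrak{p}$ is compact by hypothesis, every finite-dimensional representation of a compact group is completely reducible, so the Zariski closure of $U$ in $G_{\C}$ is a reductive algebraic group; since $\mathfrak{u}$ spans $\fg_{\C}$ over $\C$ and $G$ is connected, that Zariski closure is all of $G_{\C}$, whence $G_{\C}$ and therefore $G$ is reductive. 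With that repair your proposal is a faithful reconstruction of the proof the paper delegates to Satake.
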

\begin{proof}
(See \cite{Sat}, ${\bf I}$. 4.3)
\end{proof}

\begin{example} \label{exi}
The group $\Sp_{\R}(V,Q) \cong \Sp_{\R}(2g)$ is reductive. The inner automorphism of
$\Sp_{\R}(2g)$ given by $J_0$ (see $\eqref{einfach}$) is a Cartan involution (see \cite{Helga},
{\bf VIII}. Exercise $B.2$).

Since $\Sp(V,Q)$ is defined for the alternating form
$Q$ given by the matrix $J_0$, one can easily calculate that $J_0$ satisfies that
$Q(J_0v,v) > 0$ for all $v \in V_{\R}$. Hence by Construction $\ref{1.25}$, the complex structure
$J_0$ corresponds to a point of $\mathfrak{h}_g$. Moreover one can
easily see that the Cartan involution of $J_0$ fixes exactly its isotropy subgroup $K$ with
respect to the action of $\Sp(V,Q)(\R)$ on $\mathfrak{h}_g$. Since all points
of $\mathfrak{h}_g$ correspond to complex structures conjugate to $J_0$, the corresponding involutions,
which are conjugate by inner automorphisms, are the Cartan involutions fixing the respective
isotropy subgroups.
\end{example}

\begin{definition}
Let $M$ be a $\sC^{\infty}$ manifold. A Riemannian structure on $M$ is a symmetric tensor field
$Q$ of type $(0,2)$, which yields a positive definite non-degenerate bilinear form on $T_p(M)$ for
all $p \in M$.
\end{definition}

\begin{definition}
Let $M$ be a connected $\sC^{\infty}$ manifold with an almost complex structure $J$. A Riemannian
structure $g$ on $M$ is a Hermitian structure, if $g(J\cdot,J\cdot) = g(\cdot,\cdot)$.
\end{definition}

\begin{definition}
Let $D$ be a connected complex manifold with a Hermitian structure. It is a Hermitian symmetric
space, if each point is an isolated fixed point of an involutive holomorphic isometry on $D$.
Let $\Hol(D,g)$ denote the Lie group of holomorphic isometries.

Moreover let $\Hol(D,g)^+$  be a
non-compact semisimple Lie group endowed with an involution $\iota$, which induces by its
differential a Cartan involution on $Lie(\Hol(D,g))$, and $K_{\iota} \subset \Hol(D,g)^+$ be the
subgroup, on which $\iota$ acts as ${\rm id}$. The Hermitian symmetric space $D$ is a
Hermitian symmetric domain, if the isotropy group $K$ of one point $p \in D$ satisfies
$K_{\iota}^+ \subseteq K\subseteq K_{\iota}$. 
\end{definition}

\begin{definition}
A bounded symmetric domain $D$ is an open, bounded, connected
submanifold $D$ of $\C^N$, which has the property that each $p\in
D$ is an isolated fixed point of an involutive holomorphic
diffeomorphism onto itself.
\end{definition}

\begin{Theorem}
Each bounded symmetric domain $D$ can be equipped with a unique
Hermitian metric (called Bergman metric), which turns $D$ into a Hermitian symmetric domain.
Conversely each Hermitian symmetric domain has a holomorphic diffeomorphism onto a bounded
symmetric domain.
\end{Theorem}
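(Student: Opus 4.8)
The plan is to prove the two implications separately, constructing the Bergman metric for the forward direction and the Harish-Chandra embedding for the converse.

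For the forward direction I would first attach to a bounded domain $D \subset \C^N$ its Bergman kernel. Let $\mathcal{H}$ be the Hilbert space of holomorphic $L^2$-functions on $D$ with respect to Lebesgue measure; since $D$ is bounded it contains all polynomials, and the sub-mean-value inequality for the subharmonic functions $|f|^2$ shows that each evaluation functional $f \mapsto f(z)$ is continuous. Riesz representation then yields a reproducing kernel $K(z,w)$, holomorphic in $z$ and anti-holomorphic in $w$, with $K(z,z) > 0$, and I would define the Bergman metric by $g_{i\bar{j}} = \frac{\partial^2}{\partial z_i\,\partial \bar{z}_j}\log K(z,z)$, checking positive definiteness (classical for bounded domains). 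The decisive property is biholomorphic invariance: for $\phi \in \mathrm{Aut}(D)$ the change-of-variables formula gives the transformation law $K(\phi(z),\phi(w))\det\phi'(z)\overline{\det\phi'(w)} = K(z,w)$, so $\log K(z,z)$ changes by the pluriharmonic term $\log|\det\phi'(z)|^2$, whose mixed derivatives $\partial_i\bar{\partial}_j$ vanish; hence $g$ is $\mathrm{Aut}(D)$-invariant.

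Next I would exploit the symmetry. For each $p\in D$ the given involutive holomorphic diffeomorphism $s_p$ with isolated fixed point $p$ lies in $\mathrm{Aut}(D)$, hence is an isometry of $g$. Because $p$ is an isolated fixed point, the involution $(ds_p)_p$ of $T_pD$ has no nonzero fixed vector, so $(ds_p)_p = -\mathrm{id}$; thus $s_p$ is the geodesic symmetry at $p$ and $(D,g)$ is a Hermitian symmetric space. A standard orbit argument (the compositions $s_q s_p$ translate along geodesics, and the orbit of $p$ is open and closed) yields homogeneity, so $G := \Hol(D,g)^+$ acts transitively. Since $D$ is a bounded domain it contains no complex line, which rules out a Euclidean factor by a Liouville argument, and $D$ is noncompact, which rules out compact-type factors; hence the symmetric space is of noncompact type, $G$ is noncompact semisimple, conjugation by $s_p$ induces the required Cartan involution on $\mathrm{Lie}(G)$, and the isotropy condition $K_\iota^+ \subseteq K \subseteq K_\iota$ holds, so $D$ is a Hermitian symmetric domain. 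For uniqueness I would note that any metric turning $D$ into a Hermitian symmetric domain is $G$-invariant, hence determined by an $\mathrm{Ad}(K)$-invariant inner product on $\mathfrak{p}\cong T_pD$, unique up to a positive scalar on each irreducible factor; the Bergman metric pins down the normalization.

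For the converse let $D$ be a Hermitian symmetric domain, $G=\Hol(D,g)^+$, and $K$ the isotropy group of a base point $o$, so $D\cong G/K$. Writing $\mathfrak{g}=\mathfrak{k}\oplus\mathfrak{p}$ for the Cartan decomposition attached to the Cartan involution, the invariant complex structure $J$ acts on $\mathfrak{p}\cong T_oD$ and commutes with $\mathrm{Ad}(K)$; complexifying gives $\mathfrak{g}_\C=\mathfrak{k}_\C\oplus\mathfrak{p}^{+}\oplus\mathfrak{p}^{-}$ with $\mathfrak{p}^{\pm}$ the $\pm i$-eigenspaces of $J$. I would show $\mathfrak{p}^{\pm}$ are abelian (using $[\mathfrak{p},\mathfrak{p}]\subset\mathfrak{k}$ together with the Kähler integrability condition), set $P^{\pm}=\exp\mathfrak{p}^{\pm}$, and establish that $G$ lies in the open set $P^{+}K_\C P^{-}\subset G_\C$; the resulting map $gK\mapsto\zeta(g)$, where $g=\exp(\zeta(g))\,k\,p^{-}$, embeds $D\cong G/K$ as an open subset of $\mathfrak{p}^{+}\cong\C^N$. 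The main obstacle is proving this image bounded: I would compare $D$ with its compact dual $\check{D}=G_u/K$, realizing both as open subsets of the flag manifold $G_\C/K_\C P^{-}$ sharing the affine chart $\mathfrak{p}^{+}$, so that $\check{D}$ fills the whole chart while an estimate via $\mathfrak{sl}_2$-triples (restricting to a maximal set of strongly orthogonal noncompact roots and reducing to the one-dimensional disc, where $|\zeta|<1$) confines $G/K$ to a bounded region. This is the technically heaviest point, and where I expect to concentrate the real work. Finally, transporting the geodesic symmetries $s_p$ through this biholomorphism, each becomes an involutive holomorphic self-diffeomorphism with isolated fixed point, so the bounded image is a bounded symmetric domain, completing the proof.
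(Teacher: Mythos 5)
Your proposal is correct and follows essentially the same route as the paper, which simply cites Helgason (Theorem VIII.7.1 and Proposition VIII.3.5) for exactly this argument: the Bergman kernel construction with its biholomorphic invariance for the forward direction, and the Harish--Chandra embedding with the polydisc/strongly orthogonal roots estimate for the converse. The only small point to tighten is that noncompactness of $D$ alone does not exclude a compact \emph{factor} of the symmetric space; one should instead observe that a positive-dimensional compact factor would give a compact complex submanifold of a bounded domain in $\C^N$, impossible by the maximum principle.
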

\begin{proof}
The correspondence between Hermitian symmetric domains and bounded symmetric domains is given by
\cite{Helga}, Theorem {\bf VIII}, 7.1. The uniqueness of the Bergman metric follows from the fact
that each holomorphic diffeomorphism between bounded symmetric domains is an isometry with respect
to the Bergman metric (see \cite{Helga}, Proposition ${\bf VIII}$, 3.5.).
\end{proof}

\begin{definition}
A Shimura datum $(G,h)$ is given by a reductive $\Q$-algebraic group $G$ and
a conjugacy class of homomorphisms $h : \BS \to G_{\R}$ of algebraic groups
satisfying:
\begin{enumerate}
\item The inner automorphism of $(\ad \circ h)(i)$ on $G_{\R}^{\ad}$ is a Cartan involution.
\item The adjoint group $G^{\ad}$ does not have any direct $\Q$-factor $H$ such the Cartan
involution of $(1)$ is trivial on $H_{\R}$.
\item The representation $(\ad \circ h)(\BS)$ on $Lie(G)_{\C}$ corresponds to a Hodge structure of
the type $(1,-1) \oplus (0,0) \oplus (-1,1)$.
\end{enumerate}
\end{definition}

\begin{example} \label{obehalb}
The connected algebraic group $\GSp_{\Q}(2g)$ is the almost direct product of its central
torus $\G_{m,\Q}$ and its simple derived group $\Sp_{\Q}(2g)$. Hence it is reductive.
By Construction $\ref{1.25}$ and Example $\ref{exi}$, we have a conjugacy class of complex
structures, which corresponds to a conjugacy class of homomorphisms $h:\BS \to \GSp_{\R}(2g) $
satisfying the condition $(1)$ of a Shimura datum. The adjoint
group $\GSp_{2g}(\Q)^{\ad} = \Sp_{2g}(\Q)^{\ad}$ has only one direct simple factor on
which the Cartan involution above is not trivial. Hence condition $(2)$ of the Shimura datum is
satisfied. Since the center of
$\GSp_{\R}(2g)$ is given by $\G_{m,\R}$ (see \cite{Milne}, page 66), the kernel of the adjoint
representation on $Lie(\GSp_{\R}(2g))$ of any $h(\BS)$ in the conjugacy class is given by
$\G_{m,\R}$. Since $h(a+ib) = aE_{2g}+bJ$, each $g\in \GSp_{2g}(\R)$ commutes with $J$, if and only
if it commutes with each element of $\BS(\R)$. Hence on the complexified
eigenspace $(\mathfrak{p}_0)_{\C}$ with eigenvalue $-1$ with respect to the Cartan involution, $\BS$
acts by the characters $z/\bar z$ and $\bar z/z$.  This corresponds to a Hodge structure of the
type $(1,-1) \oplus (0,0) \oplus (-1,1)$ on $Lie(\GSp_{\R}(2g))$. Hence condition $(3)$ is
satisfied and the conjugacy class of $h_0:\BS \to \GSp_{\R}(2g)$ with $h_0(i) = J_0$ is a Shimura
datum.
\end{example}

\begin{remark}
Let $(G,h)$ be a Shimura datum. Since $G^{\ad} = G/Z(G)$ and $K$ is the centralizer of $h(\BS)$,
one has $G(\R)^+/(K(\R)\cap G(\R)^+) = G^{\ad}(\R)/\ad_{G}(K(\R))$.

The Cartan involution $int(\ad \circ h)(i)$ fixes exactly $\ad_{G}(K)$. By \cite{Sat}, {\bf I},
Corollary 4.5, the subgroups of a connected $\R$-algebraic reductive group on which a Cartan
involution acts as ${\rm id}$ are maximal compact. Hence $\ad_{G}(K(\R))$
is a maximal compact subgroup.
\end{remark}

\begin{remark}
Let $(L,h,Q)$ be a polarized integral Hodge structure of type $(1,0),(0,1)$ with corresponding
complex structure $J\in Sp_{2g}(\R)$, where $Q$ is given by $\eqref{einfach}$. The Cartan involution
corresponding to $J$ leaves $\Hg(L,h)_{\R} \subset \Sp_{\R}(2g)$ stable. Hence by \cite{Sat}
Theorem {\bf I}. $4.2$, the group $\Hg(L,h)_{\R}$ has a Cartan involution and $\Hg(L,h)$ is
reductive.
\end{remark}

Next we need to recall the definition of a variation of Hodge structures ($VHS$):

\begin{definition} \index{variation of Hodge structures}
Let $D$ be a complex manifold and $R$ be a ring such that $\Z\subseteq R \subseteq \R$. A
variation $\sV$ of $R$-Hodge structures of weight $k$ over $D$ is given by a local system
$\sV_{R}$ of $R$-modules of finite rank and a filtration $\sF^{\bullet }$ of
$\sV_{\sO_{D}}$ by holomorphic subbundles such that:
\begin{enumerate}
\item Griffiths transversality condition holds.
\item $(\sV_{R,p},\sF^{\bullet }_p)$ is an $R$-Hodge structure of weight $k$ for all $p \in D$.
\end{enumerate}

The variation $\sV$ of Hodge structures is polarized, if there is
a flat (i.e. locally constant) bilinear form $Q$ on $\sV_{R}$ such that
$(\sV_{R,p},\sF^{\bullet }_p ,Q_p)$ is a polarized $R$-Hodge structure of weight $k$ for all
$p \in D$.
\end{definition}

\begin{Theorem}
Let $h : \BS \to G$ be a Shimura datum, $t \in \N$ and $K$ denote the centralizer of $h(\BS)$. Then each
connected component $D^+$ of $D = G(\R)/K(\R)$ has a unique structure of a Hermitian symmetric domain.
These domains are isomorphic, where the connected component of the group of holomorphic isometries
is given by the quotient of $G^{\ad}(\R)$ by its direct compact factors. Each representation
$\rho:G_{\R} \to \GL_{\R}(t)$ yields a holomorphic variation $(\R^t,\rho \circ h)_{h \in D}$ of
Hodge structures on $D$.
\end{Theorem}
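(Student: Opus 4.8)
The plan is to pass to the adjoint group and feed in the three axioms of a Shimura datum one at a time: axiom $(1)$ produces a maximal compact subgroup and an invariant metric, axiom $(3)$ produces an invariant complex structure, axiom $(2)$ rules out compact factors, and the variation of Hodge structures is then read off directly from $\rho\circ h$. Since $K$ is the centralizer of $h(\BS)$ it contains $Z(G)(\R)$, so the left-translation action of $G(\R)$ on $D=G(\R)/K(\R)$ factors through the image of $G(\R)$ in $G^{\ad}(\R)$. Writing $\mathfrak{g}=Lie(G)$, axiom $(3)$ equips $\mathfrak{g}_{\C}$ with the Hodge decomposition $\mathfrak{g}_{\C}=\mathfrak{g}^{-1,1}\oplus\mathfrak{g}^{0,0}\oplus\mathfrak{g}^{1,-1}$; the centralizer $K$ has Lie algebra $\mathfrak{k}=\mathfrak{g}^{0,0}\cap\mathfrak{g}_{\R}$, and the tangent space at the base point $o$ determined by $h$ is $\mathfrak{p}:=\mathfrak{g}_{\R}/\mathfrak{k}$, which complexifies to $\mathfrak{g}^{-1,1}\oplus\mathfrak{g}^{1,-1}$. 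By axiom $(1)$ the element $(\ad\circ h)(i)$ is a Cartan involution on $G^{\ad}_{\R}$, and, as recalled in the remark that $\ad_G(K(\R))$ is maximal compact, it fixes exactly $\ad_G(K(\R))$; hence each connected component $D^+$ is a Riemannian symmetric space of $G^{\ad}(\R)^+$ carrying the invariant metric supplied by the Cartan involution.

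Next I would use axiom $(3)$ to build the complex structure. Because $K$ centralizes $h(\BS)$, the decomposition of $\mathfrak{g}_{\C}$ is ${\rm Ad}(K)$-stable, and since $\overline{\mathfrak{g}^{-1,1}}=\mathfrak{g}^{1,-1}$, the prescription that $\mathfrak{g}^{-1,1}$ be the holomorphic tangent space at $o$ defines a real endomorphism $J$ of $\mathfrak{p}$ with $J^2=-\id$; translating $J$ by $G^{\ad}(\R)^+$ gives a $G^{\ad}(\R)^+$-invariant almost complex structure on $D^+$. Integrability is the one genuine computation: the only Hodge types present are $(\pm 1,\mp 1)$ and $(0,0)$, so the relation $[\mathfrak{g}^{p,q},\mathfrak{g}^{p',q'}]\subseteq\mathfrak{g}^{p+p',q+q'}$ forces $[\mathfrak{g}^{-1,1},\mathfrak{g}^{-1,1}]\subseteq\mathfrak{g}^{-2,2}=0$, so the $+i$-eigendistribution of $J$ is involutive and, being also invariant, defines an integrable complex structure. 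Both $J$ and the metric are manufactured from the single datum $h$, the metric being the unique invariant Bergman metric of the theorem on bounded symmetric domains, so the Hermitian structure is unique; axiom $(2)$ guarantees that no simple factor of $G^{\ad}_{\R}$ is compact, so $D^+$ is genuinely a Hermitian symmetric domain. Its group of holomorphic isometries has identity component $G^{\ad}(\R)^+$ with the compact factors (which act trivially on $D^+$) divided out, that is, the quotient of $G^{\ad}(\R)$ by its direct compact factors; and since $G(\R)$ acts transitively on $D$ and hence permutes its components transitively, left translation identifies any two of them as Hermitian symmetric domains.

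Finally, for the variation of Hodge structures, every point of $D$ has the form $ghg^{-1}$ with $g\in G(\R)$, so $\rho\circ(ghg^{-1})=\rho(g)\,(\rho\circ h)\,\rho(g)^{-1}$ endows $\R^t$ with a Hodge structure by Proposition \ref{delig}; these are the fibres of the local system $\R^t$ with its flat connection $\nabla$. The compatibility $d\rho(\mathfrak{g}^{r,s})\,(\R^t)^{a,b}\subseteq(\R^t)^{a+r,b+s}$ of the Hodge bigradings then yields both remaining conditions at once: the antiholomorphic directions $\mathfrak{g}^{1,-1}$ send the Hodge filtration $F^p$ into $F^{p+1}\subseteq F^p$, so $F^{\bullet}$ varies holomorphically, while the holomorphic directions $\mathfrak{g}^{-1,1}$ send $F^p$ into $F^{p-1}$, which is exactly Griffiths transversality $\nabla F^p\subseteq F^{p-1}\otimes\Omega^1_D$. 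I expect the conceptual heart to be the correct matching of the three Shimura axioms to the defining data of a Hermitian symmetric domain, while the only genuine computation is the bracket relation $[\mathfrak{g}^{-1,1},\mathfrak{g}^{-1,1}]=0$ underlying both integrability and transversality; the rest is a repackaging of the Cartan-involution and Bergman-metric statements recalled above (compare \cite{De2} and \cite{Milne}).
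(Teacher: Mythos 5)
The paper offers no proof of its own here -- it simply cites Deligne \cite{Dat}, 2.1.1 -- and your sketch is a correct reconstruction of exactly that standard argument: axiom $(1)$ supplying the Cartan involution and invariant metric, axiom $(3)$ supplying the invariant almost complex structure with $[\mathfrak{g}^{-1,1},\mathfrak{g}^{-1,1}]\subseteq\mathfrak{g}^{-2,2}=0$ doing the work for both integrability and Griffiths transversality, and axiom $(2)$ excluding compact factors. So there is nothing in the paper to compare it against, and no gap to report.
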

\begin{proof}
(See \cite{Dat}, 2.1.1.)
\end{proof}

\begin{remark}
The Lie group $\GSp_{2g}(\R)$ has two connected components. One component consists of matrices with
positive determinant and the other consists of matrices with negative determinant. Hence the
corresponding homogeneous space $D$ parametrizing the elements of the conjugacy class has two
connected components. Since $\GSp_{2g}(\R)^+$ is a product of $\Sp_{2g}(\R)$ and $\G_{m}(\R)^+$,
where $\G_{m}(\R)^+$ is contained in the stabilizers of all points, the
corresponding connected homogeneous space may be identified with $\mathfrak{h}_g$ such that the
preceding Theorem endows $\fh_g$ with the structure of a Hermitian symmetric domain. By the
natural representation of $\GSp_{\R}(2g)$ on $\R^{2g}$, $\fh_g$ is endowed with the natural
holomorphic variation of Hodge structures of type $(1,0),(0,1)$.
\end{remark}

\section{The construction of Shimura varieties}
In the preceding section we have seen that a Shimura datum yields a bounded symmetric domain.
This is the first step of the construction of a Shimura variety. For completeness we sketch the
construction of a Shimura variety in this section. But later we will only need to use the
language of Shimura data and their associated bounded symmetric domains.

\begin{definition}
Let $G$ be a $\Q$-algebraic group. An arithmetic subgroup $\Gamma$ of $G(\Q)$ is
a group, which is commensurable with $G(\Z)$.

A subgroup $\Gamma$ of a connected Lie group $H$ is arithmetic, if there is
a $\Q$-algebraic group $G$, an arithmetic subgroup $\Gamma_0$ of $G(\Q)$ and a surjective
homomorphism $\eta : G(\R)^+ \to H$ of Lie groups with compact kernel such that
$\eta(\Gamma_0) = \Gamma$.
\end{definition}

The second step of the construction of a Shimura variety is given by the following theorem:

\begin{Theorem}
[of Baily and Borel] Let $D$ be a bounded symmetric domain, and $\Gamma$ be an
arithmetic subgroup of $\Hol(D)^+$. Then the quotient $\Gamma \backslash D$ can be endowed with a
structure of a complex quasi-projective variety. This structure is unique, if $\Gamma$ is
torsion-free.
\end{Theorem}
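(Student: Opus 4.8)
The plan is to deduce the result from the construction of Baily and Borel, whose strategy I now sketch. By the preceding theorem I realize $D$ as a bounded symmetric domain in $\C^N$ carrying its Bergman metric, a $\Hol(D)$-invariant K\"ahler metric, and I fix the automorphy factor $j(g,z) = \det\bigl(\partial(gz)/\partial z\bigr)$ for $g \in \Hol(D)^{+}$, $z \in D$, which satisfies the cocycle relation $j(g_1 g_2, z) = j(g_1, g_2 z)\, j(g_2, z)$. An \emph{automorphic form of weight $k$} for $\Gamma$ is a holomorphic function $f$ on $D$ transforming under $\Gamma$ by a $k$-th power of $j$ and bounded near the boundary. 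First I would produce a large supply of such forms by averaging: the Poincar\'e series $\Theta_{\phi}(z) = \sum_{\gamma \in \Gamma} j(\gamma,z)^{k}\,\phi(\gamma z)$ attached to a bounded holomorphic $\phi$ converge absolutely once $k$ is large (using the decay of the Bergman kernel along $\Gamma$-orbits) and transform with the correct weight by the cocycle relation; varying $\phi$, these forms separate the points of $\Gamma \backslash D$ and furnish local coordinates there.

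The second step is to form the graded ring $A = \bigoplus_{k \geq 0} A_{k}$, where $A_{k}$ is the space of weight-$k$ automorphic forms, and to set $(\Gamma\backslash D)^{*} := \mathrm{Proj}(A)$. The heart of the argument is twofold: (i) the ring $A$ is finitely generated over $\C$, so that $(\Gamma\backslash D)^{*}$ is a projective variety; and (ii) the forms in $A$ realize $\Gamma\backslash D$ as a Zariski-open dense subvariety of $(\Gamma\backslash D)^{*}$. To control the complement one equips $D^{*} = D \sqcup \bigsqcup_{F} F$, the union of $D$ with its rational boundary components, with the Satake topology, on which $\Gamma$ acts with compact Hausdorff quotient $(\Gamma\backslash D)^{*}$; reduction theory (Siegel sets, a fundamental domain for $\Gamma$) shows the boundary strata are themselves arithmetic quotients of lower-dimensional domains and that the automorphic forms extend continuously across them. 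Granting (i) and (ii), $\Gamma \backslash D$ is an open subvariety of the projective variety $(\Gamma\backslash D)^{*}$, hence quasi-projective.

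I expect the main obstacle to be exactly part (i) together with the boundary analysis: proving that $A$ is finitely generated and that the minimal compactification $(\Gamma\backslash D)^{*}$ is an honest \emph{projective} and normal variety rather than merely a compact complex-analytic space. This rests on the precise geometry of the rational boundary components and on the growth estimates for automorphic forms along Siegel sets, which constitute the genuinely hard analytic and group-theoretic input; everything else is formal once enough forms with good boundary behavior are in hand.

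For the uniqueness statement, assume $\Gamma$ is torsion-free. Then $\Gamma$ acts freely and properly discontinuously on $D$, so $\Gamma\backslash D$ is a smooth complex manifold and its boundary in $(\Gamma\backslash D)^{*}$ has codimension $\geq 1$. Given two quasi-projective structures compatible with the fixed analytic structure, the identity is a biholomorphism of their analytifications; I would then invoke the rigidity principle that holomorphic maps between arithmetic quotients of bounded symmetric domains are automatically algebraic, so that the identity and its inverse are morphisms of varieties, hence biregular. Equivalently, normality of $(\Gamma\backslash D)^{*}$ together with a GAGA/Chow-type extension across the codimension-$\geq 1$ boundary forces the algebraic structure to be determined by the analytic one. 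Either route yields uniqueness of the quasi-projective structure, completing the proof.
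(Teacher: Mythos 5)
Your proposal is a correct outline of the standard Baily--Borel argument (Poincar\'e series, the graded ring of automorphic forms and its $\mathrm{Proj}$, the Satake compactification via rational boundary components and Siegel sets, and rigidity for the uniqueness when $\Gamma$ is torsion-free), which is exactly the route taken by the sources the paper cites; the paper itself gives no proof but simply refers to \cite{Dat}, 2.1.2 and \cite{BB}. So your sketch reconstructs the intended proof and takes essentially the same approach.
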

\begin{proof}
(see \cite{Dat}, 2.1.2. (or \cite{BB} for the construction of the structure of a complex variety))
\end{proof}

Next one needs the ring of finite ad\`eles\footnote{One reason for the introduction of ad\`ele
rings is given by the fact that one wants to have canonical models of Shimura varieties over
number fields in number theory. We do not use this here, but we write it down for completeness},
which is given by
$$\A^f = \Q \otimes_{\Z} \prod\limits_p \Z_p,$$
where $p$ runs over all prime numbers. Hence $\A^f$ is the subring
of $\prod \Q_p$ consisting of the $(a_p)$ such that $a_p \in \Z_p$
for almost all $a_p$. Now let $(G,h)$ be a Shimura datum, which gives the bounded symmetric domain
$D^+$ by a connected component of the conjugacy class $D$ of $h$ and
$K$ be a compact open subgroup of $G(\A^f)$.

\begin{definition}
Let $G$ be a $\Q$-algebraic group. A principal congruence subgroup of $G(\Q)$ is 
$$\Gamma(n) := \{g \in G(\Z) | g \equiv E_g \mbox{ mod } n\}$$
for some $n \in \N$. A congruence subgroup of $G(\Q)$ is a
subgroup $\Gamma$ containing $\Gamma(n)$ such that
$[\Gamma:\Gamma(n)] < \infty$ for some $n \in \N$.
\end{definition}

\begin{lemma}
Let $K$ be a compact open subgroup of $G(\A^f)$. Then $\Gamma := K\cap G(\Q)$ is a congruence
subgroup of $G(\Q)$.
\end{lemma}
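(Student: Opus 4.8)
The plan is to compare $K$ with a standard compact open subgroup coming from an integral model of $G$, exploiting the fact that any two compact open subgroups of $G(\A^f)$ are commensurable. First I would fix a faithful $\Q$-representation $G \hookrightarrow \GL_{N,\Q}$; for each prime $p$ this gives a compact open subgroup $G(\Z_p) := G(\Q_p) \cap \GL_N(\Z_p)$ of $G(\Q_p)$, and the product $G(\widehat{\Z}) := \prod_p G(\Z_p)$ is a compact open subgroup of $G(\A^f)$ with $G(\widehat{\Z}) \cap G(\Q) = G(\Z)$. The principal congruence subgroups $K(n) := \ker\big(G(\widehat{\Z}) \to G(\Z/n\Z)\big)$ are then open of finite index in $G(\widehat{\Z})$ and form a fundamental system of open neighborhoods of the identity (this reduces, factor by factor, to the well-known statement for $\GL_N(\Z_p)$). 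By construction $K(n)\cap G(\Q) = \Gamma(n)$ in the notation of the definition above.

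Next, since both $K$ and $G(\widehat{\Z})$ are compact open, so is $K \cap G(\widehat{\Z})$; as it is an open neighborhood of the identity, the fundamental system property yields an $n \in \N$ with $K(n) \subseteq K \cap G(\widehat{\Z}) \subseteq K$. Intersecting with $G(\Q)$ gives $\Gamma(n) = K(n)\cap G(\Q) \subseteq K \cap G(\Q) = \Gamma$, which establishes the first half of the definition of a congruence subgroup.

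It remains to prove $[\Gamma : \Gamma(n)] < \infty$, and this is the main obstacle, because $\Gamma$ need not be contained in $G(\widehat{\Z})$, so one cannot simply reduce $\Gamma$ modulo $n$. I would factor the index through $\Gamma' := \Gamma \cap G(\widehat{\Z})$. On one hand $\Gamma' \subseteq G(\widehat{\Z}) \cap G(\Q) = G(\Z)$, and the reduction map $G(\Z) \to G(\Z/n\Z)$ has kernel $\Gamma(n) \subseteq \Gamma'$, so its restriction to $\Gamma'$ shows $[\Gamma' : \Gamma(n)] \leq |G(\Z/n\Z)| < \infty$. On the other hand, since $K$ is compact and $K \cap G(\widehat{\Z})$ is an open subgroup, there are only finitely many cosets of $K \cap G(\widehat{\Z})$ in $K$; assigning to each $\gamma \in \Gamma \subseteq K$ its coset $\gamma\,(K\cap G(\widehat{\Z}))$ and using that $\Gamma$ is a group, one checks that elements in distinct cosets of $\Gamma' = \Gamma \cap (K\cap G(\widehat{\Z}))$ map to distinct cosets, whence $[\Gamma : \Gamma'] \leq [K : K\cap G(\widehat{\Z})] < \infty$. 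Multiplying the two finite indices gives $[\Gamma : \Gamma(n)] < \infty$, so $\Gamma$ is a congruence subgroup. The only point needing care is the existence and basic properties of the integral model $G(\widehat{\Z})$ (well-definedness at the finitely many ``bad'' primes and compactness of the product), which I would treat as standard.
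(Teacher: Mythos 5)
Your proof is correct; the paper gives no argument for this lemma, only a citation to Milne's Proposition 4.1, and your argument is essentially the standard one found there (comparison of $K$ with $G(\widehat{\Z})$, openness of $K\cap G(\widehat{\Z})$ forcing $K(n)\subseteq K$, and the two-step index bound through $\Gamma'=\Gamma\cap G(\widehat{\Z})$). The only cosmetic point is that ``$G(\Z/n\Z)$'' is not literally defined without an integral model of $G$; replacing it by the image of $G(\widehat{\Z})$ in $\GL_N(\Z/n\Z)$ makes the bound $[\Gamma':\Gamma(n)]\leq |\GL_N(\Z/n\Z)|$ precise without changing the argument.
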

\begin{proof}
(see \cite{Milne}, Proposition 4.1)
\end{proof}

The Shimura variety ${\rm Sh}_{K}(G,h)$ \index{Shimura variety} is given by the double quotient
$${\rm Sh}_{K}(G,h) := G(\Q)\backslash D \times G(\A^f)/K
:= G(\Q)\backslash (D \times (G(\A^f)/K)).$$

\begin{proposition}
Let $K$ be a compact open subgroup of $G(\A^f)$, $C := G(\Q)\backslash G(\A^f)/K,$ and
$\Gamma_{[g]} = gKg^{-1}\cap G(\Q)^+$ for some $[g] \in C$. Then one has
$${\rm Sh}_K(G,h)= \bigsqcup\limits_{[g] \in C} \Gamma_{[g]}\backslash D^+.$$
\end{proposition}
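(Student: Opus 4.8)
The plan is to unwind the definition of the Shimura variety as a double quotient and decompose the piece $G(\A^f)/K$ into $G(\Q)^+$-orbits, which are exactly indexed by the set $C = G(\Q)\backslash G(\A^f)/K$. Concretely, I would start from
$$
{\rm Sh}_K(G,h) = G(\Q)\backslash\bigl(D \times (G(\A^f)/K)\bigr),
$$
choose a set of representatives $g \in G(\A^f)$ for the finitely many classes $[g] \in C$, and observe that $D \times G(\A^f)/K$ is the disjoint union over these representatives of the sets $D \times G(\Q)^+ g K / K$. The key input here is a strong approximation / finiteness statement guaranteeing that $C$ is finite and that $G(\A^f) = \bigsqcup_{[g]} G(\Q)^+ g K$ (up to the subtlety of working with $G(\Q)^+$ rather than all of $G(\Q)$); this is where one uses that $G$ is reductive, and the decomposition into the plus-part of $G(\Q)$.

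Next I would compute the quotient of each orbit by the residual $G(\Q)$-action. The point is that, after fixing the representative $g$, the stabilizer of the component $D^+ \times [g]$ (where $D^+$ is the chosen connected component of $D$) inside $G(\Q)$ is precisely $\Gamma_{[g]} = gKg^{-1} \cap G(\Q)^+$: an element $\gamma \in G(\Q)$ preserves the class $[g]$ in $G(\A^f)/K$ if and only if $\gamma g K = gK$, i.e. $g^{-1}\gamma g \in K$, i.e. $\gamma \in gKg^{-1}$, and it preserves the connected component $D^+$ exactly when $\gamma \in G(\Q)^+$. Hence the action on $D^+ \times \{[g]\}$ descends to the action of $\Gamma_{[g]}$ on $D^+$, giving the summand $\Gamma_{[g]}\backslash D^+$.

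I would then assemble these pieces: since every point of $D \times G(\A^f)/K$ lies, modulo $G(\Q)$, in a unique orbit $D^+ \times [g]K$, and since $G(\Q)$ acts on the set of connected components of $D$ transitively through $\pi_0$, one can arrange representatives so that each double-coset contributes exactly one copy of $D^+$ modulo $\Gamma_{[g]}$. Summing over $[g] \in C$ yields
$$
{\rm Sh}_K(G,h) = \bigsqcup_{[g] \in C} \Gamma_{[g]}\backslash D^+,
$$
which is the claim. By the lemma proved just above, each $\Gamma_{[g]}$ is a congruence subgroup, hence arithmetic, so each summand is a well-defined quotient of the Hermitian symmetric domain $D^+$.

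The main obstacle I anticipate is the careful bookkeeping of connected components, that is, reconciling $G(\Q)$ with $G(\Q)^+$ and $D$ with $D^+$. One must check that the crossing between the two connected components of $D$ (as in the $\GSp$ example, where $D$ has two components) is absorbed correctly by passing to $G(\Q)^+$, so that after choosing representatives one genuinely gets a single copy of $D^+$ per double coset rather than overcounting or undercounting components. Handling this requires the finiteness of $C$ and the fact that $G(\Q)^+$ surjects appropriately onto the component structure; once that is pinned down, the rest is the formal manipulation of double cosets sketched above.
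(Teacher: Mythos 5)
Your argument is the standard double-coset unwinding that underlies the cited result (the paper itself gives no proof here, only the reference to \cite{Milne}, Lemma 5.13), and the skeleton is right: decompose $G(\A^f)/K$ into orbits, compute the stabilizer of $D^+\times\{gK\}$, and observe that an element of $G(\Q)$ fixing the coset $gK$ and carrying a point of $D^+$ back into $D^+$ must lie in $gKg^{-1}\cap G(\Q)^+$. That stabilizer computation is correct as you state it.

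The one place where your write-up stops short of a proof is exactly the step you flag as the ``main obstacle,'' and it deserves to be done rather than deferred, because it is not purely formal. Two things are needed. First, to replace $G(\Q)\backslash\bigl(D\times G(\A^f)/K\bigr)$ by a quotient of $D^+\times G(\A^f)/K$ you must know that every $G(\Q)$-orbit meets $D^+\times G(\A^f)/K$, i.e.\ that $G(\Q)$ acts transitively on $\pi_0(D)$; this follows from the Real Approximation Theorem ($G(\Q)$ dense in $G(\R)$, hence surjecting onto $\pi_0(G(\R))$, which acts transitively on $\pi_0(D)$) --- the same input the paper uses in the proof of Lemma \ref{tija} --- but you assert the transitivity without tying it to this. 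Second, once you restrict to $D^+$, the residual identifications on $G(\A^f)/K$ are made by $G(\Q)^+$, not by $G(\Q)$, so the index set that naturally appears is $G(\Q)^+\backslash G(\A^f)/K$; this is in fact how the cited Lemma 5.13 of \cite{Milne} is stated (with $G(\Q)_+$). Identifying that set with the paper's $C=G(\Q)\backslash G(\A^f)/K$ is an additional claim, not a bookkeeping convention: the natural surjection $G(\Q)^+\backslash G(\A^f)/K\to G(\Q)\backslash G(\A^f)/K$ need not be injective for a general reductive $G$, and your sketch would silently merge double cosets if it fails to be. So either prove that identification in the situation at hand or index the disjoint union by $G(\Q)^+\backslash G(\A^f)/K$; as written, the decomposition you derive is the one over $G(\Q)^+$-cosets, and the equality with the stated formula is left hanging.
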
 \begin{proof}
(see \cite{Milne}, Lemma 5.13)
\end{proof}

Hence the preceding proposition and the Theorem of Baily and Borel endow ${\rm Sh}_{K}(G,h)$
with the structure of an
algebraic variety. By \cite{Milne}, Proposition 3.2, the surjection
$G \to G^{\ad}$ maps a congruence subgroup of $G$
onto an arithmetic subgroup of $G^{\ad}$. Now we consider compact open subgroups with the
property that the resulting arithmetic subgroups on $G^{\ad}(\R) = \Hol(D^+,g) = \Hol(D^+)$ are
torsion-free. Recall that the structure
of a complex quasi-projective variety on the quotient of a bounded symmetric domain by a
torsion-free arithmetic group is unique. If $K' \subset K$, we have a natural morphism
\begin{equation} \label{morry}
{\rm Sh}_{K'}(G,h) \to {\rm Sh}_{K}(G,h). 
\end{equation}
By the projective limit running over all compact open $K \subset G(\A^f)$ proving a torsion-free
arithmetic group on $G^{ad}(\R)$, which is given via $(\ref{morry})$, we obtain the Shimura
variety\footnote{Some authors denote only ${\rm Sh}(G,h)$ as Shimura variety.}
$${\rm Sh}(G,h) = \lim\limits_{\longleftarrow }  {\rm Sh}_{K}(G,h).$$

\section{Shimura varieties of Hodge type}
Now we know how to construct a Shimura variety. Hence next we construct the Shimura varieties
resp., Shimura data, which we will need.

\begin{definition} \index{Shimura variety!of Hodge type}
A Shimura datum $(G,h)$ is of Hodge type, if there is an embedding
$\rho :G \hookrightarrow \GSp_{2g, \Q}$ such that one has the Shimura datum of Example
$\ref{obehalb}$ by
$$\BS \stackrel{h}{\hookrightarrow } G_{\R} \stackrel{\rho_{\R}}{\hookrightarrow} \GSp_{2g, \R}.$$
A Shimura variety $SH$ is of Hodge type, if it is obtained by a Shimura datum $(G,h)$ of Hodge
type. 
\end{definition}

From now on we will write $K$, if mean the algebraic subgroup of $G_{\R}$ given by the
centralizer of $h(\BS)$ resp., $h(S^1)$. Moreover for simplicity we will write $K$ instead of
$K(\R)$ by an abuse of notation. In the respective situation the respective meaning will be clear.
 
\begin{construction}
Let $(V,h,Q)$ be a polarized $\Q$-Hodge structure of type $(1,0),(0,1)$.
The conjugacy class of the representation $h:\BS \to \GSp_{2g, \R}$
is the Shimura datum of Example $\ref{obehalb}$. Hence the adjoint representation of $\BS$
on $Lie(\MT(V,h))_{\C} \subset Lie(\GSp(V,E))_{\C}$ induces a Hodge structure of the
same type (or of the type $(0,0)$). Moreover the same arguments imply that the inner automorphism
corresponding to $(\ad \circ h)(i)$ is a Cartan involution on $\MT_{\R}^{\ad}(V,h)$. Hence
$\MT(V,h)$ is reductive. Thus it remains to show that $\MT(V,h)^{\ad}$ does not
have any non-trivial direct $\Q$-factor $H$ on which the Cartan involution is trivial:

Let $H$ be a simple direct $\Q$-factor of $\MT(V,h)^{\ad}$ with trivial Cartan
involution. We have a surjection
$$s:\MT(V,h) \stackrel{\ad}{\longrightarrow}  \MT(V,h)^{\ad} \stackrel{pr_H}{\longrightarrow} H,$$
which is obviously a homomorphism of $\Q$-algebraic groups. Hence the kernel
$\tilde K$ of $s$ is a $\Q$-algebraic group. The complex structure $J$, which satisfies that
$\ad(J)$ is the Cartan involution, satisfies that all elements of the adjoint group $H_{\R}$
commute with $\ad(J)$. Hence $J$ is
contained in $\tilde K_{\R}$. Thus $Lie(H)_{\C}$ is contained in the Lie sub-algebra of
$Lie_{\C}(\MT(V,h))$ on which $\BS$ acts by the character 1. Hence one has
$h(\BS) \subset \tilde K_{\R}$, which implies $\tilde K = \MT(V,h)$ resp., $H = \{e\}$.

Hence we obtain a Shimura datum $h:\BS \to MT(V,h)_{\R}$ of Hodge type.
\end{construction}

\begin{lemma} \label{idhggr}
$$\Hg(V,h) = (\MT(V,h) \cap \SL(V))^0$$
\end{lemma}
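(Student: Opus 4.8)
The plan is to deduce the statement from the structural identity $\MT(V,h)=Z\cdot\Hg(V,h)$, where $Z\cong\G_m$ is the central torus $\G_m\cdot\id\subset\GL(V)$ of scalar matrices, and then to cut this group down by the determinant. Throughout set $N:=\dim_\Q V$ and recall that here the weight is $k=1$ (type $(1,0),(0,1)$), in particular $k\neq 0$.

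First I would record the two easy inclusions. Since $S^1\subseteq\BS$, any $\Q$-subgroup of $\GL(V)$ whose real points contain $h(\BS)$ also contains $h(S^1)$, so minimality gives $\Hg(V,h)\subseteq\MT(V,h)$. For the determinant, note that for $z\in S^1(\R)$ one has $\bar z=z^{-1}$, so $h(z)$ acts on $V^{p,q}$ by $z^{p-q}$; as $\overline{V^{p,q}}=V^{q,p}$ forces $\dim V^{p,q}=\dim V^{q,p}$, the contributions of $(p,q)$ and $(q,p)$ cancel and $\det h(z)=1$. Thus $h(S^1)\subseteq\SL(V_\R)$, whence $\Hg(V,h)\subseteq\SL(V)$ by minimality. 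Both $\Hg$ and $\MT$ are connected, because the smallest $\Q$-group whose real points contain the image of the connected group $S^1$ (resp. $\BS$) must contain that image in its identity component. Consequently $\Hg(V,h)\subseteq(\MT(V,h)\cap\SL(V))^0$.

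The heart of the proof is the identity $\MT(V,h)=Z\cdot\Hg(V,h)$, and this is the step I expect to be the main obstacle. Here I would use the decomposition $\BS=\G_{m,\R}\cdot S^1$ recorded earlier together with the fact that the weight homomorphism $h\circ w\colon\G_{m,\R}\to\GL(V_\R)$ sends $r$ to $r^{k}\id$; since $k\neq 0$, its image is an infinite subset of the one-dimensional torus $Z$ and hence Zariski dense in $Z$. Therefore $Z\subseteq\MT(V,h)$, and combined with $\Hg\subseteq\MT$ this yields $Z\cdot\Hg\subseteq\MT$. Conversely $Z\cdot\Hg$ is a $\Q$-algebraic subgroup (the image of $Z\times\Hg\to\GL(V)$, $Z$ being central) whose real points contain $h(\G_{m,\R})\cdot h(S^1)=h(\BS)$, so the minimality of $\MT$ forces $\MT\subseteq Z\cdot\Hg$.

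Finally I would intersect with $\SL(V)$. Writing an element of $\MT$ as $s\id\cdot g$ with $s\id\in Z$ and $g\in\Hg\subseteq\SL$, its determinant equals $s^{N}$, independently of the non-unique choice of $s$ and $g$, since $Z\cap\Hg$ consists only of scalar $N$-th roots of unity, which do not change $s^N$. Hence such an element lies in $\SL(V)$ exactly when $s^N=1$, so $\MT(V,h)\cap\SL(V)=(\mu_N\cdot\id)\cdot\Hg(V,h)$. This is a finite extension of the connected group $\Hg(V,h)$, so its identity component is $\Hg(V,h)$ itself. Together with the inclusion of the second paragraph this gives $(\MT(V,h)\cap\SL(V))^0=\Hg(V,h)$, as desired. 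The only subtleties to watch are the Zariski-density claim, which needs $k\neq 0$, and the well-definedness of the determinant computation under the non-unique decomposition $m=s\id\cdot g$.
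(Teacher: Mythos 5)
Your proposal is correct and follows essentially the same route as the paper's proof: both establish $\MT(V,h)=\G_{m}\cdot\Hg(V,h)$ by combining the decomposition $\BS=\G_{m,\R}\cdot S^1$ with the minimality defining $\MT$, and then observe that intersecting with $\SL(V)$ cuts the scalar factor down to $\mu_N\cdot\id$, so that $\MT(V,h)\cap\SL(V)$ is a finite union of cosets of the connected group $\Hg(V,h)$. Your explicit verification that $\det h(z)=1$ on $S^1$ and your well-definedness check for the determinant under the non-unique factorization $s\,\id\cdot g$ are points the paper leaves implicit, but they do not change the argument.
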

\begin{proof}
By the natural multiplication, we have a morphism
$$m: \Hg(V,h) \times \G_{m,\Q} \to \MT(V,h)$$
with finite kernel. The Zariski closure $Z$ of $m(\Hg(V,h) \times \G_{m,\Q})$ in $\MT(V,h)$ is
an algebraic subgroup of $\MT(V,h)$. Moreover one has that $h(\BS) \subset Z_{\R} \subset
\MT_{\R}(V,h)$. Hence $Z = \MT(V,h)$.

Since all homomorphisms $f: G \to G'$ of algebraic groups over algebraically closed fields
satisfy $f(G) = \overline{f(G)}$ (see \cite{Abra}, Satz 2.1.8), we have the equality
$$\Hg_{\bar \Q}(V,h) \cdot \G_{m,\bar \Q} = Z_{\bar \Q} = \MT_{\bar \Q}(V,h).$$
Now let $M \in \MT(V,h)(\bar \Q) \cap \SL(V)(\bar \Q)$. It is given by a product $N \cdot M_1$
with $N \in \G_{m}(\bar \Q)$ and $M_1 \in \Hg(V,h)(\bar \Q)$. Since $\Hg(V,h)(\bar \Q) \subset
\SL(V)(\bar \Q)$, one concludes $N \in \G_{m}(\bar \Q) \cap \SL(V)(\bar \Q) = \mu_n(\bar \Q)$,
where $\dim V = n$. If
and only if $N \in Hg(V,h)(\bar \Q)$, one has $M \in \Hg(V,h)(\bar \Q)$. Hence by the fact that
$\mu_n(\bar \Q)$ is finite, one obtains the statement. 
\end{proof}

\begin{remark}
For a polarized Hodge structure of weight 1 of a curve of genus $g$, we have a natural
embedding $\Hg(V,h) \subset \Sp_{\Q}(2g)$. Since $\mu_{2g}(\bar \Q)$ is not a subgroup of
$Sp_{2g}(\bar \Q)$ for $g > 1$ and for $g = 1$ one has $\mu_{2}\subset h(S^1)$, we obtain the
equality
$$\Hg(V,h) = \MT(V,h) \cap \SL(V)$$ only in the case of a genus one curve. 
\end{remark}

\begin{remark}
Now assume that $(V,h,Q)$ is a polarized $\Q$-Hodge structure of type $(1,0),(0,1)$. Since $\MT(V,h)$
is reductive and $\MT(V,h)^{\der}$ is semisimple in this case, one concludes by Lemma
$\ref{idhggr}$ that $\MT(V,h)^{\der} = \Hg(V,h)^{\der}$. Thus one has that
$\MT(V,h)^{\ad}(\R) = \Hg(V,h)^{\ad}(\R)$. Hence by the preceding construction
$\Hg(V,h)^{\ad}(\R)$ is the identity component of the holomorphic isometry
group of a Hermitian symmetric domain. The isotropy group of a point is given by a maximal
compact subgroup of $\Hg(V,h)^{\ad}(\R)$ fixed by the Cartan involution on $\Hg(V,h)^{\ad}_{\R}$
of the corresponding complex structure $J \in \Hg(V,h)(\R)$. Hence one can consider
$h|_{S^1}: S^1 \to \Hg(V,h)_{\R}$ as Shimura datum, too.
\end{remark}

Now we
construct the holomorphic family of Jacobians over $\Hg(V,h)(\R)/K$ corresponding to the
$VHS$ induced by the embedding $\Hg(V,h) \hookrightarrow \Sp_{\Q}(2g)$, where $(V,h)$ is of type
$(1,0),(0,1)$.\footnote{This construction
and the rest of this section are similar to \cite{Mum2}, $\S 1$ with some technical differences.}

\begin{construction} \label{5.50}
Let $(L,h,Q)$ be a polarized $\Z$-Hodge structure of type $(1,0),(0,1)$ with $V := L_{\Q}$ as
before and $\{v_1, \ldots, v_g, w_1, \ldots, w_g\}$ be a symplectic basis of $L$
with respect to $Q$. For example it may be given on $L := H^1(C,\Z)$, where $C$ is a curve of
genus $g$. One has that
$\Hg(V,h) \subset \Sp(V,Q)$. Let $K \subset \Hg(V,h)(\R)^+$ be the centralizer of $h(S^1(\R))$.
Thus $\Hg(V,h)(\R)^+/K$ is a Hermitian symmetric domain as we have seen.
Consider the linearly independent set $B = \{[w_1], \ldots, [w_g]\} \subset H^{0,1}$, which
generates the real subvector space $W$. Now $iW$ is obviously generated by
$\{[Jw_1], \ldots, [Jw_g]\}$. 
The principal polarization $H$ of the Abelian variety $A = H^{0,1}/L$ is given
by the corresponding alternating form $E = -Q$ as in the proof of Theorem $\ref{ralfsiegel}$.
Since $E$ vanishes on  $W$, the principal polarization $H$ given by $H = E(i.,.)+iE(.,.)$ vanishes
on the complex vector space $W \cap iW$, too. Hence $W \cap iW= 0$. Thus the fact that
$Span_{\R}(v, Jv)$ is mapped to
$Span_{\C}([v])$ implies that $B$ is a $\C$-basis of $H^{0,1}$. Hence the period matrix of the
corresponding Abelian variety may be given by $(Z,E_g)$, where
the columns of $Z$ are given by the $[v_i]$ in their coordinates with respect to $B$.

Thus the
embedding $H^{1,0} \hookrightarrow V_{\C}$ is given by the matrix $(-E_g,Z^t)^t$. Since we have a
holomorphic variation of Hodge structures, this matrix varies holomorphically. Thus the period
matrices of the corresponding Abelian varieties vary holomorphically, too. Hence the corresponding
action of $L$ on $H^{0,1} \times \Hg(V,h)(\R)/K$ is holomorphic and we obtain a holomorphic
family of Abelian varieties over $\Hg(V,h)(\R)/K$.
\end{construction}

Now recall that our main interest is not the theory of Shimura varieties, but families with a
dense set of $CM$ points defined below: 

\begin{definition} \index{$CM$ point}
Let $D$ be a complex manifold and $\sV$ be a holomorphic variation of rational Hodge structures. A
point $p \in D$ is a $CM$ point with respect to $\sV$, if $\sV_p$ has a commutative Hodge group.

Let $\sX \to D$ be a holomorphic family of complex manifolds. A point $p \in D$ is a $CM$ point
with respect to $\sX$, if $\sX_p$ is a $CM$ fiber resp., $\sX_p$ has a complex multiplication.
\end{definition}

Now we give criteria for dense sets of $CM$ points, which imply that the family of Abelian
varieties over $\Hg(V,h)(\R)/K$ of Construction $\ref{5.50}$ has a dense set of $CM$ fibers:

\begin{lemma} \label{tija}
Let $(G,h)$ denote a Shimura datum. If the connected component $G(\R)/K$ contains a $CM$
point with respect to a $VHS$ induced by some closed embedding $G \to \GL(W)$ for some $\Q$-vector
space $W$, then the set of $CM$ points of the same type with respect to the same $VHS$ is dense in
$D$.
\end{lemma}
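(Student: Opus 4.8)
The plan is to exploit the homogeneity of $D = G(\R)/K$, the fact that the property of being a $CM$ point is preserved under the $G(\Q)$-action, and the density of $G(\Q)$ in $G(\R)$. First I would fix the given $CM$ point $x_0 \in G(\R)/K$, writing $x_0 = g_0 K$, so that the Hodge structure on $W$ at $x_0$ is induced by $h_{x_0} := \mathrm{int}(g_0) \circ h$, where $\mathrm{int}(g_0)$ denotes conjugation by $g_0$. Since $G$ is a $\Q$-subgroup of $\GL(W)$ containing the image of $h_{x_0}$, the Hodge group $\Hg(\sV_{x_0})$ is a $\Q$-subgroup of $G$, and the hypothesis that $x_0$ is a $CM$ point says precisely that this Hodge group is a $\Q$-torus $T \subset G$, with $h_{x_0}(S^1) \subset T_{\R}$.

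The key observation is that the entire $G(\Q)$-orbit of $x_0$ consists of $CM$ points of the same type. Indeed, for $q \in G(\Q)$ the point $q \cdot x_0 = (q g_0)K$ carries the Hodge structure induced by $h_{qx_0} = \mathrm{int}(q) \circ h_{x_0}$, so that $h_{qx_0}(S^1) = q\, h_{x_0}(S^1)\, q^{-1} \subset (qTq^{-1})_{\R}$. Because $q$ is $\Q$-rational, conjugation by $q$ is an automorphism of $\GL(W)$ defined over $\Q$; it therefore carries the smallest $\Q$-subgroup whose real points contain $h_{x_0}(S^1)$ isomorphically onto the smallest $\Q$-subgroup whose real points contain $h_{qx_0}(S^1)$. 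Hence $\Hg(\sV_{qx_0}) = qTq^{-1}$, again a $\Q$-torus isomorphic to $T$, and $q \cdot x_0$ is a $CM$ point of the same type.

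It then remains to see that this single orbit is already dense in $D$. Here I would invoke the real approximation theorem: since the group $G$ underlying a Shimura datum is connected reductive, $G(\Q)$ is dense in $G(\R)$ for the real topology. Consequently $G(\Q) g_0$ is dense in $G(\R)$, and applying the continuous open projection $\pi : G(\R) \to G(\R)/K$ shows that $\pi(G(\Q)g_0) = G(\Q) \cdot x_0$ is dense in $D = G(\R)/K$. As every point of this orbit is a $CM$ point of the prescribed type by the previous step, the set of such $CM$ points is dense, which is the assertion.

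The main obstacle is not the density step — which is a formal consequence of real approximation — but the bookkeeping in the second step: one must verify carefully that passing from $x_0$ to $q \cdot x_0$ genuinely conjugates the Hodge group by the $\Q$-rational element $q$, so that commutativity (the torus property), rationality of the torus, and the Hodge type of the resulting variation are all simultaneously preserved. This rests on the two facts that the $G(\Q)$-action on $D$ translates into conjugation of the representations $h$, and that $\Q$-rational conjugation carries $\Q$-tori to $\Q$-tori and commutes with the formation of the minimal $\Q$-algebraic Hodge group.
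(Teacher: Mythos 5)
Your proposal is correct and follows essentially the same route as the paper's proof: both arguments show that the $G(\Q)$-orbit of the given $CM$ point consists of $CM$ points of the same type (because $\Q$-rational conjugation carries the $\Q$-torus Hodge group to another $\Q$-torus), and then conclude by the Real Approximation Theorem together with continuity of the projection $G(\R)\to G(\R)/K$. The only cosmetic difference is that the paper normalizes the $CM$ point to the identity coset by replacing $K$ with $s_0Ks_0^{-1}$, whereas you carry the base point $g_0$ through the conjugation explicitly.
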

\begin{proof}
We have two cases. Assume that $G$ is a $\Q$-algebraic torus. In this case $G(\R)/K$ consists
of one point. The fact that we have a closed embedding $G \hookrightarrow \GL(W)$ implies that the
Hodge group of the Hodge structure over this point is a subtorus of the torus $G$.

In the other case $G$ is not a $\Q$-algebraic torus. By the assumptions, we
have a $CM$ point in $G(\R)/K$ with respect to the $VHS$ induced by some closed embedding
$G \to \GL(W)$. This implies that
$G$ contains a $\Q$-algebraic torus $T$ such that the conjugacy class of
$h:\BS \to G_{\R}$ contains an element, which factors through $T_{\R}$. By our preceding
construction,
the stabilizer of the $CM$ point $[s_0]_K \in G(\R)/K$ is given by $s_0Ks_0^{-1}$. Thus one
can replace $K$ by $s_0Ks_0^{-1}$. In this case the fact that the $VHS$ is unduced by an
embedding $G \hookrightarrow \GL(W)$ implies that the Hodge group of the Hodge structure over
$[e]$ is a subtorus of $T$. Hence $[e]$ is a $CM$ point with respect to this $VHS$, and any
$s \in G(\Q) \subset G(\R)$ has the property that it
is mapped to a $CM$ point, too. By the Real Approximation Theorem, $G(\Q)$ lies dense in the
manifold $G(\R)$ for all connected affine $\Q$-algebraic groups $G$. Since the quotient map is
continuous, the set of $CM$ points in $G(\R)/K$ is dense.
\end{proof}

\begin{Theorem} \label{commul}
Let $(G,h)$ denote a Shimura datum. The set of $CM$ points with respect to the $VHS$ induced
by some closed embedding $G \to \GL(W)$ for some $\Q$-vector space $W$ is dense in $G(\R)/K$.
\end{Theorem}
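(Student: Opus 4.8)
*Let $(G,h)$ denote a Shimura datum. The set of $CM$ points with respect to the $VHS$ induced by some closed embedding $G \to \GL(W)$ for some $\Q$-vector space $W$ is dense in $G(\R)/K$.*

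Let me think about what this theorem is saying and how to prove it.The plan is to reduce the density assertion to the existence of a single $CM$ point and then invoke Lemma \ref{tija}. That lemma already states that as soon as the connected component $G(\R)/K$ contains one $CM$ point with respect to the $VHS$ induced by a closed embedding $G \hookrightarrow \GL(W)$, the whole set of such points is dense; its proof extracts from a single $CM$ point a $\Q$-algebraic torus $T \subset G$ through whose real points an element of the conjugacy class factors, and then spreads $CM$ points around by the Real Approximation Theorem. Hence the entire remaining content of the theorem is to produce one point of $G(\R)/K$ whose Hodge group is commutative, equivalently, one element $h'$ of the conjugacy class of $h$ that factors through the real points of a $\Q$-torus contained in $G$.

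To locate a candidate I would first work purely over $\R$. Since $K$ is by definition the centralizer of $h(\BS)$ (equivalently of $h(S^1)$) in $G_\R$, the image $h(\BS)$ lies in the center of its own centralizer $M := Z_{G_\R}(h(\BS))$. Choosing any maximal $\R$-torus $T$ of the reductive group $M$, one has $h(\BS) \subset Z(M)^\circ \subset T$, and $T$ is automatically a maximal torus of $G_\R$. Thus $h$ already factors through an $\R$-torus $T \subset G_\R$; the only defect is that $T$ need not be defined over $\Q$.

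The decisive step, and the one I expect to be the main obstacle, is to replace $T$ by a maximal torus $T'$ defined over $\Q$ without leaving the $G(\R)$-conjugacy class. Here I would use that the variety of maximal tori of $G$, being a form of $G/N(T)$, is a smooth $\Q$-rational variety, so that its $\Q$-rational points are dense even for the real topology; a maximal $\Q$-torus $T'$ sufficiently close to $T$ lies in the same connected component of the real torus variety and is therefore conjugate to $T$ by some $g \in G(\R)$, say $T' = g T g^{-1}$. Setting $h' := \mathrm{int}(g) \circ h$ then gives an element of the conjugacy class of $h$ factoring through $T'_\R$ with $T'$ defined over $\Q$. Since the Hodge group $\Hg$ at the point $h'$ is, by definition, the smallest $\Q$-algebraic subgroup of $\GL(W)$ whose real extension contains $h'(S^1)$, it is contained in the torus $T'$ and is hence itself commutative; this exhibits the required $CM$ point, and Lemma \ref{tija} immediately yields the density of all $CM$ points in $G(\R)/K$. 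The subtlety to handle carefully is precisely the approximation argument: one must verify that a nearby $\Q$-torus stays in the same $G(\R)$-conjugacy class, which rests on the Real Approximation Theorem for $G$ (already used in the proof of Lemma \ref{tija}) together with the openness of the conjugacy relation among maximal real tori of a fixed type.
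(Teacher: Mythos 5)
Your proposal is correct and follows the paper's overall strategy: both reduce Theorem \ref{commul} to exhibiting a single $CM$ point via Lemma \ref{tija}, and both observe that $h(\BS)$ lies in a maximal $\R$-torus of $G_{\R}$ (you extract it from the centralizer $Z_{G_\R}(h(\BS))$; the paper extracts it from the center of $K$). Where you diverge is in the mechanism for passing from that real torus to a torus defined over $\Q$. The paper fixes a maximal $\Q$-algebraic torus $T$ of $G$, shows $T_{\R}$ is maximal in $G_{\R}$, and then conjugates the real torus $T_1 \supset h(\BS)$ onto $T_{\R}$ by an element $s_0$, citing conjugacy of Cartan subgroups; since over $\R$ maximal tori of a reductive group fall into several conjugacy classes, this step as written is delicate — the cited conjugacy holds over algebraically closed fields, and one must still justify that $s_0$ can be taken in $G(\R)$. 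Your route instead keeps the real torus $T$ containing $h(\BS)$ fixed and approximates it by a nearby maximal torus $T'$ defined over $\Q$, using density of $\Q$-points on the ($\Q$-rational) variety of maximal tori together with openness of the $G(\R)$-conjugation orbit; this is the standard Deligne--Milne argument for density of special points and sidesteps the real-conjugacy issue entirely. The price is that you must carefully justify the two approximation ingredients you flag (density of $\Q$-rational maximal tori for the real topology, and that nearby real maximal tori are $G(\R)$-conjugate, e.g.\ via the submersivity of $g \mapsto gTg^{-1}$ or by realizing $T'$ as the centralizer of a rational regular element close to a regular element of ${\rm Lie}(T)$); what it buys is a cleaner and more robust existence proof for the $CM$ point. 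Both arguments then conclude identically: the Hodge group at the conjugated point is contained in a $\Q$-torus inside $\GL(W)$, hence commutative, and Lemma \ref{tija} spreads such points densely over $G(\R)/K$.
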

\begin{proof}
By the preceding lemma, we have only to show that there exists one $CM$ point on $G(\R)/K$. By
the closed embedding $G \to \GL(W)$, each $\Q$-algebraic torus of $G$ can be identified with a
$\Q$-algebraic torus of $\GL(W)$. Thus the existence of a $CM$ point is
equivalent to the statement that there is a $h: \BS \to G_{\R}$ in this $VHS$, which factors
through a $\Q$-algebraic torus of $G$.

Now let $T$ be a
maximal ($\Q$-algebraic) torus of $G$. The centralizers of the maximal tori (resp., the
Cartan subgroups) of a reductive group are the maximal tori (see
\cite{Bor}, IV. 13.17.). The torus $T_{\R}$ is contained in a maximal torus $T_M$
of $G_{\R}$, which has the property that each point of $T_M$ is contained in the
centralizer of $T_{\R}$ resp., in the centralizer of $T$. Thus the torus $T_{\R}$ is in fact
maximal in $G_{\R}$.

The Cartan subgroups, i.e. the centralizers of the maximal tori, which are the maximal tori in our
 case, are conjugate (see \cite{Bor}, IV. 12.1.).  The stabilizer of the point given by $h$ in
$G_{\R}/K$ is the centralizer $K$ of $h(\BS)$. The center of
$K$, which is a torus contained in a maximal torus $T_1$, contains
obviously $h(\BS)$ resp., we have a maximal torus $T_1$ containing
$h(\BS)$, where $T_1 \subset K$. Thus by the fact that $T_1$ is
conjugate to $T_{\R}$ by some element $s_0$ and our preceding construction, the Hodge group of
$s_0 \circ K \in G(\R)/K$ is a subtorus of $T$. Hence $s_0 \circ K$ is a $CM$ point.
\end{proof}

\chapter{Cyclic covers of the projective line}
\section{Description of a cyclic cover of the projective line}

Let us first repeat some known facts about Galois covers of $\bP^1$.

\begin{definition} \label{ZykUeb}
Let $T_1$, $T_2$, and $S$ be topological spaces resp., complex manifolds resp., algebraic
varieties. The coverings $f_1: T_1 \to S$ and $f_2 : T_2 \to S$, which are morphisms in the
respective category, are called equivalent, if there is an isomorphism $g : T_1 \to T_2$ 
in the respective category such that $f_1 = f_2\circ g$.
\end{definition}

\begin{proposition} \label{loch}
Let $G$ be a finite group, and $S :=\{a_1, \ldots, a_{n}\} \subset
\A^1 \subset \bP^1$. There is a correspondence between the
following objects:
\begin{enumerate}
\item The isomorphism classes of Galois extensions of $\C(\bP^1) = \C(x)$ with Galois group $G$
and branch points contained in $S$.
\item The equivalence classes of (non-ramified) Galois coverings $f : R \to \bP^1 \setminus S$
of topological spaces with deck transformation group isomorphic to
$G$.
\item The normal subgroups in the fundamental group $\pi_1(\bP^1\setminus S)$ with quotient
isomorphic to $G$.
\end{enumerate}
\end{proposition}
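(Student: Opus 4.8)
The plan is to prove the proposition by establishing the two correspondences (2)$\leftrightarrow$(3) and (1)$\leftrightarrow$(2) separately, both of which are instances of classical Galois-type correspondences. The bijection (2)$\leftrightarrow$(3) is pure covering space theory, while (1)$\leftrightarrow$(2) is the comparison between topological and algebraic (Riemann-surface-theoretic) covers, ultimately resting on the GAGA/Riemann existence circle of ideas.

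First I would set up the correspondence (2)$\leftrightarrow$(3). Fix a basepoint $x_0 \in \bP^1 \setminus S$ and write $\pi := \pi_1(\bP^1 \setminus S, x_0)$. By the standard classification of covering spaces of the (locally nice, connected, semilocally simply connected) space $\bP^1 \setminus S$, connected covers up to equivalence correspond to conjugacy classes of subgroups of $\pi$, and the cover is a regular (Galois) cover precisely when the corresponding subgroup $N$ is normal, in which case the deck transformation group is $\pi/N$. Hence a connected Galois cover $f : R \to \bP^1 \setminus S$ with deck group isomorphic to $G$ corresponds exactly to a normal subgroup $N \trianglelefteq \pi$ with $\pi/N \cong G$; the equivalence of coverings in Definition \ref{ZykUeb} matches the notion of ``same subgroup'' here because an isomorphism over the base carries the associated subgroup to itself. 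This is the content of (2)$\leftrightarrow$(3).

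Next I would treat (1)$\leftrightarrow$(2). Given a topological Galois cover $f : R \to \bP^1 \setminus S$, the Riemann existence theorem lets one put a unique complex structure on $R$ making $f$ holomorphic, and then fill in the punctures to obtain a compact Riemann surface $\bar R$ with a finite branched cover $\bar f : \bar R \to \bP^1$ ramified only over points of $S$. By the equivalence of categories between compact Riemann surfaces and smooth projective curves over $\C$, and between finite covers of curves and finite extensions of their function fields, $\bar f$ yields a finite field extension $\C(\bar R)/\C(x)$; the Galois (deck) action of $G$ gives an action of $G$ on $\C(\bar R)$ fixing $\C(x)$, exhibiting the extension as Galois with group $G$ and with branch points inside $S$. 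Conversely, a Galois extension of $\C(x)$ with group $G$ produces a smooth projective curve, hence a compact Riemann surface with a branched cover of $\bP^1$; restricting over $\bP^1 \setminus S$ (after checking the branch locus lies in $S$) gives an unramified topological Galois cover with deck group $G$. The two constructions are mutually inverse up to the respective notions of isomorphism, and one checks they are compatible with the bijection from the previous paragraph.

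The main obstacle I anticipate is not any single deep theorem but the careful bookkeeping needed to see that the three notions of ``sameness'' --- isomorphism of field extensions, equivalence of coverings in the sense of Definition \ref{ZykUeb}, and conjugacy/equality of subgroups of $\pi$ --- are matched up consistently, and in particular that the identifications respect the specified isomorphism type $G$ of the (deck, Galois, quotient) group rather than merely some abstract finite group. One must also be attentive to connectivity: the correspondence in the stated form presupposes connected covers (equivalently, that the relevant extensions are fields rather than products), and to the fact that the branch locus is \emph{contained in} $S$ rather than equal to it, so that points of $S$ at which $\bar f$ happens to be unramified are permitted. I would therefore make the connectedness and branch-locus hypotheses explicit at the outset, and then the remaining verifications are the routine (if lengthy) checks that the evident maps between the three sets are well defined and mutually inverse.
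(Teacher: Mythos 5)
Your proposal is correct and follows essentially the same route the paper takes: the paper's proof is a citation to V\"olklein (Theorem 5.14), and the accompanying Remark \ref{adda} describes exactly your two-step decomposition --- (2)$\leftrightarrow$(3) via path lifting and the classification of covering spaces, and (1)$\leftrightarrow$(2) via compactifying the cover to a compact Riemann surface and passing to function fields. Your additional attention to matching the three notions of equivalence and to the ``branch points contained in $S$'' subtlety is sound and consistent with the standard argument.
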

\begin{proof}
(see \cite{Volk}, Theorem 5.14)
\end{proof}

\begin{remark} \label{adda}
We will need to understand the correspondence of the preceding Proposition. The correspondence
between (1) and (2) is given by the facts that a Galois covering $f : R \to \bP^1 \setminus S$ (of
topological spaces) yields a covering $f : \bar R \to \bP^1$ of compact
Riemann surfaces, and any morphism of compact Riemann surfaces corresponds
to an embedding of their function fields.

The correspondence
between (2) and (3) is given by the path lifting properties of
coverings of Hausdorff spaces. Take $b \in R$. Let $p = f(b)$, and
$\gamma \in \pi_1(\bP^1 \setminus S,p)$, and $f^*(\gamma(0)) = b$.
Then $f^*(\gamma(1)) = g\cdot b$ for some $g \in G \cong
{\rm Deck}(R/(\bP^1\setminus P))$. This induces a homomorphism $\Phi_b:
\pi_1(\bP^1 \setminus S,p) \to G$ and hencefore a kernel of this
homomorphism, which is a normal subgroup.
\end{remark}

\begin{remark} \label{loopy}
Let $f:R \to \bP^1$ be a Galois covering with branch points $a_1,
\ldots, a_{n}$. One can choose  $\gamma_1, \ldots, \gamma_n\in \pi_1(\bP^1\setminus P)$ such that
each $\gamma_k$ is given by a loop running counterclockwise "around" exactly one $a_k$. Hence one
has that
$$\gamma_{n} = \gamma_{1}^{-1} \ldots \gamma_{n-1}^{-1}$$
and we conclude that
$$\Phi_b(\gamma_{n}) = \Phi_b(\gamma_{1})^{-1} \ldots
\Phi_b(\gamma_{n-1})^{-1}.$$
\end{remark}

From now on we consider only irreducible cyclic covers of $\bP^1$. An irreducible cyclic cover
can be given by a prime ideal
$$(y^m - (x-a_1)^{d_1} \cdot \ldots \cdot (x-a_{n})^{d_{n}}) \subset \C[x,y].$$
First this ideal defines only an affine curve in $\A^2$, which has singularities, if there
are some $d_k >1$. But there exists a unique smooth projective curve birationally equivalent to
this affine curve. By the natural projection onto the $x$-axis, one obtains a cyclic cover of the
birationally equivalent projective smooth curve onto $\bP^1$.

\begin{remark} \label{monodisc}
Let us consider the cover given by
$$y^m = (x-a_1)^{d_1} \cdot \ldots \cdot (x-a_{n})^{d_{n}},$$
and fix a $k_0 \in  \{1, \ldots n\}$. By an automorphism of $\bP^1$,
one can put $a_{k_0}$ onto $0$. Let $\mu_{k_0} = \frac{d_{k_0}}{m}\in \Q$,
and $D$ a small disc centered in 0, which does not contain any
other $a_k$ with $k \neq k_0$. Take any point $p \in \partial D$ and
remove the line $[0,p]$. The topological
space $D \setminus [0,p]$ is simply connected. Hence one can define root functions
$z \to z^{\mu_{k_0}}$ on this space.

These functions on $D \setminus [0,p]$ are given by:
$$z^{\mu_{k_0}} = |z|^{\mu_{k_0}}\exp(\frac{2\pi i t d_{k_0}}{m}+2\pi i\frac{\ell}{m}) \  \
(\mbox{with}\  \ \ell = 0, 1, \ldots, m-1 \  \ \mbox{and} \  \ z = |z|\exp(2\pi i t))$$
Since the
cover is given by $y^m = x^{d_{k_0}}$ resp., $y = x^{\mu_{k_0}}$ over a small disc around $0$, we
may lift a closed path around 0 to some path with starting point $(z, z^{\mu_{k_0}} )$ and ending
point $(z,e^{2\pi i \mu_{k_0}}z^{\mu_{k_0}})$.
\end{remark}

\begin{definition}\index{local monodromy datum}
Let $e^{2\pi i \mu_{k_0}}$ and $d_{k_0}$ be given by Remark $\ref{monodisc}$. Then
$e^{2\pi i \mu_{k_0}}$ is the local monodromy datum of $d_{k_0}$.
\end{definition}

\begin{lemma} \label{indexe}
Assume that $d_1, \ldots, d_{n} < m$. Let the (non-singular projective) curve $C$
be given by
$$
y^m = (x-a_1)^{d_1} \cdot \ldots \cdot (x-a_{n})^{d_{n}}.
$$
Then the Galois group $G$ is $\Z/(m)$, and the covering $C \to \bP^1$ is given by the kernel of
the homomorphism given by
$\gamma_k \to d_k \in \Z/(m)$. If and only if $m$ does not divide $\sum\limits_{k=1}^n
d_k$, the point $\infty$ is a branch point and
$$\gamma_{\infty} \to -\sum\limits_{k=1}^n d_k \  \ \mbox{mod} \ \ m.$$
\end{lemma}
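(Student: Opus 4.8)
The statement has three assertions to establish for the curve $C$ defined by $y^m = \prod_{k=1}^n (x-a_k)^{d_k}$ with all $d_k < m$: first, that the Galois group is $\Z/(m)$; second, that the covering corresponds to the homomorphism $\gamma_k \mapsto d_k$; and third, the local behavior at $\infty$. My plan is to route everything through the correspondence of Proposition~\ref{loch} between Galois covers, their function-field extensions, and homomorphisms out of $\pi_1(\bP^1 \setminus S)$, together with the explicit local monodromy computation of Remark~\ref{monodisc}.

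First I would identify the Galois group. The function field of $C$ is $\C(x)[y]/(y^m - f(x))$ with $f(x) = \prod_k (x-a_k)^{d_k}$. Since $\C$ contains all $m$-th roots of unity, this is a Kummer extension, and the substitution $y \mapsto \zeta_m y$ (with $\zeta_m$ a primitive $m$-th root of unity) is a field automorphism fixing $\C(x)$; to see that the extension has full degree $m$ and hence Galois group exactly $\Z/(m)$, I would invoke the hypothesis $d_k < m$ together with the fact that the $a_k$ are distinct, which guarantees that $f(x)$ is not a perfect $e$-th power for any proper divisor $e \mid m$ (any common divisor would have to divide every exponent $d_k$, but the distinct linear factors each appear with a single exponent $d_k < m$, so $y^m - f$ is irreducible over $\C(x)$ by the standard Kummer irreducibility criterion). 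This gives $[\C(C):\C(x)] = m$ with cyclic Galois group.

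Next I would pin down the homomorphism $\Phi_b : \pi_1(\bP^1 \setminus S) \to \Z/(m)$. By Remark~\ref{adda} the cover is classified by the kernel of $\Phi_b$, and by Remark~\ref{monodisc} the lift of the small loop $\gamma_k$ encircling $a_k$ sends the sheet $(z, z^{\mu_k})$ to $(z, e^{2\pi i \mu_k} z^{\mu_k})$, where $\mu_k = d_k/m$. Under the identification of the deck group with $\Z/(m)$ via $\zeta_m^j \leftrightarrow j$, multiplication by $e^{2\pi i d_k/m} = \zeta_m^{d_k}$ is exactly the deck transformation labeled $d_k$. Hence $\Phi_b(\gamma_k) = d_k \in \Z/(m)$, which is the claimed description.

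Finally, the behavior at $\infty$. Here the key tool is Remark~\ref{loopy}, which gives $\gamma_\infty = \gamma_1^{-1}\cdots\gamma_{n-1}^{-1}$, so that $\Phi_b(\gamma_\infty) = -\sum_{k=1}^n \Phi_b(\gamma_k) = -\sum_k d_k \bmod m$ (the full product of all loops is null-homotopic on the sphere, which forces this relation). The point $\infty$ is then a genuine branch point precisely when this image is nontrivial, i.e.\ when $m \nmid \sum_k d_k$: a point is unramified exactly when the corresponding local monodromy is the identity, and the ramification index above $\infty$ equals the order of $\Phi_b(\gamma_\infty)$ in $\Z/(m)$. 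I expect the main obstacle to be the irreducibility claim underlying the first assertion, that is, verifying that $y^m - f(x)$ does not factor through a smaller Kummer extension; this is where the hypothesis $d_k < m$ and the distinctness of the $a_k$ must be used carefully, whereas the monodromy computations at the $a_k$ and at $\infty$ follow directly by feeding Remark~\ref{monodisc} and Remark~\ref{loopy} into the classification of Proposition~\ref{loch}.
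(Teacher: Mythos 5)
Your handling of the monodromy is essentially the paper's own argument: the local computation at each $a_k$ from Remark \ref{monodisc} is fed into the classification of Proposition \ref{loch} and Remark \ref{adda} to get $\Phi_b(\gamma_k)=d_k$, and the statement at $\infty$ is read off from the relation $\gamma_n=\gamma_1^{-1}\cdots\gamma_{n-1}^{-1}$ of Remark \ref{loopy}. That part is correct and matches the paper.

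The genuine gap is in what you yourself flag as the main obstacle, the irreducibility of $y^m-f$. Your claimed deduction --- that because the $a_k$ are distinct and each $d_k<m$, the polynomial $f=\prod_k(x-a_k)^{d_k}$ cannot be a perfect $e$-th power for a proper divisor $e$ of $m$ --- is false. Take $m=4$ and $f=(x-a_1)^2(x-a_2)^2$ with $a_1\neq a_2$: every exponent satisfies $d_k=2<4$, yet $f=g^2$ with $g=(x-a_1)(x-a_2)$, so $y^4-f=(y^2-g)(y^2+g)$ is reducible and the ``cover'' splits into two components of degree $2$. The correct criterion is $\gcd(d_1,\ldots,d_n,m)=1$ (over $\C$ the extra case $f\in-4K^4$ in the Kummer irreducibility criterion is subsumed, since $-4=(1+i)^4$ is a fourth power). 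The paper does not try to prove irreducibility here at all: it is a standing hypothesis of the section --- ``from now on we consider only irreducible cyclic covers'', encoded in the assumption that $(y^m-f)$ is a prime ideal and that $C$ is a curve --- and the proof simply records that, granting this, the degree-$m$ Kummer extension with automorphism $y\mapsto\zeta_m y$ has cyclic Galois group $\Z/(m)$. So you should either invoke the irreducibility hypothesis directly or replace your parenthetical with the gcd condition; as written, your argument would admit reducible equations for which the first assertion of the lemma fails.
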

\begin{proof}
The last statement of the lemma follows by the preceding rest of the
lemma and the Remark $\ref{loopy}$.

The Galois group and $\Z/(m)$ are obviously isomorphic. Let us
remove the ramification points of $C$. Then we obtain a Riemann surface
$R$. Now take a small loop $\gamma_k$ around $p_k$, which starts and ends in $p
\in \bP^1$. Now take a point $b \in R$ with $f(b) = p$.
The definition of $R$ and Remark $\ref{monodisc}$ imply that the
lifting $f^*(\gamma_k)$ of the path $\gamma_k$ starting in $b$ ends
in the point $d_k \cdot b$. Hence the statement follows from Proposition
$\ref{loch}$ resp., Remark $\ref{adda}$.
\end{proof}

Let $d \in \Z$ and $1 < m \in \N$. The residue class of $d$ in $\Z/(m)$ is denoted by
\index{$[d]_m$} $[d]_m$.

\begin{remark} \label{drehen}
Let $G = \Z/(m)$, and $[d]_m \in \Z/(m)^*$. We consider the kernels of
the monodromy representations of the covers
locally given by
$$y^m = (x-a_1)^{d_1} \cdot \ldots \cdot (x-a_{n})^{d_{n}}$$
and
$$y^m = (x-a_1)^{[dd_1]_m} \cdot \ldots \cdot (x-a_{n})^{[dd_{n}]_m}.$$
By the preceding lemma, these kernels coincide. Hence we conclude that both covers are equivalent.
\end{remark}

\section{The local system corresponding to a cyclic cover}
Now let us assume that our cover $\pi: C \to \bP^1$ is given by
$$y^m = (x-a_1)^{d_1} \cdot \ldots \cdot (x-a_{n})^{d_{n}},$$
where $m$ divides $d_1 + \ldots +d_n $ and $\infty$ is not a branch point. Moreover let
$$S := \{a_1, \ldots, a_n\}.$$
First let us consider the construction of a cyclic cover of an arbitrary
algebraic manifold:

\begin{definition} \index{branch index}
Let $X$ be a complex algebraic manifold, $\sL$ an invertible sheaf on $X$ and
$$D = \sum b_kD_k$$
a normal crossing divisor on $X$, where $\sL^m = \sO(D)$ and $0<b_k < m$
for each $k$. Then by
$\sL$ and $D$, one can construct a cyclic cover of degree $m$ onto $X$ (see \cite{EV1}, $\S3$).
The number $b_k$ is called the branch index of $D_k$ with respect to this cyclic cover.
\end{definition}

\begin{example}
In the case of
$$X = \bP^1, \  \ D = \sum\limits_{k=1}^n d_ka_k, \  \ \sL =
\sO_{\bP^1}(\frac{1}{m}\sum\limits_{k=1}^n d_k),$$
the cyclic cover of the preceding definition is given by
$$y^m = (x-a_1)^{d_1} \cdot \ldots \cdot (x-a_{n})^{d_{n}}.$$
\end{example}

Next we describe the local system $\pi_*(\C)|_{\bP^1 \setminus S}$ and its monodromy.

\begin{lemma} \label{monod}
Let $V$ be a $\C$-vector space of dimension $n$, and $X$ be an
arcwise connected and locally simply connected topological space
with $x \in X$. Then the monodromy representation provides a bijection between the set of
isomorphism classes of local systems of stalk $V$ and the set of
representations
$$\pi_1(X,x) \to \GL_n(\C),$$
modulo the action of ${\rm Aut}_{\C}(V)$ by conjugation.
\end{lemma}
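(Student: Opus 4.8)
The monodromy representation provides a bijection between isomorphism classes of local systems of stalk $V$ on $X$ and representations $\pi_1(X,x) \to \GL_n(\C)$, modulo conjugation by $\mathrm{Aut}_{\C}(V)$.

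This is a classical equivalence, so let me sketch how I would organize the proof cleanly.

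The plan is to construct the two maps explicitly and then check they are mutually inverse. First I would recall the definition of the monodromy map. Given a local system $\mathcal{L}$ with stalk $V \cong \C^n$, I would fix the identification $\mathcal{L}_x \cong V$ at the basepoint. For a loop $\gamma$ based at $x$, the local system is locally constant, so parallel transport along $\gamma$ (unique because the cover of sections is unique along a path in a locally simply connected space) yields an automorphism of the stalk $\mathcal{L}_x$. I would verify that this depends only on the homotopy class of $\gamma$ — using the homotopy lifting / continuation property that holds precisely because $X$ is arcwise connected and locally simply connected — and that it is compatible with composition of loops, giving a genuine homomorphism $\rho_{\mathcal{L}}: \pi_1(X,x) \to \GL(V) \cong \GL_n(\C)$. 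Changing the trivialization $\mathcal{L}_x \cong V$ by some $A \in \mathrm{Aut}_{\C}(V)$ conjugates $\rho_{\mathcal{L}}$ by $A$, so the well-defined invariant is the conjugacy class; this shows the map on isomorphism classes lands in representations modulo conjugation.

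Next I would construct the inverse. Given $\rho: \pi_1(X,x) \to \GL_n(\C)$, I would pass to the universal cover $p: \tilde{X} \to X$, on which $\pi_1(X,x)$ acts by deck transformations. Then I would form the associated bundle $\mathcal{L}_\rho := \tilde{X} \times_{\pi_1} V$, i.e.\ the quotient of $\tilde{X} \times V$ by the diagonal action $g \cdot (\tilde{x}, v) = (g\tilde{x}, \rho(g)v)$, and take its sheaf of locally constant sections. Local triviality holds over any evenly covered open set, and since the transition data are the constant matrices $\rho(g)$, this is a local system with stalk $V$. Here I need the existence of a universal cover, which is exactly guaranteed by the hypothesis that $X$ is arcwise connected and locally simply connected (this is the role of that assumption, parallel to its use in Proposition~\ref{loch} and Remark~\ref{adda}).

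Finally I would check the two constructions are mutually inverse, up to the stated equivalences. Starting from $\mathcal{L}$, computing $\rho_{\mathcal{L}}$, and rebuilding $\mathcal{L}_{\rho_{\mathcal{L}}}$ should recover $\mathcal{L}$ up to isomorphism — the isomorphism is built by trivializing $\mathcal{L}$ along paths from $x$, which matches the associated-bundle description by construction. Conversely, reading off the monodromy of $\mathcal{L}_\rho$ reproduces $\rho$ on the nose (with the natural trivialization), hence reproduces its conjugacy class. I expect the main obstacle to be the bookkeeping in the well-definedness of parallel transport: one must verify rigorously that continuation of a section along a path is independent of the path within a homotopy class, which is where arcwise connectedness and local simple connectedness are genuinely used, and that the resulting assignment respects loop concatenation and basepoint-trivialization changes in the expected functorial way. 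Once that invariance is nailed down, the remaining verifications are routine diagram-chasing, and I would refer to a standard source (e.g.\ a text on local systems and the Riemann--Hilbert correspondence) for the details rather than reproduce them in full.
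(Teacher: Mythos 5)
Your sketch is the standard proof of this classical equivalence (parallel transport gives the monodromy representation; the universal cover and the associated bundle $\tilde{X}\times_{\pi_1}V$ give the inverse), and it correctly identifies where the hypotheses on $X$ are used. The paper itself offers no argument here beyond a citation to \cite{Voi}, Remark 15.12, so your outline is entirely consistent with what the paper relies on.
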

\begin{proof}
(see \cite{Voi}, Remark 15.12)
\end{proof}

Since $\GL_1(\C) \cong \C^*$ is commutative, we can conclude:

\begin{corollary}
The monodromy yields a bijection between the set of isomorphism classes of rank
one local systems on $\bP^1 \setminus S$ and the set of
representations
$$\pi_1(\bP^1 \setminus S) \to \GL_1(\C).$$
\end{corollary}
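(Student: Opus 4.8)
The plan is to derive the corollary directly from Lemma~\ref{monod} by specialising to rank one and observing that the ambiguity coming from conjugation collapses. First I would set $n = 1$, $X = \bP^1 \setminus S$ and $V = \C$ in Lemma~\ref{monod}. Before applying it I would check that $X$ satisfies the topological hypotheses of the lemma: since $S$ is a finite set, $\bP^1 \setminus S$ is an open connected subset of the Riemann sphere, hence arcwise connected, and being a complex manifold it is locally homeomorphic to an open disc and therefore locally simply connected. Thus the lemma applies and furnishes a bijection between the isomorphism classes of rank one local systems on $\bP^1 \setminus S$ and the set of representations $\pi_1(\bP^1 \setminus S) \to \GL_1(\C)$, taken modulo the conjugation action of $\mathrm{Aut}_{\C}(V)$.

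The remaining point is that this conjugation action is trivial. I would argue that $\mathrm{Aut}_{\C}(\C) = \GL_1(\C) \cong \C^*$ is an abelian group, so that for every $g \in \C^*$ and every representation $\rho : \pi_1(\bP^1 \setminus S) \to \GL_1(\C)$ one has $g\,\rho(\gamma)\,g^{-1} = \rho(\gamma)$ for all $\gamma$, because $\GL_1(\C)$ is commutative. Hence each orbit of the conjugation action is a single point, the quotient map from representations to conjugacy classes of representations is itself a bijection, and the correspondence supplied by Lemma~\ref{monod} becomes a bijection with the full set of representations $\pi_1(\bP^1 \setminus S) \to \GL_1(\C)$, as claimed.

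There is no genuine obstacle here: the statement is an immediate specialisation of Lemma~\ref{monod}, and the only substantive observation is that passing to $\GL_1$ removes the need to quotient by conjugation precisely because the target group is commutative (as already flagged in the sentence preceding the corollary). If anything deserves a word of care, it is merely the verification that $\bP^1 \setminus S$ meets the hypotheses (arcwise connected and locally simply connected) of the lemma, which is immediate for the complement of finitely many points in a Riemann surface.
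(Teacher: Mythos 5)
Your argument is correct and matches the paper's reasoning exactly: the corollary is stated as an immediate consequence of Lemma~\ref{monod} precisely because $\GL_1(\C)\cong\C^*$ is commutative, so the conjugation action on representations is trivial. Your additional verification that $\bP^1\setminus S$ is arcwise connected and locally simply connected is a harmless (and correct) elaboration of a hypothesis the paper leaves implicit.
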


The Galois group of our covering curve is isomorphic to
$\Z/(m)$ and generated by a map $\psi$, which is given by $(x,y) \to
(x,e^{2\pi i \frac{1}{m}}y)$ with respect to the above affine curve contained in $\A^2$, which is
birationally equivalent to the covering curve. Hence a character $\chi$ of this group is
determined by
$\chi(\psi)$ with $\chi(\psi) \in \{e^{2\pi
i \frac{j}{m}}| j = 0, 1, \ldots, m-1\}$. Thus the character group
is isomorphic $\Z/(m)$ and we identify the character, which maps $\psi$ to
$e^{2\pi i \frac{j}{m}}$, with $j \in \Z/(m)$.\footnote{These two identifications with $\Z/(m)$
are obviously not canonical, but useful for the description of $\pi_*\C_{C}|_{\bP^1 \setminus S}$
by using our explicit equation for $\pi: C \to \bP^1$ as we will see a little bit later.}

Let $D$ be an arbitrary disc contained in $\bP^1 \setminus S$. The preimage of $D$ is given by
the disjoint union of discs $D_r$ with $r = 0, 1, \ldots, m-1$ such that
$\psi(D_r) = D_{[r+1]_m}$. The vector space $\pi_*\C_{C}|_{\bP^1 \setminus S}(D)$ has the basis
$\{v_j|j = 0, 1,\dots, m-1\}$, where
$$v_j := (e^{\frac{2\pi j(m-1)}{m} }  , \ldots,  e^{\frac{2\pi j}{m} }, 1),$$
and the $r$-th. coordinate denotes the value of the corresponding section of $\pi^{-1}(D)$ on
$D_r$. By the push-forward action, each $v_j$ is an eigenvector with respect to
the character given by $j$. Since $D$ is arbitrary, one can glue the local eigenspaces, and obtain
an eigenspace decomposition
$$\pi_*\C_{C}|_{\bP^1 \setminus S} = \bigoplus\limits_{j = 0}^{m-1}\LL_j$$
into rank 1 local systems, where $\LL_j$ is the eigenspace with respect to the character given by
$j\in \Z/(m)$. Hence the monodromy representation $\rho : \pi_1(\bP^1 \setminus
S) \to GL_{m}(\C)$ has the corresponding decomposition
$$\rho = (\rho_0, \rho_1, \ldots, \rho_{m-1}):\pi_1(\sX) \to
\prod\limits_{i = 0}^{m-1} \GL_1(\C),$$
where
$$\rho_j:\pi_1(\bP^1 \setminus S) \to  \GL_1(\C)$$
is the monodromy representation of $\LL_j$ for all $j = 0, 1,\ldots, m-1$.

Let us recall that our cyclic cover $C$ is given by
$$y^m = (x-a_1)^{d_1} \ldots (x-a_n)^{d_n},$$
where $\infty$ is not a branch
point.  Now let $x\in \bP^1 \setminus S$, and $x\in D$, where $D$ is a sufficiently small open
disc as above. Take a counterclockwise loop $\gamma_k$ around $a_k$ with
$\gamma_k(0) = \gamma_k(1) = x$ and cover the loop with a finite
number of (sufficiently) small discs. The 
continuation of $\tilde s$ on the unification of these discs leads
to a multi-section. By Remark $\ref{monodisc}$, the possible liftings
$\gamma_k^{(r)}$ of the loop $\gamma_k$ are paths with starting
point $\gamma_k^{(r)}(0) = y_r$, where $y_r \in D_r$ and ending point $\gamma_k^{(r)}(1) =
y_{[d_k+r]_m}$. This implies that the monodromy representation of $\LL_j$ maps $\gamma_k$ to
$e^{\frac{2\pi jd_k}{m}}$. Hence we conclude:

\begin{Theorem} \label{monorep}
Let the cyclic cover $\pi: C \to \bP^1$, which is not branched over $\infty$, be given by
\begin{equation} \label{cover}
y^m = (x-a_1)^{d_1} \ldots (x-a_n)^{d_n}.
\end{equation}
Then the local system $\pi_*\C|_{\bP^1 \setminus S}$ is given by the monodromy
representation
$$\gamma_k \to \{ (x_j)_{j=0,1 \ldots, m-1} \to (e^{\frac{2\pi i
jd_k}{m}}x_j)_{j=0,1 \ldots, m-1}\}.$$
\end{Theorem}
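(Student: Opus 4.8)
The plan is to reduce the statement, via Lemma \ref{monod}, to computing a single monodromy representation $\pi_1(\bP^1 \setminus S) \to \GL_m(\C)$ on a stalk, and then to read off the action of each generator $\gamma_k$ in the eigenbasis furnished by the Galois action. First I would fix a small disc $D \subset \bP^1 \setminus S$ and recall the splitting $\pi^{-1}(D) = \bigsqcup_{r=0}^{m-1} D_r$ with $\psi(D_r) = D_{[r+1]_m}$, so that the stalk $\pi_*\C_C(D)$ carries both the natural sheet-indexed basis and the eigenbasis $\{v_0, \ldots, v_{m-1}\}$, where $v_j$ is the $\psi_*$-eigenvector for the character $j \in \Z/(m)$, its $D_r$-component being $e^{2\pi i jr/m}$. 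Gluing over all such $D$ produces the eigenspace decomposition $\pi_*\C_C|_{\bP^1 \setminus S} = \bigoplus_{j=0}^{m-1}\LL_j$ already recorded above, and the coordinates $(x_j)$ appearing in the statement are precisely the components with respect to this decomposition.

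The core of the argument is the per-eigenspace monodromy computation. By Remark \ref{monodisc}, the lift of a counterclockwise loop $\gamma_k$ around $a_k$ that starts on the sheet $D_r$ terminates on the sheet $D_{[r+d_k]_m}$, so the monodromy of $\gamma_k$ acts on the sheet-indexed basis by the cyclic permutation $r \mapsto [r+d_k]_m$. Applying this permutation to $v_j$ and using that its $D_r$-component equals $e^{2\pi i jr/m}$, one finds $\gamma_k \cdot v_j = e^{2\pi i jd_k/m}\, v_j$. Thus $\gamma_k$ acts diagonally, multiplying the $\LL_j$-coordinate $x_j$ by $e^{2\pi i jd_k/m}$, which is exactly the asserted formula; by Lemma \ref{monod} this monodromy datum determines the local system up to isomorphism.

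The main obstacle is bookkeeping rather than anything conceptual: one must keep the orientation of $\gamma_k$ and the direction of the sheet permutation $r \mapsto [r+d_k]_m$ consistent with the chosen identification of the character group with $\Z/(m)$, so that the eigenvalue comes out as $e^{2\pi i jd_k/m}$ and not its inverse. I would also note that the hypothesis that $\infty$ is unramified and $m \mid \sum_k d_k$ makes the assignment consistent with the relation of Remark \ref{loopy}: the product $\gamma_1 \cdots \gamma_n$, representing a loop around $\infty$, is sent on $\LL_j$ to $e^{2\pi i j(\sum_k d_k)/m} = 1$, so the loop around the unbranched point $\infty$ acts trivially, as it must.
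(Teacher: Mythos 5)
Your proposal is correct and follows essentially the same route as the paper: the paper likewise builds the eigenbasis $v_j$ over a disc from the sheet decomposition $\pi^{-1}(D)=\bigsqcup D_r$, glues to get $\pi_*\C_C|_{\bP^1\setminus S}=\bigoplus \LL_j$, and then reads off from Remark $\ref{monodisc}$ that the lift of $\gamma_k$ permutes sheets by $r\mapsto [r+d_k]_m$, hence acts on $\LL_j$ by $e^{2\pi i jd_k/m}$. Your closing consistency check against Remark $\ref{loopy}$ (triviality of the loop around the unbranched point $\infty$) is a small addition the paper omits here, but it does not change the argument.
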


\begin{remark}
One can consider $\pi_*(\Q(e^{2\pi i \frac{1}{m}}))|_{\bP^\setminus
S}$, too. Since a generator $\psi$ of $\Gal(C;\bP^1)$ satisfies $\psi^m=1$, the minimal polynomial
of its action on $\pi_*(\Q(e^{2\pi i \frac{1}{m}}))|_{\bP^\setminus S}$ decomposes into linear
factors contained in $\Q(e^{2\pi i\frac{1}{m}})[x]$. Hence the eigenspace decomposition is defined
over $ \Q(e^{2\pi i\frac{1}{m}}) $.
\end{remark}

Each local system $L$ of $\C$-vector spaces on any topological space $X$ has a dual local system
$L^{\vee }$ given by the sheafification of the presheaf 
$$U \to {\rm Hom}_{\C}(L,\C).$$

\begin{proposition} \label{bar}
One has
$$\LL_j^{\vee} = \bar \LL_j.$$
Furthermore the monodromy
representation $\mu_{\LL_j^{\vee}}$ of $\LL_j^{\vee}$ is given by
$\mu_{\LL_j^{\vee}}(\gamma_s) = \overline{\mu_{\LL_j}(\gamma_s)}$
for all $s \in S$.
\end{proposition}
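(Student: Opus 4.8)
The plan is to reduce everything to monodromy representations and invoke the classification of rank-one local systems from the Corollary following Lemma \ref{monod}: a rank-one local system on $\bP^1 \setminus S$ is determined up to isomorphism by its monodromy homomorphism $\pi_1(\bP^1 \setminus S) \to \GL_1(\C) = \C^*$, and such a homomorphism is pinned down by its values on the generators $\gamma_1, \ldots, \gamma_n$ of $\pi_1(\bP^1 \setminus S)$ (recall that $\infty$ is not a branch point, so these loops generate, subject only to $\gamma_1 \cdots \gamma_n = 1$). Hence it suffices to compare the monodromy of $\LL_j^\vee$ and of $\bar\LL_j$ on each $\gamma_k$.

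First I would record how duality and complex conjugation act on monodromy. For a local system $L$ with monodromy $\mu_L$, the dual $L^\vee$ (the sheafification of $U \to {\rm Hom}_\C(L,\C)$, with trivial action on $\C$) carries the contragredient representation, i.e. $\mu_{L^\vee}(\gamma) = {}^{t}\mu_L(\gamma)^{-1}$; in the rank-one case transposition is trivial, so this is simply $\mu_{L^\vee}(\gamma) = \mu_L(\gamma)^{-1}$. Likewise the conjugate local system $\bar L$ has monodromy $\mu_{\bar L}(\gamma) = \overline{\mu_L(\gamma)}$. Applying this to $L = \LL_j$ gives $\mu_{\LL_j^\vee}(\gamma_k) = \mu_{\LL_j}(\gamma_k)^{-1}$ and $\mu_{\bar\LL_j}(\gamma_k) = \overline{\mu_{\LL_j}(\gamma_k)}$.

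The key observation is then that by Theorem \ref{monorep} the monodromy eigenvalue is $\mu_{\LL_j}(\gamma_k) = e^{2\pi i j d_k/m}$, a root of unity, hence of absolute value $1$. For any $z$ on the unit circle one has $z^{-1} = \bar z$, so $\mu_{\LL_j}(\gamma_k)^{-1} = \overline{\mu_{\LL_j}(\gamma_k)}$. This yields the \emph{furthermore} statement $\mu_{\LL_j^\vee}(\gamma_k) = \overline{\mu_{\LL_j}(\gamma_k)}$ directly, and simultaneously shows that $\LL_j^\vee$ and $\bar\LL_j$ have identical monodromy on each generator $\gamma_k$. By the classification recalled above, the two rank-one local systems are therefore isomorphic, i.e. $\LL_j^\vee = \bar\LL_j$.

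There is essentially no hard obstacle here: the only point requiring care is the bookkeeping for the contragredient (inverse-transpose) representation defining $\mu_{L^\vee}$, and the genuine content is the single arithmetic fact that the eigenvalues are roots of unity, so that ``inverse'' and ``conjugate'' coincide. I would also be attentive to the logical order, proving the monodromy formula first and only then deducing the isomorphism of local systems from it, rather than the reverse.
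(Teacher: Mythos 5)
Your argument is correct and complete. The paper itself gives no in-text proof of this proposition; it simply cites \cite{Doran}, Proposition 2. Your write-up therefore supplies a self-contained elementary argument where the paper defers to the literature, and the argument is exactly the right one: rank-one local systems on $\bP^1\setminus S$ are classified by their monodromy homomorphisms (the Corollary after Lemma \ref{monod}), the dual carries the contragredient (for rank one, the inverse) representation, the conjugate carries the conjugate representation, and by Theorem \ref{monorep} the eigenvalues $e^{2\pi i j d_k/m}$ lie on the unit circle, so inverse and conjugate coincide. Your attention to the logical order (establish the monodromy identity first, then deduce the isomorphism of local systems from the classification) is also the right instinct; the only thing one might add for completeness is the one-line remark that the isomorphism class is already determined by the values on $\gamma_1,\ldots,\gamma_{n-1}$ since $\gamma_n=\gamma_1^{-1}\cdots\gamma_{n-1}^{-1}$, but this changes nothing in the argument.
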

\begin{proof}
(see \cite{Doran}, Proposition 2)
\end{proof}

Hence by the respective monodromy representations, we obtain for all $j = 1, \ldots, m-1$:

\begin{corollary}
$$\LL_j^{\vee} = \LL_{m-j}$$
\end{corollary}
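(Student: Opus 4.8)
The plan is to compare monodromy representations and then invoke the classification of rank one local systems from Lemma \ref{monod}. By Theorem \ref{monorep}, the local system $\LL_j$ has monodromy sending each loop $\gamma_k$ to $e^{2\pi i j d_k/m}$. Applying Proposition \ref{bar}, the monodromy of the dual $\LL_j^\vee$ sends $\gamma_k$ to the complex conjugate $\overline{e^{2\pi i j d_k/m}} = e^{-2\pi i j d_k/m}$. So the first step is simply to read off these two monodromy data from the results already established.

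The key observation is the algebraic identity that identifies this conjugate monodromy with that of $\LL_{m-j}$. Since each $d_k$ is an integer, one has $e^{2\pi i d_k} = 1$, whence
$$e^{-2\pi i j d_k/m} = e^{2\pi i (m-j)d_k/m}.$$
By Theorem \ref{monorep} applied to the index $m-j$, the right-hand side is exactly the value of the monodromy of $\LL_{m-j}$ on $\gamma_k$. Thus the local systems $\LL_j^\vee$ and $\LL_{m-j}$ have identical monodromy on the generators $\gamma_1, \ldots, \gamma_n$ of $\pi_1(\bP^1 \setminus S)$.

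Finally, I would conclude by invoking the Corollary to Lemma \ref{monod}, which gives a bijection between isomorphism classes of rank one local systems on $\bP^1 \setminus S$ and representations $\pi_1(\bP^1 \setminus S) \to \GL_1(\C)$; here the conjugation action of ${\rm Aut}_\C(V)$ in Lemma \ref{monod} is trivial because the rank is one, so a rank one representation is determined by its values on a generating set. Since the loops $\gamma_k$ generate $\pi_1(\bP^1 \setminus S)$ (as used in Remark \ref{loopy}) and the two representations agree on them, the representations coincide, and therefore $\LL_j^\vee = \LL_{m-j}$ for all $j = 1, \ldots, m-1$ (noting that $m-j$ again lies in this range).

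The computations here are entirely routine; the only point requiring a modicum of care is the justification that agreement of monodromy on the standard generators $\gamma_k$ already forces equality of the local systems. This is the place where one must explicitly use both that the $\gamma_k$ generate the fundamental group and that the rank one hypothesis removes the conjugation ambiguity present in the general statement of Lemma \ref{monod}. I do not expect any genuine obstacle beyond making these two observations precise.
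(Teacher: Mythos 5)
Your proof is correct and is essentially the argument the paper intends: the paper derives the corollary directly from Proposition \ref{bar} and Theorem \ref{monorep} "by the respective monodromy representations," which is exactly the computation $\overline{e^{2\pi i j d_k/m}} = e^{2\pi i (m-j)d_k/m}$ that you carry out. Your added care about the rank one classification and the generators $\gamma_k$ just makes explicit what the paper leaves implicit.
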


Let $r|m$. We consider the $\C$-algebra endomorphism $\Phi_r $ of
$\C[x,y]$ given by $x \to x$ and $y \to y^r$. The (non-singular)
curve $C$ is birationally equivalent to the affine variety given by
${\rm Spec}(\C[x,y]/I)$, where
$$I = (y^m - (x-a_1)^{d_1} \ldots (x-a_n)^{d_n}).$$
By $\Phi_r$, we obtain the prime ideal
$$\Phi_r^{-1}(I) = (y^{\frac{m}{r}} - (x-a_1)^{d_1} \ldots
(x-a_n)^{d_n}).$$
Let $C_r$ \index{$C_r$} be the irreducible projective non-singular curve birationally
equivalent to the affine variety given by ${\rm Spec}(\C[x,y]/\Phi_r^{-1}(I))$.

\begin{remark} \label{cr}
By the equation above, we have a cover $\pi_r: C_r \to \bP^1$ of
degree $\frac{m}{r}$. The homomorphism $\Phi_r$ induces a cover
$\phi_r: C \to C_r$ of degree $r$ such that
$$\pi = \pi_r \circ \phi_r.$$
\end{remark}

\begin{proposition} \label{hti}
$$(\pi_r)_*\C_{C_r}|_{\bP^1 \setminus S} = \bigoplus\limits_{j =
0}^{\frac{m}{r}-1} \LL_{r\cdot j} \subset \pi_*\C_C|_{\bP^1 \setminus S}.$$
\end{proposition}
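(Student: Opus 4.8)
The plan is to work throughout over the base $U := \bP^1 \setminus S$, where every map in the tower $C \xrightarrow{\phi_r} C_r \xrightarrow{\pi_r} \bP^1$ restricts to an unramified (topological) covering, so that the sheaves involved are genuine local systems. The key starting observation is that $C_r$ is the quotient of $C$ by the subgroup $H := \langle \psi^{m/r} \rangle \subset \Gal(C/\bP^1) \cong \Z/(m)$. Indeed, on the affine models $\phi_r$ is given by $(x,y) \mapsto (x,y^r)$, and $\phi_r \circ \psi^k = \phi_r$ holds exactly when $\zeta^{kr} = 1$ (with $\zeta = e^{2\pi i/m}$), i.e. when $\tfrac{m}{r} \mid k$. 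Thus $H$ has order $r$, and $\phi_r$ exhibits $C_r$ as $C/H$, with $H$ acting simply transitively on the fibers of $\phi_r$ over $\pi_r^{-1}(U)$.

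Next I would identify $(\pi_r)_* \C_{C_r}|_U$ with the $H$-invariant part of $\pi_*\C_C|_U$. Since $\phi_r$ is an $H$-Galois covering over $\pi_r^{-1}(U)$, the unit of adjunction furnishes a natural inclusion $\C_{C_r} \hookrightarrow (\phi_r)_* (\phi_r)^* \C_{C_r} = (\phi_r)_* \C_C$ whose image is exactly the subsheaf $((\phi_r)_*\C_C)^H$ of sections that are constant on $H$-orbits. Applying the left-exact functor $(\pi_r)_*$ and using $\pi = \pi_r \circ \phi_r$, so that $\pi_* \C_C = (\pi_r)_* (\phi_r)_* \C_C$, this becomes precisely the asserted inclusion $(\pi_r)_*\C_{C_r}|_U \hookrightarrow \pi_*\C_C|_U$, with image the invariants $(\pi_*\C_C|_U)^H$ (pushforward commutes with taking invariants because $H$ acts compatibly through its action on $C$ over $C_r$).

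It then remains to compute these invariants on the eigenspace decomposition $\pi_*\C_C|_U = \bigoplus_{j=0}^{m-1} \LL_j$. The Galois generator $\psi$ acts on $\LL_j$ by the scalar $\zeta^{j}$, so its power $\psi^{m/r}$ (a generator of $H$) acts on $\LL_j$ by $\zeta^{jm/r} = e^{2\pi i j/r}$, which equals $1$ exactly when $r \mid j$. Hence $(\pi_*\C_C|_U)^H = \bigoplus_{r \mid j} \LL_j = \bigoplus_{i=0}^{\frac{m}{r}-1} \LL_{ri}$, which is the claimed description. As an independent cross-check, Theorem \ref{monorep} applied to $\pi_r : C_r \to \bP^1$ (the cover $y^{m/r} = \prod_k (x-a_k)^{d_k}$) shows that its $i$-th eigensheaf has $\gamma_k$ acting by $e^{2\pi i (ri) d_k/m}$, the same monodromy as $\LL_{ri}$; since rank one local systems on $U$ are determined by their monodromy (the Corollary following Lemma \ref{monod}), this matches the summands one by one.

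The one step deserving genuine care, rather than a deep obstacle, is the first: checking that $\phi_r$ is indeed the quotient by $H$ and that $((\phi_r)_*\C_C)^H = \C_{C_r}$ holds as sheaves over $\pi_r^{-1}(U)$, where $\phi_r^{-1}(U)$ need not be connected. This is the standard behaviour of the constant sheaf under a finite Galois covering, and the closing eigenvalue bookkeeping is routine, so I do not expect any serious difficulty.
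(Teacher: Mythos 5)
Your proof is correct, but it runs along a genuinely different track from the paper's. The paper simply applies Theorem \ref{monorep} to the cover $\pi_r\colon C_r\to\bP^1$ given by $y^{m/r}=\prod(x-a_k)^{d_k}$, reads off that $\gamma_k$ acts on the $j$-th eigensheaf by $e^{2\pi i jd_k/m_0}=e^{2\pi i jrd_k/m}$, and matches these monodromies with those of the $\LL_{rj}$ --- i.e.\ exactly the computation you relegate to your closing ``cross-check.'' Your main argument instead proceeds by Galois descent: you identify $\phi_r$ as the quotient by $H=\langle\psi^{m/r}\rangle$ (correctly, since $\phi_r\circ\psi^k=\phi_r$ iff $\tfrac{m}{r}\mid k$, and $|H|=r=\deg\phi_r$), use $\C_{C_r}=((\phi_r)_*\C_C)^H$ over the unramified locus together with $\pi_*=(\pi_r)_*(\phi_r)_*$, and then compute the $H$-invariants of $\bigoplus_j\LL_j$ by noting that $\psi^{m/r}$ acts on $\LL_j$ by $e^{2\pi i j/r}$, which is trivial iff $r\mid j$. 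Both routes are sound; the paper's is shorter but identifies the two local systems only up to isomorphism via the classification of rank-one local systems by their monodromy, whereas yours actually exhibits the canonical inclusion $(\pi_r)_*\C_{C_r}|_{\bP^1\setminus S}\hookrightarrow\pi_*\C_C|_{\bP^1\setminus S}$ asserted (but not constructed) in the statement, and does so without touching the local monodromy data $d_k$ at all. The only step needing care --- that taking $H$-invariants commutes with $(\pi_r)_*$ and that the invariant subsheaf of $(\phi_r)_*\C_C$ is $\C_{C_r}$ even where $\phi_r^{-1}(U)$ is disconnected --- is standard for finite Galois coverings in characteristic zero, as you note.
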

\begin{proof}
Let $m_0 := \frac{m}{r}$. By Theorem $\ref{monorep}$, the monodromy representation of the local
system $(\pi_r)_*\C_{C_r}|_{\bP^1 \setminus \{a_1, \ldots, a_n\}}$ is given by
$$\gamma_k \to \{ (x_j)_{j=0,1 \ldots, \frac{m}{r}-1} \to
(e^{\frac{2\pi i jd_k}{m_0}}x_j)_ {j=0,1 \ldots, \frac{m}{r}-1} =
(e^{\frac{2\pi i jrd_k}{m}}x_j)_{j=0,1 \ldots, \frac{m}{r}-1}\}.$$
By the respective monodromy representations of the local systems $\LL_j$, this yields the
statement.
\end{proof}

\section{The cohomology of a cover}
In this section we discuss some known facts about the eigenspace decomposition of the Hodge
structure of a curve $C$ with respect to a cyclic cover $\pi: C \to \bP^1$.
The main reference for this section is given by $\S 3$ of the
book \cite{EV1} of H. Esnault and E. Viehweg. Section 2 of the
essay \cite{DM} of P. Deligne and G. D. Mostow contains additional information about
our case. 

Let $\pi: C \to \bP^1$ be given by
$$y^m = (x-a_1)^{d_1} \cdot \ldots \cdot (x-a_{n})^{d_{n}}$$
such that $\infty$ is not a branch point, $$S = \{a_1, \ldots, a_n\}, \   \ D = d_1a_1+ \ldots
+ d_na_n \  \ \mbox{and} \  \
\sL^{(j)} := \sO_{\bP^1}(j\frac{d_1 + \ldots + d_n}{m}-\sum\limits_{k =
1}^{n+3} [\frac{j}{m}\cdot d_k]).$$
Moreover let the generator $\psi$ of the Galois group of $\pi$ be given by $(x,y) \to
(x,e^{2\pi i \frac{1}{m}}y)$ with respect to the explicit
equation above, which yields $\pi$.

We fix some new notation: Let $[q]_1 := q-[q]$ \index{$[q]_1$} for all $q \in \Q$. Moreover we
define
$$S_j := \{a \in S| [j\mu_a]_1 \neq 0\}. \index{$S_j$}$$

\begin{proposition} \label{fer}
The sheaves $\pi_*(\sO)$ and $\pi_*(\omega)$ have a decomposition into
eigenspaces with respect to the Galois group representation, which
are given by the sheaves $\sL^{(j)^{-1}}$ and
$$\omega_j := \omega_{\bP^1}({\rm log } D^{(j)} ) \otimes
\sL^{(j)^{-1}} \  \ \mbox{with} \  \ D^{(j)} :=\sum\limits_{a \in S_j} a$$
for $j = 0, 1, \ldots, m-1$ such that $\psi$ acts via pull-back by the character
$e^{2\pi i\frac{j}{m}}$ on $\sL^{(j)^{-1}}$ resp., $\omega_j$.
\end{proposition}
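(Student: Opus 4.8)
The plan is to derive both decompositions from the Esnault--Viehweg description of the normalized cyclic cover together with relative duality, and then to match the resulting line bundles with the explicit sheaves $(\sL^{(j)})^{-1}$ and $\omega_j$ by a degree computation on $\bP^1$ supplemented by a local analysis at the branch points.

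First I would treat $\pi_*(\sO)$. By the construction of the cyclic cover attached to $\sL := \sL^{(1)} = \sO_{\bP^1}(\frac{1}{m}\sum_k d_k)$ and $D = \sum_k d_k a_k$ (here $m \mid \sum_k d_k$ and $0 < d_k < m$, so $\sL^m = \sO(D)$), the normalization $C$ is the relative spectrum of the $\sO_{\bP^1}$-algebra $\bigoplus_{j=0}^{m-1}(\sL^{(j)})^{-1}$ with $\sL^{(j)} = \sL^{j}\otimes\sO(-\sum_k[\frac{j d_k}{m}]a_k)$; this is exactly the sheaf $\sL^{(j)}$ of the proposition and is the Esnault--Viehweg formula (see \cite{EV1}, \S3). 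The $\Z/(m)$-grading of this algebra is, by construction, the eigenspace decomposition for the deck transformation $\psi$, and $\psi$ acts on the $j$-th summand $(\sL^{(j)})^{-1}$ through the character $e^{2\pi i j/m}$. This settles the statement for $\pi_*(\sO)$.

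For $\pi_*(\omega)$ I would invoke relative duality for the finite flat morphism $\pi$ between smooth curves, $\pi_*\omega_{C/\bP^1}\cong\mathcal{H}om_{\sO_{\bP^1}}(\pi_*\sO_C,\sO_{\bP^1})$, which is equivariant for the Galois action. Since $\psi$ acts on a character-$j$ summand $M_j$ of $\pi_*\sO_C$ by $e^{2\pi i j/m}$, it acts on its dual by $e^{-2\pi i j/m}$; hence the character-$j$ eigenspace of $\pi_*\omega_{C/\bP^1}$ is $M_{m-j}^{\vee} = \sL^{(m-j)}$. The projection formula $\pi_*\omega_C = \pi_*\omega_{C/\bP^1}\otimes\omega_{\bP^1}$ then gives that the character-$j$ eigenspace of $\pi_*\omega_C$ is $\sL^{(m-j)}\otimes\omega_{\bP^1}$. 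It remains to identify this with $\omega_j = \omega_{\bP^1}(\log D^{(j)})\otimes(\sL^{(j)})^{-1}$. Writing $\omega_{\bP^1}(\log D^{(j)}) = \omega_{\bP^1}\otimes\sO(D^{(j)})$, this reduces to $\sL^{(m-j)}\otimes\sL^{(j)}\cong\sO(D^{(j)})$. On $\bP^1$ it is enough to compare degrees, and the one arithmetic fact needed is that for $\alpha = \frac{j d_k}{m}$ one has $[\alpha]+[d_k-\alpha] = d_k-1$ when $\alpha\notin\Z$ and $= d_k$ when $\alpha\in\Z$; since $\alpha\in\Z$ means $[j\mu_{a_k}]_1 = 0$, i.e. $a_k\notin S_j$, summing over $k$ yields
$$\deg\bigl(\sL^{(m-j)}\otimes\sL^{(j)}\bigr) = \sum_k d_k - \sum_k\left([\tfrac{(m-j)d_k}{m}]+[\tfrac{jd_k}{m}]\right) = |S_j| = \deg\sO(D^{(j)}),$$
and equality of degrees on $\bP^1$ forces the isomorphism, with $\psi$ still acting by $e^{2\pi i j/m}$.

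The step I expect to be the main obstacle is not this global degree count but pinning down $D^{(j)}$ as the reduced divisor $\sum_{a\in S_j}a$ on the nose: one must check at each branch point that the character-$j$ part of $\omega_C$ acquires a genuine logarithmic pole exactly when $m\nmid jd_k$. This requires the local model of the normalized cover near $a_k$ (where, with $e_k=\gcd(m,d_k)$, it looks like $y^{m/e_k} = x^{d_k/e_k}\cdot(\text{unit})$) and a direct computation of the order of vanishing of the eigen-differentials there; it is also the place where one must be careful that $C$ is the normalization of the singular affine model $y^m = \prod_k(x-a_k)^{d_k}$, not that model itself. Once the local poles are located, the global identification upgrades from an equality of degrees to the claimed equality of sheaves, completing the proof.
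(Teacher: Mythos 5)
Your proof is correct, and for the second half it takes a genuinely different route from the paper. The paper disposes of the whole proposition by citation: the decomposition of $\pi_*(\sO)$ is \cite{EV1}, Corollary 3.11, and the decomposition of $\pi_*(\omega)$ into the sheaves $\omega_j$ is \cite{EV1}, Lemma 3.16, d), with only the observation added that an eigensheaf of $\pi_*(\sO)$ for a given character forces the corresponding summand of $\pi_*(\omega)$ to be an eigensheaf for the same character. You instead rederive the $\pi_*(\omega)$ statement from the $\pi_*(\sO)$ statement via relative duality for the finite flat map $\pi$ together with the projection formula, reducing everything to the isomorphism $\sL^{(m-j)}\otimes\sL^{(j)}\cong\sO(D^{(j)})$; your pairing of characters ($j$ with $m-j$, forced by Galois-invariance of the trace) and your floor-function identity are both right, and your degree computation gives $|S_j|$ on both sides. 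What your approach buys is self-containedness and a transparent explanation of \emph{why} the logarithmic poles appear exactly along $S_j$ (it is the defect $[\alpha]+[d_k-\alpha]=d_k-1$ for $\alpha\notin\Z$); what it costs is that it leans on the base being $\bP^1$, whereas the Esnault--Viehweg lemma the paper quotes identifies $\omega_j$ canonically over an arbitrary base. One small simplification you could make: since the proposition only asserts which isomorphism classes of line bundles occur as eigensheaves, and a line bundle on $\bP^1$ is determined by its degree, the local analysis at the branch points that you flag as ``the main obstacle'' is not actually needed for the statement as written --- it would only be needed if you wanted the canonical inclusion of $\omega_j$ into a fixed ambient sheaf, which is what \cite{EV1} provides and what the paper implicitly uses later.
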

\begin{proof}
The eigenspace decomposition of $\pi_*(\sO)$ follows by \cite{EV1}, Corollary 3.11. Moreover
\cite{EV1}, Lemma 3.16, d) yields the decomposition of $\pi_*(\omega)$ into the claimed
sheaves. Since $\sL^{(j)^{-1}}$ is an eigenspace with respect to the Galois group representation,
$\omega_j$ is an eigenspace of the same eigenvalue.
\end{proof}

\begin{remark}
One has obviously $h^0(\omega_0) =0$. By \cite{EV1}, $2.3,c)$, one concludes that
$$\omega_{\bP^1}({\rm log }D^{(j)}) = \omega_{\bP^1}(D^{(j)})$$
for $j = 1, \ldots, m-1$. Hence for $j = 1, \ldots, m-1$ we obtain
$$h^0(\omega_j) = h^0(\sO_{\bP^1}(-2+deg(D^{(j)})-j\frac{d_1 + \ldots +
d_{n+3}}{m}+ \sum\limits_{k = 1}^{n+3}[\frac{j}{m}\cdot d_k]) )$$
$$= -1 + |S_j| + \sum\limits_{a \in S_j}(-j\mu_a+[j\mu_a])=
-1+ \sum\limits_{a \in S_j}(1-[j\mu_a]_1) .$$
\end{remark}

But here we want to determine our eigenspaces on $\pi_*(\omega_C)$ with respect to the
push-forward action. Thus we put $\omega^{(j)} := \omega_{[m-j]_m}$, and we obtain
\index{$H^{1,0}_j(C)$}
$$h^{1,0}_j(C) := h^0(\omega^{(j)}) =h^0(\omega_{[m-j]_m}) =
-1+ \sum\limits_{a \in S_j}(1-[(m-j)\mu_a]_1)
= -1+ \sum\limits_{a \in S_j}[j\mu_a]_1.$$
Moreover let $H^{0,1}_j(C)$ \index{$H^{0,1}_j(C)$} denote the vector space of antiholomorphic
$1$-forms on $C$ with respect to the corresponding character of the Galois group action. Since
the push-forward action of the Galois group respects the alternating form of the
polarization of the Hodge structure on $H^1(C,\Z)$, one concludes that $H^{0,1}_{[m-j]_m}(C)$ is
the dual of $H^{1,0}_j(C)$. Thus:

\begin{proposition} \label{hodgi}
We have the eigenspace decomposition
$$H^1(C,\C) = \bigoplus_{j = 1}^{m-1} H^1_j(C,\C) \  \ \mbox{with} \  \  H^{1,0}_j(C)
\oplus H^{0,1}_j(C) = H^1_j(C,\C). \index{$H^1_j(C,\C)$}$$
\end{proposition}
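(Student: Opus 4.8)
The plan is to assemble the decomposition from two ingredients already in place: the Hodge decomposition of the curve $C$ and the eigenspace decomposition of $\pi_*\omega_C$ (and of $\pi_*\sO_C$) furnished by Proposition \ref{fer}. The one geometric fact that makes the two fit together is that a generator $\psi$ of the Galois group acts holomorphically on $C$, hence preserves the Hodge type.

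First I would note that $C$, being a smooth projective curve, is a compact K\"ahler manifold, so Hodge theory gives $H^1(C,\C) = H^{1,0}(C)\oplus H^{0,1}(C)$ with $H^{1,0}(C)=H^0(C,\omega_C)$ and $H^{0,1}(C)=\overline{H^{1,0}(C)}$. The Galois group $\langle\psi\rangle\cong\Z/(m)$ acts on $H^1(C,\C)$ by the push-forward action; since $\psi^m=\id$, its minimal polynomial divides $T^m-1$, which splits into distinct linear factors over $\C$, so the action is semisimple and $H^1(C,\C)$ is the direct sum of the eigenspaces for the characters $j\in\Z/(m)$. Because $\psi$ is holomorphic, the push-forward action carries $H^{1,0}(C)$ and $H^{0,1}(C)$ to themselves; consequently the character-$j$ eigenspace of the whole of $H^1(C,\C)$ is the direct sum of the character-$j$ eigenspace inside $H^{1,0}(C)$ and the character-$j$ eigenspace inside $H^{0,1}(C)$. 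By Proposition \ref{fer} and the definitions preceding this statement these are exactly $H^{1,0}_j(C)=H^0(\bP^1,\omega^{(j)})$ and $H^{0,1}_j(C)$, so that each character-$j$ eigenspace equals $H^1_j(C,\C):=H^{1,0}_j(C)\oplus H^{0,1}_j(C)$.

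It then remains to discard the index $j=0$. The Remark following Proposition \ref{fer} gives $h^0(\omega_0)=0$, hence $H^{1,0}_0(C)=0$, and the duality $H^{0,1}_{[m-j]_m}(C)=(H^{1,0}_j(C))^{*}$ with $j=0$ forces $H^{0,1}_0(C)=0$ as well; equivalently, the invariant part of $H^1(C,\C)$ under the full Galois group is $\pi^{*}H^1(\bP^1,\C)=0$ since $\bP^1$ has trivial first cohomology. Thus $H^1_0(C,\C)=0$ and the sum ranges over $j=1,\ldots,m-1$, yielding the claimed decomposition. The argument is essentially an assembly of earlier results; the only step that is not pure bookkeeping is the compatibility of the Galois action with the Hodge decomposition, and I expect that to be the point requiring care — though it reduces at once to the holomorphy of $\psi$.
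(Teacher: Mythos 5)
Your proposal is correct and follows essentially the same route as the paper, which presents Proposition \ref{hodgi} as an immediate assembly of Proposition \ref{fer} (the eigenspace decomposition of $\pi_*\omega_C$, hence of $H^{1,0}(C)$), the corresponding decomposition of the antiholomorphic forms, and the vanishing of the invariant part coming from $h^0(\omega_0)=0$ and $H^1(\bP^1,\C)=0$. Your explicit remarks on the semisimplicity of the $\psi$-action and its compatibility with the Hodge decomposition via the holomorphy of $\psi$ simply spell out what the paper leaves implicit.
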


Moreover by $h^{0,1}_{j}(C) = h^{1,0}_{[m-j]_m}(C)$ and the preceding calculations, one concludes: 

\begin{proposition} \label{1.27}
We have
$$h^{1,0}_j(C) = \sum\limits_{s \in S_j}
[j\mu_s]_1 - 1, \mbox{ and } h^{0,1}_j(C) =
\sum\limits_{s \in S_j} (1-[j\mu_s]_1) - 1. $$
\end{proposition}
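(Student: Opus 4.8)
The plan is to read both Hodge numbers off the eigenspace decomposition of $\pi_*\omega_C$ in Proposition~\ref{fer} together with the degree count carried out in the Remark following it, and then to exploit the reflection symmetry $j \mapsto [m-j]_m$ to pass between $h^{1,0}_j$ and $h^{0,1}_j$. The arithmetic engine behind everything is a single identity on fractional parts. For $a = a_k \in S$ one has $\mu_a = d_k/m$ with $d_k \in \Z$, so $[(m-j)\mu_a]_1 = [-j\mu_a]_1$. Since $[-q]_1 = 1 - [q]_1$ when $q \notin \Z$ and $[-q]_1 = 0$ when $q \in \Z$, this gives $[(m-j)\mu_a]_1 = 0 \iff [j\mu_a]_1 = 0$, hence $S_{[m-j]_m} = S_j$, and moreover $[(m-j)\mu_a]_1 = 1 - [j\mu_a]_1$ for every $a \in S_j$. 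This is what makes the index reflection interact cleanly with the sets $S_j$.

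For the holomorphic part I would argue as follows. By construction $\omega^{(j)} := \omega_{[m-j]_m}$ is the eigenspace of $\pi_*\omega_C$ on which the generator $\psi$ acts by $e^{2\pi i j/m}$ for the push-forward action, so $h^{1,0}_j(C) = h^0(\omega^{(j)}) = h^0(\omega_{[m-j]_m})$. Substituting $m-j$ for $j$ in the Remark's formula $h^0(\omega_j) = -1 + \sum_{a \in S_j}(1 - [j\mu_a]_1)$ and applying the identity above (namely $S_{[m-j]_m} = S_j$ and $1 - [(m-j)\mu_a]_1 = [j\mu_a]_1$ on that set) collapses the summand to $[j\mu_a]_1$, yielding $h^{1,0}_j(C) = -1 + \sum_{s \in S_j}[j\mu_s]_1$, which is the first claimed formula.

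For the antiholomorphic part I would invoke the duality recorded before Proposition~\ref{hodgi}: because the push-forward Galois action respects the polarization pairing, $H^{0,1}_{[m-j]_m}(C)$ is dual to $H^{1,0}_j(C)$, whence $h^{0,1}_j(C) = h^{1,0}_{[m-j]_m}(C)$. Feeding $[m-j]_m$ into the holomorphic formula just established, and once more using $S_{[m-j]_m} = S_j$ together with $[(m-j)\mu_s]_1 = 1 - [j\mu_s]_1$, produces $h^{0,1}_j(C) = -1 + \sum_{s \in S_j}(1 - [j\mu_s]_1)$, as required.

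Each step is short, so I expect the main obstacle to be bookkeeping rather than genuine difficulty: one must keep straight the two competing conventions — the pull-back character $j$ attached to $\omega_j$ in Proposition~\ref{fer} versus the push-forward character that defines $\omega^{(j)} = \omega_{[m-j]_m}$ and the Hodge numbers $h^{1,0}_j$ — so that the reflection $j \mapsto [m-j]_m$ is applied exactly once in each formula and no spurious shift survives. A secondary point worth verifying is that the degree count underlying the Remark's formula genuinely computes $h^0$ (i.e.\ that the relevant line bundle on $\bP^1$ has degree $\geq -1$, so that the $h^1$ term drops); but this belongs to the cited Remark and need not be reproved here.
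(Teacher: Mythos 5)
Your proposal is correct and follows essentially the same route as the paper: the paper likewise obtains $h^{1,0}_j(C)=h^0(\omega_{[m-j]_m})$ from the degree computation in the remark after Proposition~\ref{fer}, using $S_{[m-j]_m}=S_j$ and $[(m-j)\mu_a]_1=1-[j\mu_a]_1$ on $S_j$, and then derives the antiholomorphic formula from the duality $h^{0,1}_j(C)=h^{1,0}_{[m-j]_m}(C)$. Your explicit bookkeeping of the pull-back versus push-forward conventions and the flagged caveat about the degree being $\geq -1$ are both consistent with (and slightly more careful than) the paper's treatment.
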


The preceding two propositions imply:

\begin{corollary} \label{rangi}
$$h^{1}_j(C,\C) = |S_j| - 2$$
\end{corollary}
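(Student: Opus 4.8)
The claim is $h^1_j(C,\C)=|S_j|-2$, and the plan is simply to add the two dimension formulas from Proposition~\ref{1.27}.

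The plan is as follows. From Proposition~\ref{hodgi} we have the eigenspace decomposition $H^1_j(C,\C)=H^{1,0}_j(C)\oplus H^{0,1}_j(C)$, so that
$$h^1_j(C,\C)=h^{1,0}_j(C)+h^{0,1}_j(C).$$
Proposition~\ref{1.27} supplies both summands explicitly in terms of the set $S_j=\{a\in S\mid [j\mu_a]_1\neq 0\}$, namely
$$h^{1,0}_j(C)=\sum_{s\in S_j}[j\mu_s]_1-1,\qquad h^{0,1}_j(C)=\sum_{s\in S_j}(1-[j\mu_s]_1)-1.$$
First I would add these two expressions. The crucial observation is that the terms $[j\mu_s]_1$ and $1-[j\mu_s]_1$ cancel pairwise under the summation over $s\in S_j$: for each fixed $s$, the contribution is $[j\mu_s]_1+(1-[j\mu_s]_1)=1$. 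Hence the combined sum $\sum_{s\in S_j}\bigl([j\mu_s]_1+(1-[j\mu_s]_1)\bigr)$ collapses to $\sum_{s\in S_j}1=|S_j|$.

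Carrying out the addition, the two isolated constants $-1$ combine to give $-2$, and we obtain
$$h^1_j(C,\C)=\Bigl(\sum_{s\in S_j}[j\mu_s]_1-1\Bigr)+\Bigl(\sum_{s\in S_j}(1-[j\mu_s]_1)-1\Bigr)=|S_j|-2,$$
which is exactly the asserted formula. There is no genuine obstacle here: the result is a one-line algebraic consequence of Proposition~\ref{1.27}, as is already flagged by the phrase ``The preceding two propositions imply'' in the text. The only point requiring any care is purely bookkeeping, namely to keep track of the two separate $-1$ constants (one from each of $h^{1,0}_j$ and $h^{0,1}_j$) so that they correctly sum to $-2$ rather than being absorbed into the telescoping of the $[j\mu_s]_1$ terms. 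Once this is done the proof is complete.
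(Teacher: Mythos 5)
Your proposal is correct and is exactly the argument the paper intends: the text derives the corollary by combining Proposition~\ref{hodgi} with the two dimension formulas of Proposition~\ref{1.27}, and your addition of $h^{1,0}_j(C)=\sum_{s\in S_j}[j\mu_s]_1-1$ and $h^{0,1}_j(C)=\sum_{s\in S_j}(1-[j\mu_s]_1)-1$ to obtain $|S_j|-2$ is precisely that computation. Nothing further is needed.
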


\section{Cyclic covers with complex multiplication}

Let us now search for examples of covers of $\bP^1$ with complex multiplication. The
family given by
$$\bP^2 \supset V(y^m - x_1(x_1-x_0)(x_1-a_1x_0) \ldots (x_1-a_{m-3}x_0))$$
$$\to (a_1, \ldots, a_{m-3})\in (\A^1\setminus \{0,1\})^{m-3} \setminus \{a_i = a_j | i \neq j\}$$
has obviously a fiber isomorphic to the Fermat curve $\F_m$, which is
given by $V(y^m+x^m+1)$ and has complex multiplication (see
\cite{Rohr} and \cite{Kohr}). For another family with a fiber with
complex multiplication, we must work a little bit.

\begin{lemma}\label{7.1}
If $(V,h_1)$ and $(W,h_2)$ are two $\Q$-Hodge structures of weight $k$, then
$$\Hg(V\oplus W,h_1 \oplus h_2) \subset
\Hg(V,h_1)\times \Hg(W,h_2) \subset \GL(V)\times \GL(W) \subset \GL(V\oplus
W),$$ and the projections
$$\Hg(V\oplus W)\to \Hg(V), \  \ \mbox{and} \  \ \Hg(V\oplus W)\to \Hg(W)$$
are surjective.
\end{lemma}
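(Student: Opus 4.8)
The plan is to exploit the defining minimality property of the Hodge group together with the block-diagonal shape of $h_1 \oplus h_2$. First I would record that $(V \oplus W, h_1 \oplus h_2)$ is again a $\Q$-Hodge structure of weight $k$, whose Hodge decomposition is the direct sum of those of $V$ and $W$, and that the associated homomorphism $h_1 \oplus h_2 : S^1 \to \GL((V\oplus W)_\R)$ acts block-diagonally, factoring through the block-diagonal embedding $\GL(V_\R) \times \GL(W_\R) \hookrightarrow \GL((V\oplus W)_\R)$ via $z \mapsto (h_1(z), h_2(z))$.

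For the containment, I would observe that by the very definition of the Hodge group one has $h_1(S^1) \subset \Hg(V)_\R$ and $h_2(S^1) \subset \Hg(W)_\R$, whence $(h_1 \oplus h_2)(S^1) \subset (\Hg(V) \times \Hg(W))_\R$. Since $\Hg(V) \times \Hg(W)$ is a $\Q$-algebraic subgroup of $\GL(V \oplus W)$ whose group of real points contains the image of $h_1 \oplus h_2$, the minimality of $\Hg(V\oplus W)$ forces $\Hg(V\oplus W) \subset \Hg(V) \times \Hg(W)$.

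For the surjectivity of the projection $p_V : \Hg(V) \times \Hg(W) \to \Hg(V)$ restricted to $\Hg(V\oplus W)$, I would apply the same minimality principle in the opposite direction. The image $p_V(\Hg(V\oplus W))$ is a $\Q$-algebraic subgroup of $\GL(V)$, since the image of a homomorphism of $\Q$-algebraic groups is Zariski closed (\cite{Abra}, Satz 2.1.8, exactly as used in the proof of Lemma $\ref{idhggr}$). Because $p_V \circ (h_1 \oplus h_2) = h_1$, the real points of this image contain $h_1(S^1)$, so by minimality of $\Hg(V)$ one gets $\Hg(V) \subset p_V(\Hg(V\oplus W))$. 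Combining this with the reverse inclusion $p_V(\Hg(V\oplus W)) \subset p_V(\Hg(V) \times \Hg(W)) = \Hg(V)$ furnished by the containment step yields equality; the argument for the projection onto $\Hg(W)$ is symmetric.

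Every step is routine once the minimality characterization of $\Hg(-)$ is invoked, so I do not anticipate a serious obstacle. The one input that genuinely matters is the closedness of the image $p_V(\Hg(V \oplus W))$: this is what certifies that it is an honest $\Q$-algebraic subgroup, rather than a merely constructible set, and hence a legitimate object against which to run the minimality comparison with $\Hg(V)$. The only bookkeeping point to watch is the consistent distinction between a $\Q$-algebraic group and its group of real points when comparing the images of $S^1$.
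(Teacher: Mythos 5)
Your argument is correct. The paper itself gives no proof of this lemma, deferring entirely to \cite{VZ5}, Lemma 8.1; what you have written is precisely the standard minimality argument one expects there: the containment follows because $\Hg(V,h_1)\times\Hg(W,h_2)$ is a $\Q$-algebraic subgroup through whose real points $h_1\oplus h_2$ factors, and surjectivity of each projection follows by running minimality against the Zariski-closed image $p_V(\Hg(V\oplus W))$, which contains $h_1(S^1)$ over $\R$ since $p_V\circ(h_1\oplus h_2)=h_1$. Your emphasis on the closedness of the image (the same fact from \cite{Abra} invoked in the proof of Lemma $\ref{idhggr}$) is exactly the one non-formal input, and the bookkeeping between $\Q$-groups and their real points is handled correctly.
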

\begin{proof}
(see \cite{VZ5}, Lemma 8.1)
\end{proof}

\begin{lemma} \label{nachbarin}
Let $V \subset W$ be a rational sub-Hodge structure of a polarized
Hodge structure $W$. Then we have a direct sum decomposition
$$W = V \oplus V',$$
where $V'$ is also a rational sub-Hodge structure of $W$.
\end{lemma}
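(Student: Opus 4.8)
The plan is to use the polarization form $Q$ on $W$ to produce an orthogonal complement and then verify that this complement is itself a sub-Hodge structure. Since $W$ is a polarized Hodge structure, it carries a bilinear form $Q$ (symmetric or alternating according to the parity of the weight $k$) whose $\C$-linear extension satisfies the two Riemann bilinear relations. The natural candidate for the complementary summand is
\[
V' := V^{\perp} = \{w \in W : Q(w,v) = 0 \text{ for all } v \in V\}.
\]
First I would check that $Q$ restricted to $V$ is nondegenerate, so that $W = V \oplus V'$ as $\Q$-vector spaces. This follows from the positivity condition $(2)$ in the definition of a polarized Hodge structure: the Hermitian form $i^k Q(\cdot,\bar\cdot)$ is definite on each $V^{p,q}$, hence $Q$ is nondegenerate on the sub-Hodge structure $V_\C = \bigoplus V^{p,q}$, and therefore $Q|_V$ is nondegenerate already over $\Q$.

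The substantive step is to show that $V'$ is a sub-Hodge structure, i.e. that $V'_\C = \bigoplus_{p,q}(V' \otimes \C \cap W^{p,q})$ respects the Hodge decomposition, equivalently that $V'$ is stable under the action $h(S^1)$ (or $h(\BS)$) defining the Hodge structure. Here I would invoke the compatibility between $Q$ and $h$: because $Q$ is a morphism of Hodge structures (the first Riemann relation says the Hodge decomposition is orthogonal for $i^k Q(\cdot,\bar\cdot)$, which forces $Q(W^{p,q},W^{p',q'}) = 0$ unless $(p',q') = (q,p)$), the $\BS$-action satisfies $Q(h(z)u, h(z)v) = \chi(z)\, Q(u,v)$ for a suitable character $\chi$. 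Since $V_\C$ is $h(S^1)$-stable by hypothesis, for $w \in V'_\C$ and $v \in V_\C$ one computes $Q(h(z)w, v) = \chi(z)\,Q(w, h(z)^{-1}v)$, and $h(z)^{-1}v$ again lies in $V_\C$, so this vanishes; hence $h(z)w \in V'_\C$. Therefore $V'$ is $\BS$-stable, and by Proposition \ref{delig} the induced decomposition of $V'_\C$ is a Hodge structure.

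The main obstacle to watch is purely bookkeeping rather than conceptual: one must be careful that $V'$ is defined over $\Q$ and not merely over $\C$. Since $Q$ takes values in $\Q$ (or the base ring $R$) and $V$ is a $\Q$-subspace, the orthogonal complement $V' = V^\perp$ is automatically a $\Q$-subspace, so no issue arises. The only genuine verification is the nondegeneracy of $Q|_V$, and this is exactly where the positivity of the polarization is used; without it the orthogonal complement could overlap $V$ and the direct sum decomposition would fail. Once $W = V \oplus V'$ over $\Q$ and $V'$ is shown to be $h$-stable, the conclusion follows immediately, with the decomposition of $W$ being a direct sum of Hodge structures by construction.
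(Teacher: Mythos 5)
Your proof is correct and is precisely the standard argument: the paper gives no proof of its own, only a citation to Voisin's Lemma 7.26, whose proof is exactly the orthogonal-complement construction $V' = V^{\perp}$ with respect to the polarization $Q$ that you carry out. The two points you isolate --- nondegeneracy of $Q|_V$ via the positivity of the Hermitian form $i^kQ(\cdot,\bar\cdot)$ on the pieces $V^{p,q}$, and stability of $V^{\perp}$ under $h(S^1)$ (equivalently, compatibility of $Q$ with the Hodge bigrading) --- are exactly the two verifications the cited reference makes.
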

\begin{proof}
(see \cite{Voi}, Lemma 7.26)
\end{proof}

\begin{lemma} \label{jojo}
A curve $C$, which is covered by the Fermat curve $\F_m$ given by $V(x^m+ y^m+ z^m) \subset \bP^2$
for some $1 \leq m \in \N$, has complex multiplication.
\end{lemma}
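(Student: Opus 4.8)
The plan is to exhibit $H^1(C,\Q)$ as a rational sub-Hodge structure of $H^1(\F_m,\Q)$ and then transport the torus property of the Hodge group from the Fermat curve down to $C$ by means of Lemma \ref{7.1}. Recall that $\F_m$ has complex multiplication (as recorded at the start of this section), so by definition $\Hg(H^1(\F_m,\Q))$ is a torus, and this is the one external input that makes the whole argument run.

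First I would use the covering. That $C$ is covered by $\F_m$ means there is a surjective, hence finite, morphism $f : \F_m \to C$ of smooth projective curves. The pullback $f^* : H^1(C,\Q) \to H^1(\F_m,\Q)$ is a morphism of $\Q$-Hodge structures, since pulling back forms preserves the $(p,q)$-decomposition. Moreover $f^*$ is injective, because the projection formula gives $f_* \circ f^* = \deg(f)\cdot \id$ on $H^1(C,\Q)$. Thus $f^*\bigl(H^1(C,\Q)\bigr)$ is a rational sub-Hodge structure of the polarized Hodge structure $H^1(\F_m,\Q)$, isomorphic as a Hodge structure to $H^1(C,\Q)$.

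Next I would split off this sub-Hodge structure. By Lemma \ref{nachbarin} there is a complementary rational sub-Hodge structure $V'$ with $H^1(\F_m,\Q) = f^*(H^1(C,\Q)) \oplus V'$ as $\Q$-Hodge structures. Applying Lemma \ref{7.1} to this decomposition, the projection $\Hg(H^1(\F_m,\Q)) \to \Hg(H^1(C,\Q))$ is surjective. Since the source is a torus, and the image of a torus under a homomorphism of algebraic groups is again a torus (the image is a connected diagonalizable group), the target $\Hg(H^1(C,\Q))$ is a torus. By the definition of complex multiplication this means that $C$ has complex multiplication, completing the argument.

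I expect the only genuinely delicate point to be the verification that $f^*$ is an injective morphism of Hodge structures; the remainder is formal, resting on Lemma \ref{7.1}, Lemma \ref{nachbarin}, and the stability of tori under homomorphic images. The injectivity is immediate from $f_* f^* = \deg(f)\cdot \id$, and the compatibility with the Hodge decomposition is the standard functoriality of the Hodge structure on $H^1$ under pullback of holomorphic one-forms, so no serious obstacle remains once these ingredients are in place.
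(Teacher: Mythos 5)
Your proposal is correct and follows the paper's own argument essentially verbatim: embed $H^1(C,\Q)$ into $H^1(\F_m,\Q)$ via pullback, split off a complement by Lemma \ref{nachbarin}, and use the surjectivity of the projection of Hodge groups from Lemma \ref{7.1} together with the fact that $\F_m$ has complex multiplication. The only difference is cosmetic: where the paper cites \cite{Voi}, 7.3.2 for the injectivity of $f^*$, you supply the standard projection-formula argument, which is a fine substitute.
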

\begin{proof} A covering $\F_m \to C$ yields an injective vector space homomorphism
$$H^1(C,\Q) \to H^1(\F_m,\Q),$$
which extends to an embedding of Hodge structures (see \cite{Voi},
7.3.2 for more details). This embedding induces a direct sum
decomposition into two rational sub-Hodge structures of $H^1(\F_m,\Q)$ (see
Lemma $\ref{nachbarin}$). Hence by Lemma $\ref{7.1}$ and the
fact that $\F_m$ has complex multiplication, one obtains the
statement.
\end{proof}

\begin{Theorem} \label{geilomat}
Let $0 <d_1, d < m$, and $\xi_k$ denote a primitive $k$-th. root of
unity for all $k \in \N$. Then  the curve $C$, which is given by
$$y^m = x^{d_1}\prod\limits_{i = 1}^{n-2} (x-\xi_{n-2}^i)^d,$$
is covered by the Fermat curve $\F_{(n-2)m}$ given by $V(y^{(n-2)m}+x^{(n-2)m}+1)$ and has complex
multiplication.
\end{Theorem}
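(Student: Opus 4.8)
The plan is to exhibit an explicit covering morphism from the Fermat curve onto $C$ and then invoke Lemma~\ref{jojo}, which already reduces having complex multiplication to being dominated by a Fermat curve. Writing $N := n-2$ and recalling that $\xi_N^1, \ldots, \xi_N^N$ run through all $N$-th roots of unity, the defining factor simplifies to $\prod_{i=1}^{N}(x-\xi_N^i) = x^N - 1$, so that $C$ is the (irreducible) cyclic cover
$$y^m = x^{d_1}(x^N-1)^d.$$
First I would record this simplification; the entire remaining content is the construction of a map $\F_{Nm} \to C$.

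The key step is a monomial substitution. On the affine Fermat curve I use the model $a^{Nm} + b^{Nm} = 1$, which is isomorphic to the curve $V(y^{Nm}+x^{Nm}+1)$ of the statement after rescaling $a$ and $b$ by an $(Nm)$-th root of $-1$. I then define $\Psi: \F_{Nm} \to C$ by
$$(a,b) \longmapsto (x,y) = \bigl(a^{m},\ \zeta\, a^{d_1} b^{Nd}\bigr),$$
where $\zeta$ is a fixed $m$-th root of $(-1)^d$. The verification that $\Psi$ lands in $C$ is a direct computation: from the Fermat relation one has $x^N - 1 = a^{Nm} - 1 = -b^{Nm}$, hence $x^{d_1}(x^N-1)^d = (-1)^d a^{m d_1} b^{Nmd} = y^m$. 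Since $x = a^m$ is non-constant, $\Psi$ is a non-constant morphism of the affine models and so extends to a morphism of the associated smooth projective curves; as $C$ is an irreducible (smooth projective) curve, this extension is finite and surjective, i.e. a genuine covering. Applying Lemma~\ref{jojo} to this covering $\F_{Nm} \to C$ then yields that $C$ has complex multiplication.

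I expect the main (if modest) obstacle to be guessing the correct map rather than verifying it. The decisive choice is $x = a^m$, which is forced by the requirement that the Fermat identity $a^{Nm}+b^{Nm}=1$ convert the polynomial $x^N-1$ into a pure power $\pm b^{Nm}$; once this is in place, matching $x^{d_1}(x^N-1)^d$ against $y^m$ forces every remaining exponent and the computation is routine. Note that no hypothesis beyond $0<d_1,d<m$ (and the standing irreducibility of the cyclic cover) is used, and in particular no coprimality assumptions enter. The only technical subtlety worth spelling out is that $\Psi$ is defined a priori on affine charts, so one must observe that a non-constant morphism between smooth projective curves is automatically finite and surjective, which is precisely what makes Lemma~\ref{jojo} applicable verbatim.
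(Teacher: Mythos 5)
Your proof is correct, and at the strategic level it coincides with the paper's: both arguments reduce the theorem to exhibiting a covering $\F_{(n-2)m}\to C$ and then invoke Lemma $\ref{jojo}$. The difference lies in how that covering is produced. The paper pulls back the defining ideal of $C$ along $(x,y)\mapsto(x^m,yx^{d_1})$, notes that the resulting polynomial $y^m-\prod_i(x^m-\xi_{n-2}^i)^d$ splits into $\gcd(m,d)$ factors, singles out the irreducible component $C_1\colon y^{m_0}=\prod_i(x^m-\xi_{n-2}^i)^{d_0}$, uses Remark $\ref{drehen}$ to trade the exponent $d_0$ for $1$, and finally maps the Fermat curve onto $C_1$, so that the covering is the chain $\F_{(n-2)m}\to C_1\to C$. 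Your single monomial substitution $(a,b)\mapsto\bigl(a^m,\zeta a^{d_1}b^{(n-2)d}\bigr)$ is in effect the composite of these maps written out in closed form, and it buys a genuine simplification: because the source $\F_{(n-2)m}$ is already irreducible and smooth, a non-constant morphism to (the smooth model of) $C$ is automatically a finite surjective covering, so the intermediate curve, the irreducibility analysis of the pulled-back ideal, and the $\gcd(m,d)$ bookkeeping all become unnecessary. The computation $x^{n-2}-1=-b^{(n-2)m}$ together with the normalization $\zeta^m=(-1)^d$ checks out, and your remark about extending the affinely defined map to the smooth projective models is exactly the point needed to apply Lemma $\ref{jojo}$ verbatim.
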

\begin{proof}
Let  $C$ be the curve, which is given by
$$y^m = x^{d_1}\prod\limits_{i = 1}^{n-2} (x-\xi_{n-2}^i)^d,$$
and $\phi: \A^2 \to \A^2$ be the morphism, which is given by $y \to
yx^{d_1}$ and $x\to x^m$. By a little abuse of notation, we denote by
$C \cap \A^2$ the singular affine curve given by the equation above,
which is birationally equivalent to $C$. The corresponding
homomorphism $\phi^*: \C[x,y] \to \C[x,y]$ sends the ideal, which
defines $C \cap \A^2$, to the ideal generated by
$$y^mx^{m\cdot d_1} - x^{m\cdot d_1}\prod\limits_{i = 1}^{n-2}(x^m- \xi_{n-2}^i )^d.$$
This is contained in the ideal generated by
\begin{equation} \label{c_1}
y^m - \prod\limits_{i = 1}^{n-2} (x^m-\xi_{n-2}^i)^d.
\end{equation}
Let $m_0 := \frac{m}{gcd(m, d)}$, and $d_0 := \frac{d}{gcd(m,
d)}$. It is obvious that
$$
y^m - \prod\limits_{i = 1}^{n-2} (x^m-\xi_{n-2}^i)^d = \prod\limits_{j=
0}^{gcd(m, d)-1}(y^{m_0} - \xi_{gcd(m, d)}^j\prod\limits_{i =
1}^{n-2} (x^m-\xi_{n-2}^i)^{d_0}).
$$
Now we take the curve $C_1$, which is given by
$$y^{m_0} = \prod\limits_{i = 1}^{n-2} (x^m- \xi_{n-2}^i )^{d_0}.$$
By the definitions of $m_0$ and $d_0$, and Remark $\ref{drehen}$, the curve $C_1$ is given by
$$y^{m_0} = \prod\limits_{i = 1}^{n-2} (x^m-\xi_{n-2}^i),$$
too. Hence this curve irreducible, and $\phi$ induces a cover $C_1 \to C$
resp., $\phi^*$ induces  a $\C$-algebra homomorphism $\C[C\cap \A^2]
\to \C[C_1\cap \A^2]$. By $x \to x$ and $y \to y^{n-2\frac{m}{m_0}}$, we get a
cover of the Fermat curve $\F_{(n-2)m}$ given by $V(y^{(n-2)m}+x^{(n-2)m}+1)$ onto $C_1$. Now we
use the composition of these covers $\F_{(n-2)m} \to C_1$ and $C_1 \to C$, and Lemma
$\ref{jojo}$. This yields the statement.
\end{proof}

\chapter{Some preliminaries for families of cyclic covers}

\section{The theoretical foundations}

We want to study the variations of Hodge structures ($VHS$) of the families of cyclic
covers onto $\bP^1$, which will be constructed in the next section. Hence let us first make
some general observations about the relation between their monodromy groups and Hodge
groups resp., Mumford-Tate groups, which will be needed for the calculation (of the derived group)
of the generic Hodge group defined below.

\begin{proposition} \label{genmt}
Let $F$ be a totally real number field, $W$ be a complex connected algebraic manifold, $\sA \to W$
be a family of Abelian varieties and $\sV$ be its polarized variation of $F$-Hodge structures of
weight $1$ on $W$. Then there is a countable
union $W' \subset W$ of subvarieties such that all $\MT(\sV_p)$ coincide (up to conjugation by
integral matrices) for all (closed)
$p \in W \setminus W'$. Moreover one has $\MT(\sV_{p'}) \subset \MT(\sV_p)$ for all (closed)
$p' \in W'$ and $p \in W \setminus W'$.
\end{proposition}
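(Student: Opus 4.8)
The plan is to reduce the statement to the classical fact that the Mumford--Tate group of a polarized $VHS$ is generically constant, with its jumps confined to a countable union of analytic subvarieties. The engine is Deligne's description of $\MT$ as the stabilizer of the Hodge tensors, combined with a countability argument that exploits the integral structure of $\sV$.

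First I would fix a base point $p_0 \in W$, put $V := \sV_{p_0}$, and let $L := H^1(\sA_{p_0},\Z) \subset V$ be the polarized lattice. For integers $a,b,c$ for which the weight of $V^{\otimes a}\otimes (V^\vee)^{\otimes b}\otimes \Q(c)$ is $0$, write $T^{a,b,c}$ for this tensor space; it carries the induced weight-$0$ Hodge structure together with the induced lattice $L^{a,b,c} := L^{\otimes a}\otimes (L^\vee)^{\otimes b}$. The key input, which I would cite from Deligne rather than reprove, is that $\MT(\sV_p)$ equals the stabilizer in $\GL(\sV_p)\times \G_{m}$ of the $\Q$-space of Hodge tensors $\bigoplus_{a,b,c}\bigl(T^{a,b,c}_p\bigr)^{0,0}\cap \bigl(T^{a,b,c}_p\bigr)_{\Q}$. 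Thus the entire question becomes how this space of Hodge tensors varies with $p$ (the totally real field $F$ only refines the Hodge structures and plays no role in what follows; one may equally run the argument with an $\sO_F$-lattice).

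Next I would pass to the universal cover $u:\tilde W \to W$, on which $u^*\sV$ and all its tensor local systems are trivialized, so that a flat section is simply a constant vector. For each tensor type and each $w$ in the lattice $L^{a,b,c}$ I would form the Hodge locus $\mathrm{HL}(w) := \{\tilde p \in \tilde W : w \in \sF^0_{\tilde p}T^{a,b,c}\}$. Because the Hodge filtration $\sF^\bullet$ varies holomorphically and membership in $\sF^0$ is a closed analytic condition, each $\mathrm{HL}(w)$ is a closed analytic subvariety. The decisive point is that $L^{a,b,c}$ is finitely generated over $\Z$, hence countable, and that every rational Hodge tensor is a rational multiple of an integral one; so these countably many loci already detect every jump of the Hodge-tensor space. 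I would then let $G$ be the group cut out by those $w$ with $\mathrm{HL}(w)=\tilde W$ (the tensors that are Hodge everywhere), set $\tilde W' := \bigcup\, \mathrm{HL}(w)$ over all types and all $w$ with $\mathrm{HL}(w)\neq \tilde W$, and take $W' := u(\tilde W')$, a countable union of analytic subvarieties. Off $W'$ every integral Hodge tensor is one that is Hodge at every point, so the space of Hodge tensors is the constant flat one and $\MT(\sV_p)=G$ for all $p \in W\setminus W'$. Moving the base point replaces the trivialization by parallel transport along a path, i.e.\ by an element of the monodromy group $\Gamma \subset \GL(L)$, which acts by integral matrices; this is the origin of the phrase ``up to conjugation by integral matrices''.

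For the inclusion at special points, I would observe that a tensor with $\mathrm{HL}(w)=\tilde W$ is Hodge at \emph{every} point, in particular at each $p'\in W'$; hence the Hodge-tensor space at $p'$ contains the generic one, and passing to stabilizers reverses the inclusion, yielding $\MT(\sV_{p'}) \subseteq G = \MT(\sV_p)$ for $p \notin W'$. I expect the main obstacle to be exactly this countability reduction: one must work with the integral lattice $L^{a,b,c}$ instead of the uncountable $\Q$-space $\bigl(T^{a,b,c}\bigr)_{\Q}$ and check that the integral tensors still capture every jump, together with the routine but essential verifications that each $\mathrm{HL}(w)$ is analytic and that a flat section which is Hodge on the dense set $W\setminus W'$ is automatically Hodge on all of $W$.
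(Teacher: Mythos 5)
Your proposal is correct: it is the standard Hodge--tensor/Hodge--locus argument (Mumford--Tate as the fixer of Hodge tensors, countably many closed analytic loci $\mathrm{HL}(w)$ indexed by integral tensors on the universal cover, with the generic group constant off their union and only shrinking on it). The paper itself gives no argument here --- it simply cites \cite{MVZ}, Subsection 1.2 --- and your write-up is precisely the proof underlying that reference, so there is nothing substantive to contrast.
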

\begin{proof}
(see \cite{MVZ}, Subsection 1.2)\end{proof}

The preceding Proposition motivates the definition of the generic Mumford-Tate group
\index{Mumford-Tate group!generic} of a 
polarized variation $\sV$ of $\Z$-Hodge structures of weight $k$ on a non-singular connected
algebraic variety $W$ given by $\MT(\sV) = \MT(\sV_p)$ for all (closed) $p \in W \setminus W'$.

Since the image of the embedding $\SL(\sV_{\Q,p}) \hookrightarrow \GL(\sV_{\Q,p})$ is independent
with respect to the chosen coordinates on $\sV_{\Q,p}$, Lemma $\ref{idhggr}$ allows us to
define the generic Hodge group $Hg(\sV):=(\MT(\sV)\cap \SL(\sV))^0$ \index{Hodge group!generic}
such that $\Hg(\sV) = \Hg(\sV_p)$ for all (closed) $p \in W \setminus W'$.

\begin{definition}
Let $\Q \subseteq K\subseteq \R$ be a field and $\sV = (\sV_{K},\sF^{\bullet } ,Q)$ be a polarized
variation of $K$ Hodge structures on a connected complex manifold $D$. Then $\Mon^0_K(\sV)_p$
denotes the connected component of identity of the
Zariski closure of the monodromy group in $\GL((\sV_{K})_p)$ for some $p \in D$. For simplicity we
write $\Mon^0(\sV)_p$ instead of $\Mon^0_{\Q}(\sV)_p$
\end{definition}

\begin{Theorem} \label{monmtder}
Keep the assumptions and notations of Proposition $\ref{genmt}$. One has that $\Mon_F^0(\sV)_p$ is a
subgroup of $\MT_F^{\der}(\sV_p)$ for all $p \in W\setminus W'$. Moreover for a variation of $\Q$
Hodge structures one has that $\Mon^0(\sV)_p$ is a normal subgroup of $\MT^{\der}(\sV_p)$ and
$$\Mon^0(\sV)_p = \MT^{\der}(\sV_p)$$
for all $p \in W\setminus W'$, if $\sV_{\Q}$ has a $CM$ point.
\end{Theorem}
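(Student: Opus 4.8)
The plan is to prove the three assertions in turn, leaning on two standard pillars of the Hodge theory of a polarizable variation which are not in the excerpt but which I would invoke: Deligne's semisimplicity theorem (the local system underlying a polarizable $\sV$ is semisimple, so $\Mon^0(\sV)_p$ is a connected semisimple group) and the theorem of the fixed part (the monodromy invariants in any tensor construction on $\sV$ underlie a sub-Hodge structure of the fibre). Throughout I work at a generic $p \in W \setminus W'$, where Proposition \ref{genmt} gives $\MT(\sV_p) = \MT(\sV)$, and I use Lemma \ref{idhggr} to pass between $\MT$ and $\Hg$. For the inclusion $\Mon_F^0(\sV)_p \subseteq \MT_F^{\der}(\sV_p)$, I would first show containment in $\MT_F(\sV_p)$. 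The group $\MT_F(\sV_p)$ is the stabilizer of all Hodge tensors at $p$, i.e. the $F$-Hodge classes of type $(0,0)$ in the spaces $\sV_p^{\otimes a}\otimes(\sV_p^\vee)^{\otimes b}$; since $W'$ is exactly the locus carrying extra Hodge classes, at the generic $p$ every such tensor is the fibre of a flat global section, hence monodromy invariant (this is where the theorem of the fixed part together with genericity enters). As monodromy fixes each monodromy-invariant tensor by definition, it lies in the common stabilizer, so $\Mon_F^0(\sV)_p \subseteq \MT_F(\sV_p)$. Being connected semisimple, it maps trivially to the torus quotient $\MT_F/\MT_F^{\der}$ and therefore lands in $\MT_F^{\der}(\sV_p)$; the same argument with $F$ replaced by $\Q$ gives the rational statement.

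For normality over $\Q$, the key point is that $\fh := Lie(\Mon^0(\sV)_p) \subseteq {\rm End}(\sV_{\Q,p})$ is a $\Q$-rational subspace stable under the adjoint action of the full monodromy group (the identity component is normal in the Zariski closure, so its Lie algebra is invariant). Hence $\fh$ is the fibre of a sub-local-system of ${\rm End}(\sV_\Q)$, which by Deligne's semisimplicity underlies a sub-VHS. Splitting off its orthogonal complement with the polarization, the projector $\pi_\fh \in {\rm End}({\rm End}(\sV_{\Q,p}))$ onto $\fh$ is a flat global section that is a Hodge class, i.e. a generic Hodge tensor, and so is fixed by $\MT(\sV_p)$. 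Fixing $\pi_\fh$ means $\MT(\sV_p)$ preserves $\fh$ under the adjoint action, so $\MT(\sV_p)$ normalizes $\Mon^0(\sV)_p$; in particular $\Mon^0(\sV)_p$ is normal in $\MT^{\der}(\sV_p)$.

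For the final equality under the hypothesis of a $CM$ point, I would write $\MT^{\der}(\sV_p) = \Mon^0(\sV)_p \cdot G'$ as an almost direct product of the normal semisimple subgroup $\Mon^0(\sV)_p$ with a complementary normal semisimple factor $G'$ that commutes with it, and show $G' = \{e\}$. The crucial Hodge-theoretic input — and the step I expect to be the main obstacle — is that $G'$ carries an isotrivial piece of the variation: since $G'$ commutes with the image of monodromy, and since $Lie(\Mon^0(\sV)_p)_\C$ contains the Griffiths-transversal directions $\fg_\C^{-1,1}\oplus\fg_\C^{1,-1}$ of $\fg := Lie(\MT(\sV))$, the $G'$-component of the Hodge cocharacter $h_t$ is constant in $t$. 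This constancy statement is exactly the delicate part and is where I would have to be careful, arguing from Griffiths transversality that modulo $\fh$ the infinitesimal period variation vanishes. Granting it, let $p_0$ be the $CM$ point, so $\MT(\sV_{p_0})$ is a torus $T$ with $T \subseteq \MT(\sV)$ by Proposition \ref{genmt}. Constancy forces the generic $G'$-component of the cocharacter to already occur at $p_0$, so the projection of $T$ to $G'$ generates all of $G'$; but a torus cannot surject onto a nontrivial semisimple group, whence $G' = \{e\}$ and $\Mon^0(\sV)_p = \MT^{\der}(\sV_p)$.
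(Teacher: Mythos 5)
The paper itself does not prove this statement --- it cites \cite{Mo}, Theorem 1.4 and \cite{MVZ}, Properties 7.14, which in turn rest on Andr\'e's theorem \cite{Andre} --- so you are supplying an argument where the paper gives only a reference. Your first two steps (containment of $\Mon^0_F(\sV)_p$ in $\MT_F(\sV_p)$ via generic Hodge tensors and the fixed-part theorem, then in $\MT_F^{\der}(\sV_p)$ by semisimplicity of $\Mon^0$; normality via the projector onto $Lie(\Mon^0(\sV)_p)$ being a generic Hodge tensor) are exactly the standard argument and are fine.

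The one genuine flaw is in the final step, and it sits precisely where you flagged the difficulty. The claimed inclusion $\fg_{\C}^{-1,1}\oplus\fg_{\C}^{1,-1}\subseteq Lie(\Mon^0(\sV)_p)_{\C}$ is false in general: for a constant family whose fibre is a non-CM abelian variety one has $\Mon^0(\sV)_p=\{e\}$ while $\fg^{-1,1}\neq 0$. (No contradiction with the theorem there, since that family has no $CM$ point, but it kills your stated justification.) What is true, and what you should use, is weaker and does not come from Griffiths transversality at all: since $\Mon^0(\sV)_p$ centralizes $G'$ up to a finite group, the subspace $Lie(G')\subset {\rm End}(\sV_{\Q,p})$ is the fibre of a sub-local system with \emph{finite} monodromy; after a finite \'etale cover it is spanned by flat global sections, so by the theorem of the fixed part it is a \emph{constant} sub-$VHS$ of ${\rm End}(\sV_{\Q})$. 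Constancy of the induced Hodge structure on $Lie(G')$ says exactly that the composite $\BS\stackrel{h_s}{\to}\MT(\sV)_{\R}\to G'^{\ad}_{\R}$ is independent of $s$ (equivalently, the component of $d\Phi$ in $(\fg/Lie(\Mon^0))^{-1,1}$ vanishes --- that is the correct form of the statement you were reaching for). From there your conclusion is right: since $G'^{\ad}$ is a quotient of $\MT(\sV)$, it is the smallest $\Q$-group containing the image of this constant cocharacter, which at the $CM$ point $p_0$ factors through a torus; a semisimple adjoint group that is generated by the image of a torus is trivial, so $G'=\{e\}$ and $\Mon^0(\sV)_p=\MT^{\der}(\sV_p)$.
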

\begin{proof}
(see \cite{Mo}, Theorem 1.4 for the statement about the variations of $\Q$ Hodge structures and
\cite{MVZ}, Properties $7.14$ for the statement about the variations of $F$ Hodge structures)
\end{proof}

\begin{corollary} \label{ssmon}
Keep the assumptions of Theorem $\ref{monmtder}$. Then the group $\Mon^0(\sV)$ is semisimple.
\end{corollary}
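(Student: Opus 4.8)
The plan is to exhibit $\Mon^0(\sV)$ as a connected normal subgroup of a semisimple group and then invoke the standard fact that such a subgroup is itself semisimple. First I would record that the relevant Mumford-Tate group is reductive: under the assumptions of Proposition \ref{genmt} the fibers are Abelian varieties, so $\sV$ is of type $(1,0),(0,1)$, and the construction preceding Lemma \ref{idhggr} shows that $\MT(\sV_p)$ is reductive. Hence by Proposition \ref{prodar} its derived group $\MT^{\der}(\sV_p)$ is the semisimple factor in the almost direct product decomposition of $\MT(\sV_p)$, i.e.\ $\MT^{\der}(\sV_p)$ is semisimple.

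Next I would apply Theorem \ref{monmtder}. Recalling that by convention $\Mon^0(\sV) = \Mon^0_{\Q}(\sV)$, the statement there for variations of $\Q$-Hodge structures gives that $\Mon^0(\sV)_p$ is a \emph{connected normal} subgroup of $\MT^{\der}(\sV_p)$. Thus the corollary reduces to the purely group-theoretic claim that a connected normal subgroup $N$ of a semisimple algebraic group $G$ is semisimple, i.e.\ has trivial radical $R(N)$.

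For this claim I would argue as follows. The radical $R(N)$ is the maximal connected normal solvable subgroup of $N$, hence it is invariant under every automorphism of $N$ (a characteristic subgroup), since any such automorphism carries connected normal solvable subgroups to connected normal solvable subgroups and preserves maximality. Because $N$ is normal in $G$, conjugation by any element of $G$ restricts to an automorphism of $N$, so $R(N)$ is normalized by all of $G$. Therefore $R(N)$ is a connected normal solvable subgroup of $G$, and by the maximality in the definition of the radical one gets $R(N) \subseteq R(G)$. Since $G = \MT^{\der}(\sV_p)$ is semisimple, $R(G) = \{e\}$, whence $R(\Mon^0(\sV)) = \{e\}$ and $\Mon^0(\sV)$ is semisimple.

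There is essentially no serious obstacle: the entire content is the lemma that connected normal subgroups of semisimple groups are semisimple, which follows from the characteristic property of the radical together with the maximality built into the definition of $R(G)$. The only point requiring care is to stay in the $\Q$-coefficient setting of Theorem \ref{monmtder}, where \emph{normality} of $\Mon^0(\sV)_p$ in $\MT^{\der}(\sV_p)$ is available; in the $F$-coefficient case one is only guaranteed an inclusion $\Mon^0_F(\sV)_p \subset \MT_F^{\der}(\sV_p)$, which alone would not yield semisimplicity.
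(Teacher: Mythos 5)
Your proof is correct and follows essentially the same route as the paper: both arguments rest on Theorem \ref{monmtder} giving that $\Mon^0(\sV)_p$ is a connected \emph{normal} subgroup of the semisimple group $\MT^{\der}(\sV_p)$, and then invoke the fact that such a subgroup is itself semisimple. The only difference is in how that last fact is justified — the paper passes to Lie algebras (the Lie algebra of $\Mon^0$ is an ideal of the semisimple Lie algebra of $\MT^{\der}$, hence a direct sum of simple ideals), whereas you argue at the group level via the radical being characteristic; both are standard and your added remark that normality is only available over $\Q$ is a correct and worthwhile observation.
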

\begin{proof}
The Lie subalgebra $Lie(\Mon_{\Q}^0(\sV)_{\R})$ of $Lie(\MT_{\Q}^{der}(\sV)_{\R})$ is an ideal.
Hence the algebra $Lie(\Mon_{\Q}^0(\sV)_{\R})$ consists of the direct sum of simple
subalgebras of $Lie(\MT_{\Q}^{\der}(\sV)_{\R})$. Thus $\Mon_{\Q}^0(\sV)_{\R}$ and hence
$\Mon^0(\sV)$ is semisimple.
\end{proof}

\section{Families of covers of the projective line}
Let $S$ be some $\C$-scheme. Recall that the covers $c_1: V_1 \to \bP^1_S$ and $c_2: V_2 \to
\bP^1_S$ are equivalent, if there is a $S$-isomorphism $j:V_1 \to V_2$ such that
$c_1 = c_2 \circ j$.

In this section we construct a family of cyclic covers onto $\bP^1$ such that all equivalence
classes of covers with a fixed number of branch points with fixed branch
indeces are represented by some of its fibers. For us it is sufficient to start with a space,
which is not a moduli scheme, but
whose closed points ``hit'' all equivalence classes of covers of $\bP^1$ with Galois group
$G = (\Z/m,+)$ and a fixed number of branch points with fixed branch indeces.

We can start with the space \index{$\sP_n$}
$$(\bP^1)^{n+3} \supset  \sP_n := (\bP^1)^{n+3}\setminus \{z_i= z_j|
i\neq j\},$$
which parameterizes the injective maps $\phi: N \to \bP^1$, where $N:= \{s_1, \ldots, s_{n+3}\}$.
Thus a point $q \in \sP_n$
corresponds to an injective map $\phi_q: N \to \bP^1$.\footnote{The set $N$ is some arbitrary
finite set, where the set $S$ of the preceding chapter is a concrete set $S \subset \bP^1$ given
by $S = \phi_q(N)$ for some $q \in \sP_n$.} One
can consider $\sP_n$ as configuration space of $n+3$ ordered points,
too.

We endow the points $s_k \in N$ with some local monodromy
data $\alpha_k =e^{2\pi i \mu_k}$, where
$$\mu_k \in \Q, \ \  0 < \mu_k < 1 \  \ \mbox{ and } \  \ \sum\limits_{k = 1}^{n+3} \mu_k \in \N.$$
Now we construct a family of covers of $\bP^1$ by these local monodromy data:

\begin{construction} \label{hiercr} \index{$\sC \to \sP_n$}
Let $m$ be the smallest integer such that $m\mu_k \in \N$ for $k = 1, \ldots, n+3$, and
$D_k \subset \bP_{\sP_n}:= \bP^1 \times \sP_n$ be the prime divisor given by 
$$D_k= \{(a_k,a_1, \ldots, a_k, \ldots, a_{n+3})\}.$$
Let $D$ be the divisor
$$D := \sum\limits_{k=1}^{n+3} m\mu_k D_k\sim m D_0 \ \ \mbox{with} \ \ D_0 :=
(\sum\limits_{k = 1}^{n+3} \mu_k) \cdot (\{0\} \times \sP_n).$$
By the sheaf $\sL := \sO_{\bP_{\sP_n}}(D_0)$ and the divisor $D$, we obtain an irreducible
cyclic cover $\sC$ of degree $m$ onto $\bP_{\sP_n}$ as in \cite{EV1}, $\S3$ (where irreducible
means that the covering variety is irreducible). By
$\pi :\sC \to \bP^1 \times \sP_n \stackrel{pr_2}{\to} \sP_n$, this cyclic cover yields a family of
irreducible cyclic covers of degree $m$ onto $\bP^1$.

Suppose that $r$ divides $m$. By taking the quotient of the subgroup of order $r$ of the Galois
group of the cyclic cover $\sC \to \bP^1 \times \sP_n$, one gets a family $\pi_r: \sC_r \to \sP_n$
\index{$\sC_r$} of cyclic covers of degree $\frac{m}{r}$ onto $\bP^1$. Let $\phi_r : \sC \to \sC_r$
denote the quotient map. One has
$$\pi = \pi_r \circ \phi_r.$$
\end{construction}

\begin{remark}
Without loss of generality one may assume that $q := (a_1,\ldots, a_{n+3}) \in \sP_n$ is contained
in $\A^{n+3}$, too. Thus the fiber $\sC_q$ is given by the equation
$$y^m = (x-a_1)^{d_1}\cdot \ldots  \cdot (x-a_{n+3})^{d_{n+3}}$$
with $d_k = m\mu_k$. By Remark $\ref{monodisc}$, the local monodromy datum $\alpha_k$ describes
the lifting of a path $\gamma_k$ around $a_k \in \bP^1$.\footnote{This circumstance explains the
term ``local monodromy datum''.} One checks easily that each equivalence class of cyclic covers of
degree $m$ with $n+3$ branch points and fixed branch indexes $d_1, \ldots, d_{n+3}$ is represented
by some fibers of $\sC$. Moreover for $C = \sC_q$ the quotient $C_r$ of Remark $\ref{cr}$ is given
by the fiber $(\sC_r)_q$.
\end{remark}

A family of smooth algebraic curves over $\C$ determines a proper submersion $\tau:X \to Y$ in the
category of differentiable manifolds (follows by \cite{Voi}, Proposition 9.5). By the Ehresmann
theorem, we obtain that over any contractible submanifold $W$ of $Y$ the family is
diffeomorphic to $X_0 \times W$, where $X_0$ is the fiber of some point $0 \in W$. This fact has
some consequences for the monodromy representation of its variation of integral Hodge structures.

It is a well-known fact that $R^1\tau_*(\Z)$ is the sheaf associated to the presheaf
$$V \to H^1(\tau^{-1}(V) , \Z|_{\pi^{-1}(V)}) \  \ (\forall \  \ V
\in Top(\sP_n)).$$
Moreover we have
$$H^1(X_0,\Z) = H^1(X_W,\Z) = (R^1\tau_*(\Z))(W)$$
for some contractible $W \subset \sP_n$ with $0 \in W$, which implies that
$R^1\tau_*(\Z)$ is a local system (see \cite{Voi}, 9.2.1).

By using these facts, one can easily ensure that the monodromy group of the $VHS$ of a family of
curves can be calculated over any arbitrary field $\Q \subseteq K \subseteq \C$:

\begin{lemma} \label{KeineAhnung}
Let $K$ be a field with $char(K) = 0$. Moreover let $\tau : X \to Y$ be a holomorphic family of
curves. Then we obtain
$$R^1\tau_*( K) = R^1\tau_*(\Z) \otimes_{\Z} K.$$
\end{lemma}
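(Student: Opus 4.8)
The plan is to build a natural comparison morphism of sheaves on $Y$ and then check that it is an isomorphism on every stalk, which suffices since a morphism of sheaves inducing isomorphisms on all stalks is an isomorphism. For the morphism, I would start from the inclusion of constant sheaves $\Z \hookrightarrow K$, which by functoriality of the higher direct image yields $R^1\tau_*(\Z) \to R^1\tau_*(K)$. Since the target is a sheaf of $K$-vector spaces, this map extends uniquely $K$-linearly to
\[
\Theta \colon R^1\tau_*(\Z)\otimes_{\Z} K \longrightarrow R^1\tau_*(K).
\]
It then remains to show $\Theta$ is an isomorphism, and this I would do stalkwise.

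For the stalks, recall (as noted just before the statement) that for any coefficient ring $A$ the stalk of $R^1\tau_*(A)$ at $y\in Y$ is $\varinjlim_{y\in V} H^1(\tau^{-1}(V),A)$. By the Ehresmann theorem used above, every $y$ has a basis of contractible neighborhoods $V$ over which $\tau$ is diffeomorphic to a product $X_y\times V$, so the restriction maps $H^1(\tau^{-1}(V),A)\to H^1(X_y,A)$ are isomorphisms and the stalk is just $H^1(X_y,A)$. Because tensoring by the fixed $\Z$-module $K$ commutes with the (filtered colimit defining the) stalk functor, the stalk of the left-hand side at $y$ is $H^1(X_y,\Z)\otimes_{\Z}K$, and under these identifications $\Theta_y$ becomes the universal-coefficient comparison map. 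Invoking the universal coefficient theorem for cohomology, $\Theta_y$ fits into
\[
0 \to H^1(X_y,\Z)\otimes_{\Z}K \to H^1(X_y,K) \to {\rm Tor}^{\Z}_1\bigl(H^2(X_y,\Z),K\bigr) \to 0.
\]
Since $\operatorname{char}(K)=0$, the field $K$ contains $\Q$ and hence is torsion-free, thus flat over the principal ideal domain $\Z$; therefore the $\rm Tor$ term vanishes and $\Theta_y$ is an isomorphism. (Alternatively, $X_y$ is a compact Riemann surface, so $H^{*}(X_y,\Z)$ is torsion-free and the UCT gives the isomorphism directly.) As $\Theta$ is an isomorphism on all stalks, it is an isomorphism of sheaves, which is the claim.

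The argument has no deep obstacle; the only step requiring care is the stalk bookkeeping — namely confirming that the stalk of the tensor-product sheaf is the tensor product of the stalk, and that the stalk of $R^1\tau_*$ is the cohomology of the fiber via Ehresmann's local triviality. The single genuinely substantive point is the flatness of $K$ over $\Z$, which is exactly what kills the potential $\rm Tor$ obstruction in the universal coefficient sequence and makes $\Theta$ an isomorphism rather than merely a map.
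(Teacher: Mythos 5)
Your proof is correct, and it reaches the same conclusion by a route that is organized differently from the paper's. The paper works at the presheaf level: it cites Hartshorne III.8.1 to present $R^1\tau_*(K)$ as the sheafification of $V \mapsto H^1(\tau^{-1}(V),K)$, and then identifies this presheaf with $V \mapsto H^1(\tau^{-1}(V),\Z)\otimes_{\Z}K$ for \emph{every} open $V$ by appealing to the \v{C}ech description of cohomology, leaving the exchange of tensor product and cohomology implicit. You instead build the natural comparison morphism $\Theta$ from functoriality and $K$-linear extension, and verify it is an isomorphism stalkwise, using the Ehresmann local triviality (established in the paper just before the lemma) to identify the stalk with $H^1(X_y,A)$ and then the universal coefficient theorem plus flatness of $K$ over $\Z$ to kill the $\mathrm{Tor}$ term. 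What your version buys is precision at exactly the point where the paper is thinnest: you only need the coefficient comparison over a neighborhood basis of contractible $V$ (where $\tau^{-1}(V)$ retracts onto the compact fiber), whereas the paper's appeal to ``the cohomology of a compact manifold'' is applied to preimages $\tau^{-1}(V)$ that are not compact in general; and you make explicit both the origin of the map being claimed to be an isomorphism and the flatness input that the paper uses silently. The paper's route is shorter and avoids stalk bookkeeping, but the essential content --- that tensoring with the torsion-free, hence flat, $\Z$-module $K$ commutes with taking $H^1$ --- is the same in both.
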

\begin{proof}
By \cite{hart}, {\bf III}, Proposition 8.1, the sheaf $R^1\tau_*(K)$ is given by the sheafification
of the presheaf
$$V \to H^1(\tau^{-1}(V), K|_{\tau^{-1}(V)}) \ \ (\forall \ \ V \in Top(Y)).$$
Hence by the description of the cohomology of a compact manifold by \v{C}ech complexes
(see \cite{Voi}, 7.1.1), this presheaf is given by
$$V \to H^1(\tau^{-1}(V), \Z |_{\tau^{-1}(V)})\otimes_{\Z} K \ \ (\forall \ \ V \in Top(Y)).$$
By the fact that a local section of $\Z$ or $K$ on a connected
component of $V$ resp., $\tau^{-1}(V)$ is constant, one does not need to differ between the locally
constant sheaves given by $\Z$ resp.,
$K$ on $X$ or $Y$ for the computation of $R^1\tau_*(K)$. Hence by using \cite{hart}, {\bf III},
Proposition 8.1 for $R^1\tau_*(\Z)$, one obtains the desired identification.
\end{proof}

By the fact that the integral cohomology of a curve does not have torsion, one concludes:

\begin{corollary} \label{hehehe}
Keep the assumptions of Lemma $\ref{KeineAhnung}$. Then the monodromy
representations of $R^1\tau_*(\Z)$ and $R^1\tau_*(K)$ coincide.
\end{corollary}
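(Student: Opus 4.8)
The plan is to reduce everything to the action of the fundamental group on a single fixed stalk, where the base-change identification of Lemma \ref{KeineAhnung} becomes transparent once one knows that the relevant lattice is free. I would fix a base point $y_0 \in Y$ and recall (Lemma \ref{monod}) that the monodromy representation of a local system is precisely the action of $\pi_1(Y,y_0)$ on the stalk at $y_0$, which for $R^1\tau_*(\Z)$ and $R^1\tau_*(K)$ are $H^1(X_{y_0},\Z)$ and $H^1(X_{y_0},K)$ respectively. By Lemma \ref{KeineAhnung} one has $R^1\tau_*(K) \cong R^1\tau_*(\Z)\otimes_{\Z} K$, and since that identification was produced naturally, i.e. compatibly with the restriction maps of the two sheaves, it is an isomorphism of local systems and hence intertwines the two $\pi_1(Y,y_0)$-actions. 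Passing to the stalk at $y_0$ therefore identifies the $K$-monodromy action on $H^1(X_{y_0},K)$ with the $K$-linear extension of the $\Z$-monodromy action on $H^1(X_{y_0},\Z)\otimes_{\Z}K$.

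Next I would invoke the quoted fact that the integral cohomology of a curve is torsion-free: the $\Z$-module $H^1(X_{y_0},\Z)$ is free of rank $2g$, so the natural map $H^1(X_{y_0},\Z) \to H^1(X_{y_0},\Z)\otimes_{\Z}K$ is injective and carries any $\Z$-basis to a $K$-basis. Fixing such a basis, the monodromy matrix of each $\gamma \in \pi_1(Y,y_0)$ lies in $\GL_{2g}(\Z)$ for the integral representation, and the matrix of the same $\gamma$ for the $K$-representation is the identical integer matrix viewed inside $\GL_{2g}(K)$. Thus the $K$-monodromy representation is just the composite of the integral one with the inclusion $\GL_{2g}(\Z) \hookrightarrow \GL_{2g}(K)$, which is exactly the assertion that the two representations coincide.

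The one point requiring care, and the main (albeit minor) obstacle, is the compatibility of the isomorphism of Lemma \ref{KeineAhnung} with parallel transport rather than with the bare stalks: one must check that the tensor-product identification is functorial in the open set, so that it commutes with the transition isomorphisms that define the monodromy. This is automatic from the way that identification was built via sheafification of the \v{C}ech presheaves, but it is precisely what upgrades a stalkwise equality of vector spaces to an equality of representations; torsion-freeness then removes the final ambiguity by guaranteeing that the $\Z$-lattice embeds into the $K$-vector space with a common basis, so that no change of coordinates is needed.
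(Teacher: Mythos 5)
Your argument is correct and follows the same route as the paper, which deduces the corollary directly from Lemma \ref{KeineAhnung} together with the torsion-freeness of the integral cohomology of a curve. You merely spell out the (automatic) compatibility of the base-change isomorphism with the transition maps and the passage to a common basis of the stalk, which the paper leaves implicit.
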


\begin{remark} \label{slj}
Recall the we have an eigenspace decomposition of
$$H^1(\sC_0,\C) = H^1(\sC_0,\Z) \otimes \C$$
with respect to the Galois group action. By $H^1(\sC_0,\C) = (R^1\pi_*(\C))(W)$ for some
contractible $W \subset \sP_n$ with $0 \in W$, we obtain an eigenspace decomposition of
$(R^1\pi_*(\C))(W)$. Since we
have this decomposition over all contractible $W \subset \sP_n$, we can glue these eigenspaces,
which yields a decomposition of the whole sheaf $R^1\pi_*(\C)$ into eigenspaces with
respect to the Galois group action.

Recall that we have an identification between the
characters of the Galois group of some fiber and the elements $j \in \Z/(m)$. This identification
allows a compatible identification between the characters of the Galois group of the family and the
elements $j \in \Z/(m)$. Let $\sL_j$ \index{$\sL_j$} denote the eigenspace of $R^1\pi_*(\C)$ with
respect to the character $j$.
\end{remark}

\begin{remark} \label{difint}
Let $0 \in \sP_n$. We have a monodromy action $\rho_{\sC}$ by diffeomorphisms on the
fiber $\sC_0$, which is induced by the glueing diffeomorphisms of the locally constant family
of manifolds given by $\sC$. Since these glueing differeomorphisms induce the
glueing homomorphisms of $R^1\pi_*(\Z)$ in the obvious natural way, the monodromy representation
$\rho$ of $R^1\pi_*(\Z)$ is given by
$$\rho(\gamma)(\eta) = (\rho_{\sC}(\gamma))_*(\eta) \ \ (\forall \  \ \eta \in H^1(\sC_0,\Z)).$$
\end{remark}

\begin{remark} \label{dromy}
Since each glueing diffeomorphism of the locally constant family of manifolds corresponding to
$\sC$ respects intersection form, Remark $\ref{difint}$ implies that the monodromy group of
$R^1\pi_*(\C)$ respects the polarization of the Hodge structures. Assume that
$H^1_j(\sC_q,\C) = (\sL_j)_q$ satisfies that $H^{1,0}_j(\sC_q) = n_1$ and $H^{0,1}_j(\sC_q)_2$.
This means that the variation of integral polarized Hodge structure endows $(\sL_j)_q$ with an
Hermitian form with signature $(n_1,n_2)$. Hence the monodromy group of this eigenspace is
contained in $U(n_1,n_2)$. In this sense we say that $\sL_j$ \index{$\sL_j$} is of type
$(n_1,n_2)$\index{$(n_1,n_2)$ type of $\sL_j$}.
\end{remark}

\section{The homology and the monodromy representation}
In this section we study the monodromy representation of $\pi_1(\sP_n)$ on the dual of
$R^1\pi_*(\C)$ given by the complex homology. This will yield corresponding statements for the
monodromy representation of $R^1\pi_*(\C)$.

In the case of the configuration space $\sP_n$ of $n+3$ points, we make a difference between these
different points. One says that the points are ``colored'' by different
``color''. Moreover one can identify its fundamental group with the subgroup
of the braid group on $n+3$ strands in $\bP^1$, which is given by the braids leaving
the strands invariant (see \cite{Hansen}, Chapter {\bf I}. 3.). This subgroup of the braid group
is called the colored braid group. An element of this group is for example
given by the Dehn twist $T_{k_1,k_2}$ with $1 \leq  k_1 < k_2 \leq n+3$. The
Dehn twist \index{Dehn twist} $T_{k_1,k_2}$ is given by leaving $a_{k_2}$ "run"
counterclockwise around $a_{k_1}$.

Now we consider a fiber $C  = \sC_q$ of $\sC$. Recall that $C$ is a cyclic cover of $\bP^1$
described in Chapter 2.

Consider the eigenspace $\LL_j$, which can be extended from a local system on $\bP^1 \setminus S$
to a local system on $\bP^1 \setminus S_j$
with $S_j = \{a_1, \ldots, a_{n_j+3}\}$. For simplicity one may without loss of generality assume 
that $a_{n_j+3} = \infty$ and $a_k \in \R$ such that $a_k < a_{k+1}$ for all
$k = 1, \ldots, n_{j+2}$. Here we assume that $\delta_{k}$ is the oriented path from $a_k$ to
$a_{k+1}$ given by the straight line.

\begin{construction}
Let $\zeta$ be a path on $\bP^1$. Assume without loss of generality that $\zeta((0,1))$ is
contained in a simply connected open subset $U$ of $\bP^1 \setminus S$. Otherwise we decompose
$\zeta$ into such paths. It has $m$ liftings $\zeta^{(0)}, \ldots, \zeta^{(m-1)}$ to $C$ such that
$\psi(\zeta^{(\ell)}) = \zeta^{([\ell-1]_m)}$. By the tensorproduct of $\C$ with the free Abelian
group generated by the paths on $C$, one obtains the vector space of $\C$-valued paths on $C$. Now
let
$c \in \C$ and take the linear combination of $\C$-valued paths on $C$ given by
$$\hat \zeta = c\zeta^{(0)}+ \ldots+ce^{2\pi i\frac{jr}{m}}\zeta^{(r)} + \ldots+
ce^{2\pi i\frac{j(m-1)}{m}} \zeta^{(m-1)}.$$
By the assumptions, one verifies easily that $\psi(\hat \zeta) = e^{2\pi i\frac{j}{m}}\hat \zeta$.
Moreover by the local sections given by $c, \ldots, ce^{2\pi i\frac{jr}{m}}, \ldots,
ce^{2\pi i\frac{j(m-1)}{m}}$ on the corresponding sheets over $U$ containing the different
$\zeta^{(\ell)}((0,1))$, one obtains a corresponding section $\tilde c \in \LL_j(U)$. In this
sense we have a $\LL_j$-valued path $\tilde c\cdot \zeta$ on $\bP^1$.
\end{construction}

\begin{remark} \label{homolog}
Consider the (oriented) path $\delta_k$. Let $e_k$ be a non-zero local section of
$\LL_j$ defined over an open set containing $\delta_k((0,1))$. The $\LL_j$-valued path
$e_k \cdot \delta_k$ yields an
element $[e_k \cdot \delta_k]$ of the homology group of $H_1(C, \C)$, which is represented by the
corresponding linear combination of paths in $C$ lying over $\delta_k$. It has
the character $j$ with respect to the Galois group representation. Let $H_1(C,\C)_j$
denote the corresponding eigenspace.
\end{remark}

\begin{definition} \index{stable partition}
A partition of $S_j$ into some disjoint sets $S^{(1)} \cup  \ldots \cup  S^{(\ell)} = S_j$ is stable
with respect to the local monodromy data $\mu_k$ of $\LL_j$, if
$$\sum\limits_{a_k \in S^{(1)}}\mu_k \notin \N, \ldots, \sum\limits_{a_k \in S^{(\ell)}}\mu_k
\notin \N.$$
\end{definition}

\begin{Theorem}
Assume that $S_j = \{a_i: i = 1, \ldots, n_j+3\}$ has the stable partition
$\{a_1, \ldots, a_{\ell+1}\}, \{a_{\ell+2},\ldots, a_{n_j+3}\}$ for some
$1 \leq \ell \leq n_j+1$. Then the eigenspace
$H_1(C,\C)_j$ of the complex homology group of $C$ has a basis given by
$$ \sB = \{[e_k\delta_k]: k = 1, \ldots, \ell\} \cup
\{[e_k\delta_k]: k = \ell +2, \ldots, n_j+2\}.$$
\end{Theorem}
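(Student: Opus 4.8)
\emph{Reduction to linear independence, via twisted homology.} By the Galois--compatible duality between $H^1(C,\C)$ and $H_1(C,\C)$ of \ref{pucky} together with Corollary \ref{rangi}, the eigenspace satisfies
$$\dim_{\C} H_1(C,\C)_j = |S_j| - 2 = n_j + 1,$$
which is exactly the cardinality of $\sB$. Hence it suffices to prove that the $n_j+1$ classes in $\sB$ are linearly independent. My plan is to run the whole argument on the base: the eigenspace $H_1(C,\C)_j$ is canonically identified with the twisted homology $H_1(\bP^1\setminus S_j,\LL_j)$ of the punctured line with coefficients in the rank one local system $\LL_j$ of Theorem \ref{monorep}, and under this identification the loaded arc $\hat\delta_k$ of Remark \ref{homolog} corresponds to $[e_k\delta_k]$.

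\emph{The consecutive arcs are cycles and give local bases.} Each $\delta_k$ joins two points $a_k,a_{k+1}\in S_j$, at which the local monodromy of $\LL_j$ is the \emph{nontrivial} root of unity $\omega = e^{2\pi i jd_k/m}$. I would first record that this makes $\hat\delta_k$ a genuine twisted cycle: the boundary contribution of $\hat\delta_k$ at a ramification point over $a_k$ is, up to a constant, the sum $\sum_{t=0}^{\nu-1}\omega^{t}$ taken over a single $\psi^{d_k}$-orbit of sheets of common length $\nu$; since $\omega^{\nu}=1$ this geometric sum vanishes exactly when $\omega\neq 1$, i.e. exactly because $a_k\in S_j$, and likewise at $a_{k+1}$. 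Thus $[e_k\delta_k]\in H_1(C,\C)_j$ is well defined for every $k=1,\dots,n_j+2$. Next, for a disc $D_1$ containing precisely the left block $S^{(1)}=\{a_1,\dots,a_{\ell+1}\}$ and no other puncture, the $\ell$ consecutive arcs $\delta_1,\dots,\delta_\ell$ form a basis of $H_1(D_1\setminus S^{(1)},\LL_j)$, a space of dimension $|S^{(1)}|-1=\ell$; similarly $\delta_{\ell+2},\dots,\delta_{n_j+2}$ form a basis of $H_1(D_2\setminus S^{(2)},\LL_j)$ for a disc $D_2$ containing precisely $S^{(2)}=\{a_{\ell+2},\dots,a_{n_j+3}\}$.

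\emph{Gluing by Mayer--Vietoris; the stable partition is exactly the needed input.} Choose $D_1,D_2$ with $\bP^1\setminus S_j=D_1\cup D_2$ so that $A:=D_1\cap D_2$ is an annular neighbourhood of a simple closed curve separating $S^{(1)}$ from $S^{(2)}$, running through the gap between $a_{\ell+1}$ and $a_{\ell+2}$ from which $\delta_{\ell+1}$ was deleted. The monodromy of $\LL_j$ along this curve equals $e^{2\pi i\sum_{a\in S^{(1)}}\mu_a}$, and this is nontrivial if and only if the partition is stable (the condition $\sum_{S^{(2)}}\mu_a\notin\N$ being equivalent, as the total monodromy around all of $S_j$ is trivial). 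When it is nontrivial, the twisted homology of the circle $A$ vanishes in every degree, so the Mayer--Vietoris sequence for twisted homology collapses to an isomorphism
$$H_1(D_1\setminus S^{(1)},\LL_j)\oplus H_1(D_2\setminus S^{(2)},\LL_j)\ \xrightarrow{\ \sim\ }\ H_1(\bP^1\setminus S_j,\LL_j).$$
Feeding in the two local bases of the previous step shows that $\sB$ maps to a basis of $H_1(C,\C)_j$, as claimed. The main obstacle I expect is not the formal gluing but pinning down the identification of $H_1(C,\C)_j$ with $H_1(\bP^1\setminus S_j,\LL_j)$ sending $[e_k\delta_k]$ to the loaded arc, and the bookkeeping that consecutive arcs really form a basis on each disc; once the local system on the separating annulus is seen to be nontrivial, the role of the stable partition hypothesis becomes immediate.
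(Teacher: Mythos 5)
Your argument is correct, but it follows a genuinely different route from the one in the text. The paper's proof works with all $n_j+2$ consecutive arcs at once: it quotes Looijenga's Lemma~4.5 to the effect that $\{[e_k\delta_k]:k=1,\dots,n_j+1\}$ is already a basis of $H_1(C,\C)_j$, concludes that the full set of $n_j+2$ arcs satisfies exactly one linear relation, produces that relation explicitly as the boundary of a weighted sum of the sheets over $\bP^1\setminus\bigcup_k\delta_k$, and reads off from its coefficients that deleting $\delta_{\ell+1}$ leaves an independent set precisely when the partition is stable. You instead combine the dimension count $\dim H_1(C,\C)_j=|S_j|-2$ with a Mayer--Vietoris decomposition of $\bP^1\setminus S_j$ into two punctured discs glued along an annulus about the separating curve; stability enters as the nontriviality of the monodromy of $\LL_j$ on that annulus, hence as the vanishing of its twisted homology, which is exactly what collapses the Mayer--Vietoris sequence. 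This makes the role of the stability hypothesis completely transparent and avoids computing the relation, at the price of two inputs you leave as black boxes: the identification of $H_1(C,\C)_j$ with the twisted homology $H_1(\bP^1\setminus S_j,\LL_j)$ carrying $[e_k\delta_k]$ to the loaded arc (true because every point of $S_j$ has nontrivial local monodromy for $\LL_j$, but one must also check that filling in the punctures of $S\setminus S_j$ kills exactly the loaded boundary loops on both sides of the comparison), and the statement that consecutive loaded arcs form a basis of the twisted homology of a punctured disc, which is the same Looijenga-type input the paper cites, merely localized to the two blocks. Be aware that this second input does not follow by iterating your own gluing step: the separating-annulus trick is special to the sphere, since inside a disc a circle around one block of punctures does not split the disc into two sub-discs, so the local basis statement needs its own proof or citation. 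Two small slips that do not affect the argument: the local monodromies at the two endpoints of $\delta_k$ are $e^{2\pi i jd_k/m}$ and $e^{2\pi i jd_{k+1}/m}$, not a common $\omega$; and the equivalence of the two halves of the stability condition indeed rests, as you say, on the triviality of the total monodromy of $\LL_j$ around all of $S_j$. Each approach buys something: the paper's explicit relation is reusable data (it tells you exactly which arcs may be exchanged for $\delta_{\ell+1}$), while yours isolates the conceptual reason the hypothesis is needed.
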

\begin{proof}
By \cite{Loo}, Lemma $4.5$, one has that $\{[e_k\delta_k]: k = 1, \ldots, n_j+1\}$ is a
basis of $H_1(C,\C)_j$. Hencefore $\{[e_k\delta_k]: k = 1, \ldots, n_j+2\}$ is linearly dependent.

One can compute a non-trivial linear combination, which yields 0, in the following way:
Choose a non-zero section of $\LL_j$ over
$$U=\bP^1\setminus(\bigcup\limits_{k= 1}^{n_j+2}\delta_k).$$ This
yields a linear combination of the sheets over $U$, on which $\psi$ acts by $j$. By the boundary
operator $\partial$, one gets the desired non-trivial linear combination of $\LL_j$-valued paths,
which is equal to 0. Now let $\alpha_k$ denote the local monodromy datum of $\LL_j$ around
$a_k \in S_j$ for all $k = 1, \ldots, n_j+3$. By the local monodromy data, one can easily compute
this linear combination. This computation yields that
$\{\delta_1, \ldots, \delta_{\ell}\} \cup \{\delta_{\ell+2},\ldots, \delta_{n_j+2}\}$ is linearly
independent, if and only if $\{a_1, \ldots, a_{\ell+1}\},\{a_{\ell+2},\ldots, a_{n_j+3}\}$ is
a stable partition.
\end{proof}

Let $\alpha_k$ denote the local monodromy datum of $\LL_j$ around $a_k \in S_j$ for all
$k = 1, \ldots, n_j+3$. One has up to a certain normalization of the basis vectors
$[e_1\delta_1], \ldots [e_1\delta_{n_j+1}]$ the following description of the monodromy
representation:

The Dehn twist $T_{k,k+1}$ leaves obviously $\delta_{\ell}$ invariant for all $|k-\ell| >1$.
Moreover by following a path representing $T_{k,k+1}$, one sees that the monodromy action of
$T_{k,k+1}$ on $H_1(C,\C)_j$ (induced by push-forward) is given by
$$[e_{k-1}\delta_{k-1}] \to [e_{k-1}\delta_{k-1}] + \alpha_k(1-\alpha_{k+1})[e_k\delta_{k}],$$
$$[e_k\delta_{k}] \to \alpha_k \alpha_{k+1} [e_k\delta_{k}]$$
$$\mbox{and} \  \ [e_{k+1}\delta_{k+1}] \to [e_{k+1}\delta_{k+1}] + (1-\alpha_{k})[e_k\delta_{k}].$$
Hence the monodromy representation is given by:

\begin{proposition} \label{repro}
The monodromy representation of $T_{\ell,\ell+1}$ on
$H_1(C,\C)_j$ is given with respect to the basis $\{[e_k\delta_k]| k = 1, \ldots n_j+1\}$ of
$H_1(C,\C)_j$ by the matrix with the entries:
$$M_{\ell,\ell+1}(a,b)
= \left\{\begin{array}{r@{\quad:\quad}l}
1 & a= b\quad\mbox{and}\quad a \neq \ell\\
\alpha_{\ell} \alpha_{\ell+1} & a= b = \ell\\
\alpha_{\ell}(1- \alpha_{\ell+1}) & a= \ell\quad\mbox{and}\quad b = \ell-1\\
1- \alpha_{\ell} & a= \ell\quad\mbox{and}\quad b = \ell+1\\
0 & \mbox{elsewhere} \end{array}  \right. $$
\end{proposition}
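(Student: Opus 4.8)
The plan is to read the matrix $M_{\ell,\ell+1}$ directly off the three transformation rules displayed immediately before the statement, so that the genuine work is to justify those rules. Throughout I would work in the twisted homology eigenspace $H_1(C,\C)_j$ with its basis $\{[e_k\delta_k]\}$ furnished by the preceding Theorem, and recall (as for cohomology in Remark~\ref{difint}) that the monodromy action on $H_1$ is induced by push-forward along the gluing diffeomorphisms of the locally constant family $\sC$. Thus computing $\rho(T_{\ell,\ell+1})$ amounts to pushing each decorated path $e_k\delta_k$ forward along a diffeomorphism of $\bP^1$ realizing the Dehn twist and re-expanding the image in the basis; the convention is that the column indexed by $b$ records the image of $[e_b\delta_b]=\sum_a M_{\ell,\ell+1}(a,b)[e_a\delta_a]$.

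First I would realize $T_{\ell,\ell+1}$ by an isotopy $\{h_t\}$ of $\bP^1$ supported in a disc $D$ that contains $a_\ell$ and $a_{\ell+1}$ but no other point of $S_j$, with $a_{\ell+1}$ sweeping once counterclockwise around $a_\ell$. Every segment $\delta_k$ with $k\notin\{\ell-1,\ell,\ell+1\}$ lies outside $D$, so it and its decoration are fixed; this accounts for all columns of the matrix except $b\in\{\ell-1,\ell,\ell+1\}$ and reduces the problem to computing the images of $[e_{\ell-1}\delta_{\ell-1}]$, $[e_\ell\delta_\ell]$ and $[e_{\ell+1}\delta_{\ell+1}]$. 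For these I would lift $h_t$ to the cover and transport the chosen local section $e_k\in\LL_j$ along the isotopy, using that the holonomy of $\LL_j$ around $a_k$ is exactly the local monodromy datum $\alpha_k$ (Theorem~\ref{monorep} together with Remark~\ref{monodisc}).

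The three computations are then Picard--Lefschetz style. The segment $\delta_\ell$ is dragged with both endpoints and returns to a path from $a_\ell$ to $a_{\ell+1}$ that has wound once around the pair $\{a_\ell,a_{\ell+1}\}$; transporting its decoration past both punctures multiplies it by $\alpha_\ell\alpha_{\ell+1}$, giving $[e_\ell\delta_\ell]\mapsto\alpha_\ell\alpha_{\ell+1}[e_\ell\delta_\ell]$ and hence $M_{\ell,\ell+1}(\ell,\ell)=\alpha_\ell\alpha_{\ell+1}$. For $\delta_{\ell-1}$ (endpoint $a_\ell$) and $\delta_{\ell+1}$ (endpoint $a_{\ell+1}$) the moving point $a_{\ell+1}$ sweeps across the arc, so each image equals the original class plus an extra contribution supported on the segment between $a_\ell$ and $a_{\ell+1}$, i.e.\ a multiple of $[e_\ell\delta_\ell]$; evaluating the $\LL_j$-holonomy of this correction yields the coefficients $\alpha_\ell(1-\alpha_{\ell+1})$ and $(1-\alpha_\ell)$ respectively. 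Assembling these three images as columns, together with the identity columns coming from the fixed segments, produces exactly the matrix of the statement.

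The main obstacle will be the consistent bookkeeping of the decorations $e_k$: the precise factors $\alpha_\ell(1-\alpha_{\ell+1})$ and $1-\alpha_\ell$, and the absence of any further phase on them, depend on choosing the local sections over adjacent segments compatibly, which is why the statement reads ``up to a certain normalization of the basis vectors.'' I would fix this normalization once and for all by orienting the sheets as in Remark~\ref{monodisc}, and then check consistency either by verifying that the resulting matrices satisfy the colored-braid relations of $\pi_1(\sP_n)$, or by comparing the representation with the reduced Gassner representation of the pure braid group, for which this matrix is the standard elementary-twist generator.
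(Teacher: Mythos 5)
Your proposal is correct and follows essentially the same route as the paper: the paper's justification of Proposition \ref{repro} consists precisely of the three displayed push-forward rules stated just before it, obtained ``by following a path representing $T_{k,k+1}$'' and tracking the local monodromy data $\alpha_k$ on the decorations, with the identity columns coming from the segments untouched by the twist and the normalization caveat handled exactly as you describe. Your extra suggestions (checking the braid relations, or matching against the Gassner/Lauricella picture as in Looijenga and Deligne--Mostow) go slightly beyond what the paper writes down but do not change the argument.
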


\begin{remark}
The monodromy representation of Proposition $\ref{repro}$ corresponds to an eigenspace in the
local system given by the direct image of the complex homology. By integration over $\C$-valued
paths, this eigenspace is the dual local
system of $\sL_{m-j}$. By the cup-product, $\sL_j$ is the dual of $\sL_{m-j}$, too. Hence
Proposition $\ref{repro}$ yields the monodromy representation of $\sL_j$.
\end{remark}

\chapter{The Galois group decomposition of the Hodge structure}
In this chapter we make some general observations of the $VHS$ of $\sC \to \sP_n$ and its generic
Hodge group. Moreover we will give an upper bound for the Hodge group and a sufficient criterion
for dense sets of complex multiplication fibers.

\section{The Galois group representation on the first cohomology} Let $\pi:C \to \bP^1$ be a
cyclic cover of degree $m$. The elements of $\Gal(\pi)$ act as $\Z$-module automorphisms on
$H^1(C,\Z)$. This induces a faithful representation
\begin{equation} \label{rho}
\rho^{1} :\Gal(\pi) \to \GL(H^1(C, \Q)).
\end{equation}

By the Galois group representation of a cyclic cover of degree $m$, we have the following
eigenspace decomposition:
$$H^1(C, \Q) \otimes \Q(\xi) = H^1(C, \Q(\xi)) = \bigoplus\limits_{i =1}^{m-1} H_j^1(C, \Q(\xi))$$
Recall that $\pi: C \to \bP^1$ is
given by some fibers of a family $\pi:\sC \to \sP_n$. The
monodromy representation of $R^1\pi_*(\C)$ has a decomposition into
subrepresentations on the different eigenspaces. In general there is
not a $\Q(\xi)$ structure on $H^1(C,\Q)$, which turns $H^1(C,\Q)$ into a $\Q(\xi)$-vector space. But
in this section we will see that $H^1(C, \Q)$ has a direct sum
decomposition into sub-vector spaces with different $\Q(\xi^r)$ structures, where $r|m$. Moreover
we will see that the monodromy representation respects the different $\Q(\xi^r)$ structures, which
we will study.

Let $\psi$ denote a generator of $\Gal(\pi)$ as in Chapter 2. The characteristic
polynomial of $\rho^1(\psi)$ decomposes into the product of the
minimal polynomials of the different $\xi^r$, where $r|m$ and $\xi$ is a $m$-th. primitive root of
unity. By \cite{MichKov}, Satz 12.3.1., we have a decomposition of $H^1(C,\Q)$ into subvector
spaces $N^1(C_r,\Q)$ \footnote{In the next section we will see that there is a correspondence
between the covers $C_r$ and the subvector spaces $N^1(C_r,\Q)$, which justifies this notation.}
\index{$N^1(C_r,\Q)$} such that the $\Q$-vector space automorphism $\rho^1(\psi)|_{N^1(C_{r} , \Q)}$
is (up to conjugation)
given by a matrix
$$\left(\begin{array}{ccc}
M & & 0 \\
 & \ddots & \\
0 &  &  M
\end{array} \right),$$
where $M$ is the $k \times k$ matrix given by
$$M = \left(\begin{array}{ccccc}
 0& 0& \ldots & 0 & -p_0\\
 1 & 0 &\ldots & 0 & -p_1\\
 0 & 1 &\ddots & 0 & -p_2\\
\vdots &  & \ddots & \ddots & \vdots\\
 0 & \ldots & 0 & 1 & -p_{k-1}
\end{array} \right),$$
where $x^k + p_{k-1}x^{k-1}+ \ldots + p_1x+p_0$ is the minimal polynomial of $\xi^r$.
We call a $\Q$-vector space with such an automorphism of the form $\diag(M, \ldots, M)$ a
$\Q(\xi^r)$-structure. By $\xi^r \cdot v := g(v)$, this defines a scalar multiplication of
$\Q(\xi^r)$, which turns $N^1(C_{r} , \Q)$ into a $\Q(\xi^r)$-vector space.  We obtain:

\begin{proposition}
The direct sum decomposition
$$H^1(C, \Q) = \bigoplus\limits_{r|m} N^1(C_{r} , \Q)$$
is a direct sum of $\Q(\xi^r)$ structures on $H^1(C, \Q)$.
\end{proposition}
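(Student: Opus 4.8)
The plan is to read the statement off the rational-canonical-form decomposition already supplied (via \cite{MichKov}, Satz 12.3.1) and to add the two verifications that the definition of a $\Q(\xi^r)$-structure requires: that each summand $N^1(C_r,\Q)$ genuinely carries a $\Q(\xi^r)$-vector-space structure, and that the sum is direct. First I would record the one structural fact that makes everything work, namely that $\rho^1(\psi)$ is semisimple. Since $\psi$ generates $\Gal(\pi)\cong\Z/(m)$ we have $\rho^1(\psi)^m=\id$, so the minimal polynomial of $\rho^1(\psi)$ divides $x^m-1=\prod_{d\mid m}\Phi_d(x)$; as we work in characteristic $0$ this polynomial is separable, whence $\rho^1(\psi)$ is diagonalizable and its minimal polynomial is a product of distinct cyclotomic factors. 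This is exactly what guarantees that in the block description the matrix $M$ attached to $N^1(C_r,\Q)$ is the companion matrix of the irreducible minimal polynomial $p$ of $\xi^r$ (that is, of $\Phi_{m/r}$), and not of some higher power.

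Next I would build the scalar action on a fixed summand. Write $g:=\rho^1(\psi)|_{N^1(C_r,\Q)}$. Because $M$ is the companion matrix of $p$ one has $p(g)=0$, so the assignment $x\mapsto g$ induces a ring homomorphism
$$\Q[x]/(p(x))\longrightarrow \mathrm{End}_{\Q}\bigl(N^1(C_r,\Q)\bigr),\qquad \bar x\longmapsto g.$$
Since $p=\Phi_{m/r}$ is irreducible over $\Q$, the source is the field $\Q[x]/(p(x))\cong\Q(\xi^r)$, and the homomorphism is injective (its kernel is a proper ideal of a field, while $1\mapsto\id$). Transporting scalars through this isomorphism — concretely $\xi^r\cdot v:=g(v)$, extended $\Q$-linearly — turns $N^1(C_r,\Q)$ into a module over the field $\Q(\xi^r)$, i.e.\ a $\Q(\xi^r)$-vector space; this is precisely the $\Q(\xi^r)$-structure named before the proposition.

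Finally I would address directness. The decomposition $H^1(C,\Q)=\bigoplus_{r\mid m}N^1(C_r,\Q)$ is furnished as an internal direct sum by the primary (isotypic) decomposition underlying \cite{MichKov}, Satz 12.3.1: since $x^m-1$ is a product of the pairwise coprime factors $\Phi_d$, the Chinese Remainder Theorem applied to $\Q[x]/(x^m-1)$ splits the $\Q[\rho^1(\psi)]$-module $H^1(C,\Q)$ into the kernels $\ker\Phi_{m/r}(\rho^1(\psi))=N^1(C_r,\Q)$, which span and intersect trivially (some of them possibly being zero). Combining this with the previous paragraph yields the asserted direct sum of $\Q(\xi^r)$-structures. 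I do not expect a serious obstacle: the content is entirely the linear algebra of a finite-order semisimple operator, and the only point that needs care is confirming semisimplicity — hence that each block is the companion of the full irreducible cyclotomic factor — which is exactly the first step above.
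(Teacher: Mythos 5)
Your proof is correct and follows essentially the same route as the paper, which obtains the decomposition from the rational canonical form (the citation to \cite{MichKov}) and defines the $\Q(\xi^r)$-scalar multiplication by $\xi^r\cdot v:=g(v)$ exactly as you do. Your explicit check that $\rho^1(\psi)$ is semisimple -- so that each block is the companion matrix of the full irreducible cyclotomic factor and the primary components are genuinely $\ker\Phi_{m/r}(\rho^1(\psi))$ -- is a detail the paper leaves implicit, and it is the right thing to verify.
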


Next we consider the trace map
$$\tr:H_j^1(C, \Q(\xi)) \to H^1(C,\Q) \mbox{ given by } v \to
\sum\limits_{\gamma \in \Gal(\Q(\xi);\Q)} \gamma v,$$
which will be one of our main tools in this chapter. By the Galois group action, the vector space
$N^1(C_{r},\Q(\xi^r))$ decomposes into eigenspaces $H^1_j(C, \Q(\xi^r)))$ such that
$$H_j^1(C, \Q(\xi)) = H_j^1(C, \Q(\xi^r)) \otimes_{\Q(\xi^r)} \Q(\xi).$$

\begin{lemma}
Let $r|m$ and $r = \gcd(j,m)$. Then  $\tr|_{H_j^1(C, \Q(\xi^r))}$ is a monomorphism.
\end{lemma}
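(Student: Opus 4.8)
The plan is to exploit the way the coefficient Galois group $\Gal(\Q(\xi)/\Q) \cong (\Z/m)^*$ permutes the eigenspaces and to isolate the component of $\tr(v)$ lying back in the original eigenspace $H^1_j$. Write $\gamma_a$ for the automorphism $\xi \mapsto \xi^a$. Since $\psi$ acts $\Q$-linearly and hence commutes with $\gamma_a$, any $w$ with $\psi w = \xi^j w$ satisfies $\psi(\gamma_a w) = \gamma_a(\xi^j w) = \xi^{aj}\gamma_a w$; thus $\gamma_a$ carries $H^1_j(C,\Q(\xi))$ isomorphically onto $H^1_{[aj]_m}(C,\Q(\xi))$. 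Consequently, for $v \in H^1_j(C,\Q(\xi^r))$ we have $\tr(v) = \sum_{a \in (\Z/m)^*}\gamma_a(v)$ with each summand $\gamma_a(v)$ lying in $H^1_{[aj]_m}$.

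First I would compute the stabilizer of the index $j$ under this permutation action. Setting $m_0 := m/r$ with $r = \gcd(j,m)$, the elementary congruence $aj \equiv j \pmod m$ holds if and only if $a \equiv 1 \pmod{m_0}$; call this subgroup $H \subseteq (\Z/m)^*$. The crucial identification is that $H$ is precisely $\Gal(\Q(\xi)/\Q(\xi^r))$: indeed $\gamma_a(\xi^r) = \xi^r$ means $ar \equiv r \pmod m$, which again says $a \equiv 1 \pmod{m_0}$. Since $\Q(\xi^r)$ is exactly the field over which $v$ is defined, the two subgroups coincide.

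Next I would project $\tr(v)$ onto $H^1_j$, using the directness of the decomposition $H^1(C,\Q(\xi)) = \bigoplus_{j'} H^1_{j'}(C,\Q(\xi))$. The component of $\tr(v)$ in $H^1_j$ is the partial sum over those $a$ with $[aj]_m = j$, i.e. over $a \in H$. But the coefficients of $v$ lie in $\Q(\xi^r)$, which is fixed pointwise by $H$, so $\gamma_a(v) = v$ for every $a \in H$, and this component equals $|H|\,v = [\Q(\xi):\Q(\xi^r)]\,v$. If $\tr(v) = 0$, directness forces every component to vanish, in particular $[\Q(\xi):\Q(\xi^r)]\,v = 0$, whence $v = 0$ since the degree is a positive integer. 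This proves that $\tr$ is injective on $H^1_j(C,\Q(\xi^r))$.

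The one point requiring care — the main obstacle, though it is bookkeeping rather than genuine difficulty — is this double role of $r$: one must check that the stabilizer of the eigenspace under the permutation action and the Galois group $\Gal(\Q(\xi)/\Q(\xi^r))$ fixing the field of definition of $v$ really are the same subgroup, so that the ``diagonal'' terms of the trace all equal $v$ itself rather than a nontrivial conjugate. Everything else reduces to the congruence $aj \equiv j \pmod m \Leftrightarrow a \equiv 1 \pmod{m/\gcd(j,m)}$ together with linear independence of the distinct eigenspaces $H^1_{j'}$.
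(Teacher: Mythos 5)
Your proof is correct and follows essentially the same route as the paper: both arguments rest on the facts that $\gamma_a$ permutes the eigenspaces via $H^1_j \mapsto H^1_{[aj]_m}$, that the stabilizer of $j$ coincides with $\Gal(\Q(\xi)/\Q(\xi^r))$ (so those terms contribute $[\Q(\xi):\Q(\xi^r)]\,v$), and that directness of the eigenspace decomposition then forces $v=0$. The only difference is organizational — you project onto the $H^1_j$-component while the paper groups the sum into cosets of $\Gamma_r$ first — and your explicit verification that the two subgroups agree is a point the paper leaves implicit.
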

\begin{proof}
Let $f \in H_j^1(C, \Q(\xi^r))\setminus\{0\}$. We need
some Galois theory. By the fact that $\Q(\xi^r)$ is a Galois
extension of $\Q$, the group $\Gamma_r :={\rm Aut}(\Q(\xi);\Q(\xi^r))$ is
a normal subgroup of $(\Z/(m))^*\cong \Gamma := \Gal(\Q(\xi);\Q)$, which is the kernel of the
epimorphism $\Gamma \to \Gal(\Q(\xi^r);\Q)$ given
by $\gamma \to \gamma|_{\Q(\xi^r)}$ for all $\gamma \in
\Gal(\Q(\xi);\Q)$. Hence we obtain that

$$\tr(f) = \sum\limits_{\gamma \in \Gal(\Q(\xi);\Q)} \gamma f =
\sum\limits_{[\gamma] \in \Gamma /\Gamma_r} [\gamma]
\sum\limits_{\gamma \in \Gamma_r} \gamma f = \sum\limits_{\gamma \in
\Gal(\Q(\xi^r);\Q)} \gamma |\Gamma_r|f.$$ 
Since $\psi$ is acts by an integral matrix, one has $\gamma \circ \psi = \psi\circ  \gamma$ for all
$\gamma \in \Gamma$. This implies that
\begin{equation} \label{commy}
\gamma(\xi^r) \gamma(f) = \gamma(\xi^r f) = (\gamma\circ \psi)(f) = \psi(\gamma f).
\end{equation}
Thus $\gamma(f) \in H_{j_0j}^1(C, \Q(\xi))$, where $j_0 \in (\Z/(m))^*$ corresponds to $\gamma$.
By the fact that we have a direct sum of eigenspaces, we conclude that
$$\tr(f) = \sum\limits_{\gamma \in \Gal(\Q(\xi^r);\Q)} \gamma
|\Gamma_r|f \neq 0.$$
\end{proof}

Now we consider the restriction of the trace map to
$$R := \bigoplus\limits_{ r|m} H_{r}^1(C, \Q(\xi^r)).$$

\begin{proposition} \label{isotrace}
The trace map $\tr|_R : R \to H^1(C,\Q)$ is an isomorphism of
$\Q$-vector spaces.
\end{proposition}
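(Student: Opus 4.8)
The plan is to combine an injectivity argument, read off from the Galois-orbit structure of the eigenspace decomposition, with a dimension count. To begin, note that $\tr$ is visibly $\Q$-linear and that for $f\in R$ the element $\tr(f)=\sum_{\gamma\in\Gamma}\gamma f$ (with $\Gamma=\Gal(\Q(\xi);\Q)\cong(\Z/(m))^*$) is $\Gamma$-invariant, hence lies in $H^1(C,\Q)=H^1(C,\Q(\xi))^{\Gamma}$. So the map is well defined into $H^1(C,\Q)$, and I would track the image of each summand inside $H^1(C,\Q(\xi))=\bigoplus_{j=1}^{m-1}H^1_j(C,\Q(\xi))$.

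First I would pin down the support of $\tr$ on a single summand. Fix $r\mid m$ and $f_r\in H^1_r(C,\Q(\xi^r))\subset H^1_r(C,\Q(\xi))$. By $\eqref{commy}$, if $\gamma\in\Gamma$ corresponds to $j_0\in(\Z/(m))^*$ then $\gamma f_r\in H^1_{[j_0 r]_m}(C,\Q(\xi))$, so the eigenspace components of $\tr(f_r)$ are supported on the $\Gamma$-orbit $O_r=\{[j_0 r]_m: j_0\in(\Z/(m))^*\}=\{j:\gcd(j,m)=r\}$. As $r$ runs over the divisors of $m$ these orbits partition $\{1,\dots,m-1\}$, so the subspaces $\bigoplus_{j\in O_r}H^1_j(C,\Q(\xi))$ are pairwise complementary and the various $\tr\bigl(H^1_r(C,\Q(\xi^r))\bigr)$ lie in them. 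Consequently a relation $\sum_{r\mid m}\tr(f_r)=0$ forces $\tr(f_r)=0$ for each $r$ separately; and since $\gcd(r,m)=r$, the preceding Lemma makes $\tr|_{H^1_r(C,\Q(\xi^r))}$ a monomorphism, whence $f_r=0$. Thus $\tr|_R$ is injective.

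It then remains to match dimensions. I would write $\dim_\Q R=\sum_{r\mid m}[\Q(\xi^r):\Q]\,\dim_{\Q(\xi^r)}H^1_r(C,\Q(\xi^r))$ and use the base change $H^1_r(C,\Q(\xi))=H^1_r(C,\Q(\xi^r))\otimes_{\Q(\xi^r)}\Q(\xi)$ to replace the last factor by $\dim_{\Q(\xi)}H^1_r(C,\Q(\xi))$. Since $\Gamma_r=\mathrm{Aut}(\Q(\xi);\Q(\xi^r))$ is exactly the stabilizer of $r$ under the multiplication action, orbit--stabilizer gives $|O_r|=[\Gamma:\Gamma_r]=[\Q(\xi^r):\Q]$; moreover each $\gamma$ induces a $\gamma$-semilinear isomorphism $H^1_r(C,\Q(\xi))\to H^1_{[j_0 r]_m}(C,\Q(\xi))$, so all eigenspaces in one orbit share the same $\Q(\xi)$-dimension. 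Hence $\dim_\Q R=\sum_{r\mid m}|O_r|\dim_{\Q(\xi)}H^1_r(C,\Q(\xi))=\sum_{j=1}^{m-1}\dim_{\Q(\xi)}H^1_j(C,\Q(\xi))=\dim_{\Q(\xi)}H^1(C,\Q(\xi))=\dim_\Q H^1(C,\Q)$. Together with injectivity, this yields that $\tr|_R$ is an isomorphism.

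The main obstacle, and the only place needing care, is the Galois-orbit bookkeeping: checking that $O_r=\{j:\gcd(j,m)=r\}$, that these orbits are pairwise disjoint (which is what upgrades the per-summand injectivity of the Lemma to injectivity on all of $R$), and that $|O_r|=[\Q(\xi^r):\Q]$ with the eigenspace dimensions constant along an orbit (which is what makes the dimension count balance). Each of these reduces to a short orbit--stabilizer computation identifying $\Gamma_r$ as the stabilizer of $r$.
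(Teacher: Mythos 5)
Your proposal is correct and follows essentially the same route as the paper: injectivity by observing that $\tr(f_r)$ is supported on the pairwise disjoint Galois orbit $\{j:\gcd(j,m)=r\}$ and then invoking the preceding monomorphism lemma summand by summand, followed by the same orbit--stabilizer dimension count. Your explicit remark that Galois conjugation gives semilinear isomorphisms forcing the eigenspace dimensions to be constant along an orbit makes precise a step the paper leaves implicit in its chain of equalities.
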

\begin{proof}
Let
$$v := \sum\limits_{ r|m} v_r \in R$$
with $v_r \in H_{r}^1(C, \Q(\xi^r))$. By the proof of
the preceding lemma, we know that
$$\tr(v_r) = \sum\limits_{\gamma \in \Gal(\Q(\xi^r);\Q)} \gamma
|\Gamma_r|v \in \bigoplus\limits_{j\in (\Z/(\frac{m}{r}))^*} H_{j}^1(C, \Q(\xi)).$$
These $\xi^{jr}$ with $j\in (\Z/(\frac{m}{r}))^*$ are exactly the
$\frac{m}{r}$-th. primitive roots of unity. Thus they are the
elements with order $\frac{m}{r}$ in the multiplicative group generated by $\xi$. Hence by the
fact that we have a direct sum of eigenspaces, we conclude that
$\tr(v) = 0$ implies that $\tr(v_r) = 0$ for all $r$ with $r|m$. By
the preceding lemma, this implies that $v_r = 0$ for all $r$ with
$r|m$ and hencefore $v = 0$. Hence the map $\tr|_R$ is injective, and we
have only to verify that $\dim_{\Q}(R) = \dim_{\Q}(H^1(C, \Q))$:
$$\dim_{\Q}R =\sum\limits_{ r|m } \dim_{\Q(\xi)}(H_{r}^1(C, \Q(\xi))) \cdot
[\Q(\xi^r);\Q]$$
$$=\sum\limits_{ r|m } \dim_{\Q(\xi)}(H_{r}^1(C, \Q(\xi))) \cdot
\sharp\{\mbox{primitive } \frac{m}{r}\mbox{-th. roots of unity}\}$$
$$=\sum\limits_{j= 1 }^{m-1} \dim_{\Q(\xi)}(H_{j}^1(C, \Q(\xi)))
= \dim_{\Q(\xi)}(H^1(C, \Q(\xi))) = \dim_{\Q}(H^1(C, \Q))$$
\end{proof}

\begin{remark}
We know that the monodromy representation fixes $H^1(C,\Q)$ and each $H_{j}^1(C, \Q(\xi))$
invariant. By the fact that
$$N^1(C_{r} , \Q) = N^1(C_r,\Q(\xi)) \cap H^1(C,\Q),$$
we conclude that the monodromy representation fixes $N^1(C_{r} , \Q))$, too.
\end{remark}

\begin{proposition} \label{label}
The monodromy representation $\rho$ on $N^1(C_r,\Q)$ is given by

$$\rho(\omega) =\left(\begin{array}{ccc}
\gamma_1M_{\omega} & & \\
 & \ddots & \\
 & & \gamma_{k}M_{\omega}
\end{array} \right),$$
where $M_{\omega}$ denotes the image of $\omega$ in the monodromy of $H^1_r(C,\Q(\xi^r))$, and
$\{\gamma_1, \ldots, \gamma_k\} = \Gal(\Q(\xi^r);\Q)$.
\end{proposition}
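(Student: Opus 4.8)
The plan is to diagonalize the situation over $\Q(\xi^r)$, where all the relevant eigenvalues of $\psi$ already live, and to exploit that the monodromy commutes both with $\psi$ and with the action of $\Gal(\Q(\xi^r);\Q)$. Recall from the remark preceding the statement that the monodromy preserves the $\Q$-subspace $N^1(C_r,\Q)$, so $\rho(\omega)$ restricts to a $\Q$-linear operator there; extending scalars, it acts on $N^1(C_r,\Q(\xi^r)) = N^1(C_r,\Q)\otimes_\Q\Q(\xi^r)$ as $\rho(\omega)\otimes 1$.

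First I would establish $\Q(\xi^r)$-linearity. The monodromy respects the eigenspace decomposition of $R^1\pi_*(\C)$ (Remark \ref{slj} and the remark preceding the statement), hence preserves each eigenspace of $\psi$; since $\psi$ acts as a scalar on each eigenspace, $\rho(\omega)$ commutes with $\psi$ and is therefore $\Q(\xi^r)$-linear for the structure $\xi^r\cdot v = \psi(v)$. In particular $\rho(\omega)$ preserves the eigenspace $H^1_r(C,\Q(\xi^r))$ and acts there by the $\Q(\xi^r)$-linear matrix $M_\omega$ in a chosen $\Q(\xi^r)$-basis $f_1,\dots,f_d$. Next I would decompose the extension: the eigenvalues of $\psi$ on $N^1(C_r,\Q)$ are exactly the primitive $(m/r)$-th roots of unity, all contained in $\Q(\xi^r)$, so $N^1(C_r,\Q(\xi^r)) = \bigoplus_{j'} H^1_{rj'}(C,\Q(\xi^r))$ with $j'\in(\Z/(m/r))^*$. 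Using \eqref{commy} I would identify each summand as the Galois conjugate $\gamma(H^1_r(C,\Q(\xi^r)))$, the bijection $j'\leftrightarrow\gamma$ being the standard isomorphism $(\Z/(m/r))^*\cong\Gal(\Q(\xi^r);\Q)$; thus the $\Gal(\Q(\xi^r);\Q)$-orbit of $H^1_r$ sweeps out all of $N^1(C_r,\Q(\xi^r))$ with multiplicity one.

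Finally I would read off the matrix. The operators $\rho(\omega)\otimes 1$ and $1\otimes\gamma$ commute because they act on different tensor factors, so for the transported basis $\gamma(f_1),\dots,\gamma(f_d)$ of the summand $\gamma(H^1_r)$ one computes
$$\rho(\omega)(\gamma(f_i)) = \gamma(\rho(\omega)f_i) = \gamma\Big(\sum_l (M_\omega)_{li}f_l\Big) = \sum_l \gamma\big((M_\omega)_{li}\big)\,\gamma(f_l),$$
so that $\rho(\omega)$ acts on $\gamma(H^1_r)$ by the matrix $\gamma(M_\omega)$ obtained by applying $\gamma$ to the entries of $M_\omega$. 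Assembling over $\gamma = \gamma_1,\dots,\gamma_k$ running through $\Gal(\Q(\xi^r);\Q)$ yields the block-diagonal form $\diag(\gamma_1 M_\omega,\dots,\gamma_k M_\omega)$ with respect to the basis $\{\gamma(f_i)\}$; this basis is the one spanned by the $\Q$-rational vectors coming from the $f_i$ under the trace isomorphism of Proposition \ref{isotrace}. The main obstacle is not conceptual but bookkeeping: carefully matching the index $j'$, the eigenspace $H^1_{rj'}$, and the automorphism $\gamma$ so that the orbit of $H^1_r$ is seen to fill $N^1(C_r,\Q(\xi^r))$ exactly once, and checking that the Galois-conjugate basis and the trace-map basis describe the same $\Q$-structure. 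The only structural input beyond definitions is the commutation of $\rho(\omega)$ with the Galois action, which is immediate from the $\Q$-rationality of the monodromy (Corollary \ref{hehehe}); everything else is the restriction-of-scalars identity $\Q(\xi^r)\otimes_\Q\Q(\xi^r)\cong\prod_\gamma\Q(\xi^r)$ applied to $M_\omega$.
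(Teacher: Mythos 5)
Your proposal is correct and follows essentially the same route as the paper: block-diagonality from the monodromy preserving the eigenspaces, identification of the summands of $N^1(C_r,\Q(\xi^r))$ as the Galois conjugates $\gamma H^1_r(C,\Q(\xi^r))$ via $\eqref{commy}$, and the commutation of the $\Q$-rational operator $\rho(\omega)$ with the Galois action to conclude that each block is $\gamma(M_\omega)$. You merely spell out the entrywise computation that the paper leaves implicit.
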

\begin{proof}
Since $\rho(\gamma)$ leaves the eigenspaces invariant, it acts by $\diag(M_1, \ldots, M_k)$,
where each $M_{\ell}$ with $1 \leq \ell \leq k$ describes the action of $\rho(\omega)$ on
$\gamma_{\ell}H^1_r(C,\Q(\xi^r))$. Let $j_{\gamma} \in (\Z/(\frac{m}{r}))^*$ and $\gamma$ correspond.
The description of the $M_1, \ldots, M_k$ follows from the facts that each $\rho(\omega)$ commutes
with each $\gamma \in \Gal(\Q(\xi^r);\Q)$, and that $\gamma H^1_r(C,\Q(\xi^r)) =
H_{rj_{\gamma}}^1(C,\Q(\xi^r))$ (see $\eqref{commy}$) for all $\gamma \in \Gal(\Q(\xi^r);\Q)$.
\end{proof}

Now let $N_{\omega}$ denote the restriction of $\rho(\omega)$ on $N^1(C_r,\Q)$ and
$v \in N^1(C_r,\Q)$ given by $v = \tr(w)$
for some $w \in H^1_r(C,\Q(\xi^r)) $. By the preceding proposition, we have:
$$N_{\omega}(v) = N_{\omega} ([\Q(\xi);\Q(\xi^r)] \sum\limits_{\gamma \in \Gal(\Q(\xi^r);\Q)}
\gamma w) = [\Q(\xi);\Q(\xi^r)] \sum\limits_{i = 1}^{k} \gamma_i M_{\omega} ( \gamma_i (w))$$
$$=[\Q(\xi);\Q(\xi^r)]\sum\limits_{i = 1}^{k} \gamma_i(M_{\omega}( w)) = tr(M_{\omega} ( w))$$
The trace map $H^1_r(C,\Q(\xi^r)) \to N^1(C_r,\Q)$ is an isomorphism  of $\Q(\xi^r)$-vector
spaces with respect to the $\Q(\xi^r)$ structure on $N^1(C_r,\Q)$. Thus one has:

\begin{proposition}
The monodromy representation on $N^1(C_r,\Q)$ is a representation on a $\Q(\xi^r)$-vector
space given by the $\Q(\xi^r)$ structure, which coincides up to the trace map with the monodromy
representation on $H^1_r(C,\Q(\xi^r))$.
\end{proposition}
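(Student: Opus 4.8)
The plan is to read the proposition off directly from the block description of the monodromy in Proposition \ref{label}, combined with the isomorphism property of the trace map in Proposition \ref{isotrace}; in fact the essential computation is the one displayed just above, so the work is mainly in packaging it correctly. First I would fix a monodromy element $\omega$ and an element $v \in N^1(C_r,\Q)$, and use Proposition \ref{isotrace} to write $v = \tr(w)$ for a unique $w \in H^1_r(C,\Q(\xi^r))$. Decomposing $\Gamma = \Gal(\Q(\xi);\Q)$ into cosets of $\Gamma_r = \Gal(\Q(\xi);\Q(\xi^r))$, exactly as in the injectivity lemma, rewrites the trace over $\Gal(\Q(\xi);\Q)$ as $[\Q(\xi):\Q(\xi^r)]$ times the sum over $\Gal(\Q(\xi^r);\Q)$, so that the summands $\gamma_i w$ are precisely the components of $v$ in the block decomposition of $N^1(C_r,\Q(\xi))$.

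The core step is then to apply the block form of $\rho(\omega)$ from Proposition \ref{label}. Since $\rho(\omega)$ commutes with every $\gamma \in \Gal(\Q(\xi^r);\Q)$ and restricts to $M_\omega$ on $H^1_r(C,\Q(\xi^r))$, it sends $\gamma_i w \mapsto \gamma_i M_\omega(w)$, whence
\[
N_\omega\bigl(\tr(w)\bigr) = [\Q(\xi):\Q(\xi^r)]\sum_i \gamma_i\bigl(M_\omega(w)\bigr) = \tr\bigl(M_\omega(w)\bigr).
\]
This is the intertwining identity $N_\omega \circ \tr = \tr \circ M_\omega$, which is exactly the assertion that the representation on $N^1(C_r,\Q)$ coincides, via $\tr$, with the monodromy representation on $H^1_r(C,\Q(\xi^r))$.

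Finally I would verify that this is genuinely a representation on the $\Q(\xi^r)$-vector space $N^1(C_r,\Q)$. The $\Q(\xi^r)$-structure on $N^1(C_r,\Q)$ is given by $\xi^r \cdot v = \rho^1(\psi)(v)$, while on $H^1_r(C,\Q(\xi^r))$ multiplication by $\xi^r$ is realized by $\psi$ itself, since $\psi$ acts on this eigenspace through the character $\xi^r$. As $\psi$ acts by an integral matrix it commutes with every $\gamma$, hence with $\tr$, so $\tr(\xi^r w) = \rho^1(\psi)\tr(w) = \xi^r \cdot \tr(w)$; thus $\tr$ is a $\Q(\xi^r)$-linear isomorphism, and the conjugate $N_\omega = \tr \circ M_\omega \circ \tr^{-1}$ of the $\Q(\xi^r)$-linear operator $M_\omega$ is again $\Q(\xi^r)$-linear. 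The only point demanding care — the mild obstacle — is the Galois bookkeeping: tracking the scalar $[\Q(\xi):\Q(\xi^r)]$ and confirming that $\tr$ carries $H^1_r(C,\Q(\xi^r))$ into the rational space $N^1(C_r,\Q)$ and not merely into $N^1(C_r,\Q(\xi))$, which is Galois-invariance of the trace together with the identity $N^1(C_r,\Q) = N^1(C_r,\Q(\xi)) \cap H^1(C,\Q)$ noted above. Given Propositions \ref{isotrace} and \ref{label}, nothing further is needed.
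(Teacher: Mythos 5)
Your proposal is correct and follows essentially the same route as the paper: the paper's own proof is precisely the displayed computation $N_{\omega}(\tr(w)) = \tr(M_{\omega}(w))$, obtained by writing $v=\tr(w)$, rewriting the trace via the coset decomposition of $\Gal(\Q(\xi);\Q)$ modulo $\Gal(\Q(\xi);\Q(\xi^r))$, and invoking the block form of $\rho(\omega)$ from Proposition~\ref{label}, followed by the remark that the trace map is an isomorphism of $\Q(\xi^r)$-vector spaces. Your explicit check that $\tr$ intertwines the two $\Q(\xi^r)$-structures (via $\psi$ commuting with the Galois action) is a welcome addition that the paper leaves implicit.
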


We will need a decomposition of $H^1(C,\R)$ into a direct sum of certain sub-vector spaces fixed by the Galois group representation. This decomposition is defined over
$\Q(\xi^j)^+ = \Q(\xi^j) \cap \R$ \index{$\Q(\xi^r)^+$} and given by the sub-vector spaces
$$\Re \V(j)  :=  (H^1_j(C,\Q(\xi)) \oplus H^1_{m-j}(C,\Q(\xi))) \cap H^1(C,\Q(\xi^j)^+).$$
\index{$\Re \V(j)$} Since the monodromy representation fixes
$$H^1_j(C,\Q(\xi)), \  \ H^1_{m-j}(C,\Q(\xi)) \  \ \mbox{and} \ \ H^1(C,\Q(\xi^j)^+),$$
it fixes $\Re \V(j)$, too.

\begin{remark}
One has that $\tr: H^1_{j}(C,\Q(\xi^{j})) \to N^1(C_j, \Q)$ coincides with
the composition
$$H^1_{j}(C,\Q(\xi^{j})) \stackrel{\tr}{\to} \Re \V(j) \stackrel{\tr}{\to}
N^1(C_j, \Q).$$
Hence the latter trace map $\Re \V(j) \stackrel{\tr}{\to}
N^1(C_j, \Q)$ induces a $\Q(\xi^j)^+$-structure on $N^1(C_j, \Q)$, which is compatible with the
$\Q(\xi^j)$-structure via $\Q(\xi^j)^+ \hookrightarrow \Q(\xi^j)$. Thus by the preceding results
about
the monodromy representation on $N^1(C_j, \Q)$, the monodromy representation on $N^1(C_j, \Q)$ is
a $\Q(\xi^j)^+$-vector space representation with respect to the $\Q(\xi^j)^+$-structure.
\end{remark}

\begin{remark}
In the case of $H^1_{\frac{m}{2}}(C,\Q(\xi^{\frac{m}{2}})$ one gets that $\Q(\xi^{\frac{m}{2}})
= \Q(-1) = \Q$.
In other terms: The monodromy group on
$H^1_{\frac{m}{2}}(C,\Q(\xi^{\frac{m}{2}})$ is the monodromy group on the rational vector space
$N^1(C_{\frac{m}{2}}, \Q)$.
\end{remark}

\section{Quotients of covers and Hodge group decomposition}

In this section we consider our quotient families $\pi_r: \sC_r \to
\sP_n$ of covers, and their Hodge groups. Moreover we will explain the notation $N^1(C_r,\Q)$
and show that the decomposition of $H^1(C,\Q)$ into these $\Q(\xi^r)$ structures is a
decomposition into rational sub-Hodge structures. Recall that $\sC_r$ is
given by a quotient of the subgroup of order $r$ of the Galois group
of $\sC$ (see Construction $\ref{hiercr}$).

Let $C$ and  $C_r$ denote a fiber of $\sC$ and the corresponding fiber of $\sC_r$ over the same
point. The natural cover $\phi_r: C \to C_r$ induces an
embedding of Hodge structures, which gives a direct sum
decomposition of $H^1(C,\Q)$ into two rational sub-Hodge structures (see \cite{Voi}, 7.3.2. and
\cite{Voi}, Lemma 7.26).

The Hodge structure on $H^1(C_r,\Q)$ is the sub-Hodge structure of $H^1(C,\Q)$ fixed by $\Gal(\phi_r)$. Hence the eigenspaces of $H^1(C_r,\C)$ with respect to the Galois
group $\pi_r$ can be identified with the eigenspaces of $H^1(C,\C)$, on which $\Gal(\phi_r)$
acts trivial. Thus one obtains
$$H^1(C_r,\C) = \bigoplus\limits_{j =1}^{\frac{m}{r}-1}H_{jr}^1(C,\C)
\hookrightarrow \bigoplus\limits_{j = 1}^{m-1} H_j^1(C,\C) = H^1(C,\C).$$
Recall that every eigenspace
$\sL_j$ of $R^1\pi_*(\C)$ is a local system. We consider the
eigenspace $(\sL_j)_{\sC_r}$ of $R^1(\pi_r)_*(\C)$ with the character $j$ and the eigenspace
$\sL_{jr}$ of $R^1\pi_*(\C)$. Proposition $\ref{hti}$ tells us that the local monodromy data of
$(\LL_j)_{C_r}$ and $\LL_{rj}$ coincide. By Proposition $\ref{repro}$, these monodromy data
determine the dual monodromy representations of the eigenspaces of the dual $VHS$ given by the
homology. Thus we obtain:

\begin{proposition} \label{projing}
The local systems $(\sL_j)_{\sC_r}$ and $\sL_{jr}$ coincide.
\end{proposition}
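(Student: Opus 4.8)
The plan is to reduce the claim to an equality of monodromy representations of $\pi_1(\sP_n)$ and then apply Lemma \ref{monod}, which identifies isomorphism classes of local systems of a fixed stalk with their monodromy representations (up to conjugation). Since both $(\sL_j)_{\sC_r}$ and $\sL_{jr}$ are local systems on $\sP_n$, it suffices to check that the representations they induce on $\pi_1(\sP_n)$ agree.

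First I would recall that $\pi_1(\sP_n)$ is the colored braid group, generated by the Dehn twists $T_{k,k+1}$. By the remark following Proposition \ref{repro}, the monodromy representation of an eigenspace $\sL_i$ is, after dualizing via integration over $\C$-valued paths and the cup product, the one carried by the corresponding homology eigenspace; and Proposition \ref{repro} writes the action of each generator $T_{\ell,\ell+1}$ on that homology eigenspace, in the basis $\{[e_k\delta_k]\}$, as an explicit matrix $M_{\ell,\ell+1}$ whose entries are polynomials in the local monodromy data $\alpha_1,\dots,\alpha_{n_i+3}$ alone. Thus the full monodromy representation of $\sL_i$ over $\sP_n$ is determined by the list of local monodromy data of the fiber local system $\LL_i$.

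Next I would compare the two local systems through their local data. Proposition \ref{hti} gives that the local monodromy datum of the fiber system $(\LL_j)_{C_r}$ around each branch point $a_k$ equals $e^{2\pi i jr d_k/m}$, which is precisely the local monodromy datum of $\LL_{jr}$; in particular the index sets $S_j$ for $C_r$ and $S_{jr}$ for $C$ coincide, so by Corollary \ref{rangi} the two homology eigenspaces have the same rank and admit matching bases $\{[e_k\delta_k]\}$. Feeding these coinciding data into the matrices $M_{\ell,\ell+1}$ of Proposition \ref{repro} shows that the monodromy representations of $(\sL_j)_{\sC_r}$ and $\sL_{jr}$ agree on every generator, hence on all of $\pi_1(\sP_n)$.

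Having matched the monodromy representations, Lemma \ref{monod} yields that $(\sL_j)_{\sC_r}$ and $\sL_{jr}$ are isomorphic as local systems on $\sP_n$, which is the assertion. I expect the only delicate point, and the main obstacle, to be the bookkeeping ensuring the local data line up branch point by branch point, so that the generating matrices $M_{\ell,\ell+1}$ are genuinely identical and not merely conjugate; this is exactly guaranteed by the equality of local monodromy data coming from Proposition \ref{hti}.
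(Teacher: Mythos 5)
Your proposal is correct and follows essentially the same route as the paper: the paper likewise deduces from Proposition \ref{hti} that the local monodromy data of $(\LL_j)_{C_r}$ and $\LL_{jr}$ coincide, and then observes that by Proposition \ref{repro} these data determine the (dual) monodromy representations on the homology eigenspaces, whence the local systems coincide. Your extra steps (explicitly invoking Lemma \ref{monod} and matching the bases via Corollary \ref{rangi}) only make explicit what the paper leaves implicit.
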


The following statements will explain the notation ``$N^1(C_r,\Q)$''. One has that
$$N^1(C_r,\Q) \otimes_{\Q} \C = \bigoplus\limits_{j \in (\Z/\frac{m}{r})^*}
H_{jr}^1(C,\C).$$
Since each $H_{jr}^1(C,\C) \subset N^1(C_r,\C)$ has a decomposition into
$$H_{j}^{1,0}(C_r) \oplus H_j^{0,1}(C_r), \  \ 
\mbox{where} \  \ \overline{H_{j}^{1}(C_r,\C)} = H_{m-j}^{1}(C_r,\C) \subset N^1(C_r,\C),$$
each $N^1(C_r,\Q)$ is a rational sub-Hodge structure of $H^1(C,\Q)$. Moreover each
$N^1(C_r,\Q)$ is the maximal sub-Hodge structure of $H^1(C_r,\Q)$, which is orthogonal (with
respect to the polarization) to each sub-Hodge structure of $H^1(C_r,\Q)$ given by a quotient
$H^1(C_{r'},\Q)$ with $r< r' < m$, $r|r'$ and $r'|m$. By using Lemma $\ref{7.1}$, we have the
result:

\begin{proposition} \label{hggrcomp}
We have a decomposition
$$H^1(C,\Q) = \bigoplus\limits_{r|m} N^1(C_r,\Q)$$
into rational Hodge structures and a natural embedding
$$\Hg(C) \hookrightarrow \prod\limits_{r|m}\Hg(N^1(C_r,\Q))$$
such that the natural projections
$$\Hg(C) \to \Hg(N^1(C_r,\Q))$$
are surjective for all $r$.
\end{proposition}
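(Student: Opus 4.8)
The plan is to reduce everything to Lemma \ref{7.1}, applied inductively over the set of divisors $r \mid m$. The underlying vector space decomposition $H^1(C,\Q) = \bigoplus_{r|m} N^1(C_r,\Q)$ has already been established via the $\Q(\xi^r)$-structures, so the first task is only to see that each summand is a rational sub-Hodge structure. This is essentially done in the computation preceding the statement: since $N^1(C_r,\Q)\otimes_\Q \C = \bigoplus_{j \in (\Z/\frac{m}{r})^*} H^1_{jr}(C,\C)$, and since $j \mapsto -j$ preserves $(\Z/\frac{m}{r})^*$ while $\overline{H^1_{jr}(C,\C)} = H^1_{m-jr}(C,\C)$, the complexification of each $N^1(C_r,\Q)$ is stable under complex conjugation and inherits the $(1,0),(0,1)$-bigrading from $H^1(C,\C)$. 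Hence each $N^1(C_r,\Q)$ carries a rational sub-Hodge structure of weight $1$, and the displayed decomposition is a direct sum of rational Hodge structures, which settles the first assertion.

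For the Hodge-group statement I would argue by induction on the number of divisors of $m$. Ordering them as $r_1, \ldots, r_t$, I would write $H^1(C,\Q) = N^1(C_{r_1},\Q) \oplus W$ with $W := \bigoplus_{i \geq 2} N^1(C_{r_i},\Q)$, both summands being sub-Hodge structures of weight $1$ by the previous paragraph. Lemma \ref{7.1} then yields the inclusion $\Hg(C) \hookrightarrow \Hg(N^1(C_{r_1},\Q)) \times \Hg(W)$ together with the surjectivity of both coordinate projections. Applying the induction hypothesis to $W$ gives an embedding $\Hg(W) \hookrightarrow \prod_{i \geq 2} \Hg(N^1(C_{r_i},\Q))$ with all projections surjective. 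Composing these two embeddings produces the desired $\Hg(C) \hookrightarrow \prod_{r|m} \Hg(N^1(C_r,\Q))$, and since a composite of surjections is again surjective, each projection $\Hg(C) \to \Hg(N^1(C_r,\Q))$ factors (for $r \neq r_1$) as $\Hg(C) \twoheadrightarrow \Hg(W) \twoheadrightarrow \Hg(N^1(C_r,\Q))$, and equals the first coordinate projection for $r = r_1$; in either case it is surjective.

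I do not expect a genuine obstacle, since the only real content is the two-factor surjectivity, which is precisely Lemma \ref{7.1}; once that is granted the induction is purely formal. The single point that deserves care is the compatibility of the embeddings, namely that the restriction of an element of $\Hg(C)$ to each $N^1(C_r,\Q)$ lands in $\Hg(N^1(C_r,\Q))$ and not merely in $\GL(N^1(C_r,\Q))$. This is again part of Lemma \ref{7.1} (its first chain of inclusions), and reflects the general fact that $\Hg(C)$ preserves every rational sub-Hodge structure of $H^1(C,\Q)$. Thus the proof amounts to iterating Lemma \ref{7.1} over the divisor lattice of $m$.
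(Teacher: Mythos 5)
Your proposal is correct and follows essentially the same route as the paper: the paper likewise first observes that each $N^1(C_r,\Q)\otimes_{\Q}\C$ is a conjugation-stable sum of eigenspaces compatible with the bigrading, hence a rational sub-Hodge structure, and then invokes Lemma $\ref{7.1}$ to obtain the embedding and the surjectivity of the projections. The only difference is that you spell out the iteration of the two-factor lemma over the divisors of $m$, which the paper leaves implicit.
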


\begin{remark}
Note that the preceding section yields a corresponding statement about the Zariski closures
of the monodromy group of $R^1\pi_*(\Q)$ and the restricted representations monodromy
representations on the different $N^1(C_r,\Q)$. These two facts will play a very important role.
\end{remark}

\section{Upper bounds for the Mumford-Tate groups of the direct summands}
The different $N^1(C_r,\Q)$ on the fibers induce a decomposition of $R^1\pi_*(\Q)$ into a direct
sum \index{$\sN^1(\sC_r,\Q)$} \index{direct summand|see{$\sN^1(\sC_r,\Q)$}} of local systems
$\sN^1(\sC_r,\Q)$. Now we consider the induced variations $\sV_r$ of rational Hodge
structures on the local systems $\sN^1(\sC_r,\Q)$. Let $Q_r$ denote the alternating form on
$N^1(C_r,\Q)$ obtained by the restriction of the intersection form $Q$ of the curve $C$. One has
that each element of $\rho(\pi_1(\sP_n))$ commutes with the Galois group. The same holds true for
the image of the homomorphism
$$h : \BS \to \GSp(H^1(C,\R),Q)$$
corresponding to the Hodge structure of an arbitrary fiber.
Since the Galois group respects the intersection form, its representation on $N^1(C_r,\Q)$ is
contained in $\Sp(N^1(C_r,\Q),Q_r)$. Let $C_r(\psi)$ \index{$C_r(\psi)$} denote the
centralizer of the Galois group in $\Sp(N^1(C_r,\Q),Q_r)$ and $GC_r(\psi)$ denote the
centralizer of the Galois group in $\GSp(N^1(C_r,\Q),Q_r)$. One concludes:

\begin{proposition}
The centralizer $GC_r(\psi)$ contains the
generic Mumford-Tate group $\MT(\sV_r)$. Moreover the centralizer $C_r(\psi)$ contains the generic
Hodge group $\Hg(\sV_r)$ and $\Mon^0(\sV_r)$.
\end{proposition}

We write
$$C(\psi) := \prod\limits_{r|m} C_r(\psi).$$ \index{$C(\psi)$}

\begin{remark}
If $r \neq \frac{m}{2}$, the preceding proposition yields some information. In the case
$r = \frac{m}{2}$ the elements of the Galois group act as the multiplication with 1 or
$-1$ on $N^1(C_{\frac{m}{2}},\Q)$. Since ${\id}$ resp., $-\id$ is contained in the center of
$\Sp(N^1(C_{\frac{m}{2}},\Q),Q_{\frac{m}{2}})$, this proposition does not give any new information
in this case.
\end{remark}

Now let us assume that $r \neq \frac{m}{2}$. We describe $C_r(\psi)$ by its
$\R$-valued points. Let $\xi^j$ be a $\frac{m}{r}$-th. primitive root of unity such that
$H^1_j(C,\C) \subset N^1(C_r,\C)$, $v \in H^1_j(C,\C)$ and $M \in C_r(\psi)(\R)$. Then
one gets
$$\psi M(v) = M(\psi v) = M(\xi^jv) =\xi^j M(v).$$
Thus $M$ leaves each $H^1_j(C,\C)$ invariant.

For our description of $C_r(\psi)$ we introduce the
trace map
$$\tr:\GL(H^1_j(C,\C)) \to \GL(\Re\V(j)_{\R})$$
given by
\begin{equation} \label{trace}
\GL( H^1_j(C,\C) ) \ni N \to N \times \bar N \in \GL(H^1_j(C,\C)) \times \GL(H^1_{m-j}(C,\C)),
\end{equation}
where $\bar N$ denotes the matrix, which satisfies that $\bar N \bar v = \overline{Nv}$ for all
$v \in H^1_j(C,\C)$. Recall that we have a fixed complex structure. Thus one checks easily
that $N \times \bar N$ leaves $\Re \V(j)_{\R}$ invariant. Hence we consider it as a real matrix.

For the Hermitian form $H(\cdot,\cdot) :=iE(\cdot,\bar \cdot)$ and
$v,w \in H^1_j(C,\C)$ one obtains
$$H(v,w)= iE(v,\bar w) = iE(M v,M \bar w) = iE(M v,\overline{M w}) = H(Mv,Mw).$$
Thus the matrix
$M|_{\Re\V(j)_{\R}}$ is contained in $\tr(\U(H^1_j(C,\C), H|_{H^1_{m-j}(C,\C)})$.

Assume conversely that $M \in \GL(N^1(C_r,\C))$ satisfies that
$$M|_{\Re\V(j)_{\R}} \in \tr(\U(H^1_j(C,\C), H|_{H^1_j(C,\C)}))$$
for each $\frac{m}{r}$-th. primitive root of unity $\xi^j$. Since $M$ leaves all eigenspaces
$H^1_j(C,\C)) \subset N^1(C_r,\C)$ invariant, $M$ commutes with the Galois group representation on
$N^1(C_r,\R)$. Now let $N \in \GL(H^1_j(C,\C))$ be the matrix with $\tr(N) = M|_{\Re\V(j)_{\R}}$.
One has that
$$iE(v,\bar w) = iE(N v,\overline{N w}) \Leftrightarrow E(v,\bar w) = E(N v,\overline{N w})$$
for all $v, w \in H^1_j(C,\C)$. By the fact that $E$ is an alternating form, one gets
$$E(\bar v, w) = E(\overline{N v},N w),$$
too. Since each element of $\Re\V(j)_{\C}$ can be given by $v_1 + \bar v_2$ and
$w_1 + \bar w_2$ with $ v_1, v_2,w_1, w_2\in H^1_j(C,\C)$, one concludes that
$$E(v_1 + \bar v_2,w_1 + \bar w_2) = E(v_1,\bar w_2) + E(\bar v_2,w_1) = E(N v_1,\overline{N w_2})
+E(\overline{N v_2},N w_1)$$
$$= E(M v_1,M \bar w_2)
+E(M \bar v_2,M w_1) = E(M(v_1 + \bar v_2),M(w_1 + \bar w_2)).$$
Thus $M$ is contained in the symplectic group. Altogether we conclude:

\begin{Theorem} \label{dirprod}
If $r \neq \frac{m}{2}$, the group $C_r(\psi)(\R)$ is isomorphic to the direct product of the Lie
groups given by the $\R$-valued points of unitary groups over the spaces
$\Re\V(j)_{\R} \subset N^1(C_r,\R)$ induced by the trace maps and the unitary groups 
$\U(H^1_j(C,\C), H|_{H^1_j(C,\C)})$.
\end{Theorem}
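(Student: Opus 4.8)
The plan is to assemble the two implications established immediately above the statement into an explicit isomorphism of Lie groups, with the product taken over conjugate pairs of eigenspaces. First I would fix the index set cleanly. Recall that
$$N^1(C_r,\C) = \bigoplus_{j \in (\Z/(m/r))^*} H^1_{jr}(C,\C),$$
and that complex conjugation sends $H^1_{jr}$ to $H^1_{m-jr} = H^1_{r(m/r - j)}$, so the eigenspaces group into conjugate pairs indexed by $\{j,-j\} \subset (\Z/(m/r))^*$. The hypothesis $r \neq \frac{m}{2}$ is exactly what forces $j \neq -j$, i.e. no eigenvalue $\xi^{jr}$ is real; hence every pair consists of two genuinely distinct eigenspaces and each $\Re\V(j)_{\R}$ carries an honest complex (Hermitian) structure. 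This is precisely where the excluded case $r = \frac{m}{2}$ would destroy the unitary description. Choosing one representative $j$ from each conjugate pair, the real space decomposes as $N^1(C_r,\R) = \bigoplus_{[j]} \Re\V(j)_{\R}$.

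Next I would define the candidate map. By the computations preceding the statement, every $M \in C_r(\psi)(\R)$ preserves each eigenspace $H^1_j(C,\C)$, hence preserves each real summand $\Re\V(j)_{\R}$, and its restriction equals $\tr(N_j)$ for a unique $N_j \in \U(H^1_j(C,\C), H|_{H^1_j(C,\C)})$. This defines
$$\Phi \colon C_r(\psi)(\R) \longrightarrow \prod_{[j]} \U(H^1_j(C,\C), H|_{H^1_j(C,\C)}), \qquad M \longmapsto (N_j)_{[j]}.$$
That $\Phi$ is a group homomorphism follows from two observations: the restriction $M \mapsto M|_{\Re\V(j)_{\R}}$ is multiplicative because $M$ preserves $\Re\V(j)_{\R}$, and the trace map $\tr \colon N \mapsto N \times \bar N$ is injective and multiplicative (since $\overline{N_1 N_2} = \bar N_1 \bar N_2$), so $N_j$ is recovered uniquely and multiplicatively from $M$. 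Injectivity of $\Phi$ is then immediate: if all $N_j = \id$, then $M$ restricts to $\id$ on each $\Re\V(j)_{\R}$, and these span $N^1(C_r,\R)$, so $M = \id$.

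For surjectivity I would invoke the converse direction already proved: given any tuple $(N_j)$, define $M \in \GL(N^1(C_r,\R))$ by $M|_{\Re\V(j)_{\R}} = \tr(N_j)$, which is well defined by the direct-sum decomposition. The preceding argument shows such an $M$ commutes with the Galois representation and lies in $\Sp(N^1(C_r,\R),Q_r)$; hence $M \in C_r(\psi)(\R)$ and $\Phi(M) = (N_j)$. Finally, since $\tr$ is a closed injective algebraic (hence smooth) embedding, it identifies each abstract factor $\U(H^1_j(C,\C), H|_{H^1_j(C,\C)})$ with the Lie group $\tr(\U(\dots))$ acting on $\Re\V(j)_{\R}$, so $\Phi$ is an isomorphism of Lie groups, which is the assertion.

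The step requiring the most care is the bookkeeping over conjugate pairs. The theorem's phrasing lists both the spaces $\Re\V(j)_{\R}$ and the unitary groups $\U(H^1_j,H)$, and one must verify that the genuine degrees of freedom amount to one unitary group per pair $\{j,-j\}$: the datum on $H^1_{m-j}$ is forced to be $\bar N_j$ by the reality constraint defining $\Re\V(j)_{\R}$, so $\tr(N_j)$ lands in and is fully determined on $\Re\V(j)_{\R}$ without independent data on the two eigenspaces. Making this reality constraint explicit is the only genuinely delicate point; the remainder is a formal reassembly of the two implications already in hand.
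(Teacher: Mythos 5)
Your proposal is correct and follows the same route as the paper: the paper's proof consists precisely of the two implications you cite (every $M\in C_r(\psi)(\R)$ preserves each eigenspace and restricts on $\Re\V(j)_{\R}$ to $\tr$ of a unitary matrix; conversely any such block assembly is symplectic and commutes with the Galois action), which it then assembles with an implicit "altogether we conclude." Your explicit organization of this into a bijective homomorphism $\Phi$, together with the observation that $r\neq\frac{m}{2}$ is exactly what guarantees $j\neq -j$ so that the eigenspaces pair off and one unitary factor per conjugate pair suffices, only makes precise what the paper leaves tacit.
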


Recall the definition of the type $(a,b)$ of an eigenspace $\sL_j$ in Remark $\ref{slj}$. If there
is an eigenspace of $N^1(C_r,\C)$ of type $(a,b)$ with $a> 0$ and $b >0$, we call $N^1(C_r,\Q)$
\index{direct summand!general} general. Otherwise we call it
\index{direct summand!special} special. Now assume that $N^1(C_r,\Q)$ is special. In this case
$h(\BS)$ is contained in the center of $GC_r(\psi)_{\R}$, and $h(S^1)$ is contained in the
center of $C_r(\psi )_{\R}$. Thus one concludes:

\begin{remark}
Assume that $N^1(C_r,\Q)$ is special. Then the  center $Z(GC_r(\psi))$ of $GC_r(\psi)$ contains
$\MT(\sV_r)$. Moreover the center $Z(C_r(\psi))$ of $C_r(\psi)$ contains $\Hg(\sV_r)$.
\end{remark}

\begin{remark} \label{nixda}
One has that $C_r(\psi)_{\R}$ consists of $\U(s)^t$ for some $s,t \in \N_0$, if $N^1(C_r,\Q)$ is
special. Thus in this case the monodromy group is a discrete sub-group of the compact group
$\U(s)^t$. Hence it is finite and $\Mon^0(\sV_r)$ is trivial in this case.
\end{remark}

\section{A criterion for complex multiplication}

In this short section we find a sufficient condition for the existence of a dense
set of $CM$ fibers of a family of cyclic covers. By technical reasons,
we do not consider the family $\sC \to \sP_n$, but a family over the space $\sM_n$, \index{$\sM_n$}
which can be considered as the quotient
$$\sM_n= \sP_n/{\rm PGL}_2(\C).$$
But one has an embedding $\iota_{a,b,c}: \sM_n \to \sP_n$, \index{$\iota_{a,b,c}$} too. Its image
is the subspace of $\sP_n$, which parameterizes the maps $\phi: N \to \bP^1$ satisfying
$\phi(a) = 0$, $\phi(b) = 1$ and $\phi(c) = \infty$ for some fixed $a,b,c \in
N$ (compare to \cite{DM}, 3.7).

\begin{remark} \label{mnn}
One can move 3 arbitrary branch points of a fiber of $\sC \to \sP_n$ to 0, 1 and $\infty$. Hence
one has that all fibers of the geometric points of $\sP_n$ occur as fibers of the restricted
family $\sC_{\sM_n} \to \sM_n$, too. Hence the generic Hodge groups and the generic Mumford-Tate
groups of the both families coincide.
\end{remark}

\begin{pkt} \label{japp}
Each curve $C$ with $g(C) > 1$ has at most $84(g-1)$ automorphisms (see
\cite{hart}, {\bf IV}. Exercise 2.5). Thus $C$ can have only finitely many cyclic covers onto
$\bP^1$ with different Galois groups. Moreover, there is
an automorphism $\alpha$ of $\bP^1$, if the Galois groups of the covers of $\sC_{p_1}$ and
$\sC_{p_2}$ can be conjugate by an isomorphism $\iota$ such that the following diagram commutes:
$$\xymatrix{
  {\sC_{p_1}} \ar[rr]^{\iota} \ar[d]  &  & {\sC_{p_2}}\ar[d]\\
  {\bP^1} \ar[rr]^{\alpha} &  & {\bP^1} 
}$$
Thus $C$ occurs only as finitely many fibers of $\sC_{\sM_n}$, if
$g(C) \geq 2$. \end{pkt}

Recall that we have defined the type of an eigenspace $\sL_j$ in Remark $\ref{slj}$.

\begin{definition} \index{pure $(1,n)-VHS$}
Let $\sC \to \sP_n$ be a family of cyclic covers onto $\bP^1$ and $C$ denote an arbitrary
fiber. The family $\sC$ has a pure $(1,n)-VHS$, if it has only one eigenspace $\sL_j$ of type
$(1,n)$ such that $\sL_{m-j}$ is of type $(n,1)$ with respect to the
Galois group representation, and all other eigenspaces
are of type $(a,0)$ or of type $(0,b)$ for some $a, b \in \N_0$.
\end{definition}

\begin{Theorem} \label{jnvz}
Let $\sC_{\sM_n} \to \sM_n$ be a family of cyclic covers onto $\bP^1$ and $C$ be a fiber with
$g(C) \geq 2$ as before. Assume that $\sC$ has a pure $(1,n)-VHS$. Then the family
$\sC_{\sM_n} \to \sM_n$ has a dense set of complex multiplication fibers.
\end{Theorem}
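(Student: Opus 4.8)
The plan is to reduce the complex multiplication of an entire fiber to that of the single ``variable'' direct summand, then transport the density of $CM$ points from an abstract Shimura domain to $\sM_n$ through the period map, using openness of that map to pull the dense set back.

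First I would exploit the decomposition of Proposition \ref{hggrcomp}. Since $\Hg(\sC_p)\hookrightarrow\prod_{r\mid m}\Hg(N^1((\sC_r)_p,\Q))$ with all projections surjective, and since Hodge groups are connected, the fiber $\sC_p$ has complex multiplication (i.e. $\Hg(\sC_p)$ is a torus) if and only if each factor $\Hg(N^1((\sC_r)_p,\Q))$ is a torus. For a \emph{special} summand $N^1(C_r,\Q)$ the group $C_r(\psi)_{\R}$ is compact, a product $\U(s)^t$ (Remark \ref{nixda}), and $\Hg(\sV_r)$ lies in its center, which is a torus; hence every fiber is already $CM$ on its special summands. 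Under the pure $(1,n)$-$VHS$ hypothesis there is exactly one \emph{general} summand $N^1(C_{r_0},\Q)$, namely the one containing the unique type $(1,n)$ eigenspace $\sL_j$ and its conjugate $\sL_{m-j}$ of type $(n,1)$. Thus it suffices to produce a dense set of $p\in\sM_n$ at which the variation $\sV:=\sV_{r_0}$ is $CM$.

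Next I would set up the Shimura datum. By the earlier construction $h|_{S^1}\colon S^1\to\Hg(\sV)_{\R}$ is a Shimura datum of Hodge type, and $\Hg^{\ad}(\sV)(\R)/K$ carries the structure of a Hermitian symmetric domain; because only the type $(1,n)$ eigenspace varies (the remaining eigenspaces of $N^1(C_{r_0},\Q)$ being special, by Theorem \ref{dirprod} the noncompact factor of $C_{r_0}(\psi)_{\R}$ is $\U(1,n)$), this domain is the complex $n$-ball $\B^n$ associated with $\SU(1,n)$. Theorem \ref{commul} then gives directly that the set of $CM$ points with respect to the $VHS$ induced by the embedding $\Hg(\sV)\hookrightarrow\GL(N^1(C_{r_0},\Q))$ is dense in $\Hg(\sV)(\R)/K$.

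It remains to transfer this density to $\sM_n$ via the (multivalued) period map $\Phi$ that sends $p$ to the line $H^{1,0}_j(\sC_p)\subset(\sL_j)_p$, i.e. to a point of the ball $\Hg^{\ad}(\sV)(\R)/K$. Here both $\sM_n$ and the ball have dimension $n$, and the differential of $\Phi$ is the cup product of the Kodaira--Spencer class against the one-dimensional $H^{1,0}_j$, valued in ${\rm Hom}(H^{1,0}_j,H^{0,1}_j)\cong\C^n$. The main obstacle is precisely to show this differential is an isomorphism, so that $\Phi$ is a local biholomorphism and in particular open; this is exactly where the ``pure $(1,n)$'' shape of the $VHS$ and the numerical coincidence $\dim\sM_n=n=\dim\B^n$ are used, and it is the technical heart of the argument. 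Granting openness, the density pulls back formally: for any nonempty open $U\subset\sM_n$ the image $\Phi(U)$ is open and nonempty, hence meets the dense set of $CM$ points of the domain, so $U$ contains a point $p$ at which $\sV$ — and therefore, by the first paragraph, the whole fiber $\sC_p$ — has complex multiplication. This furnishes the asserted dense set of $CM$ fibers.
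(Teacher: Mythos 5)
Your overall strategy matches the paper's: reduce to the unique non-unitary eigenspace, invoke Theorem $\ref{commul}$ for density of $CM$ points on the associated domain, and pull this back through an open period map. But the one step you flag as ``the technical heart'' --- openness of the period map --- is exactly the step you do not prove. Writing ``Granting openness, the density pulls back formally'' leaves the essential content of the theorem unestablished: without openness there is no way to intersect $\Phi(U)$ with the dense set of $CM$ points, and the route you sketch towards it (showing that cup product with the Kodaira--Spencer class gives an isomorphism $T_p\sM_n\to{\rm Hom}(H^{1,0}_j,H^{0,1}_j)$) is an infinitesimal computation that is nowhere carried out and is not obviously easier than the theorem itself.

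The paper closes this gap by an entirely different, non-infinitesimal argument, and you should be aware of it because it is what makes the proof work. One shows the fractional period map $p\colon W\to\bP(H^1_j(\sC_{q_0},\C))$ has \emph{zero-dimensional fibers}: under the pure $(1,n)$ hypothesis the whole integral polarized Hodge structure of a fiber is determined by the class $[\varpi_q^{(j)}]$, by Torelli two curves with isomorphic principally polarized Jacobians are isomorphic, and by $\ref{japp}$ (a curve of genus $\geq 2$ has at most $84(g-1)$ automorphisms, hence occurs as only finitely many fibers of $\sC_{\sM_n}$) each isomorphism class appears only finitely often over $\sM_n$. Since $\dim W=\dim\bP(H^1_j(\sC_{q_0},\C))=n$, a holomorphic map between equidimensional complex manifolds with discrete fibers is open (\cite{Nara}, Chapter {\bf VII}, Proposition 4). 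This bypasses any differential computation. If you want to keep your differential-based route you must actually prove the Kodaira--Spencer cup product is an isomorphism; otherwise adopt the discrete-fibers argument, which in particular explains why the hypothesis $g(C)\geq 2$ appears in the statement.
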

\begin{proof}
We have to show that over an arbitrary open simply connected subset $W$ of $\sM_n(\C)$ there are
infinitely many $CM$ points of the $VHS$ of $\sC_{\sM_n}$.
Let $q_0 \in W$ and $\sL_j$ be the eigenspace of type $(1,n)$. We have a trivialization
$$R^1\pi_*(\C)|_{W} = H^1(\sC_{q_0},\C) \times W \ \ \mbox{such that} \ \ \sL_j|_W \cong 
H_j^1(\sC_{q_0},\C) \times W.$$
Let $q \in W$ and $\varpi_q^{(j)} \in H_j^{1,0}(\sC_q)\setminus \{0\}$. By the holomorphic $VHS$
of the family, one obtains a holomorphic "fractional period" map
$$p: W \to \bP(H_j^1(\sC_{q_0},\C)) \  \ \mbox{via} \  \ q \to [\varpi_q^{(j)}].$$
By the assumptions, the integral Hodge structure depends uniquely on the class
$[\varpi_q^{(j)}] \in \bP(H_j^1(\sC_{q_0},\C))$. Since for each fiber there are only finitely many
isomorphic fibers (see
$\ref{japp}$) and two curves have isomorphic polarized integral Hodge structures, if and only
if they are isomorphic, the fibers of $p$ have the dimension 0. Thus \cite{Nara}, Chapter
{\bf VII}, Proposition 4 and the fact that $\dim W = \dim \bP(H_j^1(\sC_{q_0},\C))$ tell us that
$p$ is open.

The natural embedding $C(\psi) \hookrightarrow \GL(H^1(\sC_{q_0},\C))$ induces a holomorphic
variation of Hodge structures over the bounded symmetric domain associated with $C(\psi)(\R)/K$.
This $VHS$ depends uniquely on the fractional $VHS$ on the eigenspace $H_j^1(\sC_{q_0},\C)$ of type
$(1,n)$. Hencefore this $VHS$ yields a holomorphic injection $\varphi : C(\psi)(\R)/K \to
\bP(H_j^1(\sC_{q_0},\C))$.

Note that $C(\psi)(\R)/K$ parameterizes the integral Hodge structures of type $(1,0), (0,1)$ on
$H^1(\sC_{q_0},\C)$, whose Hodge group is contained in $C(\psi)$.
Hence altogether the map $\varphi^{-1} \circ p$, which assigns to each fiber $\sC_q$ its integral
Hodge structure, is open. Since the set of $CM$ points on $C(\psi)(\R)/K$ is dense (see
Theorem $\ref{commul}$), this yields the desired statement.
\end{proof}

\chapter{The computation of the Hodge group}

In this chapter we try to compute the derived group of the generic Hodge group of a family
$\sC \to \sP_n$. For infinitely many examples we will not be able to do this. But we will get many
information and in infinitely many examples we will obtain
$$\MT^{\der}(\sV) = \Hg^{\der}(\sV) = \Mon^0(\sV) = C^{\der}(\psi).$$

Recall that $\sP_n$ is the configuration space of $n+3$ points and $\sM_n = \sP_n/{\rm PGL}_2(\C)$.
Finally we will see that a family $\sC \to \sM_1$ induces an open period map
$$p: \sM_1(\C) \to \MT^{\ad}(\sV)/K,$$
if and only if it has a pure $(1,1)-VHS$.

\section{The monodromy group of an eigenspace}
Let $j \in \{1, \ldots, m-1\}$. Then we have an eigenspace $\sL_j$ in the variation
of Hodge structures of a family $\sC \to \sP_n$ of cyclic degree $m$ covers onto $\bP^1$.
There are $p, q \in \N_0$ such that the eigenspace $H^1_j(C,\C)$ of an arbitrary fiber $C$ is
of type $(p,q)$, where $(p,q)$ is the signature of the restricted polarization of the latter
eigenspace.
The type of $\sL_j$ is given by the type of $H^1_j(C,\C)$. The embedding
$\R \hookrightarrow \C$ allows to consider $H^1_j(C,\C)$ as $\R$-vector space. Let $\Mon^0(\sL_j)$
denote the identity component of the Zariski closure of the monodromy group of $\sL_j$ in
${\rm GL}_{\R}(H^1_j(C,\C))$.

We show in this section:

\begin{Theorem} \label{nundenn}
Let $\sL_{j}$ be of type $(p,q)$ with $p, q \geq 1$. Moreover assume that $j \neq \frac{m}{2}$ or
$p = q = 1$. Then
$$\Mon^0(\sL_j) = \SU(p,q).$$
\end{Theorem}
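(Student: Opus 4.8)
The plan is to prove the two inclusions $\Mon^0(\sL_j)\subseteq\SU(p,q)$ and $\Mon^0(\sL_j)\supseteq\SU(p,q)$ separately. For the upper bound, recall from Remark \ref{dromy} that the polarization endows $H^1_j(C,\C)$ with a flat Hermitian form $H_j$ of signature $(p,q)$, so the monodromy preserves $H_j$ and the identity component of the Zariski closure of its image lies in $\U(H_j)\cong\U(p,q)$. Since $\sL_j$ is a direct summand of $R^1\pi_*\C$, the group $\Mon^0(\sL_j)$ is the image of $\Mon^0(\sV)$ under the (algebraic) projection onto the $j$-th eigenblock; being the image of a semisimple group (Corollary \ref{ssmon}) it is semisimple. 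A semisimple group carries no nontrivial character, so it lies in the kernel of $\det\colon\U(p,q)\to\U(1)$, whence $\Mon^0(\sL_j)\subseteq\SU(p,q)$.

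For the reverse inclusion set $G:=\Mon^0(\sL_j)$, a connected semisimple subgroup of $\SU(p,q)$ with complexification $G_{\C}\subseteq\SL_n(\C)$, where $n=p+q$. By Proposition \ref{repro} each generating Dehn twist acts on $H^1_j(C,\C)$ by a matrix $M_{\ell,\ell+1}$ that differs from the identity in a single row; thus $M_{\ell,\ell+1}=I+(\text{rank }1)$ is a pseudoreflection, fixing a hyperplane and scaling a complementary line by $\alpha_{\ell}\alpha_{\ell+1}$. I would therefore reduce the claim to showing that the pseudoreflection group generated by the $M_{\ell,\ell+1}$ is Zariski dense in $\SU(p,q)$, i.e. $G_{\C}=\SL_n(\C)$. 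Note that since each $\alpha_{\ell}\alpha_{\ell+1}$ is a root of unity, the individual generators have finite order, so one cannot exponentiate them to produce Lie algebra elements; the bigness must instead come from the global interaction of the reflections, which is why the argument proceeds through a structural classification rather than a naive infinitesimal computation.

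Three ingredients feed the classification. First, \emph{irreducibility}: a pseudoreflection leaves a subspace invariant only if that subspace contains its reflection line or lies in its fixed hyperplane, and the explicit entries of $M_{\ell,\ell+1}$ show that consecutive reflection lines are non-orthogonal and jointly span $H^1_j(C,\C)$; the resulting connected chain forces every nonzero invariant subspace to be everything. Second, $G$ is \emph{infinite}: an irreducible group preserving the indefinite form $H_j$ (here $p,q\geq1$) cannot be finite, since a finite irreducible group preserves a positive-definite Hermitian form, which by Schur's lemma would have to be proportional to $H_j$, impossible for signature $(p,q)$ with $p,q\geq1$. Third, the standard representation of $G$ is \emph{not self-dual}: the unitary structure gives $(H^1_j)^{*}\cong\overline{\sL_j}=\sL_{m-j}$ (Proposition \ref{bar}), and for $j\neq m/2$ the local monodromy data of $\sL_j$ and $\sL_{m-j}$ are complex-conjugate and distinct (the underlying field $\Q(\xi^{\gcd(j,m)})$ is a genuine CM field with nontrivial conjugation), so $\sL_j\not\cong\sL_{m-j}$ and $H^1_j$ carries no $G$-invariant bilinear form. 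This excludes $G_{\C}\subseteq\SO_n(\C)$ and $G_{\C}\subseteq\Sp_n(\C)$; this is precisely where the hypothesis $j\neq m/2$ is used.

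The main obstacle is the final classification step: from ``$G_{\C}\subseteq\SL_n(\C)$ connected, irreducible, infinite, generated by pseudoreflections, and not self-dual'' one must conclude $G_{\C}=\SL_n(\C)$. I would invoke the classification of connected algebraic groups admitting a pseudoreflection in an irreducible standard representation: the only such simple groups are $\SL_n$, $\Sp_n$, $\SO_n$ together with a short list of small exceptions excluded here by irreducibility, non-self-duality and the value of $n$. The non-self-dual case then forces $G_{\C}=\SL_n(\C)$, and since $\SU(p,q)$ is a connected real form whose complexification is $\SL_n(\C)$, the containment $G\subseteq\SU(p,q)$ with $G_{\C}=\SL_n(\C)$ yields $G=\SU(p,q)$. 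Finally, the borderline case $j=m/2$ with $p=q=1$ is recovered from the exceptional isogeny $\SU(1,1)\cong\SL_2(\R)$, under which the self-dual (symplectic/orthogonal) description of that eigenspace coincides with $\SU(1,1)$.
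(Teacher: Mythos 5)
Your strategy is genuinely different from the paper's. The paper argues bottom-up: it starts from the type $(1,1)$ case (Lemma \ref{infini} plus semisimplicity and the minimality of $\fsl_2(\C)$ among nontrivial semisimple complex Lie algebras) and then inducts on $p+q$ via ``collision of points'' (Construction \ref{cop}), using Corollary \ref{1} to produce two stable partitions and an explicit matrix computation to show that the complexified Lie algebras of the two resulting sub-monodromy groups generate $\fsl_{n+1}(\C)$. You instead go top-down through the classification of Zariski closures of complex reflection groups in the style of Beukers--Heckman. That route is viable in principle, but as written it has a genuine gap at its crux: since the eigenvalues $\alpha_{\ell}\alpha_{\ell+1}$ are roots of unity, the pseudoreflections have finite order and need not lie in the identity component $G=\Mon^0(\sL_j)$ of the Zariski closure --- they only \emph{normalize} it. So ``connected algebraic groups admitting a pseudoreflection in an irreducible representation'' is not the classification you need; the relevant statement concerns connected irreducible groups whose normalizer contains a pseudoreflection, and before it applies one must exclude the \emph{imprimitive} case, in which the full monodromy group permutes a system of subspaces. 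Because a rank-one pseudoreflection that moves a block forces every block to be a line, imprimitivity here means the monodromy is monomial, $G$ is a torus, and $V|_{G}$ is not irreducible even though $V|_{H}$ is. Your three ingredients (irreducibility of the full group, infiniteness, non-self-duality) do not rule this out; it can be ruled out using the semisimplicity of $\Mon^0(\sL_j)$ that you already invoked, but that step is missing. Relatedly, you verify non-self-duality only for the full monodromy group, whereas excluding $\SO_n$ and $\Sp_n$ requires the absence of a $G$-invariant bilinear form, and passing from one to the other again presupposes that $V|_{G}$ is irreducible.

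The second gap is that irreducibility of the monodromy on $H^1_j(C,\C)$ is asserted rather than proven. The chain argument correctly shows that an invariant subspace containing one reflection line $\langle[e_{\ell}\delta_{\ell}]\rangle$ contains them all, since by Proposition \ref{repro} the neighbouring generators move $[e_{\ell}\delta_{\ell}]$ off itself as long as all local monodromy data are $\neq 1$. But it does not address the complementary case of a nonzero invariant subspace contained in every fixed hyperplane, i.e.\ a nonzero common fixed vector of all the Dehn twists. Whether the intersection of the kernels of the matrices $M_{\ell,\ell+1}-{\rm id}$ is zero is a nontrivial condition on the data $\alpha_k$ --- it is essentially what the stable-partition machinery of Section 3.3 and Lemmas \ref{pop} and \ref{cases} is controlling --- and it is precisely where the hypotheses of the theorem must enter. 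Until the primitivity step and the irreducibility step are supplied, the proposal does not yet constitute a proof, although both can be repaired within your framework.
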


If $p = 0$ or $q = 0$, the statement of the preceding theorem does not hold true in general as one
can conclude by Remark $\ref{nixda}$.

We give a proof of Theorem $\ref{nundenn}$ by induction over the integer given by $p+q$.

By the following lemma, we start the proof of Theorem $\ref{nundenn}$:

\begin{lemma} \label{infini}
If $\sL_{j}$ is of type $(1,1)$, its monodromy group contains infinitely many elements.
\end{lemma}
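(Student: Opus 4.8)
The plan is to exhibit two explicit monodromy matrices that fail to commute, and then to deduce infinitude from the containment of the monodromy group in $\U(1,1)$.

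First I would pin down the dimension. Since $\sL_j$ is of type $(1,1)$ we have $h^1_j(C,\C) = 2$, so Corollary \ref{rangi} gives $|S_j| = 4$. Normalizing as in Section 3.3 so that $a_4 = \infty$ and $a_1, a_2, a_3 \in \R$ with $a_k < a_{k+1}$, the eigenspace $H_1(C,\C)_j$ carries the two-element basis $\{[e_1\delta_1], [e_2\delta_2]\}$, and by Remark \ref{dromy} the monodromy group $G$ of $\sL_j$ lies in $\U(1,1)$.

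Next I would write down the images of the two available Dehn twists $T_{1,2}$ and $T_{2,3}$ via Proposition \ref{repro}. With respect to the above basis these are
\begin{equation*}
M_{1,2} = \begin{pmatrix} \alpha_1\alpha_2 & 1-\alpha_1 \\ 0 & 1 \end{pmatrix}, \qquad M_{2,3} = \begin{pmatrix} 1 & 0 \\ \alpha_2(1-\alpha_3) & \alpha_2\alpha_3 \end{pmatrix},
\end{equation*}
where $\alpha_k$ denotes the local monodromy datum of $\sL_j$ at $a_k$. Because $a_1, a_3 \in S_j$ we have $[j\mu_1]_1 \neq 0$ and $[j\mu_3]_1 \neq 0$, hence $\alpha_1 \neq 1$ and $\alpha_3 \neq 1$, while $\alpha_2$ is a nonzero root of unity. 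A direct comparison of the $(1,1)$-entries of $M_{1,2}M_{2,3}$ and $M_{2,3}M_{1,2}$ shows that they differ by $(1-\alpha_1)\,\alpha_2\,(1-\alpha_3) \neq 0$, so $M_{1,2}$ and $M_{2,3}$ do not commute. Both matrices lie in $G$, so $G$ is non-abelian.

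Finally I would invoke the structure of $\U(1,1)$: it is connected, and its maximal compact subgroup is the abelian torus $\U(1)\times\U(1)$ (the elements of $\U(1,1)$ that are simultaneously unitary for the standard positive-definite form). If $G$ were finite it would be a compact subgroup, hence conjugate into this maximal torus and therefore abelian, contradicting the non-commutation established above. Thus $G$ must be infinite. I expect the only delicate point to be the bookkeeping in the middle step — reading off the two matrices correctly from Proposition \ref{repro} in the two-dimensional case and verifying the non-vanishing of the commutator entry; once the containment $G \subset \U(1,1)$ and the inequalities $\alpha_1, \alpha_3 \neq 1$ are in hand, the conclusion is formal.
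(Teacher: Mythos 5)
Your proof is correct, and it reaches the conclusion by a genuinely different route from the paper's. Both arguments start from the same two matrices $\rho_j(T_{1,2})$ and $\rho_j(T_{2,3})$ of Proposition \ref{repro}, and both use the fact that they do not commute (your computation of the $(1,1)$-entry of the commutator, $(1-\alpha_1)\alpha_2(1-\alpha_3)\neq 0$ since all $a_k\in S_j$ forces $\alpha_1,\alpha_3\neq 1$, is a clean way to certify this, which the paper only asserts). From there the paths diverge. The paper argues by cases: if some product $\alpha_k\alpha_{k+1}=1$ it exhibits a nontrivial unipotent triangular matrix, hence an element of infinite order; otherwise it passes to an orthonormal basis making the Hermitian form $\diag(1,-1)$, writes a general non-diagonal element of $\SU(1,1)$, and estimates the trace of the explicit commutator $K=A\,\diag(\xi,1)\,A^{-1}\,\diag(\bar\xi,1)$ to show $\tr(K)\geq 2$, whence $K$ is unipotent or has an eigenvalue off the unit circle and so has infinite order. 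You instead invoke the soft structural fact that $\U(1,1)$ is connected with abelian maximal compact subgroup $\U(1)\times\U(1)$, so any finite (hence compact) subgroup is conjugate into a torus and is abelian; non-commutativity then forces infinitude. Your route eliminates both the case distinction and the trace estimate, at the cost of producing no explicit element of infinite order, whereas the paper's computation hands you one (the commutator $K$, or the unipotent twist in the first case). Both are valid; yours is shorter and less delicate, the paper's is more constructive.
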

\begin{proof}
There are two cases: In the first case there are some local monodromy data $\alpha_1$ and
$\alpha_2$ of the eigenspace $\LL_j$ in $(\pi_q)_*(\C_C)|_{\bP^1 \setminus S_j}$ for the fiber
$C := \sC_q$ of some arbitrary $q \in \sP_n$ such that $\alpha_1\alpha_2 = 1$. In this case the
Dehn twist $T_{1,2}$ yields a unipotent triangular matrix (follows by Proposition $\ref{repro}$)
and we are done.

Otherwise each Dehn twist $T_{k,\ell}$ provides a semisimple matrix, where its eigenvalues are
given by $1$ and a $m$-th. root of unity. Note that the matrices induced by the Dehn twists
$T_{1,2}$ and $T_{2,3}$ do not commute. In the considered case $\{a_1,a_2\}, \{a_3,a_4\}$ is a
stable partition. Hence one can choose the basis
$\sB = \{[e_1\gamma_1], [e_3\gamma_3]\}$ of $H_1^j(C,\C)$.
By the fact that these two cycles do not intersect each other, this basis
is orthogonal with respect to the Hermitian form induced by the intersection form. Hence by
normalization, this basis is orthonormal with respect to the Hermitian form such that the Hermitian
form is without loss of generality given by $\diag(1,-1)$ with respect to $\sB$. The matrix induced
by $T_{1,2}$ is given by $\diag(\xi,1)$ with respect to $\sB$, where $\xi$ is a $m$-th. root of
unity. Since the matrix $A$ of $T_{2,3}$ with respect to $\sB$ does not commute with
$\diag(\xi,1)$, it is not a diagonal matrix. Now we compute the commutator
$$K = A\cdot  \diag(\xi,1)\cdot  A^{-1}\cdot  \diag(\bar\xi,1).$$
One can replace $A$ by a non-diagonal matrix in $\SU(1,1)$ and the matrix $\diag(\xi,1)$
by $\diag(e,\bar e) \in \SU(1,1)$, where $e^2 = \xi$, for the computation of $K$. By \cite{Sat}, page
59, one has a description of the matrices in $\SU(1,1)(\R)$ such that
$$K =\left(\begin{array}{cc}
a & b\\ \bar b & \bar a 
\end{array} \right) \diag(e,\bar e) \left(\begin{array}{cc}
\bar a & -b\\
-\bar b & a 
\end{array} \right) \diag(\bar e,e) = \left(\begin{array}{cc}
 a \bar a -e^{-2} b \bar b &  a  b -e^{2}  a  b \\
\bar a \bar b -e^{-2} \bar a \bar b & a \bar a - e^2b \bar b
\end{array} \right).$$ 
Hence
$$tr(K) -2 = 2a \bar a - 2\Re(e^2) b \bar b -2= 2a \bar a - 2\Re(e^2) b \bar b - a \bar a+
b \bar b -1$$
$$\geq (a \bar a - |\Re(e^2)|b \bar b) + (b \bar b - |\Re(e^2)|b \bar b)-1\geq
a \bar a - |\Re(e^2)|b \bar b -1 \geq 0.$$
If the eigenvalues of $K$ would be roots of unity (if it is not unipotent), one would have
$|tr(K)| <2$. Hence by the fact that $tr(K) \geq 2$, one concludes that $K$ is unipotent or has
eigenvalues $v$ with $|v| \neq 1$. In both cases $K$ has infinite order.
\end{proof}

For the proof of Theorem $\ref{nundenn}$ we need to recall some facts about complex simple Lie
algebras. The complex simple Lie algebra $\fsl_n(\C)$ will be very important:

\begin{remark}
The Lie algebra $\fsl_n(\C)$ is given by
$$\fsl_n(\C) = \{ M \in M_{n \times n} (\C): {\rm tr}(M) = 0\}.$$
The Cartan subalgebra of $\fsl_n(\C)$ is given by
$$\fh=\{\diag(a_1, \ldots, a_n): \sum\limits_{i = 1}^n a_i = 0\}.$$
Each root space is given by the matrices $(a_{i,j})$, which have
exactly one entry $a_{i_0,j_0} \neq 0$ for a fixed pair $(i_0,j_0)$ with $i_0 \neq j_0$.
\end{remark}

We want to show a statement about unitary groups, and not about special linear groups. The
fact, which makes $\fsl_n(\C)$ interesting for us, is given by the following remark:

\begin{remark} \label{unisimple}
We can obviously embed $\fsu_{p,q}(\R)$ into $\fsl_{p+q}(\C)$, since
$\SU(p,q)(\R)$ is a Lie subgroup of $\SL_{p+q}(\C)$. Moreover
$i\fsu_{p,q}(\R)$ is a subvector space of $\fsl_{p+q}(\C)$ (considered
as real vector space). One has that
$$\fsu_{p,q}(\C) = \fsu_{p,q}(\R) \oplus i\fsu_{p,q}(\R) = \fsl_{p+q}(\C).$$
(see \cite{FH}, page 433)
\end{remark}

Moreover we need to compare the monodromy group of $\sL_j$ with the monodromy groups of some of
its restrictions over certain subspaces of $\sP_n$.

\begin{remark}
Consider some embedding $\iota_{a,b,c}:\sM_n \hookrightarrow \sP_n$. By the holomorphic
diffeomorphism
$$\PGL_2(\C)\times \iota_{a,b,c}(\sM_n)(\C) \ni M \times q \to M(q) \in \sP_n(\C),$$
we have that
$$\PGL_2(\C)\times \sM_n  \cong \sP_n \  \ \mbox{and} \  \
\pi_1(\PGL_2(\C)) \times \pi_1(\sM_n) \cong \pi_1(\sP_n),$$
where $\pi_1(\PGL_2(\C)) \cong \Z/(2)$ (compare \cite{DM}, $3.7$). 
\end{remark}

For technical reasons, we need to introduce an additional subspace of $\sP_n$:
\index{$\sP_n^{(a_k)}$}
$$\sP_n^{(a_k)} = \{q \in \sP_n|\phi_q(a_k) = \infty\}$$

Let $G_T$ denote the group of triangular matrices given by
$$G_T = \{\left(\begin{array}{cc}
a& 0\\
b & 1
\end{array} \right) \in M_{2\times 2}(\C)|a \neq 0\}.$$
We have obviously an embedding $\iota_{a,b,c}: \sM_n \hookrightarrow \sP_n^{(a_{n+3})}$
such that we get a holomorphic diffeomorphism
$$G_T\times \iota_{a,b,c}(\sM_n)(\C) \ni M \times q \to M(q) \in \sP^{(a_{n+3})}_n(\C).$$
Hence we have that
$$G_T\times \sM_n  \cong \sP^{(a_{n+3})}_n \  \ \mbox{and} \  \
\pi_1(G_T) \times \pi_1(\sM_n) \cong \pi_1(\sP^{(a_{n+3})}_n),$$
where $\pi_1(G_T) \cong \Z/(2)$.

The space $\sP_n^{(a_{n+3})}$ has a natural interpretation as configuration space of $n+2$ points
on $\R^2$. Its fundamental group is the colored braid group on $n+2$ strands in $\R^2$.

\begin{lemma} \label{hansen}
The fundamental group of the configuration space of $n+2$ points
on $\R^2$ is generated by the Dehn twists $T_{k_1,k_2}$ with
$1\leq k_1 < k_2 \leq n+2$.
\end{lemma}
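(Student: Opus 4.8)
The space in question is the configuration space of $n+2$ ordered (``colored'') points in $\R^2 \cong \C$, namely
$$X_N := \{(z_1, \ldots, z_N) \in \C^N : z_i \neq z_j \text{ for } i \neq j\}, \qquad N := n+2,$$
whose fundamental group is the colored braid group on $N$ strands, and in which the Dehn twist $T_{k_1,k_2}$ is the loop where $z_{k_2}$ runs once counterclockwise around $z_{k_1}$ while the remaining points stay fixed. The plan is to argue by induction on $N$ using the Fadell--Neuwirth fibration obtained by forgetting the last point.

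First I would set up the fibration
$$p: X_N \to X_{N-1}, \qquad (z_1, \ldots, z_N) \mapsto (z_1, \ldots, z_{N-1}),$$
which is a locally trivial fiber bundle whose fiber over $(z_1, \ldots, z_{N-1})$ is the punctured plane $\C \setminus \{z_1, \ldots, z_{N-1}\}$. Since each $X_k$ is aspherical (it is an iterated tower of such fibrations with aspherical fibers), in particular $\pi_2(X_{N-1}) = 0$, the long exact homotopy sequence collapses to the short exact sequence
$$1 \to \pi_1(\C \setminus \{z_1, \ldots, z_{N-1}\}) \to \pi_1(X_N) \xrightarrow{p_*} \pi_1(X_{N-1}) \to 1.$$
The kernel is free of rank $N-1$, and I would take as its free generators the simple loops in which the last point $z_N$ encircles $z_k$ counterclockwise, for $k = 1, \ldots, N-1$; by definition these are precisely the Dehn twists $T_{k,N}$.

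Next comes the inductive step. By the induction hypothesis $\pi_1(X_{N-1})$ is generated by the Dehn twists $T_{k_1,k_2}$ with $1 \le k_1 < k_2 \le N-1$. For each such pair the Dehn twist $T_{k_1,k_2} \in \pi_1(X_N)$ (in which $z_N$ stays fixed) maps under $p_*$ to the corresponding Dehn twist in $\pi_1(X_{N-1})$, hence provides a lift of that generator. The standard argument for a short exact sequence $1 \to K \to G \to Q \to 1$ then shows that $G$ is generated by the chosen generators of $K$ together with lifts of the generators of $Q$: given $g \in G$, write $p_*(g)$ as a word in the $T_{k_1,k_2}$ with $k_2 \le N-1$, lift this word to some $\tilde g \in G$, and note that $g\tilde g^{-1}$ lies in $K$ and is therefore a word in the $T_{k,N}$. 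Consequently $\pi_1(X_N)$ is generated by all $T_{k_1,k_2}$ with $1 \le k_1 < k_2 \le N$, completing the induction; the base case $N=2$ is immediate, since $X_2 \simeq S^1$ with $\pi_1(X_2) = \langle T_{1,2}\rangle \cong \Z$.

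The main obstacle is not the group theory but the topological input: verifying that $p$ is genuinely a locally trivial fibration (Fadell--Neuwirth) and that the configuration spaces are aspherical, and then matching the free generators of the punctured-plane fiber with the geometrically defined Dehn twists $T_{k,N}$, so that the lifts of the base generators may likewise be taken to be Dehn twists. These identifications are classical for braid groups and are exactly what the cited reference \cite{Hansen} supplies, so in practice I would either invoke \cite{Hansen} directly or confirm the loop identifications by an explicit isotopy.
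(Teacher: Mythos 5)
Your proposal is correct. Note that the paper itself gives no argument for this lemma: its ``proof'' is a bare citation to Hansen's \emph{Braids and coverings}, Chapter I.4, so there is nothing in the text to compare against line by line. What you have written is the standard classical proof that the cited reference (and any treatment of pure braid groups) supplies: the Fadell--Neuwirth fibration $p\colon X_N \to X_{N-1}$, asphericity of configuration spaces of the plane to kill $\pi_2$ and collapse the long exact sequence to $1 \to F_{N-1} \to \pi_1(X_N) \to \pi_1(X_{N-1}) \to 1$, identification of free generators of the punctured-plane fiber with the loops $T_{k,N}$, and the routine extension argument combined with induction on $N$ (with the correct base case $X_2 \simeq S^1$). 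The one point you rightly flag as needing care is the matching of the chosen free generators of $\pi_1(\C \setminus \{z_1,\dots,z_{N-1}\})$ with the geometrically defined Dehn twists $T_{k,N}$: a loop in which $z_N$ encircles only $z_k$ must be specified up to the basepoint path, and different conventions give conjugate (not identical) elements. Since conjugates of generators still generate, and since the lifts of the base generators can be taken to be the twists fixing $z_N$, this does not affect the conclusion. In short, you have supplied the proof that the paper outsources to its reference.
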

\begin{proof}
(see \cite{Hansen}, Chapter {\bf I}. $4$)
\end{proof}

\begin{pkt}
By the preceding results, the monodromy groups of $\sL_j$, $(\sL_j)_{\sM_n}$ and
$(\sL_j)_{\sP_n^{(a_{n+3})}}$ are commensurable. Hencefore their $\R$-Zariski closures have the
same connected component of identity. Thus we do not need to distinguish between them and we will
call them simply $\Mon^0(\sL_j)$.
\end{pkt}

Again assume that $\sL_j$ is of type $(1,1)$. By Lemma $\ref{hansen}$, the monodromy group
$\rho_j(\pi_1(\sP_1^{(a_{4})} ))$ of $(\sL_j)_{\sP_1^{(a_{4})}}$ is generated by the
matrices $\rho_j(T_{k,\ell})$ for $k,\ell \in \{1,2,3\}$. For each
Dehn twist $T$ one can choose a suitable numbering of the branch points such that $T = T_{1,2}$.
Hence by Proposition $\ref{repro}$, one concludes that the generators of the monodromy group
are contained in the group given by
$$\{M \in GL_{2}(\C)|{\rm det}(M)^m = 1\}.$$
Since $\Mon^0(\sL_j)$ is contained in
$\U(1,1)$, one concludes that $\Mon^0(\sL_j) \subseteq \SU(1,1)$. Thus the
complexification of the
Lie algebra of $\Mon^0(\sL_j)$ is contained in $\fsl_{2}(\C)$. Note that the real Zariski closure
$\Mon^0(\Re\V(j)_{\R})$ is isomorphic to $\Mon^0(\sL_j)$ and $\Mon^0(\Re\V(j)_{\R})$ is a
quotient of the semisimple group $\Mon_{\R}^0(\sV_r)$. Thus by the kernel, which is semisimple, we
have an exact sequence of algebraic groups. This yields an exact sequence of semisimple Lie
algebras such that $\Mon^0(\sL_j)$ must be semisimple. One has that
$\Mon_{\C}^0(\sL_j) \subseteq \SU_{\C}(1,1)$. Since
$\fsu_{1,1}(\C) = \fsl_{2}(\C)$ is the smallest semisimple non-trivial complex Lie algebra (see
\cite{FH}, $\S14.1$, Step 3) and $\Mon^0(\sL_j)$ is infinite by Lemma $\ref{infini}$, one
concludes:

\begin{proposition}
If $\sL_j$ is of type $(1,1)$, then $\Mon^{0}(\sL_j) = \SU(1,1)$.
\end{proposition}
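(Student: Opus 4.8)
The plan is to trap $\Mon^0(\sL_j)$ between the group $\SU(1,1)$ from above and a lower bound that forces equality, using that the type $(1,1)$ eigenspace carries an indefinite Hermitian form of signature $(1,1)$ and that $\fsl_2(\C)$ is the smallest nonzero semisimple complex Lie algebra. First I would establish the inclusion $\Mon^0(\sL_j)\subseteq\SU(1,1)$. Since the monodromy preserves the polarization, Remark \ref{slj} already yields $\Mon^0(\sL_j)\subseteq\U(1,1)$, so it remains to control determinants. By Lemma \ref{hansen} the relevant fundamental group is generated by the Dehn twists $T_{\ell,\ell+1}$, and Proposition \ref{repro} gives each generator explicitly: the matrix $M_{\ell,\ell+1}$ differs from the identity only in row $\ell$, and since the neighbouring columns are standard basis vectors, a short row reduction gives $\det M_{\ell,\ell+1}=\alpha_\ell\alpha_{\ell+1}$. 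As each $\alpha_k=e^{2\pi i\mu_k}$ is an $m$-th root of unity, every generator has determinant in $\mu_m$, so the determinant of the whole monodromy group lands in the finite group $\mu_m$. A connected group mapping to a finite group maps trivially, hence $\det$ vanishes on $\Mon^0(\sL_j)$ and $\Mon^0(\sL_j)\subseteq \SL_2(\C)\cap\U(1,1)=\SU(1,1)$.

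Next I would produce the matching lower bound at the level of Lie algebras. By Corollary \ref{ssmon} the group $\Mon^0(\sV_r)$ is semisimple, and $\Mon^0(\sL_j)$ is its image under the representation on the eigenspace $\sL_j$ (equivalently on $\Re\V(j)_\R$). Since the image of a semisimple algebraic group under a homomorphism is again semisimple, $\Mon^0(\sL_j)$ is semisimple. Passing to complexified Lie algebras and invoking the identification $\fsu_{1,1}(\C)=\fsl_2(\C)$ of Remark \ref{unisimple}, the algebra $Lie(\Mon^0(\sL_j))_\C$ is a semisimple subalgebra of the simple algebra $\fsl_2(\C)$.

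Finally the two bounds close up. Because $\fsl_2(\C)$ is simple, indeed the smallest nonzero semisimple complex Lie algebra, its only semisimple subalgebras are $\{0\}$ and itself. By Lemma \ref{infini} the group $\Mon^0(\sL_j)$ is infinite, so $Lie(\Mon^0(\sL_j))_\C\neq 0$ and therefore equals $\fsl_2(\C)$. Comparing real dimensions, the real Lie algebra is then all of $\fsu_{1,1}(\R)$, and a connected subgroup of $\SU(1,1)$ with full Lie algebra is $\SU(1,1)$ itself, which is the claim. I expect the only genuinely delicate points to be the determinant bookkeeping in the first step, namely checking that the \emph{connected} component really descends into $\SL_2(\C)$ rather than merely into the group $\{M:\det(M)^m=1\}$, and the transfer of semisimplicity from $\Mon^0(\sV_r)$ down to its eigenspace image; everything afterwards is forced by the exceptional smallness of $\fsl_2(\C)$.
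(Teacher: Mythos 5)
Your proposal is correct and follows essentially the same route as the paper: bound $\Mon^0(\sL_j)$ above by $\SU(1,1)$ using the Dehn-twist generators from Lemma \ref{hansen} and the explicit matrices of Proposition \ref{repro} (whose determinants are $m$-th roots of unity, so the connected component lands in $\SL_2$), then deduce equality from the semisimplicity of $\Mon^0(\sL_j)$, the infiniteness given by Lemma \ref{infini}, and the fact that $\fsl_2(\C)$ is the smallest nontrivial semisimple complex Lie algebra. The only differences are cosmetic — you compute $\det M_{\ell,\ell+1}=\alpha_\ell\alpha_{\ell+1}$ explicitly and phrase the semisimplicity transfer via images rather than the paper's exact-sequence formulation, and the containment in $\U(1,1)$ is stated in Remark \ref{dromy} rather than Remark \ref{slj}.
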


Recall that we want to give a proof of Theorem $\ref{nundenn}$ by induction. The following
construction explains our method to compare the monodromy groups of eigenspaces of different
type, which we will need for the induction:

\begin{construction}[Collision of points] \label{cop} \index{collision of points}
Let $\LL_j$ be an eigenspace in the cohomology of a fiber $C = \sC_q$ with the local monodromy
data $\alpha_k$ on $a_k$. Now let
$$b := \{a_{n_j+2},a_{n_j+3}\} \  \  \mbox{and} \  \  P =
\{\{a_1\}, \ldots,\{a_{n_j+1}\},  b \}$$
be a stable partition of $N = \{a_1, \ldots, a_{n_j+3}\}$. Let $\phi_P: P \to \bP^1$ be some
embedding and the local system $\LL(P)_j$ on
$\bP^1 \setminus \phi_P(P)$ have the local monodromy data 
$$\alpha_{b} = \alpha_{a_{n_j+2}}\alpha_{a_{n_j+3}} \  \ \mbox{and otherwise}
\    \ \alpha_{\{a_k\}} = \alpha_{a_k}.$$
By Construction $\ref{hiercr}$, these monodromy data allow the construction of a family of cyclic
covers
$$\pi(P): \sC(P) \to \sP_{n_j-1}.$$
The higher direct image sheaf $R^1 \pi(P)_*(\C)$ has an
eigenspace with respect to the character given by 1, which we denote by $\sL(P)_j$.\footnote{This
definition may seem to be a little bit odd. But it is motivated by some reasons, which
should become clearer by Remark $\ref{gugu}$.} By the description of the respective monodromy
representations in Proposition $\ref{repro}$, we can identify the monodromy group of
$(\sL(P)_j)_{\sP_{n_j-1}^{(b)}}$ with the subgroup of the monodromy group of
$(\sL_j)_{\sP_n^{(a_{n_3})}}$ generated by the Dehn twists $T_{a_{k_1},a_{k_2}}$ with
$k_1, k_2 \leq n_j+1$.
\end{construction}

\begin{remark} \label{gugu}
The local system $\sL(P)_j$ is in general not the $j$-th. eigenspace of a family of irreducible
covers of degree $m$ obtained by a collision of two branch points of a family of irreducible
covers of degree $m$. The problem is given by the irreducibility of the resulting family obtained
by collision. Take for example the family $\sC \to \sP_2$ with generic fibers given by
$$y^4 = (x-a_1)(x-a_2)(x-a_3)^2\cdot \ldots \cdot (x-a_5)^2.$$
By the collision of $a_1$ and $a_2$, one does not obtain an irreducible family of degree 4 covers.
But the resulting local system $\sL(P)_1$ is the eigenspace with respect to the character 1 on the
higher direct image sheaf of the family $\sC(P) \to \sP_1$ with generic fibers given by
$$y^2 = (x-a_1)\cdot \ldots \cdot (x-a_4).$$
\end{remark}

Now let $\sL_j$ be of type $(p,q)$ with $p,q >0$. By the collision of two points
and Proposition $\ref{1.27}$, one gets an eigenspace of type $(p,q-1)$ or of type $(p-1,q)$, if
there is a suitable corresponding stable partition. A little bit later we will see that this
construction yields an induction
step such that the statement of Theorem $\ref{nundenn}$ for local systems of type $(p,q-1)$ (if
$p, q-1 \geq  1$) and of type $(p-1,q)$ (if $p-1, q \geq  1$) implies
the statement of Theorem $\ref{nundenn}$ for local systems of type $(p,q)$.

For the application of the step of induction we will need a pair of stable partitions such that
the resulting two eigenspaces satisfy the assumptions of Theorem $\ref{nundenn}$. Moreover
one can assume that for each fiber $S_j$ contains at least 5 different points. Otherwise
$\sL_j$ is of type $(1,1)$ or unitary. By the following technical lemma, we start to show that
there exists a suitable pair of stable partitions, if the assumptions of Theorem $\ref{nundenn}$
are satisfied and if $S_j$ contains at least 5 points:

\begin{lemma} \label{pop}
Assume that $j \neq \frac{m}{2}$. Then there is an $a_k\in S_j$ with $\mu_k \neq \frac{1}{2}$.
\end{lemma}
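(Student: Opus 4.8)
The plan is to argue by contradiction. Suppose $j \neq \frac{m}{2}$ but every $a_k \in S_j$ nevertheless satisfies $\mu_k = \frac{1}{2}$. Writing $d_k := m\mu_k$ for the branch indices, such a point forces $d_k = \frac{m}{2}$, so in particular $m$ is even and $S_j \neq \emptyset$. The lever I intend to use is the \emph{irreducibility} of the cover: by the minimality of $m$ in Construction \ref{hiercr} (were a prime $p$ to divide $m$ and all of $d_1, \ldots, d_{n+3}$, then $m/p$ would already clear every denominator $\mu_k$), one has $\gcd(m, d_1, \ldots, d_{n+3}) = 1$. This is the constraint that will rule out the supposed configuration.

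First I would record two elementary reformulations. Set $g := \gcd(j,m)$ and $m' := m/g$. Since $\gcd(j/g, m') = 1$, for every branch point one has the equivalence
\[
a_k \notin S_j \iff [j\mu_k]_1 = 0 \iff m \mid j d_k \iff m' \mid d_k .
\]
Combining this with the contradiction hypothesis, the branch indices split cleanly: points in $S_j$ have $d_k = \frac{m}{2}$, while points outside $S_j$ have $m' \mid d_k$.

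The heart of the argument is then a single divisibility observation: the integer $\gcd(\frac{m}{2}, m')$ divides every $d_k$. Indeed it divides $\frac{m}{2}$ (hence every $S_j$-index) and it divides $m'$ (hence every non-$S_j$-index). As it also divides $m$, irreducibility forces $\gcd(\frac{m}{2}, m') = 1$. Now $\frac{m}{2}$ and $m'$ both divide $m$ and are coprime, so their product $\frac{m}{2}\, m' = \lcm(\frac{m}{2}, m')$ divides $m$; in particular $\frac{m}{2}\, m' \le m$, i.e. $m' \le 2$. Thus $g = m/m' \in \{\frac{m}{2}, m\}$. The value $g = m$ would force $m \mid j$, impossible for $1 \le j \le m-1$; hence $g = \frac{m}{2}$, and since $j$ is then a multiple of $\frac{m}{2}$ in the range $0 < j < m$ we get $j = \frac{m}{2}$, contradicting the hypothesis.

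I expect the only real obstacle to be conceptual rather than computational: recognizing that irreducibility (equivalently, the minimality of $m$) is exactly the constraint that excludes the degenerate configuration, and packaging the complement of $S_j$ through the single modulus $m' = m/\gcd(j,m)$ so that the two families of branch indices can be compared via the one quantity $\gcd(\frac{m}{2}, m')$. Once these reductions are in place the number theory is routine.
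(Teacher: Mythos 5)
Your proof is correct, and it rests on the same lever as the paper's: irreducibility of the cover (equivalently, $\gcd(m,d_1,\ldots,d_{n+3})=1$, which you correctly extract from the minimality of $m$ in Construction $\ref{hiercr}$) combined with the reduction to showing that the contradiction hypothesis forces $\gcd(j,m)=\frac{m}{2}$. The only difference is packaging: the paper passes to the quotient family $\sC_r$ with $r=\gcd(j,m)$ and observes that all its branch indices would equal $\frac{m}{2r}$, violating its irreducibility unless $\frac{m}{2r}=1$, whereas you stay with the degree-$m$ equation and run the divisibility argument directly through the single quantity $\gcd(\frac{m}{2},\frac{m}{\gcd(j,m)})$ --- a marginally more self-contained route, since it avoids having to justify that the quotient cover is again irreducible.
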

\begin{proof}
Assume that all $a_k\in S_j$ satisfy  $\mu_k = \frac{1}{2}$ and $j \neq \frac{m}{2}$. One has that
$\sC_{r}$ (with $r = \gcd(m,j)$) is a family of
irreducible cyclic covers onto $\bP^1$ of degree $\frac{m}{r} > 2$ given by
$\mu_1,\ldots, \mu_{n+3}$ in the sense of Construction $\ref{hiercr}$. By the assumption that all
$a_k\in S_j$
satisfy $\mu_k = \frac{1}{2}$, each branch point has the same branch index $\frac{m}{2r}$, which
divides the degree $\frac{m}{r}$. Since we assume that $j\neq \frac{m}{2}$, one concludes that the
branch indeces given by $\frac{m}{2r}$ are not 1. Thus $\sC_{r}$ is not a family of irreducible
cyclic covers. Contradiction!
\end{proof}

Next we show that a $\mu_k \neq \frac{1}{2}$ yields two stable partitions:

\begin{lemma} \label{cases}
Assume that $S_j$ contains at least 5 different points such that there is an $a_k\in S_j$
with $\mu_k \neq \frac{1}{2}$. Then there are some pairwise different $\mu_h, \mu_i, \mu_s, \mu_t
\in S_j$ such that
$$\mu_h + \mu_i \neq 1, \ \ \mbox{and} \  \ \mu_s + \mu_t \neq 1.$$
\end{lemma}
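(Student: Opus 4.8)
The plan is to argue by contradiction: I will assume that no two disjoint \emph{good} pairs exist (a pair $\{a_h,a_i\}$ being \emph{good} when $\mu_h+\mu_i\neq 1$) and derive a contradiction. Throughout I read the $\mu_k$ as the effective local exponents $[j\mu_k]_1\in(0,1)$ of $\LL_j$ attached to the points of $S_j$, so that a $2$-element block is stable exactly when the corresponding pair is good. The decisive extra input, which the bare hypotheses ``$|S_j|\geq 5$ and some $\mu_k\neq\frac12$'' do not provide on their own, is that $S_j$ is the support of a genuine eigenspace, whence $\sum_{a_k\in S_j}\mu_k\in\N$. Without this arithmetic constraint the statement would be false (four exponents equal to $\frac12$ together with one exponent $\neq\frac12$ admits only good pairs through the single exceptional point), so the crux of the proof is to feed this sum condition in at exactly the right moment.

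First I would record the elementary observation that if three or more of the exponents have all their pairwise sums equal to $1$, then they are all equal to $\frac12$: subtracting two such relations yields two equal exponents, and the third relation then forces the common value to be $\frac12$. Next I would produce a first good pair containing the exceptional point $a_k$ with $\mu_k\neq\frac12$: either some other point has exponent $\neq 1-\mu_k$, giving such a pair directly, or all of the $\geq 4$ remaining points share the exponent $1-\mu_k$, in which case any two of them form a good pair (their sum is $2-2\mu_k\neq 1$), so two disjoint good pairs already appear, contradicting the assumption. Hence the first alternative holds and I may fix a good pair $\{a_h,a_i\}$ with $a_h=a_k$, so that $\mu_h\neq\frac12$.

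Let $T$ denote the remaining $\geq 3$ points. If some pair inside $T$ is good I am done, so I assume every pair in $T$ is bad; by the observation above every point of $T$ then has exponent $\frac12$. Now $\{a_h,t\}$ is good for each $t\in T$, since $\mu_h\neq\frac12$. If $\mu_i\neq\frac12$ as well, then $\{a_i,t'\}$ is good for a second point $t'\in T$, and the disjoint good pairs $\{a_h,t\}$, $\{a_i,t'\}$ contradict the assumption. The only surviving possibility is $\mu_i=\frac12$, and this is where the sum condition enters: $\sum_{a_k\in S_j}\mu_k=\mu_h+\tfrac12+\tfrac{|T|}{2}$ would have to be an integer, forcing $\mu_h+\tfrac12\in\tfrac12\Z$; but $\mu_h\in(0,1)\setminus\{\tfrac12\}$ gives $\mu_h+\tfrac12\in(\tfrac12,\tfrac32)\setminus\{1\}$, which contains no half-integer. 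This final contradiction closes the argument.

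The step I expect to be the genuine obstacle is not any single computation but recognising that the stated hypotheses alone are insufficient and locating the precise place where the integrality $\sum\mu_k\in\N$ must be invoked; once that is pinned down, the case analysis is short. A secondary point to handle carefully is the reading of ``pairwise different $\mu_h,\mu_i,\mu_s,\mu_t$'': the subsequent construction (two independent collisions of points) needs the four \emph{points} $a_h,a_i,a_s,a_t$ to be distinct, which is exactly what two disjoint good pairs furnish, so I would state the conclusion as the existence of two vertex-disjoint good pairs.
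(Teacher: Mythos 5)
Your proof is correct, and it is sharper than the paper's own argument at precisely the point you flag. The paper follows the same combinatorial outline --- produce one pair with $\mu_h+\mu_{i'}\neq 1$ (possible since otherwise all exponents would equal $\frac{1}{2}$), then either find a second good pair among the remaining $\geq 3$ points or conclude that all of those exponents equal $\frac{1}{2}$ --- but in the latter case it simply asserts that $\mu_h=\frac{1}{2}$ or $\mu_{i'}=\frac{1}{2}$ ``would contradict the assumptions'' and exhibits the pairs $\{a_h,a_{s'}\}$ and $\{a_{i'},a_{t'}\}$. As you observe, the configuration with exactly one exponent $\neq\frac{1}{2}$ and all others equal to $\frac{1}{2}$ satisfies the hypotheses as literally stated and admits no two vertex-disjoint good pairs, so that assertion does not follow from the stated assumptions alone; what actually excludes this configuration is the integrality $\sum_{a_k\in S_j}\mu_k\in\N$ (equivalently $m\mid \sum d_k$) built into Construction $\ref{hiercr}$ and inherited by the eigenspace exponents $[j\mu_k]_1$. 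Your parity computation --- $\mu_h+\frac{1}{2}+\frac{|T|}{2}\in\N$ forces $\mu_h+\frac{1}{2}\in\frac{1}{2}\Z$, which is impossible for $\mu_h\in(0,1)\setminus\{\frac{1}{2}\}$ --- supplies exactly the missing justification. The remaining differences are cosmetic: you anchor the first good pair at the exceptional point (disposing of the subcase where all other exponents equal $1-\mu_k$ by finding two disjoint good pairs among those $\geq 4$ points), whereas the paper starts from an arbitrary good pair; both proofs then run the same dichotomy on the leftover $\geq 3$ points. Your closing remark that the conclusion must be read as two vertex-disjoint pairs is also the correct reading, since that is what the two independent collisions in the induction step of Theorem $\ref{nundenn}$ require.
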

\begin{proof}
Assume that each pair $h, i' \in \{1,\ldots, n+3\}$ with $h \neq i'$ satisfies
$\mu_h+ \mu_{i'} = 1$. This implies that $\mu_h = \mu_{i'} = \frac{1}{2}$ for each pair $h, i'$.
But this contradicts the assumptions of this lemma. Hence by the assumptions, there must be a pair
$(h,i')$ such that $\mu_h+ \mu_{i'} \neq 1$.

Now consider $S_j' := S_j \setminus \{a_h,a_i'\}$. Let us assume that each pair
$a_{s'}, a_{t'} \in S_j'$ with $s' \neq t'$ satisfies $\mu_{s'}+ \mu_{t'} = 1$. Since $|S_j'| \geq
3$, one concludes that $\mu_{s'} = \mu_{t'} = \frac{1}{2}$. Since $\mu_h = \frac{1}{2}$ or
$\mu_{i'}= \frac{1}{2}$ would contradict the assumptions in this case, one
concludes that $\mu_h, \mu_{i'} \neq \frac{1}{2}$. Hence put $i:=s', s :=i', t := t'$, and we are
done in this case.

If there are $a_{s'}, a_{t'} \in S_j'$ with $s' \neq t'$ and $\mu_{s'}+ \mu_{t'} \neq 1$, we
put $i := i', s := s', t := t'$, and we are done.

\end{proof}

By Lemma $\ref{pop}$ and Lemma $\ref{cases}$, one concludes immediately:

\begin{corollary} \label{1}
Assume that $S_j$ contains at least 5 different points and $j \neq \frac{m}{2}$. Then there are
some pairwise different $\mu_h, \mu_i, \mu_s, \mu_t \in S_j$ such that
$$\mu_h + \mu_i \neq 1, \ \ \mbox{and} \  \ \mu_s + \mu_t \neq 1.$$
\end{corollary}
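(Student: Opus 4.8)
The plan is to obtain this corollary purely as a bookkeeping combination of the two immediately preceding lemmas, with no fresh computation. First I would observe that the hypothesis $j \neq \frac{m}{2}$ is exactly what is needed to invoke Lemma \ref{pop}, which produces some $a_k \in S_j$ with $\mu_k \neq \frac{1}{2}$. The point to verify is simply that the output of Lemma \ref{pop} matches one of the two hypotheses of Lemma \ref{cases}: that lemma asks for $S_j$ to contain at least $5$ different points \emph{and} for the existence of an $a_k \in S_j$ with $\mu_k \neq \frac{1}{2}$.

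Having secured both hypotheses of Lemma \ref{cases} — the cardinality condition $|S_j| \geq 5$ comes directly from the assumption of the corollary, and the condition on the existence of a non-$\frac{1}{2}$ datum comes from Lemma \ref{pop} — I would then apply Lemma \ref{cases} verbatim. Its conclusion is precisely the assertion of the corollary: pairwise different $\mu_h, \mu_i, \mu_s, \mu_t \in S_j$ with $\mu_h + \mu_i \neq 1$ and $\mu_s + \mu_t \neq 1$. Thus the statement follows at once, which explains the phrasing ``one concludes immediately'' in the excerpt. There is no genuine obstacle here; the entire substance of the argument has already been discharged in Lemma \ref{pop} (the Galois-theoretic irreducibility argument showing not every branch datum can equal $\frac{1}{2}$) and in Lemma \ref{cases} (the elementary case analysis extracting two disjoint pairs with non-unit sums). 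The only thing to be careful about is confirming that the hypotheses chain correctly, and this is transparent.
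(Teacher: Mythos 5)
Your proposal is correct and is exactly the paper's argument: the corollary is stated immediately after Lemma \ref{pop} and Lemma \ref{cases} and is obtained by chaining them precisely as you describe ($j \neq \frac{m}{2}$ feeds Lemma \ref{pop}, whose output together with $|S_j| \geq 5$ feeds Lemma \ref{cases}). Nothing further is needed.
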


\begin{remark} \label{2}
The condition that
$$\mu_h + \mu_i \neq 1, \ \ \mbox{and} \  \ \mu_s + \mu_t \neq 1$$
implies that
$$(\mu_h \neq \frac{1}{2} \  \ \mbox{or} \  \ \mu_i \neq \frac{1}{2}) \  \ \mbox{and} \  \ 
(\mu_s \neq \frac{1}{2} \  \ \mbox{or} \  \ \mu_t \neq \frac{1}{2}).$$
Hencefore the resulting eigenspace obtained by the collision of $a_h$ and $a_i$ resp.,
$a_s$ and $a_t$ satisfies that there is a local monodromy datum $\mu_k \neq \frac{1}{2}$. Hence
the resulting eigenspace is not a middle part $\sL_{\frac{m}{2}}$ of the $VHS$ of the family
obtained by the respective collision of two points. One must only ensure that the resulting
eigenspaces are not of type $(a,0)$ resp., $(0,b)$ in order to satisfy the assumptions of Theorem
$\ref{nundenn}$ in this case.
\end{remark}

\begin{pkt}
Assume that $\sL_j$ is of type $(1,n)$ with $n > 1$. By Proposition $\ref{1.27}$, one calculates
that
$$\sum\limits_{i = 1}^{n+3} \mu_i = 2$$
in this case. One can choose the indeces such that $$\mu_1 \leq \ldots \leq \mu_{n+3}.$$
Hence one has
$$\mu_1 + \mu_3 \leq \mu_2 + \mu_4 \leq \mu_3 + \mu_5.$$
By the fact that
$$(\mu_2 + \mu_4) + (\mu_3 + \mu_5) < 2 \   \ \mbox{and} \    \
\mu_2 + \mu_4 \leq \frac{1}{2}((\mu_2 + \mu_4) + (\mu_3 + \mu_5)),$$
one has
$$\mu_1 + \mu_3 \leq \mu_2 + \mu_4 < 1.$$
Since the local systems with respect to the corresponding stable partitions of the collision of
$a_1$ and $a_3$ resp., the collision of
$a_2$ and $a_4$ are of type $(1, n-1)$ as one can calculate by Proposition
$\ref{1.27}$, one can apply the induction hypothesis for these partitions.
\end{pkt}

Now let $\sL_{j}$ be of type $(n,1)$. Then the monodromy representation of $\sL_{j}$
is the complex conjugate of the monodromy representation of $\sL_{m-j}$, which is of type
$(1,n)$ in this case. Hence first the induction step yields the statement for all $\sL_j$ of
type $(1,n)$. Then we have the statement for all $\sL_j$ of type $(n,1)$, too. 

Assume that $\sL_{j}$ is of type $(p,q)$ with $p,q\geq 2$ and satisfies the assumptions of Theorem
$\ref{nundenn}$. By Corollary $\ref{1}$, one has a pair of stable partitions. Remark $\ref{2}$ and
the fact that $p,q\geq 2$ imply that the corresponding eigenspaces satisfy the assumptions of
Theorem $\ref{nundenn}$, too.

Now we must only prove and explain the step of induction:

One has without loss of generality the stable partitions
$$P_1 =  \{\{a_1\}, \ldots, \{a_{n+1}\}, \{a_{n+2}, a_{n+3}\}\}, \  \ \mbox{and} \   \
P_2 = \{\{a_{1}, a_{2}\}, \{a_3\}, \ldots,  \{a_{n+3}\}\}.$$
Here we assume without loss of generality that $a_k \in \R$ and $a_k < a_{k+1}$ such that
$\delta_k$ is the oriented path from $a_k$ to $a_{k+1}$ given by the straight line.

Let $q\in \sP_n$. We consider the monodromy representation with
respect to the basis $\sB$ of $(\sL_j)_q$ given by 
$$\sB = \{ [e_1\delta_1], \ldots, [e_n\delta_{n}], [e_{n+2}\delta_{n+2}]\}.$$
One has obviously that $\Mon^0(\sL_j(P_1))$ leaves $\langle[e_1\delta_1],\ldots, [e_n\delta_{n}]
\rangle$ invariant and fixes all vectors in $\langle[e_{n+2}\delta_{n+2}]\rangle$. Now let $U_1$
be a small open neighborhood of the identity in $\Mon^0(\sL_j(P_1))(\R)$ such that the ``inverse''
$$\log: U_1 \to Lie(\Mon^0(\sL_j(P_1)))$$
of the exponential map is defined on $U_1$. By Remark $\ref{unisimple}$ and the induction
hypothesis, $\log(U_1)$ generates a Lie algebra, whose complexification $L_1$ is with respect to
$\sB$ given by the matrices
$$\left(\begin{array}{cccc}
a_{1,1} & \ldots & a_{1,n} & 0\\
\vdots & & \vdots & \vdots\\
a_{n,1} & \ldots & a_{n,n} & 0\\
0 & \ldots & 0 & 0
\end{array} \right), \  \ \mbox{where} \  \ N := \left(\begin{array}{ccc}
a_{1,1} & \ldots & a_{1,n} \\
\vdots & & \vdots \\
a_{n,1} & \ldots & a_{n,n}
\end{array} \right)$$
is an arbitrary $n\times n$ matrix with $\tr(N) = 0$. Note that $\Mon^0(\sL_j(P_2))$
fixes all vectors in $\langle[e_1\delta_1]\rangle$ and leaves
$\langle[e_3\delta_3],\ldots, [e_{n+2}\delta_{n_2}]\rangle$ invariant. Hence in
a similar way $\log(U_2)$ ($e \in U_2 \subset \Mon^0(\sL_j(P_2))(\R)$) generates a Lie algebra.
Its complexification $L_2$ is given by the matrices
$$\left(\begin{array}{cc}

0& v\\
0 & N
\end{array} \right),$$
where $N$ is again an arbitrary $n\times n$ matrix with  $\tr(N) = 0$ and
$$v = (v_1, \ldots, v_n)$$
is a vector depending on $N$. It is easy to see that $L_1$ and $L_2$ generate $\fsl_{n+1}(\C)$.

Since $\Mon^0(\sL_j)$ is contained in $\SU(p,q)$ and $\fsu_{p,q} \otimes \C \cong \fsl_{n+1}(\C)$,
the group $\Mon^0(\sL_j)$ is isomorphic to $\SU(p,q)$.

\section{The Hodge group of a general direct summand}
The $VHS$ of a family $\sC \to \sP_n$ has a decomposition into rational
subvariations $\sV_r$ of Hodge structures, which where introduced in Section $4.3$. Recall that
$\sV_r$ is general, if its monodromy group is infinite. Otherwise we call it special. Let
$r \neq \frac{m}{2}$, $\sV_r$ be general and $\sL_j \subset \sV_r$ in this section. Moreover
recall that $\Mon^0_{\R}(\sV_r)_q$ denotes the connected component of identity of the
Zariski closure of the monodromy group in $\GL(((\sV_r)_{\R})_q)$ for some $q \in \sP_n$. Since
$\Mon^0_{\R}(\sV_r)_{q_1}$ and $\Mon^0_{\R}(\sV_r)_{q_2}$ are conjugated, we write
$\Mon^0_{\R}(\sV_r)$ \index{$\Mon_{\R}^0(\sV_r)$} instead of $\Mon^0_{\R}(\sV_r)_q$ for simplicity.

\begin{remark}
The group
$\Mon_{\R}^0(\sV_r)$ does not need to be equal to $\Mon^0(\sV_r)\times_{\Q} \R$. But it satisfies
$\Mon_{\R}^0(\sV_{r})\subseteq \Mon^0(\sV_r)\times_{\Q} \R$. Hence $\Mon_{\R}^0(\sV_r)$
yields a lower bound for $\Mon^0(\sV_r)$. Thus one obtains $C^{\der}_r(\psi)  = \Mon^0(\sV_r)$, if
$C^{\der}_r(\psi)_{\R}  = \Mon_{\R}^0(\sV_r)$.
\end{remark}

By the preceding section,
we know that $\Mon_{\R}^0(\sV_r) \to \Mon^0(\sL_j)$ can be considered as the projection onto
some $\SU(a,b)$, if $\sL_j$ is of type $(a,b)$ with $a, b > 0$. Otherwise one can use induction
with the corresponding stable partitions again. We only consider the start of induction:

Assume that $S_j = 4$. hence one has without loss of generality $\sC_r \to \sP_1$. By our
assumptions, there is an eigenspace $\sL_{j_2}$ in $\sN^1(\sC_r,\C)$ of type $(1,1)$, whose
monodromy group is infinite. Since the monodromy
group of $\sL_j$ is conjugated to the monodromy of $\sL_{j_2}$ by some $\gamma \in
Gal(\Q(\xi^r);\Q)$, it is infinite, too. One concludes similarly to the preceding section that
$\Mon^0(\sL_j) = \SU(2)$ (since $\fsu_2(\C) = \fsl_2(\C)$ by \cite{FH}, page 433, too).
The rest of the proof is an induction analogue to the induction of the
preceding section.

By the preceding considerations, one has:

\begin{proposition}
Assume that $\sV_r$ is general. Then the image of the natural projection $\Mon_{\R}^0(\sV_r) \to
\GL(\Re\V(j)_{\R})$ is given by the special unitary group induced by the trace map
and the special unitary group $\SU(H_j^1(C,\C), H|_{H_j^1(C,\C)})$ described in Section $4.3$. 
\end{proposition}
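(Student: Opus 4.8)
\emph{The plan} is to reduce the assertion about the real Zariski closure $\Mon_{\R}^0(\sV_r)$ to the single-eigenspace computation already carried out in Theorem \ref{nundenn}. First I would observe that, under the identification $\Re\V(j)_{\R} \cong H^1_j(C,\C)$ furnished by the trace map $\tr$ of \eqref{trace}, the natural projection $\Mon_{\R}^0(\sV_r) \to \GL(\Re\V(j)_{\R})$ is carried to the projection onto $\Mon^0(\sL_j) \subset \GL_{\R}(H^1_j(C,\C))$; this is exactly the isomorphism $\Mon^0(\Re\V(j)_{\R}) \cong \Mon^0(\sL_j)$ used already in the type $(1,1)$ case. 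Hence it suffices to show $\Mon^0(\sL_j) = \SU(H^1_j(C,\C), H|_{H^1_j(C,\C)})$ for every eigenspace $\sL_j \subset \sV_r$ and then apply $\tr$. A short arithmetic check on the indices $j = kr$ with $\gcd(k,\tfrac{m}{r})=1$ shows that $r \neq \frac{m}{2}$ forces $j \neq \frac{m}{2}$, so Theorem \ref{nundenn} is available for every such $\sL_j$. If $\sL_j$ has \emph{indefinite} type $(a,b)$ with $a,b \geq 1$, Theorem \ref{nundenn} gives at once $\Mon^0(\sL_j) = \SU(a,b) = \SU(H^1_j(C,\C), H|_{H^1_j(C,\C)})$, and applying $\tr$ yields the claimed group.

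The real content lies in the \emph{definite} case, say type $(a,0)$, where $\U(H^1_j(C,\C), H|_{H^1_j(C,\C)}) = \U(a)$ is compact and Theorem \ref{nundenn} does not apply directly. Here I would exploit that $\sV_r$ is general: by Remark \ref{nixda}, were every eigenspace of $\sV_r$ definite the monodromy of $\sV_r$ would be finite, contradicting generality, so some eigenspace $\sL_{j_2}\subset \sV_r$ is indefinite. Since the eigenspaces of $\sV_r$ form a single $\Gal(\Q(\xi^r);\Q)$-orbit, $\sL_{j_2}$ is a Galois conjugate of $\sL_j$, whence by Proposition \ref{label} the monodromy representation of $\sL_j$ is a $\gamma$-conjugate of that of $\sL_{j_2}$; in particular the two eigenspaces have the same rank $a$, so $\sL_{j_2}$ is of type $(b,c)$ with $b+c=a$ and $b,c\geq 1$.

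By Theorem \ref{nundenn} one then has $\Mon^0(\sL_{j_2}) = \SU(b,c)$, whose complexification is $\fsl_a(\C)$ by Remark \ref{unisimple}. Transporting this along $\gamma$ and complexifying shows $\Mon^0_{\C}(\sL_j)$ is again a copy of $\SL_a(\C)$; being connected and semisimple (Corollary \ref{ssmon}), hence of determinant $1$, and contained in the compact group $\U(a)$, the group $\Mon^0(\sL_j)$ is the compact real form of $\SL_a(\C)$ inside $\U(a)$, namely $\SU(a)$. As the text indicates, one may equally re-run the induction of the preceding section in the definite case: the collision construction \ref{cop}, fed by the pair of stable partitions from Corollary \ref{1} and Remark \ref{2}, degenerates $\sL_j$ to eigenspaces of strictly smaller rank (the base case being the type $(2,0)$ situation settled by the Galois-conjugation step above), and the complexified Lie algebras $L_1, L_2$ of the two partitions together generate $\fsl_a(\C)$. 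Either way $\Mon^0(\sL_j)=\SU(a)$, so $\tr(\Mon^0(\sL_j)) = \tr(\SU(H^1_j(C,\C), H|_{H^1_j(C,\C)}))$ in every case, which is the assertion. I expect the definite case to be the main obstacle: there $\Mon^0(\sL_j)$ cannot be read off the (definite) polarization alone, since the ambient group is compact, and the argument must carry both the infiniteness and the full Lie-algebra dimension across a Galois conjugation while verifying that the required stable partitions exist and avoid the excluded middle eigenspace $\sL_{m/2}$.
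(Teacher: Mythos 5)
Your proposal is correct and shares the paper's overall architecture: both reduce the statement to the single eigenspace $\sL_j$ via the trace-map identification $\Mon^0(\Re\V(j)_{\R})\cong\Mon^0(\sL_j)$, both settle the indefinite case by quoting Theorem $\ref{nundenn}$ (using, as you note, that $r\neq\frac{m}{2}$ forces $j\neq\frac{m}{2}$ for $j$ with $\gcd(j,m)=r$), and both handle the definite case by playing $\sL_j$ off against an indefinite Galois conjugate $\sL_{j_2}\subset\sV_r$, which exists exactly because $\sV_r$ is general. Where you genuinely diverge is in how the definite case is closed for rank $a>2$. The paper uses the Galois conjugation only to get \emph{infiniteness} of the monodromy in the base case $|S_j|=4$, concludes $\Mon^0(\sL_j)=\SU(2)$ because $\fsu_2(\C)=\fsl_2(\C)$ is the smallest nontrivial semisimple complex Lie algebra, and then re-runs the entire collision-of-points induction of the preceding section for larger $|S_j|$. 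You instead transport the whole group at once: by Proposition $\ref{label}$ the generators of the monodromy of $\sL_j$ are the entrywise $\gamma$-conjugates of those of $\sL_{j_2}$, so the Zariski closures over $\bar\Q$ correspond under $\gamma$ and the complexification is already $\fsl_a(\C)$ for every $a$; a connected semisimple subgroup of $\U(a)$ of real dimension $a^2-1$ must then lie in, hence equal, $\SU(a)$. This buys a proof with no second induction and no need to re-verify stable partitions in the definite case. The one step worth making explicit is the passage from the complex Zariski closure (which is what $\gamma$ transports) back to $\Mon^0(\sL_j)$, which the paper defines as a \emph{real} Zariski closure: one needs $\dim_{\R}\Mon^0(\sL_j)\geq\dim_{\C}$ of the complex closure, which holds because the complexification of the real closure surjects onto the complex closure, and combined with the containment in the $(a^2-1)$-dimensional group $\SU(a)$ this pins the group down. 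With that caveat your argument is sound, and your fallback via the collision induction is exactly the paper's route.
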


Moreover we know that $\Mon_{\R}^0(\sV_r)$ is contained in $C_r^{\der}(\psi)_{\R}$, which is given
by a direct product of
certain groups $\SU(a,b)$. Either $\Mon^0(\sV_r) = C_r^{\der}(\psi)$ or it is given by a proper
subgroup. We want to examine the conditions of the case $\Mon^0(\sV_r) \neq C_r^{\der}(\psi)$.
This will yield information and some criteria for the structure of $\Mon^0(\sV_r)$.

First let us make a simple, but very useful observation: 

\begin{remark} \label{tutty}
Let $G_1, \ldots, G_t$ be connected simple Lie groups and
$N \subset G_1 \times  \ldots \times  G_t =: G$ be a normal connected subgroup.  One has that $Lie(G)$ is a direct sum of the
simple ideals $Lie(G_1), \ldots, Lie(G_t)$, which implies that each ideal is a sum of certain
$Lie(G_i)$ (see \cite{Helga}, {\bf II}. Corollary $6.3$). Since the normal
connected subgroups of $G$ and the ideals of $Lie(G)$ correspond (follows by \cite{FH},
Proposition $8.41$ and \cite{FH}, Exercise $9.2$), one obtains that
$$N = G_1 \times  \ldots \times G_{t_0}\times \{e\} \times \ldots \times \{e\}$$
for some $t_0 \leq t$ with respect to a suitable numbering.
\end{remark}

The decomposition of the rational Hodge structure $N^1(C_r,\Q)$ into the $\Q(\xi^r)^+$-Hodge
structures $\Re\V(j)$ yields a decomposition of the variation $\sV_r$ of rational Hodge structures
into the variations $\Re\sV(j)$ of $\Q(\xi^r)^+$-Hodge structures.

By technical reasons, we consider the semisimple adjoint group $\Mon_{\R}^{\ad}(\sV_r)$ instead of
$\Mon_{\R}^0(\sV_r)$ first. By Remark $\ref{tutty}$, one concludes that $\Mon_{\R}^{\ad}(\sV_r)$ is isomorphic
to the direct product of $\Mon^{\ad}(\Re \sV(j)_{\R})$ and the kernel $K_j$ of the natural projection
$\Mon_{\R}^{\ad}(\sV_r) \to \Mon^{\ad}(\Re \sV(j)_{\R})$. Moreover one has:

\begin{lemma} \label{5.15}
Let $G_1, \ldots, G_t$ be simple adjoint Lie groups and $G$ be a semisimple subgroup of 
$G_1 \times  \ldots \times  G_t$ such that each natural projection
$$G \hookrightarrow G_1 \times  \ldots \times  G_t \stackrel{pr_j}{\longrightarrow} G_j$$
is surjective. One has $G \neq G_1 \times  \ldots \times  G_t$, if and only if there are some
$j_1,j_2 \in \{1, \ldots, t\}$ with $j_1 \neq j_2$ such that $G$ contains a simple subgroup $G'$
isomorphically mapped onto $G_{j_1}$ and $G_{j_2}$ by the
natural projections.
\end{lemma}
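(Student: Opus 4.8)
The plan is to pass to Lie algebras and invoke the classical structure theory of subdirect products of simple Lie algebras (a Lie-theoretic Goursat lemma). First I would observe that the ambient group $G_1 \times \cdots \times G_t$ is centreless, being a product of adjoint groups, so its subgroup $G$ is centreless as well; together with semisimplicity this makes $G$ a semisimple \emph{adjoint} group, hence (as in Remark $\ref{tutty}$) a direct product of its simple normal factors $G' = S_1 \times \cdots \times S_s$. On Lie algebras this reads $\mathfrak{g} := Lie(G) = \mathfrak{s}_1 \oplus \cdots \oplus \mathfrak{s}_s$, a sum of simple ideals sitting inside $\mathfrak{g}_1 \oplus \cdots \oplus \mathfrak{g}_t$ (with $\mathfrak{g}_i := Lie(G_i)$ simple) so that each projection $p_i \colon \mathfrak{g} \to \mathfrak{g}_i$ is surjective.

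Next I would carry out the combinatorial heart. For each simple ideal $\mathfrak{s}_k$ and each index $i$, the image $p_i(\mathfrak{s}_k)$ is an ideal of $p_i(\mathfrak{g}) = \mathfrak{g}_i$, hence equals $0$ or $\mathfrak{g}_i$ by simplicity; set $I_k := \{ i : p_i(\mathfrak{s}_k) = \mathfrak{g}_i \}$. Whenever $i \in I_k$ the restriction $p_i|_{\mathfrak{s}_k}$ has kernel an ideal of the simple algebra $\mathfrak{s}_k$ and nonzero image, so it is an isomorphism $\mathfrak{s}_k \xrightarrow{\sim} \mathfrak{g}_i$. Using that distinct ideals commute, $[\,\mathfrak{s}_k,\mathfrak{s}_{k'}\,]=0$ forces $[\,p_i(\mathfrak{s}_k),p_i(\mathfrak{s}_{k'})\,]=0$; since $[\mathfrak{g}_i,\mathfrak{g}_i]=\mathfrak{g}_i\neq 0$, no index can lie in two different $I_k$, so the $I_k$ are disjoint, and surjectivity of each $p_i$ forces them to cover $\{1,\dots,t\}$. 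Thus $\{I_k\}$ is a partition and each $\mathfrak{s}_k$ embeds diagonally into $\bigoplus_{i\in I_k}\mathfrak{g}_i$ with all factors isomorphic to $\mathfrak{s}_k$. Comparing dimensions, $\dim\mathfrak{g}=\sum_k \dim\mathfrak{s}_k$ while $\dim\bigoplus_i\mathfrak{g}_i=\sum_k |I_k|\dim\mathfrak{s}_k$, so $\mathfrak{g}=\bigoplus_i\mathfrak{g}_i$ (equivalently $G=G_1\times\cdots\times G_t$) holds precisely when every $|I_k|=1$. Hence $G$ is proper iff some $|I_k|\ge 2$, i.e. some simple factor $S_k$ of $G$ is mapped isomorphically by $pr_{j_1}$ and $pr_{j_2}$ onto $G_{j_1}$ and $G_{j_2}$ for two distinct $j_1,j_2\in I_k$ — the asserted $G'$.

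Finally I would translate back to groups and check the converse. Because the $G_i$ are adjoint and the algebra maps $p_{j_1}|_{\mathfrak{s}_k}, p_{j_2}|_{\mathfrak{s}_k}$ are isomorphisms, the restrictions of $pr_{j_1},pr_{j_2}$ to the connected simple factor $S_k=:G'$ are genuine isomorphisms onto $G_{j_1},G_{j_2}$, giving the forward implication. For the converse, the subtle point — which I expect to be the main obstacle — is the exact role of the word ``simple subgroup'': an \emph{arbitrary} simple subgroup, such as a diagonal, can live inside the full product, so one must use that $G'$ is a simple \emph{normal} factor of $G$. Granting that, if such a $G'$ existed while $G$ equalled the whole product, then projecting to $G_{j_1}\times G_{j_2}$ would carry the normal subgroup $G'$ onto the graph $\{(g,\phi(g))\}$ of the isomorphism $\phi = pr_{j_2}\circ(pr_{j_1}|_{G'})^{-1}$; this graph is simple but not normal in $G_{j_1}\times G_{j_2}$, contradicting that images of normal subgroups under a surjection are normal. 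Thus the existence of $G'$ forces $G\neq G_1\times\cdots\times G_t$, completing the equivalence. The only genuine care needed is the adjoint/centreless bookkeeping that upgrades Lie-algebra isomorphisms to group isomorphisms and keeps normality intact.
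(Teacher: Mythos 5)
Your proof is correct in substance and supplies a real argument where the paper has essentially none: the printed proof is only ``the if part is easy to see, the only if part follows by induction.'' Your route --- pass to $\fg=\mathfrak{s}_1\oplus\cdots\oplus\mathfrak{s}_s\subseteq\fg_1\oplus\cdots\oplus\fg_t$, note that each $p_i(\mathfrak{s}_k)$ is an ideal of the simple $\fg_i$ and hence $0$ or all of $\fg_i$, show the index sets $I_k$ are pairwise disjoint (distinct ideals commute) and cover $\{1,\dots,t\}$ (surjectivity), and compare dimensions --- is the standard Goursat-type analysis and replaces the paper's unspecified induction with one transparent combinatorial statement: $G$ is proper exactly when some block $I_k$ has size at least $2$. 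You also correctly put your finger on the one genuine subtlety of the statement: read with ``simple subgroup'' meaning an arbitrary simple subgroup, the ``if'' direction is false, since a diagonally embedded copy of $G_{j_1}$ lives inside the full product and maps isomorphically onto two factors. The lemma only holds, and is only ever applied in the paper, with $G'$ a simple \emph{normal} (direct) factor of $G$ --- compare the subsequent extraction of ``a direct simple factor of $\Mon_{\R}^{\ad}(\sV_r)$'' --- and with that reading your normality-of-images argument for the converse is correct.

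Two justifications need repair, though neither affects the architecture. First, a subgroup of a centreless group need \emph{not} be centreless (e.g.\ $\SL_2\hookrightarrow\PGL_3$ via $A\mapsto[\diag(A,1)]$), so your opening sentence is wrong as stated; the correct reason $Z(G)=\{e\}$ is the surjectivity hypothesis: for $z\in Z(G)$ the image $pr_j(z)$ is central in $pr_j(G)=G_j$, which is adjoint, so $pr_j(z)=e$ for every $j$. Second, when you upgrade the Lie-algebra isomorphism $\mathfrak{s}_k\to\fg_{j_1}$ to a group isomorphism $S_k\to G_{j_1}$, the relevant point is not that the \emph{target} is adjoint (the covering $\SL_2\to\PGL_2$ shows that is not enough) but that the \emph{source} is: the kernel is a discrete normal, hence central, subgroup of the connected group $S_k$, and $Z(S_k)\subseteq Z(G)=\{e\}$ because $S_k$ is a direct factor. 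Both are one-line fixes.
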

\begin{proof}
The "if" part is easy to see. The "only if" part follows by induction.
\end{proof}

Note that we have a natural embedding
$$\Mon_{\R}^{\ad}(\sV_r)\hookrightarrow \prod\limits_{j \in \Z/\frac{m}{r}, j \leq \frac{m}{2}}
\Mon^{\ad}(\Re \sV(j)_{\R}).$$
Thus the preceding lemma and our assumption that $\Mon^0(\sV_r) \neq C_r^{\der}(\psi)$ imply that
there is a direct simple factor of $\Mon_{\R}^{\ad}(\sV_r)$, which isomorphically mapped onto
$\Mon^{\ad}( \Re \sV(j_1)_{\R} )$ and $\Mon^{\ad}( \Re \sV(j_2)_{\R} )$ for some $j_1$ and $j_2$
with $j_2 \neq j_1$ and $m-j_1$. By Remark $\ref{tutty}$, $\Mon_{\R}^{\ad}(\sV_r)$ is a direct
product of the kernel of the both projections and this direct simple factor.

Thus the natural projections onto
$\Mon^{\ad}(\Re \sV(j_1)_{\R})$ and $\Mon^{\ad}(\Re \sV(j_2)_{\R})$ yield an isomorphism 
$$\alpha^{\ad} : \Mon^{\ad}(\Re \sV(j_1)_{\R})\to \Mon^{\ad}(\Re \sV(j_2)_{\R}).$$
Moreover one concludes that the
image $\Mon^{\ad}(\Re \sV(j_1,j_2)_{\R})$ of the projection 
$$\Mon_{\R}^{\ad}(\sV_r) \to \Mon^{\ad}( \Re \sV(j_1)_{\R} ) \times \Mon^{ad}(\Re \sV(j_2)_{\R}) $$
is given by the graph of $\alpha^{\ad}$.

\begin{pkt}
For the image $\Mon^{0}( \Re \sV(j_1,j_2)_{\R} )$ \index{$\Mon^{0}( \Re\sV(j_1,j_2)_{\R} )$} of the
projection
$$\Mon_{\R}^{0}(\sV_r) \to \Mon^{0}(\Re \sV(j_1)_{\R}) \times \Mon^{0}(\Re \sV(j_2)_{\R})$$
this implies that the natural projections
$$p_1: \Mon^{0}(\Re\sV(j_1,j_2)_{\R}) \to \Mon^{0}(\Re \sV(j_1)_{\R})$$
and
$$p_{2}: \Mon^{0}(\Re\sV(j_1,j_2)_{\R}) \to \Mon^{0}(\Re \sV(j_2)_{\R})$$
are isogenies. Since
$$\Mon^{0}(\Re \sV(j_1)_{\R})(\C) = \Mon^{0}(\Re \sV(j_2)_{\R})(\C) = \SL_{a+b}(\C),$$
where $(a,b)$ is the type of $\sL_{j_1}$, and the Lie group $\SL_{a+b}(\C)$ is simply connected
(see \cite{FH}, Proposition $23.1$), the induced isogenies of Lie groups of $\C$-valued points are
isomorphisms. Hence the isogenies $p_1$ and $p_2$ are isomorphisms.

Hence our assumption implies the existence of an isomorphism
$$\alpha : \Mon^{0}(\Re \sV(j_1)_{\R})\to \Mon^{0}(\Re \sV(j_2)_{\R}),$$
which satisfies that $\Mon^{0}(\Re \sV(j_1,j_2)_{\R})$ is given by ${\rm Graph}(\alpha)$.
\end{pkt}

\section{A criterion for the reaching of the upper bound}
In this section we give a necessary criterion for the existence of an isomorphism $\alpha$.
This yields a sufficient condition that $\Mon^0(\sV_r)$ reaches the upper bound
$C^{\der}_r(\psi)$. In addition we will see that $\Mon^0(\sV) = \Mon^0(\sV_1)$ reaches the upper
bound, if the degree $m$ of the covers given by the fibers of $\sC \to \sP_n$ is a prime number
$>2$.\footnote{For $m = 2$ we will later see that $\Mon^0(\sV)$ reaches the upper bound as well.}

We say that a Dehn twist $T$ is semisimple \index{semisimple Dehn twist} (with respect to
$\sV_r$), if the monodromy representation $\rho_j$ of one (and hence of all) $\sL_j \subset \sV_r$
yields a semisimple matrix $\rho_j(T)$. By the trace map (see $\eqref{trace}$),
we can identify 
$\Mon^{\ad}(\Re \sV(j)_{\R})$ and $\Mon^{\ad}( \sL_j)$. Thus $\Mon^{0}(\Re \sV(j_1,j_2)_{\R})$
is equal to ${\rm Graph}(\alpha)$, if and only if one has a corresponding isomorphism
$\alpha^{\ad}:\Mon^{\ad}(\sL_1) \to \Mon^{\ad}(\sL_2)$ such
that $\Mon^{\ad}(\sL_{j_1} \oplus \sL_{j_2})$ is given by ${\rm Graph}(\alpha^{\ad})$. By an abuse
of notation, we will write $\alpha$ instead of $\alpha^{\ad}$ from now on.

First let us formulate a sufficient criterion for the non-existence of $\alpha$ in the case
$\sC \to \sP_1$:

\begin{proposition} \label{uekur}
Let $\sV_r$ be general and $\sL_{j_1}, \sL_{j_2} \subset \sV_r$ be of type $(a,b)$, where
$a+b = 2$. Then there does not exist an isomorphism $\alpha: \Mon^{\ad}(\sL_1) \to
\Mon^{ad}(\sL_2)$ such that $P\rho_{j_2} = \alpha \circ P\rho_{j_1}$, if there is a semisimple
Dehn twist $T$ such that the non-trivial eigenvalue $z_2$ of $\rho_{j_2}(T)$ is not contained in
$\{z_1,\bar z_1\}$, where $z_1$ denotes the non-trivial eigenvalue of $\rho_{j_1}(T)$.
\end{proposition}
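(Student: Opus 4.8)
The plan is to argue by contradiction: assuming an isomorphism $\alpha : \Mon^{\ad}(\sL_{j_1}) \to \Mon^{\ad}(\sL_{j_2})$ with $P\rho_{j_2} = \alpha \circ P\rho_{j_1}$ exists, I would extract from the single semisimple Dehn twist $T$ a numerical invariant that $\alpha$ is forced to preserve, and then show that the hypothesis $z_2 \notin \{z_1,\bar z_1\}$ makes this impossible. The structural input is that, since $\sL_{j_1}$ and $\sL_{j_2}$ have type $(a,b)$ with $a+b=2$ and $\sV_r$ is general, Theorem $\ref{nundenn}$ and the discussion of general summands give $\Mon^0(\sL_{j_i})\in\{\SU(1,1),\SU(2)\}$; hence both groups $\Mon^{\ad}(\sL_{j_1})$ and $\Mon^{\ad}(\sL_{j_2})$ are adjoint groups of a simple Lie algebra whose complexification is $\fsl_2(\C)$. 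Therefore $\alpha$ is induced by a Lie-algebra isomorphism $d\alpha$, and because the adjoint representation of a group on its own Lie algebra is intrinsic, $d\alpha$ intertwines the two adjoint actions: $\mathrm{Ad}(\alpha(g))\circ d\alpha = d\alpha\circ \mathrm{Ad}(g)$ for all $g$. Consequently $\mathrm{Ad}(g)$ and $\mathrm{Ad}(\alpha(g))$ are conjugate linear maps and have the same eigenvalues.

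Next I would compute these eigenvalues on the distinguished element $g = P\rho_{j_1}(T)$. Since $T$ is a semisimple Dehn twist and $a+b=2$, Proposition $\ref{repro}$ shows that $\rho_{j_1}(T)$ is a diagonalizable $2\times 2$ matrix with eigenvalues $\{z_1,1\}$, where $z_1$ is the product of the two local monodromy data attached to the colliding points; likewise $\rho_{j_2}(T)$ has eigenvalues $\{z_2,1\}$. The induced adjoint action of such a matrix on $\fsl_2(\C)$ has eigenvalues $\{z_1, 1, z_1^{-1}\} = \{z_1, 1, \bar z_1\}$, using that $z_1$ is a root of unity so that $z_1^{-1}=\bar z_1$; similarly $\mathrm{Ad}(P\rho_{j_2}(T))$ has eigenvalues $\{z_2, 1, \bar z_2\}$. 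This computation is uniform across the types $(1,1)$, $(2,0)$ and $(0,2)$, since only the two eigenvalues $\{z_i,1\}$ of the $2\times 2$ twist matrix enter, and scalars act trivially under $\mathrm{Ad}$ so the data descends to the adjoint image.

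Combining the two steps finishes the argument: applying the relation $P\rho_{j_2} = \alpha \circ P\rho_{j_1}$ to $T$ gives $P\rho_{j_2}(T) = \alpha(P\rho_{j_1}(T))$, and by the first step these share the same multiset of adjoint eigenvalues, i.e. $\{z_1, 1, \bar z_1\} = \{z_2, 1, \bar z_2\}$. This forces $z_2 \in \{z_1, \bar z_1\}$, contradicting the hypothesis; hence no such $\alpha$ can exist.

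I expect the main obstacle to be the careful justification of the first step, namely that an abstract isomorphism of the adjoint monodromy groups must preserve the adjoint-eigenvalue data of a fixed element rather than merely matching abstract conjugacy classes up to some twist. This rests on the facts that $A_1$ has no outer automorphisms, so that $d\alpha$ genuinely intertwines the adjoint representations, and that the chosen element $P\rho_{j_1}(T)$ is semisimple — guaranteed precisely because $T$ is a semisimple Dehn twist — so that its adjoint eigenvalues are well defined and invariant. A secondary point to verify is the identification, via the trace map of Section $4.3$ and Proposition $\ref{repro}$, of each nontrivial eigenvalue $z_i$ with the explicit product of local monodromy data appearing in that proposition, so that $z_1$ and $z_2$ can be compared concretely in the applications.
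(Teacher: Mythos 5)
Your proof is correct, but it runs along a genuinely different line than the paper's. The paper argues inside a maximal torus: it observes that $\Mon^{\ad}(\sL_{j_1})\cong\PU(1,1)$ or $\PU(2)$, places the finite cyclic group generated by $P\rho_{j_1}(T)$ in a maximal torus whose real points are $S^1(\R)$, and derives the contradiction from the fact that $\alpha$ restricted to this torus must be a homeomorphism of $S^1(\R)$ and hence cannot send an element adjacent to $1$ in the cyclic subgroup $\langle\xi^s\rangle$ to a non-adjacent one unless $\alpha(\zeta)\in\{\zeta,\bar\zeta\}$. You instead use the naturality of the adjoint representation: $\mathrm{Ad}(\alpha(g))=d\alpha\circ\mathrm{Ad}(g)\circ d\alpha^{-1}$ forces the adjoint eigenvalue multisets $\{z_1,1,\bar z_1\}$ and $\{z_2,1,\bar z_2\}$ on $\fsl_2(\C)$ to coincide, which immediately yields $z_2\in\{z_1,\bar z_1\}$. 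Both arguments ultimately exploit the same rigidity (an automorphism of these rank-one adjoint groups can only act on torus elements by $z\mapsto z^{\pm1}$), but yours isolates a clean conjugation invariant and avoids the choice of maximal torus, the uniqueness discussion, and the interval/adjacency bookkeeping; the paper's version is more elementary-topological but also more fragile in its identifications of $\alpha(G)(\R)$ with $S^1(\R)$. Two small remarks: the intertwining of the adjoint actions holds for any Lie group isomorphism regardless of whether $A_1$ has outer automorphisms, so that aside is unnecessary; and you should note (as the paper does implicitly) that the $\alpha$ in question arises as a morphism of real algebraic groups, so differentiability of $\alpha$ and the existence of $d\alpha$ are available.
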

\begin{proof}
Assume that $\Mon^{\ad}(\sL_1)$ and $\Mon^{ad}(\sL_2)$ are isomorphic and $T$ satisfies the
assumptions of this proposition. Thus $\rho_{j_1}(T)$ generates a finite commutative subgroup $FT$ of
$\Mon^{\ad}(\sL_{j_1})$, which is contained in a maximal torus $G$ of $\Mon^{\ad}(\sL_{j_1})$. Our
assumption that $a+b = 2$ implies that $\Mon^{\ad}(\sL_{j_1}) \cong \Mon^{\ad}(\sL_{j_2})$ is
isomorphic to $\PU(1,1)$ or $\PU(2)$. Note that the maximal tori of reductive groups are conjugate
and in the case of $\PU(1,1)$ resp., $\PU(2)$ the $\R$-valued points of $G(\R)$ can (up to
conjugation) be given by the diagonal matrices in $\PU(1,1)$ resp., $\PU(2)$. Thus one checks
easily that $G$ is unique and
that the Lie group $G(\R)$ is isomorphic to $S^1(\R)$. Hence one can identify $FT(\R)$ with some
$\langle\xi^{s}\rangle \subset S^1(\R)$. Now let $1 \neq \zeta \in \langle\xi^{s}\rangle$ satisfy
the property that there is a closed interval on $S^1(\R)$ with end points $1$ and $\zeta$,
which does not contain any other element of $\langle\xi^{s}\rangle$. Hence
there is a closed interval $I$ on $T$ with ending points $[\diag(1,1)]$ and
$[\diag(\zeta ,1)] \in FT$, which does not contain any other element of $FT$.

Now assume such an isomorphism $\alpha$ exists. Note that we have an identification
$\alpha(G)(\R) = S^1(\R)$, too. But our assumptions imply that
$$\alpha(\diag(\zeta ,1)) \notin
\{\diag(\zeta ,1), \diag(\bar \zeta ,1)\}.$$
Hence by our identification $\alpha(G)(\R) = S^1(\R)$, one obtains that
$$\alpha(\zeta) \notin \{\zeta ,\bar \zeta \}.$$
Thus $\alpha(I) \subset \alpha(G)(\R)$ is not a connected interval, which does not contain any
other element of $\langle\xi^{s}\rangle$ except of $1$ and $\alpha(\zeta)$. But $\alpha$ must be
a homeomorphism on the $\R$-valued points. Contradiction!
\end{proof}

By the preceding proposition, we can use certain semisimple Dehn twists for the study of the
generic Hodge group. Hence we make some observations about the orders and the existence of
semisimple Dehn twists:

\begin{lemma} \label{buaha}
Let $j \neq \frac{m}{2}$ and $v|\frac{m}{r}$, where
$$1 \neq v, \  \ r := \gcd(m,j) \  \ \mbox{and} \  \  1, 2 \neq \frac{m}{rv}.$$
Then there exists a Dehn twist $T \in \pi_1(\sP_n)$ such that
$\rho_j(T) \in \Mon(\sL_j)$ is semisimple and $|\langle\rho_j(T)\rangle|$ does not divide $v$.
\end{lemma}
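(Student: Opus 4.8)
The plan is to convert the conclusion into a purely arithmetic statement about the branch indices and then produce two suitable branch points by an irreducibility/counting argument. First I would fix notation: set $r=\gcd(j,m)$, $m'=m/r$, write $j=rj_0$ with $\gcd(j_0,m')=1$, and put $w:=\tfrac{m}{rv}=m'/v$, so that the hypotheses read $v\geq 2$, $w\geq 3$ and $vw=m'$. By Theorem \ref{monorep} (and the closing remark of Section 3.3, which transfers Proposition \ref{repro} to $\sL_j$), the local monodromy datum of $\sL_j$ at $a_k$ is $\alpha_k=e^{2\pi i j\mu_k}=e^{2\pi i j_0 d_k/m'}$. For a Dehn twist $T=T_{k_1,k_2}$, after renumbering so that it is $T_{1,2}$, the matrix of Proposition \ref{repro} has only the eigenvalues $1$ and $\alpha_{k_1}\alpha_{k_2}$; hence $\rho_j(T)$ is semisimple precisely when $\alpha_{k_1}\alpha_{k_2}\neq 1$, and in that case $|\langle\rho_j(T)\rangle|=\mathrm{ord}(\alpha_{k_1}\alpha_{k_2})$. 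A short computation using $\gcd(j_0,m')=1$ gives $\alpha_{k_1}\alpha_{k_2}\neq 1\Leftrightarrow m'\nmid(d_{k_1}+d_{k_2})$ and $\mathrm{ord}(\alpha_{k_1}\alpha_{k_2})\mid v\Leftrightarrow w\mid(d_{k_1}+d_{k_2})$. Since $w\mid m'$, it then suffices to exhibit two distinct $a_{k_1},a_{k_2}\in S_j$ with $w\nmid(d_{k_1}+d_{k_2})$.

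Next I would record the two structural inputs. Writing $\bar d_k:=[d_k]_{m'}$, one has $a_k\in S_j\Leftrightarrow m'\nmid d_k\Leftrightarrow\bar d_k\neq 0$, so $S_j$ is exactly the branch locus of the irreducible degree-$m'$ cover $C_r$; irreducibility yields $\gcd\bigl(\{\bar d_k:a_k\in S_j\}\cup\{m'\}\bigr)=1$. In particular, since $w\mid m'$ and $w\geq 3$, the $\bar d_k$ are not all divisible by $w$. Here I assume $|S_j|\geq 3$; otherwise $\sL_j$ has rank $|S_j|-2=0$ and there is nothing to prove. Because $d_k\equiv\bar d_k\pmod w$, the goal becomes: find $a_{k_1},a_{k_2}\in S_j$ with $\bar d_{k_1}+\bar d_{k_2}\not\equiv 0\pmod w$.

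I would then argue by contradiction, assuming $\bar d_{k_1}+\bar d_{k_2}\equiv 0\pmod w$ for every pair in $S_j$. Choosing three indices and forming $(\bar d_{k_1}+\bar d_{k_2})+(\bar d_{k_1}+\bar d_{k_3})-(\bar d_{k_2}+\bar d_{k_3})$ shows $w\mid 2\bar d_k$ for every $a_k\in S_j$. If $w$ is odd this forces $w\mid\bar d_k$ for all $k$, contradicting $\gcd(\{\bar d_k\},m')=1$ since $w\geq 3$. If $w$ is even, each $\bar d_k$ satisfies $\bar d_k\equiv 0$ or $\bar d_k\equiv w/2\pmod w$; a pair with one index of each type would sum to $w/2\not\equiv 0$, so all $\bar d_k$ lie in a single class. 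They cannot all be $\equiv 0\pmod w$ (irreducibility again), so all $\bar d_k\equiv w/2\pmod w$; then $w/2$ divides every $\bar d_k$ and divides $m'$, whence $w/2\mid\gcd(\{\bar d_k\},m')=1$, which is absurd because $w\geq 4$ gives $w/2\geq 2$. This contradiction produces the desired pair, and with it the semisimple Dehn twist of order not dividing $v$.

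The hard part will be the even-$w$ branch: one must eliminate the configuration in which every reduced index is congruent to $w/2$, and this is exactly where both the irreducibility of $C_r$ and the hypothesis $w=\tfrac{m}{rv}\geq 3$ (which forces $w/2\geq 2$) are indispensable — without $w\geq 3$ the statement genuinely fails, e.g. it degenerates when $j=\tfrac m2$. The remaining bookkeeping, namely the eigenvalue/order reading of Proposition \ref{repro} and the reduction $d_k\equiv\bar d_k\pmod w$, I expect to be routine.
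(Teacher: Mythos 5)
Your argument is correct and is essentially the paper's: both translate the order condition on the Dehn twist into the divisibility $\frac{m}{rv}\mid(d_{k_1}+d_{k_2})$, use the identity $2d_k=b_{k,k_1}+b_{k,k_2}-b_{k_1,k_2}$ to deduce $\frac{m}{rv}\mid 2d_k$ for all $k$, and then contradict the irreducibility of $C_r$ using $\frac{m}{rv}\geq 3$. The only differences are cosmetic: the paper first replaces $\sC$ by $\sC_r$ and normalizes to $j=1$, and it finishes in one line by noting $\frac{m}{rv}\mid\gcd(2d_1,\ldots,2d_{n+3})=2$, whereas you unpack that same fact with a parity case split on $w$.
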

\begin{proof}
One can replace $\sC$ by $\sC_r$ and choose a suitable collection of local monodromy data for $\sC$
such that $j= 1$. By an isomorphism $\langle\xi\rangle \cong \Z/(m)$, the non-trivial eigenvalues
of the semisimple Dehn twists $T_{k_1,k_2}$ correspond to some elements $[b_{k_1,k_2}]\in \Z/(m)$,
where $b_{k_1,k_2} := d_{k_1}+d_{k_2}$ and $d_{k_1}$ and $d_{k_2}$ denote the branch indeces of
$a_{k_1}$ and $a_{k_2}$.

Assume that each semisimple Dehn twist satisfies that its order divides some $v$ with $v|m$.
This implies that $\frac{m}{v}|b_{k_1,k_2}$ for all $b_{k_1,k_2}$. Hence for all
$k = 1, \ldots, n+3$ one has that $\frac{m}{v}$ divides
$$2d_{k} = (d_k+d_{k_1}) +(d_k + d_{k_2}) -(d_{k_1} + d_{k_2}) = b_{k,k_1}+b_{k,k_2}-b_{k_1,k_2}.$$
Since there does not exist any integer $N\neq 1$, which divides each $d_k$, one has that
$\frac{m}{v}$ divides $2$. This implies that $\frac{m}{v}= 1$ or $\frac{m}{v}= 2$.
\end{proof}

For the formulation of our criterion in the higher dimensional case we need the following lemma:

\begin{lemma} \label{5.19}
Let $q \in \sP_n$. Assume that we have a stable partition
$$P :=\{\{a_1\}, \{a_2\}, \{a_3\}, \{a_4, \ldots, a_{n_j+3}\}\}$$
with respect to the local monodromy data of $(\sL_j)_q$ such that we can define the
eigenspace $\sL_j(P)$ over $\sP_1$ with 
$b = \{a_4, \ldots, a_{n_j+3}\}$ as in Construction $\ref{cop}$. Then the monodromy group
$\rho_j(P)(\pi_1(\sP_1))$ of $\sL_j(P)$ has a subgroup of finite index generated by
$\rho_{j}(T_{1,2})$ and $\rho_{j}(T_{2,3})$.
\end{lemma}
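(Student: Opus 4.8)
The plan is to compare the monodromy of $\sL_j(P)$ over the configuration space $\sP_1$ with its monodromy over the quotient space $\sM_1 = \sP_1/\PGL_2(\C)$, and to exploit the fact that $\pi_1(\sM_1)$ is free of rank $2$ on exactly two Dehn twists. The point of the hypothesis that $P$ has four blocks is precisely that it makes the base space $\sP_1$, so that $\sM_1$ is a thrice-punctured line and its fundamental group is free on two generators rather than on more.

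First I would recall from Construction \ref{cop} that $\sL_j(P)$ is an eigenspace of $R^1\pi(P)_*(\C)$ for the family $\sC(P) \to \sP_1$, so that $\rho_j(P)$ is a representation of $\pi_1(\sP_1)$ and $\rho_j(T_{1,2}), \rho_j(T_{2,3})$ are the images of the corresponding Dehn twists under this representation. The structural input is the diffeomorphism $\PGL_2(\C) \times \sM_1 \cong \sP_1$ recorded earlier, which gives $\pi_1(\sP_1) \cong \pi_1(\PGL_2(\C)) \times \pi_1(\sM_1) \cong \Z/(2) \times \pi_1(\sM_1)$. In particular the embedding $\iota: \sM_1 \hookrightarrow \sP_1$ obtained by fixing the three marked points $a_1$, $a_3$ and the collided block $b = \{a_4, \ldots, a_{n_j+3}\}$ (say at $0$, $1$, $\infty$) and letting $a_2$ vary induces an injection $\iota_* : \pi_1(\sM_1) \hookrightarrow \pi_1(\sP_1)$ whose image is the second factor, hence of index $2$.

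Next I would identify the free generators of $\pi_1(\sM_1)$. With $a_1, a_3, b$ fixed at $0,1,\infty$, the space $\sM_1$ is identified with $\bP^1 \setminus \{0,1,\infty\}$, whose fundamental group is free of rank $2$, generated by the two loops along which $a_2$ encircles $a_1$ and $a_3$ respectively. By the description of Dehn twists in Section 3.3, and since $a_2$ is the common mobile strand, these two loops realize $T_{1,2}$ and $T_{2,3}$. Hence the monodromy of $(\sL_j(P))_{\sM_1}$, which is $\rho_j(P)(\iota_*(\pi_1(\sM_1)))$, equals $\langle \rho_j(T_{1,2}), \rho_j(T_{2,3})\rangle$, a subgroup of the full monodromy group $\rho_j(P)(\pi_1(\sP_1))$.

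Finally I would combine the two observations: since $\iota_*(\pi_1(\sM_1))$ has index $2$ in $\pi_1(\sP_1)$, its image under $\rho_j(P)$ has index at most $2$ in $\rho_j(P)(\pi_1(\sP_1))$, so $\langle \rho_j(T_{1,2}), \rho_j(T_{2,3})\rangle$ is the desired finite-index subgroup. The step I expect to require the most care is the precise matching of the two free generators of $\pi_1(\sM_1)$ with the \emph{named} twists $T_{1,2}$ and $T_{2,3}$: one must check, using the orientation conventions of Section 3.3, that the loop encircling $a_3$ indeed realizes $T_{2,3}$ up to a conjugation and inversion that leaves the generated subgroup unchanged. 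Everything else is a formal consequence of the index-$2$ relation $\pi_1(\sP_1) \cong \Z/(2) \times \pi_1(\sM_1)$, which in fact re-derives, in this special case, the commensurability of the various monodromy groups used elsewhere in the chapter.
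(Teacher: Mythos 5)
Your proposal is correct and takes essentially the same route as the paper: the paper also fixes $a_1=0$, $a_3=1$, $b=\infty$ to embed $\sM_1$ into $\sP_1$, identifies the two free generators of $\pi_1(\sM_1)$ with $T_{1,2}$ and $T_{2,3}$, and concludes from the finite index of the monodromy group of $\sL_j(P)|_{\sM_1}$ in that of $\sL_j(P)$. Your explicit derivation of the index from $\pi_1(\sP_1)\cong \Z/(2)\times\pi_1(\sM_1)$ is just a slightly more detailed version of the same step.
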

\begin{proof}
The stability of the partition ensures that
$\alpha_b = \alpha_{a_4} \ldots \alpha_{n_j+3} \neq 1.$ It is a well-known fact that
$\pi_1(\sM_1(\C))$ is generated by the two loops around 0 and 1, where we identify
$\A^1\setminus\{0,1\} = \sM_1$. By the embedding $\sM_1 \to \sP_1$ given by
$$a_1 = 0, \  \ a_3 = 1, \  \ a_4 = \infty,$$
we can identify the generators of $\pi_1(\sM_1(\C))$ with the Dehn twists $T_{1,2}$ and $T_{2,3}$. The
statement follows from the fact that the monodromy group of $\sL_j(P)|_{\sM_1}$ has finite index
in the monodromy group of $\sL_j(P)$.
\end{proof}

\begin{proposition} \label{ula}
Let $\sL_{j_1}, \sL_{j_2} \subset (\sV_1)_{\C}$ with $j_1 \neq j_2$ and $j_1 \neq m-j_2$.
Assume that we have a stable partition
$$P :=\{\{a_1\}, \{a_2\}, \{a_3\}, \{a_4, \ldots, a_{n+3}\}\}$$
such that the monodromy group of $\sL_{j_1}(P)$ or $\sL_{j_2}(P)$ is infinite. Let
$\Mon^0(\sL_{j_1}(P))$ and
$\Mon^0(\sL_{j_2}(P))$ be not isomorphic or $T_{k,\ell}$ be a semisimple Dehn twist with
$k, \ell \in \{1,2,3\}$ such that the non-trivial eigenvalue $z_2$ of $\rho_{j_2}(T_{k,\ell})$ is
not contained in $\{z_1,\bar z_1\}$, where $z_1$ denotes the non-trivial eigenvalue of
$\rho_{j_1}(T_{k,\ell})$. Then
$$\Mon^{0}( \Re\sV(j_1,j_2)_{\R} ) =\Mon^{0}( \Re\sV(j_1)_{\R} )\times
\Mon^{0}( \Re\sV(j_2)_{\R}).$$
\end{proposition}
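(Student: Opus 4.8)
The plan is to argue by contradiction and to transport the two-dimensional obstruction of Proposition \ref{uekur} (valid over $\sP_1$) up to the given situation over $\sP_n$ by means of the collision-of-points construction. So suppose the conclusion fails, i.e.\ that $\Mon^{0}(\Re\sV(j_1,j_2)_{\R})$ is a proper subgroup of $\Mon^{0}(\Re\sV(j_1)_{\R}) \times \Mon^{0}(\Re\sV(j_2)_{\R})$. By the graph analysis carried out just after Lemma \ref{5.15}, together with the trace-map identification (see \eqref{trace}) of $\Mon^{\ad}(\Re\sV(j)_{\R})$ with $\Mon^{\ad}(\sL_j)$, this forces the existence of an isomorphism $\alpha : \Mon^{\ad}(\sL_{j_1}) \to \Mon^{\ad}(\sL_{j_2})$ with $\Mon^{\ad}(\sL_{j_1} \oplus \sL_{j_2}) = {\rm Graph}(\alpha)$; equivalently $P\rho_{j_2} = \alpha \circ P\rho_{j_1}$ on all of $\pi_1(\sP_n)$. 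The hypotheses $j_1 \neq j_2$ and $j_1 \neq m-j_2$ ensure that the two factors are genuinely distinct eigenspaces, so that this graph situation is indeed the only alternative to the full product.

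Next I would restrict the whole picture to the subspace $\sP_1$ singled out by the stable partition $P$. By Construction \ref{cop}, the monodromy group of each $\sL_{j_i}(P)$ is identified with the subgroup of $\Mon(\sL_{j_i})$ generated by the Dehn twists $T_{k_1,k_2}$ with $k_1,k_2 \in \{1,2,3\}$, and by Lemma \ref{5.19} the group $\Mon^{0}(\sL_{j_i}(P))$ is generated up to finite index by $\rho_{j_i}(T_{1,2})$ and $\rho_{j_i}(T_{2,3})$. Since the relation $P\rho_{j_2} = \alpha \circ P\rho_{j_1}$ holds on all of $\pi_1(\sP_n)$, it restricts in particular to these generators. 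Hence $\alpha$ carries the adjoint form of $\Mon^{0}(\sL_{j_1}(P))$ isomorphically onto that of $\Mon^{0}(\sL_{j_2}(P))$ and still satisfies $P\rho_{j_2} = \alpha \circ P\rho_{j_1}$ on $\pi_1(\sP_1)$.

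Finally I would extract the contradiction, the two cases matching exactly the two alternatives in the hypothesis. If $\Mon^{0}(\sL_{j_1}(P))$ and $\Mon^{0}(\sL_{j_2}(P))$ are not isomorphic, then — at least one of them being infinite, hence a positive-dimensional real form of $\PGL_2$, so $\PU(1,1)$ or $\PU(2)$ up to the relevant quotients — no such restricted isomorphism can exist, contradiction. Otherwise, the restricted eigenspaces $\sL_{j_1}(P),\sL_{j_2}(P)$ are of type $(a,b)$ with $a+b=2$, and the existence of a semisimple Dehn twist $T_{k,\ell}$ with $k,\ell \in \{1,2,3\}$ whose non-trivial eigenvalue for $\rho_{j_2}$ lies outside $\{z_1,\bar z_1\}$ is precisely the configuration forbidden by Proposition \ref{uekur}: no $\alpha$ with $P\rho_{j_2} = \alpha \circ P\rho_{j_1}$ can exist, again a contradiction. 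In either case the graph structure is impossible, so $\Mon^{0}(\Re\sV(j_1,j_2)_{\R})$ is the full product.

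The hard part will be the clean descent from the monodromy over $\sP_n$ to that over $\sP_1$: one must check, via Construction \ref{cop} and Lemma \ref{5.19}, that the graph relation genuinely passes to the subgroup generated by $T_{1,2}$ and $T_{2,3}$, that the restricted summands $\sL_{j_i}(P)$ are of the rank-two type needed to invoke Proposition \ref{uekur}, and that the passage between the linear monodromy groups and their adjoint quotients is kept consistent throughout. Once this bookkeeping is in place, the two displayed cases follow directly from the cited results.
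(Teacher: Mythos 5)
Your proposal is correct and follows essentially the same route as the paper: the paper likewise derives, from the failure of the product decomposition, an isomorphism $\alpha$ between the adjoint monodromy groups, uses Lemma \ref{5.19} (together with the finite-index comparison of the monodromy over $\sM_n$ and $\sP_n$) to restrict it to an isomorphism $\alpha(P):\Mon^0(\sL_{j_1}(P)) \to \Mon^0(\sL_{j_2}(P))$ compatible with the projective monodromy representations, and then concludes by applying Proposition \ref{uekur}. The descent step you flag as the hard part is exactly the content of Lemma \ref{5.19} as invoked in the paper, so no additional work is needed beyond what you describe.
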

\begin{proof}
By Lemma $\ref{5.19}$ and the fact that the monodromy group of $\sL_j|_{\sM_n}$ has finite index
in the monodromy group of $\sL_j$, one concludes that the group generated by $\rho_{j_1}(T_{1,2})$
and $\rho_{j_1}(T_{2,3})$ resp., $\rho_{j_2}(T_{1,2})$ and $\rho_{j_2}(T_{2,3})$ has finite index
in the monodromy representation of $\sL_{j_1}(P)$ resp., $\sL_{j_2}(P)$. Hencefore an isomorphism
$$\alpha:\Mon^{0}( \Re\sV(j_1)_{\R} )\to \Mon^{0}( \Re\sV(j_2)_{\R})$$
yields an isomorphism
$$\alpha(P):\Mon^0(\sL_{j_1}(P)) \to \Mon^0(\sL_{j_2}(P)).$$
Thus one only needs to apply Proposition $\ref{uekur}$.
\end{proof}

Now let us first define the condition for the reaching of the upper bound and then write
down the obvious theorem:

\begin{definition} \index{direct summand!very general}
Assume that one has for each $\sL_{j_1}, \sL_{j_2} \subset \sV_r$ with
$j_1 \neq j_2, m-j_2$ and $\Mon_{\R}^0(\sL_{j_1}) \cong \Mon_{\R}^0(\sL_{j_1})$ a stable
partition
$$P :=\{\{a_1\}, \{a_2\}, \{a_3\}, \{a_4, \ldots, a_{n_j+3}\}\}$$
(with respect to a suitable enumeration) such that the monodromy group of $\sL_{j_1}(P)$ or $\sL_{j_2}(P)$ is
infinite and one of the following conditions is satisfied:
\begin{enumerate}

\item $\Mon^0(\sL_{j_1}(P))$ and $\Mon^0(\sL_{j_2}(P))$ are not isomorphic. 
\item There is a semisimple Dehn twist $T_{k,\ell}$ with
$k, \ell \in \{1,2,3\}$ such that the non-trivial eigenvalue $z_2$ of $\rho_{j_2}(T_{k,\ell})$ is
not contained in $\{z_1,\bar z_1\}$, where $z_1$ denotes the non-trivial eigenvalue of
$\rho_{j_1}(T_{k,\ell})$.
\end{enumerate}
We call $\sV_r$ very general in this case.

A direct summand $\sV_r$ is exceptional, if it is general, but not
\index{direct summand!exceptional} very general.
\end{definition}

By Proposition $\ref{ula}$, one concludes:

\begin{Theorem}
If $\sV_r$ is very general, $\Mon^0(\sV_r)$ reaches the upper bound $C^{\der}(\psi)$.
\end{Theorem}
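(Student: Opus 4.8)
The plan is to read the statement off from the structural results of Section 5.2 together with Proposition \ref{ula}, carrying out the whole argument first at the adjoint level and then lifting it by a dimension count. By the remark opening Section 5.2, in order to prove $\Mon^0(\sV_r) = C_r^{\der}(\psi)$ it suffices to establish the equality over $\R$, namely $\Mon_{\R}^0(\sV_r) = C_r^{\der}(\psi)_{\R}$, since $\Mon_{\R}^0(\sV_r) \subseteq \Mon^0(\sV_r)\times_{\Q}\R$ always furnishes the lower bound. For the general summand $\sV_r$ under consideration we have $r \neq \frac{m}{2}$, so Theorem \ref{dirprod} exhibits $C_r(\psi)_{\R}$ as a direct product of unitary groups indexed by the spaces $\Re\V(j)$; consequently $C_r^{\der}(\psi)_{\R} = \prod_{j} \SU(H^1_j(C,\C),H)$ and $C_r^{\ad}(\psi)_{\R} = \prod_{j}\Mon^{\ad}(\Re\sV(j)_{\R})$, the products running over the relevant $j \leq \frac{m}{2}$.

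First I would pass to the adjoint group. The results of Section 5.2 provide a natural embedding $\Mon_{\R}^{\ad}(\sV_r)\hookrightarrow \prod_{j}\Mon^{\ad}(\Re\sV(j)_{\R})$ in which every projection is surjective: each factor is the adjoint of the relevant $\SU$, by Theorem \ref{nundenn} and the identification of $\Mon^{\ad}(\Re\sV(j)_{\R})$ with $\Mon^{\ad}(\sL_j)$ through the trace map. Thus Lemma \ref{5.15} applies, and $\Mon_{\R}^{\ad}(\sV_r)$ equals the full product if and only if there is no pair $j_1 \neq j_2$ whose two adjoint factors are identified by a common simple diagonal subgroup, that is, no isomorphism $\alpha$ with $\Mon^0(\Re\sV(j_1,j_2)_{\R}) = \mathrm{Graph}(\alpha)$.

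Next I would rule out every such diagonal using the hypothesis. Suppose a diagonal existed for some pair; then its two factors are isomorphic, forcing $\Mon_{\R}^0(\sL_{j_1})\cong\Mon_{\R}^0(\sL_{j_2})$ with $j_1 \neq j_2, m-j_2$ (the case $j_2 = m-j_1$ being excluded because $\Re\V(j)$ already fuses $j$ and $m-j$). But this is exactly the configuration to which the definition of \emph{very general} attaches a stable partition $P=\{\{a_1\},\{a_2\},\{a_3\},\{a_4,\ldots,a_{n_j+3}\}\}$ satisfying condition (1) or (2). Proposition \ref{ula} then yields $\Mon^0(\Re\sV(j_1,j_2)_{\R}) = \Mon^0(\Re\sV(j_1)_{\R})\times\Mon^0(\Re\sV(j_2)_{\R})$, the full product rather than a graph, contradicting the assumed diagonal. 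Hence no diagonal exists and, by Lemma \ref{5.15}, $\Mon_{\R}^{\ad}(\sV_r) = \prod_j\Mon^{\ad}(\Re\sV(j)_{\R}) = C_r^{\ad}(\psi)_{\R}$.

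Finally I would lift this back to the groups themselves. Both $\Mon_{\R}^0(\sV_r)$ and $C_r^{\der}(\psi)_{\R}$ are connected and semisimple, hence have finite centre, so equality of their adjoint groups forces equality of dimensions; combined with the inclusion $\Mon_{\R}^0(\sV_r)\subseteq C_r^{\der}(\psi)_{\R}$ and the connectedness of the (derived, hence connected) group $C_r^{\der}(\psi)_{\R}$, this gives $\Mon_{\R}^0(\sV_r) = C_r^{\der}(\psi)_{\R}$, and therefore $\Mon^0(\sV_r) = C_r^{\der}(\psi)$. The one delicate point is this last passage from adjoint equality to equality of the groups; it is clean here because the finite centres make the adjoint identification already pin down the dimension, and the simple connectedness of the complex points $\SL_{a+b}(\C)$ invoked in Section 5.2 ensures that the relevant isogenies are isomorphisms, so no stray covering phenomena intervene.
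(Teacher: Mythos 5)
Your argument is correct and follows essentially the same route as the paper: the theorem there is stated as a direct consequence of Proposition $\ref{ula}$, with the reduction to the adjoint level, the appeal to Lemma $\ref{5.15}$ to characterize failure of the product decomposition by a graph of an isomorphism $\alpha$, and the lift back via the simple connectedness of $\SL_{a+b}(\C)$ all carried out in the surrounding text of Sections 5.2 and 5.3 exactly as you reconstruct them. Your explicit final step passing from equality of adjoint groups to equality of the groups themselves (finite centres, dimension count, connectedness) is a clean way to make precise what the paper leaves implicit.
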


\begin{Theorem}
If the degree $m$ of the covers given by the fibers of $\sC \to \sP_n$ is a prime number $m > 2$,
$\Mon^0(\sV) = \Mon(\sV_1)$ reaches the upper bound. 
\end{Theorem}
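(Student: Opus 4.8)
The plan is to deduce the claim from the theorem immediately above by showing that, for $m$ prime, the only possibly nontrivial direct summand $\sV_1$ is \emph{very general}. First I would use primality to simplify the decomposition of Proposition \ref{hggrcomp}: for every $j \in \{1,\dots,m-1\}$ one has $\gcd(j,m)=1$, so all eigenspaces $\sL_j$ belong to $N^1(C_1,\Q)$, while the remaining summand $N^1(C_m,\Q)$ is the first cohomology of the degree-$m$ quotient $\bP^1$ and hence vanishes. Thus $\sV=\sV_1$, $\Mon^0(\sV)=\Mon^0(\sV_1)$ and $C^{\der}(\psi)=C_1^{\der}(\psi)$, and it suffices to prove that $\sV_1$ is very general, whence the preceding theorem gives $\Mon^0(\sV_1)=C_1^{\der}(\psi)$. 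I would first record that $\sV_1$ is general: were it special, every $\sL_j$ would be of type $(a_j,0)$ or $(0,b_j)$, the Hodge filtration would be the flat subsystem $\bigoplus_{\mathrm{type}\,(a,0)}\sL_j$ and hence constant along $\sP_n$; the period map would then be constant and all fibres isomorphic, contradicting \ref{japp} for $n\ge 1$.

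Next I would verify the defining conditions of very general for a relevant pair $\sL_{j_1},\sL_{j_2}\subset\sV_1$, i.e. one with $j_1\neq j_2,m-j_2$ and $\Mon^0_{\R}(\sL_{j_1})\cong\Mon^0_{\R}(\sL_{j_2})$. The crucial input is that primality forces the second condition automatically. By Theorem \ref{monorep} and Proposition \ref{repro}, the nontrivial eigenvalue of $\rho_j(T_{k,\ell})$ is $\xi^{j(d_k+d_\ell)}$, and $T_{k,\ell}$ is semisimple precisely when $d_k+d_\ell\not\equiv 0\pmod m$. Among any three branch points $a_1,a_2,a_3$ at least one such pair occurs: otherwise $d_1\equiv d_2\equiv d_3$ and $2d_1\equiv 0$, which forces $d_1\equiv 0$ since $m$ is odd, contradicting $0<d_1<m$. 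For such a semisimple twist primality makes $d_k+d_\ell$ invertible modulo $m$, so $\xi^{j_2(d_k+d_\ell)}\in\{\xi^{\pm j_1(d_k+d_\ell)}\}$ would force $j_2\equiv\pm j_1\pmod m$, excluded by hypothesis. Hence the nontrivial eigenvalue $z_2$ of $\rho_{j_2}(T_{k,\ell})$ lies outside $\{z_1,\bar z_1\}$, which is exactly condition (2).

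It remains to supply, for each relevant pair, a stable partition $P=\{\{a_1\},\{a_2\},\{a_3\},\{a_4,\dots,a_{n_j+3}\}\}$ carrying the semisimple twist above among its first three marked points and for which $\sL_{j_1}(P)$ or $\sL_{j_2}(P)$ has monodromy large enough to run Proposition \ref{ula} via Proposition \ref{uekur}. Since the projections $\Mon^0(\sV_1)\to\Mon^{\ad}(\Re\sV(j))$ are surjective, a relevant pair has the same type $(p,q)$ up to swapping. When $p,q\ge 1$ I would collapse, as in the induction of Theorem \ref{nundenn} and using the stable-partition Lemma \ref{cases} and Corollary \ref{1}, all but three suitably chosen points into one block so that $\sL_{j_1}(P)$ becomes of type $(1,1)$, whose monodromy is infinite by \ref{infini}; the definite case $(a,0)$ I would reduce analogously to type $(2,0)$ over $\sP_1$, where Proposition \ref{uekur} still applies. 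I expect this reduction—choosing the three retained points and the block so that the collapsed eigenspace has the intended type while the partition stays stable, and checking that the collapsed monodromy fills the relevant $\PU$-factor—to be the main obstacle, for which I would rely on the control of the numbers $[j\mu_s]_1$ provided by Proposition \ref{1.27} together with Corollary \ref{1}. Once $P$ is found, Proposition \ref{ula} rules out the exceptional linking between $\Re\sV(j_1)$ and $\Re\sV(j_2)$; as the pair was arbitrary, $\sV_1$ is very general and the preceding theorem yields $\Mon^0(\sV)=\Mon^0(\sV_1)=C_1^{\der}(\psi)=C^{\der}(\psi)$.
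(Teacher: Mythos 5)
Your overall route is the paper's: reduce to showing that $\sV=\sV_1$ is very general so that the preceding theorem applies, and observe that primality of $m$ makes $\Z/(m)$ a field, so that for any semisimple Dehn twist $T$ the nontrivial eigenvalue of $\rho_{j_2}(T)$ automatically lies outside $\{z_1,\bar z_1\}$ whenever $j_1\neq j_2, m-j_2$. Your argument for the existence of a semisimple Dehn twist (three branch indices cannot pairwise sum to $0$ modulo an odd $m$) is a correct substitute for the paper's appeal to Lemma $\ref{buaha}$, and your direct appeal to Proposition $\ref{uekur}$ over $\sP_1$ is exactly what the paper does there.

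The gap is the step you yourself flag as the main obstacle: for $n>1$ you must actually exhibit, for each relevant pair, a stable partition of the prescribed shape $P=\{\{a_1\},\{a_2\},\{a_3\},\{a_4,\dots,a_{n+3}\}\}$ whose three singletons carry a semisimple Dehn twist, and your sketch does not produce one. Aiming to make $\sL_{j_1}(P)$ of type $(1,1)$ is neither necessary nor, as described, achievable in general (collision can only decrease $\sum_k[j\mu_k]_1$, and Corollary $\ref{1}$ and Lemma $\ref{cases}$ hand you two disjoint pairs with $\mu$-sum $\neq 1$, not a partition with three singletons and one \emph{stable} block); likewise the reduction of the definite case to type $(2,0)$ is a non-issue, since unitary eigenspaces contribute nothing to $\Mon^0(\sV_1)$ and need not be examined. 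What closes the argument in the paper is a normalization you are missing: by Remark $\ref{drehen}$ one may rescale all branch indices by a unit of $\Z/(m)$ so that the semisimple twist $T_{1,2}$ satisfies $d_1+d_2=m-1$. Then stability of the big block is exactly the condition $d_1+d_2+d_3\not\equiv 0 \bmod m$, i.e.\ $d_3\neq 1$; so either some $d_k\neq 1$ with $k\geq 3$ serves as the third singleton, or $d_3=\cdots=d_{n+3}=1$, in which case for $m>3$ the partition $\{\{a_3\},\{a_4\},\{a_5\},\{a_1,a_2,a_6,\dots,a_{n+3}\}\}$ is stable with $T_{3,4}$ semisimple because $d_3+d_4=2\not\equiv 0$, while for $m=3$ no pair satisfies $j_1\neq j_2,m-j_2$ and there is nothing to check. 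Without this (or an equivalent) construction your proof is incomplete at its decisive step.
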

\begin{proof}
By the preceding theorem, we have only to show that $\Mon^0(\sV) = \Mon(\sV_1)$ is very general.
Note that Lemma $\ref{buaha}$ implies that there is a semisimple Dehn twist for $m > 2$.

Assume that we are in the case of a family $\sC \to \sP_1$, and that $j_1 \neq j_2, m-j_2$. Since
$\Z/(m)$ is a field in our case, one has that each semisimple
Dehn twist satisfies that the non-trivial eigenvalue of $\rho_{j_2}(T)$ is not contained in
$\{z_1,\bar z_1\}$, where $z_1$ denotes the non-trivial eigenvalue of $\rho_{j_2}(T)$. Thus in
this case the statement follows from Proposition $\ref{uekur}$.

Otherwise we have to find a stable partition $P$ as in Proposition $\ref{ula}$. One has without
loss of generality the semisimple Dehn twist $T_{1,2}$. Moreover assume without loss of generality
that $d_1 + d_2 = m-1$. One has two cases: Either there is some $a_3$ such that
$$P = \{\{a_1\},\{a_2\}, \{a_3\}, \{a_4, \ldots, a_{n+3}\}\}$$
is the desired stable partition or one has that
$$d_3 = \ldots = d_{n+3} = 1.$$
Since in the case $m = 3$ there is nothing to show, one can otherwise assume that $m > 3$ and take
the stable partition
$$P = \{\{a_3\},\{a_4\}, \{a_5\}, \{a_1,a_2,a_6, \ldots, a_{n+3}\}\}.$$
\end{proof}

\section{The exceptional cases}
At this time the author does not see a possibility to calculate the monodromy group of the $VHS$
of an arbitrary family $\sC \to \sP_n$. Hencefore we consider mainly a family $\sC \to \sP_1$.

\begin{pkt} 
Let $\rho_{j_1}$ and $\rho_{j_2}$ denote the monodromy representations of
$\sL_{j_1}, \sL_{j_2} \subset \sV_r$. Proposition $\ref{repro}$ yields a description of
$\rho_{j_1}(T)$ and $\rho_{j_2}(T)$ for some Dehn twist $T$. By this description, the entries of the
matrices $\rho_{j_1}(T)$ and $\rho_{j_2}(T)$ differ by some $\gamma \in \Gal(\Q(\xi^r);\Q)$. By
its action on $\langle\xi^r\rangle \cong \Z/(\frac{m}{r})$, each $\gamma$ can be identified with
some $[v] \in (\Z/(\frac{m}{r}))^*$ such that $[\frac{j_1}{r}v]_{\frac{m}{r}} =
[\frac{j_2}{r}]_{\frac{m}{r}}$. One has a subgroup $H_1(\gamma)$ of $\langle\xi^r\rangle$
consisting of roots of unity fixed by $\gamma$ and a subgroup $H_2(\gamma)$ of
$\langle\xi^r\rangle$ consisting of roots of unity, on which $\gamma$ acts by
complex conjugation. Since $j_1 \neq j_2, m-j_2$, one has that
$\gamma$ is neither given by the complex conjugation nor by the identity. Thus $H_1(\gamma)$
resp., $H_2(\gamma)$ is
given by $\{1\}$ or some proper subgroup of $\langle\xi^r\rangle$ generated by
$\xi^{rt_1(\gamma)}$ resp., $\xi^{rt_2(\gamma)}$, where $1 \neq t_{1}(\gamma)$ and
$1 \neq t_{2}(\gamma)$ divide $\frac{m}{r}$.
\end{pkt}

For the rest of this section we consider only families $\sC \to \sP_1$ of degree $m$ with an
exceptional part $\sV_r$. Assume without loss of generality that $\sV_1$ is exceptional and $j_1
=1$. Let $\gamma$ correspond to $v$. For simplicity we write $t_1$ and $t_2$ instead of
$t_1(\gamma)$ and $t_2(\gamma)$, and $H_1$ and $H_2$ instead of $H_1(\gamma)$ and $H_2(\gamma)$.

\begin{lemma} \label{oui}
Let $\sC \to \sP_1$ be a family of degree $m$ covers such that $\sV_1$ is exceptional. Then one is
without loss of generality in one of the following cases:
\begin{enumerate}
\item (Complex case) \index{direct summand!exceptional!complex}
$t_1|d_1+d_2$, $t_1|d_2+d_3$ and $t_2|d_1+d_3$, where
$t_1$ does not divide $d_1+d_3$. \item (Separated case) $t_1 = 2$ and 2 divides $d_1+d_2$,
$d_2+d_3$ and $d_1+d_3$. \index{direct summand!exceptional!seperated}
\end{enumerate}
\end{lemma}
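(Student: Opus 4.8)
The plan is to convert the hypothesis that $\sV_1$ is exceptional into a divisibility condition on the branch indices $d_1,\dots,d_4$ and then run a short combinatorial normalisation. First I would fix the pair $\sL_{j_1},\sL_{j_2}\subset\sV_1$ that witnesses the failure of very-generality, with $j_1=1$ as arranged in the preceding discussion. Over $\sP_1$ every $\sL_j$ with $\gcd(j,m)=1$ has $S_j=\{a_1,\dots,a_4\}$ and type $(a,b)$ with $a+b=2$ by Proposition $\ref{1.27}$; since $\sV_1$ is general and $\Mon^0(\sL_1)\cong\Mon^0(\sL_{j_2})$ must be noncompact, both eigenspaces are of type $(1,1)$ and $\Mon^0(\sL_1)=\Mon^0(\sL_{j_2})=\SU(1,1)$, both with infinite monodromy. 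The only stable partition of the required shape is the partition into singletons, for which $\sL_j(P)=\sL_j$, so condition $(1)$ of very-generality fails automatically (the two groups are isomorphic) and exceptionality is exactly the failure of condition $(2)$: for every semisimple Dehn twist $T_{k,\ell}$ the nontrivial eigenvalue $z_2$ of $\rho_{j_2}(T_{k,\ell})$ lies in $\{z_1,\bar z_1\}$.

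Next I would make this arithmetic explicit. By Theorem $\ref{monorep}$ and Proposition $\ref{repro}$ the nontrivial eigenvalue of $\rho_1(T_{k,\ell})$ is $z_1=\xi^{d_k+d_\ell}$ and that of $\rho_{j_2}(T_{k,\ell})$ is $z_2=\gamma(z_1)=\xi^{j_2(d_k+d_\ell)}$, with $T_{k,\ell}$ semisimple precisely when $d_k+d_\ell\not\equiv 0\pmod m$. Writing $s_{12}=d_1+d_2$, $s_{13}=d_1+d_3$, $s_{23}=d_2+d_3$ in $\Z/(m)$, the eigenvalue condition $z_2\in\{z_1,\bar z_1\}$ says $\xi^{s_{k\ell}}\in H_1\cup H_2$, i.e. $t_1\mid s_{k\ell}$ or $t_2\mid s_{k\ell}$. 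Since the six pairwise sums satisfy $s_{14}\equiv -s_{23}$, $s_{24}\equiv -s_{13}$, $s_{34}\equiv -s_{12}$ and $H_1,H_2$ are closed under negation, exceptionality is equivalent to the three conditions $s_{12},s_{13},s_{23}\in H_1\cup H_2$ (a sum $\equiv 0$ trivially satisfies both divisibilities as $t_1,t_2\mid m$).

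The core step uses two symmetries. Relabelling $a_1,a_2,a_3$ realises the full symmetric group on $\{s_{12},s_{13},s_{23}\}$, while replacing $j_2$ by $m-j_2$ (passing to $\sL_{m-j_2}=\overline{\sL_{j_2}}$, which has the same monodromy group and still witnesses exceptionality, since $\bar z_2\in\{z_1,\bar z_1\}\iff z_2\in\{z_1,\bar z_1\}$) interchanges $H_1\leftrightarrow H_2$ and hence $t_1\leftrightarrow t_2$, because $\xi\mapsto\xi^{-j_2}$ fixes exactly $H_2$ and conjugates exactly $H_1$. Applying the swap I can arrange that at least two of the three sums lie in $H_1$. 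If all three lie in $H_1$, then the relations $t_1\mid s_{k\ell}$ give $t_1\mid 2d_1,2d_2,2d_3$, and $2d_4\equiv -(s_{12}+s_{13}+s_{23})$ gives $t_1\mid 2d_4$; hence $t_1\mid\gcd(2d_1,\dots,2d_4)=2\gcd(d_1,\dots,d_4)=2$ by the irreducibility normalisation $\gcd(d_1,\dots,d_4)=1$ used in Lemma $\ref{buaha}$, so $t_1=2$ (as $t_1\neq 1$) and all sums are even: the separated case. If exactly two sums lie in $H_1$ and the third in $H_2\setminus H_1$, I relabel the odd sum to be $s_{13}$, obtaining $t_1\mid s_{12}$, $t_1\mid s_{23}$, $t_2\mid s_{13}$ and $t_1\nmid s_{13}$: the complex case.

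Finally I would dispose of the degenerate configurations in which one or two of the $s_{k\ell}$ vanish modulo $m$. A vanishing sum lies in $H_1\cap H_2$, so it is compatible with either normal form; rerunning the count on the remaining semisimple sums and again invoking $\gcd(d_1,\dots,d_4)=1$ shows that two vanishing sums force the $d_k$ to agree up to sign and collapse to the separated case or, after relabelling, to the complex case, while three vanishing sums force $d_k\equiv m/2$ for all $k$, contradicting irreducibility for $m>2$ (and $m=2$ is the hyperelliptic situation where no admissible $j_2\neq 1,m-1$ exists). The hardest part is purely the bookkeeping: keeping the relabelling action and the $H_1\leftrightarrow H_2$ swap consistent while tracking which of $t_1\mid s_{k\ell}$, $t_2\mid s_{k\ell}$ and the crucial non-divisibility $t_1\nmid s_{13}$ survive in each sub-case.
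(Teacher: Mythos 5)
Your proof is correct and follows essentially the same route as the paper's (much terser) argument: exceptionality forces each of $d_1+d_2$, $d_2+d_3$, $d_1+d_3$ into $H_1\cup H_2$, the case where one $t_i$ divides all three collapses to $t_i=2$ exactly as in Lemma \ref{buaha}, and otherwise a relabelling yields the complex case. Your explicit handling of the $t_1\leftrightarrow t_2$ swap via $j_2\mapsto m-j_2$ and of the unipotent (vanishing-sum) twists fills in details the paper leaves implicit, but introduces no new method.
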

\begin{proof}
If $\sV_1$ is exceptional, then $d_1+d_2$, $d_2+d_3$ and $d_1+d_3$ are divided by $t_1$ or
$t_2$. Assume that $t_1$ (resp., $t_2$) divides $d_1+d_2$, $d_2+d_3$ and $d_1+d_3$. Hence 
one has $t_1= 2$ (resp., $t_2= 2$) as in the proof of Lemma $\ref{buaha}$. Otherwise one has only
to choose a suitable enumeration such that one is in the complex case.
\end{proof}

\begin{remark}
It can occur that one is in the complex case and the separated case with respect to the
same eigenspaces (up to complex conjugation). Consider the family $\sC \to \sP_1$ of degree 12
covers given by
$$d_1 = 5, \ \ d_2 =1, \ \ d_3 = 11, \  \ d_4 = 7.$$
Let $v = 5$. Then one has $t_1 = 3$ and $t_2 = 2$ such that $3|d_1+d_2$, $3|d_2+d_3$ and
$2|d_1+d_3$. Now let $v = 7$. In this case one has $t_1 = 2$ and 2 divides $d_1+d_2$, $d_2+d_3$ and
$d_1+d_3$. By $\ref{nabel}$, we will see that there is an isomorphism $\alpha:
\Mon^0(\Re\sV(1))_{\R} \to \Mon^0(\Re\sV(5))_{\R}$.

On the other hand consider the family $\sC \to \sP_1$ of degree 12 covers
given by
$$d_1 = 11, \ \ d_2 =1, \ \ d_3 = 11, \  \ d_4 = 1.$$
Again by the same arguments, we are in the complex case and the separated case at the same time.
But in this case the existence of a suitable isomororphism $\alpha:
\Mon^0(\Re\sV(1))_{\R} \to \Mon^0(\Re\sV(5))_{\R}$ is not known to the author at this time.
\end{remark}

\begin{pkt} \label{punkt}
Assume that the direct summand $\sV_1$ is separated with respect to $[v]_m \in (\Z/(m))^*$ for a
family $\sC \to \sP_1$ of degree $m$ covers. One has $[v2] = [2]$ in each separated case. This
implies that $[2][v-1] = [0]$. Hencefore one has $[v] = [\frac{m}{2}+1] \in (\Z/(m))^*$ in each
separated case.
Hence $v\in (\Z/(m))^*$ is an involution. The fact that $[v] = [\frac{m}{2}+1] \in (\Z/(m))^*$
implies that $\frac{m}{2}+1$ is odd. Hence $4$ divides $m$. In the separated case $r_1 = 2$
divides each $d_k+d_{\ell}$. Thus $\sV_1$ is separated, if and only if $4|m$ and each $d_k$ is
odd.

Hencefore there are infinitely many cases of families $\sC \to \sP_1$ such that $\sV_1$ is
separated.  At this time the author can not give an isomorphism $\alpha:\Mon^{\ad}(\sL_1) \to
\Mon^{\ad}(\sL_{\frac{m}{2}+1})$ for each separated example.
\end{pkt}

By the preceding point we have classified and described all examples $\sC \to \sP_1$ such that
$\sV_1$ is separated. Hence we consider only the case of a family $\sC \to \sP_1$ such that
$\sV_1$ is complex for the rest of this section.

\begin{lemma} 
Assume that $\sV_1$ is complex. Then one has:
$$\ell := \lcm(t_1,t_2) = \left\{ \begin{array}{r@{\quad:\quad}l}
m & m \  \ \mbox{is odd} \\
\frac{m}{2} & m \  \ \mbox{is even}
\end{array} \right.$$
\end{lemma}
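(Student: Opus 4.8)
The plan is to translate the divisibility data defining $t_1$ and $t_2$ into explicit $\gcd$ expressions in $v$ and $m$, and then reduce the assertion to an elementary fact about $\gcd(v-1,v+1,m)$. First I would recall that the automorphism $\gamma \in \Gal(\Q(\xi);\Q)$ corresponds to $v \in (\Z/(m))^*$ through its action $\xi \mapsto \xi^v$; after identifying $\langle \xi \rangle$ with the additive group $(\Z/(m),+)$, this is just multiplication by $v$. The subgroup $H_1(\gamma)$ of roots of unity fixed by $\gamma$ is then the kernel of multiplication by $v-1$ on $\Z/(m)$, so $|H_1(\gamma)| = \gcd(v-1,m)$ and $H_1(\gamma) = \langle \xi^{m/\gcd(v-1,m)} \rangle$. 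Since by definition $H_1(\gamma) = \langle \xi^{t_1} \rangle$ with $t_1 \mid m$, this forces $t_1 = m/\gcd(v-1,m)$. In exactly the same way, $H_2(\gamma)$ consists of those $\xi^a$ with $\xi^{va} = \xi^{-a}$, i.e. $(v+1)a \equiv 0 \pmod m$, so it is the kernel of multiplication by $v+1$ and $t_2 = m/\gcd(v+1,m)$.

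Next I would compute the least common multiple by the duality $\lcm(m/a,m/b) = m/\gcd(a,b)$, which holds for any divisors $a,b$ of $m$ (verify prime by prime: if $v_p(m)=e$, $v_p(a)=\alpha$, $v_p(b)=\beta$ with $\alpha,\beta \le e$, both sides have $p$-valuation $e-\min(\alpha,\beta)$). Applying this with $a = \gcd(v-1,m)$ and $b = \gcd(v+1,m)$, together with the identity $\gcd(\gcd(v-1,m),\gcd(v+1,m)) = \gcd(v-1,v+1,m)$, I obtain
$$\ell = \lcm(t_1,t_2) = \frac{m}{\gcd(v-1,v+1,m)}.$$

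Finally I would finish with a parity argument. Since $\gcd(v-1,v+1)$ divides $(v+1)-(v-1) = 2$, the integer $\gcd(v-1,v+1,m)$ divides $\gcd(2,m)$. If $m$ is odd, then $\gcd(2,m)=1$, so $\gcd(v-1,v+1,m)=1$ and $\ell = m$. If $m$ is even, then $v \in (\Z/(m))^*$ is odd, hence $v-1$ and $v+1$ are both even while $m$ is even too, so $2 \mid \gcd(v-1,v+1,m)$; combined with the divisibility bound $\gcd(v-1,v+1,m) \mid 2$ this gives $\gcd(v-1,v+1,m)=2$ and $\ell = m/2$. I do not expect a serious obstacle: the argument is insensitive to the distinction between the complex and separated cases and uses only that $\gamma$ is neither the identity nor complex conjugation. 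The one point requiring care is the bookkeeping that identifies $t_1$ and $t_2$ with the $\gcd$ expressions, including the degenerate possibilities $H_1(\gamma)=\{1\}$ or $H_2(\gamma)=\{1\}$ (that is $\gcd(v\mp 1,m)=1$), which are harmless and simply correspond to $t_1=m$ or $t_2=m$.
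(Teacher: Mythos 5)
Your proof is correct and follows essentially the same route as the paper: the paper computes $H_1 \cap H_2 = \langle \xi^{\lcm(t_1,t_2)}\rangle$ directly and notes that any root of unity both fixed and inverted by $\gamma$ must equal $\pm 1$, which is exactly your observation that $\gcd(v-1,v+1,m)$ divides $\gcd(2,m)$. Your version merely makes the identifications $t_1 = m/\gcd(v-1,m)$ and $t_2 = m/\gcd(v+1,m)$ explicit, which the paper leaves implicit.
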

\begin{proof}
If $m$ is odd, $H_1\cap H_2= \{1\} = \{\xi^m\}$. If $m$ is even, $H_1\cap H_2 = \{1,-1\} =
\langle\xi^{\frac{m}{2}}\rangle$. \end{proof}

\begin{lemma} \label{H1H2}
Assume that $\sV_1$ is complex. Then one has that $t_1t_2 =m$ or $t_1t_2 =\frac{m}{2}$. Moreover
one has that $t_1t_2 =m$, if $m$ is odd, and $t_1t_2 =\frac{m}{2}$, if $2|m$, but 4 does not
divide $m$.
\end{lemma}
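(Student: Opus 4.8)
The plan is to reduce the whole statement to the elementary identity $t_1 t_2 = \gcd(t_1,t_2)\cdot\lcm(t_1,t_2)$ combined with the preceding lemma, which already computes $\lcm(t_1,t_2)=m$ when $m$ is odd and $\lcm(t_1,t_2)=\frac{m}{2}$ when $m$ is even. Writing $\ell := \lcm(t_1,t_2)$ and $g := \gcd(t_1,t_2)$, the entire lemma will follow once I show $g \mid 2$ and then pin down whether $g=1$ or $g=2$ occurs in each case. So the real content is a bound on $\gcd(t_1,t_2)$.

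To obtain this bound I would translate the complex-case divisibilities of Lemma \ref{oui} into membership in the product group $H_1 H_2$. Since (with $r=1$) $H_1=\langle\xi^{t_1}\rangle$ and $H_2=\langle\xi^{t_2}\rangle$ are subgroups of the cyclic group $\langle\xi\rangle$ of order $m$, their product is $H_1 H_2=\langle\xi^{\gcd(t_1,t_2)}\rangle=\langle\xi^{g}\rangle$, so $\xi^{s}\in H_1 H_2$ exactly when $g\mid s$. The complex-case relations $t_1\mid d_1+d_2$, $t_1\mid d_2+d_3$, $t_2\mid d_1+d_3$ say precisely that $\xi^{d_1+d_2},\xi^{d_2+d_3}\in H_1$ and $\xi^{d_1+d_3}\in H_2$, hence all three lie in $H_1 H_2$, giving $g\mid d_1+d_2$, $g\mid d_2+d_3$, $g\mid d_1+d_3$. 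Forming the alternating sums $(d_1+d_2)+(d_1+d_3)-(d_2+d_3)=2d_1$, and its two analogues, then yields $g\mid 2d_1$, $g\mid 2d_2$, $g\mid 2d_3$.

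The one genuinely fiddly point, and the step I expect to be the main obstacle, is that the complex-case hypotheses involve only $d_1,d_2,d_3$, whereas irreducibility of the degree-$m$ cover is a condition on all four branch indices of $\sC\to\sP_1$. To bring $d_4$ in, I would invoke $\sum_{k=1}^4 d_k\equiv 0 \pmod m$ together with $g\mid m$: from $g\mid d_1+d_2$ this forces $g\mid d_3+d_4$, and with $g\mid 2d_3$ it gives $g\mid 2d_4$. Now $g\mid m$ yields $g\mid 2m$, so $g$ divides $\gcd(2d_1,\dots,2d_4,2m)=2\gcd(d_1,\dots,d_4,m)=2$ by irreducibility of the cover (exactly the kind of deduction made in the proof of Lemma \ref{buaha}). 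Hence $g\in\{1,2\}$.

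Finally I would split into cases via $t_1 t_2 = g\,\ell$. If $m$ is odd then $t_1,t_2\mid m$ are odd, so $g$ is odd and $g=1$, whence $t_1 t_2=m$. If $m$ is even then $\ell=\frac{m}{2}$ and $g\in\{1,2\}$, so $t_1 t_2\in\{\frac{m}{2},m\}$, which establishes the dichotomy. For the last assertion, suppose $2\mid m$, $4\nmid m$, and $g=2$; then $2\mid t_1$ and $2\mid t_2$, so $4\mid t_1 t_2=g\ell=2\cdot\frac{m}{2}=m$, contradicting $4\nmid m$. Therefore $g=1$ and $t_1 t_2=\frac{m}{2}$, completing the proof.
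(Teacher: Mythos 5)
Your proof is correct, and its top-level skeleton agrees with the paper's: both reduce the lemma to the identity $t_1t_2=\gcd(t_1,t_2)\cdot\lcm(t_1,t_2)$ together with the preceding lemma computing $\ell=\lcm(t_1,t_2)$, so that the only real content is the bound $g:=\gcd(t_1,t_2)\mid 2$, and the closing case analysis ($m$ odd forces $g$ odd; $\ell=\frac{m}{2}$ odd when $4\nmid m$ forces $g\neq 2$) is the same in substance. Where you genuinely diverge is in how $g\mid 2$ is obtained. The paper appeals to Lemma $\ref{buaha}$: if $g$ were too large there would exist a semisimple Dehn twist whose order does not divide $t_1't_2'$, contradicting the complex-case constraint that every nontrivial eigenvalue of a semisimple Dehn twist lies in $H_1\cup H_2$. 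You instead inline the arithmetic: from $t_1\mid d_1+d_2,\ d_2+d_3$ and $t_2\mid d_1+d_3$ you get $g\mid d_i+d_j$ for $i,j\in\{1,2,3\}$, the alternating sums give $g\mid 2d_1,2d_2,2d_3$, the relation $m\mid d_1+\ldots+d_4$ together with $g\mid m$ brings in $2d_4$, and irreducibility ($\gcd(d_1,\ldots,d_4,m)=1$) yields $g\mid 2$. This is essentially the computation hidden inside the proof of Lemma $\ref{buaha}$ (there $2d_k=b_{k,k_1}+b_{k,k_2}-b_{k_1,k_2}$), so the underlying trick is the same; but your self-contained version buys something concrete: it sidesteps the somewhat delicate bookkeeping in the paper's application of Lemma $\ref{buaha}$ for even $m$, where the orders of the semisimple Dehn twists a priori only divide $2t_2'$ resp.\ $2t_1'$ rather than $t_1't_2'$, and your use of $m\mid\sum d_k$ is exactly the right device to involve $d_4$, which Lemma $\ref{oui}$ does not constrain directly.
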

\begin{proof}
If $m$ is odd, one has $\ell = \lcm(t_1,t_2) = m$. Hence one has $t_1't_2'g = m$ for
$g := \gcd(t_1,t_2)$ and $t_i = gt_i'$. Hence $|H_1| = t_2'$ and $|H_2| = t_1'$. If $g >2$, there
is a semisimple Dehn twist, whose order does not divide $t_1't_2'$ (follows from Lemma
$\ref{buaha}$). But this can not occur by our assumption that $\sV_1$ is complex. Hence $g = 1$,
since $g = 2$ is not possible for $m$ odd.

If $m$ is even, one has $\ell = \lcm(t_1,t_2) = \frac{m}{2}$. Hence one has $t_1't_2'g =
\frac{m}{2}$ for $g := \gcd(t_1,t_2)$ and $t_i = gt_i'$. If
$g >2$, there is a semisimple Dehn twist, whose order does not divide $t_1't_2'$. Hence one has
$g = 1$ or $g = 2$. Thus $t_1t_2 =m$ or $t_1t_2 =\frac{m}{2}$. 

Now assume that $2|m$, but 4 does not divide $m$. Then one has that $\frac{m}{2}= \lcm(t_1,t_2)$
is odd. Hence one can not have that $g = 2$ in this case. Thus $g = 1$ and $t_1t_2 = \frac{m}{2}$.
\end{proof}

\begin{example}
In the case $4|m$ both $t_1t_2 =m$ and $t_1t_2 =\frac{m}{2}$ can occur. Let $m = 24$ and
take $v = 5$ for the corresponding automorphism of $\Q(\xi)$. In this case one has
$t_1 = 6$ and $t_2 = 4$ such that $t_1t_2 = 24 = m$.

Now let $m = 24$ and take $v = 7$. In this case one has $t_1 = 4$ and $t_2 = 3$ such that $t_1t_2
= 12 = \frac{m}{2}$.
\end{example}
 
\begin{proposition}
Assume $\gamma \in  Gal(\Q(\xi);\Q)$ yields an example of a complex case. Then $\gamma$ is an
involution.
\end{proposition}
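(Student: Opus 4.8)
The plan is to reduce the statement to the single congruence $v^2 \equiv 1 \pmod m$, where $[v]_m \in (\Z/(m))^*$ is the residue corresponding to $\gamma$ via its action $\xi \mapsto \xi^v$ on $\langle \xi \rangle$. Since $\gamma$ is determined by $\gamma(\xi) = \xi^v$, one has $\gamma^2(\xi) = \xi^{v^2}$, so $\gamma^2 = \id$ precisely when $m \mid v^2 - 1$; and because the complex case carries the hypothesis $j_1 \neq j_2, m-j_2$, the automorphism $\gamma$ is not the identity, so this congruence forces $\gamma$ to have order exactly $2$.

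First I would translate the two subgroups $H_1$ and $H_2$ into divisibility data for $v$. A root of unity $\xi^k$ is fixed by $\gamma$ exactly when $(v-1)k \equiv 0 \pmod m$, so $H_1$ is cyclic of order $\gcd(m, v-1)$; comparing with its description as the group generated by $\xi^{t_1}$ gives $t_1 = m/\gcd(m, v-1)$. Likewise $\gamma$ acts by complex conjugation on $\xi^k$ exactly when $(v+1)k \equiv 0 \pmod m$, so $H_2$ has order $\gcd(m, v+1)$ and $t_2 = m/\gcd(m, v+1)$. Setting $g_1 := \gcd(m, v-1)$ and $g_2 := \gcd(m, v+1)$, this records the identities $t_1 g_1 = t_2 g_2 = m$.

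The decisive input is then Lemma \ref{H1H2}, which says that in the complex case $t_1 t_2 = m$ or $t_1 t_2 = m/2$. Substituting $t_i = m/g_i$ converts this into $g_1 g_2 = m$ or $g_1 g_2 = 2m$, so in either case $m \mid g_1 g_2$. Since $g_1 \mid v-1$ and $g_2 \mid v+1$, the product satisfies $g_1 g_2 \mid (v-1)(v+1) = v^2 - 1$; combined with $m \mid g_1 g_2$ this gives $m \mid v^2 - 1$, which is exactly the congruence isolated in the first paragraph. Hence $\gamma$ is an involution.

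I expect the only genuine obstacle to lie in the first translation step: matching the stated generator $\xi^{t_i}$ of $H_i$ with the correct greatest common divisor, and noting that the divisibility $g_1 g_2 \mid v^2 - 1$ needs no coprimality of $g_1$ and $g_2$, since it follows at once from $g_1 \mid v-1$ and $g_2 \mid v+1$. Once Lemma \ref{H1H2} is invoked, the remainder is purely arithmetic.
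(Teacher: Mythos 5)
Your proof is correct and follows essentially the same route as the paper: both arguments feed the divisibility data encoded in $H_1$ and $H_2$ (namely $m \mid (v-1)t_1$ and $m\mid (v+1)t_2$) into Lemma $\ref{H1H2}$'s dichotomy $t_1t_2 = m$ or $t_1t_2 = \tfrac{m}{2}$ to conclude $m \mid v^2-1$. The only cosmetic difference is that you work with the complementary divisors $g_i = m/t_i = \gcd(m, v\mp 1)$ where the paper deduces $t_2 \mid v-1$ and $t_1 \mid v+1$ (resp.\ $2t_2$ and $2t_1$ in the half case) directly.
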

\begin{proof}
Let $[v] \in \Z/(m)^*$ correspond to $\gamma$. One has that $t_1t_2 = m$ or
$t_1t_2 = \frac{m}{2}$. Since one has that $[vt_1]_m = [t_1]_m$ and $[vt_2]_m = -[t_2]_m$, one
gets that
$$(v-1)t_1 \in (m) \  \ \mbox{and} \  \ (v+1)t_2 \in (m).$$
This implies that $t_2|(v-1)$ and $t_1|(v+1)$ or (if $t_1t_2 = \frac{m}{2}$) that $2t_2|(v-1)$ and
$2t_1|(v+1)$. Hence in each case one obtains that
$$v^2-1 = (v-1)(v+1) \in (m).$$
\end{proof}

\begin{Theorem} \label{piu}
Let $\sC \to \sP_1$ be a family of degree $m$ covers. Then
$\sV_1$ is complex, if and only if the fibers of $\sC$ have the branch indeces $d_1, \ldots, d_4$
with $2m = d_1+ \ldots +d_4$ such that
$$[vd_2]_m = [d_1+d_2+d_3]_m, \  \ [vd_1]_m = [-d_3]_m, \  \ [vd_3]_m = [-d_1]_m$$
or
$$[vd_2]_m = [d_1+d_2+d_3 +\frac{m}{2} ]_m, \  \ [vd_1]_m = [-d_3+\frac{m}{2}]_m,
\  \ [vd_3]_m = [-d_1+\frac{m}{2}]_m$$
for some $v$ with $[v^2]_m = [1]_m$ and $[v]_m \notin \{[1]_m, [m-1]_m\}$.
\end{Theorem}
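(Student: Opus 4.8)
The plan is to translate the group-theoretic conditions defining the complex case into explicit congruences on the branch indices, and conversely. First I would fix the dictionary between the combinatorics and the monodromy. For the eigenspace $\sL_1 \subset \sV_1$ the local monodromy datum at $a_k$ is $\xi^{d_k}$, so by Proposition \ref{repro} a semisimple Dehn twist $T_{k,\ell}$ acts on $\sL_1$ with non-trivial eigenvalue $z_1 = \xi^{d_k+d_\ell}$ and on the Galois-conjugate eigenspace $\sL_{j_2}$ (with $j_2 \equiv v \bmod m$) with eigenvalue $z_2 = \xi^{v(d_k+d_\ell)} = \gamma(z_1)$. Since $2m = d_1+\dots+d_4$, Proposition \ref{1.27} gives $h^{1,0}_1 = 1$ and $h^{0,1}_1 = 1$, so $\sL_1$ is of type $(1,1)$ and $\sV_1$ is general by Lemma \ref{infini}; moreover $\gamma$ is an involution ($[v^2]_m = [1]_m$) by the preceding proposition, with $[v]_m \notin \{[1]_m,[m-1]_m\}$ because $\gamma$ is neither the identity nor complex conjugation. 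Thus the quantifier ``for some $v$'' in the statement is exactly the quantifier over the admissible $\gamma$.

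For the forward implication I would assume $\sV_1$ complex and invoke Lemma \ref{oui}: after renumbering, $t_1 \mid d_1+d_2$, $t_1 \mid d_2+d_3$, $t_2 \mid d_1+d_3$ and $t_1 \nmid d_1+d_3$. Recalling that $H_1 = \langle \xi^{t_1}\rangle$ is the fixed subgroup of $\gamma$ and $H_2 = \langle \xi^{t_2}\rangle$ the subgroup on which $\gamma$ acts by conjugation, these divisibilities read $\xi^{d_1+d_2}, \xi^{d_2+d_3} \in H_1$ and $\xi^{d_1+d_3}\in H_2$, i.e.
\[
v(d_1+d_2)\equiv d_1+d_2,\quad v(d_2+d_3)\equiv d_2+d_3,\quad v(d_1+d_3)\equiv -(d_1+d_3) \pmod m.
\]
I would then solve this linear system for $vd_1, vd_2, vd_3$ modulo $m$. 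Subtracting and adding the relations yields $2\,vd_1 \equiv -2d_3$ and $2\,vd_3 \equiv -2d_1 \pmod m$, while $vd_2 = (d_1+d_2) - vd_1$.

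The parity of $m$ governs the division by $2$. The congruences $2\,vd_1\equiv -2d_3$ and $2\,vd_3\equiv -2d_1$ have, modulo the $\tfrac{m}{2}$-ambiguity when $m$ is even, exactly two joint solutions compatible with $v(d_1+d_3)\equiv -(d_1+d_3)$ (mixed lifts being excluded by this last relation): either $[vd_1]_m=[-d_3]_m$, $[vd_3]_m=[-d_1]_m$, $[vd_2]_m=[d_1+d_2+d_3]_m$ (the first displayed system), or the $\tfrac{m}{2}$-shifted $[vd_1]_m=[-d_3+\tfrac{m}{2}]_m$, $[vd_3]_m=[-d_1+\tfrac{m}{2}]_m$, $[vd_2]_m=[d_1+d_2+d_3+\tfrac{m}{2}]_m$ (the second). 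For odd $m$ only the first occurs; for even $m$ both are possible, corresponding to $t_1t_2=m$ and $t_1t_2=\tfrac{m}{2}$ as in Lemma \ref{H1H2}. This bookkeeping of the $\tfrac{m}{2}$-shifts across the parity split is the step requiring the most care.

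For the converse I would start from either displayed system together with $[v^2]_m=[1]_m$ and $[v]_m\notin\{[1]_m,[m-1]_m\}$, and compute the pairwise sums: in both cases the $\tfrac{m}{2}$-shifts cancel, giving $v(d_1+d_2)\equiv d_1+d_2$, $v(d_2+d_3)\equiv d_2+d_3$ and $v(d_1+d_3)\equiv -(d_1+d_3)$ modulo $m$. Hence every semisimple Dehn twist $T_{k,\ell}$ with $k,\ell\in\{1,2,3\}$ has $z_2\in\{z_1,\bar z_1\}$, so the second condition in the definition of ``very general'' fails; since the common type $(1,1)$ makes $\sV_1$ general (so the first condition cannot rescue it either), $\sV_1$ is exceptional. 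Finally, the asymmetry of the pattern — two pairs fixed by $\gamma$, the pair $d_1+d_3$ genuinely conjugated (here $[v]_m\neq[m-1]_m$ prevents global conjugation and $t_1\nmid d_1+d_3$ separates this from the degenerate case $2(d_1+d_3)\equiv 0$) — places $\sV_1$ in the complex and not the separated case of Lemma \ref{oui}, completing the equivalence. The main obstacle throughout is precisely disentangling the complex from the separated case under this two-fold $\tfrac{m}{2}$ ambiguity.
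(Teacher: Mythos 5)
Your proof follows the paper's argument essentially verbatim: for the forward direction the paper likewise invokes Lemma $\ref{oui}$, applies $v$ to the three pairwise sums $d_1+d_2$, $d_2+d_3$, $d_1+d_3$ to get $2vd_1\equiv -2d_3$, $2vd_2\equiv 2(d_1+d_2+d_3)$, $2vd_3\equiv -2d_1$, and resolves the factor-of-$2$ ambiguity into exactly the two $\frac{m}{2}$-shifted systems (the paper fixes the lift of $vd_2$ first and then determines $vd_1,vd_3$ from $v(d_1+d_2)=d_1+d_2$ and $v(d_2+d_3)=d_2+d_3$, which is the same bookkeeping as your exclusion of mixed lifts). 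For the converse the paper only says ``one can easily calculate''; your pairwise-summation check that the $\frac{m}{2}$-shifts cancel is a reasonable way to fill that in and is consistent with the paper's level of detail.
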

\begin{proof}
The condition $2m = d_1+ \ldots +d_4$ ensures that $\sV_1$ is not special.

By an abuse of notation, each integer $z$ denotes the residue class $[z]_m$ in this proof. Assume
that $\sV_1$ is complex. Hence by Lemma $\ref{oui}$, one has that
$$2vd_2 = v((d_1+d_2)-(d_1+d_3) + (d_2+d_3)) = (d_1+d_2)+(d_1+d_3) + (d_2+d_3) =2(d_1+d_2+d_3),$$
$$2vd_1 = v((d_1+d_2)+(d_1+d_3) - (d_2+d_3)) = (d_1+d_2)-(d_1+d_3) - (d_2+d_3)) = -2d_3,$$
$$2vd_3 = v(-(d_1+d_2)+(d_1+d_3) + (d_2+d_3)) = -(d_1+d_2)-(d_1+d_3) + (d_2+d_3)) = -2d_1.$$
Hence one has two cases:
$$vd_2 = d_1+d_2+d_3 \  \ \mbox{or} \  \  vd_2 = d_1+d_2+d_3 +\frac{m}{2}$$
In the first case (resp., the second case) the fact that $v(d_1 + d_2) = d_1 + d_2$ implies that
$vd_1 = -d_3$ (resp., $vd_1 = -d_3 + \frac{m}{2}$). Moreover in the first case (resp., the second
case) the fact that $v(d_2 + d_3) = d_2 + d_3$ implies that $vd_3 = -d_1$ (resp., $vd_3 = -d_1 +
\frac{m}{2}$). Hence we have obtained the claimed equations.

Assume conversely that the family $\sC \to \sP_1$ satisfies one of the two systems of equations of
this theorem. Then one can easily calculate that $\sV_1$ is complex. 
\end{proof}

\begin{pkt} \label{nabel}
Let $\sC \to \sP_1$ be a family of degree $m$ covers. Assume that $d_1,d_2, d_3$ satisfy the
first system of equations of Theorem $\ref{piu}$ with respect to some $v$ with $[v^2] = [1]_m$,
which satisfies
that $[v]_m \notin \{[1]_m, [m-1]_m\}$. Moreover let $j \in (\Z/(m))^*$ such that $\sL_j \subset
\sV_1$ with monodromy representation $\rho_j$. Now we calculate that $\Mon_{\Q(\xi)^+}^0(\sV_1)$
does not reach the upper bound $C^{\der}_1(g)_{\Q(\xi)^+}$ in this case.

Let $a_1 = 0$, $a_3 = 1$ and $a_4 =\infty$. The fundamental group of the corresponding copy of
$\sM_1$ is generated by $T_{1,2}$ and $T_{2,3}$. One obtains that
$$\rho_j(T_{1,2}) =\left(\begin{array}{cc}
\xi^{jd_1+jd_2} & 1-\xi^{jd_1} \\
0 & 1
\end{array} \right), \ \
\rho_j(T_{2,3}) =\left(\begin{array}{cc}
1 & 0\\
\xi^{jd_2}-\xi^{jd_2+jd_3} & \xi^{jd_2+jd_3}
\end{array} \right).$$
Let $\gamma_v \in \Gal(\Q(\xi);\Q)$ denote the automorphism corresponding to $[v]$. The monodromy
representation of $\sL_{jv}$ is given by
$$\rho_{jv}(T_{1,2}) =\left(\begin{array}{cc}
\xi^{jd_1+jd_2} & 1-\xi^{-jd_3} \\
0 & 1
\end{array} \right), \ \
\rho_{jv}(T_{2,3}) =\left(\begin{array}{cc}
1 & 0\\
\xi^{jd_1+jd_2+jd_3}-\xi^{jd_2+jd_3} & \xi^{jd_2+jd_3}
\end{array} \right).$$

One calculates easily that
$$\frac{1-\xi^{jd_1}}{1-\xi^{-jd_3}} \cdot
\frac{ \xi^{jd_2}-\xi^{jd_2+jd_3} }{ \xi^{jd_1+jd_2+jd_3}-\xi^{jd_2+jd_3} } =
\frac{\xi^{jd_2}-\xi^{jd_2+jd_3} -\xi^{jd_1+jd_2}+\xi^{jd_1 + jd_2+jd_3}}
{\xi^{jd_1+jd_2+jd_3}-\xi^{jd_2+jd_3} -\xi^{jd_1+jd_2}+\xi^{jd_2}}= 1.$$
Hence there is a $z \in \Q(\xi)$ such
that $\gamma_v|_{<\rho_j(T_{1,2}),\rho_j(T_{2,3})>}$
coincides with $\alpha|_{<\rho_j(T_{1,2}),\rho_j(T_{2,3})>}$, where $\alpha$ is given by
$$\left(\begin{array}{cc}
a & b\\
c & d
\end{array} \right) \to \left(\begin{array}{cc}
a & zb\\
z^{-1}c & d
\end{array} \right).$$
Thus by Lemma $\ref{5.15}$, the group $\Mon_{\Q(\xi)^+}^0(\sV_1)$ does not attain its upper bound
in this case. In addition one calculates easily that $\alpha$ is given by
$$\left(\begin{array}{cc}
a & b\\
c & d
\end{array} \right)
\to \left(\begin{array}{cc}
\sqrt{z} & 0\\
0 & \sqrt{z^{-1}}
\end{array} \right) \left(\begin{array}{cc}
a & b\\ c & d
\end{array} \right)
\left(\begin{array}{cc}
\sqrt{z^{-1}} & 0\\
0 & \sqrt{z}
\end{array} \right).$$
Thus the monodromy representations of $\sL_j$ and $\sL_{jv}$ coincide up to
conjugation such that $\sL_j$ and $\sL_{jv}$ are isomorphic for each $j \in (\Z/(m))^*$.
\end{pkt}

\begin{corollary}
There are infinitely many families $\sC \to \sP_1$ such that $\sV_1$ is complex and
$\Mon_{\Q(\xi)^+}^0(\sV_1)$ does not reach its upper bound.
\end{corollary}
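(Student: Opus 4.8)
The plan is to reduce the corollary to exhibiting an explicit infinite sequence of families $\sC \to \sP_1$ satisfying the \emph{first} system of congruences of Theorem \ref{piu}, since for any such family Point \ref{nabel} already establishes that $\Mon_{\Q(\xi)^+}^0(\sV_1)$ fails to reach the upper bound $C^{\der}_1(\psi)_{\Q(\xi)^+}$. Thus the only real task is to produce, for infinitely many degrees $m$, branch indices $d_1,\dots,d_4$ and an element $v$ with $[v^2]_m=[1]_m$ and $[v]_m \notin\{[1]_m,[m-1]_m\}$, meeting the first system
$$[vd_2]_m=[d_1+d_2+d_3]_m,\quad [vd_1]_m=[-d_3]_m,\quad [vd_3]_m=[-d_1]_m,$$
together with $2m=d_1+\cdots+d_4$ (which by the proof of Theorem \ref{piu} guarantees that $\sV_1$ is not special) and the irreducibility requirement of Construction \ref{hiercr}.

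Concretely, I would take, for every integer $u\ge 2$,
$$m=4u,\qquad (d_1,d_2,d_3,d_4)=(2u-1,\,1,\,4u-1,\,2u+1),\qquad v=2u-1.$$
First I would check that $v$ is an admissible involution: $v^2=(2u-1)^2=4u(u-1)+1\equiv 1\pmod{4u}$, while any common divisor of $2u-1$ and $4u$ divides $2(2u-1)-(4u-2)=0$ hence divides $2$, and $2u-1$ is odd, so $v\in(\Z/m)^{*}$; moreover $v\neq 1,\,m-1$ precisely because $u\ge 2$. Next I would verify the weight condition $d_1+d_2+d_3+d_4=8u=2m$ and the irreducibility condition $\gcd(d_1,d_2,d_3,d_4,m)=1$, which is immediate since $d_2=1$; all $d_k$ lie in $(0,m)$, so the $\mu_k=d_k/m$ are legitimate local monodromy data with $\sum_k\mu_k=2$. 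Finally I would confirm the three congruences of the first system modulo $4u$: $vd_1=v^2\equiv 1\equiv -d_3$, then $vd_3=(2u-1)(4u-1)=8u^2-6u+1\equiv 2u+1\equiv -d_1$, and $vd_2=2u-1\equiv 6u-1=d_1+d_2+d_3$. Note that the case $u=3$ reproduces exactly the degree $12$ example $(5,1,11,7)$ with $v=5$ discussed after Theorem \ref{piu}, which serves as a consistency check.

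With these verifications in hand, Theorem \ref{piu} shows that $\sV_1$ is complex for each such family, and Point \ref{nabel} shows that $\Mon_{\Q(\xi)^+}^0(\sV_1)$ does not attain its upper bound. Since the degree $m=4u$ differs for different $u$, covers of these distinct degrees are inequivalent, so the families are pairwise distinct and there are infinitely many of them; this yields the corollary. I do not expect the verification steps to present any real difficulty, as they are elementary congruence computations; the one genuinely creative step — and hence the ``hard part'' — is guessing the correct one-parameter normal form $(m;d_1,\dots,d_4;v)$ that simultaneously realizes an involution $v\neq\pm1$, satisfies the first system, and keeps the cover irreducible. (I would also remark that these examples happen to be separated as well, since $4\mid m$ and every $d_k$ is odd, but this is harmless: Point \ref{nabel} applies to the complex structure directly and is independent of the separated behaviour.)
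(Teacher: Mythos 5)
Your proof is correct and follows essentially the same route as the paper's: both arguments reduce the corollary to Theorem $\ref{piu}$ combined with $\ref{nabel}$ and then exhibit an explicit infinite sequence of branch data of the shape $(d_1,d_2,d_3,d_4)=(v,1,m-1,m-v)$ for a nontrivial involution $v\in(\Z/(m))^*$. The only difference is the choice of witnesses --- the paper takes $m=pq$ with $\gcd(p,q)=1$, $p,q\notin\{1,2\}$ and $v$ corresponding to $(1,-1)$ under the Chinese remainder theorem, whereas you take $m=4u$, $v=2u-1$ --- and your congruence verifications all check out.
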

\begin{proof}
Let $p, q \in \N$ such that $\gcd(p,q) = 1$ with $p,q \notin \{1,2\}$ and $m := pq$. Hence
$\Z/(m) = \Z/(p) \times \Z/(q)$. Let $v < m$ correspond to $(1,-1)\in \Z/(p) \times \Z/(q)$. Thus
we get $[v^2] = [1]_m$ and $[v]_m \notin \{[1]_m, [m-1]_m\}$. One has that
$$d_1 = v, \  \ d_2 = 1, \  \ d_3 = m -1$$
satisfies the first system of equations of Theorem $\ref{piu}$, which quaranties by $\ref{nabel}$
that $\Mon_{\Q(\xi)^+}^0(\sV_1)$ does not
reach its upper bound. Since there are infinitely many
possible choices for $p, q \in \N$ such that $\gcd(p,q) = 1$ with $p,q \notin \{1,2\}$, one
obtains infinitely many families $\sC \to \sP_1$ such that $\sV_1$ is exceptional.
\end{proof}

\section{The Hodge group of a universal family of hyperelliptic curves}
If the middle part \index{middle part} $\sV_{\frac{m}{2}}$ is of type $(1,1)$, one obtains
$\Mon^0(\sV_{\frac{m}{2}}) = \Sp_{\Q}(2)$, since $\Sp_{\R}(2) \cong \SU(1,1)$, and
$\Mon_{\R}^0(\sV_{\frac{m}{2}}) = \SU(1,1)$ as one has by Theorem $\ref{nundenn}$.

By using \cite{Zar} Theorem $10.1$ and Remark $10.2$, one can conclude that the Hodge group
$\Hg(\sV_{\frac{m}{2}})$ of an arbitrary middle part $\sV_{\frac{m}{2}}$ coincides
with $\Sp(\sV_{\frac{m}{2}},Q_{\sV_{\frac{m}{2}}})$. For completeness we give an elementary proof.
We use the the fact that
$$\Mon^0(\sV_{\frac{m}{2}}) \subseteq \Hg(\sV_{\frac{m}{2}}) \subseteq
\Sp(\sV_{\frac{m}{2}},Q_{\sV_{\frac{m}{2}}})$$
and show by explicite calculations that the dimensions of the Lie algebras of $\Mon^0(\sV_{\frac{m}{2}})$
and $\Sp(\sV_{\frac{m}{2}},Q_{\sV_{\frac{m}{2}}})$ coincide.

By Proposition $\ref{repro}$, each Dehn twist $T_{\ell,\ell+1}$ yields a unipotent subgroup of
$\Mon^0(\sV_{\frac{m}{2}})$ isomorphic to $\G_a$.
Its corresponding subvector space of the Lie algebra is generated by
$$A_{\ell,\ell+1}(a,b)
= \left\{\begin{array}{r@{\quad:\quad}l}
-1 & a= \ell\quad\mbox{and}\quad b = \ell-1\\
1 & a= \ell\quad\mbox{and}\quad b = \ell+1\\
0 & \mbox{elsewhere} \end{array}  \right. .$$

Now we consider the middle part of type $(2,2)$. Hence we are in the case of the genus 2 curves.
For $\ell =1, \ldots, 4$ the matrices $A_{\ell,\ell+1}$ generate a 4 dimensional vector
space. Moreover by $[A_{i,i+1},A_{i+1,i+2}]$ for $i = 1,2,3$, we get the 3 additional linearly
independent matrices 
$$\left(\begin{array}{cccc}
-1 & 0 & 1 & 0\\
0 & 1 & 0 &0\\
0 & 0 & 0 & 0\\
0& 0 & 0 & 0
\end{array} \right), \  \
\left(\begin{array}{cccc}
0 & 0 & 0 & 0\\
0 & -1 & 0 & 1\\
-1 & 0 & 1 & 0\\
0& 0 & 0 & 0
\end{array} \right), \  \ \mbox{and} \  \
\left(\begin{array}{cccc}
0 & 0 & 0 & 0\\
0 & 0 & 0 & 0\\
0 & 0 & -1 & 0\\
0& -1 & 0 & 1
\end{array} \right).$$
By
$$[A_{2,3},[A_{3,4},A_{4,5}]] \  \ \mbox{resp.,} \  \ [[A_{1,2},A_{2,3}],A_{3,4}],$$
we obtain the two further linearly independent matrices
$$\left(\begin{array}{cccc}
0 & 0 & 0 & 0\\
0 & 0 & -1 & 0\\

0 & 0 & 0 & 0\\
-1 & 0 & 1 & 0
\end{array} \right) \ \ \mbox{and} \  \
\left(\begin{array}{cccc}
0 & -1 & 0 & 1\\
0 & 0 & 0 & 0\\
0 & 1 & 0 & 0\\
0 & 0 & 0 & 0
\end{array} \right).$$
Thus the Lie algebra has at least dimension 9. Moreover one checks easily
that $$[[A_{1,2},A_{2,3}],[A_{3,4},A_{4,5}]]
=\left(\begin{array}{cccc}
0 & 0 & -1 & 0\\
0 & 0 & 0 & 0\\
0 & 0 & 0 & 0\\

0 & 1 & 0 & 0
\end{array} \right).$$
is a tenth linearly independent matrix. Thus the well-known fact that $Sp_{\Q}(4)$ has dimension
10 implies:

\begin{proposition} \label{5.39}
If $\sV_{\frac{m}{2}}$ is of type $(2,2)$, then $\Mon^0(\sV_{\frac{m}{2}}) \cong
\Sp(\sV_{\frac{m}{2}},Q_{\sV_{\frac{m}{2}}})$.
\end{proposition}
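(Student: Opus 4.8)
The plan is to upgrade the already-available inclusions
\[
\Mon^0(\sV_{\frac{m}{2}}) \subseteq \Hg(\sV_{\frac{m}{2}}) \subseteq \Sp(\sV_{\frac{m}{2}},Q_{\sV_{\frac{m}{2}}})
\]
to an equality by a dimension count on Lie algebras. Since $\sV_{\frac{m}{2}}$ is of type $(2,2)$, Corollary $\ref{rangi}$ forces $|S_{\frac{m}{2}}| = 6$, so $N^1(C_{\frac{m}{2}},\Q)$ has rank $4$ and $\Sp(\sV_{\frac{m}{2}},Q_{\sV_{\frac{m}{2}}}) \cong \Sp_{\Q}(4)$ has dimension $10$. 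Here we work over $\Q$ rather than over some $\Q(\xi^r)^+$, which is exactly what makes the middle part tractable, since $\Q(\xi^{\frac{m}{2}}) = \Q(-1) = \Q$. Because $\Mon^0(\sV_{\frac{m}{2}})$ is a connected subgroup of the connected group $\Sp_{\Q}(4)$, it suffices to prove $\dim Lie(\Mon^0(\sV_{\frac{m}{2}})) \geq 10$, whence the containment of full-dimensional connected subgroups becomes an equality.

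First I would extract unipotent generators of the Lie algebra from the Dehn twists. On the middle part the local monodromy datum around each point of $S_{\frac{m}{2}}$ is $(-1)^{d_k} = -1$, so for each consecutive pair one has $\alpha_{\ell}\alpha_{\ell+1} = 1$. Substituting this into the matrix of Proposition $\ref{repro}$ shows that each $T_{\ell,\ell+1}$ acts as a unipotent transformation $I + N$ with $N^2 = 0$; it therefore generates a one-parameter unipotent subgroup $\cong \G_a$ inside $\Mon^0(\sV_{\frac{m}{2}})$, whose Lie algebra is spanned by the nilpotent matrix $A_{\ell,\ell+1}$. Hence the four matrices $A_{1,2}, A_{2,3}, A_{3,4}, A_{4,5}$ all lie in $Lie(\Mon^0(\sV_{\frac{m}{2}}))$.

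The remaining step is purely Lie-theoretic: as $Lie(\Mon^0(\sV_{\frac{m}{2}}))$ is closed under the bracket, it contains all iterated commutators of these generators. I would then exhibit ten linearly independent elements inside this span, organized by bracket length: the four generators $A_{\ell,\ell+1}$; the three length-two brackets $[A_{i,i+1},A_{i+1,i+2}]$ for $i = 1,2,3$; the two length-three brackets $[A_{2,3},[A_{3,4},A_{4,5}]]$ and $[[A_{1,2},A_{2,3}],A_{3,4}]$; and the single length-four bracket $[[A_{1,2},A_{2,3}],[A_{3,4},A_{4,5}]]$. Their linear independence is verified by a direct inspection of the nonzero entries of the resulting $4\times 4$ matrices.

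With these ten matrices in hand one concludes $\dim Lie(\Mon^0(\sV_{\frac{m}{2}})) \geq 10 = \dim \fsp_{\Q}(4)$, so the inclusion of Lie algebras is an equality, and therefore $\Mon^0(\sV_{\frac{m}{2}}) = \Sp(\sV_{\frac{m}{2}},Q_{\sV_{\frac{m}{2}}})$, which in particular gives the claimed isomorphism. The only genuine labor — and the step most prone to arithmetic slips — is the explicit computation of the successive commutators and the check that the chosen ten matrices are independent; the surrounding structure (unipotence of the twists, closure under brackets, the dimension comparison) is routine once these generators are identified.
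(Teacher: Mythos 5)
Your proposal is correct and follows essentially the same route as the paper: both upgrade the inclusion $\Mon^0(\sV_{\frac{m}{2}})\subseteq \Sp(\sV_{\frac{m}{2}},Q_{\sV_{\frac{m}{2}}})$ to an equality by extracting the nilpotent generators $A_{\ell,\ell+1}$ from the unipotent Dehn twists and exhibiting the identical list of ten linearly independent elements (the four generators, the three brackets $[A_{i,i+1},A_{i+1,i+2}]$, the two triple brackets, and $[[A_{1,2},A_{2,3}],[A_{3,4},A_{4,5}]]$) to match $\dim\fsp_{\Q}(4)=10$. No substantive difference from the paper's argument.
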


Note that the quotient of $\Sp_4(\R)$ by its maximal compact subgroup is Siegel's upper half plane
$\fh_2$, which has dimension 3. Since $\sM_3$ has dimension 3, one concludes for
the restricted family $\sC_{\sM_3}\to \sM_3$ of genus 2 curves:

\begin{corollary}
The family $\sC_{\sM_3}\to \sM_3$ of genus 2 curves has a dense set of $CM$ fibers.
\end{corollary}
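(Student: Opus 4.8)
The plan is to identify the generic Hodge group of $\sV$ with the full symplectic group, to recognise the associated period domain as $\fh_2$, and then to transport the density of $CM$ points on $\fh_2$ back to $\sM_3$ through an open period map, exactly along the lines of the proof of Theorem \ref{jnvz}.

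First I would pin down $\Hg(\sV)$. For a family of genus $2$ curves the covers have degree $m=2$, so the only nontrivial eigenspace is the middle part, i.e. $\sV = \sV_{m/2} = \sV_1$, which is of type $(2,2)$. By Proposition \ref{5.39} this gives $\Mon^0(\sV) \cong \Sp(\sV, Q_{\sV}) = \Sp_{\Q}(4)$. Combining this with the inclusions $\Mon^0(\sV) \subseteq \Hg(\sV) \subseteq \Sp(\sV,Q_{\sV})$ forces $\Hg(\sV) = \Sp_{\Q}(4)$. Hence the relevant Shimura datum is the Siegel datum of Example \ref{obehalb}, whose symmetric domain is $\fh_2 = \Sp_4(\R)/K$, of dimension $3$.

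Next I would build the period map and prove it open. Over a simply connected open $W \subseteq \sM_3(\C)$ I trivialise $R^1\pi_*(\C)|_W$ and obtain a holomorphic map $p\colon W \to \fh_2$ sending $q$ to the polarised Hodge structure of $\sC_q$, equivalently to the point of $\fh_2$ classifying the principally polarised Jacobian $\mathrm{Jac}(\sC_q)$ with a fixed symplectic basis. By Torelli's theorem (see \cite{LB}, Torelli's Theorem $11.1.7$) two genus $2$ curves with isomorphic principally polarised Jacobians are already isomorphic, and by $\ref{japp}$ each isomorphism class of fibres occurs only finitely often; therefore the fibres of $p$ are $0$-dimensional. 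Since $\dim \sM_3 = 3 = \dim \fh_2$, the map $p$ is open by \cite{Nara}, Chapter {\bf VII}, Proposition $4$, just as in the proof of Theorem \ref{jnvz}.

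Finally I would invoke density. By Theorem \ref{commul}, applied to the Siegel Shimura datum of Example \ref{obehalb} realising $\fh_2 = \Sp_4(\R)/K$ via the natural embedding $\Sp_4 \hookrightarrow \GL_4$, the set of $CM$ points is dense in $\fh_2$. Because a curve $C$ is a $CM$ fibre precisely when $\mathrm{Jac}(C)$ is of $CM$ type, the $CM$ fibres in $W$ are exactly the preimages under $p$ of the $CM$ points of $\fh_2$; openness of $p$ then makes $p^{-1}$ of a dense set dense in $W$, and letting $W$ range over a covering of $\sM_3$ yields the claim. The only delicate point is the openness of $p$, which I expect to be the main obstacle: it hinges on the $0$-dimensionality of the fibres (Torelli plus finiteness of automorphisms) together with the numerical coincidence $\dim \sM_3 = \dim \fh_2$. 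The argument is identical to that of Theorem \ref{jnvz}, the difference being only that the period domain is here the whole $\fh_2$ rather than a projective space attached to a single eigenspace.
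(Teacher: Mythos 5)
Your proposal is correct and follows essentially the same route as the paper: the paper's proof likewise observes (referring back to the proof of Theorem \ref{jnvz}) that the period map $p:\sM_3 \to \fh_2$ has $0$-dimensional fibers, concludes openness from $\dim(\fh_2)=\dim(\sM_3)=3$, and then uses the density of $CM$ points in $\fh_2$. Your write-up merely makes explicit the identification $\Hg(\sV)=\Sp_{\Q}(4)$ via Proposition \ref{5.39} and the Torelli/finiteness argument for the fiber dimension, both of which the paper leaves implicit in its citation of Theorem \ref{jnvz}.
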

\begin{proof}
One has (similarly to the proof of Theorem $\ref{jnvz}$) that the holomorphic period map $p:\sM_3
\to \fh_2$ has fibers of dimension 0. Since $\dim(\fh_2) = \dim(\sM_3) = 3$, one concludes that
$p$ is open. Hence the statement follows from the fact that $\fh_2$ has a dense set of $CM$ points.
\end{proof}

We will use Proposition $\ref{5.39}$ and the calculations, which yield this proposition, to show
the following theorem by induction:

\begin{Theorem} \label{middlep}
If $\sV_{\frac{m}{2}}$ is of type $(g,g)$, then $\Mon^0(\sV_{\frac{m}{2}}) \cong
\Sp(\sV_{\frac{m}{2}},Q_{\sV_{\frac{m}{2}}})$.
\end{Theorem}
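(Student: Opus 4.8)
The plan is to reduce the assertion to a computation of the dimension of a Lie algebra, and then to carry out that computation by induction on $g$, using the genus~$2$ case (Proposition~\ref{5.39}) as the base of the induction. Since the monodromy respects the intersection form (Remark~\ref{dromy}) one already has the inclusions
$$\Mon^0(\sV_{\frac{m}{2}}) \subseteq \Hg(\sV_{\frac{m}{2}}) \subseteq \Sp(\sV_{\frac{m}{2}},Q_{\sV_{\frac{m}{2}}}),$$
and all three groups are connected ($\Mon^0$ is an identity component, and the symplectic group is connected). Hence it suffices to prove that the Lie algebra $\fg := Lie(\Mon^0(\sV_{\frac{m}{2}}))$ is all of $\fsp(\sV_{\frac{m}{2}},Q_{\sV_{\frac{m}{2}}})$, i.e. that $\dim \fg = g(2g+1)$; once equality of dimensions is reached, the connected subgroup $\Mon^0(\sV_{\frac{m}{2}})$ coincides with the full (connected) symplectic group, and therefore with $\Hg(\sV_{\frac{m}{2}})$ as well.

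First I would record the generators of $\fg$ sitting inside $\fsp(2g)$. In the middle part every local monodromy datum equals $-1$, so by Proposition~\ref{repro} each consecutive Dehn twist $T_{\ell,\ell+1}$ (for $\ell=1,\ldots,2g$, acting on the basis $\{[e_k\delta_k]\}_{k=1}^{2g}$ supplied by Corollary~\ref{rangi}) is unipotent with square-zero logarithm proportional to the transvection $A_{\ell,\ell+1}$ written down just before Proposition~\ref{5.39}. Thus $\fg$ contains all the $A_{\ell,\ell+1}$, is closed under the bracket, and is contained in $\fsp(2g)$.

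For the inductive step I would pass from type $(g,g)$ to type $(g-1,g-1)$ by a collision of two branch points of the middle part (Construction~\ref{cop}): merging two of the $2g+2$ ramification points makes the product of their local monodromy data trivial, so the merged point leaves $S_{\frac{m}{2}}$ and one is left with a middle part of type $(g-1,g-1)$, whose monodromy group is identified by Construction~\ref{cop} with a subgroup of $\Mon^0(\sV_{\frac{m}{2}})$. By the induction hypothesis this subgroup is $\Sp(2g-2)$, so $\fg$ contains a subalgebra isomorphic to $\fsp(2g-2)$, of dimension $(g-1)(2g-1)$. It then remains to produce $4g-1 = \dim\fsp(2g)-\dim\fsp(2g-2)$ further elements of $\fg$, linearly independent modulo this subalgebra, by bracketing the remaining chain transvections attached to the last cycles $\delta_{2g-1},\delta_{2g}$ against the transvections already present --- precisely the pattern displayed in the explicit genus~$2$ computation preceding Proposition~\ref{5.39}. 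As all of these elements lie in $\fsp(2g)$, whose dimension equals $(g-1)(2g-1)+(4g-1)=g(2g+1)$, exhibiting this many independent matrices forces $\fg=\fsp(2g)$, hence $\Mon^0(\sV_{\frac{m}{2}})\cong\Sp(\sV_{\frac{m}{2}},Q_{\sV_{\frac{m}{2}}})$.

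The hard part will be the bookkeeping in the inductive step: one must verify that the brackets of the new chain transvections with the inductively obtained $\fsp(2g-2)$ really sweep out the $4g-1$ missing symplectic directions (the new off-diagonal block together with the symmetric blocks attached to the last coordinate pair), and that the matrices so obtained are linearly independent from the $\fsp(2g-2)$-part. This is the genus-$g$ analogue of the explicit ten-dimensional calculation carried out for $g=2$, and it is exactly here that the chain structure of the vanishing cycles --- each $\delta_\ell$ meeting only $\delta_{\ell\pm1}$, so that distant brackets vanish while adjacent ones are new --- must be used to keep the count exact.
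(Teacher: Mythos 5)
Your overall strategy is the same as the paper's: reduce the theorem to showing that the Lie algebra generated by the logarithms $A_{\ell,\ell+1}$ of the consecutive Dehn twists has dimension $2g^2+g$, and prove this by induction on $g$ with the explicit genus-$2$ computation (Proposition $\ref{5.39}$) as the base case. The framing (connectedness, the chain of inclusions $\Mon^0\subseteq\Hg\subseteq\Sp$, unipotence of the $T_{\ell,\ell+1}$ in the middle part via Proposition $\ref{repro}$) is all correct.

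The gap is in the inductive step, and it is not merely the ``bookkeeping'' you defer. First, producing the $4g-1$ new independent directions \emph{is} the content of the induction; saying that brackets of the new transvections against the inductive subalgebra ``sweep out the missing symplectic directions'' restates the claim rather than proving it. Second, and more seriously, your implicit picture --- a block-diagonally embedded $\fsp(2g-2)$ plus new elements supported in the complementary blocks --- is false in the basis $\{[e_k\delta_k]\}$ of the larger eigenspace: the vanishing cycles satisfy the linear relation $\sum_{i}[e_{2i+1}\delta_{2i+1}]=0$, so the subgroup $G_1$ generated by the first $2g$ Dehn twists is represented by matrices carrying a nonzero correction column $v$ depending on the $\fsp(2g-2)$-part (Remark $\ref{joh}$). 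Without controlling which entries of $Lie(G_1)$ can be nonzero, one cannot certify that the newly constructed brackets are independent of $Lie(G_1)$; this is exactly the purpose of the base-change analysis in $\ref{hoehoeo}$ and of Lemma $\ref{juchu}$. The paper also organizes the count differently: it introduces a second overlapping subgroup $G_2\cong\Sp_4$ generated by the last Dehn twists, glues $Lie(G_1)$ and $Lie(G_2)$ along their common $3$-dimensional intersection to obtain $2g^2+g+7$ dimensions at once, adds $4g-6$ elements by bracketing against the matrices $x^{(i_0,j_0)}$ of Lemma $\ref{juchu}$, and finds the final two elements by explicit nested brackets whose independence rests on the vanishing pattern established in $\ref{hoehoeo}$. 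To complete your argument you would have to supply an equivalent of all of this.
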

 
\begin{corollary}
$$\Hg(\sV_{\frac{m}{2}}) = \Sp(\sV_{\frac{m}{2}},Q_{\sV_{\frac{m}{2}}}) \  \ \mbox{and} \ \
\MT(\sV_{\frac{m}{2}}) = \GSp(\sV_{\frac{m}{2}},Q_{\sV_{\frac{m}{2}}})$$
\end{corollary}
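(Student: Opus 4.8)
The plan is to reduce the statement to a generation problem inside the Lie algebra $\fsp_{2g}$ and then to run the induction announced in the text, with Proposition \ref{5.39} as the base case $g=2$. Since $j=\frac{m}{2}$ gives $\Q(\xi^{\frac m2})=\Q$, the middle part $\sV_{\frac m2}$ is a variation of $\Q$-Hodge structures on the $2g$-dimensional symplectic space $N^1(C_{\frac m2},\Q)$, and the inclusions $\Mon^0(\sV_{\frac m2})\subseteq \Hg(\sV_{\frac m2})\subseteq \Sp(\sV_{\frac m2},Q_{\sV_{\frac m2}})$ recalled before the theorem reduce matters to showing that the leftmost group equals $\Sp_{2g}$. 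As $\Sp_{2g}$ is connected and $\Mon^0(\sV_{\frac m2})$ is a connected closed subgroup, it suffices to prove $\dim Lie(\Mon^0(\sV_{\frac m2}))\ge g(2g+1)=\dim\fsp_{2g}$; equality of Lie algebras then forces equality of groups, the Hodge group is squeezed between two equal groups, and the Mumford--Tate assertion follows because $\MT\subseteq\GSp$ by Corollary \ref{sympemb} while $\MT$ contains both $\Hg=\Sp$ and the central homotheties $\G_m$ coming from the weight, whence $\MT=\GSp$.

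First I would produce the infinitesimal generators. Every $a_k\in S_{\frac m2}$ has local monodromy datum $e^{2\pi i\frac m2\mu_k}=e^{\pi i d_k}=(-1)^{d_k}=-1$, since $d_k$ is odd for $a_k\in S_{\frac m2}$. Hence by Proposition \ref{repro} each Dehn twist $T_{\ell,\ell+1}$ acts on $\sL_{\frac m2}$ by a unipotent matrix $M$ with $(M-\id)^2=0$, so its one-parameter unipotent subgroup lies in $\Mon^0(\sV_{\frac m2})$ and has as infinitesimal generator the matrix $A_{\ell,\ell+1}$ displayed just before Proposition \ref{5.39}. Thus $Lie(\Mon^0(\sV_{\frac m2}))$ contains $A_{1,2},\dots,A_{2g,2g+1}$ and, being a Lie subalgebra, all their iterated brackets. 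One also checks that these matrices lie in $\fsp_{2g}$ for the tridiagonal alternating form $Q_{\sV_{\frac m2}}$ attached to the cycles $[e_k\delta_k]$, consistently with the inclusion $\Mon^0\subseteq\Sp_{2g}$.

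For the inductive step from $g-1$ to $g$ I would argue that the generators indexed $3,\dots,2g,2g+1$ span, by the induction hypothesis applied to the tridiagonal block on the coordinates $e_3,\dots,e_{2g}$, a full copy of $\fsp_{2g-2}$; the deficit $\dim\fsp_{2g}-\dim\fsp_{2g-2}=4g-1$ must then be made up by bracketing the two remaining end generators $A_{1,2}$ and $A_{2,3}$ against this block, exactly as the nested commutators $[A_{i,i+1},A_{i+1,i+2}]$, $[A_{2,3},[A_{3,4},A_{4,5}]]$ and $[[A_{1,2},A_{2,3}],[A_{3,4},A_{4,5}]]$ produced the ten independent matrices in the proof of Proposition \ref{5.39}. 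Organizing the bookkeeping through the root-space decomposition of $\fsp_{2g}$ should make transparent which vectors arise: writing $E_{ab}$ for the elementary matrix with a single $1$ in position $(a,b)$, the interior generators $A_{\ell,\ell+1}=E_{\ell,\ell+1}-E_{\ell,\ell-1}$ behave like combinations of simple-root vectors for the chain, and successive brackets fill in the long roots involving the new indices $1,2$ and their symplectic partners.

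The main obstacle is precisely this last bookkeeping: one must exhibit $4g-1$ explicit brackets that are linearly independent of the inductively generated $\fsp_{2g-2}$ and of one another, while controlling the boundary terms $-E_{\ell,\ell-1}$ that prevent the genus-$(g-1)$ subalgebra from sitting inside a clean coordinate block. I expect the cleanest route is to track weights rather than to grind out matrices: show that the adjoint action of the chain moves a fixed root vector through all positive roots of $C_g$, so that the generated algebra is stable under the full set of roots and hence equals $\fsp_{2g}$, with the $g=2$ computation of Proposition \ref{5.39} serving both as the base case and as the model for the weight pattern.
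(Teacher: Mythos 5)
Your reduction is the same as the paper's: the corollary is squeezed out of $\Mon^0(\sV_{\frac{m}{2}}) \subseteq \Hg(\sV_{\frac{m}{2}}) \subseteq \Sp(\sV_{\frac{m}{2}},Q_{\sV_{\frac{m}{2}}})$ once Theorem $\ref{middlep}$ gives $\Mon^0 = \Sp$, and $\MT = \GSp$ then follows from Lemma $\ref{idhggr}$ and Corollary $\ref{sympemb}$. Your identification of the generators is also right: since every $\mu_k = \frac12$ on $S_{\frac{m}{2}}$, Proposition $\ref{repro}$ makes each Dehn twist unipotent with logarithm $A_{\ell,\ell+1}$, and the problem becomes a dimension count in $Lie(\Mon^0)$ with Proposition $\ref{5.39}$ as base case. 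Up to this point you are reproducing the paper's argument.

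The gap is that the induction step — the entire content of Theorem $\ref{middlep}$ beyond $g=2$ — is not carried out; you name the "$4g-1$ independent brackets" as the main obstacle and substitute a hope that a root-space argument will work. Two concrete difficulties stand in the way. First, the inductive copy of $\fsp_{2g-2}$ does \emph{not} sit as a clean coordinate block in the basis $\{[e_k\delta_k]\}$: the relation $\sum_{i}[e_{2i+1}\delta_{2i+1}]=0$ among the odd-indexed cycles means that, after the base change of $\ref{hoehoeo}$, the inductively generated matrices acquire a column $v$ depending on the block entries (Remark $\ref{joh}$), and the paper needs the resulting constraint ($m_{1,2g+1}=0$ whenever $m_{1,j}=0$ for $j\le 2g$) precisely to certify that its last two bracket vectors are new. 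Your plan does not engage with this, and without it linear independence of the new brackets from the old subalgebra cannot be verified. Second, the weight-tracking idea is not immediate: the $A_{\ell,\ell+1}$ are nilpotent elements that are not root vectors for any common Cartan subalgebra normalized by all of them, so "moving a root vector through all positive roots of $C_g$" has no a priori meaning here. Note also that the paper eases the bookkeeping by using \emph{two} overlapping inductive subalgebras — $Lie(G_1)\cong\fsp_{2g}$ from the first $2g$ twists and $Lie(G_2)\cong\fsp_4$ from the last ones, meeting in a $3$-dimensional $\fsp_2$ — so that only $4g-6+2$ genuinely new brackets must be exhibited rather than your $4g-1$; if you insist on a single inductive copy you make the missing computation strictly harder.
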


It is a well-known fact that $\dim(\Sp_{\Q}(2g)) = 2g^2+g$.\footnote{Otherwise one has a
description of $\fsp_{2g}(\C)$ in \cite{FH}, page 239. By this description, one can easily
determine its dimension.} Hence one gets
$$\dim(\Sp_{\Q}(2g+1)) = 2(g+1)^2 + g+1 = (2g^2 + g)+(4g+3).$$

We will show by induction that for each $g \in \N$ the matrices $A_{\ell, \ell+1}$ generate a Lie
algebra, which has at least the same dimension as $\mathfrak{sp}_{2g}(\Q)$. This yields Theorem
$\ref{middlep}$. Since we have shown the statement for $g = 1,2$, we will only give the induction
step:

Recall that we have defined $\LL_j$-valued paths $[e_k \delta_k]$ in Section $3.3$. We consider
a middle part of type $(g+1,g+1)$ with respect to the basis $\sB = \{[e_1\delta_1],\ldots,
[e_{2g+2}\delta_{2g+2}]\}$. The Dehn twists $T_{\ell,\ell+1}$ for $\ell = 1, \ldots, 2g$ yield the monodromy
group $G_1$ of a middle part of type $(g,g)$. Hencefore by the induction hypothesis, they yield a
group isomorphic to $\Sp_{2g}(\Q)$.

\begin{remark} \label{joh}
One has the obvious embedding of $G_1 \hookrightarrow \GL(N^1(C_{\frac{m}{2}},\Q))$ with respect
to the basis $\sB_1 := \{[e_1\delta_1], \ldots, [e_{2g}\delta_{2g}], [e_{2g+2}\delta_{2g+2}],
[e_{2g+3}\delta_{2g+3}]\}$ such that
$$G_1 \ni A \to  \left(\begin{array}{ccc}
A &  & \\
 & 1 & 0\\
 & 0 & 1
\end{array} \right) \in \GL(N^1(C_{\frac{m}{2}},\Q)).$$
Moreover this embedding of $G_1$ into $\GL(N^1(C_{\frac{m}{2}},\Q))$ is given by
$$G_1 \ni A \to  \left(\begin{array}{ccc}
A & v & \\
 & 1 & 0\\
 & 0 & 1
\end{array} \right) \in \GL(N^1(C_{\frac{m}{2}},\Q)),$$
with respect to the basis $\sB$, where
$v^t = (v_1, \ldots, v_{2g})$ is a vector depending on $A$.
\end{remark}

Since we consider the embedding with respect to the latter basis, we want to understand $v$, which
is possible, if we understand the base change between the bases of the preceding remark.

\begin{lemma}
Let $C \to \bP^1$ be a hyperelliptic curve of genus $g+1$. One has (up to a suitable normalization)
$$\sum\limits_{i= 0}^{g+1}[e_{2i+1}\delta_{2i+1}] = 0.$$
\end{lemma}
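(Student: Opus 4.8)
The plan is to follow the template established in the proof of the basis theorem of Section~3.3: realise the claimed relation as the twisted boundary of a suitable $2$-chain, obtained by choosing a global section of $\LL_1$ over the complement of a system of segments and applying the boundary operator $\partial$.

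First I would record the hyperelliptic picture. Here $m=2$, so $C$ is the double cover of $\bP^1$ branched at the $2g+4$ points $a_1,\ldots,a_{2g+4}$, all carrying $\mu_k=\tfrac12$ and hence local monodromy datum $\alpha_k=-1$ for $\LL_1$. Normalise so that $a_1<\cdots<a_{2g+3}$ lie on the real axis, $a_{2g+4}=\infty$, and $\delta_k$ is the straight segment from $a_k$ to $a_{k+1}$. The key structural observation is that the odd segments $\delta_1,\delta_3,\ldots,\delta_{2g+3}$ pair the $2g+4$ branch points into the $g+2$ consecutive pairs $\{a_1,a_2\},\{a_3,a_4\},\ldots,\{a_{2g+3},a_{2g+4}\}$, so they form a complete system of branch cuts for the double cover.

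Next I would set $U:=\bP^1\setminus\bigcup_{i=0}^{g+1}\delta_{2i+1}$. A small loop around any single cut $\delta_{2i+1}$ has $\LL_1$-monodromy $\alpha_{2i+1}\alpha_{2i+2}=(-1)(-1)=1$, and such loops generate $\pi_1(U)$; therefore the rank-one local system $\LL_1|_U$ is trivial and admits a nowhere-vanishing global section $e$. Regarding $U$ as an $\LL_1$-valued $2$-chain with this coefficient, I would compute $\partial(e\cdot U)$: the topological boundary of $U$ traverses each cut $\delta_{2i+1}$ twice, once along each bank. Since crossing a branch cut interchanges the two sheets of $C\to\bP^1$ and $\LL_1$ is the anti-invariant eigenspace, the value of $e$ on the two banks differs by the factor $-1$; combined with the opposite induced orientations this makes the two bank-contributions reinforce rather than cancel, yielding $\partial(e\cdot U)=\sum_{i=0}^{g+1}c_i\,[e_{2i+1}\delta_{2i+1}]$ with all $c_i\neq 0$. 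As a boundary this class is $0$ in $H_1(C,\C)_1$; absorbing each $c_i$ into the normalisation of $e_{2i+1}$ gives exactly $\sum_{i=0}^{g+1}[e_{2i+1}\delta_{2i+1}]=0$.

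The hard part will be the orientation-and-sign bookkeeping in the boundary computation: one must check that at each interior branch point the twisted chain genuinely closes up (this is the regularisation that makes $[e_k\delta_k]$ a bona fide twisted cycle in the sense of \cite{Loo}), so that no spurious boundary terms survive at the $a_k$, and that on each cut the two banks add with a nonzero total coefficient. Both facts reduce to the single computation of how the chosen section $e$ transforms under the monodromy $-1$ across a cut, which is elementary but must be tracked with consistent conventions; once this is in place the relation follows at once and is manifestly nontrivial.
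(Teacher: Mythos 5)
Your proof is correct and is essentially the paper's argument: both realise the relation as the boundary of a $2$-chain carrying the anti-invariant ($\psi$ acts by $-1$) combination of the two sheets. The only difference is the cut system: the paper removes all of $\delta_1,\ldots,\delta_{2g+3}$ and then computes from the local monodromy data $\alpha_k=-1$ that the even-indexed coefficients in the resulting boundary relation vanish, whereas you cut only along the odd segments, over whose complement $\LL_1$ is already trivial, so the even-indexed terms never appear and only the sign bookkeeping on the banks remains. This is a mild streamlining of the same idea rather than a different route.
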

\begin{proof}
Let $\zeta \in H_2(C,\C)$ be a nontrivial linear combination of the closures of the sheets of
$\bP^1 \setminus S$, on which $\psi$ acts via push-forward by the character $1 \in \Z/(2)$.
One has that $\partial \zeta$ represents a linear combination of
$[e_1\delta_1], \ldots, [e_{2g+1}\delta_{2g+3}] \in H_1(C,\C)_1$, which is equal to zero. Recall
that over $\delta_1 \cup \ldots \cup \delta_{2g+3}$ the glueing of these sheets depends on the
local monodromy data determined by the branch indeces of the branch points $a_k$.
Since each $a_k$ has the local monodromy datum $-1$, this linear combination is (up to a suitable
normalization of $[e_1\delta_1], \ldots, [e_{2g+1}\delta_{2g+3}]$) given by
$$\sum\limits_{i= 0}^{g+1}[e_{2i+1}\delta_{2i+1}] = 0.$$
\end{proof}

\begin{pkt} \label{hoehoeo}
By the preceding lemma, the matrices of base change between the bases $\sB$ and $\sB_1$ are given
by
$$M^{\sB_1}_{\sB}({\rm id}) = \left(\begin{array}{cccccc}
1&  & &  &  & -1\\
& \ddots & &  &  & \vdots\\
& & 1&  &  & -1\\
& & & 1 &  & 0\\
& & &  & 0 & -1\\
& & &  & 1 & 0
\end{array} \right) \  \ \mbox{and} \  \ M^{\sB}_{\sB_1}({\rm id}) = \left(\begin{array}{cccccc}
1&  & &  & -1 & \\
& \ddots & & & \vdots & \\
& & 1&  &  -1 & \\
& & & 1 & 0 & \\
& & &  & 0 & 1\\
& & &  & -1 & 0
\end{array} \right)$$
such that
$$\left(\begin{array}{ccc}
A & v & \\
 & 1 & 0\\
 & 0 & 1
\end{array} \right) = M_{\sB}^{\sB_1}({\rm id})\cdot \left(\begin{array}{ccc}
A &  & \\
 & 1 & 0\\
 & 0 & 1
\end{array} \right)\cdot M^{\sB}_{\sB_1}({\rm id}).$$
Thus one calculates easily that $v_1= 0$, if $a_{1,1} = 1$ and $a_{1,j} = 0$ for
$2 \leq j \leq 2g$ and $A = (a_{i,j})$. The exponential map $\exp$ is a diffeomorphism on a
neighborhood of $0$. Hence by the definition
$$\exp(m) = 1 + m + \frac{m^2}{2} + \frac{m^3}{6} + \ldots,$$
one concludes that each $(m_{i,j}) \in Lie(G_1)$ satisfies that $m_{1,2g+1}= 0$, if $m_{1,j} = 0$
for all $j = 1, \ldots, 2g$, which will play a very important role later. Otherwise $\exp$
would yield a matrix with $a_{1,1} = 1$, $a_{1,j} = 0$ for $2 \leq j \leq 2g$ and $v_1 \neq 0$
as one can calculate by the fact that each $(m_{i,j}) \in Lie(G_1)$ satisfies that $m_{i,j}= 0$
for $i > 2g$.
\end{pkt}

\begin{lemma} \label{juchu}
Let $i_0\leq 2g$ and $j_0 < 2g$ be integers such that $i_0-j_0 >0$. In the Lie algebra
$Lie(G_1)$ one finds an element $(x^{(i_0,j_0)}_{i,j})$ with $x^{(i_0,j_0)}_{i_0,j_0} \neq 0$ and
$x^{(i_0,j_0)}_{i,j} = 0$, if $i> i_0$ or $j < j_0$ or $i = 1$.
\end{lemma}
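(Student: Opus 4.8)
The plan is to prove this by an explicit construction inside $Lie(G_1)$, realizing the required matrix as an iterated commutator of the generators $A_{\ell,\ell+1}$ and controlling its support directly. Write $A_\ell := A_{\ell,\ell+1} = E_{\ell,\ell+1} - E_{\ell,\ell-1}$ for the matrices introduced above, where $E_{p,q}$ is the elementary matrix and any out-of-range elementary matrix is read as $0$; then $A_\ell \in Lie(G_1)$ for $1\le \ell \le 2g$, and since $Lie(G_1)$ is a Lie algebra it is closed under $[\,\cdot\,,\,\cdot\,]$. So it suffices to produce a bracket polynomial in the $A_\ell$ whose unique entry in column $j_0$ is a nonzero entry at $(i_0,j_0)$ and which has no entry in row $1$, in rows $>i_0$, or in columns $<j_0$. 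Note that I will not need the induction hypothesis $Lie(G_1)\cong\fsp_{2g}$ for this lemma: only membership $A_\ell\in Lie(G_1)$ and closure under brackets are used.

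The key computational tool is the single-term bracket formula
$$[E_{p,q}, A_\ell] = \delta_{q\ell}\,(E_{p,\ell+1} - E_{p,\ell-1}) - (\delta_{p,\ell+1} - \delta_{p,\ell-1})\,E_{\ell,q},$$
which shows that $\ad(A_\ell)$ either shifts the column of an entry from $\ell$ to $\ell\pm1$, keeping its row fixed, or shifts its row from $\ell\pm1$ to $\ell$, keeping its column fixed, in each case by a single step. First I would dispose of the case $j_0 = i_0-1$, which forces $i_0\ge 2$: here the generator $A_{i_0}$ itself has its unique column-$(i_0-1)$ entry equal to $-1$ at $(i_0,i_0-1)$, while its other support lies in row $i_0$ and column $i_0+1$; this already satisfies every support requirement, so $x^{(i_0,j_0)} := A_{i_0}$ works.

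For the main case $j_0 \le i_0-2$ put $a := j_0+1$ (so $a\ge 2$, and since $i_0\le 2g$ forces $j_0\le 2g-2$ all indices $a,a+1,\dots,i_0$ lie in $\{1,\dots,2g\}$) and take
$$x^{(i_0,j_0)} := \ad(A_{i_0})\,\ad(A_{i_0-1})\cdots \ad(A_{a+2})\bigl([A_a,A_{a+1}]\bigr).$$
Exactly as in the genus-$2$ computations one finds $[A_a,A_{a+1}] = E_{a,a+2} - E_{a,a} + E_{a+1,a+1} - E_{a+1,a-1}$, whose only entry in column $a-1=j_0$ is the corner $-E_{a+1,a-1}$. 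The essential step is then an induction on the number of operators applied, proving the invariant that after $\ad(A_{a+2}),\dots,\ad(A_{a+1+t})$ the matrix is supported in rows $a$ and $a+1+t$ only, has all columns $\ge j_0$, and has as its single column-$j_0$ entry the corner $\pm E_{a+1+t,\,j_0}$. This invariant is preserved because $\ad(A_{a+2+t})$ pushes the current bottom row $a+1+t$ down to $a+2+t$ by a row-shift, only moves the row-$a$ entries rightward in column (so they stay in columns $\ge j_0$), and cannot touch the corner, whose column $j_0$ never equals the bracket index $a+2+t$. Taking $t=i_0-j_0-2$ brings the bottom row to $i_0$, yielding $x^{(i_0,j_0)}$ with nonzero $(i_0,j_0)$-entry and support confined to rows in $\{2,\dots,i_0\}$ and columns $\ge j_0$, as required.

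The main obstacle is purely bookkeeping: guaranteeing that throughout the walk no stray entry drifts into row $1$, into a row $>i_0$, or into a column $<j_0$, and that the corner entry is never cancelled. Both are handled by the inductive invariant above, together with the observation that the corner always sits in a column strictly smaller than every bracket index used, so it only ever undergoes row-shifts and survives with coefficient $\pm1$.
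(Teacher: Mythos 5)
Your construction is correct, and it follows the same basic strategy as the paper's proof of Lemma \ref{juchu} --- exhibit the witness as an iterated commutator of the generators $A_{\ell,\ell+1}$ and track its support by hand --- but run in the mirror direction. The paper inducts on $k_0=i_0-j_0$ with the \emph{column} as the moving index: starting from your same base case $A_{i_0,i_0+1}$ it sets $x^{(i_0,j_0)}:=[x^{(i_0,j_0+1)},A_{j_0+1,j_0+2}]$, so the corner walks leftward from $(i_0,i_0-1)$ to $(i_0,j_0)$; the whole verification is then one line, since the new corner entry is the product $x^{(i_0,j_0+1)}_{i_0,j_0+1}\cdot(A_{j_0+1,j_0+2})_{j_0+1,j_0}\neq 0$ and the support conditions fall out of the shape of a single bracket. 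You instead pin the column at $j_0$ from the start via $[A_{j_0+1},A_{j_0+2}]$ and walk the \emph{row} down from $j_0+2$ to $i_0$, which buys a closed-form witness but costs a full support invariant, and there is one small leak in yours: as stated (``supported in rows $a$ and $a+1+t$ only, columns $\geq j_0$, unique column-$j_0$ entry at the corner'') the invariant does not propagate by itself, because the term $M\cdot A_{a+2+t}$ of the bracket is supported in exactly those rows where column $a+2+t$ of $M$ is nonzero, and nothing in your invariant forbids the bottom row $a+1+t$ from having an entry in that column --- if it did, a residue would survive in row $a+1+t$ and the ``two rows only'' clause would fail at the next stage. The fix is easy and your explicit formula does satisfy it (the bottom row's entries sit only in columns $a-1$ and $a+1$, so add that clause; or simply weaken the row condition to ``rows contained in $\{2,\dots,a+1+t\}$'', which propagates formally and still gives everything the lemma asks for), but as written the inductive step is not self-contained.
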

\begin{proof}
Let $k_0 := i_0-j_0 >0$. We show the statement by induction over $k_0$. Each pair $(i_0,j_0)$
with $i_0-j_0 = k_0 = 1$ is given by $(i_0,i_0-1)$. By $A_{i_0, i_0+1}$, such an element is
given for each $(i_0,i_0-1)$.

Now let $(i_0,j_0)$ be a pair with $k_0 := i_0-j_0 > 1$ and assume that the statement is satisfied
for $k_0 -1, \ldots, 1 > 0$. Hence one has $(x^{(i_0,j_0+1)}_{i,j}), A_{j_0+1,j_0+2} \in
Lie(G_1)$. By
$$(x^{(i_0,j_0)}_{i,j}) := [(x^{(i_0,j_0+1)}_{i,j}), A_{j_0+1,j_0+2}],$$
one obtains the desired element of $Lie(G_{1})$, since one has the entry
$$x^{(i_0,j_0)}_{i_0,j_0} =  x^{(i_0,j_0+1)}_{i_0,j_0+1} \cdot (A_{j_0+1,j_0+2})_{j_0+1,j_0}
\neq 0.$$
\end{proof}

Moreover the Dehn twists $T_{2n-1,2n}, \ldots, T_{2g+2,2g+3}$ generate a group $G_2$ isomorphic to
the monodromy group of a middle part of type $(2,2)$, which has dimension 10. One can
easily compare the matrices of $Lie(G_2)$ with the above explicitely given matrices of a middle part of type
$(2,2)$: ``The restriction of the matrices of $Lie(G_2)$ to the lower right
corner looks like the matrices of the Lie algebra of the monodromy group of a middle part of type
$(2,2)$.''

Since the vectors
$$A_{2g-1,2g}, \  \ A_{2g,2g+1} \ \ \mbox{and} \ \  [A_{2g-1,2g}, A_{2g,2g+1}]$$ 
are contained in $Lie(G_1) \cap Lie(G_2)$, both Lie algebras yield together a
$2g^2+g+7$-dimensional vector space of matrices $(x_{i,j})$, whose entries $x_{i,j}$ vanish for
$j < 2g-3$ and $i > 2g$. Hence by using
$$[A_{2g+1,2g+2}, (x^{(2g,j_0)}_{i,j})] \  \ \mbox{and} \  \
[[A_{2g+1,2g+2}, A_{2g+2,2g+3}],x^{(2g,j_0)}_{i,j}]$$
for $j_0 < 2g-3$, one has $4g-6$ additional linearly independent vectors. Thus we have altogether
$(2g^2+g) + (4g+1)$ linearly independent vectors. Hence 2 remaining linearly independent vectors
are to find. Since $x^{(i_0,j_0)}_{i,j} = 0$ for $i = 1$,
in the constructed vector space
of matrices $(m_{i,j})$ the coordinate $m_{1,2g+1}$ depends uniquely on the
vectors in $Lie(G_1)$ such that $m_{1,2g+1} = 0$, if $m_{1,j} =0$ for all $j = 1, \ldots, 2g$
as we have seen in $\ref{hoehoeo}$. Let
$$Lie(G_1)\ni (y_{i,j}) = [A_{1,2},[A_{2,3},[\ldots[A_{2g-1,2g},A_{2g,2g+1}]\ldots]].$$ 
One checks easily that
$$y_{1,2g+1} \neq 0.$$
Now the matrix
$$(y'_{i,j}) = [(y_{i,j}),[A_{2g+1,2g+2}, A_{2g+2,2g+3}] ]$$
satisfies $y'_{1,2g+1} \neq 0$, $y'_{i,j} = 0$ for $i,j \leq 2g$ and $y'_{1,2g+2} = 0$. Thus we
have found a new
vector not contained in the vector space, which we have constructed by $Lie(G_1)$, $Lie(G_2)$ and
some Lie brackets at the present.

Note that all matrices $(x_{i,j})$, which we have found, satisfy $x_{1,2g+2} = 0$. But
$$(z_{i,j}) := [(y_{i,j}),A_{2g+1,2g+2}]$$
satisfies $z_{1,2g+2} \neq 0$. Hencefore we are done.

\section{The complete generic Hodge group}

By this section, we finish our calculation (of the derived group) of the generic Hodge group
and obtain the final result:

\begin{Theorem} \label{hmhmm}
One has
$$\Mon^0(\sV) = \prod\limits_{r|m}\Mon^0(\sV_r)$$
in the following cases:
\begin{enumerate}
\item The degree $m$ of the covers given by the fibers of $\sC \to \sP_n$ is odd.
\item $\sP_n = \sP_1$ and 6 does not divide $m$.
\end{enumerate}
\end{Theorem}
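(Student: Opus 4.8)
The plan is to upgrade the natural inclusion to an equality by a Goursat-type argument at the level of adjoint groups, and then to exclude the only possible obstruction — a simple factor shared between two distinct summands — using the eigenvalue criteria of the previous two sections. First I would record the starting inclusion: the monodromy analogue of Proposition \ref{hggrcomp} (see the remark following it) gives an embedding $\Mon^0(\sV) \hookrightarrow \prod_{r|m} \Mon^0(\sV_r)$ for which every projection $\Mon^0(\sV) \to \Mon^0(\sV_r)$ is surjective, and by Corollary \ref{ssmon} all of these groups are semisimple. Passing to adjoint groups, each simple factor $S$ of $\Mon^{\ad}(\sV)$ is normal, so by Remark \ref{tutty} its image under each projection $\Mon^{\ad}(\sV) \to \Mon^{\ad}(\sV_r)$ is either trivial or a single simple factor onto which $S$ maps isomorphically. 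Hence $\Mon^0(\sV) = \prod_r \Mon^0(\sV_r)$ holds if and only if no simple factor of $\Mon^{\ad}(\sV)$ maps isomorphically onto simple factors of two different $\Mon^{\ad}(\sV_r)$ and $\Mon^{\ad}(\sV_{r'})$ with $r \neq r'$; by Lemma \ref{5.15} this is exactly the failure mode to rule out. Such a shared factor corresponds to an isomorphism $\alpha : \Mon^{\ad}(\sL_{j_1}) \to \Mon^{\ad}(\sL_{j_2})$ realising $\Mon^{\ad}(\sL_{j_1} \oplus \sL_{j_2})$ as $\mathrm{Graph}(\alpha)$, with $\sL_{j_1} \subset \sV_r$ and $\sL_{j_2} \subset \sV_{r'}$, so that $\gcd(j_1, m) = r \neq r' = \gcd(j_2, m)$.

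Next I would translate the existence of $\alpha$ into an arithmetic condition and derive a contradiction from $r \neq r'$. Using Construction \ref{cop} to collide points down to a copy of $\sM_1 \subset \sP_1$ on which the eigenspaces in question become of type $(1,1)$ with infinite monodromy, and then Proposition \ref{ula} together with Proposition \ref{uekur}, an isomorphism $\alpha$ can exist only if for every semisimple Dehn twist $T_{k,\ell}$ one has $j_2(d_k+d_\ell) \equiv \pm j_1(d_k + d_\ell) \pmod m$ (the sign depending on the twist). In particular $\rho_{j_1}(T_{k,\ell})$ and $\rho_{j_2}(T_{k,\ell})$ then have equal order, i.e. $\gcd(m, j_1 s) = \gcd(m, j_2 s)$ for every branch-sum $s = d_k + d_\ell$.

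For case (1), where $m$ is odd, I would fix a prime $p \mid m$ at which $v_p(\gcd(j_1,m)) \neq v_p(\gcd(j_2,m))$ (such a $p$ exists since $r \neq r'$) and observe that, as $\gcd(d_1,\ldots,d_{n+3}) = 1$ and $p$ is odd, the branch-sums cannot all be divisible by $p$; choosing a branch-sum $s_0$ with $p \nmid s_0$ gives $\min(v_p(m), v_p(j_1)) = \min(v_p(m), v_p(j_2))$, contradicting the choice of $p$. The genuinely mixed-sign (complex) solutions are excluded the same way, because the arithmetic behind Lemma \ref{H1H2} and \ref{nabel} forces $j_2 \equiv v j_1$ for a unit $v$ modulo $m$, so that $\gcd(j_1,m) = \gcd(j_2,m)$; and the separated solutions of Lemma \ref{oui} require $4 \mid m$, which is impossible for odd $m$. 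Thus no cross-summand factor exists and the product decomposition holds.

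For case (2) I would exploit that over $\sP_1$ there are exactly four branch points, so only the three branch-sums $d_1 + d_2$, $d_2 + d_3$, $d_1 + d_3$ occur and the sign system is precisely the one classified in Lemma \ref{oui} and Theorem \ref{piu}. The prime $p = 3$ causes no trouble: if all three branch-sums were divisible by $3$, then $d_1 \equiv d_2 \equiv d_3 \equiv 0 \pmod 3$, whence $d_4 \equiv 0$, contradicting $\gcd(d_k) = 1$, so a coprime branch-sum is always available at $p = 3$. The remaining obstruction is entirely $2$-adic: when every $d_k$ is odd all three branch-sums are even, and the eigenvalue matching can then persist across different $\gcd$-strata only when $v_2(m)$ is large enough for the exponents to saturate it — exactly the coincidences displayed by the degree $12$ and degree $24$ families in the remarks of Section 5.4, all of which satisfy $6 \mid m$. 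Excluding these under the hypothesis $6 \nmid m$ is the heart of the matter and where I expect the real work to lie: one must check, sign by sign and prime by prime, that the three-equation system of Theorem \ref{piu} with $\gcd(j_1,m) \neq \gcd(j_2,m)$ has no solution once $6 \nmid m$. The main obstacle is therefore the combined $2$- and $3$-adic bookkeeping of the complex and separated cases for even $m$; the odd case and the Goursat reduction via Lemma \ref{5.15} are comparatively routine.
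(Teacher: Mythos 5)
Your Goursat-type reduction in the first paragraph is exactly the paper's: one passes to adjoint groups, uses the fact that a normal connected subgroup of a product of simple adjoint groups is a subproduct (Remark \ref{tutty}, Lemma \ref{5.15}), and reduces the theorem to the non-existence of an isomorphism $\alpha:\Mon^{\ad}(\sL_{j_1})\to\Mon^{\ad}(\sL_{j_2})$ compatible with the projective monodromy representations when $\gcd(j_1,m)\neq\gcd(j_2,m)$; this is precisely Proposition \ref{tyui}. Your case (1) is also sound and essentially the paper's argument in a cleaner $p$-adic dress: since an isomorphism of groups preserves orders of elements, it suffices to produce one semisimple Dehn twist on which $P\rho_{j_1}$ and $P\rho_{j_2}$ have different orders, and for an odd prime $p$ with $v_p(r_1)\neq v_p(r_2)$ the identity $2d_k=(d_k+d_{k_1})+(d_k+d_{k_2})-(d_{k_1}+d_{k_2})$ together with $\gcd(d_1,\dots,d_{n+3})=1$ guarantees a branch-sum prime to $p$ (this is the content of Lemma \ref{buaha}).

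Case (2), however, contains a genuine gap, and it sits exactly where the hypothesis $6\nmid m$ has to be used. You correctly observe that odd primes are handled as in case (1) and that the residual obstruction is $2$-adic (all $d_k$ odd, all branch-sums even, so no order mismatch is forced at $p=2$). But you then defer the remaining work to ``the three-equation system of Theorem \ref{piu}'' --- that theorem classifies the \emph{complex exceptional case inside a single summand} $\sV_1$, i.e.\ $j_1,j_2\in(\Z/(m))^*$ related by a unit $v$, and says nothing about the cross-summand situation $\gcd(j_1,m)\neq\gcd(j_2,m)$ that you must rule out here. The missing step in the paper's proof is different and shorter: if no semisimple Dehn twist exhibits an order mismatch, Lemma \ref{buaha} forces $r_2=2r_1$ (so $2\mid m$) and the non-trivial eigenvalue of $\rho_{j_2}(T)$ to be the \emph{square} of the eigenvalue $\xi$ of $\rho_{j_1}(T)$; since the relevant maximal tori are isomorphic to $S^1$, whose character group is $\Z$, any isomorphism between them acts on eigenvalues by $\xi\mapsto\xi^{\pm1}$, so compatibility requires $\xi^2=\xi^{-1}$, i.e.\ $\xi^3=1$, whence $3\mid m$ and therefore $6\mid m$ --- contradicting the hypothesis. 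Without this torus-rigidity argument (or an equivalent arithmetic verification that your condition $j_2s\equiv\pm j_1s\pmod m$ for all semisimple branch-sums, together with $r_1\neq r_2$, forces $6\mid m$), case (2) is not proved; your own eigenvalue criterion is the right starting point, but the conclusion you need does not follow from Theorem \ref{piu}.
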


\begin{corollary}
Assume that $\sC \to \sP_n$ satisfies one of the following conditions:
\begin{enumerate}
\item The degree $m$ of the covers given by the fibers of $\sC \to \sP_n$ is odd.
\item $\sP_n = \sP_1$ and 6 does not divide $m$.
\end{enumerate}
Then one has
$$\MT^{der}(\sV) = \Hg^{der}(\sV) \supseteq 
\prod\limits_{r|m}\Mon^0(\sV_r).$$
\end{corollary}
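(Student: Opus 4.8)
The plan is to leverage the subdirect decomposition already in place and then rule out any ``gluing'' between distinct summands. From the monodromy analogue of Proposition \ref{hggrcomp} we have a subdirect embedding $\Mon^0(\sV)\hookrightarrow\prod_{r\mid m}\Mon^0(\sV_r)$ whose projections onto the factors are surjective, and each $\Mon^0(\sV_r)$ is semisimple by Corollary \ref{ssmon}. First I would pass to the adjoint groups and decompose everything into simple factors; the special summands drop out because their monodromy is finite (Remark \ref{nixda}), so only the general $\sV_r$ contribute. By the correspondence between connected normal subgroups and ideals (Remark \ref{tutty}) together with Lemma \ref{5.15}, the desired equality $\Mon^{\ad}(\sV)=\prod_{r\mid m}\Mon^{\ad}(\sV_r)$ holds as soon as no simple factor of $\Mon^{\ad}(\sV)$ is carried isomorphically onto a simple factor of $\Mon^{\ad}(\sV_{r_1})$ and simultaneously onto one of $\Mon^{\ad}(\sV_{r_2})$ for two \emph{distinct} divisors $r_1\neq r_2$ (the gluings internal to a single $\sV_r$ are allowed and are exactly what keeps $\Mon^{\ad}(\sV_r)$ from splitting further). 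Such a cross-summand factor would come from a pair of eigenspaces $\sL_{j_1}\subset\sV_{r_1}$ and $\sL_{j_2}\subset\sV_{r_2}$ with $\gcd(j_i,m)=r_i$, and $r_1\neq r_2$ forces $j_1\not\equiv\pm j_2\pmod m$.

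Next I would translate ``no cross-summand gluing'' into the eigenvalue language of the preceding two sections. A gluing of the corresponding adjoint factors would mean $\Mon^{\ad}(\Re\sV(j_1,j_2)_{\R})$ is the graph of an isomorphism $\alpha:\Mon^{\ad}(\sL_{j_1})\to\Mon^{\ad}(\sL_{j_2})$. By Proposition \ref{repro} the nontrivial eigenvalue of a semisimple Dehn twist $T_{k,\ell}$ on $\sL_j$ is $\xi^{j(d_k+d_\ell)}$, so by Proposition \ref{uekur} it suffices to exhibit a semisimple $T_{k,\ell}$ for which, with $s:=d_k+d_\ell$, one has $\xi^{j_2 s}\notin\{\xi^{j_1 s},\xi^{-j_1 s}\}$. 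When $\sP_n=\sP_1$ every general summand has its eigenspaces of type $(1,1)$ (by Corollary \ref{rangi} the rank is $|S_j|-2\le 2$), so $\Mon^{\ad}(\sL_{j_i})\cong\PU(1,1)$ and Proposition \ref{uekur} applies verbatim. For general $\sP_n$ (the case $m$ odd) the factors may have higher rank, so I would first reduce to the rank-one situation over a copy of $\sP_1$ by a collision of points (Construction \ref{cop}, Lemma \ref{5.19}, Proposition \ref{ula}), choosing the stable partition $\{\{a_1\},\{a_2\},\{a_3\},\{a_4,\ldots\}\}$ so that at least one of $\sL_{j_1}(P),\sL_{j_2}(P)$ stays general; this is the same bookkeeping of branch indices as in Corollary \ref{1} and Remark \ref{2}.

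Everything then reduces to a number-theoretic statement: the twist $T_{k,\ell}$ fails to separate the two factors exactly when $(j_1-j_2)s\equiv0$ or $(j_1+j_2)s\equiv0\pmod m$, i.e. when $m/\gcd(j_1-j_2,m)$ or $m/\gcd(j_1+j_2,m)$ divides $s$, and both of these moduli exceed $1$ because $j_1\not\equiv\pm j_2$. I would produce a semisimple Dehn twist of suitable order using Lemma \ref{buaha}, and this is precisely where the two hypotheses intervene. If $m$ is odd there is no middle part and no order-two eigenvalue coincidence, so Lemma \ref{buaha} directly yields a twist whose eigenvalue has order too large to satisfy either forbidden divisibility. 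If $\sP_n=\sP_1$ then, because the four branch indices satisfy $\sum_i d_i\equiv0\pmod m$, there are only three essentially distinct sums $s$ available (the pairs $T_{k,\ell}$ and their complements give complex-conjugate eigenvalues), and the hypothesis $6\nmid m$ is exactly what prevents all three from being simultaneously forbidden.

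I expect the decisive difficulty to be this last, purely arithmetic step: controlling the orders of the Dehn-twist eigenvalues uniformly over all admissible branch data, and in particular verifying in the $\sP_1$ case that $6\nmid m$ genuinely excludes every configuration in which each of the three sums $s$ lands in a forbidden residue class. Two secondary points need care. First, one must make sure the collision reduction in the case $m$ odd preserves generality of at least one eigenspace, so that Proposition \ref{ula} is applicable; I would settle this exactly as in the induction of Theorem \ref{nundenn}. Second, the low-rank coincidences $\SU(1,1)\cong\Sp_{\R}(2)$ and the exceptional and separated summands discussed above must be accommodated, but since the eigenvalue criterion of Proposition \ref{uekur} obstructs an isomorphism of adjoint groups regardless of the ambient type, these coincidences do not actually create new gluings.
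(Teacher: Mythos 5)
You have correctly identified that the corollary reduces to Theorem \ref{hmhmm} (the inclusion into $\MT^{\der}(\sV)=\Hg^{\der}(\sV)$ then being Theorem \ref{monmtder} plus the standard identity of derived groups), and your architecture for Theorem \ref{hmhmm} --- subdirect embedding, adjoint simple factors, Remark \ref{tutty} and Lemma \ref{5.15}, reduction to excluding an isomorphism $\alpha:\Mon^{\ad}(\sL_{j_1})\to\Mon^{\ad}(\sL_{j_2})$ compatible with the projective monodromies for $j_1,j_2$ in \emph{distinct} summands --- is exactly the paper's. The divergence, and the gap, lies in how $\alpha$ is excluded.

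The paper's Proposition \ref{tyui} uses the hypothesis $r_1=\gcd(j_1,m)\neq\gcd(j_2,m)=r_2$ through the \emph{orders} of $P\rho_{j_1}(T)$ and $P\rho_{j_2}(T)$: these divide $m/r_1$ and $m/r_2$ respectively, Lemma \ref{buaha} produces a twist whose order on the $j_1$ side does not divide $\gcd(m/r_1,m/r_2)$, and only the residual case $2r_1=r_2$ needs the maximal-torus computation where $6\nmid m$ (or oddness of $m$) enters. Your reduction keeps from $r_1\neq r_2$ only the consequence $j_1\not\equiv\pm j_2\pmod m$ and then seeks a twist with $(j_1\pm j_2)s\not\equiv 0\pmod m$; the claim that $6\nmid m$ guarantees such a twist is false at that level of generality. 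The complex exceptional summands of Section 5.4 give counterexamples: for $m=pq$ with coprime $p,q\geq 5$ (so $m$ odd and $6\nmid m$) and $j_2=vj_1$ with $v^2\equiv1$, $v\not\equiv\pm1\pmod m$, one has $j_1\not\equiv\pm j_2$, yet $t_1\mid d_1+d_2,\ d_2+d_3$ and $t_2\mid d_1+d_3$ force $\xi^{j_2s}\in\{\xi^{j_1s},\xi^{-j_1s}\}$ for \emph{every} semisimple Dehn twist, and $\ref{nabel}$ shows the isomorphism $\alpha$ genuinely exists there. Those examples have $r_1=r_2$ and so do not threaten the corollary, but they show that your decisive arithmetic step cannot be closed from $j_1\not\equiv\pm j_2$ and $6\nmid m$ alone: the condition $\gcd(j_1,m)\neq\gcd(j_2,m)$ must be used more substantially, and the invariant that sees it is the order of $P\rho_j(T)$, not membership of its eigenvalue in $\{z_1,\bar z_1\}$. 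Two secondary points: the collision reduction you invoke for $m$ odd is needed only because Proposition \ref{uekur} is a rank-two statement, whereas the order argument is rank-independent and makes that detour unnecessary; and when $T$ is unipotent on one side your ``exact'' criterion is really an order comparison in disguise and should be stated as such.
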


By Theorem $\ref{geilomat}$, one has a $CM$-fiber, if the fibers of $\sC \to \sP_n$ have $n+1$
branch points with the same branch index $d$. Thus by the fact that this implies the equality of
$\Mon^0(\sV)$ and $\MT^{\der}(\sV)$ (see Theorem $\ref{monmtder}$), one concludes:

\begin{corollary}
Let the fibers of $\sC \to \sP_n$ have $n+1$ branch points with the same branch index $d$ and
$\sC \to \sP_n$ satisfy one of the following conditions:
\begin{enumerate}
\item The degree $m$ of the covers given by the fibers of $\sC \to \sP_n$ is odd.
\item $\sP_n = \sP_1$ and 6 does not divide $m$.
 \end{enumerate}
Then
$$\MT^{\der}(\sV) =  \Hg^{\der}(\sV) = \prod\limits_{r|m}\Mon^0(\sV_r).$$
\end{corollary}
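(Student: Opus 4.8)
The plan is to upgrade the inclusion already recorded in the preceding corollary to an equality, the only missing ingredient being a single $CM$ fiber together with Theorem \ref{monmtder}. Under either hypothesis (1) or (2) the preceding corollary gives $\MT^{\der}(\sV) = \Hg^{\der}(\sV) \supseteq \prod_{r|m} \Mon^0(\sV_r)$, while Theorem \ref{hmhmm} identifies the product on the right with $\Mon^0(\sV)$. Thus it will be enough to establish the reverse inclusion $\MT^{\der}(\sV) \subseteq \Mon^0(\sV)$, after which all three groups coincide and the displayed equalities follow at once.

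First I would produce a $CM$ point of $\sV_\Q$ from the branch-index hypothesis. Among the $n+3$ branch points, $n+1$ carry the common index $d$; I would choose the geometric point of $\sP_n$ at which these $n+1$ points sit at the $(n+1)$-th roots of unity, the remaining finite point sits at $0$, and $\infty$ is the last branch point. Because the family is normalised by $\sum_k \mu_k \in \N$ in Construction \ref{hiercr}, the branch index forced at $\infty$ is automatically the complementary residue, so this fiber is exactly of the shape $y^m = x^{d_0}\prod_{i=1}^{n+1}(x-\xi_{n+1}^i)^d$ treated in Theorem \ref{geilomat} (with the parameter there chosen so as to produce $n+1$ root-of-unity branch points). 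That theorem shows the fiber is dominated by a Fermat curve and hence has complex multiplication; by Remark \ref{mnn} it is harmless to pass between $\sP_n$ and $\sM_n$ here, since their generic Mumford-Tate groups agree.

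With a $CM$ fiber in hand, the second part of Theorem \ref{monmtder}, applied to the variation of $\Q$-Hodge structures $\sV$, yields $\Mon^0(\sV)_p = \MT^{\der}(\sV_p)$ for every $p \in W \setminus W'$, that is $\Mon^0(\sV) = \MT^{\der}(\sV)$ at the generic point defining the generic Mumford-Tate group. Combining this with the inclusions of the first paragraph closes the circle and gives $\MT^{\der}(\sV) = \Hg^{\der}(\sV) = \prod_{r|m}\Mon^0(\sV_r)$.

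The main obstacle is the bookkeeping in the second paragraph: one must verify that the prescribed configuration really is a fiber of $\sC \to \sP_n$ in the precise normalisation of Construction \ref{hiercr} (branch indices in $(0,m)$, summing to a multiple of $m$) and that, after relabelling the three points fixed by the embedding $\iota_{a,b,c}$ and if necessary multiplying all branch indices by a unit of $\Z/(m)$ as permitted by Remark \ref{drehen}, it matches the curve of Theorem \ref{geilomat} exactly. This step is routine but must be carried out with some care; once the $CM$ fiber is in place, the remainder is a purely formal assembly of Theorem \ref{hmhmm}, Theorem \ref{monmtder} and the preceding corollary.
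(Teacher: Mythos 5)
Your proposal is correct and takes essentially the same route as the paper: the paper also obtains a $CM$ fiber from Theorem \ref{geilomat} using the $n+1$ equal branch indices, invokes the second part of Theorem \ref{monmtder} to get $\Mon^0(\sV)=\MT^{\der}(\sV)$, and combines this with Theorem \ref{hmhmm} and the preceding corollary. Your second paragraph merely spells out the explicit root-of-unity configuration that the paper leaves implicit.
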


Since $C^{\der}_r(g)$ is an upper bound for $\Hg^{\der}(\sV_r)$, one concludes finally:

\begin{corollary}
Assume that $\sC \to \sP_n$ satisfies one of the following conditions:
\begin{enumerate}
\item The degree $m$ of the covers given by the fibers of $\sC \to \sP_n$ is odd.
\item $\sP_n = \sP_1$ and 6 does not divide $m$.
\end{enumerate}
If all $\sV_r$ except of the middle part are very general or special, one has
$$\MT^{\der}(\sV) =  \Hg^{\der}(\sV) = \Mon^0(\sV) = \prod\limits_{r|m}\Mon^0(\sV_r).$$
\end{corollary}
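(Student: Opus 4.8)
The plan is to deduce the final Corollary from Theorem \ref{hmhmm} together with the per-summand computations already carried out, reducing everything to the identity $\Hg^{\der}(\sV_r) = \Mon^0(\sV_r)$ for each divisor $r \mid m$. First I would record the two inclusions that hold without any extra hypothesis. On the one hand, Theorem \ref{monmtder} gives $\Mon^0(\sV) \subseteq \MT^{\der}(\sV)$, and since each $\sV_r$ is a polarized variation of type $(1,0),(0,1)$ the reductivity remark yields $\MT^{\der}(\sV) = \Hg^{\der}(\sV)$ (this is exactly the content of the Corollary immediately following Theorem \ref{hmhmm}). On the other hand, the decomposition $H^1(C,\Q) = \bigoplus_{r\mid m} N^1(C_r,\Q)$ of Proposition \ref{hggrcomp}, in its generic form, produces an embedding $\Hg(\sV) \hookrightarrow \prod_{r\mid m}\Hg(\sV_r)$ with surjective projections; passing to commutator subgroups and using that the derived group of a product is the product of the derived groups, I obtain $\Hg^{\der}(\sV) \subseteq \prod_{r\mid m}\Hg^{\der}(\sV_r)$.

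The heart of the argument is then the claim that, under the hypothesis that all $\sV_r$ except the middle part are very general or special, one has $\Hg^{\der}(\sV_r) = \Mon^0(\sV_r)$ for every $r$. I would treat the three types separately. If $\sV_r$ is very general, the Theorem asserting that very general summands reach the upper bound gives $\Mon^0(\sV_r) = C_r^{\der}(\psi)$; combining this with $\Mon^0(\sV_r) \subseteq \Hg^{\der}(\sV_r) \subseteq C_r^{\der}(\psi)$ (the first inclusion from Theorem \ref{monmtder} and $\MT^{\der}=\Hg^{\der}$, the second because $C_r(\psi)$ contains $\Hg(\sV_r)$) squeezes all three groups together. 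If $\sV_r$ is special, Remark \ref{nixda} shows $\Mon^0(\sV_r)$ is trivial, while $\Hg(\sV_r)$ lies in the torus $Z(C_r(\psi))$ and so $\Hg^{\der}(\sV_r)$ is trivial as well. For the middle part $\sV_{\frac{m}{2}}$ (which only arises in case (2) with $m$ even), Theorem \ref{middlep} and the corollary following it give $\Mon^0(\sV_{\frac{m}{2}}) = \Hg(\sV_{\frac{m}{2}}) = \Sp(\sV_{\frac{m}{2}},Q_{\sV_{\frac{m}{2}}})$, and since the symplectic group is semisimple it equals its own derived group, so again $\Hg^{\der}(\sV_{\frac{m}{2}}) = \Mon^0(\sV_{\frac{m}{2}})$.

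Assembling these, I would chain the inclusions
\[
\Mon^0(\sV) \subseteq \Hg^{\der}(\sV) \subseteq \prod_{r\mid m}\Hg^{\der}(\sV_r) = \prod_{r\mid m}\Mon^0(\sV_r) = \Mon^0(\sV),
\]
where the last equality is Theorem \ref{hmhmm}, valid in both case (1) and case (2). This forces every inclusion to be an equality, and together with $\MT^{\der}(\sV) = \Hg^{\der}(\sV)$ it yields the asserted chain $\MT^{\der}(\sV) = \Hg^{\der}(\sV) = \Mon^0(\sV) = \prod_{r\mid m}\Mon^0(\sV_r)$.

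The genuine obstacle is not this final assembly, which is essentially formal, but the hypothesis that each non-middle $\sV_r$ be very general or special: this is precisely what forbids \emph{exceptional} summands, for which $\Mon^0(\sV_r)$ can be the proper subgroup ${\rm Graph}(\alpha) \subsetneq C_r^{\der}(\psi)$ while $\Hg^{\der}(\sV_r)$ need not collapse onto it, breaking the squeeze in the very-general step. The two substantive inputs I am leaning on — Theorem \ref{hmhmm} itself and the fact that very general summands attain $C_r^{\der}(\psi)$ — carry all the real weight; the remaining care is only to check that the generic $\MT$/$\Hg$ decomposition behaves functorially under passage to derived groups.
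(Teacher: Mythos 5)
Your proof is correct and follows exactly the route the paper intends: the paper compresses the whole argument into the one-line remark ``Since $C^{\der}_r(\psi)$ is an upper bound for $\Hg^{\der}(\sV_r)$, one concludes finally,'' and your squeeze $\Mon^0(\sV_r)\subseteq \Hg^{\der}(\sV_r)\subseteq C_r^{\der}(\psi)=\Mon^0(\sV_r)$ for very general summands, together with the trivial/special case, the middle-part identification via Theorem $\ref{middlep}$, and the final assembly through Theorem $\ref{hmhmm}$, is precisely the intended filling-in of that remark. Your closing observation about exceptional summands being the real obstruction is also the correct reading of why the hypothesis is needed.
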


Recall that we search for families $\sC \to \sP_n$
with dense set of complex multiplication fibers. One obtains dense set of complex multiplication fibers, if one has an open (multivalued)
period map
$$p: \sM_n(\C) \to \MT^{\der}(\sV)(\R)/K$$
given by the $VHS$. Hence for our applications we need to know $\MT^{\der}(\sV)$ and the
dimension of $\MT^{\der}(\sV)(\R)/K$, but not $\MT(\sV)$ itself. Let us first prove Theorem
$\ref{hmhmm}$. After this proof we will see that the (multivalued) period map of a family
$\sC \to \sM_1$ onto $\MT^{\der}(\sV)(\R)/K$ is open, if and only if one has a $(1,1)-VHS$. 

For the proof of Theorem $\ref{hmhmm}$ we use the same methods as before. One has that
$\Mon^{\ad}(\sV)$ is the direct product of the kernel of the natural projection
$$p_1: \Mon^{\ad}(\sV) \to \Mon^{\ad}(\sV_{r_1})$$
and an adjoint semisimple group $G_{r_1}$ isomorphic to $\Mon^{\ad}(\sV_{r_1})$. Moreover one has
that
$$\Mon^{\ad}(\sV) = \prod\limits_{r|m}\Mon^{\ad}(\sV_r),$$
if and only if each $G_{r_1}$ is contained in the kernels of the natural projections onto all
$\Mon^{\ad}(\sV_{r_2})$ with $r_1 \neq r_2$.

We give a proof of Theorem $\ref{hmhmm}$ by contradiction. Thus we assume that
$$\Mon_{\R}^0(\sV) \neq \prod\limits_{r|m} \Mon_{\R}^0(\sV_r). \ \ \mbox{This implies} \ \
\Mon_{\R}^{\ad}(\sV) \neq \prod\limits_{r|m} \Mon_{\R}^{\ad}(\sV_r).$$

Hence some $G_{r_1}$ is not contained in the kernel of the projection onto
$\Mon^{\ad}(\sV_{r_2})$ for some $r_2 \neq r_1$. Since all simple direct factors of $G_{r_1}$ resp.,
$G_{r_2}$ project isomorphically onto some $\Mon^{\ad}(\sL_{j_1})$ resp.,
$\Mon^{\ad}(\sL_{j_2})$, one gets an isomorphism
$$\alpha : \Mon^{\ad}(\sL_{j_1}) \to \Mon^{\ad}(\sL_{j_2}),$$
which respects the respective projective monodromy representations. But by the
following proposition, the isomorphism $\alpha$ can not exist, if the assumptions of Theorem
$\ref{hmhmm}$ are satisfied. This yields the proof of Theorem $\ref{hmhmm}$. 

\begin{proposition} \label{tyui}
Assume that $r_1 := \gcd(m,{j_1}) \neq r_2 := \gcd(m,{j_2})$. Moreover assume that one of the
following cases holds true:
\begin{enumerate}
\item $m$ is odd.
\item $\sP_n = \sP_1$ and 6 does not divide $m$.
\end{enumerate}
Then an isomorphism
$$\alpha : \Mon^{\ad}(\sL_{j_1}) \to \Mon^{\ad}(\sL_{j_2}),$$
which respects the respective projective monodromy representations, can not exist.
\end{proposition}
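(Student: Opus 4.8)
The plan is to assume such an $\alpha$ exists and to extract from it an equality of element orders that the branch data cannot satisfy. Since $\alpha$ respects the projective monodromy representations, for every Dehn twist $T \in \pi_1(\sP_n)$ it sends the class $[\rho_{j_1}(T)]$ to $[\rho_{j_2}(T)]$, and being a group isomorphism it preserves their orders. By Proposition $\ref{repro}$ the matrix $\rho_j(T_{k,\ell})$ is a pseudo-reflection whose only non-trivial eigenvalue is $\xi^{j(d_k+d_\ell)}$, where $\xi = e^{2\pi i/m}$ and the $d_k$ are the branch indices of a fixed fiber; hence the order of $[\rho_j(T_{k,\ell})]$ in $\Mon^{\ad}(\sL_j)$ equals the order of this root of unity, which one computes to be $\tfrac{N}{\gcd(N,\,d_k+d_\ell)}$ with $N := m/\gcd(m,j)$ — a quantity depending on $j$ only through $\gcd(m,j)$. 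Writing $N_i := m/r_i$, order preservation forces $\tfrac{N_1}{\gcd(N_1,b)} = \tfrac{N_2}{\gcd(N_2,b)}$ for every pairwise sum $b = d_k+d_\ell$.

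First I would run the following least-common-multiple argument, which disposes of Case $1$ for arbitrary $\sP_n$. Taking the lcm over all pairs gives $\lcm_b \tfrac{N_i}{\gcd(N_i,b)} = \tfrac{N_i}{\gcd(N_i,\delta)}$, where $\delta := \gcd_{k,\ell}(d_k+d_\ell)$. Since $\delta$ divides $(d_k+d_p)+(d_k+d_q)-(d_p+d_q)=2d_k$ for all $k$, it divides $2\gcd_k d_k$; and because the cover is irreducible one has $\gcd(\gcd_k d_k,\, m)=1$, so $\gcd(N_i,\delta)$ divides $2$. If $m$ is odd then $N_i$ is odd and this factor is $1$, whence $\lcm_b(\cdots)=N_i$ and the order equality yields $N_1=N_2$, i.e. $r_1=r_2$, contradicting the hypothesis. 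This settles Case $1$ without any further geometry.

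For Case $2$ ($\sP_n=\sP_1$, $6\nmid m$, $m$ possibly even) the same computation gives $N_1/g_1 = N_2/g_2$ with $g_i=\gcd(N_i,\delta)\mid 2$; if $g_1=g_2$ we again conclude $r_1=r_2$. The only residual possibility is $\{N_1,N_2\}=\{N,2N\}$ with $N$ odd, which forces all $d_k$ odd; in this configuration Proposition $\ref{uekur}$ need no longer supply a distinguishing semisimple twist (small examples show the eigenvalues of every semisimple twist may agree up to complex conjugation), so the order/eigenvalue methods are exhausted. Here I would switch to the finer conjugacy invariant used in $\ref{nabel}$: over the copy of $\sM_1 \hookrightarrow \sP_1$ the monodromy of each $\sL_{j_i}$ is generated by $\rho_{j_i}(T_{1,2})$ and $\rho_{j_i}(T_{2,3})$, and an $\alpha$ would make these two irreducible $\PGL_2$-representations conjugate. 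Conjugacy of a non-commuting pair is governed not only by the eigenvalues of $T_{1,2}$ and $T_{2,3}$ but also by that of the product $T_{1,2}T_{2,3}$, equivalently by the product of off-diagonal entries $(1-\xi^{jd_1})\,\xi^{jd_2}(1-\xi^{jd_3})$ appearing in Proposition $\ref{repro}$. I would compute this product for $j_1$ and $j_2$ and show that the sign discrepancy forced by the factor $\xi^{m/2}=-1$ cannot be absorbed by a diagonal conjugation, the exclusion $3\nmid m$ (together with $2\mid m$, i.e. $6\nmid m$) being exactly what prevents the low-order root-of-unity coincidences that would otherwise let the two products match.

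The main obstacle is this residual even case. When $m$ is odd the element-order invariant alone recovers $m/r_i$ and the contradiction is immediate, but for even $m$ the individual twist eigenvalues become conjugation-ambiguous and carry no information, so the entire distinction must be wrung out of the character-variety (product-trace) invariant of a non-commuting pair of twists; controlling the interaction of the primes $2$ and $3$ in that computation is the delicate point, and it is the reason the $\sP_1$ statement must avoid $6\mid m$.
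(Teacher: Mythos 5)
Your Case (1) argument is correct and is essentially the paper's own argument in different packaging: the paper (via Lemma $\ref{buaha}$) produces a single semisimple Dehn twist whose projective order fails to divide $\gcd(\tfrac{m}{r_1},\tfrac{m}{r_2})$, and the engine is exactly your identity $2d_k=(d_k+d_p)+(d_k+d_q)-(d_p+d_q)$ together with $\gcd(\gcd_k d_k,\,m)=1$ from irreducibility; taking the lcm over all pairs, as you do, is an equivalent way to extract the same conclusion $N_1=N_2$ when $m$ is odd. One small point to tidy up: your formula ``order $=\tfrac{N}{\gcd(N,b)}$'' is the order of the eigenvalue, not of the projective class, and the two differ precisely when $N\mid b$, where $\rho_j(T)$ is unipotent (infinite order) or trivial rather than of order $1$. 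This does not hurt you --- any type mismatch between $[\rho_{j_1}(T)]$ and $[\rho_{j_2}(T)]$ (unipotent versus nontrivial semisimple, or trivial versus nontrivial) contradicts order preservation even faster --- but the lcm should formally be taken only over pairs where both twists are semisimple, with the mismatched pairs handled separately.

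The genuine gap is in Case (2). You correctly reduce to the residual configuration $\{N_1,N_2\}=\{N,2N\}$ with $N$ odd and all $d_k$ odd, where every semisimple Dehn twist has the same projective order on both eigenspaces, but at that point you only announce a plan (``I would compute this product \dots and show that the sign discrepancy \dots cannot be absorbed'') without carrying it out, and it is not clear the computation closes: the quantity $(1-\xi^{jd_1})\xi^{jd_2}(1-\xi^{jd_3})$ is not by itself a conjugacy invariant of the pair (in $\ref{nabel}$ it is a \emph{ratio} of such products that decides whether a diagonal conjugation exists, and there the ratio comes out to be $1$, i.e.\ the representations \emph{are} conjugate), and the role you assign to $\xi^{m/2}=-1$ and to $3\nmid m$ is asserted rather than derived. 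The paper closes this case by a much shorter structural argument that your proposal is missing: since $n=1$, the groups $\Mon^{\ad}(\sL_{j_i})$ are forms of $\PGL_2$ with one-dimensional maximal tori isomorphic to $S^1$, whose character group is $\Z$; hence $\alpha$ restricted to a maximal torus containing the image of a semisimple Dehn twist can only act as $t\mapsto t^{\pm1}$. Comparing the nontrivial eigenvalues, which in the residual case are related by squaring, forces $\xi^2\in\{\xi,\xi^{-1}\}$ for a nontrivial root of unity $\xi$, so $\xi$ is a primitive cube root of unity, $3\mid m$, and together with $2r_1=r_2$ this gives $6\mid m$ --- excluded by hypothesis. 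You need either this torus argument or a fully executed trace computation to complete Case (2); as written, the residual case is open.
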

\begin{proof}
Assume without loss of generality that $r_1 < r_2$. This implies $\frac{m}{r _1} > \frac{m}{r_2}$.
There are two cases: Either $2r _1 \neq r_2$ or $2r _1 = r_2$.

If $m$ is odd, one has $\frac{r_1}{2} \neq g := \gcd(\frac{m}{r_1},\frac{m}{r_2})$. Hence
by Lemma $\ref{buaha}$, one finds a Dehn twist $T$ such that $P\rho_{j_1}(T)$ is semisimple and
the order of $P\rho_{j_1}(T)$ does not divide $g$. One has that $P\rho_{j_2}(T)$ is either
unipotent or semisimple. If $P\rho_{j_2}(T)$ is semisimple, its order divides $\frac{m}{r_2}$. But
the order of $P\rho_{j_1}(T)$ does not divide $\frac{m}{r_2}$. If $P\rho_{j_2}(T)$ is unipotent,
its order infinite. But $P\rho_{j_1}(T)$ has finite order.
However $P\rho_{j_1}(T)$ and $P\rho_{j_2}(T)$ do not have the same order. Hence such an
isomorphism $\alpha: \Mon^{\ad}(\sL_{j_1}) \to \Mon^{\ad}(\sL_{j_2})$, which respects the
respective projective monodromy representations, can not exist in this case.

Now assume that we are in the case of a family $\sC \to \sP_1$, where 6 does not divide $m$. There
is a Dehn twist $T$ such that $P\rho_{j_1}(T)$ is semisimple. If $P\rho_{j_1}(T)$ and
$P\rho_{j_2}(T)$ do not have the same order, one can argue as above. Otherwise all semisimple Dehn
twists have the same order. Hence one must have $2r_1 = r_2$. The
nontrivial eigenvalue of $\rho_{j_2}(T)$ is given by the square of the nontrivial
eigenvalue $\xi$ of $\rho_{j_1}(T)$. Note that the corresponding maximal tori are isomorphic to
$S^1$, where $S^1_{\C} \cong \G_{m,\C}$. Thus its character group is isomorphic to $\Z$. Hence the
induced map of the corresponding maximal tori can be an isomorphism, only if one has $\xi^2 =
\xi^{-1} = \bar \xi$. In this case $\xi$ would be a primitive cubic root of unity, which
implies that 3 divides $m$. Since we have that $2r _1 = r_2$, 6 would divide $m$. But by the
assumptions, this is not possible.
\end{proof}

\begin{remark}
If $2r _1 = r_2$, there are many additional cases, in which $\alpha$ can not exist. These obvious
cases are given, if for a Dehn twist $T$ the order of the semisimple matrix $\rho_{r_1}(T)$ does
not divide $\frac{m}{2r_1}$, if $\rho_{r_1}(T)$ is semisimple and $\rho_{j_2}(T)$ is unipotent or
if $\sL_{j_1}$ and $\sL_{j_1}$ are of type $(a_1,b_1)$ and $(a_2, b_2)$ such that
$$(a_1,b_1) \neq (a_2, b_2) \  \ \mbox{and} \  \ (a_1,b_1) \neq (b_2, a_2).$$
But in the case of the family $\sC \to \sP_1$ of degree 6 covers given by the local monodromy data
$$d_1 = d_2 = 1, \  \ d_3 = d_4 = 5$$
nothing of them holds true with respect to $\sL_1$ and $\sL_2$.
\end{remark}

Now let us finish this chapter and state the final result about the period map:

\begin{Theorem}
In the case of a family $\sC \to \sM_1$ the period map
$$p: \sM_1(\C) \to \MT^{\der}(\sV)(\R)/K$$
is open, if and only if one has a pure $(1,1)-VHS$.
\end{Theorem}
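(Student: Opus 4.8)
The plan is to reduce the statement to a comparison of dimensions between $\sM_1$ and the Hermitian symmetric domain $D := \MT^{\der}(\sV)(\R)/K$, and then to read off $\dim_{\C} D$ from the eigenspace types. Since $\dim_{\C}\sM_1 = 1$ and $p$ is holomorphic, its image is an analytic subset of $D$ of dimension at most $1$; hence $p$ can be open only if $\dim_{\C} D \le 1$. Conversely, if $\dim_{\C} D = 1$, then $p$ is a holomorphic map between two $1$-dimensional complex manifolds and is open as soon as it is nonconstant, and in fact has $0$-dimensional fibres by $\ref{japp}$ (each fibre of $\sC \to \sM_1$ with $g \ge 2$ occurs only finitely often), exactly as in the proof of Theorem $\ref{jnvz}$ via \cite{Nara}, Chapter {\bf VII}, Proposition $4$. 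Discarding the degenerate situation $\dim_{\C} D = 0$ (all eigenspaces special, $D$ a point, $p$ constant), openness of $p$ is therefore equivalent to $\dim_{\C} D = 1$.

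Next I would compute $\dim_{\C} D$. For a family with $4$ branch points, Corollary $\ref{rangi}$ gives $\dim_{\C} H^1_j(C,\C) = |S_j| - 2 \le 2$, so every eigenspace has type $(1,1)$, $(2,0)$, $(0,2)$, $(1,0)$, $(0,1)$ or is trivial. In particular the only eigenspaces of type $(a,b)$ with $a,b \ge 1$ are those of type $(1,1)$, and by Remark $\ref{nixda}$ the special ones contribute only compact unitary factors, hence lie in $K$ and do not affect $\dim_{\C} D$. By Theorem $\ref{dirprod}$ the upper bound $C^{\der}(\psi)$ is a product of copies of $\SU(1,1)$, one for each conjugate pair $\{j,m-j\}$ with $\sL_j$ of type $(1,1)$ (or a single $\Sp_{\Q}(2)\cong \SU(1,1)$ coming from a middle part of type $(1,1)$), so its domain has complex dimension equal to that number $N$ of pairs. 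Since $\Mon^0(\sV) \subseteq \MT^{\der}(\sV) = \Hg^{\der}(\sV) \subseteq C^{\der}(\psi)$, with $\Mon^0(\sL_j) = \SU(1,1)$ for each type-$(1,1)$ eigenspace by Theorem $\ref{nundenn}$ and all projections of $\Hg^{\der}(\sV)$ onto the factors surjective by Proposition $\ref{hggrcomp}$, the integer $N$ governs $\dim_{\C} D$, subject to the subtlety below.

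The implication ``pure $(1,1)$-VHS $\Rightarrow$ $p$ open'' is then immediate: purity forces $N = 1$, whence $C^{\der}(\psi) \cong \SU(1,1)$, $\MT^{\der}(\sV) \cong \SU(1,1)$ and $\dim_{\C} D = 1$; openness follows from the first paragraph, or directly from the fractional-period-map argument of Theorem $\ref{jnvz}$ specialised to $n=1$, where $\bP\bigl(H^1_j(\sC_{q_0},\C)\bigr)\cong \bP^1$ already has dimension $1 = \dim\sM_1$. For the converse I argue contrapositively: if the VHS is not pure $(1,1)$ then $N = 0$ or $N \ge 2$. If $N = 0$ all eigenspaces are special, $D$ is a point and $p$ is the constant (degenerate) map, which is not an open period map in the intended sense. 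If $N \ge 2$ it remains to prove $\dim_{\C} D \ge 2$, so that the image of the $1$-dimensional $\sM_1$ cannot be open in $D$.

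The main obstacle is exactly this last point. A priori $\MT^{\der}(\sV)$ could sit inside $C^{\der}(\psi)\cong \SU(1,1)^N$ as a proper, for instance diagonal, subgroup, lowering $\dim_{\C} D$ below $N$; by the Goursat-type description of Remark $\ref{tutty}$ and Lemma $\ref{5.15}$ such a collapse forces an isomorphism $\alpha : \Mon^{\ad}(\sL_{j_1}) \to \Mon^{\ad}(\sL_{j_2})$ respecting the projective monodromy representations, i.e. precisely the exceptional (complex or separated) configurations classified by Lemma $\ref{oui}$, Theorem $\ref{piu}$ and point $\ref{nabel}$. The delicate step is to show that in these configurations the collapse affects only the \emph{monodromy} group $\Mon^0(\sV)$ and not the generic Hodge group $\MT^{\der}(\sV) = \Hg^{\der}(\sV)$: the identification produced in $\ref{nabel}$ is merely an isomorphism of local systems and need not respect the variation of Hodge filtrations, so that generically the two linked pieces fail to be isomorphic as sub-Hodge structures and $\MT^{\der}(\sV)$ stays the full product on the corresponding factors, keeping $\dim_{\C} D \ge 2$. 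Concretely I would combine Theorem $\ref{monmtder}$ (which would force $\Mon^0(\sV) = \MT^{\der}(\sV)$, hence a genuine collapse, only in the presence of a $CM$ point) with the classification of exceptional families to check that no $4$-point family outside the pure $(1,1)$ case yields $\dim_{\C} D = 1$; this reconciliation of the monodromy collapse with the count $N$ of independent type-$(1,1)$ factors is where I expect the real work to lie.
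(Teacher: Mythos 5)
Your reduction of openness to the dimension count $\dim_{\C}\bigl(\MT^{\der}(\sV)(\R)/K\bigr)=1$, the identification of the type-$(1,1)$ eigenspace pairs as the only sources of non-compact factors of $C^{\der}(\psi)$, and the ``if'' direction via the fractional period map of Theorem $\ref{jnvz}$ all agree with the paper. The gap is in the converse, at precisely the point you flag as ``where the real work lies'': you do not close it, and the route you sketch would not close it. You propose to show that in the exceptional (complex or separated) configurations the collapse affects only $\Mon^0(\sV_1)$ and not $\Hg^{\der}(\sV_1)$, so that $\MT^{\der}(\sV)$ stays the full product $\SU(1,1)^N$. But $\ref{nabel}$ produces an explicit diagonal conjugation under which the monodromy representations of $\sL_j$ and $\sL_{jv}$ coincide, i.e.\ these eigenspaces are genuinely isomorphic as local systems; and by Theorem $\ref{monmtder}$ the group $\Mon^0(\sV)$ is a normal subgroup of $\MT^{\der}(\sV)$ which \emph{equals} it whenever a $CM$ fiber exists, so for such families the collapse does propagate to the Hodge group. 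The paper itself leaves the structure of $\Mon^0(\sV_1)$ in the exceptional cases partly undetermined (see $\ref{punkt}$ and the remarks following Lemma $\ref{oui}$), so a converse built on resolving that question rests on something neither you nor the paper establishes.

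The paper's actual argument avoids the collapse inside $\sV_1$ altogether. Lemma $\ref{ropo}$ shows that openness forces all direct summands $\sV_r$ except one to be special, because two non-special summands $\sV_{r_1},\sV_{r_2}$ with $r_1\neq r_2$ can never be linked by an isomorphism respecting the projective monodromy representations (the order and unipotent-versus-semisimple arguments of Proposition $\ref{tyui}$, plus the fact that the middle part sends all Dehn twists to unipotent matrices), so their contributions to $\dim\bigl(\MT^{\der}(\sV)(\R)/K\bigr)$ add. Hence the two non-unitary eigenspaces must lie in one exceptional $\sV_1$, and exceptionality then manufactures a second non-special summand: in the separated case all $d_k$ are odd and $m$ is even (see $\ref{punkt}$), so $\Mon^0_{\R}(\sV_{\frac{m}{2}})=\Sp_{\R}(2)$; in the complex case Lemma $\ref{oui}$ forces $t_1\nmid d_k$ for every $k$, so $\sV_{\frac{m}{t_1}}$ is a non-special summand. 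Either way Lemma $\ref{ropo}$ is contradicted. Without this step, or a genuine proof that $\Hg^{\der}(\sV_1)$ does not collapse when $N\geq 2$, your converse is incomplete.
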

\begin{proof}
As we have seen in the proof Theorem $\ref{jnvz}$, the period map is open, if one has a pure
$(1,1)-VHS$.

For the other direction assume that the period map is open and there are up to complex
conjugation at least two different eigenspaces, which are not unitary. 

\begin{lemma} \label{ropo}
Assume that we have a family $\sC_{\sM_1} \to \sM_1$. Only if all $\sV_r$ except for exactly one
$\sV_{r_0}$ are special, the period map
$$p: \sM_1(\C) \to \MT^{\der}(\sV)(\R)/K$$
can be open.
\end{lemma}
\begin{proof}
Assume that $r_1$ and $r_2$ divide $m$ such that $r_1 \neq r_2$ and $\sV_{r_1}$ and $\sV_{r_2}$
are not special. If $2r_1 \neq r_2$ or if there is a Dehn twist, whose finite order
with respect to $\sV_{r_1}$ does not divide $\frac{m}{r_2} = \frac{m}{2r_1}$, the same arguments
as in the proof of Proposition $\ref{tyui}$ imply that
$$\dim(\MT^{\der}(\sV)(\R)/K)>1= \dim(\sM_1).$$
Hencefore the period map can not be open.

Otherwise assume without loss of generality that $r_1 = 1$ and all semisimple Dehn twists
have an order dividing $\frac{m}{2}$. This implies that all $d_k$ are odd and the degree $m$ is
even. Hence $\Mon^0(\sV_{\frac{m}{2}})$ is isomorphic to $\Sp_{\Q}(2)$, where its monodromy
representation sends all Dehn twists to unipotent matrices. Thus $\dim(\MT^{\der}(\sV)(\R)/K) >1$.
\end{proof}

By Lemma $\ref{ropo}$, these two eigenspaces, which are not unitary, must be contained in the same
$\sV_{r_0}$, which must be exceptional. Hence assume
without loss of generality that $\sV_{r_0} = \sV_1$.

In the separated case, the fact that all $d_k$ are odd (compare to $\ref{punkt}$) implies that
$\Mon^0_{\R}(\sV_{\frac{m}{2}}) = \Sp_{\R}(2)$. Hence by Lemma $\ref{ropo}$, we have a
contradiction.

In the complex case Lemma $\ref{oui}$ implies without loss of generality that 
$$t_1|d_1+d_2, \  \ t_1|d_2+d_3, \  \ t_1|d_1+d_4, \  \ t_1|d_3+d_4.$$
This implies that $t_1$ divides each $d_k$ or that $t_1$ does not divide any $d_k$. Thus $t_1$
does not divide any $d_k$. Hence $\sC_{\frac{m}{t_1}}$ is a family
of covers with 4 branch points, where $\rho_{\frac{m}{t_1}}(T_{1,2})$ and
$\rho_{\frac{m}{t_1}}(T_{2,3})$ are unitary. Hence $\sV_{\frac{m}{t_1}}$ has an infinite monodromy
group resp., it is not special. Thus by Lemma $\ref{ropo}$, we have a contradiction.
\end{proof}

\chapter{Examples of families with dense sets of complex multiplication fibers}
\section{The necessary condition $SINT$} 
By Theorem $\ref{jnvz}$, one has a sufficient criterion for a dense set of $CM$ fibers of a family
$\sC_{\sM_n} \to \sM_n$. This criterion is satisfied, if $\sC$ has a pure $(1,n)-VHS$ (i.e. its
$VHS$ contains one eigenspace of type $(1,n)$, a complex
conjugate eigenspace of type $(n,1)$ and otherwise only eigenspaces of the type $(a,0)$ and
$(0,b)$ for some $a,b \in \N_0$).

\begin{remark}
Assume that the family $\sC \to \sP_n$ of cyclic covers of degree $m$ has a pure $(1,n)$-VHS
and that $\sL_{j_0}$ is the eigenspace of type $(1,n)$.
Let $j_0\notin (\Z/(m))^*$. Then we have $ 1 < r_0 := gcd(j_0, m)$. By Section $4.2$,
the family $\sC_{r_0}$ has a pure $(1,n)$-VHS, too.
\end{remark}

\begin{definition}
A pure $(1,n)-VHS$ is primitive, if $j_0\in (\Z/(m))^*$.\index{pure $(1,n)-VHS$!primitive} Otherwise
it is a derived pure $(1,n)-VHS$ \index{pure $(1,n)-VHS$!derived} with the associated primitive
pure $(1,n)-VHS$ induced by $\sC_{r_0}$, where $\sC_{r_0}$ is given by the preceding remark.
\end{definition}

Hence first we search for families with a primitive pure $(1,n)-VHS$. Later we will look for
families with a derived pure $(1,n)-VHS$. It is helpful to have a necessary condition to find the
families with a primitive pure $(1,n)-VHS$. In \cite{DM} P. Deligne and G. D. Mostow have
formulated the following integral condition $INT$:

\begin{definition}
A local system on $\bP^1 \setminus S$ of monodromy $(\alpha_s)_{s \in S}$ with
$\alpha_s = exp(2\pi i \mu_s)$ and $\mu_s \in \Q$ for all $s \in S$ satisfies the condition $INT$,
if:
\begin{enumerate}
\item $0 < \mu_s < 1$ for all $s \in S$.
\item We have for all $s, t \in S$: $(1-\mu_s -\mu_t)^{-1}$ is an integer, if $s\neq t$ and
$\mu_s + \mu_t <1$.
\item $\sum \mu_s = 2$.
\end{enumerate}
\end{definition}

One can identify the local monodromy data, which yield the family $\sC \to \sP_n$ by Construction
$\ref{hiercr}$, with the local monodromy data of the eigenspace $\LL_1$ of some fiber $\sC_q$ for
an arbitrary $q \in \sP_n$. Hence one can formulate the condition $INT$ for the local monodromy data
of the family. For the latter data we give a corresponding stronger integral condition $SINT$:

\begin{definition} \index{$SINT$}
A family $\sC \to \sP_n$ of cyclic covers onto $\bP^1$ given by the local monodromy data
given by $\mu_k \in \Q$ around $s_k \in N$ satisfies $SINT$, if we have:
\begin{enumerate}
\item $\mu_{k_1} + \mu_{k_2} = 1$ or $(1-\mu_{k_1} -\mu_{k_2})^{-1} \in \Z$ for all
$s_{k_1}, s_{k_2} \in N$ with $s_{k_1} \neq s_{k_2}$.
\item $\sum\limits \mu_s = 2$.
\end{enumerate} 
\end{definition}

\begin{remark}
The reader checks easily that for a family $\sC \to \sP_1$ the conditions $INT$ and $SINT$ are
equivalent. Moreover by the list on \cite{DM}, page 86, each family $\sC \to \sP_n$ with
$n\geq 2$, which satisfies $INT$, satisfies $SINT$, too.

At the present the author can not explain this fact. We use $SINT$ instead of $INT$, since this
yields a stronger and hencefore a more helpful condition.
\end{remark}

By the following theorem, we have our helpful necessary condition for families $\sC$, which
have a primitive pure $(1,n)-VHS$:

\begin{Theorem} \label{sint}
If the family $\sC \to \sP_n$ has a primitive pure $(1,n)-VHS$, its local monodromy data can
be given rational numbers satisfying $SINT$.
\end{Theorem}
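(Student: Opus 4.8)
The plan is to first normalise the family so that the distinguished eigenspace of type $(1,n)$ becomes $\sL_1$. Since the $VHS$ is \emph{primitive}, the index $j_0$ of the type-$(1,n)$ eigenspace lies in $(\Z/(m))^*$, so by Remark \ref{drehen} the cover $y^m = \prod_k (x-a_k)^{d_k}$ is equivalent to the cover with branch indices $[j_0 d_k]_m$. Replacing the family by this equivalent one multiplies all Galois characters by $j_0$; hence the eigenspace of type $(1,n)$ becomes $\sL_1$ and the new local monodromy data are $\mu_k' := [j_0\mu_k]_1$, still with $0<\mu_k'<1$. Because $SINT$ is a condition on the local monodromy data of the family, which we have identified with those of $\sL_1$, it suffices to verify $SINT$ for the $\mu_k'$; so from now on I would assume $j_0=1$ and write $\mu_k$ for these data, with branch indices $d_k = m\mu_k$.

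Condition (2) is then immediate. Since $\gcd(1,m)=1$ and $0<d_k<m$, every branch point lies in $S_1$, and Proposition \ref{1.27} gives $h^{1,0}_1(C) = \sum_{k}[\mu_k]_1 - 1 = \sum_k \mu_k - 1$. As $\sL_1$ has type $(1,n)$ this equals $1$, whence $\sum_k \mu_k = 2$.

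For condition (1) I would translate purity into a statement about fractional parts. By Proposition \ref{hodgi}, Proposition \ref{1.27} and Corollary \ref{rangi}, the eigenspace $\sL_j$ has type $(h^{1,0}_j,h^{0,1}_j)$ with $h^{1,0}_j = f(j)-1$ and $h^{0,1}_j = |S_j|-1-f(j)$, where $f(j):=\sum_{k}[j\mu_k]_1$ is an integer, since $\sum_k j\mu_k = 2j\in\Z$. Thus $\sL_j$ is unitary, i.e.\ of type $(a,0)$ or $(0,b)$, precisely when $f(j)\in\{1,\,|S_j|-1\}$. Purity says this holds for every $j\neq 1,m-1$; for $j$ coprime to $m$ (where $|S_j|=n+3$) it reads $\sum_k (jd_k \bmod m)\in\{m,(n+2)m\}$, and the non-coprime cases are covered by applying the same reformulation to the quotient families $\sC_r$ of Section 4.2.

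Finally I would deduce the pairwise integrality. Fix two points with branch indices $p_1=d_{k_1}$, $p_2=d_{k_2}$ and set $s=p_1+p_2$; I must show $s=m$ or $(m-s)\mid m$, the case $s>m$ being symmetric under the conjugation $j\mapsto m-j$ (which replaces $\mu_k$ by $1-\mu_k$). Arguing by contradiction, suppose $s\ne m$ and $N:=m-s$ does not divide $m$, so $\gcd(N,m)<N$. The carry identity $[jp_1/m]_1+[jp_2/m]_1 = [-jN/m]_1 + \varepsilon_j$ with $\varepsilon_j\in\{0,1\}$ exhibits exactly how the chosen pair contributes to $f(j)$: for an index $j$ coprime to $m$ with $j\neq 1,m-1$ and $jN\equiv \gcd(N,m)\pmod m$, the pair contributes $(m-\gcd(N,m))/m+\varepsilon_j$, which together with the remaining points' fractional parts should force $1<f(j)<n+2$, i.e.\ an eigenspace $\sL_j$ with both Hodge numbers positive, contradicting purity. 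I expect this last step, namely the explicit selection of the witnessing index $j$ (including the existence of a representative coprime to $m$) and the bookkeeping of the carries $\varepsilon_j$ together with the contributions of the other $n+1$ points, to be the main obstacle; it is the genuinely number-theoretic heart of the theorem and precisely where the Deligne--Mostow integrality analysis of \cite{DM} is brought to bear.
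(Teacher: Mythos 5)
Your normalisation to $j_0=1$ via Remark \ref{drehen} and your derivation of condition (2) of $SINT$ from Proposition \ref{1.27} (type $(1,n)$ forces $\sum_k\mu_k-1=1$) are both correct and agree with what the paper does implicitly. The problem is that everything after that is a plan rather than a proof: the pairwise integrality $(1-\mu_{k_1}-\mu_{k_2})^{-1}\in\Z$ is exactly the content of the theorem, and your argument for it stops at ``I expect this last step\dots to be the main obstacle.'' The selection of a witnessing index $j$ with $jN\equiv\gcd(N,m)\pmod m$ that is coprime to $m$, distinct from $1$ and $m-1$, and for which the carries $\varepsilon_j$ together with the contributions of the other $n+1$ points force $1<f(j)<n+2$, is precisely the part that is never carried out, so the proposal does not establish condition (1). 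It is also far from clear that this purely combinatorial route can be made to work at all: nothing in your setup uses more than the Hodge numbers of the eigenspaces, whereas the integrality in the paper is extracted from an analytic fact about the period map.

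The paper's actual proof is structured quite differently. It first reduces to families over $\sP_1$ by colliding pairs of branch points along a stable partition (Lemma \ref{lbc}, with the single exceptional configuration handled in $\ref{excep}$), checking that the collided family still has a primitive pure $(1,1)$-$VHS$. For four branch points it then shows, via a Torelli-type argument (Lemma \ref{jaaa} and Proposition \ref{uebers}), that the fractional period map attached to $\sL_1$ is injective near the boundary point where $a_{k_1}$ and $a_{k_2}$ collide; by the Deligne--Mostow local analysis (\cite{DM}, 9.6) this map is, up to a biholomorphism, $x\mapsto x^{1-\mu_{k_1}-\mu_{k_2}}$ on a punctured neighbourhood of $0$, and injectivity of such a map forces $(1-\mu_{k_1}-\mu_{k_2})^{-1}\in\Z$. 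The cases where branch indices coincide (so that Proposition \ref{uebers} does not directly apply) are disposed of by separate elementary enumerations. In short, the genuinely number-theoretic step you identified as the heart of the matter is, in the paper, replaced by a geometric injectivity statement; without either that input or a completed combinatorial argument, your proposal has a genuine gap at the decisive point.
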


For the proof of Theorem $\ref{sint}$ we first reduce the situation to the case of a family
$\sC \to \sP_1$ of covers with only 4 branch points. That means we will consider a pair of branch
points of a fiber of $\sC \to \sP_n$, where $\sC$ has a primitive pure $(1,n)-VHS$, as a pair of
branch points with the same branch indeces of a fiber of a family $\sC(P) \to \sP_1$, which has a
primitive pure $(1,1)-VHS$. The following lemma will make it possible in almost all cases:

\begin{lemma} \label{lbc}
Assume that $\sC$ is given by local monodromy data on at least 5 points, where one does not have
$\mu_3 = \ldots = \mu_{n+3} = \frac{1}{2}$. Then there exists a
stable partition $P$ with $\{a_1\}, \{a_2\} \in P$ such that $|P| = 4$.\footnote{Since the
assumptions of this lemma are sufficient, we do not restrict to the interesting case of a family
with a primitive pure $(1,n)-VHS$.}
\end{lemma}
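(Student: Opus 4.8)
The plan is to reduce the assertion to a purely combinatorial statement about the fractional parts of the $\mu_k$ and then dispose of it by a short case distinction. Writing $T := \{a_3, \dots, a_{n+3}\}$, a partition of the shape $P = \{\{a_1\}, \{a_2\}, B_1, B_2\}$ with $B_1 \sqcup B_2 = T$ and $B_1, B_2$ nonempty has exactly four blocks; since $0 < \mu_1, \mu_2 < 1$, the two singleton blocks automatically have non-integral mass, so $P$ is stable precisely when both $\sum_{a_k \in B_1}\mu_k$ and $\sum_{a_k \in B_2}\mu_k$ fail to be integers. Thus everything comes down to splitting $T$ into two nonempty parts whose $\mu$-masses are both non-integral. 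Because we assume at least five points, $|T| = n+1 \ge 3$, which is exactly what will guarantee that a proper nonempty two-element subset of $T$ exists in the degenerate case below. I would also record at the outset that, all subset masses being strictly positive, ``not a natural number'' coincides with ``non-integral,'' i.e. with having nonzero fractional part $[\,\cdot\,]_1$, so that the definition of a stable partition is literally met.

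First I would set $s := \mu_3 + \dots + \mu_{n+3}$ and test the singletons: for $k \ge 3$ the choice $B_1 = \{a_k\}$, $B_2 = T \setminus \{a_k\}$ makes $B_1$ automatically non-integral, while $B_2$ has mass $s - \mu_k$, whose fractional part $[\,s-\mu_k\,]_1$ vanishes precisely when $[s]_1 = [\mu_k]_1 = \mu_k$, that is when $\mu_k = [s]_1$. Hence if some $k \ge 3$ satisfies $\mu_k \ne [s]_1$, the singleton $\{a_k\}$ already yields the desired stable partition of size four, and we are done in this case without even using the hypothesis on $\tfrac12$.

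The remaining case is when every $\mu_k$ with $k \ge 3$ equals the common value $c := [s]_1 \in (0,1)$; this is exactly where the hypothesis enters, since the excluded configuration $\mu_3 = \dots = \mu_{n+3} = \tfrac12$ is precisely $c = \tfrac12$, so here $c \ne \tfrac12$. I would then take $B_1 = \{a_k, a_\ell\}$ to be any two-element subset of $T$ (available because $|T| \ge 3$), with mass $2c$. One checks $[2c]_1 \ne 0$ precisely because $c \ne \tfrac12$, while $s - 2c \equiv -c \pmod 1$ gives $[\,s-2c\,]_1 = [-c]_1 = 1 - c \ne 0$ since $0 < c < 1$. Both blocks are then non-integral, so $P = \{\{a_1\},\{a_2\}, B_1, T\setminus B_1\}$ is the required stable partition with $\{a_1\},\{a_2\} \in P$ and $|P| = 4$.

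The main obstacle is thus cleanly isolated in the last case: when all singleton tests fail, the $\mu_k$ ($k \ge 3$) are pinned to a single value $c$, and the two-element block $2c$ works only once one rules out $c = \tfrac12$ — which is exactly the excluded degenerate configuration. I expect no further difficulty; the only points requiring care are the bookkeeping that $B_2$ remains nonempty (which follows from $|T| = n+1 \ge 3$) and the observation that the four blocks $\{a_1\}, \{a_2\}, B_1, B_2$ are genuinely distinct, both of which are immediate.
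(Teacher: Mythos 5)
Your proof is correct and follows essentially the same route as the paper's: first test the singleton blocks $\{a_k\}$ for $k\geq 3$, observe that if all of them fail then the $\mu_k$ ($k\geq 3$) share a common value, and then show a two-element block can only fail when that common value is $\tfrac12$, which is excluded. The only cosmetic difference is that you work directly with the fractional part of $s=\mu_3+\dots+\mu_{n+3}$, whereas the paper normalizes to $\mu_1+\mu_2\leq 1$ by passing to $\sL_{m-1}$ and uses $\sum_k\mu_k\in\N$; your explicit check that the complementary block $T\setminus B_1$ also has non-integral mass is a welcome detail the paper leaves implicit.
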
  
\begin{proof}
One can without loss of generality assume that $\mu_1 + \mu_2 \leq 1$.
Otherwise we take the local monodromy data of $\sL_{m-1}$.

Now assume that such a stable partition $P$ with $\{a_1\},\{a_2\} \in P$
does not exist. Hence one must have $\mu_{1}+ \mu_{2}+\mu_k = 1$ for all $3 \leq k \leq n+3$.
Otherwise one obtains the stable partition
$$P = \{\{a_1\},\{a_2\},\{a_{k}\}, \{a_3, \ldots, a_{k-1}, a_{k+1},\ldots, a_{n+3}\}\}$$
Thus one must have
$$\mu:=\mu_{3} = \ldots = \mu_{n_j+3}.$$
Since 
$$P = \{\{a_1\},\{a_2\},\{a_{3}, a_{4}\}, \{a_{5},\ldots, a_{n_j+3}\}\}$$
is not a stable partition by our assumption, too, one has
$$2\mu = \mu_3+ \mu_4 = 1. \ \ \mbox{Hence} \ \ \mu=\frac{1}{2}.$$
\end{proof}

\begin{pkt} \label{excep}
The family of irreducible cyclic covers of $\bP^1$ given by the local monodromy data
$$\mu_1 = \mu_2 = \frac{1}{4}, \  \ \mu_3 = \mu_4 = \mu_5 = \frac{1}{2}$$
has a primitive pure $(1,2)-VHS$. Moreover it is easy to calculate that this family satisfies $SINT$.

But this is the only example of a family $\sC \to \sP_n$ with a primitive pure $(1,n)-VHS$ with
$n >1$, which
does not satisfy the assumptions of Lemma $\ref{lbc}$: It is very easy to see that this is the
only degree 4 example with a primitive pure $(1,n)-VHS$ for $n>1$, which contradicts the
assumptions of Lemma $\ref{lbc}$. If $m > 4$, $\sL_3$ must be unitary. But in this case the
condition that
$$n+3 > 4 \ \ \mbox{and}  \ \ [3\mu_3]_1 = \ldots = [3\mu_{n+3}]_1 = \frac{1}{2}$$
and Proposition $\ref{1.27}$ imply that
$$h_3^{1,0}(C) \geq (\sum\limits_{k \geq 3} [3\mu_k]_1) -1
= (\sum\limits_{k \geq 3} \frac{1}{2}) -1 > 0$$
and
$$h_3^{0,1}(C) \geq \sum\limits_{k \geq 3} (1-[3\mu_k]_1) -1 =
(\sum\limits_{k \geq 3} \frac{1}{2}) -1 > 0.$$
Thus $\sL_3$ is not unitary.
\end{pkt}

\begin{pkt}
Assume that $\sC\to \sP_n$ has a primitive pure $(1,n)-VHS$. Hence $\sL_1$ is without loss of
generality the eigenspace of type $(1,n)$. For our application of Lemma
$\ref{lbc}$ we must check that the collision of Lemma $\ref{lbc}$ resp., its corresponding
stable partition yields a family $\sC(P)\to \sP_1$, which has a primitive pure $(1,1)-VHS$.
The family $\sC(P)$ is given by $N=P$ with the local monodromy data
$$\alpha_{\{a_k,\ldots, a_{\ell} \}} = \alpha_k \cdot \ldots \cdot \alpha_{\ell} \  \
(\forall \ \ \{a_k,\ldots, a_{\ell} \} \in P)$$
as in Construction $\ref{hiercr}$. The fibers of $\sC(P)$ have the degree $m'$, where $m'$ divides
$m$. For $j = 1, \ldots, m'-1$ and $q \in \sM_1$, the eigenspace $\LL_j(P)$ in the Hodge structure of
$\sC(P)_q$ with the character $j$ is given by the local monodromy data
$$[j\mu_1]_1, \  \ [j\mu_2]_1, \  \ [j\mu_3+ \ldots+j\mu_k]_1,
\  \ [j\mu_{k+1}+ \ldots+j\mu_{n+3}]_1.$$
If the eigenspace $\sL_j$ in the $VHS$ of $\sC$ is of type $(0,a)$, Proposition $\ref{1.27}$
implies that its local monodromy data satisfy
$$[j\mu_1]_1 + \ldots + [j\mu_{n+3}]_1 = 1.$$
Hence one has that
$$[j\mu_1]_1 + [j\mu_2]_1 + [j\mu_3+ \ldots+j\mu_k]_1 + [j\mu_{k+1}+ \ldots+j\mu_{n+3}]_1 = 1,$$
too. Thus by Proposition $\ref{1.27}$, $\LL_j(P)$ is of type $(0,a')$.

If $\sL_j$ is of type $(a,0)$, $\sL_{m-j}$ is of type $(0,a)$. The dual eigenspace
$\LL_j(P)^{\vee}$ of $\LL_j(P)$ is given by
$$[(m-j)\mu_1]_1, \  \ [(m-j)\mu_2]_1, \  \ [(m-j)\mu_3+ \ldots+(m-j)\mu_k]_1,
\  \ [(m-j)\mu_{k+1}+ \ldots+(m-j)\mu_{n+3}]_1.$$
The same arguments as above tell us that $\LL_j(P)^{\vee}$ is of type $(0,a')$. Thus $\LL_j(P)$ is
of type $(a',0)$.

The restricted family $\sC_{\sM_1}(P) \to \sM_1$ of cyclic covers
with 4 different branch points has a non-trivial variation of Hodge structures. This follows from
the fact that each fiber of $\sC_{\sM_n} \to \sM_n$ is isomorphic to only finitely many other
fibers (compare to $\ref{japp}$). Hencefore the eigenspaces $\LL_{1}(P)$ and $\LL_{m'-1}(P)$
are of type $(1,1)$. In addition one concludes that $m' = 2$ or $m' = m$.
\end{pkt}

Now we are without loss of generality in the case of a family $\sC \to \sP_1$ with a primitive
pure $(1,1)-VHS$. For the proof of Theorem $\ref{sint}$ we need the following lemma:

\begin{lemma} \label{jaaa}
Let $C$ and $C'$ be curves and $\gamma : {\rm Jac}(C) \to {\rm Jac}(C')$ be an isomorphism of
principally polarized Abelian varieties. Then there exists a unique isomorphism $f: C \to C'$
such that
$$\pm \gamma \circ \alpha_p = \alpha_{f(p)} \circ f$$
for each $p \in C$, where $\alpha_p$ and $\alpha_{f(p)}$ denote the respective Abel-Jacobi maps.
\end{lemma}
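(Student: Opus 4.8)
The plan is to derive this refined Torelli statement from the weak Torelli theorem already quoted (\cite{LB}, Torelli's Theorem 11.1.7) together with the fact that the natural pushforward on Jacobians intertwines the based Abel-Jacobi maps. Throughout I would assume $g := g(C) \geq 2$; the cases $g \leq 1$ are immediate, since then $\alpha_p$ is either an isomorphism onto $\mathrm{Jac}(C)$ (for $g = 1$) or both sides are trivial (for $g = 0$), and the assertion becomes a direct transport of the hypothesis on $\gamma$ through these isomorphisms.

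For existence I would invoke the strong form of Torelli's theorem. Since $\gamma$ is a homomorphism of Abelian varieties fixing the origin and satisfying $\gamma^{*}\Theta' \equiv \Theta$ up to algebraic equivalence, it must be of the shape $\gamma = \pm f_{*}$ for an isomorphism of curves $f : C \to C'$, where $f_{*} : \mathrm{Pic}^{0}(C) \to \mathrm{Pic}^{0}(C')$ is the canonical origin-preserving isomorphism $[\textstyle\sum n_i q_i] \mapsto [\sum n_i f(q_i)]$. The crucial normalization here is that no residual translation can intervene, because $\gamma$ and $f_{*}$ are both group homomorphisms; and the two possible signs are unavoidable, since $f_{*}$ and $-f_{*} = [-1]\circ f_{*}$ pull back $\Theta'$ to the same translation class of $\Theta$, so the polarization alone cannot separate them.

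The second step is purely formal: $f_{*}$ intertwines all the based Abel-Jacobi maps at once. Indeed $\alpha_p(q) = [\mathcal{O}_C(q-p)]$, whence
\[
f_{*}\bigl(\alpha_p(q)\bigr) = [\mathcal{O}_{C'}(f(q)-f(p))] = \alpha_{f(p)}\bigl(f(q)\bigr),
\]
so $f_{*}\circ \alpha_p = \alpha_{f(p)}\circ f$ identically in $p$. Combining this with $\gamma = \pm f_{*}$ gives $\pm\gamma\circ\alpha_p = \alpha_{f(p)}\circ f$ for every $p \in C$, which is the desired relation. For uniqueness, suppose $f$ and $f'$ both satisfy it with the sign inherited from $\gamma$; then $\alpha_{f(p)}\circ f = \alpha_{f'(p)}\circ f'$ for a fixed $p$, so the two translated Abel-Jacobi curves $\alpha_{f(p)}(C')$ and $\alpha_{f'(p)}(C')$ coincide in $\mathrm{Jac}(C')$. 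Because a curve of genus $\geq 2$ embedded in its Jacobian has trivial translation stabilizer, the basepoints must agree, $f(p) = f'(p)$, and then injectivity of the embedding $\alpha_{f(p)}$ forces $f = f'$.

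I expect the main obstacle to be the first step, namely pinning down the precise strong form of Torelli: that the comparison map is literally $\pm f_{*}$, with no leftover translation and with a single canonically determined sign, rather than merely producing the abstract isomorphism $C \cong C'$ furnished by the weak statement. Once this normalization is secured, the Abel-Jacobi compatibility and the uniqueness are elementary.
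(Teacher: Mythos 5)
Your proposal is correct and follows essentially the same route as the paper: both invoke the strong form of Torelli's theorem (the paper cites Milne's Theorem 12.1, which gives $\pm\gamma\circ\alpha_p + c = \alpha_{p'}\circ f$ for a unique $f$ and $c$) and then normalize away the translation constant by choosing the basepoint $p' = f(p)$, which is exactly what your identity $f_{*}\circ\alpha_p = \alpha_{f(p)}\circ f$ accomplishes. The only cosmetic difference is that you re-derive the uniqueness (via the trivial translation stabilizer of the Abel--Jacobi curve for $g\geq 2$) instead of quoting it from the cited theorem.
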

\begin{proof}
By \cite{milne2}, Theorem 12.1, for each $p \in C$ and $p' \in C'$ there is a unique isomorphism
$f: C \to C'$ and a unique $c \in {\rm Jac}(C')$ such that
$$\pm \gamma \circ \alpha_p + c = \alpha_{p'} \circ f.$$
Since $(\gamma \circ \alpha_p)(p) = 0 \in {\rm Jac}(C')$ and
$(\alpha_{p'} \circ f)(p) = [f(p)-p'] \in {\rm Jac}(C')$, one has $c = 0$ for $p' = f(p)$.\end{proof}

By the next proposition, we will apply Lemma $\ref{jaaa}$ for our proof of Theorem $\ref{sint}$:

\begin{proposition} \label{uebers}
Let $q_1, q_2 \in \sP_n$ and $\sC \to \sP_n$ be a family of cyclic covers. Assume there is an
isomorphism between the polarized integral Hodge structures of the fibers $\sC_{q_1}$ and
$\sC_{q_2}$, which respects the eigenspace decompositions of
$H^1(\sC_{q_1},\C)$ and $H^1(\sC_{q_1},\C)$. Then there is an isomorphism
$\iota : \sC_{p_1} \to \sC_{p_2}$ and an isomorphism $\alpha : \bP^1 \to \bP^1$ such that the
following diagram commutes:
$$\xymatrix{
  {\sC_{p_1}} \ar[rr]^{\iota} \ar[d]  &  & {\sC_{p_2}}\ar[d]\\
  {\bP^1} \ar[rr]^{\alpha} &  & {\bP^1} 
}
$$
\end{proposition}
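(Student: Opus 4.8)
The plan is to reinterpret the hypothesis as an isomorphism of principally polarized Jacobians, apply Lemma \ref{jaaa} to descend it to an isomorphism of curves, and then use the eigenspace compatibility to force this curve isomorphism to normalize the Galois action, so that it passes to the quotients of $\sC_{q_1}$ and $\sC_{q_2}$ by their Galois groups, which are the two copies of $\bP^1$ in the diagram.

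First I would record that, by Riemann's Theorem \ref{ralfsiegel} together with the identification of $\mathrm{Jac}(\sC_{q_i}) = H^{0,1}(\sC_{q_i})/H^1(\sC_{q_i},\Z)$ carrying the principal polarization induced by the intersection form on $H^1(\sC_{q_i},\Z)$, the polarized integral Hodge structure on $H^1(\sC_{q_i},\Z)$ is exactly the datum of $\mathrm{Jac}(\sC_{q_i})$ as a principally polarized abelian variety. Hence the assumed isomorphism of polarized integral Hodge structures is precisely an isomorphism $\gamma : \mathrm{Jac}(\sC_{q_1}) \to \mathrm{Jac}(\sC_{q_2})$ of principally polarized abelian varieties. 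Next I translate the eigenspace hypothesis into equivariance: if $\psi_i$ denotes the fixed generator of the Galois group of $\sC_{q_i} \to \bP^1$, then the eigenspace decomposition of $H^1(\sC_{q_i},\C)$ is the decomposition under $\psi_i^{*}$, so respecting these decompositions means that the Hodge isomorphism conjugates $\psi_1^{*}$ into a power of $\psi_2^{*}$. Passing to the lattices and hence to the Jacobians, $\gamma$ conjugates $\mathrm{Jac}(\psi_1)$ into $\langle \mathrm{Jac}(\psi_2)\rangle$; in particular $\gamma$ carries the cyclic group $\langle\mathrm{Jac}(\psi_1)\rangle$ isomorphically onto $\langle\mathrm{Jac}(\psi_2)\rangle$.

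Then I apply Lemma \ref{jaaa} to $\gamma$, obtaining the unique isomorphism $\iota : \sC_{q_1} \to \sC_{q_2}$ with $\pm\,\gamma\circ\alpha_p = \alpha_{\iota(p)}\circ\iota$ for all $p$, where $\alpha_p,\alpha_{\iota(p)}$ are the Abel--Jacobi maps. To see that $\iota$ normalizes the Galois action, I would compare, for the relevant exponent $k$, the two isomorphisms $\iota\circ\psi_1^{\,k}$ and $\psi_2\circ\iota$ of $\sC_{q_1}$ onto $\sC_{q_2}$. Each induces a principally polarized isomorphism of the Jacobians, and by the conjugation relation for $\gamma$ established above together with the fact that $[-1]$ is central on the Jacobian, these two induced isomorphisms agree up to the sign $\pm$; the uniqueness clause of Lemma \ref{jaaa} then forces $\iota\circ\psi_1^{\,k} = \psi_2\circ\iota$. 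Thus $\iota$ sends $\psi_1$-orbits to $\psi_2$-orbits.

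Finally, since $\bP^1 = \sC_{q_i}/\langle\psi_i\rangle$ and $\iota$ carries the fibers of $\sC_{q_1}\to\bP^1$ onto the fibers of $\sC_{q_2}\to\bP^1$, it descends to an isomorphism $\alpha : \bP^1 \to \bP^1$ of the quotients, and the square commutes by construction, which is the assertion. The step I expect to be the main obstacle is the equivariance argument: correctly tracking the duality between the $H^1$-eigenspaces and the induced automorphisms of the Jacobian, and disposing of the sign ambiguity $\pm$ in Lemma \ref{jaaa}, so that the conjugation property of $\gamma$ genuinely transfers to $\iota$ rather than to some twist of it.
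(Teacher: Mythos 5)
Your proposal is correct and follows essentially the same route as the paper: translate the polarized Hodge isomorphism into a principally polarized isomorphism of Jacobians intertwining the Galois actions (after adjusting the generator of one Galois group), apply Lemma \ref{jaaa} to descend to a Galois-equivariant curve isomorphism, and pass to the quotients $\bP^1$. Your extra care with the sign ambiguity and with deducing $\iota\circ\psi_1^{k}=\psi_2\circ\iota$ from the uniqueness clause of Lemma \ref{jaaa} fills in a step the paper states without detail.
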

\begin{proof}
Let $\gamma$ be an isomorphism of polarized Hodge structures respecting the eigenspace
decompositions of $H^1(\sC_{q_1}\C)$ and $H^1(\sC_{q_1}\C)$. Then there exists a suitable pair
$(\psi_1,\psi_2)$ of generators of the Galois groups of $\sC_{q_1}$ and $\sC_{q_1}$ such that
$$\gamma \circ (\psi_1)_* = (\psi_2)_* \circ \gamma.$$
For simplicity we write $\psi$ instead of $\psi_1$ and $\psi_2$.

By the exponential exact sequence, an isomorphism
$\gamma: H^1(\sC_{q_1},\Z) \to H^1(\sC_{q_2},\Z)$ of polarized Hodge structures
commuting with the action of $\psi$ on these integral Hodge structures induces an isomorphism
$\gamma':{\rm Jac}(\sC_{q_1}) \to {\rm Jac}(\sC_{q_2})$ commuting with $\psi_*$.
In other terms one has
$$\gamma' \circ \psi_* = \psi_* \circ \gamma'$$
for the Jacobians.

By Lemma $\ref{jaaa}$, one obtains a unique isomorphism
$\sC_{p_1} \stackrel{\iota}{\to} \sC_{p_2}$
such that 
$$\iota \circ \psi = \psi \circ \iota.$$
Thus one obtains the desired automorphism $\alpha$.
\end{proof}

\begin{pkt} \label{pot}
Now assume that $\sC \to \sP_n$ has a primitive pure $(1,n)-VHS$. Moreover one can without
loss of generality assume that $\sL_1$ is the eigenspace of type $(1,n)$. Choose $s_1 , s_2 \in N$.

If $\mu_1 + \mu_2 = 1$, there remains nothing to prove with respect to these two points for
Theorem $\ref{sint}$.

Otherwise we let the branch points collide as in Lemma $\ref{lbc}$, if we are not in the only
exceptional case, which satisfies $SINT$ as we have seen in $\ref{excep}$. Thus we can restrict to
the case $\sC \to \sP_1$. Assume that all 4 branch points of
a fiber of $\sC \to \sP_1$ have pairwise different branch indeces. Hencefore there will not be an
isomorphism $\alpha$ as in Proposition $\ref{uebers}$ for different fibers. Hence Proposition
$\ref{uebers}$ implies that the fractional period map according to $\sL_{1}|_{\sM_1}$ is
injective. Now choose the embedding $\sM_1 \hookrightarrow \bP^1$ corresponding to
$$p_1 = 0, \  \ p_3 = 1, \  \ p_4 = \infty.$$
By \cite{Loo}, Section 4, one can identify the fractional period map concerning $\sL_1$ with
some multivalued map, which is called Schwarz map. The Schwarz map is the composition of
the multivalued map studied by P. Deligne and G. D. Mostow in \cite{DM}, which is defined by some
integrals, with the natural
map $\C^{n+1}\setminus\{0\} \to \bP^n_{\C}$. By \cite{DM}, $9.6$ and the preceding
description of the fractional period map, there exists a sufficiently small neighborhood $U$ of
$0 \in \bP^1 \setminus \sM_1$ such that the fractional period map concerning $\sL_1$ is (up to a
biholomorphic map) given by $x \to x^{1-\mu_1-\mu_2}$ on $U\setminus \{0\}$. Hence the injectivity
of the period map implies that $(1- \mu_1 - \mu_2)^{-1} \in\Z$. This yields $SINT$.
\end{pkt}

\begin{pkt}
Now we have the problem that we can not directly apply Proposition $\ref{uebers}$ as before, if we
assume that there are 4 branch points, where exactly two of them have the same branch index: Let
$p_1$ and $p_4$ have the same branch index and $p_3$ run around $p_2$, where
$$p_1 = 0, \  \ p_2 = 1, \  \ p_4 = \infty.$$
The automorphism $x \to x^{-1}$ interchanges $0$ and $\infty$ and leaves a basis of neighborhoods
of $1\in \bP^1 \setminus \sM_1$ invariant. We have obviously the same problem, if we let $p_1$ run around
$p_4$. But for all other pairs $k_1, k_2 \in \{1,2,3,4\}$ with $k_1 \neq k_2$ and the coordinates
$$k_1 = 0, \  \ k_3 = 1, \  \ k_4 = \infty,$$
Proposition $\ref{uebers}$ implies that the multivalued period map is injective on
$U\setminus \{0\}$, where $U$ is a sufficiently small neighborhood of $0 \in
\bP^1 \setminus \sM_1$. Thus $k_1$ and $k_2$ satisfy the integral condition
$$1 - \mu_{k_1} - \mu_{k_2} = 0 \ \ \mbox{or} \  \ (1 - \mu_{k_1} - \mu_{k_2})^{-1} \in \Z.$$
Hence one must ensure that the remaining pairs satisfy this latter condition, in order to show
that $SINT$ is satisfied:

Let us change the enumeration and assume that
$\mu_1 = \mu_2$. By Proposition $\ref{uebers}$, we can have
$$1-\mu_1 -\mu_2 = \frac{1}{\ell} \ \ \mbox{or} \  \
1-\mu_{1} -\mu_{2} = \frac{2}{\ell} \ \ \mbox{or} \  \ 1-\mu_1 -\mu_2 = 0$$
for some odd $\ell \in \Z$. Note that $1-\mu_1-\mu_2 = -(1-\mu_3-\mu_4)$, if
$\mu_1 + \ldots +\mu_4 = 2$. Hence we only have to exclude the second case
$(1-\mu_{1} -\mu_{2}) = \frac{2}{\ell}$. First assume that $m$ is odd. In this case $m - 2d_1$ is
odd  and the second case can not occur. Hence assume that $m$ is even and let $m = 2^sr$, where
$r= k\cdot \ell$ is odd. If the second case holds true, one has
$$\frac{2^sk\ell - 2 d_1}{2^sk\ell} = \frac{2}{\ell} \Leftrightarrow 
2^{s-1}k \ell - d_1=2^sk \Leftrightarrow d_1 = 2^{s-1}k(\ell-2).$$
If $s\geq 2$, one has that $d_1= d_2$ is even. Since $\sC_2$ must have a trivial $VHS$, one has
without loss of generality that $d_3 = 2^{s-1}k\ell$. Since we have
$$2m = d_1+ \ldots + d_4,$$
which is even, where $d_1, d_2, d_3$ are even, too, $d_4$ must be even. But in this case the cover
is not irreducible. Hence we must have $s= 1$. Since $\sC_2$ must have a trivial $VHS$, one has
without loss of generality $d_3 = k\ell$. Since we have
$$2m = d_1+ \ldots + d_4,$$
which is even, where $d_1, d_2, d_3$ are odd, one must have that $d_4$ is odd, too. But in this
case $\sC_{k\ell}$ is the family of elliptic curves and we do not have a
primitive pure $(1,1)-VHS$. Hencefore the second case is excluded.
\end{pkt}

\begin{remark}
If we have 4 branch points and more than exactly two of them have the same branch index, one can
have the additional simple cases
$$\mu_1 = \mu_2 = \mu_3 \  \ \mbox{or} \  \ \mu_1 = \mu_2, \ \ \mu_3 = \mu_4.$$
For these very simple cases one can directly calculate all occurring examples of families
$\sC\to \sP_1$ with a primitive pure $(1,1)-VHS$. Then one can verify by their local
monodromy data that Theorem $\ref{sint}$ holds true in these cases as we will do now.
\end{remark}

\begin{remark}
One must without loss of generality have
$$d_1 < d_4
\  \ \mbox{resp.,} \  \ d_1 = d_2 = d_3 < d_4 \   \  \mbox{or} \  \ d_1 = d_2 < d_3 = d_4$$
in the simple cases, if $m > 2$. Otherwise we would obtain
$$d_1 = d_2 = d_3 = d_4 = \frac{m}{2},$$
which implies that $\sC$ is not irreducible, if $m > 2$.
\end{remark}

\begin{lemma}
Assume that the family $\sC \to \sP_1$ with the branch indeces $d_1 = d_2 = d_3 \neq d_4$ has a
primitive pure $(1,1)-VHS$. Then the degree $m$ is odd and satisfies $m \leq 9$. Moreover one has
without loss of generality that $d_1 = d_2 + d_3 = 1$.
\end{lemma}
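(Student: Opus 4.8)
The plan is to turn the purity hypothesis into a purely combinatorial statement about residues and then count. First I would normalize the data. By Remark \ref{drehen} I may replace the branch indices $(d_1,\dots,d_4)$ by $([cd_1]_m,\dots,[cd_4]_m)$ for any unit $c\in(\Z/(m))^\ast$ without changing the equivalence class of the cover, and this clearly preserves the shape $d_1=d_2=d_3\neq d_4$. Since the $(1,1)$-VHS is \emph{primitive}, the distinguished eigenspace $\sL_{j_0}$ of type $(1,1)$ has $j_0\in(\Z/(m))^\ast$, so after multiplying by $j_0^{-1}$ I may assume $\sL_1$ is the type-$(1,1)$ part. Writing $d:=d_1=d_2=d_3$ and $\mu:=d/m$, the requirement $h^{1,0}_1=h^{0,1}_1=1$ combined with Proposition \ref{1.27} forces $\sum_k\mu_k=2$, i.e. $3d+d_4=2m$, whence $d_4\equiv-3d\pmod m$; irreducibility of the cover gives $\gcd(d,m)=1$, and ordering $d<d_4$ yields $m/3<d<m/2$.

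The computational heart is to read off the type of every eigenspace. For $j\in\{1,\dots,m-1\}$ set $s:=[jd]_m$, which is a bijection of $\{1,\dots,m-1\}$ because $\gcd(d,m)=1$. Using $\mu_1=\mu_2=\mu_3=\mu$ and $d_4\equiv-3d$, Proposition \ref{1.27} gives, whenever $m\nmid 3jd$ (so that all four points lie in $S_j$), the identity $h^{1,0}_j=3[j\mu]_1+[-3j\mu]_1-1$. Writing $3s=qm+t$ with $0\le t<m$ and $t\neq0$ in this case, one checks $3[j\mu]_1+[-3j\mu]_1=q+1$, hence $h^{1,0}_j=\lfloor 3s/m\rfloor$ and $h^{0,1}_j=2-\lfloor 3s/m\rfloor$. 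Thus $\sL_j$ has type $(0,2)$ for $s<m/3$, type $(2,0)$ for $s>2m/3$, and the forbidden type $(1,1)$ exactly when $m/3<s<2m/3$. The only remaining possibility $m\mid 3jd$ with $m\nmid jd$ forces $3\mid m$ and $j\in\{m/3,2m/3\}$; here $|S_j|=3$, so by Corollary \ref{rangi} one has $h^1_j=1$, and a one-line evaluation of Proposition \ref{1.27} shows $\sL_j$ is of type $(1,0)$ or $(0,1)$, hence harmless. Therefore purity is \emph{equivalent} to: the only integers in the open interval $(m/3,2m/3)$ are $s=d$ (from $j=1$) and $s=m-d$ (from $j=m-1$), both of which indeed lie there.

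From this interval-counting criterion both conclusions follow. For parity: if $m$ were even, then $s=m/2$ lies strictly in $(m/3,2m/3)$, satisfies $m\nmid 3\cdot(m/2)$, and is distinct from $d$ and $m-d$ because $d<m/2$; it would give a second forbidden type-$(1,1)$ eigenspace, a contradiction, so $m$ is odd. For the bound: the interval $(m/3,2m/3)$ has length $m/3$, and an open interval of length greater than $3$ always contains at least three integers; hence $m\ge10$ would produce a third integer in $(m/3,2m/3)$ besides $d$ and $m-d$, again contradicting purity. Thus $m\le9$, and since $m\in\{1,3\}$ admits no integer $d$ with $m/3<d<m/2$, the surviving degrees are $m\in\{5,7,9\}$, all odd and $\le9$. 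Finally, multiplying the indices by $d^{-1}\bmod m$ via Remark \ref{drehen} normalizes $d_1=d_2=d_3=1$ (with $d_4=m-3$), which is the asserted normal form (I read the printed ``$d_1=d_2+d_3=1$'' as the typographical slip for $d_1=d_2=d_3=1$).

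The step I expect to be the genuine obstacle is the clean evaluation of the eigenspace types, namely proving $h^{1,0}_j=\lfloor 3[jd]_m/m\rfloor$ out of Proposition \ref{1.27} and correctly disposing of the degenerate residues $j\in\{m/3,2m/3\}$ where the fourth point leaves $S_j$. Once this dictionary between the residue $s=[jd]_m$ and the Hodge type of $\sL_j$ is established, the remaining arguments—oddness of $m$ and the bound $m\le9$—are elementary counts of integers in $(m/3,2m/3)$.
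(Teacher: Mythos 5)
Your argument is correct, and it runs on the same engine as the paper's proof --- Proposition $\ref{1.27}$ applied to a cover with indices $(d,d,d,2m-3d)$ and $\gcd(d,m)=1$ --- but you organize it differently, and the reorganization is worth noting. The paper first normalizes to $d_1=d_2=d_3=1$ and then exhibits two specific offending eigenspaces: for $m$ even it observes that $\sC_{m/2}$ is the family of elliptic curves, so $\sL_{m/2}$ is a second type-$(1,1)$ piece; for $m>9$ odd it computes by hand that $\sL_{(m-3)/2}$ is of type $(1,1)$. You instead establish the complete dictionary that $\sL_j$ is of type $(1,1)$ exactly when $s=[jd]_m$ lies in the open interval $(m/3,2m/3)$ (your identity $3[j\mu]_1+[-3j\mu]_1=q+1$ checks out, and the degenerate residues with $3\mid m$, $j\in\{m/3,2m/3\}$ are correctly disposed of via Corollary $\ref{rangi}$), after which both conclusions reduce to counting integers in that interval: $s=m/2$ kills even $m$, and an interval of length $m/3>3$ forces a third forbidden residue. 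Your route buys the full list $m\in\{5,7,9\}$ together with the admissible $d$ in one stroke and removes the need to guess which $j$ to test, at the cost of being longer to set up; the paper's parity step also has a structural reading (the hyperelliptic quotient) that yours replaces by arithmetic. Two small points: the step ``ordering $d<d_4$'' silently replaces $j_0$ by $m-j_0$, which is legitimate since both eigenspaces are of type $(1,1)$ and the sum condition $\sum\mu_k=2$ is preserved; and your reading of the misprinted conclusion as $d_1=d_2=d_3=1$ is confirmed both by the paper's own proof and by Remark $\ref{3gleich}$.
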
 
\begin{proof}
By the assumptions we have that $2m = 3d_1 + d_4$. Hence $g= \gcd(m,d_1) = 1$ divides $d_4$, too,
which implies by the irreducibility of the fibers of $\sC$ that $g=1$. Thus if $m$ is even,
we have that $d_1= d_2 = d_3$ and $d_4$ are odd. But then
$\sC_{\frac{m}{2}}$ would be the family of elliptic curves such that $\sL_{\frac{m}{2}}$ is of
type $(1,1)$. Contradiction! Hence $m$ must be odd.

It remains to show that $m \leq 9$. Since $\gcd(m,d_1) = 1$, the fibers are without
loss of generality given by
$$y^m = x(x-1)(x-\lambda)$$
such that $\sL_{[\frac{m}{2}]}$ is of type $(1,1)$ as one can calculate by Proposition
$\ref{1.27}$. By Proposition $\ref{1.27}$, one can calculate the type of $\sL_{\frac{m-3}{2}}$ by
its local monodromy data, too. For this local system one gets that
$$3[\frac{m-3}{2m}]_1 +[\frac{(m-3)(m-3)}{2m}]_1$$
$$= 3\frac{m-3}{2m}
+ (m-3)\frac{m-3}{2m} -[\frac{(m-3)(m-3)}{2m}]
= \frac{m-3}{2} -[\frac{(m-3)(m-3)}{2m}].$$
Now let us assume that $9< m$. Since $m$ must be odd, we obtain
$$3[\frac{m-3}{2m}]_1 +[\frac{(m-3)(m-3)}{2m}]_1 = \frac{m-3}{2} - [\frac{m-6}{2}+\frac{9}{2m}] =
\frac{m-3}{2} -\frac{m-7}{2} = 2.$$
This result and Proposition $\ref{1.27}$ imply that $\sL_{\frac{m-3}{2}}$ is of type $(1,1)$ in
this case, too. Hence we do not have a pure $(1,1)-VHS$, if $9< m$.
\end{proof}

\begin{remark} \label{3gleich}
In the case of the preceding lemma one obtains all examples of families $\sC \to \sP_1$ with
a primitive pure $(1,1)-VHS$ by $m = 5, 7, 9$, which satisfy $SINT$ as one can calculate easily,
too.
\end{remark} 

\begin{remark} \label{deg3}
If we are in the second simple case $d_1 = d_2 \neq d_3 = d_4$, one obtains
$$d_1 + d_3 = d_2 + d_4 = m.$$
By the fact that
$d_1 \neq d_3$, one concludes that $\mu_1, \mu_3 \neq \frac{1}{2}$. Hence the local monodromy data
of $\sL_2$ satisfy $[2\mu_i]_1 \neq 0$ for all $i = 1, \ldots ,4$. Moreover one has
$$[2\mu_1]_1+[2\mu_3]_1 = [2\mu_2]_1+[2\mu_4]_1 = 1.$$
Hence $\sL_2$ is of type $(1,1)$ and $\sC$ can have a primitive $(1,1)-VHS$, only if $m = 3$. Thus
the only possible case is given by
$$\mu_1 = \mu_2 = \frac{1}{3} \ \ \mbox{and} \ \ \mu_3 = \mu_4 = \frac{2}{3},$$
which satisfies $SINT$ as one can easily verify.
\end{remark}

\section{The application of $SINT$ for the more complicated cases}

In the preceding section we have seen that $SINT$ is a necessary condition for families
$\sC \to \sP_n$ with a primitive pure $(1,n)-VHS$. In addition we have given all examples of
families $\sC \to \sP_1$ with a primitive pure $(1,1)-VHS$, which do not satisfy that at most
two branch points have the same branch index. Here we calculate all examples of families
$\sC \to \sP_1$ with a primitive pure $(1,1)-VHS$, which satisfy that at most two branch points have the same branch index.

By technical reasons, we will sometimes assume $m \geq 4$. Note that the only
possible case of a family $\sC\to \sP_1$ of degree 3 covers with a pure $(1,1)-VHS$ is given
by Remark $\ref{deg3}$, where the only possible case of degree 2 covers is given by the elliptic
curves. Thus this assumption does not provide any restriction for the more complicated cases.

Note that in the case of a family $\sC \to \sP_1$ the condition $SINT$ is equivalent to $INT$.

\begin{remark} \label{1dimballs}
By \cite{DM}, 14.3, one can describe all families of covers $\sC \to \sP_1$, whose local monodromy
data satisfy $INT$, such that there is not any pair $k_1,k_2 \in \{1,2,3,4\}$ with $k_1 \neq k_2$
satisfying $\mu_{k_1} + \mu_{k_2} = 1$, in the following way:
Let $(p,q,r) \in \N^3$ with  $ \frac{1}{p} + \frac{1}{q}+\frac{1}{r} < 1$ and
$1 < p \leq q \leq r < \infty$. Then in the case of 4 branch points these solutions of
$INT$ for covers can be given by:
\begin{eqnarray*}
\mu_1 = \frac{1}{2}(1-\frac{1}{p}-\frac{1}{q}+\frac{1}{r}), \ \
\mu_2 = \frac{1}{2}(1-\frac{1}{p}+\frac{1}{q}-\frac{1}{r}),\\
\mu_3 = \frac{1}{2}(1+\frac{1}{p}-\frac{1}{q}-\frac{1}{r}), \ \
\mu_4 = \frac{1}{2}(1+\frac{1}{p}+\frac{1}{q}+\frac{1}{r})
\end{eqnarray*}
We have that
$$\mu_1 + \mu_2 = 1- \frac{1}{p}, \    \ \mu_1 + \mu_3 = 1- \frac{1}{q}, \     \
\mu_2 + \mu_3 = 1- \frac{1}{r}.$$
Thus $p$ ,$q$, and $r$ divide the degree $m$ of the cover. This fact and the equations, which use
$p$, $q$, and $r$ for the definition of the different $\mu_i$, imply that we have
$$m = \lcm(p,q,r) \    \ \mbox{or} \    \ m = 2 \cdot \lcm(p,q,r).$$
\end{remark}

If we are in the case of a family with a primitive pure $(1,1)-VHS$ such that all local monodromy
data satisfy $\mu_{k_1} + \mu_{k_2} \neq 1$ and at most two branch points have the same index,
we are in the case of Remark $\ref{1dimballs}$ with the additional condition $p < r$. Hence let us
first consider this case. Later we will consider families with at most two branch points with the
same branch index and some $\mu_{k_1} + \mu_{k_2} = 1$, which is the last remaining subcase.

Now let $\ell := \lcm(p,q,r)$.

\begin{lemma} \label{dudu}
Let $\sC \to \sP_1$ be given by $p, q, r$ as in Remark $\ref{1dimballs}$, where $p < r$, and have
a primitive pure $(1,1)-VHS$. Then one has
$$\frac{1}{p} = \frac{1}{q} + \frac{1}{r}.$$
\end{lemma}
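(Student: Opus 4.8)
The plan is to reduce the statement to a single numerical identity and then exclude the two ways it can fail by exhibiting a second eigenspace of type $(1,1)$, which a primitive pure $(1,1)$-$VHS$ forbids.

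\emph{Reduction.} From the three relations of Remark \ref{1dimballs}, namely $\mu_1+\mu_2=1-\tfrac1p$, $\mu_1+\mu_3=1-\tfrac1q$ and $\mu_2+\mu_3=1-\tfrac1r$, one gets $\tfrac1q+\tfrac1r-\tfrac1p=(1-\mu_1-\mu_3)+(1-\mu_2-\mu_3)-(1-\mu_1-\mu_2)=1-2\mu_3$. Hence $\tfrac1p=\tfrac1q+\tfrac1r$ is equivalent to $\mu_3=\tfrac12$, i.e. to $d_3=\tfrac m2$, and it suffices to prove $\mu_3=\tfrac12$. (The hypothesis $p<r$ excludes the symmetric case $p=q=r$, which corresponds to $\mu_1=\mu_2=\mu_3$ and was disposed of earlier.) Note also $\mu_1\le\mu_2\le\mu_3\le\mu_4$ with $\mu_4>\tfrac12$, and that by Proposition \ref{1.27} together with Corollary \ref{rangi} an eigenspace $\sL_j$ is of type $(1,1)$ precisely when $|S_j|=4$ and $\sum_k[j\mu_k]_1=2$; purity of the primitive $(1,1)$-$VHS$ rules this out for every $j\neq 1,m-1$.

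\emph{Ruling out $\mu_3>\tfrac12$.} I would test the eigenspace $\sL_2$, which is legitimate since $m\ge 4$ forces $2\neq 1,m-1$. As $2\mu_1,2\mu_2\in(0,1)$ and $2\mu_3,2\mu_4\in(1,2)$ with none an integer, one has $[2\mu_1]_1+[2\mu_2]_1=2\mu_1+2\mu_2$ and $[2\mu_3]_1+[2\mu_4]_1=2\mu_3+2\mu_4-2$, so $\sum_k[2\mu_k]_1=2\sum_k\mu_k-2=2$ and $|S_2|=4$. Thus $\sL_2$ would be a forbidden second eigenspace of type $(1,1)$, so $\mu_3\le\tfrac12$. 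For the reverse inequality I would use the eigenspace $\sL_p$ (here $p\ge 3$, since $\mu_3<\tfrac12$ forces $\tfrac1q+\tfrac1r>\tfrac1p$, which together with $\tfrac1q+\tfrac1r<1-\tfrac1p$ gives $p>2$). Writing $a=m/p$, one has $[p\mu_k]_1=[d_k/a]_1$, and since $d_1+d_2=m-a$ and $d_3+d_4=m+a$ the sums $d_1/a+d_2/a=p-1$ and $d_3/a+d_4/a=p+1$ are integers; hence generically $[d_1/a]_1+[d_2/a]_1=[d_3/a]_1+[d_4/a]_1=1$, giving $\sum_k[p\mu_k]_1=2$ with $|S_p|=4$ and again contradicting purity. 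This yields $\mu_3\ge\tfrac12$, hence $\mu_3=\tfrac12$.

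\emph{Main obstacle.} The genuine work is in the last step, because the clean splitting $[d_1/a]_1+[d_2/a]_1=1$ and $[d_3/a]_1+[d_4/a]_1=1$ fails in a handful of degenerate configurations: when $p$ is odd and $q=r$ (then $d_1=d_2$ and two of the fractions vanish, so $|S_p|=2$ and $\sL_p$ becomes trivial), and when an integrality coincidence such as $\tfrac1q+\tfrac1r=\tfrac2p$ makes $d_3/a$ and $d_4/a$ integral. In each such subcase I would instead produce the forbidden type-$(1,1)$ eigenspace from another divisor index, namely $\sL_q$ (or $\sL_r$), using the analogous relations $d_1/b+d_2/b$ and $d_3/b+d_4/b$ with $b=m/q$; and in the special situation where all $d_k$ are odd and $m$ is even one instead reads it off the middle part $\sL_{m/2}$, whose four local data are all $\tfrac12$. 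In every subcase one must check that the four local monodromy data are nonzero, that the fractional parts sum to $2$, and that the chosen index is $\not\equiv\pm1\pmod m$. Managing this finite but delicate case distinction — the parities of $p$, the coincidences $q=r$ and $\tfrac1q+\tfrac1r=\tfrac2p$ — is the substance of the proof, while the reduction and the $\sL_2$ computation are routine.
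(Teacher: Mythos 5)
Your approach is genuinely different from the paper's: instead of exploiting the quotient family $\sC_p$, you hunt directly for a second eigenspace of type $(1,1)$ via Proposition \ref{1.27}. The reduction to $\mu_3=\tfrac12$ and the exclusion of $\mu_3>\tfrac12$ by means of $\sL_2$ are correct and complete. The gap is in the reverse inequality, and it is structural rather than a matter of omitted bookkeeping. Your generic $\sL_p$ computation never uses the standing assumption $\mu_3<\tfrac12$ (it enters only to get $p\geq 3$); if that computation were valid outside your degenerate configurations it would exclude every family fitting Remark \ref{1dimballs}, including the four genuine examples in the paper's list (numbers 5, 8, 10 and 11, all of which have $q=r$, $\mu_3=\tfrac12$, and are degenerate for your test). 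So the entire content of the step is carried by the degenerate cases, and there your fallbacks are not shown to work: in all four genuine examples one has $q=r=m$, so $b=m/q=1$, every $d_k/b$ is integral, and the proposed substitute $\sL_q$ is the trivial eigenspace. The hypothesis $\mu_3<\tfrac12$ does in fact kill that configuration (if $q=r=m$ then $p\mid q$ and $p<q$ force $q\geq 2p$, i.e.\ $\mu_3\geq\tfrac12$), but nothing of the sort appears in your sketch; you never indicate where $\mu_3<\tfrac12$ enters any degenerate branch, and your enumeration of the degenerate configurations is itself unverified --- for instance the coincidence $\tfrac1q+\tfrac1r=\tfrac2p$ that you list is exactly $\mu_3=\tfrac12$ and so cannot occur under your reductio, while other integrality coincidences you do not list can. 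As written, the second half is a programme, not a proof.

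For comparison, the paper reduces everything to a single divisibility and needs no eigenspace-by-eigenspace search. Since $p\mid m$, the eigenspaces of the quotient family $\sC_p$ are among the $\sL_{pj}$, which are all unitary because the $(1,1)$-VHS is primitive; hence $\sC_p$ has trivial VHS, is isotrivial, and therefore has at most three branch points, so $\tfrac{m}{p}\mid d_{i_0}$ for some $i_0$. Writing $d_{i_0}$ in terms of $\ell=\lcm(p,q,r)$ gives $\tfrac{\ell}{p}\mid\bigl(\tfrac{\ell}{q}\pm\tfrac{\ell}{r}\bigr)$, and the size constraints $\tfrac{\ell}{p}\geq\tfrac{\ell}{q}$, $\tfrac{\ell}{p}>\tfrac{\ell}{r}$ together with $p<r$ then yield $\tfrac{\ell}{p}=\tfrac{\ell}{q}+\tfrac{\ell}{r}$. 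If you want to salvage your route, the cleanest fix is to borrow exactly this input: primitivity is used far more efficiently on the whole quotient family than on one eigenspace at a time.
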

\begin{proof}
Since $p|\ell$ resp., $p|m$, we have the family $\sC_p$, which must have a trivial $VHS$. This
implies that there is a $d_{i_0}$ with $\frac{m}{p}|d_{i_0}$, which implies that
$\frac{\ell}{p}|d_{i_0}$. Since
$$d_{i_0} = \ell\pm \frac{\ell }{p} \pm \frac{\ell }{q} \pm \frac{\ell }{r} \  \ \mbox{or} \ \
2d_{i_0} = \ell\pm \frac{\ell }{p} \pm \frac{\ell }{q} \pm \frac{\ell }{r},$$
one concludes that $\frac{\ell}{p} |(\frac{\ell}{q}\pm \frac{\ell}{r})$. From the
fact that $ \frac{\ell}{p} \geq \frac{\ell}{q}$ and $\frac{\ell}{p} >\frac{\ell}{r}$, one obtains
$$\frac{\ell}{p} = \frac{\ell}{q} + \frac{\ell}{r}. \  \ \mbox{Hence} \   \
\frac{1}{p} = \frac{1}{q} + \frac{1}{r}.$$
\end{proof}

\begin{lemma} \label{l3}
Let $\sC \to \sP_1$ be a family with a primitive pure $(1,1)-VHS$,
which is given by $p, q, r$ as in Remark $\ref{1dimballs}$, where $p < r$. Then the
family $\sC$ and the eigenspace $\LL_1$ are given by the local monodromy data
$$\mu_1 = \frac{1}{2}-\frac{1}{q}, \  \ \mu_2 = \frac{1}{2}-\frac{1}{r}, \   \ \mu_3 = \frac{1}{2},
\  \ \mu_4 = \frac{1}{2}+\frac{1}{p}.$$
\end{lemma}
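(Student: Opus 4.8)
The plan is to treat this as the final bookkeeping step that turns the arithmetic constraint isolated in Lemma \ref{dudu} into an explicit list of monodromy data. The substantive input is already in hand: since $\sC \to \sP_1$ is given by $(p,q,r)$ as in Remark \ref{1dimballs} with $p < r$ and carries a primitive pure $(1,1)$-VHS, Lemma \ref{dudu} applies directly and yields
\[
\frac{1}{p} = \frac{1}{q} + \frac{1}{r}.
\]
So the first step is simply to record this identity and to note that the local monodromy data of the family $\sC$ are by construction the local monodromy data of the distinguished type-$(1,1)$ eigenspace $\LL_1$; hence establishing the four formulas for $\mu_1,\dots,\mu_4$ describes $\sC$ and $\LL_1$ at once.

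The second and main step is a direct substitution of $1/p = 1/q + 1/r$ into the four expressions for $\mu_1,\dots,\mu_4$ in terms of $1/p,1/q,1/r$ recorded in Remark \ref{1dimballs}. Concretely, replacing $1/p$ by $1/q+1/r$ in $\mu_1 = \tfrac12(1-\tfrac1p-\tfrac1q+\tfrac1r)$ cancels the $1/r$ contributions and leaves $\tfrac12-\tfrac1q$; the same substitution in $\mu_2$ cancels the $1/q$ contributions and leaves $\tfrac12-\tfrac1r$; in $\mu_3$ the bracket collapses to $1$, giving $\tfrac12$; and in $\mu_4 = \tfrac12(1+\tfrac1p+\tfrac1q+\tfrac1r)$ one rewrites $1/q+1/r$ as $1/p$ to obtain $\tfrac12(1+\tfrac2p) = \tfrac12+\tfrac1p$. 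This reproduces the asserted values.

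Since each manipulation is an elementary identity, there is no real obstacle inside this lemma; the entire weight of the classification rests on Lemma \ref{dudu} and on the earlier parametrization in Remark \ref{1dimballs}. To finish cleanly I would add a short consistency check: the inequality $\tfrac1p+\tfrac1q+\tfrac1r<1$ from Remark \ref{1dimballs} together with $\tfrac1p=\tfrac1q+\tfrac1r$ gives $\tfrac2p<1$, so $p\geq 3$, which guarantees $\mu_4 = \tfrac12+\tfrac1p < 1$; likewise $\tfrac1p>\tfrac1q$ forces $p<q\leq r$, so that $0<\mu_1\leq \mu_2<\mu_3<\mu_4<1$ and $\sum_k \mu_k = 2$, exactly the normalization expected for a pure $(1,1)$-VHS with four branch points.
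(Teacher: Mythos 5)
Your proof is correct and follows exactly the paper's route: invoke Lemma \ref{dudu} to get $\tfrac1p = \tfrac1q + \tfrac1r$ and substitute into the parametrization of Remark \ref{1dimballs}; the paper's own proof is just these two steps stated without the explicit cancellations. Your added consistency check ($0<\mu_1\leq\mu_2<\mu_3<\mu_4<1$ and $\sum\mu_k=2$) is harmless extra confirmation but not needed.
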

\begin{proof}
By Lemma $\ref{dudu}$, we have
$$\frac{1}{p} = \frac{1}{q} + \frac{1}{r}.$$
This equation and  Remark $\ref{1dimballs}$, this imply that $\sC$ and $\LL_1$ have the local
monodromy data
$$\mu_1 = \frac{1}{2}-\frac{1}{q}, \  \
\mu_2 = \frac{1}{2}-\frac{1}{r}, \   \
\mu_3 = \frac{1}{2}, \  \
\mu_4 = \frac{1}{2}+\frac{1}{p}.$$
\end{proof}

\begin{remark} \label{3bsp}
Let $\sC \to \sP_1$ is a family of covers of degree $m \geq 4$ with a primitive pure
$(1,1)-VHS$ satisfying the assumptions of Lemma $\ref{l3}$. Moreover assume that
$3 \notin (\Z/(m))^*$. Hence the assumption that $\sC \to \sP_1$ has primitive pure $(1,1)-VHS$
implies that the family $\sC_3$ must have a trivial $VHS$. Thus all fibers of $\sC_3$ must be
isomorphic. Hence they are ramified over at most 3 points. By Lemma $\ref{l3}$, one
concludes that
$$0 = [\frac{1}{2}-\frac{3}{q}]_1, \  \
0 = [\frac{1}{2}-\frac{3}{r}]_1 \   \
\mbox{or} \  \
0 = [\frac{1}{2}+\frac{3}{p}]_1.$$
Since $\mu_4 =\frac{1}{2}+\frac{1}{p} < 1$, one concludes that $2 < p \leq q \leq r$. Thus one has
$$p = 6, \  \ q = 6 \  \ \mbox{or} \  \ r = 6.$$
Hence one can determine all examples of families $\sC \to \sP_1$ with a primitive pure $(1,1)-VHS$
in this case as we will do now:
\end{remark}

\begin{pkt} \label{3bsp1}
Keep the assumptions of Remark $\ref{3bsp}$. In the case $p = 6$ one has that $[3\mu_4]_1 = 0$.
One can have $q = 7,8,9,10,11,12$, where
$q = 12$ implies that
$$\frac{1}{6} = \frac{1}{p}, \  \ \frac{1}{q}= \frac{1}{r} = \frac{1}{12},$$
which leads to a family with a primitive pure $(1,1)-VHS$. Now we
verify that $q = 7,8,9,10,11$ do not lead to a family with a primitive pure $(1,1)-VHS$:
One must have that $\LL_5$ is unitary. It has the local monodromy data
$$\mu_3 = \frac{1}{2} \  \ \mbox{and} \  \ \mu_4 = [\frac{1}{2}+\frac{5}{6}]_1 = \frac{1}{3}.$$
Hence one must have that
$$\frac{1}{6} \geq \mu_1 = [\frac{1}{2}-\frac{5}{q}]_1,$$
which is satisfied for $q= 10,11$, but not for $q = 7,8,9$. For $q = 10$ we have that
$$\frac{1}{r} = \frac{1}{p} -\frac{1}{q} = \frac{1}{15}.$$
This leads to a family given by the local monodromy data
$$ \mu_1 = \frac{4}{10}, \  \ \mu_2 = \frac{13}{30}, \  \ \mu_3 = \frac{1}{2}, \  \
\mu_4 = \frac{2}{3}.$$
One calculates easily that the eigenspace $\sL_7$ in the $VHS$ of this family is given by
$$\mu_1 = \frac{4}{5}, \  \ \mu_2 = \frac{1}{30}, \  \ \mu_3 = \frac{1}{2}, \  \
\mu_4 = \frac{2}{3}.$$
Hence this family has not a pure $(1,1)-VHS$.

For $q = 11$ we have that
$$\frac{1}{p} -\frac{1}{q} = \frac{5}{66}.$$
Hence the equation of Lemma $\ref{dudu}$ can not be satisfied in this case.
\end{pkt}

\begin{pkt} \label{3bsp2}
Keep the assumptions of Remark $\ref{3bsp}$. Moreover assume that $q = 6$.
In this case we can have $p = 3, 4, 5$, where $p = 3$ implies
$$\frac{1}{p} = \frac{1}{3}, \  \ \frac{1}{q} = \frac{1}{r} = \frac{1}{6},$$
which yields an example of a family with a primitive $(1,1)-VHS$. For $p= 4$ resp., $p = 5$ Lemma
$\ref{dudu}$ and Lemma $\ref{l3}$ yield a family of covers given by the local monodromy data
$$\mu_1 = \frac{1}{3}, \  \ \mu_2 = \frac{5}{12} \  \  \mu_3 = \frac{1}{2}, \  \
\mu_4 = \frac{3}{4}$$
resp.,
$$\mu_1 = \frac{1}{3}, \  \ \mu_2 = \frac{14}{30} \  \ \mu_3 = \frac{1}{2}, \  \
\mu_4 = \frac{7}{10}.$$
Hence one can easily verify that $\sL_5$ is an eigenspace of type $(1,1)$ in both cases.  Thus
$p = 4, 5$ do not lead to a primitive pure $(1,1)-VHS$.
\end{pkt}

\begin{pkt} \label{3bsp3}
Keep the assumptions of Remark $\ref{3bsp}$. Moreover assume that $r = 6$.
In this case Lemma $\ref{dudu}$ implies that
$$\frac{1}{p} \geq \frac{2}{r} = \frac{1}{3}.$$
Hence one has $p = 2$ or $p = 3$,
where $p = 2$ would imply that $\mu_4 = 1$, and $p = 3$ yields the same example of a family with
a primitive pure $(1,1)-VHS$ as in $\ref{3bsp2}$.
\end{pkt}

Now we have considered the subcase given by $3\notin (\Z/(m))^*$. We start the consideration of
the subcase given by $3\in (\Z/(m))^*$ by the following lemma: 

\begin{lemma} \label{dudud}
Let $\sC \to \sP_1$ be a family with a primitive pure $(1,1)-VHS$,
which satisfies that each $\mu_{k_1}+ \mu_{k_2} \neq 1$. Then one has $m > 4$.
\end{lemma}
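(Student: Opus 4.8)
The plan is to translate the two hypotheses into elementary arithmetic constraints on the branch indices and then rule out $m\le 4$ by a short finite inspection. First I would normalize the situation: since the pure $(1,1)$-VHS is primitive, the distinguished eigenspace of type $(1,1)$ is $\sL_{j_0}$ with $j_0\in(\Z/(m))^*$, and by Remark \ref{drehen} I may replace the local monodromy data $d_k$ by $[j_0^{-1}d_k]_m$ without changing the equivalence class of the family; after this normalization $\sL_1$ itself is the eigenspace of type $(1,1)$, and $\mu_1,\dots,\mu_4$ are its local monodromy data. Because $0<\mu_k<1$ for a family $\sC\to\sP_1$ (four branch points), the set $S_1$ appearing in Proposition \ref{1.27} is all four points and $[\mu_k]_1=\mu_k$, so the condition $h^{1,0}_1(C)=1$ reads $\sum_{k=1}^4\mu_k=2$. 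Writing $d_k=m\mu_k\in\{1,\dots,m-1\}$ this becomes $d_1+d_2+d_3+d_4=2m$, and the hypothesis that no pair of $\mu$'s sums to $1$ is exactly the statement that $d_{k_1}+d_{k_2}\neq m$ for all $k_1\neq k_2$.

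With these constraints in hand I would examine $m=2,3,4$ and exhibit a forbidden pair in each case. For $m=2$ every $d_k=1$, so any pair gives $d_{k_1}+d_{k_2}=2=m$. For $m=3$ the equation $\sum d_k=6$ with $d_k\in\{1,2\}$ forces exactly two $1$'s and two $2$'s, whence a $1$ together with a $2$ sums to $3=m$. For $m=4$ I would enumerate the multisets $\{d_k\}\subseteq\{1,2,3\}$ with $\sum d_k=8$: writing $b$ and $c$ for the number of $2$'s and $3$'s, one gets $b+2c=4$, whose only solutions give $\{2,2,2,2\}$, $\{1,2,2,3\}$ or $\{1,1,3,3\}$. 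In the first two a pair of $2$'s sums to $4=m$, and in the last two a $1$ and a $3$ sum to $4=m$; so in every case the no-pair hypothesis is violated. Hence $m\le 4$ is impossible, and therefore $m>4$.

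I expect the only genuinely delicate point to be the normalization step together with the passage from the Hodge-type condition to $\sum\mu_k=2$ via Proposition \ref{1.27}; once that bookkeeping is clean, the remainder is a purely combinatorial case check requiring no deep input. It is worth noting that the degenerate configuration $\{2,2,2,2\}$ for $m=4$ is also excluded by irreducibility (its greatest common divisor with $m$ equals $2$), so one could alternatively invoke Construction \ref{hiercr} there, but this is unnecessary since that configuration already produces a pair summing to $m$. Thus I anticipate no serious obstruction beyond careful setup.
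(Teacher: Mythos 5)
Your proof is correct and follows essentially the same route as the paper: translate the type-$(1,1)$ condition into $\sum d_k=2m$ and the no-pair hypothesis into $d_{k_1}+d_{k_2}\neq m$, then eliminate $m\le 4$ by a finite check. The only cosmetic difference is in the $m=4$ step, where the paper first invokes the triviality of the $VHS$ of $\sC_2$ to force some $d_k=2$ and then argues by parity, while you enumerate the three admissible multisets directly; both amount to the same elementary arithmetic.
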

\begin{proof}
We know that one must have $m \geq 4$ in the considered case. Thus we must only exclude $m= 4$.
Since for a family $\sC$ of degree 4 covers with a primitive pure $(1,1)-VHS$ the family
$\sC_2$ must have a trivial $VHS$, one has without loss of generally $d_1 = 2$. By the assumption
that each $\mu_{k_1}+ \mu_{k_2} \neq 1$, one concludes that $d_1, d_2, d_3$ are not equal to 2.
Hence $d_1, d_2, d_3$ are odd. But this contradicts our assumptions, which imply that we have
the even sum
$$2m = d_1 + \ldots +d_4.$$
\end{proof}

\begin{remark}
Keep the assumption of Lemma $\ref{l3}$. If $m > 4$, the eigenspace $\LL_3$ is not of type $(1,1)$. Assume
that 3 is a unit in $\Z/(m)$. Thus Lemma $\ref{dudud}$ implies for the local monodromy data of
$\LL_3$ that $\sum \mu_i = 3$ or $\sum \mu_i = 1$. Recall that $\mu_3 = \frac{1}{2}$. Hence
$\sum \mu_i = 3$ implies that
$$\mu_1, \mu_2, \mu_4 > \frac{1}{2}.$$
By Lemma $\ref{l3}$, one concludes that
$$ \mu_1 = \frac{3}{2}-\frac{3}{q} \  \ \mbox{and} \ \ \mu_2 = \frac{3}{2}-\frac{3}{r}.$$
By Lemma $\ref{dudu}$, this implies that
$$\mu_4 = 3 -\mu_1 -\mu_2-\mu_3 = 3 - \frac{3}{2}+\frac{3}{q}-\frac{3}{2}+\frac{3}{r}- \frac{1}{2}
= -\frac{1}{2}+\frac{3}{p}+\frac{3}{q}= -\frac{1}{2}+\frac{3}{p}$$
in this case. This implies that $p,q,r < 6$.

In the case $\sum \mu_i = 1$ one gets $\mu_1, \mu_2, \mu_4 < \frac{1}{2}$. By Lemma $\ref{dudu}$
and Lemma $\ref{l3}$, this implies that $$\mu_1 = \frac{1}{2}-\frac{3}{q}, \  \ 
\mu_2 = \frac{1}{2}-\frac{3}{r}\  \ \mbox{and} \ \ \mu_4 = -\frac{1}{2}+\frac{3}{p}$$
such that $p < 6$ and $q,r > 6$.
\end{remark}

\begin{remark}
The case $p,q,r < 6$ does not yield any example of a family with a primitive pure
$(1,1)-VHS$, since no triple $(p,q,r) \in \N^3$ with $2 \leq p\leq q\leq r < 6$ satisfies
both
$$\frac{1}{p} - \frac{1}{q} = \frac{1}{r} \ \ \mbox{and} \ \
\frac{1}{p} + \frac{1}{q} + \frac{1}{r} < 1$$
as one can check by calculation for each example.
\end{remark}

\begin{pkt} \label{az}
Assume that we are in the case $p < 6$ and $q,r > 6$. Since
$\frac{1}{p} + \frac{1}{q} = \frac{1}{r}$, one has $\frac{1}{p} \leq 2\frac{1}{q}$ such that 
$6 < q \leq 2p$ and $3 < p < 6$. Hence one has two cases: $p = 4$ or $p = 5$. Thus by using
that $\frac{1}{p} - \frac{1}{q} = \frac{1}{r}$, one calculates that only the
examples given by
$$p = 4, \  \ q = r = 8 \  \ \mbox{and} \ \ p = 5, \  \ q = r = 10$$
have a primitive pure $(1,1)-VHS$ in this case.
\end{pkt}

Now we consider the last remaining case of a family $\sC \to \sP_1$ with a primitive pure
$(1,1)-VHS$. In this case there are at most 2 branch indeces equal and one has some
$\mu_{k_1} + \mu_{k_2} = 1$.

\begin{lemma}
Let $\sC \to \sP_1$ a family of cyclic covers. If there are $k_1, k_2 \in \{1,2,3,4\}$ such
that $d_{k_1} + d_{k_2} = m$ with $d_1 \leq d_2 \leq d_3 \leq d_4$, then one has
$$d_1 + d_4 = m \  \ \mbox{and} \   \ d_2 + d_3 = m.$$
\end{lemma}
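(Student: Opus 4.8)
The plan is to reduce the statement to a short arithmetic argument, the key input being that the four branch indices must sum to $2m$. Recall from Construction \ref{hiercr} that the local monodromy data satisfy $d_k = m\mu_k$ with $0 < \mu_k < 1$ and $\sum_{k=1}^{4}\mu_k \in \N$; hence $\sum_{k=1}^4 d_k$ is a multiple of $m$ lying strictly between $0$ and $4m$, so $\sum_k d_k \in \{m, 2m, 3m\}$. First I would use the hypothesis $d_{k_1} + d_{k_2} = m$ to pin down this sum: the two remaining indices $d_{k_3}, d_{k_4}$ satisfy $d_{k_3} + d_{k_4} = (\sum_k d_k) - m$. The value $m$ would force $d_{k_3} + d_{k_4} = 0$, which is impossible as each $d_k > 0$; the value $3m$ would force $d_{k_3} + d_{k_4} = 2m$, which is impossible since each $d_k < m$. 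Therefore $\sum_k d_k = 2m$ and, moreover, the complementary pair also sums to $m$.

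At this point I would have a partition of $\{d_1, d_2, d_3, d_4\}$ into two pairs, each summing to $m$, and the remaining task is to show, using the ordering $d_1 \leq d_2 \leq d_3 \leq d_4$, that the largest index is paired with the smallest. I would argue as follows: write the pair containing $d_4$ as $\{d_4, d_j\}$ with $d_4 + d_j = m$, and let $\{d_a, d_b\}$ be the other pair, so that $\{j,a,b\} = \{1,2,3\}$ and $d_a + d_b = m = d_4 + d_j$. Since $d_4 \geq d_a$ and $d_4 \geq d_b$, the identity $d_j = d_a + d_b - d_4$ yields simultaneously $d_j \leq d_a$ and $d_j \leq d_b$; hence $d_j = \min\{d_1, d_2, d_3\} = d_1$ as a value. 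This gives $d_1 + d_4 = m$, and then $d_2 + d_3 = (\sum_k d_k) - (d_1 + d_4) = 2m - m = m$, as claimed.

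I expect no serious obstacle here, as the argument is purely elementary; the only point requiring a little care is the very first step, where the constraints $0 < d_k < m$ together with $m \mid \sum_k d_k$ are exactly what rule out the sums $m$ and $3m$ and thereby force $\sum_k d_k = 2m$. One should also note that the conclusion continues to hold (trivially) in the degenerate configurations $d_1 = d_2 = d_3 = d_4 = m/2$ or $d_1 = d_2 \neq d_3 = d_4$, so that no separate case distinction by multiplicity of the indices is needed: the pairing argument above covers all of them uniformly.
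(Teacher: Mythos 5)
Your argument is correct and complete: the divisibility $m \mid \sum_k d_k$ from Construction $\ref{hiercr}$ together with $0 < d_k < m$ forces $\sum_k d_k = 2m$, and the pairing argument correctly identifies the partner of $d_4$ as the minimal index. The paper itself offers no proof here (it records the statement as "quite easy to see"), and what you have written is precisely the elementary verification being alluded to, with the one genuinely necessary observation — that the complementary pair must also sum to $m$ — made explicit.
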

\begin{proof}
(quite easy to see)
\end{proof}

\begin{remark} \label{noncom}
By the preceding lemma, we have that $d_1 + d_4 = d_2 + d_3 = m$, if there are $k_1, k_2 \in
\{1,2,3,4\}$ such that $d_{k_1} + d_{k_2} = m$ with $d_1 \leq d_2 \leq d_3 \leq d_4$.
Hence if $d_1 + d_3 = m$ resp., $d_3 = d_4$, one gets $d_1 = d_2$, too. But this contradicts the
assumption that at most 2 branch indexes are equal. Hence by $SINT$, one gets
$$\mu_1 + \mu_2 = 1- \frac{1}{p}< 1, \    \ \mu_1 + \mu_3 = 1- \frac{1}{q} < 1, \     \
\mu_2 + \mu_3 = 1$$
with $p, q \in \N$ and $p \leq q$. Hence
one obtains similarly to Remark $\ref{1dimballs}$ with $\frac{1}{p}+\frac{1}{q} < 1$:

\begin{eqnarray*}
\mu_1 = \frac{1}{2}(1-\frac{1}{p}-\frac{1}{q}), \   \
\mu_2 = \frac{1}{2}(1-\frac{1}{p}+\frac{1}{q}),\\
\mu_3 = \frac{1}{2}(1+\frac{1}{p}-\frac{1}{q}), \    \
\mu_4 = \frac{1}{2}(1+\frac{1}{p}+\frac{1}{q})
\end{eqnarray*}
\end{remark}

\begin{lemma}
Assume that the local monodromy data of Remark $\ref{noncom}$ yield a family of degree $m \geq 4$
with a primitive pure $(1,1)-VHS$. Then one has $p = q$ and $m$ is even.
\end{lemma}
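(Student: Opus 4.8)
The plan is to reduce the purity hypothesis to two arithmetic facts about $p$ and $q$ and then to detect a forbidden Hodge type on a single eigenspace. Writing $\mu_k=d_k/m$ and starting from the normal form of Remark~$\ref{noncom}$, the relations $\mu_1+\mu_2=1-\frac1p$ and $\mu_1+\mu_3=1-\frac1q$ become $m-d_1-d_2=m/p$ and $m-d_1-d_3=m/q$. Since the left-hand sides are integers, this immediately gives $p\mid m$ and $q\mid m$. These divisibilities, together with the complementarity $d_1+d_4=d_2+d_3=m$ from the previous lemma and the Hodge-number formula $h^{1,0}_j(C)=\sum_{s\in S_j}[j\mu_s]_1-1$ of Proposition~$\ref{1.27}$, are essentially all the input required.

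For the parity statement I would argue directly. Suppose $m$ were odd; then every divisor of $m$ is odd and so are the cofactors $m/p$ and $m/q$. But $d_1=\frac{m}{2}\bigl(1-\tfrac1p-\tfrac1q\bigr)=\frac{m-m/p-m/q}{2}$, and the numerator is a difference of three odd integers, hence odd. Thus $d_1$ would fail to be an integer, contradicting that it is a branch index. Hence $m$ is even; in particular $m/2\in\Z$, which is exactly what makes the following computation available.

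For $p=q$ the idea is to examine the eigenspace $\LL_2$. Since $m\geq 4$ we have $2\notin\{1,m-1\}$, so a pure $(1,1)$-VHS forces $\LL_2$ to be of type $(a,0)$ or $(0,b)$, i.e. $h^{1,0}_2(C)=0$ or $h^{0,1}_2(C)=0$. The relations $2\mu_4=2-2\mu_1$ and $2\mu_3=2-2\mu_2$ show that $[2\mu_1]_1+[2\mu_4]_1=1$ whenever $2\mu_1\notin\Z$, and $[2\mu_2]_1+[2\mu_3]_1=1$ whenever $2\mu_2\notin\Z$. Because $0<\frac1p+\frac1q<1$ we have $d_1\neq m/2$, so $2\mu_1\notin\Z$ and the first identity always holds. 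If $p\neq q$ then $\mu_2=\frac12\bigl(1-\tfrac1p+\tfrac1q\bigr)\neq\frac12$, so $d_2\neq m/2$, $2\mu_2\notin\Z$, and the second identity holds too; summing gives $\sum_{s\in S_2}[2\mu_s]_1=2$ with $|S_2|=4$, hence $h^{1,0}_2=h^{0,1}_2=1$ and $\LL_2$ is of type $(1,1)$. This contradicts purity, so we must have $d_2=m/2$, i.e. $p=q$.

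The step I expect to need the most care is confirming that $\LL_2$ really is subject to the purity constraint rather than being one of the two admissible $(1,1)$-pieces: this uses $2\not\equiv\pm1\pmod m$ (which is where $m\geq 4$ enters) and the verification that, when $p\neq q$, all four ramification points lie in $S_2$, so that $|S_2|=4$ and the Hodge numbers come out as stated. The remainder is routine fractional-part bookkeeping built on the pairing $d_k+d_{5-k}=m$.
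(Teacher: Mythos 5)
Your proof is correct, and the key step --- forcing $p=q$ by showing that $p\neq q$ makes $\LL_2$ an eigenspace of type $(1,1)$ distinct from $\sL_1,\sL_{m-1}$ --- is exactly the paper's argument; the paper just computes the four fractional parts $[2\mu_k]_1$ explicitly instead of invoking the pairing identity $[x]_1+[-x]_1=1$ coming from $d_1+d_4=d_2+d_3=m$. The only genuine divergence is the evenness of $m$: the paper gets it for free from $p=q$, which forces $[2\mu_2]_1=0$, i.e.\ $d_2=m/2$, whereas you give an independent parity contradiction using $p\mid m$, $q\mid m$ and the integrality of $d_1=\tfrac12\bigl(m-\tfrac{m}{p}-\tfrac{m}{q}\bigr)$. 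Both routes are valid; yours has the minor advantage of not depending on first establishing $p=q$, while the paper's is a one-line corollary.
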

\begin{proof} In the case of Remark $\ref{noncom}$ the eigenspace $\LL_2$ is given by the local monodromy data
$$\mu_1 = 1-\frac{1}{p}-\frac{1}{q}, \   \
\mu_2 = [1-\frac{1}{p}+\frac{1}{q}]_1, \ \
\mu_3 = [\frac{1}{p}-\frac{1}{q}]_1, \    \
\mu_4 = \frac{1}{p}+\frac{1}{q}.$$
Thus in this case $\sL_2$ is of type $(1,1)$, if and only if $p < q$. Hence one can obtain a
primitive pure $(1,1)-VHS$, only if $p = q$. Now $p = q$ implies that $\mu_2= \mu_3 = 0$ for the
local monodromy data of $\LL_2$. Hence the family of covers has an even degree.
\end{proof}

\begin{proposition}
Assume that the local monodromy data of Remark $\ref{noncom}$ yield a family of degree $m \geq 4$
with a primitive pure $(1,1)-VHS$. Then
$p = q \leq 6$.
\end{proposition}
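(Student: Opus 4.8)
The plan is to invoke the preceding lemma to reduce at once to the case $p = q$ with $m$ even, and then to manufacture a \emph{second} conjugate pair of eigenspaces of type $(1,1)$ as soon as $p > 6$; this extra pair would violate the very definition of a pure $(1,1)$-VHS, forcing $p = q \leq 6$. So first I would substitute $p = q$ into the formulas of Remark $\ref{noncom}$, which collapses the four local monodromy data to
$$\mu_1 = \frac{1}{2} - \frac{1}{p}, \quad \mu_2 = \mu_3 = \frac{1}{2}, \quad \mu_4 = \frac{1}{2} + \frac{1}{p}.$$
A direct application of Proposition $\ref{1.27}$ to these data confirms that $\sL_1$ is indeed of type $(1,1)$, i.e. it plays the role of the distinguished eigenspace $\sL_{j_0}$ with $j_0 = 1$.

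The heart of the argument is to single out the eigenspace $\sL_3$ and compute its Hodge type under the hypothesis $p > 6$ (equivalently $p \geq 7$, whence $\frac{3}{p} < \frac{1}{2}$). Reading off the local monodromy data of $\sL_3$ by the rounding rule gives
$$[3\mu_1]_1 = \frac{1}{2} - \frac{3}{p}, \quad [3\mu_2]_1 = [3\mu_3]_1 = \frac{1}{2}, \quad [3\mu_4]_1 = \frac{1}{2} + \frac{3}{p},$$
all four of which are nonzero, so $S_3$ again consists of all four branch points. Feeding these into Proposition $\ref{1.27}$ yields $h^{1,0}_3(C) = h^{0,1}_3(C) = 1$, that is, $\sL_3$ is of type $(1,1)$.

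To close the argument I would observe that $p \geq 7$ forces $m \geq p \geq 7$, so that $3 < m$ and $3 \notin \{1, m-1\}$; hence $\sL_3$ and its conjugate $\sL_{m-3}$ form a conjugate pair of type $(1,1)$ eigenspaces genuinely different from the distinguished pair $\{\sL_1, \sL_{m-1}\}$. Since a pure $(1,1)$-VHS admits only one such pair, with every remaining eigenspace of type $(a,0)$ or $(0,b)$, this is a contradiction, and therefore $p = q \leq 6$. I expect the only delicate point to be the bookkeeping of the fractional parts $[3\mu_k]_1$: one must check that each value lies strictly in $(0,1)$ and stays nonzero exactly in the range $p \geq 7$, so that $|S_3| = 4$ and the Hodge numbers come out $(1,1)$ rather than a unitary type. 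Reassuringly, the boundary case $p = 6$ is precisely where $[3\mu_1]_1$ degenerates to $0$, which matches $p \leq 6$ being the sharp bound.
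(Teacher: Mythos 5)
Your proof is correct and is essentially the paper's own argument: reduce to $p=q$ (with $m$ even) via the preceding lemma, compute the local monodromy data of $\sL_3$ for $p>6$, and apply Proposition $\ref{1.27}$ to exhibit a second eigenspace of type $(1,1)$, contradicting purity. Your extra check that $3\notin\{1,m-1\}$ (so $\sL_3$ is genuinely a new eigenspace) is a detail the paper leaves implicit, but it does not change the route.
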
 \begin{proof}
By the preceding lemma, the assumptions imply that $p = q$. Hence by Remark $\ref{noncom}$, we
have:
\begin{equation} \label{boff}
\mu_1 = \frac{p-2}{2p}, \  \ \mu_2 = \mu_3 = \frac{1}{2}, \  \ \mu_4 = \frac{p+2}{2p}
\end{equation}
If $p>6$, then $\LL_3$ has the local monodromy given by
$$\mu_1 = \frac{p-6}{2p}, \  \ \mu_2 = \mu_3 = \frac{1}{2}, \  \ \mu_4 = \frac{p+6}{2p}.$$
Hence Proposition $\ref{1.27}$ implies that $\sL_3$ is of type $(1,1)$ in this case.
\end{proof}

\begin{lemma}
Assume that the local monodromy data of Remark $\ref{noncom}$ yield a family of degree $m \geq 4$
with a primitive pure $(1,1)-VHS$. Then $p$ must be even.
\end{lemma}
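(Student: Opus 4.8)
The plan is to argue by contradiction, using the classification of the local monodromy data already achieved: by the preceding results we are reduced to the case $p=q$ with $m$ even, and (from the previous proposition) $p\le 6$; since $\tfrac{1}{p}+\tfrac{1}{q}<1$ forces $p>2$, the only odd values to exclude are $p=3$ and $p=5$. Rather than treat these two numerically, I would give a uniform argument for every odd $p$: assuming $p$ odd, I will exhibit a third non-unitary eigenspace, besides $\LL_1$ and $\LL_{m-1}$, contradicting the hypothesis that $\sC$ carries a primitive pure $(1,1)$-VHS.

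First I would pin down the degree. By $\eqref{boff}$ the four local monodromy data are $\mu_1=\frac{p-2}{2p}$, $\mu_2=\mu_3=\frac12$ and $\mu_4=\frac{p+2}{2p}$. When $p$ is odd, both $p-2$ and $p+2$ are odd and coprime to $2p$, so $\mu_1$ and $\mu_4$ are already in lowest terms with denominator $2p$; hence the least common denominator is $m=2p$. In particular $m\ge 6$, so the middle index $\frac{m}{2}=p$ satisfies $p\ne 1$ and $p\ne m-1$. Next I would compute the type of the eigenspace $\LL_{p}=\LL_{m/2}$ by Proposition $\ref{1.27}$. Since $p$ is odd one has $[p\mu_1]_1=[\frac{p-2}{2}]_1=\frac12$, $[p\mu_2]_1=[p\mu_3]_1=[\frac{p}{2}]_1=\frac12$ and $[p\mu_4]_1=[\frac{p+2}{2}]_1=\frac12$, so $S_{p}=\{a_1,a_2,a_3,a_4\}$ and
$$h^{1,0}_{p}(C)=\sum_{s\in S_{p}}[p\mu_s]_1-1=4\cdot\tfrac{1}{2}-1=1,\qquad h^{0,1}_{p}(C)=\sum_{s\in S_{p}}\bigl(1-[p\mu_s]_1\bigr)-1=1.$$
Thus $\LL_{p}$ is of type $(1,1)$. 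This is precisely the behaviour that fails for $p$ even: there the values $\frac12$ at $\mu_2,\mu_3$ are annihilated by the middle index (they become $0$) and the middle eigenspace degenerates to type $(0,0)$, so no contradiction arises. Since here $\LL_{p}$ is a non-unitary eigenspace with $p\notin\{1,m-1\}$, the family cannot have a pure $(1,1)$-VHS, which is the desired contradiction; hence $p$ is even.

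The only genuinely delicate point is the determination $m=2p$: one must verify that no common factor shrinks the denominators of $\mu_1$ and $\mu_4$, and this is exactly where the oddness of $p$ is used. Once that is settled, the identification of the middle index as $p$ and the evaluation of the fractional parts are immediate substitutions into Proposition $\ref{1.27}$, so I expect the rest of the argument to be routine. As a sanity check one can instead specialise to $p=3$ (giving $m=6$ with $\mu=(\tfrac16,\tfrac12,\tfrac12,\tfrac56)$ and $\LL_3$ of type $(1,1)$) and $p=5$ (giving $m=10$ with $\mu=(\tfrac{3}{10},\tfrac12,\tfrac12,\tfrac{7}{10})$ and $\LL_5$ of type $(1,1)$), recovering the same obstruction in the two cases actually permitted by the bound $p\le 6$.
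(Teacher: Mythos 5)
Your proof is correct and follows essentially the same route as the paper: assume $p$ odd, deduce $m=2p$ from $\gcd(p-2,2p)=1$, and observe that the middle eigenspace $\sL_{m/2}=\sL_p$ is then of type $(1,1)$, contradicting the pure $(1,1)$-VHS. The only cosmetic difference is that the paper phrases the last step by noting all branch indices are odd so that $\sC_p$ is a family of elliptic curves, whereas you compute the type of $\sL_p$ directly from Proposition $\ref{1.27}$.
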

\begin{proof}
Assume that $p$ is odd. Since $gcd(p-2,2p)= 1$ in this case, one gets a family of degree $2p$
with branch indeces
$$d_1 = p-2, \  \ d_2 = d_3 = p, \  \ d_4 = p+2.$$
Thus all branch indeces are odd, and $\sC_p$ is a family of elliptic curves such that $\sL_p$ is
of type $(1,1)$. Contradiction!
\end{proof}

\begin{remark} \label{letzt}
Keep the assumptions of the preceding lemma. Since one must have $\mu_1 > 0$, the preceding
proposition and $\eqref{boff}$ imply that
$$3 \leq p \leq 6.$$
Since $p = q$ must be even, one can only have $p= 4$ and $p = 6$.
\begin{enumerate}
\item For $p = 4$ one obtains the example of a family with a primitive pure $(1,1)-VHS$ given by
$$\mu_1 = \frac{1}{4}, \  \ \mu_2 = \mu_3 = \frac{1}{2}, \  \ \mu_4 = \frac{3}{4}.$$
\item If $p = 6$ one has the example of a family with a primitive pure $(1,1)-VHS$ given by
$$\mu_1 = \frac{1}{3}, \  \ \mu_2 = \mu_3 = \frac{1}{2}, \  \ \mu_4 = \frac{2}{3}.$$
\end{enumerate}
\end{remark}

\section{The complete lists of examples}
In this section we give the complete lists of examples of families $\sC \to \sP_n$ with
primitive $(1,n)$-variations or derived pure $(1,n)$-variations of Hodge structures.

By our preceding
calculations, we get the following complete list of families of covers $\sC \to \sP_1$ with a
primitive pure $(1,1)-VHS$, where ``ref'' denotes the number of the preceding remark, lemma,
proposition or point yielding the respective example:
\begin{center}
\begin{tabular}{|c||c|c|c|c|}  \hline 
number & degree & branch points with branch index & genus &  ref \\ \hline \hline
1 & 2 & 1 1 1 1 & 1 & (known) \\ \hline
2 & 3 & 1 2 2 1 & 2 & $\ref{deg3}$ \\ \hline
3 & 4 & 1 2 2 3 & 2 & $\ref{letzt},(1)$ \\ \hline
4 & 5 & 1 3 3 3 & 4 & $\ref{3gleich}$ \\ \hline
5 & 6 & 1 4 4 3 & 3 & $\ref{3bsp2}, \ref{3bsp3}$ \\ \hline
6 & 6 & 2 3 3 4 & 2 & $\ref{letzt}, (2)$ \\ \hline
7 & 7 & 2 4 4 4 & 6 & $\ref{3gleich}$ \\ \hline
8 & 8 & 2 5 5 4 & 5 & $\ref{az}$ \\ \hline
9 & 9 & 3 5 5 5 & 7 & $\ref{3gleich}$ \\ \hline
10 & 10 & 3 6 6 5 & 6 & $\ref{az}$ \\ \hline
11 & 12 & 4 7 7 6 & 7 & $\ref{3bsp1}$ \\ \hline
\end{tabular}
\end{center}
We will later see that each derived pure $(1,n)-VHS$ is in fact
a derived pure $(1,1)-VHS$. In the next section we will verify that we get the following
complete list of families of covers $\sC \to \sP_1$ with a derived pure $(1,1)-VHS$, where
$N_{r_0}$ means the number of $\sC_{r_0}$ in the preceding list, which has the corresponding
primitive pure $(1,1)-VHS$:
\begin{center}
\begin{tabular}{|c|c|c|c|c|} \hline
degree & branch points with branch index & genus & $r_0$ & $N_{r_0}$ \\ \hline \hline
4 & 1 1 1 1 & 3 & 2 & 1\\ \hline
6 & 1 1 1 3 & 4 & 3 & 1\\ \hline
6 & 1 2 2 1 & 4 & 2 & 2\\ \hline
\end{tabular}
\end{center}
Note that any family $\sC \to \sP_n$ with a primitive pure $(1,n)-VHS$ satisfies
$SINT$, which implies $INT$. Hence by consulting the list of \cite{DM} on page 86, which contains
all examples satisfying $INT$ for $n \geq 2$, (and the calculation of the types of the
eigenspaces of the corresponding covers), we have the following complete list of families of
covers with a primitive pure $(1,n)-VHS$ for $n> 1$:
\begin{center}
\begin{tabular}{|c|c|c|c|} \hline
degree & branch points with branch index & genus \\ \hline \hline
3 & 2 1 1 1 1 & 3 \\ \hline
4 & 2 2 2 1 1 & 3 \\ \hline
5 & 2 2 2 2 2 & 6 \\ \hline
6 & 3 3 3 2 2 & 4 \\ \hline
3 & 1 1 1 1 1 1 & 4 \\ \hline
\end{tabular}
\end{center}

In \cite{co} R. Coleman formulated the following conjecture:

\begin{conjecture}
Fix an integer $g \geq 4$. Then there are only finitely many complex
algebraic curves $C$ of genus $g$ such that ${\rm Jac}(C)$ is of
$CM$ type.
\end{conjecture}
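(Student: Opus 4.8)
The plan is not to prove this conjecture but to \emph{refute} it: the entire apparatus developed above is designed precisely to produce counterexamples, so what I would actually carry out is the construction of families of curves of fixed genus $g \geq 4$ carrying infinitely many fibers whose Jacobians are of $CM$ type. The governing principle is Theorem \ref{jnvz}: whenever a family $\sC_{\sM_n} \to \sM_n$ of cyclic covers of $\bP^1$ has a pure $(1,n)-VHS$, its set of complex multiplication fibers is dense, hence infinite. Thus the first step is to pass from the classification of such families (the nineteen families with a pure $(1,n)-VHS$ listed in Section 6.3) to those whose generic fiber has genus $\geq 4$.

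Concretely, I would proceed as follows. First, for each candidate family produced by Construction \ref{hiercr} from local monodromy data satisfying the lists of Section 6.3, I would compute the genus of a generic fiber via the eigenspace dimensions $h^{1,0}_j(C)$ of Proposition \ref{1.27}, summing over $j$; this identifies, for instance, family number 4 (degree $5$, branch indices $1\,3\,3\,3$) as genus $4$, family number 8 (degree $8$, indices $2\,5\,5\,4$) as genus $5$, family number 7 (degree $7$, indices $2\,4\,4\,4$) as genus $6$, and family number 9 (degree $9$, indices $3\,5\,5\,5$) as genus $7$. Second, I would verify that each such family indeed has a pure $(1,n)-VHS$, i.e.\ exactly one eigenspace $\sL_{j_0}$ of type $(1,n)$ with conjugate of type $(n,1)$ and all remaining eigenspaces of type $(a,0)$ or $(0,b)$; this again reduces to the type computation of Proposition \ref{1.27}. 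Theorem \ref{jnvz} then furnishes a dense, hence infinite, set of $CM$ fibers in each family.

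The final step is to convert ``infinitely many $CM$ fibers'' into ``infinitely many non-isomorphic $CM$ curves of the \emph{same} genus'', which is what contradicts the conjecture. Here I would invoke \ref{japp}: a curve of genus $\geq 2$ occurs as at most finitely many fibers of $\sC_{\sM_n}$, because it has at most $84(g-1)$ automorphisms and therefore only finitely many cyclic covers onto $\bP^1$. Since the genus is a locally constant, hence constant, invariant along the connected family, the infinitely many $CM$ fibers must represent infinitely many distinct isomorphism classes of curves of that one genus, completing the counterexample for each $g \in \{4,5,6,7\}$.

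The main obstacle is not the density statement --- that is delivered wholesale by Theorem \ref{jnvz} once the pure $(1,n)-VHS$ hypothesis is checked --- but rather the bookkeeping that secures the \emph{purity} of the variation together with the exact value of the genus. Verifying that no ``extra'' eigenspace is of mixed type $(a,b)$ with $a,b \geq 1$ is delicate, since a single such eigenspace both raises the dimension of the relevant period domain (destroying openness of the period map) and alters the genus; this is exactly the content of the exclusion arguments running through Section 6.2, and it is where the bulk of the work lies. A secondary caveat is that this strategy only disproves Coleman's conjecture for those $g$ attained as genera of families with a pure $(1,n)-VHS$; the classification shows these are comparatively few, so the conjecture is left untouched for the generic large genus.
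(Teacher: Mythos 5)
Your proposal matches the paper's own treatment exactly: the conjecture is not proved but refuted for the genera in question, via the same chain of reasoning --- the classification of families with a pure $(1,n)-VHS$ in Section $6.3$, the genus and type computations from Proposition $\ref{1.27}$, the density of $CM$ fibers from Theorem $\ref{jnvz}$, and the finiteness of isomorphic fibers from $\ref{japp}$ to guarantee infinitely many distinct isomorphism classes of fixed genus. Your closing caveat is also the paper's: the counterexamples cover only $g \in \{4,5,6,7\}$, and the conjecture remains open for general $g$.
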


\begin{remark}
J. de Jong and R. Noot \cite{DeJN} resp., E. Viehweg and K. Zuo \cite{VZ5} have
given counterexamples of families with infinitely many $CM$ fibers for $g = 4, 6$. In our lists
here we have counterexamples for $g = 5,7$.

J. de Jong and R. Noot resp., E. Viehweg and K. Zuo needed to show first the existence
of one fiber with $CM$ for the proofs that their examples of families have infinitely many $CM$
fibers. In the proof of Theorem $\ref{jnvz}$, which implies that the examples of this section
have dense set of complex multiplication fibers, we did not need to show the existence of one $CM$ fiber first.

But by the fact that our examples $\sC\to \sM_n$ with a dense set of $CM$ fibers satisfy that
$n+1$ branch points have the same branch index, Theorem $\ref{geilomat}$ yields the $CM$-type of
one $CM$ fiber and hencefore by Lemma $\ref{tija}$, the $CM$-type of a dense set of $CM$ fibers.
\end{remark}

\section{The derived variations of Hodge structures}

In this section we determine the families of cyclic covers with a derived pure $(1,n)-VHS$ and
verify that the list of examples in the preceding section is complete.

\begin{remark}
Assume that the family $\sC$ of degree $dm$ covers has a derived pure $(1,n)-VHS$ induced by
$\sC_d$. Let
$$d = p_1^{n_1} \cdot \ldots \cdot p_t^{n_t}$$
be the decomposition of $d$ into its prime factors. Then there exists a family of covers of
degree $p_1m$ with a derived pure $(1,n)-VHS$. Hence there are two cases to consider first:
$d$ is a prime number and divides $m$, or $d$ is a prime number and does not divide $m$.
\end{remark}

\begin{lemma}
Let $p$ be a prime number. Assume that $d$ is a prime number such that
$\gcd(d, p) = 1$. Then a family $\sC$ of covers of degree $p\cdot d$ with a derived pure
$(1,n)-VHS$ induced by $\sC_d$ can not exist, if all Dehn twists yield semisimple matrices with
respect to the monodromy representation of $\sL_d$.
\end{lemma}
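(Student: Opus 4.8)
The plan is to argue by contradiction, translating everything into the arithmetic of the branch indices modulo $p$ and modulo $d$. Suppose such a family $\sC$ of degree $m=pd$ exists, with branch indices $d_1,\dots,d_{n+3}$ and local monodromy data $\mu_k=d_k/m$. Since the $(1,n)$-VHS is derived and induced by $\sC_d$, the eigenspace of type $(1,n)$ is some $\sL_{j_0}$ with $r_0=\gcd(j_0,m)=d$, say $j_0=j_0'd$ with $j_0'\in(\Z/p)^*$; by Proposition~\ref{projing} and Section~$4.2$ this eigenspace is identified with the type-$(1,n)$ eigenspace of the primitive degree-$p$ family $\sC_d$. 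By Theorem~\ref{monorep} the local monodromy of $\sL_d$ around $a_k$ is $e^{2\pi i d_k/p}$, so by Proposition~\ref{repro} the Dehn twist $T_{k_1,k_2}$ acts semisimply on $\sL_d$ exactly when $e^{2\pi i(d_{k_1}+d_{k_2})/p}\neq1$. Thus the hypothesis becomes the purely arithmetic statement that $p\nmid d_{k_1}+d_{k_2}$ for all $k_1\neq k_2$.

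First I would record the two numerical facts I intend to play off against each other. Using the notation $[q]_1$ for the fractional part, Proposition~\ref{1.27} applied to $\sL_{j_0}$ gives $\sum_{s}[j_0\mu_s]_1=\sum_s[j_0'd_s/p]_1=2$, since $h^{1,0}_{j_0}=1$. On the other hand, the defining property of a pure $(1,n)$-VHS forces every eigenspace lying in $N^1(C_p,\Q)\oplus N^1(C_1,\Q)$ --- that is, every $\sL_j$ with $d\nmid j$, the ``new'' eigenspaces relative to $\sC_d$ --- to be unitary, of type $(a,0)$ or $(0,b)$. By Proposition~\ref{1.27} and Corollary~\ref{rangi} this unitarity is equivalent to $\sum_{s}[j\mu_s]_1\in\{1,\,|S_j|-1\}$. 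Applied to the characters $j$ with $p\mid j$ (the summands of $\sV_p$) it pins down $\sum_s[\bar p\,d_s/d]_1\in\{1,\,|S_p|-1\}$, where $\bar p p\equiv1\pmod d$.

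The heart of the argument is then to exhibit a character $j^\ast$ with $\gcd(j^\ast,pd)=1$ whose eigenspace $\sL_{j^\ast}$ is of genuinely mixed type, contradicting purity. For this I would use the Chinese-remainder identity $[x/(pd)]_1=\langle[\,x\bar d/p\,]_1+[\,x\bar p/d\,]_1\rangle$ (the brackets $\langle\cdot\rangle$ denoting reduction of the sum modulo $1$), valid for coprime $p,d$, to write
$$h^{1,0}_{j^\ast}=\Big(\sum_s[u\,d_s/p]_1\Big)+\Big(\sum_s[w\,d_s/d]_1\Big)-E-1,$$
where $u\equiv j^\ast\bar d\pmod p$ and $w\equiv j^\ast\bar p\pmod d$ are free units and $E=\#\{s:[ud_s/p]_1+[wd_s/d]_1\geq1\}$ counts the ``carries.'' Choosing $u=j_0'$ makes the first bracket equal to $2$ by the type-$(1,n)$ relation, while the second bracket is controlled by the $\sV_p$-unitarity above; the task is to select $w$ so that $1\le h^{1,0}_{j^\ast}\le|S_{j^\ast}|-3$, which exhibits $\sL_{j^\ast}$ as mixed. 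Here the semisimplicity hypothesis $p\nmid d_{k_1}+d_{k_2}$ enters decisively: it guarantees that $[j_0'd_s/p]_1$ never vanishes for the relevant points and never pairs up to force $E$ to an extreme value, so that no cancellation collapses $h^{1,0}_{j^\ast}$ back onto the unitary boundary $\{0,\,|S_{j^\ast}|-2\}$.

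I expect the main obstacle to be precisely the bookkeeping of the carry count $E$: showing that the hypothesis $p\nmid d_{k_1}+d_{k_2}$, together with the already-forced values of the two bracketed sums, really does prevent $h^{1,0}_{j^\ast}$ from landing on that boundary for \emph{every} admissible choice of $w$. I would handle this by treating the finitely many shapes of the multiset $\{d_s\bmod p\}$ separately --- distinguishing whether $\sV_p$ is of low or higher dimension --- and, in the residual small configurations, by reducing to a four-branch-point family over $\sP_1$ via the collision of points of Construction~\ref{cop}, on which the types can be read off directly from Proposition~\ref{1.27}.
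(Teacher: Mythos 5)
Your overall strategy --- derive a contradiction by exhibiting a character $j^*$ with $\gcd(j^*,pd)=1$ whose eigenspace $\sL_{j^*}$ has mixed type --- is the same as the paper's, and your translations of the hypotheses are essentially right (the semisimplicity condition as $p\nmid d_{k_1}+d_{k_2}$ for the relevant pairs, unitarity of an eigenspace as $\sum_s[j\mu_s]_1\in\{1,|S_j|-1\}$, and $\sum_s[j_0\mu_s]_1=2$ for the type-$(1,n)$ eigenspace). But the argument is not closed: the step you yourself flag as the main obstacle --- controlling the carry count $E$ so that $h^{1,0}_{j^*}$ avoids the boundary values $0$ and $n+1$ for \emph{some} admissible choice --- is precisely the content of the lemma, and nothing in the sketch shows such a choice exists. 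Moreover, by freezing the mod-$p$ component at $u=j_0'$ you restrict to a subfamily of candidate characters that need not contain a witness; the character the paper actually uses does not in general lie in it.

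The idea you are missing is the one that lets the paper bypass the carry bookkeeping entirely. Purity forces the degree-$d$ quotient family $\sC_p$ to have a trivial $VHS$, and from this the paper extracts a branch index $d_2$ with $d\mid d_2$, while irreducibility gives a $d_1$ with $d\nmid d_1$; hence $d\nmid d_1+d_2$. The semisimplicity hypothesis then gives $p\nmid d_1+d_2$, so $[d_1+d_2]_{pd}$ is a unit, and one takes $j^*=d_0$ with $d_0[d_1+d_2]_{pd}=[1]_{pd}$. The whole point of this choice is that the two local monodromy data of $\sL_{d_0}$ at $a_1,a_2$ are positive and sum to exactly $\frac{pd+1}{pd}$, while all $n+3$ data of $\sL_{d_0}$ are positive and each is at most $\frac{pd-1}{pd}$; Proposition $\ref{1.27}$ then forces $1\le h^{1,0}_{d_0}\le n$ at once, with no case analysis on the residues $d_s \bmod p$. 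Your sketch contains neither the divisibility statement $d\mid d_2$ (which is what makes $d_1+d_2$ invertible modulo $d$, and is where the derived-from-$\sC_d$ hypothesis actually enters) nor any mechanism pinning two monodromy data to a sum just above $1$, so as written the proof does not go through.
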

\begin{proof}
Since $\sC_p$ must have a trivial $VHS$, there exists a $d_2$ such that $d$ divides $d_2$.
Moreover there is a $d_1$ such that $d$ does not divide $d_1$. Hence $\gcd(d, d_1 + d_2) = 1$. By
the fact that $\sC_d$ has the property that its local monodromy data satisfy
$\mu_1 + \mu_2 \neq 1$, one concludes that $\gcd(p, d_1 + d_2) = 1$, too. Hence $[d_1 + d_2]_{dp}$
is a unit in $\Z/(dp)$. Thus there exists a $d_0 \in (\Z/(dp))^*$ such that
$d_0[d_1 + d_2]_{dp} = 1$. One obtains that the sum of the integers of $\{1, \ldots, p-1\}$
representing $[d_0d_1]_{dp}$ and $[d_0d_2]_{dp}$ is given by $dm+1$. By Proposition $\ref{1.27}$,
one concludes that $\sL_{d_0}$ is not of type $(0,n+1)$. Moreover the fact that the local
monodromy data of $\sL_{d_0}$ satisfy
$$\mu_1 + \mu_2 = \frac{dp+1}{dp}, \  \ \mu_3 \leq \frac{dp-1}{dp}, \  \ \mu_{4} + \ldots +
\mu_{n+3} <n$$
tells us that
$$\mu_{1} + \ldots + \mu_{n+3} <n+2.$$
Hence one concludes by Proposition $\ref{1.27}$ that $\sL_{d_0}$ is not of type $(n+1,0)$, too.
\end{proof}

\begin{lemma}
Let $m = 2^tp$, where $p \neq 2$ is a prime number and $t \geq 1$. Assume that $d$ is a prime
number such that $\gcd(d, m) = 1$. Then a family $\sC$ of degree $m\cdot d$ covers with a derived
pure $(1,n)-VHS$, which is induced by $\sC_{d}$, can not exist.
\end{lemma}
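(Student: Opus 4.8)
The plan is to extend the argument of the preceding lemma, whose only genuine input was the production of a unit $d_0\in(\Z/(md))^*$ for which the eigenspace $\sL_{d_0}$ is neither of type $(a,0)$ nor of type $(0,b)$, contradicting the purity of the derived $(1,n)$-VHS. First I would reduce to $d$ prime, as licensed by the remark at the beginning of this section: if $d=p_1^{n_1}\cdots p_t^{n_t}$ there is already a family of degree $p_1m$ with a derived pure $(1,n)$-VHS induced by its $\sC_d$-quotient, so it suffices to treat $d$ prime. Write $M:=md=2^tpd$ and let $\sL_{j_0}$, with $r_0:=\gcd(j_0,M)=d$, be the eigenspace of type $(1,n)$; thus $j_0=dj_0'$ with $j_0'$ prime to $m=2^tp$, so $j_0$ is odd and prime to $p$.

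Next I would exploit that several quotients of $\sC$ do not see $\sL_{j_0}$ and hence carry only special eigenspaces. By Proposition $\ref{projing}$ the quotient $\sC_r$ realizes exactly the eigenspaces $\sL_{rj}$; since $2^t\nmid j_0$, $2^td\nmid j_0$ and $m\nmid j_0$, none of $\sC_{2^t}$, $\sC_{2^td}$, $\sC_m$ contains $\sL_{j_0}$, so by Remark $\ref{nixda}$ each has finite monodromy and therefore isotrivial variation. An isotrivial family of cyclic covers over the moving configuration can ramify over at most three of the marked points (a fixed curve occurs in only finitely many fibres, see $\ref{japp}$), and the strongest such statement, from $\sC_{2^t}$ of degree $pd$, yields $pd\mid d_k$ for all but at most three indices $k$; call the exceptional set $A$, so $|A|\le 3$. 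On the other hand $\sL_{j_0}$ has rank $n+1=|S_{j_0}|-2$, whence $|S_{j_0}|=n+3$, i.e.\ $\sL_{j_0}$ ramifies over \emph{every} branch point; since $\gcd(j_0,M)=d$ this forces $2^tp\nmid d_k$ for all $k$. Consequently $d_k=pd\,c_k$ with $2^t\nmid c_k$ for $k\notin A$.

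With this in hand the remaining task is arithmetic: find a pair $\{k_1,k_2\}$ with $s:=d_{k_1}+d_{k_2}$ coprime to $M$, set $d_0:=s^{-1}\in(\Z/(M))^*$, and observe, as in the preceding lemma, that $[d_0\mu_{k_1}]_1+[d_0\mu_{k_2}]_1=1+\tfrac1M$ while the remaining $n+1$ local data are each strictly below $1$; then $1<\sum_k[d_0\mu_k]_1<n+2$, and Proposition $\ref{1.27}$ shows $\sL_{d_0}$ is neither of type $(0,n+1)$ nor of type $(n+1,0)$, the desired contradiction. The hard part, and the only genuinely new feature compared with the preceding lemma, is producing such a pair against all three primes $2,p,d$ simultaneously. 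Irreducibility of $\sC_d$ and of $\sC$ guarantees branch indices not divisible by $p$ and not divisible by $d$, and these must lie in $A$ because $pd\mid d_k$ off $A$; pairing a suitable index of $A$ with one of the abundant indices of $B$ (where $pd\mid d_k$) kills the primes $p$ and $d$, the delicate point being whether a single exceptional index can avoid both $p$ and $d$, which I would settle by a short case analysis on $|A|\le 3$ (falling back, if necessary, on the explicit classification that forces $\sC_d$ to be one of the degree $6,10,12$ families of the primitive list). The prime $2$ is the true obstacle, to be handled by a parity count using the half-index eigenspace $\sL_{M/2}$: its local datum at $a_k$ equals $\tfrac12$ when $d_k$ is odd and $0$ otherwise, so by Proposition $\ref{1.27}$ it has type $(\tfrac N2-1,\tfrac N2-1)$ with $N$ the number of odd branch indices; since $M/2\neq\pm j_0$ this eigenspace must be special, forcing $N=2$ (the case $N=0$ contradicts irreducibility). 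Two odd indices against $n+1$ even ones create many odd-sum pairs, and intersecting this parity freedom with the $p$- and $d$-constraints above furnishes the coprime pair, completing the contradiction.
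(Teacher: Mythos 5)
Your reductions are sound as far as they go: $|S_{j_0}|=n+3$ does force $2^tp\nmid d_k$ for all $k$; the pigeonhole on $|A|\le 3$ (at least two indices prime to $d$ and at least two prime to $p$, all necessarily lying in $A$) does produce an index avoiding both $p$ and $d$, so the point you flag as delicate is actually fine; and the $\sL_{M/2}$ computation giving exactly two odd branch indices is correct and arguably cleaner than the parity juggling in the paper. The genuine gap is the final sentence: the constraints you have derived do \emph{not} force the existence of a pair $k_1,k_2$ with $d_{k_1}+d_{k_2}\in(\Z/(M))^*$. Take $t=2$, $p=3$, $d=5$, so $M=60$, $m=12$, and the configuration $d_1=30$, $d_2=2$, $d_3=3$, $d_4=25$. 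It satisfies everything you establish: the sum is $60$, the cover is irreducible, $12\nmid d_k$ for every $k$, $15\mid d_1$ so $|A|=3$, exactly two indices are odd, and two indices avoid $3$ while two avoid $5$. Yet the six pair sums $32,33,55,5,27,28$ are each divisible by $2$, $3$ or $5$, so no $d_0$ of the required form exists and your contradiction never materializes. (This is not a counterexample to the lemma: here $\sL_9$ has local data summing to $2$, hence is of type $(1,1)$ with $\gcd(9,60)=3\ne d$, which kills the configuration --- but nothing in your argument looks at such eigenspaces.)

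The paper escapes in two ways your sketch omits. First, it exploits the triviality of $\sC_2$ and of $\sC_p$ \emph{separately}: the former pins $n$ of the indices to the exact value $2^{t-1}pd$, the latter forces $2^td$ to divide $n$ further indices, and since no index can satisfy both conditions while at least two indices must avoid $d$, this collapses everything to $n=1$ with $d_1=2^{t-1}pd$ and $2^td\mid d_2$; your single quotient $\sC_{2^t}$ only yields $pd\mid d_k$ off $A$, which is strictly weaker. Second --- and this is decisive --- even after that collapse the paper still meets a case ($t=1$, $d_3$ a unit normalized to $1$, $\gcd(dm,d_1+d_3)=2$) in which \emph{no} pair has unit sum; there it abandons the coprime-pair device and instead uses $d_0=pd-2$, for which $d_0(d_1+d_3)\equiv -2$, together with the fact $2^td\mid d_2$, to exclude the types $(0,n+1)$ and $(n+1,0)$. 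So the coprime-pair endgame is provably insufficient on its own, and your proof needs either the stronger divisibility constraints coming from $\sC_2$ and $\sC_p$ or a substitute for $d_0$ in the parity-obstructed case.
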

\begin{proof}
Since $\sC_2$ must have a trivial $VHS$, one has $d_1 = \ldots = d_n = 2^{t-1}dp$. By the fact
that $\sC_p$ must
have a trivial $VHS$, we obtain that $2^td$ divides $n$ different branch indeces. Since there must
be at least two different branch indeces, which are not divided by $d$, $d_{n+2}$ and $d_{n+3}$
are not divided by $d$. By the fact that $d_1 = \ldots = d_n = 2^{t-1}dp$ is not divided by $2^td$,
one must have $n = 1$ and that $2^td$ divides $d_2$. Moreover the facts that
$$d_1 +\ldots + d_4 \in (m) = (2^tpd)\   \ \mbox{and}  \   \ 2|d_2$$
imply without loss of generality that $2$ does not divide $d_3$. We have two cases: Either $p|d_3$
or this does not hold true. In the first case one has that 2, $p$ and $d$ do not
divide $d_2+d_3$. Hence $d_2+d_3$ is a unit, and again we use the argument that there is a
$d_0 \in (\Z/(dm))^*$ such that $[d_0d_2 + d_0d_3] = 1$.

In the other case $d_3$ yields a unit of $\Z/(dm)$. Hence we have without loss of generality
$d_3 = 1$. Thus $g:=\gcd(dm,d_1+d_3) \in \{1,2\}$. If $g = 1$, we are done again. Otherwise we
must have $t = 1$, if $g=2$. Hence
$$[(pd-2)(d_1+d_3)]_{dm} = pd + pd-2 = dm - 2$$
such that $\sL_{pd-2}$ is neither of type $(0,n+1)$ nor of type $(n+1,0)$, since the fact that
$2^td$ divides $d_2$ implies that $[(pd-2)d_2]_{dm} \neq [1]_{dm}$.
\end{proof}

\begin{lemma}
Let $p$ be a prime number and $m = p^t$ with $t \geq 2$. Assume that $d$ is a
prime number such that $\gcd(d, p) = 1$. Then there can not be a family $\sC$ of degree $m\cdot d$
covers with a derived pure $(1,n)-VHS$, which is induced by $\sC_{d}$.
\end{lemma}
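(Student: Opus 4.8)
The plan is to argue by contradiction, following the template of the two preceding lemmas. Suppose a family $\sC$ of degree $N := p^t d$ carries a derived pure $(1,n)$-$VHS$ induced by $\sC_d$, and let $\sL_{j_0}$ be the eigenspace of type $(1,n)$. Since the variation is induced by $\sC_d$ (degree $p^t$), one has $j_0 = du$ with $\gcd(u,p)=1$, so $\gcd(j_0,N)=d$ and in particular $p \nmid j_0$ and $p \nmid N-j_0$. Hence neither of the two non-unitary eigenspaces $\sL_{j_0},\sL_{N-j_0}$ occurs among the eigenspaces $\sL_{pk}$ of $\sC_p$, so every eigenspace of $\sC_p$ is unitary and $\sC_p$ has a trivial $VHS$ (Remark $\ref{nixda}$). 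Moreover, because $j_0$ and $N-j_0$ are not units, for each unit $d_0 \in (\Z/(N))^*$ one has $S_{d_0} = \{a_1,\dots,a_{n+3}\}$ (all branch points are active, as $N \nmid d_0 d_k$), so $\sL_{d_0}$ has rank $n+1$ by Corollary $\ref{rangi}$, and purity forces $\sL_{d_0}$ to be of type $(n+1,0)$ or $(0,n+1)$. The whole strategy is to contradict this by producing a unit $d_0$ for which $\sL_{d_0}$ is non-unitary.

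First I would reduce this to a purely numerical statement. By Proposition $\ref{1.27}$, writing $\Sigma(d_0) := \sum_{k=1}^{n+3}[d_0\mu_k]_1$ (a positive integer), one has $h^{1,0}_{d_0} = \Sigma(d_0)-1$ and $h^{0,1}_{d_0} = (n+2)-\Sigma(d_0)$, so $\sL_{d_0}$ is non-unitary exactly when $2 \le \Sigma(d_0) \le n+1$. It therefore suffices to find two branch points $a_1,a_2$ whose index sum $d_1+d_2$ is coprime to $N$, and set $d_0 := (d_1+d_2)^{-1}$ in $\Z/(N)$. Then $[d_0d_1]_N + [d_0d_2]_N \equiv 1 \pmod N$ with both summands in $\{1,\dots,N-1\}$, which forces $[d_0d_1]_N+[d_0d_2]_N = N+1$, hence $[d_0\mu_1]_1+[d_0\mu_2]_1 = \tfrac{N+1}{N}$; since the remaining $n+1$ terms each lie in $\bigl(0,\tfrac{N-1}{N}\bigr]$ one gets $\tfrac{N+1}{N} < \Sigma(d_0) \le (n+2)-\tfrac{n}{N} < n+2$, so indeed $2 \le \Sigma(d_0) \le n+1$.

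The heart of the matter is then to locate two branch points with $\gcd(d_1+d_2,N)=1$, and this is exactly where $t \ge 2$ enters. Triviality of the $VHS$ of $\sC_p$ forces at least one branch point to be inactive in $\sC_p$, i.e. divisible by $N/p = p^{t-1}d$ (otherwise $\sC_p$ would be a non-isotrivial family with at least four moving branch points, cf. $\ref{japp}$, contradicting triviality); for $t \ge 2$ such a \emph{heavy} index is divisible by both $p$ and $d$. Now classify each $a_k$ by whether $p \mid d_k$ and whether $d \mid d_k$: irreducibility of $\sC$ supplies a point with $p \nmid d_k$ and a point with $d \nmid d_k$, and the heavy point $H$ has $p \mid d_H$ and $d \mid d_H$. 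If some point $A$ is coprime to $N$, pair it with $H$: then $d_A+d_H \equiv d_A \not\equiv 0$ modulo both $p$ and $d$. Otherwise every index is divisible by $p$ or by $d$, and the two irreducibility points then provide an $A$ with $p\nmid d_A,\ d\mid d_A$ and a $B$ with $d\nmid d_B,\ p\mid d_B$, whence $d_A+d_B \equiv d_A \not\equiv 0 \pmod p$ and $d_A+d_B \equiv d_B \not\equiv 0 \pmod d$. In both cases $\gcd(d_1+d_2,N)=1$, and the reduction above gives the desired contradiction.

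The estimate on $\Sigma(d_0)$ and the pair extraction are essentially bookkeeping; the main obstacle I anticipate is the structural input that triviality of the $VHS$ of $\sC_p$ genuinely produces an index divisible by $p^{t-1}d$ (and hence, through $t\ge 2$, a type-$(1,1)$ heavy point). I expect to justify this exactly as the index-pinning in the preceding lemma — translating ``every eigenspace of $\sC_p$ is unitary'' into ``$\sC_p$ has at most three active branch points'' via $\ref{japp}$ and Remark $\ref{nixda}$ — and I would need to check the borderline small configurations (notably $n=1$, and the primitive bases of degree $4,8,9$ that can sit on $\sC_d$) to confirm that the heavy point, the two irreducibility points, and the resulting coprime pair can always be chosen among the $n+3$ branch points as distinct points.
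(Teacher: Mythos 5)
Your proof is correct and follows essentially the same route as the paper: triviality of the $VHS$ of $\sC_p$ yields an index divisible by $p^{t-1}d$ (hence, as $t\geq 2$, by both $p$ and $d$), one then locates a second index so that the sum of the two is a unit modulo $p^td$, and multiplying by the inverse $d_0$ of that sum produces an eigenspace $\sL_{d_0}$ with $[d_0d_{k_1}]_{p^td}+[d_0d_{k_2}]_{p^td}=1$, which can be neither of type $(n+1,0)$ nor of type $(0,n+1)$. The only (harmless) deviation is in how the companion index is found: the paper uses $\sum d_k\equiv 0 \pmod{p^td}$ to pin down a unit among the three non-heavy indices, while your two-case analysis on whether a unit index exists reaches the same coprime pair without that relation.
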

\begin{proof}
Since $\sC_p$ must have a trivial $VHS$, one concludes without loss of generality that
$dp^{t-1}$ divides $d_1, \ldots, d_n$. Since $d$
and $p$ divide 
$$dp^t = d_1 + \ldots +d_{n+3},$$
too, $p$ resp., $d$ does not divide at least two different elements of
$\{d_{n+1}, d_{n+2}, d_{n+3}\}$. Hence there is an element of $\{d_{n+1}, d_{n+2}, d_{n+3}\}$,
which is not divided by both $d$ and $p$. Without loss of generality $d_{n+1}$ is a unit in
$\Z/(2^td)$. Hence one has without loss of generality $[d_1 + d_{n+1}]_{dm} = [1]_{dm}$.
\end{proof}

There are only few remaining examples, which do not satisfy the assumptions of the preceding
lemmas. One of these examples is considered in the following lemma:

\begin{lemma}
Let $d \neq 3$ be a prime number. There can not be a family of covers of degree $3d$ with a
derived pure $(1,2)-VHS$ induced by $\sC_d$ given by the local monodromy data
$$\mu_1 = \ldots = \mu_4 = \frac{1}{3}, \   \ \mu_5 = \frac{2}{3}.$$
\end{lemma}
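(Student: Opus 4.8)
The plan is to show that purity of the variation on $\sC$ is incompatible with the constraints on its branch indices $d_1,\dots,d_5$ (with $d_k=m\mu_k$, $m=3d$) by exhibiting an eigenspace $\sL_j$, $j\neq d,2d$, of type $(a,b)$ with $a,b\ge 1$. First I would fix the setup: since the degree-$3$ quotient $\sC_d$ has the stated data, the mod-$3$ reductions of the branch indices are $[d_k]_3=1$ for $k\le 4$ and $[d_5]_3=2$, and $m\mid\sum_k d_k$. By Proposition $\ref{projing}$ the eigenspace $\sL_1$ of $\sC_d$ (which Proposition $\ref{1.27}$ shows is of type $(1,2)$) is the eigenspace $\sL_d$ of $\sC$, with conjugate $\sL_{2d}$ of type $(2,1)$. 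Hence a pure $(1,2)$-$VHS$ forces every $\sL_j$ with $j\neq d,2d$ to be special: writing $\Sigma_j:=\sum_{k}[j\mu_k]_1\in\N$ and noting $|S_j|=5$ whenever $\gcd(j,3d)=1$, the type formula of Proposition $\ref{1.27}$ gives $h^{1,0}_j=\Sigma_j-1$, $h^{0,1}_j=|S_j|-\Sigma_j-1$, so specialness of $\sL_j$ means $\Sigma_j\in\{1,|S_j|-1\}$.

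Second, I would reduce to $\sum_k d_k=3d$. Since $\sum_k d_k$ is a multiple of $3d$ strictly between $0$ and $15d$, we have $\Sigma_1=\frac{1}{3d}\sum_k d_k\in\{1,2,3,4\}$; as $|S_1|=5$ and $1\neq d,2d$, specialness of $\sL_1$ rules out $\Sigma_1\in\{2,3\}$ and leaves $\sum_k d_k\in\{3d,12d\}$. Replacing the family by its complex conjugate, equivalently $d_k\mapsto 3d-d_k$ (which preserves irreducibility since $\gcd(3d-d_k,3d)=\gcd(d_k,3d)$, and swaps $\sL_j\leftrightarrow\sL_{m-j}$, hence the distinguished pair $\{\sL_d,\sL_{2d}\}$ and purity), I may assume $\sum_k d_k=3d$. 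Note that the remaining argument never refers to the residues mod $3$.

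Third, I would analyse the degree-$d$ quotient $\sC_3$. Its eigenspaces are exactly the $\sL_{3j'}$, and since $3\nmid d$ none of them equals $\sL_d$ or $\sL_{2d}$; by purity they are all special, so $\sC_3$ has constant $VHS$. Here I would invoke the key sub-lemma: a family of cyclic covers of $\bP^1$ of prime degree with at least $4$ moving branch points is non-isotrivial (by $\ref{japp}$ each curve occurs in only finitely many fibres, while the configuration space has positive dimension) and so has non-constant period map, i.e. a general eigenspace. Thus $\sC_3$ has at most $3$ moving branch points, so $d\mid d_k$ for at least two indices; as $d_k<3d$ these lie in $\{d,2d\}$, and with $\sum_k d_k=3d$ the only possibility is that exactly two $d_k$ equal $d$ and the other three sum to $d$. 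Then no $d_k$ exceeds $d<\frac{3d}{2}$, whence $\Sigma_2=\frac{2}{3d}\sum_k d_k=2$ with $|S_2|=5$, so $\sL_2$ is of type $(1,2)$ — general, although $2\neq d,2d$ (as $d\ge 5$) — contradicting purity. The degenerate case $d=2$ (where $\sL_2=\sL_d$ is itself distinguished) I would settle by direct inspection: irreducibility and the divisibility/sum constraints leave only the configurations $(1,1,1,1,2),(1,1,4,4,2),(1,1,1,4,5),(1,4,4,4,5)$, and in each of them $\sL_1$ or $\sL_3$ is already general.

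The main obstacle is the sub-lemma of the third step — that a prime-degree family with $\ge 4$ moving branch points never has constant $VHS$ — which I expect to establish either from the rigidity input $\ref{japp}$ together with the Torelli theorem, or by a self-contained combinatorial check that for prime $d$ one cannot have $\Sigma_j\in\{1,N-1\}$ for all $j$ once $N\ge 4$. Everything else is bookkeeping with Proposition $\ref{1.27}$; the one point demanding care is the conjugation reduction, where I must confirm that $d_k\mapsto 3d-d_k$ genuinely preserves both irreducibility and the identification of the distinguished eigenspaces.
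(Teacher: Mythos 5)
Your proof is correct, and it shares the paper's pivotal step: purity forces every eigenspace of the degree-$d$ quotient $\sC_3$ to be special, hence its $VHS$ is trivial, hence at least two branch indices are divisible by $d$ (the non-isotriviality input that you rightly flag is used just as implicitly by the paper, via $\ref{japp}$ and Remark $\ref{nixda}$, so it is not a gap relative to the source). Where you diverge is the endgame. The paper picks one index divisible by $d$ and one not, notes that the mod-$3$ data $(1,1,1,1,2)$ allow at most one of the two sums $d_1+d_3$, $d_2+d_4$ to be divisible by $3$, so that one of them is a unit in $\Z/(3d)$, and then rescales by the inverse unit to produce $\sL_{d_0}$ with $[d_0d_1+d_0d_3]_{3d}=1$; Proposition $\ref{1.27}$ then forces $\sL_{d_0}$ to be of type $(1,2)$ or $(2,1)$, and since $d_0$ is a unit it is automatically distinct from $d$ and $2d$ --- no case distinction needed. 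You instead normalize $\sum d_k=3d$ by conjugation, pin the branch data down to two indices equal to $d$ and three summing to $d$, and read off that $\sL_2$ is general; this is more explicit (you essentially classify the admissible configurations) but costs you the separate enumeration for $d=2$, where $\sL_2$ coincides with the distinguished eigenspace, and it also makes the mod-$3$ hypothesis nearly irrelevant to your argument, whereas the paper uses it essentially to manufacture the unit. Both routes are sound; the paper's is shorter and uniform across the lemmas of that section, yours yields sharper structural information about the branch indices.
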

\begin{proof}
Let $\gcd(d, 3) = 1$ and $\sC$ be a family of degree $3d$ with a derived pure $(1,2)-VHS$.
Since $\sC_3$ should have a trivial $VHS$, one has with a new enumeration $d|d_1$ and $d|d_2$.
Moreover one has without
loss of generality that $d_3$ and $d_4$ are not divided by $d$. Hence $d$ divides neither
$d_1 + d_3$ nor $d_2 + d_4$. Moreover the local monodromy data of
$\sC_d$ tell us that 3 does not divide $d_1 + d_3$ or $d_2 + d_4$.
Hence without loss
of generality $d_1 + d_3$ is a unit in $\Z/(3d)$ such there is a $d_0\in (\Z/(3d))^*$ with the
property that $[d_0d_1 + d_0d_3]_{3d} = 1$, which implies that $\LL_{d_0}$ is of type $(1,2)$ or
of type $(2,1)$.
\end{proof}

The reader checks easily that all examples of families with a primitive pure $(1,n)-VHS$ satisfy
with two exceptions the assumptions of one of the preceding lemmas. These two exceptions yield
examples of families with a derived pure $(1,n)-VHS$ as we will see now.

\begin{pkt}
Now we consider the case of the elliptic curves. Let $d$ be a prime number with $\gcd(d, 2) =1$ and
$\sC$ be a family of degree $d\cdot 2$ covers with a derived pure $(1,1)-VHS$ induced by $\sC_d$.
Thus $d_1, \ldots, d_4$ must be odd. Without loss of generality we have $d_4 = d$, since $\sC_2$
must have a trivial $VHS$. Since $d_3 = d$
would imply that $\LL_1$ is of type $(1,1)$, one has that $d_1, d_2, d_3 \in (\Z/(2d))^*$.
We have two cases. Either $d_1 = d_2$ or this does not hold true. In the first case we put
$d_1 = d_2 = d-2$. One has
$$2d < d_1 + d_2 + d_4 < 2\cdot 2d$$
such that $\LL_1$ is of type $(1,1)$, if $4 <d$. Thus one can have $d = 3$. In this case one
has a family of degree 6 covers, where $d_4 = 3$. Hence one must have
$$\mu_1 = \mu_2 = \mu_3 = \frac{1}{6}, \  \ \mu_4 = \frac{1}{2}.$$

In the second case, one puts $d_3 = d-2$. This
implies that $d_3 + d_4 = 2d-2$. Since $d_1 \neq d_2$, one can not have $d_1 = d_2 = 1$ such that
$\LL_1$ is of type $(1,1)$ in this case.
\end{pkt}

\begin{pkt}
Now we consider the case number 2 in the list of examples with a primitive pure $(1,1)-VHS$. Let
$d$ be a prime number with $\gcd(d, 3) =1$ and $\sC$ be a family of degree $d\cdot 3$ covers with a
derived pure $(1,1)-VHS$ induced by $\sC_d$. Assume without loss of generality that $d$ divides
$d_1$ and $d_1 + \ldots + d_4 = 3d$. We have 2 cases: Either $d$ divides $d_2$, $d_3$ or $d_4$, or
$d$ does not divide $d_2$, $d_3$ and $d_4$. In the first case one has without loss of generality
that $d$ divides $d_2$. Since $d$ divides $d_1$ and $d_1 + \ldots + d_4 = 3d$, one concludes that
$d_1 = d_2 = d$. This implies that $\sL_2$ is of type $(1,1)$ such that $d=2$. In addition one
concludes that
$$d_1 = d_2 = 2, \  \ d_3 = d_4 = 1.$$

In the second case one has  that 3 does
not divide $d(d_1+d_i)$ for exactly one $k \in \{2,3,4\}$, which
follows by the branch indeces in the case number 2. Hence 3 does not
divide $d_1+d_k$. Moreover $d$ does not divide $d_1+d_k$, too. Hence $d_1
+ d_k \in (\Z/(3d))^*$.
\end{pkt}

\begin{proposition}
Let $d$ be a prime number, which divides $m$ and $\sC$ be a family of covers of degree $md$.
Assume a Dehn twist yields a semisimple matrix of maximal order $m$ with respect to the monodromy
representation of $\sL_{d}$. Then $\sC$ can not have a derived pure $(1,n)-VHS$ induced by
$\sC_{d}$.
\end{proposition}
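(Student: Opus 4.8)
The plan is to adapt the argument of the first lemma of this section (the $\gcd(d,p)=1$ case) to the situation $d \mid m$, producing a ``new'' eigenspace of mixed type that a pure $(1,n)$-VHS is forbidden to possess. First I would record what the hypothesis says concretely. By Proposition \ref{projing} the eigenspace $\sL_d$ of $\sC$ is the character-$1$ eigenspace of $\sC_d$, and by Proposition \ref{repro} a Dehn twist $T_{\ell,\ell+1}$ acts on $\sL_d$ with non-trivial eigenvalue $e^{2\pi i(d_\ell+d_{\ell+1})/m}$, where the $d_k$ are the branch indices of $\sC$. After relabelling I may assume the semisimple Dehn twist of maximal order $m$ is $T_{1,2}$; ``maximal order $m$'' then translates into $\gcd(d_1+d_2,m)=1$.

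The key arithmetic remark is that, since $d \mid m$, the integers $m$ and $md$ have the same prime divisors, so $\gcd(d_1+d_2,m)=1$ already yields $\gcd(d_1+d_2,md)=1$; that is, $[d_1+d_2]_{md}$ is a unit in $\Z/(md)$. I would choose $d_0 \in (\Z/(md))^*$ with $d_0(d_1+d_2)\equiv 1 \pmod{md}$ and consider the eigenspace $\sL_{d_0}$. Writing $r_i$ for the representative of $d_0 d_i$ in $\{1,\dots,md-1\}$ (both non-zero, as $d_0$ is a unit and $0<d_i<md$), the congruence $r_1+r_2\equiv 1$ together with $2\le r_1+r_2\le 2md-2$ forces $r_1+r_2=md+1$, so that $[d_0\mu_1]_1+[d_0\mu_2]_1=(md+1)/(md)>1$.

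Next I would pin down the type of $\sL_{d_0}$ via Proposition \ref{1.27}. Since $\sum_k\mu_k\in\N$, the sum $\sum_{k}[d_0\mu_k]_1$ is an integer. It is $>1$ (the first two terms already exceed $1$ and the remaining ones are positive), and it is $<n+2$ because $[d_0\mu_1]_1+[d_0\mu_2]_1+[d_0\mu_3]_1\le (md+1)/(md)+(md-1)/(md)=2$ while the other $n$ terms are each $<1$; hence it lies in $\{2,\dots,n+1\}$. As $d_0$ is a unit, no $[d_0\mu_k]_1$ vanishes, so $|S_{d_0}|=n+3$ and Corollary \ref{rangi} gives $h^1_{d_0}(C)=n+1$; consequently $h^{1,0}_{d_0}=\sum_k[d_0\mu_k]_1-1$ and $h^{0,1}_{d_0}=(n+1)-h^{1,0}_{d_0}$ are both at least $1$. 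Thus $\sL_{d_0}$ is of mixed type $(a,b)$ with $a,b\ge 1$.

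Finally I would derive the contradiction. Because the VHS is a \emph{derived} pure $(1,n)$-VHS induced by $\sC_d$, its eigenspace $\sL_{j_0}$ of type $(1,n)$ satisfies $\gcd(j_0,md)=d$, so $d\mid j_0$ and likewise $d\mid(md-j_0)$. Since $d_0$ is a unit, $d\nmid d_0$, whence $\sL_{d_0}$ is neither $\sL_{j_0}$ nor $\sL_{md-j_0}$; but the definition of a pure $(1,n)$-VHS requires every other eigenspace to be of type $(a,0)$ or $(0,b)$. This contradicts the mixed type just obtained, so no such derived pure $(1,n)$-VHS can exist. I expect the only delicate point to be the upper bound on $\sum_k[d_0\mu_k]_1$, which (exactly as in the $\gcd(d,p)=1$ lemma) must be organised as $\mu_1+\mu_2+\mu_3\le 2$ together with $n$ summands strictly below $1$, in order to exclude the pure type $(n+1,0)$ rather than merely bounding each summand separately.
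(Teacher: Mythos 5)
Your proof is correct and follows essentially the same route as the paper: the paper's (very terse) proof likewise reduces to showing that the maximal order $m$ of the semisimple Dehn twist forces $\gcd(d_1+d_2,m)=1$, hence $d_1+d_2\in(\Z/(dm))^*$ because $d\mid m$, and then invokes the standard "$d_0(d_1+d_2)\equiv 1$" argument from the preceding lemmas of that section. Your write-up simply makes explicit the type computation for $\sL_{d_0}$ via Proposition~$\ref{1.27}$ and Corollary~$\ref{rangi}$ and the observation that $d_0$, being a unit, cannot index the distinguished eigenspace of a VHS derived from $\sC_d$.
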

\begin{proof}
Assume without loss of generality that $\rho_d(T_{1,2})$ yields a matrix of order $m$. In this
case $[d(d_1+d_2)]\in \Z/(dm)$ has the order $m$. Hence the fact that $d$ divides $m$ implies that
$d_1+d_2 \in (\Z/(dm))^*$.
\end{proof}

\begin{remark}
One can easily check that the assumptions of the preceding proposition are satisfied for all
examples of families with a primitive pure
$(1,n)-VHS$ except of the case of elliptic curves. In this case we have in fact an example of a
family of degree 4 covers with a derived pure $(1,1)-VHS$. Without loss of generality we have 
$$d_1 + \ldots + d_4 = 4.$$
Hence the only possibility is given by
$$d_1 = \ldots = d_4 = 1.$$
\end{remark}

\begin{pkt}
In the case of the elliptic curves we have families of degree 6 and degree 4 covers with a
derived pure $(1,1)-VHS$. Hence one must check that there is not a family of degree 8, 12
or 18 covers with derived pure $(1,1)-VHS$ in this case.

First we check that there is not a family $\sC$ of degree $8$ covers with a derived pure
$(1,1)-VHS$. Otherwise one has such a family $\sC$ of degree 8 covers such that $\sC_2$ is the
family of degree 4 covers with a derived pure
$(1,1)-VHS$. This implies that each $d_k$ satisfies $[d_k]_4 = [1]_4$ or each $d_k$ satisfies
$[d_k]_4 = [3]_4$. Moreover one has without loss of generality that $d_1 + \ldots +d_4 = 8$. Hence
it is not possible that each $d_k$ satisfies $[d_k]_4 = [3]_4$. Thus the only possibility is (up
to the numbering) given by
$$d_1 = d_2 = d_3 = 1, \  \ d_4 = 5.$$
But in this case $\sL_3$ is of type $(1,1)$. Thus there can not exist a family of degree $8$
covers with a derived pure $(1,1)-VHS$.

There can not be a family of degree 12 covers with a derived pure $(1,1)-VHS$ induced by $\sC_6$.
Otherwise one has that $\sC_3$ the example of degree 4 covers with a derived pure $(1,1)-VHS$. Thus
one concludes that
$$[d_1]_4 = \ldots = [d_4]_4 = [1]_4 \  \ \mbox{or} \  \  [d_1]_4 = \ldots = [d_4]_4 = [3]_4.$$
Since one has without loss of generality that $d_1 + \ldots + d_4 = 12$, the only possibilities
are given by
$$d_1 = d_2 = 5, \  \ d_3 = d_4 = 1 \  \ \mbox{and} \  \  d_1 = 9, \   \ d_2 = d_3 = d_4 = 1.$$
In the first case $\sL_2$ is of type $(1,1)$ and in the second case $\sL_5$ is of type $(1,1)$.

There can not be a family of degree 18 covers with a derived $(1,1)-VHS$ induced by $\sC_9$.
Otherwise one has that $\sC_3$ is the example of degree 6 covers with a derived pure $(1,1)-VHS$
induced by the elliptic curves. Thus one concludes that
$$[d_1]_6 = \ldots = [d_3]_6 = [1]_6 \ \ \mbox{and} \  \ [d_4]_6 = [3]_6$$
or
$$[d_1]_6 = \ldots = [d_3]_6 = [5]_6 \ \ \mbox{and} \  \ [d_4]_6 = [3]_6.$$
Since one has without loss of generality that $d_1 + \ldots + d_4 = 18$, the only possibilities
are given by:
$$d_1 = 13, \   \ d_2 = d_3 = 1, \ \ d_4 = 3$$
$$d_1 = d_2 = 7, \  \ d_3 = 1, \  \ d_4 = 3$$
$$d_1 = 7, \   \ d_2 = d_3 = 1, \  \ d_4 = 9$$
$$d_1 = d_2 = d_3 = 1, \  \ d_4 = 15$$
$$d_1 = d_2 = d_3 = 5, \  \ d_4 = 3.$$
One has that $\sL_5$ is of type $(1,1)$ in case 1, $\sL_2$ is of type $(1,1)$ in case 2,
$\sL_5$ is of type $(1,1)$ in case 3, $\sL_7$ is of type $(1,1)$ in case 4 and
$\sL_2$ is of type $(1,1)$ in case 5.
\end{pkt}

\begin{pkt}
It remains to show that there can not exist a degree $12$ cover with a derived $(1,1)-VHS$ induced
by the degree 3 cover given by
$$d_1 = d_2 = 1, \  \ d_3 = d_4 = 2.$$
Otherwise one has such a family $\sC$ of degree 12 covers such that $\sC_2$ is the family of
degree 6 covers with a derived pure $(1,1)-VHS$ by the degree 3 example above. Thus one
concludes that

$$[d_1]_6 = [d_2]_6 = [2]_6 \ \ \mbox{and} \  \ [d_3]_6 = [d_4]_6 = [1]_6$$
or
$$[d_1]_6 = [d_2]_6 = [4]_6 \ \ \mbox{and} \  \ [d_3]_6 = [d_4]_6 = [5]_6$$
Since one has without loss of generality that $d_1 + \ldots + d_4 = 12$, the only possibilities
are given by

$$d_1 = 8, \  \ d_2 = 2, \  \ d_3 = d_4 = 1 \  \ \mbox{and} \  \  d_1 = d_2 = 2, \   \ d_3 = 7,
\  \ d_4 = 1.$$
One has that $\sL_5$ is of type $(1,1)$ in the first case and one has that
$\sL_3$ is of type $(1,1)$ in the second case.
\end{pkt}

\chapter{The construction of Calabi-Yau manifolds with complex multiplication}

\section{The basic construction and complex multiplication}
Now we have finished our considerations on Hodge structures of cyclic covers of $\bP^1$. We start
with the second part, which is devoted to the construction of families of Calabi-Yau
manifolds with dense set of complex multiplication fibers.

In the works of C. Borcea \cite{Bc}, \cite{Bc2}, of E. Viehweg and K. Zuo \cite{VZ5} and of C.
Voisin \cite{Voi2} the methods to obtain higher dimensional Calabi-Yau manifolds contain one
common basic construction. In this section we describe this construction and explain how it yields
complex multiplication. For this construction we need Kummer coverings. Let $A-B$ be a principal
divisor with $(f) = A-B$ for some $f \in \C(X)$. The Kummer
covering given by $\C(X)(\sqrt[m]{\frac{A}{B}})$ is nothing but the normalization of $X$ in
$\C(X)(\sqrt[m]{f})$.

Let $V_1$ and $V_2$ be irreducible complex algebraic manifolds and $\sA$ resp., $\sB$ be a bundle
of irreducible algebraic manifolds with universal fiber $A$ resp.,
$B$ over $V_1$ resp., $V_2$. Moreover let $\sZ$ resp., $\Sigma$ be a cyclic
Galois cover of $\sA$ resp., a cyclic Galois cover of $\sB$ of
degree $m$ over $V_1$ resp., $V_2$ ramified over a smooth divisor. We assume that the irreducible
components of these ramification divisors intersect each fiber of $\sZ$ resp., $\Sigma$
transversally in smooth subvarieties of codimension 1. Thus we assume that $\sZ$ and
$\Sigma$ are given by Kummer coverings of the kind
$$\C(W) = \C(X)( \sqrt[m]{\frac{D_1+ \ldots + D_k}{D_0^m}} ),$$
where $D_1, \ldots, D_k$ are (reduced) smooth prime divisors, which do not intersect each other.

\begin{example} \label{datda}
By a cyclic degree 2 cover $S \to R$ of surfaces (or in general algebraic varieties),
one has an involution on $S$. Let us assume that the surface $S$ is a smooth $K3$
surface. Moreover assume that there exists an involution $\iota$ on $S$, which acts
via pull-back by $-1$ on $\Gamma(\omega_S)$. It
has the property that it fixes at most a divisor $D$, whose support consists of smooth curves,
which do not intersect each other (see \cite{Voi2}, 1.1.). Moreover by \cite{Voi2}, 1.1., to
give an involution $\iota$ on $S$, which acts by $-1$ on $\Gamma(\omega_S)$, is the same as to
give a cyclic degree 2 cover $S \to R$ of smooth surfaces. In this case $R$ is rational, if and
only if $D \neq 0$.
\end{example}

We consider the following commutative diagram, which yields the basic
construction:
\begin{equation} \label{diag}
\xymatrix{
  {\sZ\times \Sigma} \ar[rr]^{\gamma }  &  & {\sY'} \ar[rr]^{\alpha }  &  & {\sA \times \sB}
\ar[rr] &  & {V_1 \times V_2} \\
  {\widetilde{\sZ\times \Sigma}} \ar[rr]^{\tilde \gamma } \ar[u]^{\beta} &  & {\tilde  \sY}
\ar[rr]^{\tilde \alpha } \ar[u]^{\delta} &  & {\hat \Pi} \ar[u]^{\zeta}
}
\end{equation}
First we explain the upper line of this diagram: The cyclic covers $\sZ$ and $\Sigma$ can locally
be described by equations of the type
$$y^m = \prod_{i = 1, \ldots, k} f_i(x_1, \ldots, x_j)$$ over any open affine set $\A$ of $\sA$
resp., $\sB$, where $f_i$ is the (reduced) equation of $D_i$
in $\A$. The Galois transformations are given by
$$(y, x_1, \ldots, x_j) \stackrel{g_k}{\to} (e^{2\pi \sqrt{-1} \frac{k}{m}}y, x_1, \ldots, x_j)$$
for some $k \in \Z/(m)$. Hence we have a natural identification between $\Z/(m)$ and the Galois
groups given by $[k]_m \to g_k$. By the describtion of the covers above in terms of Kummer
coverings, this identification is independent of the chosen open affine subset. Now $\gamma$ is
the quotient by
$$G := \langle(1,1)\rangle  \  \ \subset  \  \ G' := \Gal(\sZ;\sA) \times \Gal(\Sigma;\sB),$$
and $\alpha$ is the quotient by $G'/G$. The morphism $\zeta$ is given by the blowing up of the
fiber product
of the supports of the branch divisors of $\sZ$ and $\Sigma$. Moreover $\delta$ is the blowing up
along the singular points of $\sY'$, which is given by the intersection locus of the ramification
divisors, and $\beta$ is the blowing up with respect to the corresponding
inverse image ideal sheaf. Hence $\tilde  \alpha$ and $\tilde \gamma$ are the unique cyclic covers
obtained by the universal property of the blowing up (compare to \cite{hart}, {\bf II}. Corollary
$7.15$). By the construction of $\alpha$, one can easily check that $\tilde \alpha$ is not ramified
over the exceptional divisor. Hence the branch locus of
$\tilde \alpha$ is smooth. This implies that $\tilde \sY$ is smooth, too. The ramification locus
of $\tilde \gamma$ is given by the smooth exceptional divisor of $\beta$, since $G$ leaves
the generators of the inverse image ideal sheaf invariant as one can see by the following remark:

\begin{remark} \label{buahaha}
Now we describe $\widetilde{\sZ\times \Sigma}$. A neighborhood of the
preimage point $p \in \sZ\times \Sigma$ of a singular point can be identified with an open
neighborhood of $0 \in \C^2 \times \B$, where $\B$ is a ball of suitable
dimension and the Galois group acts via $(x_1,x_2) \to
(e^{\frac{2\pi i}{m}} x_1, e^{\frac{2\pi i}{m}} x_2)$ with respect to the coordinates on $\C^2$.
Due to \cite{Bart}, {\bf III}. Proposition 5.3, each singular point of $\sY'$ has an analytic
neighborhood isomorphic to $V(x^m = y^{m-1}z) \times \B$. Hence locally we have the product of a
cover of surfaces with $\B$. One should have $\B$ in mind. But for the description of
$\widetilde{\sZ\times \Sigma}$, it is sufficient
to consider only covers of surfaces.  The inverse image ideal sheaf with respect to this cover is
generated by $\{x_1^{m-i}x_2^i : i = 0, 1, \ldots, m\}$. By the Veronese embedding for relative
projective manifolds, one can easily identify the blowing up with respect to this ideal with the
blowing up with respect to the ideal generated by $\{x_1, x_2\}$. But this is the blowing up of
the origin resp., the preimage point of the singular point. Hence in the general situation
$\widetilde{\sZ\times \Sigma}$ is given by the blowing up of the reduced preimage
$\gamma^{-1}(S)$, where $S$ is the singular locus of $\sY'$.
\end{remark}

Now we have described the basic construction. Next we see that this construction yields
complex multiplication. We use following fact:

\begin{proposition} \label{tensorp}
For all $\tilde a \in \sA$, and $\tilde b \in \sB$, we have the following tensor
product of rational Hodge structures on the fibers:
$$H^n(\sZ_{\tilde a} \times \Sigma_{\tilde b}, \Q) = \bigoplus\limits_{a+b = n}
H^a(\sZ_{\tilde a},\Q) \otimes H^b(\Sigma_{\tilde b}, \Q)$$
such that
$$H^{r,s}(\sZ_{\tilde a} \times \Sigma_{\tilde b}) = \bigoplus\limits_{p+p' = r, q + q' = s}
H^{p,q}(\sZ_{\tilde a}) \otimes H^{p',q'}(\Sigma_{\tilde b})$$
\end{proposition}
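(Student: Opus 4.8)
The statement is simply the Künneth formula refined by the Hodge decomposition, so the plan is to invoke the topological Künneth theorem and then check compatibility with the Hodge filtrations. First I would recall that for the smooth projective (or compact Kähler) varieties $\sZ_{\tilde a}$ and $\Sigma_{\tilde b}$, whose rational cohomology is finite-dimensional, the Künneth theorem gives a canonical isomorphism
\[
H^n(\sZ_{\tilde a} \times \Sigma_{\tilde b}, \Q) \;\cong\; \bigoplus_{a+b=n} H^a(\sZ_{\tilde a},\Q) \otimes_{\Q} H^b(\Sigma_{\tilde b},\Q),
\]
with no $\mathrm{Tor}$ contributions since we work over the field $\Q$. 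This handles the first displayed equation at the level of $\Q$-vector spaces.

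Next I would upgrade this to an isomorphism of Hodge structures. The key input is that the Künneth isomorphism, after tensoring with $\C$ and passing through $H^k(\cdot,\C)=H^k(\cdot,\Z)\otimes\C$ as in the Example early in Chapter 1, is induced by the exterior cup product $\alpha \otimes \beta \mapsto \mathrm{pr}_1^*\alpha \wedge \mathrm{pr}_2^*\beta$ on differential forms. Since the pullbacks $\mathrm{pr}_i^*$ preserve the types of Dolbeault classes and the wedge product of a form of type $(p,q)$ with a form of type $(p',q')$ is of type $(p+p',q+q')$, the cup product sends $H^{p,q}(\sZ_{\tilde a}) \otimes H^{p',q'}(\Sigma_{\tilde b})$ into $H^{p+p',q+q'}(\sZ_{\tilde a} \times \Sigma_{\tilde b})$. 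Summing over $p+p'=r$ and $q+q'=s$ gives exactly the refined decomposition
\[
H^{r,s}(\sZ_{\tilde a} \times \Sigma_{\tilde b}) = \bigoplus_{p+p'=r,\; q+q'=s} H^{p,q}(\sZ_{\tilde a}) \otimes H^{p',q'}(\Sigma_{\tilde b}),
\]
and a dimension count (together with the fact that both sides exhaust $H^n\otimes\C$) shows the inclusions are equalities. The compatibility $\overline{V^{p,q}}=V^{q,p}$ of the target follows from that of the two factors, so the tensor-product Hodge structure is genuinely the Hodge structure of the product, which is the content of the proposition.

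I do not expect a serious obstacle here; the only point requiring a little care is the verification that the algebraic Künneth isomorphism agrees with the geometric exterior cup product, so that the Hodge bigrading is respected rather than merely the total grading. This is standard (see, e.g., \cite{Voi}, and the discussion of exterior products of Hodge structures), and once it is in place the weight-$n$ statement and its $(r,s)$-refinement are immediate. One should also note that the isomorphism is defined over $\Q$ on the integral/rational lattices, so it is an isomorphism of $\Q$-Hodge structures and not merely of the complexifications; this is exactly what is needed for the subsequent applications to Hodge groups and complex multiplication.
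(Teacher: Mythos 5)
Your argument is correct and is essentially the standard proof of the result the paper simply cites: the paper's entire proof is a reference to \cite{Voi}, Theorem 11.38, and your Künneth-plus-exterior-cup-product verification of the bigrading is exactly the content of that theorem. No gaps; you have merely written out what the paper outsources to the reference.
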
 \begin{proof}
(follows from \cite{Voi}, Theorem 11.38)
\end{proof}

We want to construct higher dimensional varieties with complex multiplication. The first main tool
is:

\begin{proposition}
Let $h_1$ and $h_2$ be rational polarized Hodge structures. Then
$h_3 = h_1 \otimes h_2$ is of $CM$ type, if and only if $h_1$ and $h_2$ are of $CM$ type.
\end{proposition}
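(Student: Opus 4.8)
The plan is to prove both implications at the level of Mumford--Tate groups, using that for a polarized rational Hodge structure $(V,h)$ being of $CM$ type is equivalent to $\MT(V,h)$ being a torus (equivalently commutative); this matches the condition that $\Hg(V,h)$ be a torus, since $\MT = \Hg \cdot \G_{m}$ is an almost direct product and a connected subgroup of a torus is a torus. The central object is the joint Mumford--Tate group $M := \MT(h_{1}\times h_{2})$, the smallest $\Q$-algebraic subgroup of $\GL(V_{1})\times \GL(V_{2})$ whose real points contain $(h_{1}\times h_{2})(\BS)$, together with the tensor representation $\rho : \GL(V_{1})\times \GL(V_{2}) \to \GL(V_{1}\otimes V_{2})$, $\rho(g_{1},g_{2}) = g_{1}\otimes g_{2}$. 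Since $h_{3} = \rho \circ (h_{1}\times h_{2})$, my first step is to identify $\MT(h_{3}) = \rho(M)$: the group $\rho(M)$ is $\Q$-algebraic (the image of a $\Q$-group under a $\Q$-homomorphism is closed, as used in the proof of Lemma \ref{idhggr}) and contains $h_{3}(\BS)$, giving $\MT(h_{3}) \subseteq \rho(M)$; conversely $\rho^{-1}(\MT(h_{3}))$ is a $\Q$-algebraic subgroup containing $(h_{1}\times h_{2})(\BS)$, hence contains $M$ by minimality, so $\rho(M) \subseteq \MT(h_{3})$. By the same minimality argument $M \subseteq \MT(h_{1})\times \MT(h_{2})$ and both projections $M \to \MT(h_{i})$ are surjective (the Mumford--Tate analogue of Lemma \ref{7.1}).

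For the ``if'' direction I would argue as follows. If $h_{1}$ and $h_{2}$ are of $CM$ type, then $\MT(h_{1})$ and $\MT(h_{2})$ are tori, so $\MT(h_{1})\times \MT(h_{2})$ is a torus and $M$, being a connected subgroup of it, is a torus as well. Then $\MT(h_{3}) = \rho(M)$ is the image of a torus under an algebraic homomorphism, hence a torus, so $h_{3}$ is of $CM$ type.

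The ``only if'' direction is where the real work lies, and I expect it to be the main obstacle. Suppose $\MT(h_{3}) = \rho(M)$ is a torus. The kernel of $\rho$ is the central one-dimensional torus $\ker\rho = \{(\lambda\,\id_{V_{1}}, \lambda^{-1}\id_{V_{2}}) : \lambda \in \G_{m}\}$, so $K := M \cap \ker\rho$ is central in $M$ and $M/K \cong \rho(M)$ is commutative; hence the derived group $M^{\der} = [M,M]$ is contained in $K$. Now I invoke that $M$ is reductive: $M$ is the Mumford--Tate group of the polarized Hodge structure $h_{1}\oplus h_{2}$, and the polarization furnishes a Cartan involution exactly as in the reductivity argument recorded after Lemma \ref{idhggr}. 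A connected reductive group has semisimple derived group; but $M^{\der} \subseteq K$ is contained in a torus, and a semisimple group inside a torus is trivial, so $M^{\der} = \{e\}$ and $M$ is a torus. Finally, since the projections $M \to \MT(h_{i})$ are surjective, $\MT(h_{1})$ and $\MT(h_{2})$ are images of a torus, hence tori, so $h_{1}$ and $h_{2}$ are of $CM$ type. The one point demanding care throughout is the reductivity of $M$ together with the triviality of a central semisimple subgroup, since without reductivity the relation $[M,M]\subseteq Z(M)$ would only yield that $M$ is two-step nilpotent rather than commutative.
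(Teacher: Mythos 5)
Your argument is correct, but note that the paper itself does not prove this proposition at all: it simply cites Borcea (\cite{Bc}, Proposition 1.2), so there is no in-text proof to compare against. What you supply is a complete, self-contained argument of exactly the kind that citation hides. The skeleton is sound: $\MT(h_1\otimes h_2)=\rho(\MT(h_1\oplus h_2))$ follows from the two minimality arguments you give (using that images of $\Q$-homomorphisms of algebraic groups are closed), the kernel of $\rho$ is the antidiagonal central $\G_m$, and the ``if'' direction is immediate since a connected subgroup of a torus is a torus. For the ``only if'' direction your use of reductivity of $M=\MT(h_1\oplus h_2)$ is legitimate -- $h_1\oplus h_2$ is polarizable and the Cartan-involution argument recorded in the paper for weight one extends to this setting -- and you are right that this is the step that must not be skipped. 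It is worth pointing out, though, that you can localize the appeal to reductivity: since $[M,M]\subseteq K\subseteq \G_m$ is central, the commutator pairing $M\times M\to \G_m$ is bimultiplicative in each variable, hence a morphism from the connected group $M$ to the discrete character lattice of $M$, hence trivial; so $M$ is commutative with no reductivity input, and one then only needs that the surjective images $\MT(h_i)$ -- being commutative, connected and reductive as Mumford--Tate groups of polarized \emph{pure} Hodge structures, which is exactly the form of reductivity the paper actually records -- are tori. Either way the proof closes; the only cosmetic caveat is that your identification of ``CM type'' with ``$\MT$ is a torus'' should be matched against the paper's convention via $\Hg=(\MT\cap\SL)^0$, which you do note.
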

\begin{proof}
(see \cite{Bc}, Proposition 1.2)
\end{proof}

By the fact that $\sY'$ is not smooth, but the blowing up $\tilde \sY$ is smooth, $\tilde \sY$
will be our candidate for a family of Calabi-Yau manifolds with dense set of complex multiplication fibers. Hence we must
consider the behavior of the Hodge structures under blowing up:

\begin{lemma} \label{blowlow}
Let $X$ be an algebraic manifold of dimension $n$ and $\tilde X$ be
the blowing up $X$ with respect to some submanifold $Z \supset X$ of
codimension $2$. Then $\Hg(H^k(\tilde X,\Z))$ is commutative, if and
only if $\Hg(H^k(X,\Z))$ and $\Hg(H^{k-2}(Z,\Z))$ are commutative,
too.
\end{lemma}
\begin{proof}
By \cite{Voi}, Theorem 7.31, we have an isomorphism
$$H^k(X,\Z) \oplus H^{k-2}(Z,\Z) \cong H^k(\tilde X, \Z)$$
of Hodge structures, where $H^{k-2}(Z,\Z)$ is shifted by $(1,1)$ in
bi-degree. Since
$$\Hg(H^k(\tilde X, \Z)) = \Hg(H^k(X,\Z) \oplus H^{k-2}(Z,\Z)) \subset
\Hg(H^k(X,\Z)) \times \Hg(H^{k-2}(Z,\Z))$$
such that the natural projections
$$\Hg(H^k(\tilde X, \Z)) \to \Hg(H^k(X,\Z)) \  \ \mbox{and} \ \  \Hg(H^k(\tilde X, \Z)) \to
\Hg(H^{k-2}(Z,\Z))$$
are surjective (compare to \cite{VZ5}, Lemma $8.1$), we obtain the result.
\end{proof}

\begin{corollary} \label{blowblow}
Let $X$ be a smooth surface and $\tilde X$ be the blowing up of some point $p \in X$. Then $X$ has
complex multiplication, if and only if $\tilde X$ has complex multiplication, too.
Moreover we obtain that
$$\Hg(H^2(\tilde X, \Z)) \cong \Hg(H^2(X,\Z)).$$
\end{corollary}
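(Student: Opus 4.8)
The plan is to read the corollary as the special case of Lemma \ref{blowlow} in which $X$ has dimension $n=2$, the cohomological degree is $k=2$, and the center of the blow-up is a single point $p$, which is a submanifold of codimension $2$. First I would unwind the definition of complex multiplication in this setting: for the smooth surface $X$, having $CM$ means that $\Hg(H^2(X,\Q))$ is a torus, and since every Hodge group is connected and (being the Hodge group of a polarized Hodge structure) reductive, being a torus is equivalent to being commutative. Thus both halves of the statement — the biconditional on $CM$ and the isomorphism of Hodge groups — can be attacked through the structure of $H^2(\tilde X)$.

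For the biconditional I would apply Lemma \ref{blowlow} directly with $k=2$ and $Z=\{p\}$. The lemma then says that $\Hg(H^2(\tilde X,\Z))$ is commutative if and only if both $\Hg(H^2(X,\Z))$ and $\Hg(H^{0}(\{p\},\Z))$ are commutative. The second factor is harmless: $H^0$ of a point is a rank-one Hodge structure of pure type $(0,0)$, on which $h(S^1)$ acts trivially, so its Hodge group is the trivial group, which is in particular commutative. Hence $\Hg(H^2(\tilde X))$ is commutative exactly when $\Hg(H^2(X))$ is, which by the reformulation above is precisely the assertion that $\tilde X$ has $CM$ if and only if $X$ does.

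For the isomorphism I would return to the Hodge-theoretic decomposition underlying Lemma \ref{blowlow}, namely the isomorphism $H^2(\tilde X,\Z) \cong H^2(X,\Z) \oplus H^0(\{p\},\Z)$ of \cite{Voi}, Theorem 7.31, where the second summand is shifted by $(1,1)$ in bidegree. The key observation is that this extra summand is a one-dimensional Hodge structure of pure type $(1,1)$, and for such a structure $h(S^1)$ acts by the scalar $z\bar z = 1$, so its Hodge group is trivial. Writing $V := H^2(X,\Q)$ and $U$ for the rank-one $(1,1)$-summand, Lemma \ref{7.1} gives $\Hg(V\oplus U) \subseteq \Hg(V)\times \Hg(U) = \Hg(V)\times\{e\}$ with the projection to $\Hg(V)$ surjective; the first projection is therefore an isomorphism of algebraic groups, which yields $\Hg(H^2(\tilde X,\Z)) \cong \Hg(H^2(X,\Z))$.

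The one genuinely substantive point, and the only place where care is needed, is the computation of the Hodge group of the exceptional summand: one must confirm that $H^0(\{p\})$ enters $H^2(\tilde X)$ as a Tate-type $(1,1)$ piece and that such a piece contributes nothing to the Hodge group. Everything else is a direct bookkeeping application of the two lemmas already proved; there is no analytic or arithmetic obstacle, so the main task is simply to identify the correct special case of Lemma \ref{blowlow} and to verify the triviality of the $(1,1)$-summand's Hodge group.
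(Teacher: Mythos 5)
Your proposal is correct and is essentially the paper's own (implicit) argument: the corollary is stated as an immediate consequence of Lemma \ref{blowlow} with $k=2$ and $Z=\{p\}$, and your verification that the exceptional rank-one summand is of Tate type $(1,1)$ with trivial Hodge group, combined with Lemma \ref{7.1} to get the isomorphism $\Hg(H^2(\tilde X,\Z))\cong\Hg(H^2(X,\Z))$, is exactly the intended reasoning.
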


Now we want to consider the behavior of the fibers. Hence for
simplicity we assume now that $V_1 = V_2= {\rm Spec}(\C)$ in diagram
($\ref{diag}$). By the fact that $\tilde \sY$ has the Hodge structure given by the Hodge
sub-structure of $\widetilde{\sZ\times \Sigma}$ invariant under the Galois group, one concludes:

\begin{Theorem} \label{hiercm}
If for all $k$ the groups $\Hg(H^k(\sZ,\Q))$, $\Hg(H^k(\Sigma,\Q))$ and $\Hg(H^k(Z_i,\Q))$ are
commutative,\footnote{One needs in fact the condition that each $\Hg(H^k(Z_i,\Q))$ is commutative.
The argument is similar to the argument in the proof of Proposition $\ref{fixhodge}$.}
then $\Hg(H^k(\tilde \sY,\Q))$ is commutative for all $k$, too.
\end{Theorem}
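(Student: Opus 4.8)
The plan is to reduce the commutativity of $\Hg(H^k(\tilde\sY,\Q))$ in two steps: first to that of the smooth blow-up $\widetilde{\sZ\times\Sigma}$, and then, through the blow-up formula together with the K\"unneth decomposition, to the three families of Hodge groups named in the hypothesis.

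First I would exploit that $\tilde\gamma$ realizes $\tilde\sY$ as the quotient of the smooth projective manifold $\widetilde{\sZ\times\Sigma}$ by the cyclic group $G=\langle(1,1)\rangle$, so that (as recalled just before the statement) $H^k(\tilde\sY,\Q)$ is the $G$-invariant rational sub-Hodge structure $H^k(\widetilde{\sZ\times\Sigma},\Q)^G$. Since $H^k(\widetilde{\sZ\times\Sigma},\Q)$ is polarized, Lemma \ref{nachbarin} splits it as $H^k(\tilde\sY,\Q)\oplus U'$ for a complementary rational sub-Hodge structure $U'$, and Lemma \ref{7.1} then makes the projection $\Hg(H^k(\widetilde{\sZ\times\Sigma},\Q))\to\Hg(H^k(\tilde\sY,\Q))$ surjective. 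As a quotient of a commutative group is commutative, it suffices to prove that $\Hg(H^k(\widetilde{\sZ\times\Sigma},\Q))$ is commutative for every $k$.

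Next I would invoke Remark \ref{buahaha}, by which $\widetilde{\sZ\times\Sigma}$ is the blow-up of $\sZ\times\Sigma$ along the reduced preimage $\gamma^{-1}(S)$; this centre is the product of the ramification loci of $\sZ\to\sA$ and $\Sigma\to\sB$, hence smooth of codimension $2$, and its connected components are precisely the (mutually disjoint) subvarieties $Z_i$. Blowing these up one at a time---each $Z_i$ being disjoint from the others and from the exceptional divisors already produced---repeated application of Lemma \ref{blowlow} gives that $\Hg(H^k(\widetilde{\sZ\times\Sigma},\Q))$ is commutative once $\Hg(H^k(\sZ\times\Sigma,\Q))$ and each $\Hg(H^{k-2}(Z_i,\Q))$ are commutative. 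The groups $\Hg(H^{k-2}(Z_i,\Q))$ are commutative by hypothesis, so only the product $\sZ\times\Sigma$ remains.

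For the product, Proposition \ref{tensorp} gives $H^k(\sZ\times\Sigma,\Q)=\bigoplus_{a+b=k}H^a(\sZ,\Q)\otimes H^b(\Sigma,\Q)$, a direct sum of weight-$k$ Hodge structures. By hypothesis each factor $H^a(\sZ,\Q)$ and $H^b(\Sigma,\Q)$ has commutative Hodge group, i.e. is of $CM$ type; the Proposition following Proposition \ref{tensorp} then shows every summand $H^a(\sZ,\Q)\otimes H^b(\Sigma,\Q)$ is again of $CM$ type, and a repeated use of Lemma \ref{7.1} (the Hodge group of a direct sum embeds in the product of the Hodge groups of the summands) shows the whole sum has commutative Hodge group. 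Assembling the three steps yields $\Hg(H^k(\tilde\sY,\Q))$ commutative for all $k$. The point needing the most care is the Hodge-theoretic identification $H^k(\tilde\sY,\Q)\cong H^k(\widetilde{\sZ\times\Sigma},\Q)^G$ together with the verification that the blow-up centre is genuinely smooth of codimension $2$, so that Lemma \ref{blowlow} applies componentwise; once these are in place the remaining Hodge-group bookkeeping is routine.
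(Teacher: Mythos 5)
Your proof is correct and follows exactly the route the paper intends: the paper states the theorem immediately after observing that $H^k(\tilde\sY,\Q)$ is the Galois-invariant sub-Hodge structure of $H^k(\widetilde{\sZ\times\Sigma},\Q)$, and leaves the reduction via Lemma $\ref{blowlow}$, Proposition $\ref{tensorp}$, the tensor-product $CM$ criterion and Lemma $\ref{7.1}$ implicit. You have simply written out those implicit steps (including the use of Lemma $\ref{nachbarin}$ to make the Hodge-group projection onto the invariant part surjective), so there is nothing to correct.
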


\begin{remark}
At first sight the condition that for all $k$ the groups $\Hg(H^k(\sZ,\Q))$, $\Hg(H^k(\Sigma,\Q))$
and $\Hg(H^k(Z_i,\Q))$ have to be commutative may seem to be a little bit restrictive. But by the
Hodge diamond of a Calabi-Yau $n$-manifold with $n \leq 3$ or the Hodge diamond of a
Calabi-Yau $n$-manifold given by a projective hypersurface, one sees
that the condition that all its Hodge groups are commutative is
equivalent to the condition that it has complex multiplication. Moreover we will need this
condition for an inductive construction of families of Calabi-Yau manifolds with dense set of
complex multiplication fibers in arbitrary high dimension in the next section.
\end{remark}

\section{The Borcea-Voisin tower}

Recall that we want to construct families of Calabi-Yau manifolds with a dense set of $CM$ fibers.
Hence let us now define Calabi-Yau manifolds:

\begin{definition} \label{defCala}
A Calabi-Yau manifold $X$ of dimension $n$ is a compact K\"ahler
manifold of dimension $n$ such that $\Gamma(\Omega_X^i)= 0$ for all
$i = 1, \ldots, n-1$ and $\omega_X \cong \sO_X$.
\end{definition}

By the construction of the preceding section, which we will use, we need more and we get more than
only complex multiplication. Hence let
us define, which we will get:

\begin{definition}
A $CMCY$ family $\sX \to \sB$ of $n$-manifolds \index{$CMCY$
family of $n$-manifolds} is a (smooth) family of Calabi-Yau manifolds of dimension $n$, which has
a dense set of fibers $\sX_b$ satisfying the property that $\Hg(H^k(\sX_b,\Q))$ is commutative for
all $k$.
\end{definition}

In this section the degree $m$ of all cyclic covers, which will occur, is equal to 2. We apply the
construction of a Calabi-Yau manifold with an involution by two Calabi-Yau manifolds with
involutions by C. Borcea \cite{Bc2}. This yields an iterative construction of $CMCY$ families with
involutions in arbitrary high dimension by $CMCY$ families in lower dimension.\footnote{The
construction of C. Borcea is repeated in Proposition $\ref{trivcan}$. By C. Voisin \cite{Voi2},
the same construction was used to construct Calabi-Yau 3-manifolds by $K3$-surfaces with involutions and
elliptic curves. This is the reason that our construction here is called ``Borcea-Voisin tower''. Here this
construction is introduced as a systematic method to construct Calabi-Yau manifolds with complex multiplication
in an arbitrary dimension.}

\begin{construction} \label{bvtower}
Let $\sZ_1 \to \sM$ be a $CMCY$ family of $n$-manifolds covering the $A$
bundle $\sA$ with ramification locus $R_1$, which satisfies the assumptions for $\sZ$ in diagram
$\eqref{diag}$. Moreover let $\Sigma_i$ be a $CMCY$ family $\Sigma_i \to \sM^{(i)}$
of $n_i$-manifolds covering the $B_i$ bundle $\sB_i$ over $\sM^{(i)}$ with ramification locus
$R^{(i)}$, which satisfies the assumptions for $\Sigma$ in diagram $\eqref{diag}$, for all
$1 <i \in \N$. \index{Borcea-Voisin tower}

Let us assume that there is a dense subset of points $m^{(i)} \in \sM^{(i)}$ resp., $p \in \sM$,
which have the property that each $\Hg(H^k((\Sigma_i)_{m^{(i)}},\Q))$ and each
$\Hg(H^k(R_{m^{(i)}}^{(i)},\Q))$ resp., each $\Hg(H^k((\sZ_1)_p,\Q))$ and each
$\Hg(H^k((R_1)_p,\Q))$ is commutative.

We define an iterative tower of covers
$$\sZ_i \to V^{(i)} :=\sM \times \sM^{(2)} \times \ldots \times \sM^{(i)}$$
given by
$$\sZ_{i} = \tilde \sY_i,$$
where $\tilde \sY_{i}$ is obtained from $\tilde \sY$ in the diagram $\eqref{diag}$ with
$V_1 =V^{(i-1)}$, $V_2 = \sM^{(i)}$, $\Sigma = \Sigma_i$ and $\sZ = \sZ_{i-1}$ for all $i \in \N$.
Let us call such a construction Borcea-Voisin tower.
\end{construction}

The assumption that we have ramification in codimension 1 on the fibers of a family of Calabi-Yau
manifolds leads to the important property that the corresponding involutions act by $-1$ on
the global sections of their canonical sheaves, as we see by the following Lemma:

\begin{lemma} \label{ampel}
Let $C$ be a Calabi-Yau manifold and $\iota$ be an involution on it. Assume that the points fixed
by $\iota$ are given by a non-trivial (reduced) effective smooth divisor $D$. Then $\iota$ acts by
$-1$ on $H^0(C, \omega_C)$.
\end{lemma}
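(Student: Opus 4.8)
The plan is to use that $C$ is Calabi--Yau to reduce the statement to a purely local computation at a fixed point, where the involution can be linearized.

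First I would record that by Definition \ref{defCala} the canonical sheaf satisfies $\omega_C \cong \sO_C$, so $H^0(C,\omega_C)$ is one-dimensional and is generated by a nowhere-vanishing holomorphic $n$-form $\omega$, where $n := \dim C$. Since $\iota$ is an involution, the induced action $\iota^*$ on this one-dimensional space is multiplication by a scalar $\lambda$ with $\lambda^2 = 1$; thus $\lambda \in \{1,-1\}$, and it remains only to exclude $\lambda = 1$.

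Next I would analyze $\iota$ near a fixed point. The divisor $D$ is non-trivial by hypothesis, so I choose $p \in D$. By Bochner's linearization theorem (averaging local coordinates over the order-$2$ group $\langle \iota \rangle$) there are holomorphic coordinates centered at $p$ in which $\iota$ coincides with its differential $d\iota_p$. As $\iota^2 = \mathrm{id}$, the endomorphism $d\iota_p$ is diagonalizable with eigenvalues in $\{1,-1\}$, and in these coordinates its fixed-point set is exactly the $(+1)$-eigenspace. Since this fixed-point set is $D$, a smooth divisor of dimension $n-1$, the $(+1)$-eigenspace has dimension $n-1$ and the $(-1)$-eigenspace is one-dimensional. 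Hence I may take coordinates $(x_1,\dots,x_n)$ with $D = \{x_1 = 0\}$ and $\iota(x_1,x_2,\dots,x_n) = (-x_1,x_2,\dots,x_n)$.

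Finally I would carry out the local computation. Writing $\omega = f\, dx_1\wedge\dots\wedge dx_n$ with $f$ holomorphic and nowhere vanishing, the chain rule gives $\iota^*\omega = -f(-x_1,x_2,\dots,x_n)\, dx_1\wedge\dots\wedge dx_n$, the sign coming from $d(-x_1) = -dx_1$. Restricting to a point of $D$, i.e. setting $x_1 = 0$, and comparing with $\iota^*\omega = \lambda\omega$ yields $\lambda f(0,x_2,\dots,x_n) = -f(0,x_2,\dots,x_n)$; since $f$ is nowhere zero this forces $\lambda = -1$, as desired. I expect the only genuinely delicate point to be the linearization step---specifically, justifying that the smoothness and codimension-one hypothesis on $D$ pins down the normal action of $\iota$ to be $-1$; everything else is formal once the local normal form is in hand.
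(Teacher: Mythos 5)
Your argument is correct, and it takes a genuinely different route from the paper. The paper works globally on the quotient: since the fixed locus is a smooth divisor, $C/\iota$ is a smooth manifold and the Hurwitz formula gives $\gamma^*(\omega_{C/\iota}) \cong \sO_C(-D)$ for the double cover $\gamma: C \to C/\iota$; as $D$ is a non-trivial effective divisor this forces $h^0(\omega_{C/\iota}) = 0$, and since by \cite{EV1}, $\S 3$, the sheaf $\omega_{C/\iota}$ is exactly the invariant eigenspace of $\gamma_*(\omega_C)$, the one-dimensional space $H^0(C,\omega_C)$ must lie in the $(-1)$-eigenspace. You instead localize at a single fixed point, linearize $\iota$ there, observe that the codimension-one hypothesis pins the normal eigenvalue to $-1$, and read off the sign from the nowhere-vanishing volume form. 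Your version is more elementary (no cyclic-cover or eigenspace-decomposition machinery, no smoothness of the quotient needed) and in fact uses less than the full hypothesis: it only requires one smooth point of the fixed locus at which that locus has codimension one. What the paper's approach buys in exchange is that it sits inside the Kummer-cover formalism used throughout Chapter 7, so the same computation of the character of the Galois action on $\gamma_*(\omega)$ is reused verbatim in Proposition \ref{trivcan} and its relatives. Your linearization and sign computation are both carried out correctly, including the delicate point you flag: for a linear involution the fixed set is precisely the $(+1)$-eigenspace, so equality with the $(n-1)$-dimensional $D$ forces a one-dimensional $(-1)$-eigenspace and hence the factor $d(-x_1) = -dx_1$.
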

\begin{proof}
By our assumptions, the induced natural cyclic cover $\gamma:C \to C/\iota$ is ramified over a
smooth non-trivial divisor $D$ such that $C/\iota$ is smooth. Hence one has a cyclic cover of
manifolds and one can apply the Hurwitz formula (compare \cite{Bart}, {\bf I}. 16). Since $C$ has a
trivial canonical divisor, the Hurwitz formula implies that
$\sO_{C}(-D) \cong \gamma^*(\omega_{C/\iota})$. This implies that
$\omega_{C/\iota}$ does not contain any global section. Since $\omega_{C/\iota}$ yields the
eigenspace for the character 1 of $\gamma_*(\omega_C)$ (see \cite{EV1}, $\S3$), the character
of the action of $\iota$ on $H^0(C, \omega_{C})$ is not given by 1. Thus it is given by $-1$.
\end{proof}

\begin{proposition} \label{trivcan}
Assume that $\gamma_1: C_1 \to M_1$ and $\gamma_2: C_2 \to M_2$ are
cyclic covers of degree 2 with the involutions $\iota_1$ and $\iota_2$ and ramification
divisors $D_1 \subset C_1$ and $D_2 \subset C_2$, which consist of disjoint smooth
hypersurfaces. Moreover assume that $C_1$ and $C_2$ are Calabi-Yau
manifolds of dimension $n_1$ and $n_2$. Let $\widetilde
{C_1 \times C_2}$ denote the blowing up of $C_1 \times C_2$ with
respect to $D_1 \times D_2$. Then by the involution on $\widetilde
{C_1 \times C_2}$ given by $(\iota_1, \iota_2)$, one obtains a
cyclic cover $\gamma: \widetilde {C_1 \times C_2} \to C$ such that
$C$ is a Calabi-Yau manifold.
\end{proposition}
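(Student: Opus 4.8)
The plan is to verify directly the three defining properties of a Calabi-Yau manifold from Definition~\ref{defCala} for $C$: that $C$ is a smooth compact K\"ahler manifold of dimension $n_1+n_2$, that $\omega_C \cong \sO_C$, and that $\Gamma(\Omega_C^i) = 0$ for $1 \le i \le n_1+n_2-1$. The smoothness of $C$ and the fact that $\gamma$ is a cyclic double cover ramified over a smooth divisor are already furnished by the general analysis of diagram~\eqref{diag}, specialized to $V_1 = V_2 = \mathrm{Spec}(\C)$: blowing up $D_1 \times D_2$ turns the codimension-$2$ fixed locus of $(\iota_1,\iota_2)$ into the smooth exceptional divisor $E$, the lifted involution fixes exactly $E$ (see Remark~\ref{buahaha}), and the quotient by a reflection-type involution along a smooth divisor is smooth. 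Compact K\"ahlerness descends from $\widetilde{C_1 \times C_2}$, a blow-up of the compact K\"ahler manifold $C_1\times C_2$ along a smooth center, to the finite quotient $C$ by averaging an invariant K\"ahler metric.

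For the triviality of $\omega_C$ I would exploit the invariant holomorphic top form. Since $C_1$ and $C_2$ are Calabi-Yau, choose nowhere vanishing $\omega_1 \in H^0(C_1,\omega_{C_1})$ and $\omega_2 \in H^0(C_2,\omega_{C_2})$; then $\sigma := \mathrm{pr}_1^*\omega_1 \wedge \mathrm{pr}_2^*\omega_2$ is a nowhere vanishing holomorphic $(n_1+n_2)$-form on $C_1 \times C_2$. By Lemma~\ref{ampel} each $\iota_k$ acts by $-1$ on $H^0(C_k,\omega_{C_k})$, so $(\iota_1,\iota_2)^*\sigma = (-1)(-1)\sigma = \sigma$ is invariant. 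Pulling back along the blow-up $\pi \colon \widetilde{C_1\times C_2} \to C_1\times C_2$ of the codimension-$2$ center, the form $\pi^*\sigma$ is a holomorphic top form vanishing to order exactly $1$ along $E$ (the discrepancy $c-1 = 1$) and nowhere else, and it remains $(\iota_1,\iota_2)$-invariant. Writing $\gamma$ in local coordinates as $z_1 \mapsto w = z_1^2$ with $E = \{z_1=0\}$, an invariant top form vanishing to first order along $E$ is precisely $\gamma^*$ of a nowhere vanishing top form downstairs, since $\gamma^*(dw\wedge \eta) = 2z_1\,dz_1 \wedge \gamma^*\eta$. Hence $\pi^*\sigma$ descends to a nowhere vanishing section of $\omega_C$, giving $\omega_C \cong \sO_C$. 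Equivalently, one may combine $\omega_{\widetilde{C_1\times C_2}} = \pi^*\omega_{C_1\times C_2}\otimes \sO(E) = \sO(E)$ with the Hurwitz relation $\omega_{\widetilde{C_1\times C_2}} = \gamma^*\omega_C \otimes \sO(E)$ to deduce $\gamma^*\omega_C \cong \sO$, then use the invariant section to descend.

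For the vanishing of the intermediate holomorphic forms I would identify $H^0(C,\Omega_C^i)$ with the invariants $H^0(\widetilde{C_1\times C_2},\Omega^i)^{(\iota_1,\iota_2)}$, valid because $C$ is the smooth quotient of $\widetilde{C_1\times C_2}$ by $\langle(\iota_1,\iota_2)\rangle$. Since the Hodge numbers $h^{i,0}$ are birational invariants and $\pi^*$ is an $(\iota_1,\iota_2)$-equivariant isomorphism on global holomorphic forms, this invariant space is isomorphic to $H^0(C_1\times C_2,\Omega^i)^{(\iota_1,\iota_2)}$. The holomorphic K\"unneth decomposition gives $H^0(C_1\times C_2,\Omega^i) = \bigoplus_{a+b=i} H^{a,0}(C_1)\otimes H^{b,0}(C_2)$, and the Calabi-Yau condition forces $H^{a,0}(C_k) = 0$ except for $a \in \{0, n_k\}$. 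For $1 \le i \le n_1+n_2-1$ the only surviving summands are the lines spanned by $\mathrm{pr}_1^*\omega_1$ (at $i = n_1$) and by $\mathrm{pr}_2^*\omega_2$ (at $i = n_2$), both of which lie in the $-1$ eigenspace of $(\iota_1,\iota_2)$ by Lemma~\ref{ampel}; hence the invariant part vanishes and $\Gamma(\Omega_C^i) = 0$ throughout this range.

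The main obstacle I anticipate is the precise bookkeeping of orders of vanishing and eigenvalues across the blow-up: one must check that $\pi^*\sigma$ vanishes to order exactly $1$ on $E$ (not merely along $E$) and that the local model of the lifted involution is genuinely a reflection $z_1 \mapsto -z_1$ transverse to $E$, so that the descent of an invariant, first-order-vanishing top form yields a nowhere vanishing form on $C$ rather than one acquiring zeros or poles along the branch divisor $\gamma(E)$. Pinning down this local normal form, already sketched in Remark~\ref{buahaha}, is where the real care lies; the remaining K\"unneth-plus-eigenvalue argument is then formal.
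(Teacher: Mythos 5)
Your proposal is correct and follows essentially the same route as the paper's proof: Lemma~\ref{ampel} for the $-1$ action on the canonical forms, the blow-up/Hurwitz computation giving $\gamma^*\omega_C \cong \sO_{\widetilde{C_1\times C_2}}(E)^{-1}\otimes\omega_{\widetilde{C_1\times C_2}} \cong \sO$, descent of the invariant top form (the paper phrases this via the eigenspace decomposition of $\gamma_*\omega$ from \cite{EV1}, $\S3$, which is your "equivalently" variant), and the K\"unneth-plus-eigenvalue argument for $h^{j,0}(C)=0$ in the intermediate range, which the paper leaves implicit but you spell out. No gaps.
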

\begin{proof}
We assume that each $C_i$ is a Calabi-Yau manifold such that $h^{t,0}(C_i) = 0$ for all
$t = 1, \ldots, n_i-1$. By the assumption that one has the ramification divisors $D_1$ and
$D_2$ and Lemma $\ref{ampel}$, the corresponding involution of each $\gamma_i$ acts by $-1$ on
each $\omega_{C_i}$. Thus one concludes that $h^{j,0}(C) = 0$ for all $j = 1, \ldots, (n_1+n_2)-1$.

The canonical divisor $K_{\widetilde{C_1 \times C_2}}$ of $\widetilde{C_1 \times C_2}$ is given by
the exceptional divisor $E$ of
the blowing up $\widetilde{C_1 \times C_2}\to C_1\times C_2$. Moreover the ramification divisor
$R$ of $\gamma$ coincides with $E$. Hence by the Hurwitz formula (\cite{Bart}, ${\bf I}.16$), we
have 
$$\sO_{\widetilde{C_1 \times C_2}}(R)\cong
\sO_{\widetilde{C_1 \times C_2}}(K_{\widetilde{C_1 \times C_2}}) =
\omega_{\widetilde{C_1 \times C_2}} \cong \gamma^*(\omega_C) \otimes
\sO_{\widetilde{C_1 \times C_2}}(R).$$
Thus one concludes that $\gamma^*(\omega_C) \cong \sO$.

Since $\iota_1$ and $\iota_2$ act by the character $-1$, the
involution $(\iota_1, \iota_2)$ on $\widetilde{C_1 \times C_2}$ leaves the global sections of
$\omega_{\widetilde{C_1 \times C_2}}$ invariant. Now recall that
$\gamma_*(\omega_{\widetilde{C_1 \times C_2}})$ consists of a direct sum of invertible sheaves,
which are the eigenspaces with respect to the characters of the Galois group action. By
\cite{EV1}, $\S3$, the eigenspace for the character $1$ is given by $\omega_C$. Thus $\omega_C$
has a non-trivial global section. Hence the canonical divisor of $C$ satisfies (up to linear
equivalence) $K_C \geq 0$. Thus by the fact that $\gamma^*(\omega_C) \cong \sO$, we have the
desired result $K_C \sim 0$.
\end{proof}

Altogether one has the following result:

\begin{Theorem} \label{JMCA}
Each family $\sZ_i \to \sM \times \sM^{(2)} \times \ldots \times \sM^{(i)}$ obtained
by the Borcea-Voisin tower is a $CMCY$ family of $n+n_2 + \ldots + n_{i}$-manifolds.
\end{Theorem}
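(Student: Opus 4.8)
The plan is to establish Theorem \ref{JMCA} by induction on the index $i$ of the Borcea-Voisin tower, using Proposition \ref{trivcan} (to guarantee the Calabi-Yau property) together with Theorem \ref{hiercm} (to guarantee the commutativity of the Hodge groups, i.e. complex multiplication) at each stage. The base case $i=1$ is exactly the hypothesis of Construction \ref{bvtower}: $\sZ_1 \to \sM$ is by assumption a $CMCY$ family of $n$-manifolds whose ramification locus $R_1$ has the required commutative Hodge groups on a dense set. The induction step is where the work lies: assuming $\sZ_{i-1} \to V^{(i-1)}$ is a $CMCY$ family of $n + n_2 + \ldots + n_{i-1}$-manifolds satisfying the ramification and density assumptions needed to play the role of $\sZ$ in diagram $\eqref{diag}$, I must show $\sZ_i = \tilde\sY_i \to V^{(i)} = V^{(i-1)} \times \sM^{(i)}$ is a $CMCY$ family of $n + n_2 + \ldots + n_i$-manifolds.

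First I would verify the Calabi-Yau property of a general fiber. Fix a general point of $V^{(i)}$; the corresponding fibers of $\sZ_{i-1}$ and $\Sigma_i$ are Calabi-Yau manifolds $C_1$ and $C_2$ of dimensions $n+n_2+\ldots+n_{i-1}$ and $n_i$, each carrying an involution whose fixed locus (the ramification divisor of the degree $2$ cover) consists of disjoint smooth hypersurfaces. By Proposition \ref{trivcan}, the blow-up $\widetilde{C_1 \times C_2}$ along $D_1 \times D_2$ carries the involution $(\iota_1,\iota_2)$ whose quotient $C$ is Calabi-Yau; this quotient fiber is precisely the fiber of $\tilde\sY_i = \sZ_i$. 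Here I must invoke the inductive promise that $\sZ_{i-1}$ comes equipped with a $V^{(i-1)}$-involution whose fiberwise fixed locus is a disjoint union of smooth hypersurfaces, so that the hypotheses of Proposition \ref{trivcan} are met; I would note that Proposition \ref{trivcan} (via the preceding Proposition in the excerpt describing the output involution) also furnishes an involution on $\sZ_i$ satisfying the same assumptions, which is exactly what feeds the next step of the tower and so propagates the induction.

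Next I would establish complex multiplication on a dense set. Because the period domains multiply, density of $CM$ points is inherited: the dense $CM$ loci in $V^{(i-1)}$ and in $\sM^{(i)}$ produce, by taking products, a dense set of points in $V^{(i)} = V^{(i-1)} \times \sM^{(i)}$ over which both factors have commutative Hodge groups on all $H^k$, and likewise for the ramification loci. Over such a point, Theorem \ref{hiercm} applies directly: it asserts that if $\Hg(H^k(\sZ_{i-1},\Q))$, $\Hg(H^k(\Sigma_i,\Q))$ and $\Hg(H^k(Z_j,\Q))$ are all commutative (where the $Z_j$ are the blow-up centers), then $\Hg(H^k(\tilde\sY_i,\Q)) = \Hg(H^k(\sZ_i,\Q))$ is commutative for all $k$. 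The commutativity for the blow-up centers follows since they are built from the ramification divisors $R^{(i)}$ and the fiber data, whose Hodge groups are commutative over the dense set by the standing assumptions of Construction \ref{bvtower}; the underlying mechanism is the tensor-product decomposition of Proposition \ref{tensorp} combined with the fact that a tensor product of $CM$ Hodge structures is again $CM$, and that passing to the Galois-invariant sub-Hodge-structure and to the blow-up (Lemma \ref{blowlow}) preserves commutativity.

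\textbf{The hard part} will be bookkeeping the induction hypothesis so that it is genuinely self-reproducing: I must confirm not merely that $\sZ_i$ is $CMCY$, but that it again satisfies \emph{all} the structural assumptions required to serve as the input $\sZ$ for the $(i+1)$-th stage of diagram $\eqref{diag}$ — namely that it is an $A$-bundle cover with an involution whose fiberwise fixed locus is a disjoint union of smooth hypersurfaces, and that the relevant $\Hg(H^k(\cdot,\Q))$ for the new ramification data stay commutative on a dense set. Verifying that the involution produced on $\sZ_i$ has ramification again in codimension one with smooth disjoint components (so that Lemma \ref{ampel} applies at the next level, forcing the $-1$ action on the canonical sections) is the delicate geometric point; once that is in place the induction closes and the theorem follows for every $i$.
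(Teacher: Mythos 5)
Your proposal is correct and follows essentially the same route as the paper: induction on $i$, with Lemma \ref{ampel} and Proposition \ref{trivcan} giving the Calabi--Yau property fiberwise, and Theorem \ref{hiercm} applied over the product of the dense $CM$ loci giving the commutativity of the Hodge groups. The ``hard part'' you isolate — that the ramification locus of $\sZ_{i+1}$ decomposes into components of $(\sZ_i)_p \times R^{(i+1)}$ and $(R_i)_p \times \Sigma_{i+1}$, so the inductive hypothesis reproduces itself — is precisely the content of the Claim with which the paper closes its proof.
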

\begin{proof}
The statement that each $(\sZ_{i})_p$ is a Calabi-Yau manifold
follows fiberwise by induction. By the assumptions, we have the result for $n = 1$.
First by induction, one can show that the ramification loci are given by smooth divisors. By using
this fact and the induction hypothesis, one can apply Lemma $\ref{ampel}$ such that each involution
acts by the character $-1$ on each $\Gamma(\omega)$. Hence the assumptions of Proposition
$\ref{trivcan}$ are satisfied, which provides the induction step.

Next we want to show the statement about the commutativity of all Hodge groups over a dense
subset of the basis. Due to the situation described in
diagram $(\ref{diag})$ the connected components of
the ramification locus $(R_{i+1})_{p \times m^{(i+1)} }$ of $(\sZ_{i+1} )_{p \times m^{(i+1)}}$
over $p \times m^{(i+1)} \in V^{(i)} \times \sM^{(i+1)}$ are given by the connected components of
$(\sZ_i)_p \times R^{(i+1)}_{m^{(i+1)}}$ and by the connected components of
$(R_i)_{p} \times (\Sigma_{i+1})_j$, where $(R_i)_{p}$ is the ramification locus of
$(\sZ_{i})_{p}$.

Hence it is sufficient to use an inductive argument and to show the following Claim:
\end{proof}

\begin{claim}
Assume that for all $k$ the Hodge group $\Hg(H^k((\sZ_i)_p, \Z))$ is commutative and each
connected component $Z$ of the ramification locus $(R_i)_{p}$ satisfies that each
$\Hg(H^k(Z, \Z))$ is commutative. In addition we assume that for all $k$ the Hodge group
$\Hg(H^k(\Sigma_{i+1})_{m^{(i+1)}},\Z))$ is commutative and each connected component $Z_{i+1}$ of
$R^{(i+1)}_{m^{(i+1)}}$ satisfies that each $\Hg(H^k(Z_{i+1}, \Z))$ is commutative. Then for all
$k$ each connected component $\tilde Z$ of $(R_{i+1})_{p \times m^{(i+1)}}$ satisfies that each
$\Hg(H^k(\tilde Z, \Z))$ is commutative and for all $k$
$\Hg(H^k((\sZ_{i+1})_{p\times m^{(i+1)}}, \Z))$ is commutative.
\end{claim}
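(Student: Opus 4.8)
The plan is to isolate one elementary product principle, then feed it into the blow-up formula (Lemma $\ref{blowlow}$) and into the description of the Hodge structure on $\tilde\sY$ as a Galois-invariant part. First I would record the \emph{product principle}: if $X$ and $Y$ are smooth projective manifolds with $\Hg(H^a(X,\Q))$ and $\Hg(H^b(Y,\Q))$ commutative for all $a,b$, then $\Hg(H^k(X\times Y,\Q))$ is commutative for all $k$. This follows by combining the K\"unneth decomposition of Proposition $\ref{tensorp}$, which writes $H^k(X\times Y,\Q)$ as a direct sum of tensor products $H^a(X,\Q)\otimes H^b(Y,\Q)$, with the tensor-product criterion stated just before Lemma $\ref{blowlow}$ (a tensor product is of $CM$ type iff both factors are), and finally with Lemma $\ref{7.1}$, which places the Hodge group of a direct sum inside the product of the Hodge groups of the summands and hence preserves commutativity. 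Here I use that for a polarizable Hodge structure ``commutative Hodge group'' and ``of $CM$ type'' coincide, since $\Hg$ is reductive and a connected commutative reductive group is a torus.

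With this in hand the assertion about the ramification components is immediate. As $(\sZ_i)_p$ and $(\Sigma_{i+1})_{m^{(i+1)}}$ are connected, the decomposition recalled just before the Claim shows that every connected component $\tilde Z$ of $(R_{i+1})_{p\times m^{(i+1)}}$ is either of the form $(\sZ_i)_p\times Z_{i+1}$ with $Z_{i+1}$ a component of $R^{(i+1)}_{m^{(i+1)}}$, or of the form $Z_i\times(\Sigma_{i+1})_{m^{(i+1)}}$ with $Z_i$ a component of $(R_i)_p$. In either case both factors have all Hodge groups commutative by hypothesis, so the product principle gives that $\Hg(H^k(\tilde Z,\Z))$ is commutative for all $k$.

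For the main assertion I would run through the two morphisms of diagram $\eqref{diag}$. By Remark $\ref{buahaha}$ the fiber $\widetilde{\sZ\times\Sigma}$ over $p\times m^{(i+1)}$ is the blow-up of $(\sZ_i)_p\times(\Sigma_{i+1})_{m^{(i+1)}}$ along the smooth center $(R_i)_p\times R^{(i+1)}_{m^{(i+1)}}$, which has codimension $2$ and whose disjoint smooth components are the products $Z_i\times Z_{i+1}$. Applying Lemma $\ref{blowlow}$ componentwise, $\Hg(H^k(\widetilde{\sZ\times\Sigma},\Z))$ is commutative once $\Hg(H^k((\sZ_i)_p\times(\Sigma_{i+1})_{m^{(i+1)}},\Z))$ and each $\Hg(H^{k-2}(Z_i\times Z_{i+1},\Z))$ are; both inputs are supplied by the product principle, using the commutativity hypotheses on $(\sZ_i)_p$, on $(\Sigma_{i+1})_{m^{(i+1)}}$, and on the components $Z_i$ and $Z_{i+1}$ of their ramification loci. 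Finally, $(\sZ_{i+1})_{p\times m^{(i+1)}}=\tilde\sY$ carries the Hodge structure equal to the $\langle(\iota_1,\iota_2)\rangle$-invariant sub-Hodge structure of $H^k(\widetilde{\sZ\times\Sigma},\Z)$; since the involution acts as an automorphism of the variety, this invariant part is a genuine polarizable sub-Hodge structure, so by Lemma $\ref{nachbarin}$ it splits off as a direct summand and Lemma $\ref{7.1}$ realizes $\Hg(H^k(\tilde\sY,\Z))$ as a quotient of the commutative group $\Hg(H^k(\widetilde{\sZ\times\Sigma},\Z))$, whence it is commutative.

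The step requiring the most care is the blow-up reduction: one must check that the center $(R_i)_p\times R^{(i+1)}_{m^{(i+1)}}$ is smooth of codimension exactly $2$ and that Lemma $\ref{blowlow}$, phrased for a connected center, is applied correctly to the \emph{disjoint} union of the components $Z_i\times Z_{i+1}$, so that the cohomological splitting becomes a direct sum over these components. The remaining subtlety is confirming that the Galois involution induces a morphism of Hodge structures, which is what legitimizes identifying $H^k(\tilde\sY,\Z)$ with a polarizable invariant summand and invoking Lemmas $\ref{nachbarin}$ and $\ref{7.1}$.
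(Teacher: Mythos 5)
Your proof is correct and follows the paper's own argument: the paper likewise observes that the connected components of $(R_{i+1})_{p\times m^{(i+1)}}$ are products of varieties all of whose Hodge groups are commutative, and then invokes Theorem $\ref{hiercm}$ for the main assertion. The only difference is that you unfold the proof of Theorem $\ref{hiercm}$ --- K\"unneth together with the tensor-product and direct-sum criteria, Lemma $\ref{blowlow}$ for the blow-up along the codimension-$2$ center, and the identification of $H^k(\tilde\sY)$ with the Galois-invariant sub-Hodge structure via Lemmas $\ref{nachbarin}$ and $\ref{7.1}$ --- rather than citing it, which matches exactly the sketch the paper gives immediately before that theorem.
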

\begin{proof}
By the assumptions of this claim and the description of $R_{i+1}$
above, one obtains obviously that the connected components $\tilde
Z$ of $(R_{i+1})_{p \times m^{(i+1)}}$ satisfy that each $\Hg(H^k(\tilde
Z, \Z))$ is commutative. Then one must simply use Theorem $\ref{hiercm}$ and one obtains that
each $\Hg(H^k(\sZ_{i+1})_{p \times m^{(i+1)}}, \Z))$ is commutative, too.
\end{proof}

\section{The Viehweg-Zuo tower}
By the Borcea-Voisin tower, one can construct $CMCY$ families of manifolds in arbitrary high
dimension. But one needs $CMCY$ families of manifolds (in low dimension) with a suitable
involution, which can be used to be $\sZ_1$ or some $\Sigma_i$. One way to obtain some suitable
$CMCY$ families of $n$-manifolds (in low dimension) is given by the Viehweg-Zuo tower, which we
introduce now.

E. Viehweg and K. Zuo \cite{VZ5} have constructed a tower of projective algebraic manifolds
starting with a family $\sF_1$ of cyclic covers of $\bP^1$ given by \index{Viehweg-Zuo tower}
$$\bP^2 \supset  V( y_1^5 + x_1(x_1-x_0) (x_1-\alpha  x_0)(x_1-\beta   x_0) x_0)
\to (\alpha ,\beta)\in \sM_2,$$
which has a dense set of $CM$ fibers. This is one example of a family of cyclic covers, which has
a primitive pure $(1,2)-VHS$ as one can easily verify by using Proposition $\ref{1.27}$. Since
each of these covers given by the fibers of the family can be embedded into $\bP^2$, the fibers of
$\sF_1$ are the branch loci of the fibers of a family $\sF_2$ of cyclic covers onto $\bP^2$ of
degree 5. Moreover the fibers of $\sF_2$, which can be embedded into $\bP^3$, are the branch loci
of the fibers of a family $\sF_3$ of cyclic covers onto $\bP^3$, which can be embedded into
$\bP^4$. The family $\sF_3$ is given by
$$\bP^4 \supset V(y_3^5+y_2^5 + y_1^5 + x_1(x_1-x_0) (x_1-\alpha  x_0)(x_1-\beta   x_0) x_0)
\to (\alpha ,\beta)\in \sM_2.$$
Thus the fibers of $\sF_3$ are Calabi-Yau 3-manifolds. By an
inductive argument, this latter family has a dense set of $CM$ points on the basis given by the
dense set of the $CM$ points of the family of curves we have started with (see \cite {VZ5}). Since
only the Hodge group of the Hodge
structure on $H^3(X,\Q)$ of a projective hypersurface $X \subset \bP^4$ can be non-trivial, the
family $\sF_3$ is a $CMCY$ family of 3-manifolds.

\begin{example} \label{3cm3}
By Theorem $\ref{geilomat}$, the fibers of $\sF_1$ isomorphic to
$$V(y_1^5+x_1^5+x_0^5), \ \ V(y_1^5+x_1(x_1^4+x_0^4)), \  \ V(y_1^5+x_1(x_1^3+x_0^3)x_0) \subset \bP^2$$
have $CM$. Thus the fibers of $\sF_3$ isomorphic to
$$V( y_3^5+y_2^5 + y_1^5+x_1^5+x_0^5), \ \ V(y_3^5+y_2^5 +y_1^5+x_1(x_1^4+x_0^4)), \ \
V(y_3^5+y_2^5 +y_1^5+x_1(x_1^3+x_0^3)x_0) \subset  \bP^4$$
have $CM$, too.
\end{example}

\begin{example} \label{qui3f}
We consider the $CMCY$ family $\sF_3$
$$\bP^4 \supset V(y_3^5+y_2^5 + y_1^5 + x_1(x_1-x_0) (x_1-\alpha  x_0)(x_1-\beta   x_0) x_0)
\to (\alpha ,\beta)\in \sM_2$$
constructed by E. Viehweg and K. Zuo. On each fiber $(\sF_3)_p$ the involution $\iota$ given by
$$\iota (y_3:y_2:y_1:x_1:x_0) = (y_2:y_3:y_1:x_1:x_0)$$
leaves the smooth divisor $\sD_p$ given by the equation $y_3 = y_2$ invariant. Moreover one has
that $\sD_p \cong (\sF_2)_p$. Therefore there is a dense set of points $p \in \sM_2$, which
have the property that for all $k$ the Hodge groups of $H^k(\sD_p,\Q)$ and $H^k((\sF_3)_p,\Q)$ are
commutative. Hence one can use $\sF_3$ to be $\sZ_1$ or some $\Sigma_i$ for the construction of a
Borcea-Voisin tower of $CMCY$ families of $n$-manifolds.
\end{example}

\begin{example}
Let $\F_d$ denote the Fermat curve of degree $d>2$. The curve $\F_d$
has complex multiplication (see \cite{Kohr} and \cite{Rohr}). By the
construction of E. Viehweg and K. Zuo in \cite{VZ5}, one concludes
that the Calabi-Yau manifold $H_d$ given by
$$V(\sum\limits_{i = 0}^{d-1}x_i^d) \subset \bP^{d-1}$$
has complex multiplication. Since $H_d$ is a projective hypersurface, this implies that $H_d$ has
only commutative Hodge groups. We have the involution $\iota_a$ given by
$$(x_{d-1}: \ldots: x_2: x_1: x_0) \to (x_{d-1}: \ldots; x_2: x_0: x_1)$$
on $H_d$. If $d$ is even, one has the additional involution $\iota_b$ given by
$$(x_{d-1}: \ldots: x_1: x_0) \to (x_{d-1}: \ldots: x_1: -x_0).$$
The involution $\iota_a$ resp., $\iota_b$ (if it is given on $H_d$) fixes the points of a
smooth divisor on $H_d$, which is isomorphic to
$$V(\sum\limits_{i = 0}^{d-2}x_i^d) \subset \bP^{d-2}.$$
Therefore by the same arguments as in Example $\ref{qui3f}$, one can use $H_d$ to be $\sZ_1$ or
some $\Sigma_i$ with $\sM = {\rm Spec}(\C)$, resp., $\sM^{(i)} = {\rm Spec}(\C)$ for the
construction of a Borcea-Voisin tower of $CMCY$ families of $n$-manifolds.
\end{example}

We want to start the construction of a Viehweg-Zuo tower (of projective hypersurfaces as
in \cite{VZ5} or the construction of a modified version) with a family of cyclic covers $\sC
\to \sM_n$ of $\bP^1$ with a dense set of $CM$ fibers. For the smoothness of the higher
dimensional fibers
of the resulting families, we will need the assumption that the fibers of $\sC$ are given by
\begin{equation} \label{univfam}
V( y^m + x(x-1)(x-  a_1) \ldots (x- a_n)) \subset \A^2,
\end{equation}
where $m$ divides $n+3$ such that all branch indeces coincide.

By our preceding results, we have only the following examples of families of cyclic covers onto
$\bP^1$ with a dense set of $CM$ fibers, which satisfy this assumption:
\begin{center}
\begin{tabular}{|c|c|} \hline
degree $m$ & number of ramification points of the fibers\\ \hline \hline
2 & 4\\ \hline
2 & 6\\ \hline
3 & 6\\ \hline
4 & 4\\ \hline
5 & 5\\ \hline \end{tabular}
\end{center}

\begin{remark}
The case with $m = 2$ and 4 ramification points is the case of elliptic curves, which has been
considered by C. Borcea in \cite{Bc}. The case with $m = 5$ yields the example by E. Viehweg and
K. Zuo in \cite{VZ5}.
\end{remark}

The case with $m = 3$ is one of the examples of a family of covers onto $\bP^1$ with a dense set
of $CM$ fibers by J. de Jong and R. Noot \cite{DeJN}. We must a bit work to
give a suitable modified construction of a Viehweg-Zuo tower for this example. The next
chapter is devoted to this modified construction of a Viehweg-Zuo tower.

In the case of the family $\sC \to \sM_3$ of genus 2 curves the author does not see a possibility
for the construction of a Viehweg-Zuo tower.\footnote{One natural choice for an embedding of the
fibers of the family of genus 2 curves is given by the weighted projective space $\bP(3,1,1)$. But
the canonical divisor of the desingularization of $\bP(3,1,1)$ does not allow a natural construction
of a Viehweg-Zuo tower as in the case of $\bP(2,1,1)$, which we will see in the next chapter for
the degree 3 case.}

The case with $m = 4$ yields the Shimura- and Teichm\"uller curve of M. M\"oller \cite{Mll}, which
provides the example of the next section. 

\section{A new example}

Here we see that the Shimura- and Teichm\"uller curve of M. M\"oller yields an example of a
Viehweg-Zuo tower. Moreover we will see that the resulting $CMCY$ family of 2-manifolds is endowed
with some involutions, which make it suitable for the construction of a Borcea-Voisin tower. In
addition we try to decide, which involutions provide isomorphic quotients resp., isomorphic $CMCY$
families by the construction of a Borcea-Voisin tower.

\begin{proposition} \label{k3fl}
The family $\sC_2 \to \sM_1$ given by
$$\bP^3 \supset V(y_2^4 + y_1^4 + x_1(x_1-x_0)(x_1-\lambda x_0)x_0)
\to \lambda \in \sM_1$$
is a $CMCY$ family of 2-manifolds.
\end{proposition}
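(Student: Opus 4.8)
The plan is to combine two ingredients: that the members of $\sC_2$ are $K3$ surfaces, hence Calabi--Yau $2$-manifolds, and that the dense set of complex multiplication fibres of the underlying family of curves propagates to $\sC_2$ through the Viehweg--Zuo construction of Section $7.3$. Thus I would structure the proof as (i) a geometric check that the fibres are Calabi--Yau $2$-manifolds, and (ii) a Hodge-theoretic transfer of the $CM$ property from the curve $\sC_\lambda$ to the surface $(\sC_2)_\lambda$.

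First I would check that for every $\lambda \in \sM_1$ (i.e. $\lambda \neq 0,1,\infty$) the fibre $V(y_2^4 + y_1^4 + x_1(x_1-x_0)(x_1-\lambda x_0)x_0) \subset \bP^3$ is a smooth quartic surface. A common zero of all four partial derivatives forces $y_1 = y_2 = 0$ together with a common zero of $F := x_1(x_1-x_0)(x_1-\lambda x_0)x_0$ and of its two partials; but for $\lambda \neq 0,1,\infty$ the form $F$ is a product of four pairwise distinct linear forms on $\bP^1$, hence reduced, so no such point exists and the fibre is smooth. By adjunction $\omega_{(\sC_2)_\lambda} \cong \sO_{(\sC_2)_\lambda}(4-4) \cong \sO$, and a smooth quartic surface in $\bP^3$ has $h^{1,0}=0$; so Definition $\ref{defCala}$ is satisfied and $\sC_2 \to \sM_1$ is a smooth family of $K3$ surfaces, i.e. of Calabi--Yau $2$-manifolds.

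Next I would recall that by Example $\ref{exa1}$ the family of plane curves $\sC \to \sM_1$ given by $y_1^4 = x_1(x_1-x_0)(x_1-\lambda x_0)x_0$ has a pure $(1,1)$-$VHS$, so Theorem $\ref{jnvz}$ provides a dense subset $D \subseteq \sM_1$ of points $\lambda$ for which $\sC_\lambda$ has complex multiplication, that is, $\Hg(H^1(\sC_\lambda,\Q))$ is a torus. The projection $[y_2:y_1:x_1:x_0] \mapsto [y_1:x_1:x_0]$ exhibits $(\sC_2)_\lambda$ as a cyclic cover of $\bP^2$ of degree $4$ branched exactly along the plane quartic $\sC_\lambda \subset \bP^2$; this is the first storey of the Viehweg--Zuo tower over the curve family. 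It then remains to prove the pointwise statement: whenever $\lambda \in D$, the surface $(\sC_2)_\lambda$ has complex multiplication. Here I would use the $\mu_4$-action $y_2 \mapsto i\,y_2$ and the eigenspace decomposition $H^2((\sC_2)_\lambda,\C) = \bigoplus_{j=0}^{3} H^2_j$. The invariant part $H^2_0$ is pulled back from $H^2(\bP^2,\C)$, of Tate type $(1,1)$. By the theory of cyclic covers along a smooth branch divisor (the surface analogue of Chapter $2$, via \cite{EV1} and \cite{VZ5}), the remaining eigenspaces are, up to a Tate twist, built from the Galois eigenspaces of $H^1(\sC_\lambda,\C)$. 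Since $\Hg(H^1(\sC_\lambda,\Q))$ is a torus for $\lambda \in D$, each rational summand of $H^2((\sC_2)_\lambda,\Q)$ has torus Hodge group, and Lemma $\ref{7.1}$ forces the connected group $\Hg(H^2((\sC_2)_\lambda,\Q))$ to be a torus. For a projective $K3$ surface only $H^2$ carries a non-trivial Hodge group, so all $\Hg(H^k((\sC_2)_\lambda,\Q))$ are commutative, i.e. $(\sC_2)_\lambda$ has complex multiplication; as $D$ is dense, $\sC_2 \to \sM_1$ is a $CMCY$ family of $2$-manifolds.

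The main obstacle is this last step: making precise the identification of the transcendental eigenspaces of $H^2((\sC_2)_\lambda)$ with Tate twists of the Hodge structure of $\sC_\lambda$, and verifying that it is an isomorphism of polarized Hodge structures so that the torus property of the Hodge group genuinely transfers. This is exactly the inductive Hodge-theoretic content of the Viehweg--Zuo construction \cite{VZ5}, which I would invoke rather than reprove; note that for the present statement only the pointwise conclusion at $\lambda \in D$ is required, so the compatibility of the decomposition with the full variation need not be developed in detail here.
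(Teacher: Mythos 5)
Your proposal is correct and follows essentially the same route as the paper's proof: identify the fibres as $K3$ surfaces, transfer the $CM$ property from the dense set of $CM$ fibres of the plane-quartic family $\sC_1$ (which has a derived pure $(1,1)$-$VHS$) to $\sC_2$ by invoking the Viehweg--Zuo construction \cite{VZ5}, and finish by noting that for a $K3$ surface only $H^2$ can carry a non-commutative Hodge group. Your explicit smoothness/adjunction check and the sketch of the $\mu_4$-eigenspace decomposition are more detail than the paper supplies, but the key step is delegated to \cite{VZ5} in both arguments.
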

\begin{proof}
It is well-known that a hypersurface of $\bP^3$ of degree $4$ is a $K3$-surface.

By \cite{VZ5}, Notation 2.2, and Corollary 8.5, we have that $\lambda_0$ is a $CM$-point of
$\sC_2$, if $\lambda_0$ is a $CM$-point of the family $\sC_1 \to \sM_1$ given by
$$\bP^2 \supset V(y_1^4 + x_1(x_1-x_0)(x_1-\lambda x_0)x_0)
\to \lambda \in \sM_1.$$
Note that $\sC_1$ has in fact a dense set of $CM$ fibers, since it has a derived pure $(1,1)-VHS$
as we have seen. Since only the Hodge group of the Hodge structure on $H^2(X,\Q)$ can be
non-trivial for a $K3$-surface $X$ (follows by definition resp., by the Hodge diamond of a
$K3$-surface), the family $\sC_2$ is a $CMCY$ family of 2-manifolds.
\end{proof}

Now we give some examples of $CM$ fibers of $\sC_2$:

\begin{remark} \label{ellcomp}
Consider the family $\sE \to \sM_1$ of elliptic curves given by
$$\bP^2 \supset V(y^2x_0+x_1(x_1-x_0)(x_1-\lambda x_0)) \to \lambda \in \sM_1.$$
Note that $\sC_1$ has a derived
pure $(1,1)-VHS$, where $\sE$ has the associated primitive pure $(1,1)-VHS$.
Thus the Hodge structure decomposition of Proposition $\ref{hggrcomp}$ tells us
that the fiber $(\sC_1)_{\lambda}$ has $CM$, if the fiber $\sE_{\lambda}$ has $CM$.
In the proof of Proposition $\ref{k3fl}$ we have seen that $(\sC_2)_{\lambda}$ has $CM$,
if $(\sC_1)_{\lambda}$ has $CM$. Thus by the $CM$ fibers of $\sE$, we can determine
$CM$ fibers of $\sC_2$.
\end{remark}

\begin{example} \label{ellcm}
By Remark $\ref{ellcomp}$, the well-known $CM$ curves with $j$ invariant
0 and 1728 yield $CM$ fibers of $\sC_2$ isomorphic to
$$V(y_2^4+y_1^4+(x_1^3-x_0^3)x_0), \  \ V(y_2^4+y_1^4+x_1^4+x_0^4) \subset  \bP^3.$$
Theorem $\ref{geilomat}$ yields the same examples.
\end{example}

\begin{example} \label{ellcm2}
By \cite{hart} {\bf IV}. Proposition $4.18$, one concludes that an elliptic curve has complex
multiplication, if it has a non-trivial isogeny with itself.
The elliptic curve with $j$ invariant 8000 resp., -3375 is given by
$$y^2x_0 = x_1(x_1-x_0)(x_1-(1+\sqrt{2})^2x_0) \  \ \mbox{resp.,} \  \
y^2x_0 = x_1(x_1-x_0)(x_1-\frac{1}{4}(3+i\sqrt{7})^2x_0)$$
and has an isogeny of degree 2 with itself. Moreover the elliptic curve with $j$ invariant 1728 has
an isogeny of degree 2 with itself. This follows from the solution of
\cite{hart}, {\bf IV}. Exercise $4.5$, which we will partially sketch. Thus the $K3$ surfaces given by
$$y_2^4+y_1^4 + x_1(x_1-x_0)(x_1-(1+\sqrt{2})^2x_0)x_0 \  \ \mbox{and} \  \
y_2^4 + y_1^4 + x_1(x_1-x_0)(x_1-\frac{1}{4}(3+i\sqrt{7})^2x_0)x_0$$
have complex multiplication.

We sketch how we obtain the given examples: First note that each degree 2 cover $u:\bP^1 \to \bP^1$ is up to a changement of coordinates
given by $x \to x^2$. This follows from the fact that $u$ has two ramification points
by the Hurwitz formula. Without loss of generality the elliptic curve $E$ is endowed with a degree two
cover $i: E \to \bP^1$ such that there exists a $\lambda$ such $i$ is ramified over
$0,1,\lambda,\infty$ resp., $E$ is locally given by
\begin{equation} \label{loceq}
V(y^2-x(x-1)(x-\lambda)) \subset \A^2.
\end{equation}
Since an isogeny $f: E \to E$ is a morphism of Abelian varieties, one concludes that for
each $(x,y) = f(P) \in E$ one has $f(-P) = -(x,y) = (x,-y)$. Hence one concludes that
there exist the degree 2 covers $u_f: \bP^1 \to \bP^1$ and $h_f:E \to \bP^1$ such that
$$i\circ f  = u_f \circ h_f.$$
It is a very easy exercise to check that $u_f$ can be given by $x \to x^2$ in this
case for some suitable $\lambda$, which yields $E$. Thus one concludes that $h_f$ is ramified
over
$$1, -1, \sqrt{\lambda},-\sqrt{\lambda},$$
which follows from considering the ramification indeces. By a changement of coordinates,
$E$ is given by
$$0,1,\frac{(\sqrt{\lambda}+1)^2}{(\sqrt{\lambda}-1)^2},\infty,$$
too. Note that $\lambda$ and $1-\lambda$ yield the same elliptic curve. We substitute $t = \sqrt{\lambda}$ and resolve the equations
$$t^2 = \frac{(t+1)^2}{(t-1)^2} \  \ \mbox{and} \  \ t^2 = 1-\frac{(t+1)^2}{(t-1)^2}$$
by using the computer algebra program MATHEMATICA in the case of the ground field $\C$. This yields the stated elliptic curves $E$
with an isogeny $f: E \to E$ of degree 2. It remains to prove the completeness of the given
examples, which is a well-known fact.
\end{example}

\begin{example} \label{ellcm3}
Elliptic curves with $CM$ has been well studied by number theorists. In \cite{Sil} Appendix {\bf C}, $\S3$
there is a list of 13 isomorphy classes of elliptic curves with complex multiplication containing all classes represented by the
preceding 4 examples. Two examples of the list, which have the $j$ invariants 54000
and 16581375, are given by the equations
$$y^2 = x^3 - 15x +22, \ \ y^2 = x^3 -595x+5586.$$
The equations allow an explicite determination of involutions on these examples. The given equations
for the 7 remaining isomorphy classes of elliptic curves do not allow an immediate description of
involutions.
\end{example}

As we will see, the family $\sC_2$ has some involutions, which make it suitable for the construction of a
Borcea-Voisin tower. The following lemma is obvious:

\begin{lemma}
Over the basis $\sM_1$ the family $\sC_2$ has three involutions given by
$$\iota_1(y_2:y_1:x_1:x_0) = (-y_2:y_1:x_1:x_0), \  \ 
\iota_2(y_2:y_1:x_1:x_0) = (y_2:-y_1:x_1:x_0),$$
$$\iota_3(y_2:y_1:x_1:x_0) = (-y_2:-y_1:x_1:x_0),$$
which constitute with the identity map a subgroup of the $\sM_1$-automorphism group of
$\sC_2$ isomorphic to the Kleinsche Vierergruppe.
\end{lemma}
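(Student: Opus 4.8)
The plan is to verify directly that each $\iota_i$ is a well-defined $\sM_1$-automorphism of $\sC_2$ of order $2$, and then to check the group law. First I would regard each $\iota_i$ as induced by a diagonal element of $\PGL_4(\C)$ acting on the homogeneous coordinates $(y_2:y_1:x_1:x_0)$, namely $\iota_1 = [\diag(-1,1,1,1)]$, $\iota_2 = [\diag(1,-1,1,1)]$ and $\iota_3 = [\diag(-1,-1,1,1)]$. Since each underlying matrix is invertible, these induce automorphisms of $\bP^3$; and as none of the three diagonal matrices is a scalar multiple of the identity or of one another, the induced maps are pairwise distinct and different from $\id$.

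Next I would check that each $\iota_i$ preserves the fibers. Writing $F_\lambda = y_2^4 + y_1^4 + x_1(x_1-x_0)(x_1-\lambda x_0)x_0$ for the defining polynomial of $(\sC_2)_\lambda$, the substitution $y_2 \mapsto -y_2$ leaves $y_2^4$ invariant and fixes every other monomial, so $F_\lambda \circ \iota_1 = F_\lambda$; the same holds for $\iota_2$ (using $y_1 \mapsto -y_1$, $y_1^4 \mapsto y_1^4$) and for $\iota_3$. Hence $\iota_i(V(F_\lambda)) = V(F_\lambda)$ for every $\lambda$. Because the defining formulas do not involve $\lambda$ and act only on the $\bP^3$-coordinates, each $\iota_i$ defines a morphism $\sC_2 \to \sC_2$ commuting with the projection to $\sM_1$, that is, an $\sM_1$-automorphism.

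Finally I would read off the group structure from the diagonal matrices. One has $\iota_i^2 = [\diag(1,1,1,1)] = \id$, so each $\iota_i$ is an involution (in particular invertible, confirming it is an automorphism); and since diagonal matrices commute, $\iota_1 \iota_2 = \iota_2 \iota_1 = [\diag(-1,-1,1,1)] = \iota_3$. Thus $\{\id,\iota_1,\iota_2,\iota_3\}$ is closed under composition, abelian, and generated by the two commuting involutions $\iota_1,\iota_2$, hence isomorphic to $\Z/2 \times \Z/2$, the Klein four-group.

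There is no serious obstacle here; as the author indicates, the statement is essentially a direct computation. The only point requiring a little care is the projective-coordinate bookkeeping: one must confirm that the three diagonal matrices give genuinely distinct, non-trivial elements of $\PGL_4(\C)$, so that, for instance, $\iota_1$ is not secretly equal to $\iota_2$ after rescaling all coordinates by $-1$. This is immediate, since no two of the matrices differ by a scalar.
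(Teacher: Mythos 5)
Your verification is correct and is precisely the direct computation the paper leaves implicit: the paper states this lemma without proof, prefacing it with "The following lemma is obvious." Your care in checking that the three diagonal matrices are pairwise non-proportional in $\PGL_4(\C)$ is the only point of substance, and you handle it correctly.
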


\begin{remark} \label{invo}
Over $\sM_1$ there are at least the 4 following additional involutions on $\sC_2$:
$$\iota_4(y_2:y_1:x_1:x_0) = (y_1:y_2:x_1:x_0), \ \
\iota_5(y_2:y_1:x_1:x_0) = (iy_1:-iy_2:x_1:x_0),$$
$$\iota_6(y_2:y_1:x_1:x_0) = (-y_1:-y_2:x_1:x_0),\ \
\iota_7(y_2:y_1:x_1:x_0) = (-iy_1:iy_2:x_1:x_0)$$
\end{remark}

\begin{Theorem} \label{iotata}
By the involutions $\iota_1$ and $\iota_4$, the family $\sC_2$ can be used to be $\sZ_1$ or some
$\Sigma_i$ for the construction of a Borcea-Voisin tower of $CMCY$ families of $n$-manifolds.
\end{Theorem}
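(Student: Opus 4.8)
The plan is to verify, for each of the two involutions, that the hypotheses imposed on $\sZ$ (resp. $\Sigma$) in diagram $\eqref{diag}$ and in Construction $\ref{bvtower}$ are met: that $\sC_2 \to \sM_1$ is a $CMCY$ family of $2$-manifolds, that the involution is an $\sM_1$-involution realizing each fiber as a cyclic degree $2$ cover whose ramification locus is a union of smooth pairwise disjoint hypersurfaces, and that there is a dense set of $\lambda \in \sM_1$ at which all Hodge groups of the fiber and of its ramification locus are commutative. The first point is exactly Proposition $\ref{k3fl}$, and since both $\iota_1$ and $\iota_4$ fix the coordinates $x_1,x_0$ they descend to the identity on $\sM_1$, hence are $\sM_1$-involutions.

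First I would compute the fixed loci on a fiber $S=(\sC_2)_\lambda \subset \bP^3$. For $\iota_1$ a point is fixed only if $y_2=0$ or if $x_1=x_0=y_1=0$; the latter point $(1:0:0:0)$ does not lie on $S$, since $y_2^4$ does not vanish there, so the fixed locus is $\{y_2=0\}\cap S = V(y_1^4+x_1(x_1-x_0)(x_1-\lambda x_0)x_0)$, which is precisely the fiber $(\sC_1)_\lambda$. For $\iota_4$ the fixed points satisfy $y_1=y_2$, the alternative $(1:-1:0:0)$ again not lying on $S$; setting $y_1=y_2=:y$ and rescaling $y$, the fixed locus is $V(y^4+x_1(x_1-x_0)(x_1-\lambda x_0)x_0)$, again isomorphic to $(\sC_1)_\lambda$. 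In both cases, because $0,1,\lambda,\infty$ are distinct for $\lambda\in\sM_1$, the fixed locus is a single smooth irreducible curve, hence a non-trivial reduced smooth divisor consisting of pairwise disjoint smooth hypersurfaces, as required.

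Next I would apply Lemma $\ref{ampel}$: each fiber $S$ is a Calabi-Yau manifold (a $K3$ surface, by Proposition $\ref{k3fl}$) and the fixed locus is a non-trivial reduced smooth divisor, so the involution acts by $-1$ on $H^0(S,\omega_S)$. By Example $\ref{datda}$ this is equivalent to $S\to S/\iota$ being a cyclic degree $2$ cover of smooth surfaces whose branch divisor consists of disjoint smooth curves, i.e. exactly the Kummer-cover structure demanded of $\sZ$ (resp. $\Sigma$) in diagram $\eqref{diag}$.

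Finally I would establish density of the good points. By Proposition $\ref{k3fl}$ there is a dense set of $\lambda$ for which every $\Hg(H^k((\sC_2)_\lambda,\Q))$ is commutative; moreover the proof of that proposition identifies these as the points where $(\sC_1)_\lambda$ has complex multiplication, and this set is dense because $\sC_1\to\sM_1$ carries a derived pure $(1,1)-VHS$. Since for both involutions the ramification family is isomorphic to $\sC_1\to\sM_1$, at each such $\lambda$ the curve $R_\lambda\cong(\sC_1)_\lambda$ has commutative $\Hg(H^1(R_\lambda,\Q))$ (while $\Hg(H^0)$ and $\Hg(H^2)$ are trivial), simultaneously with all $\Hg(H^k((\sC_2)_\lambda,\Q))$. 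This verifies the density hypothesis, so $\sC_2$ equipped with $\iota_1$ or $\iota_4$ satisfies the assumptions for $\sZ_1$ or $\Sigma_i$, proving the theorem. The only delicate point is the coincidence of the two dense $CM$ loci — the one for the surfaces and the one for the ramification curves — which is secured precisely by the identification of the ramification locus with $(\sC_1)_\lambda$ together with the reduction of the $CM$ points of $\sC_2$ to those of $\sC_1$ in Proposition $\ref{k3fl}$.
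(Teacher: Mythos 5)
Your proposal is correct and follows essentially the same route as the paper's proof: identify the fixed divisor of $\iota_1$ (resp. $\iota_4$) as $\{y_2=0\}$ (resp. $\{y_2=y_1\}$) intersected with the fiber, observe that it is smooth and isomorphic to $(\sC_1)_\lambda$, and then reuse the argument of Proposition $\ref{k3fl}$ together with the density of $CM$ fibers of $\sC_1$ to verify the hypotheses of Construction $\ref{bvtower}$. Your additional checks — that the isolated fixed points $(1:0:0:0)$ and $(1:-1:0:0)$ of the ambient involutions do not lie on the fibers, and the explicit appeal to Lemma $\ref{ampel}$ — are details the paper leaves implicit but do not change the argument.
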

\begin{proof}
The divisor of the fiber $(\sC_2)_{\lambda}$, which is fixed by $\iota_1$ resp., $\iota_4$ is
given by $y_2 = 0$ resp., $y_2 = y_1$. Hence both divisors are smooth and isomorphic to the fiber
$(\sC_1)_{\lambda}$ given by
$$\bP^2 \supset V(y_1^4 + x_1(x_1-x_0)(x_1-\lambda x_0)x_0)
\to \lambda \in \sM_1.$$
We use the same arguments as in the proof of Proposition $\ref{k3fl}$: If $(\sC_1)_{\lambda}$ has
complex multiplication, then $(\sC_2)_{\lambda}$ and the divisor fixed by
$\iota_1$ resp., $\iota_4$ have complex multiplication, too. Hence by the fact that $\sC_1$ has a
dense set of complex multiplication fibers, $\sC_2$ and $\iota_1$ resp., $\sC_2$ and $\iota_4$
satisfy the assumptions of Construction $\ref{bvtower}$.
\end{proof}

\begin{remark} \label{etos} 
By the fact that $$\iota_2 = \iota_4\circ \iota_1\circ \iota_4,$$
the involution $\iota_2$ is suitable for the construction of a
Borcea-Voisin tower, too. But according to the construction of C. Voisin \cite{Voi2}, this implies
that $\iota_2$ yields a $CMCY$ family of $3$-manifolds over $\sM_1 \times \sM_1$, which is
isomorphic to the corresponding family obtained by $\iota_1$.

Let $\alpha$ denote the $\sM_1$-automorphism of $\sC_2$ given by
$$(y_2:y_1:x_1:x_0) \to (iy_2:y_1:x_1:x_0).$$
One calculates easily that
$$\iota_{5} = \alpha \circ \iota_4 \circ \alpha^{-1}, \  \ \iota_{6} = \alpha^2 \circ \iota_4
\circ \alpha^{-2}, \  \  \iota_{7} = \alpha^{-1} \circ \iota_4 \circ \alpha.$$
Hence one has that $\sC_2/\iota_{4}, \ldots, \sC_2/\iota_{7}$ resp., the resulting
$CMCY$ families of 3-manifolds obtained by the method of C. Voisin \cite{Voi2} are isomorphic as
$\sM_1$-schemes resp., as $\sM_1 \times \sM_1$-schemes.

Since
$$\iota_3 = \iota_1\iota_2,$$
the involution $\iota_3$ acts by ${\rm id}$ on each
$\Gamma(\omega_{(\sC_2)_{\lambda}})$ such that it can not be used for the construction of a
Borcea-Voisin tower.
\end{remark}

\begin{remark}
By Example $\ref{ellcm}$, Example $\ref{ellcm2}$ and Example $\ref{ellcm3}$, one has 6 explicitely
given elliptic curves with $CM$ and explicitely given involutions, which yield
6 $K3$ surfaces with $CM$. By using the method of C. Voisin \cite{Voi2}, these examples yield
36 explicitely given fibers with $CM$ for each of our resulting $CMCY$ family of 3-manifolds. 
\end{remark}

\begin{remark}
The author does not see a way to conjugate $\iota_1$ into $\iota_4$. Moreover we will
see that the fibers of the resulting $CMCY$ families of 3-manifolds constructed with $\iota_1$ and
$\iota_4$ according to C. Voisin \cite{Voi2} have the same Hodge numbers. This means that the
question for isomorphisms between these two families remains open.
\end{remark}

\chapter{The degree 3 case}

\section{Prelude}
We construct a surface $R^1$ by a desingularization of the weighted projective space $\bP(2,1,1)$
during this section. Our modified construction of a Viehweg-Zuo tower starts with
the family $\sC$ of curves with a dense set of $CM$ fibers given by
\begin{equation} \label{fami}
R^1 \supset V(y^3 = x_1(x_1-x_0)(x_1 - \alpha x_0)(x_1-\beta x_0)(x_1-\gamma x_0)x_0) \to
(\alpha ,\beta ,\gamma) \in \sM_3.
\end{equation}
We use such a weighted projective space, since the degree of these covers onto $\bP^1$ does
not coincide with the sum of branch indeces. In this section we construct a rational
3-manifold $R^2$ with a natural projection onto $R^1$. For each fiber $\sC_q$ this projection
induces a cyclic degree 3 cover of a Calabi-Yau hypersurface of $R^2$ onto $R^1$ ramified over
$\sC_q$. We will later see that these Calabi-Yau hypersurfaces of $R^2$ yield a $CMCY$ family of
2-manifolds suitable for the construction of a Borcea-Voisin tower.

Recall that the usual projective space $\bP^n$ is given by ${\rm Proj}(\C[z_n, \ldots, z_1, z_0])$,
where each $z_j$ (with $j = 0, \ldots, n$) has the weight 1. Our weighted projective space $Q^n$
\index{$Q^n$} is given by ${\rm Proj}(\C[y_n, \ldots, y_1, x_1, x_0])$, where each $y_j$
(with $j = 1, \ldots, n$) has the weight 2, and $x_0$ and $x_1$ have the weight 1.

First we investigate and describe the projective space $Q^n$. The following well-known
Lemma will be very useful here:

\begin{lemma} (Veronese embedding)
Let $R$ be a graded ring. Then we have
$${\rm Proj}(R) \cong {\rm Proj}(R^{[d]}).$$
\end{lemma}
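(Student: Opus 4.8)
The plan is to exhibit the isomorphism ${\rm Proj}(R) \cong {\rm Proj}(R^{[d]})$ locally on a compatible cover by distinguished affine opens and then glue, where $R^{[d]} = \bigoplus_{n \geq 0} R_{nd}$ denotes the $d$-th Veronese subring, graded so that its $n$-th homogeneous piece is $R_{nd}$. Recall that the points of ${\rm Proj}(R)$ are the homogeneous prime ideals of $R$ not containing the irrelevant ideal $R_+ = \bigoplus_{n > 0} R_n$, and that for a homogeneous $f \in R_+$ the distinguished open $D_+(f) = \{\mathfrak{p} : f \notin \mathfrak{p}\}$ is isomorphic as a locally ringed space to ${\rm Spec}(R_{(f)})$, where $R_{(f)}$ is the degree-zero part of the localization $R_f$. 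First I would observe that, since $\mathfrak{p}$ is prime, $f \in \mathfrak{p}$ is equivalent to $f^d \in \mathfrak{p}$, so $D_+(f) = D_+(f^d)$; hence the opens $D_+(g)$ with $\deg(g)$ divisible by $d$ already cover ${\rm Proj}(R)$. These are exactly the $D_+(g)$ attached to homogeneous elements $g$ of positive degree in the Veronese grading, which cover ${\rm Proj}(R^{[d]})$, so both spaces carry the same covering family.

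The key point is then a purely algebraic identity: for a homogeneous $g \in R$ whose degree is divisible by $d$ (so that $g \in R^{[d]}$ is homogeneous of positive degree in the Veronese grading) one has an equality of rings $R_{(g)} = (R^{[d]})_{(g)}$. Indeed, a typical element of $R_{(g)}$ has the form $a/g^n$ with $a \in R$ homogeneous of degree $n \deg(g)$; since $\deg(g)$ is a multiple of $d$, so is $n\deg(g)$, whence $a \in R^{[d]}$. This gives $R_{(g)} \subseteq (R^{[d]})_{(g)}$, and the reverse inclusion is immediate from $R^{[d]} \subseteq R$. Thus the inclusion $R^{[d]} \hookrightarrow R$ induces, for each such $g$, an isomorphism ${\rm Spec}(R_{(g)}) \cong {\rm Spec}((R^{[d]})_{(g)})$, i.e. an isomorphism of the distinguished opens $D_+(g) \subset {\rm Proj}(R)$ and $D_+(g) \subset {\rm Proj}(R^{[d]})$ together with their structure sheaves.

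Finally I would glue these local isomorphisms. On points the underlying map sends a homogeneous prime $\mathfrak{p}$ not containing $R_+$ to $\mathfrak{p} \cap R^{[d]}$; I would check that $\mathfrak{p} \cap R^{[d]}$ does not contain $R^{[d]}_+$, that this map is a bijection onto the homogeneous primes of $R^{[d]}$ missing $R^{[d]}_+$, and that it is a homeomorphism for the Zariski topologies, after which it restricts on each $D_+(g)$ to the comparison of spectra obtained above. Since all the identifications are induced by the single inclusion $R^{[d]} \hookrightarrow R$, they are automatically compatible on overlaps $D_+(g) \cap D_+(g') = D_+(gg')$, and the local isomorphisms glue to a global isomorphism of schemes. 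The step I expect to require the most care is precisely this compatibility of the structure-sheaf isomorphisms on the intersections, in tandem with the verification that the point-level correspondence $\mathfrak{p} \mapsto \mathfrak{p}\cap R^{[d]}$ is a well-defined bijection; once these are in place the assertion follows.
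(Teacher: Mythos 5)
Your argument is correct and is the standard proof of this fact: the opens $D_+(g)$ with $d\mid\deg g$ cover both schemes (since $D_+(f)=D_+(f^d)$), the identity $R_{(g)}=(R^{[d]})_{(g)}$ follows from clearing denominators, and the local identifications glue because they are all induced by the single inclusion $R^{[d]}\hookrightarrow R$. The paper states this lemma as well known and gives no proof at all, so there is no argument to compare yours against; your write-up supplies exactly the verification the paper omits.
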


\begin{proposition} \label{qgl}
The weighted projective space $Q^n$ is isomorphic to the irreducible singular
hypersurface in $\bP^{n+2}$ given by the equation $z_1z_3 = z_2^2$. The singular locus of $Q^n$ is
given by $V(z_1,z_2,z_3)$.
\end{proposition}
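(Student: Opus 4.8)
The plan is to use the Veronese embedding lemma with $d = 2$ to replace $Q^n = {\rm Proj}(R)$, where $R = \C[y_n, \ldots, y_1, x_1, x_0]$ carries the weighting $\deg y_j = 2$ and $\deg x_0 = \deg x_1 = 1$, by ${\rm Proj}(R^{[2]})$ of its second Veronese subring $R^{[2]} = \bigoplus_k R_{2k}$. The essential observation is that in its reindexed grading $R^{[2]}$ is generated in degree one, and its degree-one part is exactly the weight-two part of $R$: a $\C$-basis is given by $y_1, \ldots, y_n$ together with the three monomials $x_0^2$, $x_0 x_1$, $x_1^2$. This yields $n+3$ algebra generators, matching the $n+3$ homogeneous coordinates of $\bP^{n+2}$.

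I would then introduce homogeneous coordinates $z_1, \ldots, z_{n+3}$ on $\bP^{n+2}$ and the surjection $\C[z_1, \ldots, z_{n+3}] \to R^{[2]}$ sending $z_1 \mapsto x_0^2$, $z_2 \mapsto x_0 x_1$, $z_3 \mapsto x_1^2$, and $z_4, \ldots, z_{n+3} \mapsto y_1, \ldots, y_n$. Since the target generators satisfy $z_1 z_3 = z_2^2$, this relation lies in the kernel. The central step is to prove the kernel is \emph{exactly} the ideal $(z_1 z_3 - z_2^2)$. Because $y_1, \ldots, y_n$ are free indeterminates algebraically independent from $x_0, x_1$, it suffices to treat the subalgebra of $\C[x_0, x_1]$ generated by $x_0^2, x_0 x_1, x_1^2$, i.e.\ the $2$-Veronese of $\C[x_0, x_1]$. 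This is the classical affine quadric cone, isomorphic to $\C[z_1, z_2, z_3]/(z_1 z_3 - z_2^2)$; a monomial-basis / degree count (the products $x_0^{2a}(x_0 x_1)^b x_1^{2c}$ give all even-degree monomials with no accidental coincidences beyond the one relation) shows there is no further relation. Tensoring with the free polynomial ring $\C[z_4, \ldots, z_{n+3}]$ gives $R^{[2]} \cong \C[z_1, \ldots, z_{n+3}]/(z_1 z_3 - z_2^2)$, and applying the Veronese lemma then yields the isomorphism $Q^n \cong V(z_1 z_3 - z_2^2) \subset \bP^{n+2}$.

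For irreducibility I would note that $z_1 z_3 - z_2^2$ is a quadratic form of rank $3$ in the variables $z_1, z_2, z_3$, and such a form is irreducible, since a reducible quadric factors into two linear forms and therefore has rank at most $2$. For the singular locus I would apply the Jacobian criterion to $f = z_1 z_3 - z_2^2$: the partials are $\partial f/\partial z_1 = z_3$, $\partial f/\partial z_2 = -2 z_2$, $\partial f/\partial z_3 = z_1$, and all remaining partials vanish identically. A point of the hypersurface is singular precisely when all partials vanish, that is $z_1 = z_2 = z_3 = 0$; conversely every such point already satisfies $f = 0$. Hence the singular locus is exactly $V(z_1, z_2, z_3)$, the $\bP^{n-1}$ spanned by $z_4, \ldots, z_{n+3}$, which in the original coordinates is the locus $x_0 = x_1 = 0$.

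The main obstacle I anticipate is precisely the kernel computation — verifying that the $n+3$ generators of $R^{[2]}$ satisfy no relation beyond $z_1 z_3 = z_2^2$; once the $2$-Veronese of $\C[x_0,x_1]$ is identified with the quadric cone, the irreducibility and singular-locus assertions are routine rank and Jacobian arguments.
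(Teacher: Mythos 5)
Your proposal is correct, and it shares the paper's skeleton (Veronese embedding with $d=2$, realization of $Q^n$ in $\bP^{n+2}$ via the $n+3$ degree-two generators, then a Jacobian computation for the singular locus), but the decisive middle step is handled by a genuinely different argument. The paper identifies the defining equation geometrically: it computes $\dim Q^n = n+1$ from the affine chart $Q^n \setminus V(x_0^2) \cong \A^{n+1}$, invokes the characterization of irreducible codimension-one subvarieties of affine space as zero sets of single irreducible polynomials (Hartshorne, {\bf I}.~Proposition~1.13, applied to the cone), and then concludes that the irreducible hypersurface $V(z_1z_3 - z_2^2)$ containing $Q^n$ must equal it. You instead compute the kernel of the ring surjection $\C[z_1,\ldots,z_{n+3}] \to R^{[2]}$ directly, reducing to the classical presentation of the $2$-Veronese of $\C[x_0,x_1]$ as the quadric cone via a monomial count, and then tensoring with the free variables $z_4,\ldots,z_{n+3}$. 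Your route is more self-contained and yields the ideal-theoretic statement (the prime ideal of $Q^n$ is exactly $(z_1z_3 - z_2^2)$) without any appeal to dimension theory, at the cost of the explicit relation-checking; the paper's route is shorter but leans on the quoted hypersurface characterization and on the separate verification that $f$ is irreducible, which you also supply cleanly via the rank-$3$ observation. Both arguments are complete, and your treatment of the singular locus coincides with the paper's.
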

\begin{proof}
By the Veronese embedding, we have
$$Q^n\cong {\rm Proj}(k[x_0^2,x_0x_1, x_1^2,y_1, \ldots, y_n]).$$

Hencefore we obtain a closed embedding of $Q^n$ into $\bP^{n+2}$ given by
$$x_0^2 \to z_1, \ \ x_0x_1 \to z_2, \ \ x_1^2 \to z_3, \ \ y_1 \to z_4, \  \ \ldots,  \  \
y_n \to z_{n+3}.$$
We have that $Q^n \setminus V(x_0^2)$
is isomorphic to $\A^{n+1}$. Hence $\dim(Q^n) = n+1$, which implies that its projective cone, which
is contained in $\A^{n+3}$, has the dimension $n+2$. By \cite{hart}, {\bf I}. Proposition 1.13,
each irreducible component of dimension $n+2$ of this cone is given by an ideal generated by one
irreducible polynomial. The corresponding polynomial of the unique irreducible component of $Q^n$
is
$$f(z_1,z_2,z_3) = z_1z_3 - z_2^2,$$
since each point $p \in Q^n\subset \bP^{n+2}$ satisfies $f(p)=0$ and $f$ is irreducible. The last
statement about the singular locus follows from calculating the partial derivatives of $f$.
\end{proof}

Let $a_1, \ldots, a_{2m} \in \C$, and $m \in \N \setminus \{1\}$. Then $C_{(n)} \subset  Q^n$ is
the subvariety, which is given by the homogeneous polynomial
$$y_n^m + \ldots + y_1^m + (x_1-a_1x_0) \ldots (x_1-a_{2m}x_0).$$
It is a very easy exercise to check that this polynomial is irreducible.

\begin{proposition}
There exists a homogenous polynomial $G \in \C[z_1,z_2, z_3]$ of
degree $m$ such that $C_{(n)} \subset \bP^{n+2}$ is given by the ideal
generated by $h$ and $f$, where
$$h = z_{n+3}^m + \ldots z_4^m + G.$$
\end{proposition}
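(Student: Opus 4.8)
The plan is to produce $G$ explicitly from the defining equation of $C_{(n)}$ and then transport the description of $C_{(n)}$ through the Veronese isomorphism $Q^n \cong V(f)$ of Proposition $\ref{qgl}$. First I would recall that, in the weighted coordinates, $C_{(n)} \subset Q^n$ is the zero locus of the weighted-homogeneous polynomial
$$P := y_n^m + \ldots + y_1^m + (x_1 - a_1 x_0)\cdots(x_1 - a_{2m}x_0),$$
which has weighted degree $2m$, since each $y_j$ has weight $2$ and each linear factor has weight $1$. Under the Veronese embedding of Proposition $\ref{qgl}$, the homogeneous coordinate ring of $V(f) \subset \bP^{n+2}$ is identified with the even-degree Veronese subring $R^{[2]}$ of $R = \C[y_n,\ldots,y_1,x_1,x_0]$ via $z_1 \mapsto x_0^2$, $z_2 \mapsto x_0 x_1$, $z_3 \mapsto x_1^2$ and $z_{j+3}\mapsto y_j$. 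Because $P$ has even weighted degree it lies in $R^{[2]}$, and it is its preimage that I must exhibit as $h$.

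For the construction of $G$, the terms $y_j^m$ already pull back from $z_{j+3}^m$, so only the product $F := (x_1 - a_1 x_0)\cdots(x_1 - a_{2m}x_0)$ needs attention. Here $F$ is a binary form of degree $2m$ in $x_0,x_1$, so every monomial $x_0^a x_1^b$ occurring in $F$ satisfies $a + b = 2m$; in particular $a \equiv b \pmod 2$, and such a monomial equals $z_1^p z_2^q z_3^r$ for suitable nonnegative integers with $p+q+r = m$ (take $q \in \{0,1\}$ with $q \equiv a \pmod 2$, then $p = (a-q)/2$ and $r = (b-q)/2$). Equivalently, the even part $\bigoplus_d \C[x_0,x_1]_{2d}$ of $\C[x_0,x_1]$ is exactly $\C[x_0^2, x_0 x_1, x_1^2]$, so $F \in \C[x_0^2, x_0x_1, x_1^2]$. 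Choosing any $G \in \C[z_1,z_2,z_3]$ of degree $m$ mapping to $F$, I set $h := z_{n+3}^m + \ldots + z_4^m + G$, which then maps to $P$ under the identification $\C[z_1,\ldots,z_{n+3}]/(f) \cong R^{[2]}$.

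Finally I would check that $C_{(n)} = V(h, f)$ inside $\bP^{n+2}$. Since the Veronese map is an isomorphism onto $V(f)$ carrying the defining equation $P$ of $C_{(n)}$ to the class of $h$ modulo $f$, a point of $V(f)$ lies in the image of $C_{(n)}$ if and only if it also satisfies $h = 0$; this yields both inclusions simultaneously and scheme-theoretically identifies the homogeneous ideal of $C_{(n)}$ with $(f, h)$. The point demanding most care is precisely this last transfer: one must keep the grading conventions of the Veronese subring straight and observe that $G$, and hence $h$, is determined only modulo $f$, so the equality $V(h,f) = C_{(n)}$ must be seen to be independent of the chosen lift. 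Irreducibility of $C_{(n)}$, which was already noted before the statement, then rules out any extraneous lower-dimensional component, so that the set-theoretic identification upgrades cleanly to the asserted ideal-theoretic one.
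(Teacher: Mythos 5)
Your proposal is correct and follows essentially the same route as the paper: choose $G$ with $G(x_0^2,x_0x_1,x_1^2)=(x_1-a_1x_0)\cdots(x_1-a_{2m}x_0)$, and transport the defining equation through the Veronese identification $\C[z_1,\ldots,z_{n+3}]/(f)\cong\C[x_0^2,x_0x_1,x_1^2,y_1,\ldots,y_n]$ to conclude that the ideal of $C_{(n)}$ is $(h,f)$. You merely spell out two points the paper leaves implicit, namely the explicit monomial argument for why such a $G$ exists and the independence of the conclusion from the choice of lift of $G$ modulo $f$.
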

\begin{proof}
We can obviously choose a polynomial $G$ such that
$$G(x_0^2, x_0x_1, x_0^2) = (x_1-a_1x_0) \ldots (x_1-a_{2m}x_0).$$
Now let $h = z_{n+3}^m + \ldots z_4^m + G$, and
$$\phi : \C[z_1, \ldots, z_{n+3}]  \to \C[x_0^2,x_0x_1, x_1^2,y_1, \ldots, y_n]$$
be the homomorphism associated to the closed embedding $Q^n \hookrightarrow \bP^{n+2}$, which has
the kernel $ ( f ) $. We obtain
$$\phi(h) = y_n^m + \ldots + y_1^m + (x_1-a_1x_0) \ldots (x_1-a_{2m}x_0).$$

Hence $C_{(n)} \subset \bP^{n+2}$ is given by the prime ideal
$$\phi^{-1}(\sI(C_{(n)})) = (h, f).$$
\end{proof}

\begin{proposition}
The singular locus of $C_{(n)}$ is given by
$C_{(n)} \cap V(z_1,z_2,z_3)$.
\end{proposition}
\begin{proof}
On $Q^n \setminus V(x_0) \cong {\rm Spec}(\C[x_1, y_1, \ldots, y_n])$ the
hypersurface $C_{(n)}$ is given by the equation
$$0= y_n^m + \ldots + y_1^m + (x_1-a_1) \ldots (x_1-a_{2m}).$$
By the partial derivatives of the polynomial on the right hand,
one can easily check that there are no singularities of $C_{(n)}$ in
this affine subset. The same arguments give the same statement for
$Q^n \setminus V(x_1)$. Hence all singularities of $C_{(n)}$ are
contained in $V = V(z_1,z_2,z_3)$. For all $P \in C_{(n)} \cap V$, the
Jacobian matrix of $C_{(n)}$ at $P$ does not have the maximal rank 2,
where this is obtained by explicit calculation of the partial
derivatives of $f$ and $h$.
\end{proof}

\begin{pkt}
The variety $Q^n$ has a natural interpretation as degree 2 cover onto the variety given by
$\{z_2=0\}$ ramified over $\{z_1 = z_2=0\}$ and $\{z_2=z_3=0\}$. Hence by blowing up
$V = V(z_1,z_2,z_3)$, the proper transform $R^n := \tilde {Q}^n_V$ \index{$R^n$} is the natural
degree 2 cover onto the proper transform of $\{z_2=0\}$ ramified over the disjoint proper
transforms of $\{z_1 = z_2=0\}$ and $\{z_2=z_3=0\}$. Thus $R^n$ is non-singular.

Note that the general construction of the blowing up yields a natural embedding of an open
subset of $R^n$ into $\A^{n+2} \times \bP^2$. Hence the Jacobian matrix at each
point of $R^n$ has the maximal rank 3 with respect to this local embedding. The Jacobian matrix of
the proper transform $\tilde C_n$ of $C_{(n)}$ is given by adding the line of the partial
derivatives of $h$ to the
Jacobian matrix of $R^n$. Without loss of generality we are on the open subset $\{y_1 = 1\}$. On
the exceptional divisor $E$ the polynomial $G$ vanishes. Thus all points of $\tilde C_n \cap E$
satisfy
$$y_n^m + \ldots + y_2^n + 1 = 0.$$
Hence for each $p \in \tilde C_n \cap E$ there is a partial derivative
$\partial h/\partial y_i(P)\neq 0$. Since all partial derivatives  of the equations defining $R^n$
with respect to $y_i$ vanish, the Jacobian matrix of $\tilde C_n$ has the maximal rank 4
at each point on the exceptional divisor. Thus $\tilde C_n$ is smooth.
\end{pkt}

\begin{remark} \label{regeli}
Note that $Q^1$ has a natural interpretation as projective closure of the affine cone of a
rational curve of degree 2 in $\bP^2$. By \cite{hart}, {\bf V}. Example 2.11.4, one has that
$R^1$, which is the blowing up of the unique singular point given by the vertex of the cone, is a
rational ruled surface isomorphic to $\bP(\sO_{\bP^1} + \sO_{\bP^1}(2))$, where the exceptional
divisor has the self-intersection number $-2$.
\end{remark}

By \cite{hart}, {\bf II}. Proposition 8.20, one has for $n \geq 1$:
$$\omega_{Q^n\setminus V(z_1,z_2,z_3)} = \omega_{\bP^{n+2} \setminus V(z_1,z_2,z_3)}
\otimes \sI(Q^n\setminus \ V(z_1,z_2,z_3)) \otimes \sO_{Q^n\setminus V(z_1,z_2,z_3)}$$
$$= \sO_{Q^n\setminus V(z_1,z_2,z_3)} (-(n+1)V(z_4))$$
By \cite{artin}, Theorem 2.7 and the fact that the self-intersection number of the exceptional
divisor is $-2$, the pull-back of the canonical divisor of $Q^1$ with
respect to the blowing up morphism is the canonical divisor of $R^1$. Note that the canonical
divisor of $Q^1$ yields the canonical divisor of $Q^1 \setminus\{s\}$, where $s$ denotes the
singular point. Thus:

\begin{corollary} \label{kanne}
The canonical divisor of $R^1$ is given by $-2V(z_4)$.
\end{corollary}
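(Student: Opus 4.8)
The plan is to combine the adjunction computation on the smooth locus, already carried out in the display preceding the statement, with the crepancy of the blow-up $\pi : R^1 \to Q^1$ at the unique singular point $s$. First I would specialize the displayed formula to $n = 1$, which reads $\omega_{Q^1 \setminus \{s\}} = \sO_{Q^1 \setminus \{s\}}(-2V(z_4))$; this records the canonical sheaf of $Q^1$ away from the vertex of the quadric cone $z_1z_3 = z_2^2$.

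The first point to settle is that this already determines the canonical (Weil) divisor class of the whole surface $Q^1$. Since $Q^1$ is normal and $s$ is a single point, hence of codimension $2$, removing $s$ does not change the divisor class group: the restriction map $\operatorname{Cl}(Q^1) \to \operatorname{Cl}(Q^1 \setminus \{s\})$ is an isomorphism. Therefore the canonical divisor of $Q^1$, as a Weil divisor, is $K_{Q^1} = -2V(z_4)$, exactly as the formula on $Q^1 \setminus \{s\}$ prescribes.

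Next I would pass to the resolution. By Remark $\ref{regeli}$ the exceptional divisor $E$ of $\pi$ is a smooth rational curve of self-intersection $-2$, so $s$ is an ordinary double point ($A_1$) of the surface; in particular it is Gorenstein, so $K_{Q^1}$ is Cartier and its pullback is well defined. The decisive input is $\cite{artin}$, Theorem $2.7$: because $E^2 = -2$, the singularity is rational and the resolution is crepant, i.e. the discrepancy of $E$ vanishes and $\pi^*K_{Q^1} = K_{R^1}$ with no correction term $aE$. This is the step I expect to carry the real content of the argument; the rest is bookkeeping.

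Finally I would compute the pullback of the remaining piece. The divisor $V(z_4) = V(y_1)$ does not pass through $s = (0:0:0:1)$, where $z_4 = 1 \neq 0$, so the center of the blow-up is disjoint from $V(z_4)$ and $\pi$ is an isomorphism in a neighborhood of it; hence the total transform coincides with the proper transform and $\pi^*V(z_4) = V(z_4)$ on $R^1$. Combining the three steps gives $K_{R^1} = \pi^*K_{Q^1} = \pi^*(-2V(z_4)) = -2V(z_4)$, which is the asserted description of the canonical divisor of $R^1$.
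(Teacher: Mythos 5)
Your proof is correct and follows essentially the same route as the paper: specialize the adjunction formula on the smooth locus to $n=1$, use Artin's Theorem 2.7 together with $E^2=-2$ to see that $\pi^*K_{Q^1}=K_{R^1}$, and note that the canonical divisor of $Q^1$ is determined by its restriction to $Q^1\setminus\{s\}$. The extra details you supply (the isomorphism $\operatorname{Cl}(Q^1)\to\operatorname{Cl}(Q^1\setminus\{s\})$ and the fact that $V(z_4)$ misses the center of the blow-up, so its total transform is itself) are exactly the points the paper leaves implicit.
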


The following lemma describes the construction of this section. One has the following commutative
diagram of closed embeddings:
$$
\xymatrix{
{C_{(0)}} \ar[d]^{} \ar[rr]^{}  &  & {\ldots} \ar[d]^{} \ar[rr]^{}  &  & {C_{(n)}} \ar[d]^{}
\ar[rr]^{}  &  & {C_{(n+1)}} \ar[d]^{} \ar[rr]^{}  &  & {\ldots} \ar[d]^{}\\
{Q^0} \ar[d]^{} \ar[rr]^{}  &  & {\ldots} \ar[d]^{} \ar[rr]^{}  &  & {Q^n} \ar[d]^{} \ar[rr]^{}
&  & {Q^{n+1}} \ar[d]^{} \ar[rr]^{}  &  & {\ldots} \ar[d]^{}\\
{\bP^2} \ar[rr]^{}  &  & {\ldots} \ar[rr]^{}  &  & {\bP^{n+2}} \ar[rr]^{}  &  & {\bP^{n+3}}
\ar[rr]^{} &  & {\ldots}}
$$ 
The ideal sheaf of each blowing up $\tilde C_{n} \to C_{(n)}$ and $R^n \to Q^n$ is generated by
$z_1, z_2, z_3$.
Moreover this ideal sheaf is obviously the inverse image ideal sheaf of the ideal sheaf
generated by $z_1, z_2, z_3$ with respect to all embeddings. Hence we obtain by \cite{hart}, II.
Corollary $7.15$ for $V := V(z_1,z_2,z_3)$:

\begin{lemma} \label{pups} We have the commutative diagram
$$\xymatrix{
  {\tilde C_{0}} \ar[d]^{} \ar[rr]^{}  &  & {\ldots} \ar[d]^{} \ar[rr]^{}  &  & {\tilde C_{n}}
\ar[d]^{} \ar[rr]^{}  &  & {\tilde C_{n+1}} \ar[d]^{} \ar[rr]^{}  &  & {\ldots} \ar[d]^{}\\
  {R^0} \ar[d]^{} \ar[rr]^{}  &  & {\ldots} \ar[d]^{} \ar[rr]^{}  &  & {R^n} \ar[d]^{} \ar[rr]^{}
 &  & {R^{n+1}} \ar[d]^{} \ar[rr]^{} &  & {\ldots} \ar[d]^{}\\
  {\tilde \bP^2_V} \ar[rr]^{}  &  & {\ldots} \ar[rr]^{}  &  & {\tilde \bP^{n+2}_V} \ar[rr]^{}
 &  & {\bP^{n+3}_V} \ar[rr]^{} &  & {\ldots}
}$$
of closed embeddings.
\end{lemma}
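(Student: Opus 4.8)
The final statement to prove is Lemma \ref{pups}, which asserts the existence of a commutative diagram of closed embeddings connecting the blown-up varieties $\tilde C_n$, $R^n$, and $\tilde \bP^{n+2}_V$, compatible with the inclusions as $n$ varies. The proof is essentially an application of the universal property of blowing up as formalized in \cite{hart}, {\bf II}. Corollary $7.15$, so the plan is to verify that all hypotheses of that corollary are met and then assemble the diagram from its functoriality.

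First I would set up the key input: the three vertical towers before blowing up, namely the closed embeddings $C_{(n)} \hookrightarrow Q^n \hookrightarrow \bP^{n+2}$ and their compatibility with the inclusions $C_{(n)} \hookrightarrow C_{(n+1)}$, $Q^n \hookrightarrow Q^{n+1}$, and $\bP^{n+2} \hookrightarrow \bP^{n+3}$. This commutative diagram of embeddings is already displayed in the excerpt just before the lemma, and it follows from the explicit defining equations together with Proposition \ref{qgl}. The crucial observation, which I would state carefully, is that on every variety in this diagram we blow up the \emph{same} closed subscheme $V = V(z_1,z_2,z_3)$, and that the ideal sheaf generated by $z_1,z_2,z_3$ on each of $\tilde C_n$ and $R^n$ is precisely the \emph{inverse image ideal sheaf} of the ideal $(z_1,z_2,z_3)$ under the relevant closed embedding. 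This is the pullback-compatibility condition required to invoke \cite{hart}, {\bf II}. $7.15$: the blow-up of a scheme along an ideal commutes with passing to a closed subscheme provided one blows up the inverse image ideal sheaf there.

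Next I would apply Corollary $7.15$ to each square of the diagram. For a closed embedding $W \hookrightarrow X$ with $W$ cut out inside $X$, blowing up $X$ along $V$ and blowing up $W$ along the inverse image ideal $\sI_V \cdot \sO_W$ yields a canonical closed embedding $\tilde W_V \hookrightarrow \tilde X_V$ fitting into a commutative square with the original embedding and the blow-up morphisms. Applied to $C_{(n)} \hookrightarrow Q^n$ this gives $\tilde C_n \hookrightarrow R^n$; applied to $Q^n \hookrightarrow \bP^{n+2}$ it gives $R^n \hookrightarrow \tilde \bP^{n+2}_V$; and applied to the horizontal inclusions $C_{(n)} \hookrightarrow C_{(n+1)}$, etc., it produces the horizontal maps $\tilde C_n \hookrightarrow \tilde C_{n+1}$ and $R^n \hookrightarrow R^{n+1}$. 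Because the corollary produces \emph{canonical} (universal) morphisms, any two ways of composing them around a square agree, which gives commutativity of the whole three-dimensional diagram for free.

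The main obstacle, and the only point requiring genuine verification rather than bookkeeping, is the claim that the ideal sheaf $(z_1,z_2,z_3)$ on $\tilde C_n$ and on $R^n$ really is the inverse image ideal sheaf, i.e.\ that the generators $z_1,z_2,z_3$ restrict compatibly and generate the pulled-back ideal without collapsing. I would address this by noting that the ambient blow-up $\tilde \bP^{n+2}_V$ along $(z_1,z_2,z_3)$ has its exceptional structure generated by these three functions, and that restricting to the closed subschemes $Q^n$ and $C_{(n)}$ simply restricts these same generators; since the embeddings are cut out by equations ($f$, respectively $h,f$) not involving $z_1,z_2,z_3$ as a zero-divisor relation that would kill the generators, the inverse image ideal is again generated by the images of $z_1,z_2,z_3$. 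This is exactly the hypothesis ``the inverse image ideal sheaf of the ideal sheaf generated by $z_1,z_2,z_3$'' stated in the excerpt, so once this compatibility is recorded, \cite{hart}, {\bf II}. Corollary $7.15$ delivers the commutative diagram directly and the lemma follows.
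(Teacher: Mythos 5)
Your proposal is correct and follows essentially the same route as the paper: the paper likewise observes that each blow-up $\tilde C_n \to C_{(n)}$ and $R^n \to Q^n$ is taken along the ideal generated by $z_1,z_2,z_3$, that this is the inverse image ideal sheaf under all the closed embeddings in the pre-blow-up diagram, and then invokes \cite{hart}, {\bf II}. Corollary $7.15$ to obtain the induced closed embeddings and the commutativity of the diagram. Your additional remarks on why the inverse image ideal sheaf is generated by the restrictions of $z_1,z_2,z_3$ and on deducing commutativity from the universal property only make explicit what the paper leaves as ``obvious''.
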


\begin{remark}
Note that $C_{(0)} = \tilde C_0$, $C_{(1)} = \tilde C_1$ and $Q^0 = R^0$.
\end{remark}

\begin{Theorem} \label{kanone}
The canonical divisor of $R^n$ is given by $-(n+1)\tilde V(z_4)$ for $n\geq 1$.
\end{Theorem} \begin{proof}
By Corollary $\ref{kanne}$, we have the statement for $n= 1$.

We use induction for higher $n$. Let $E_n$ denote exceptional
divisor of the blowing up $R^n \to Q^n$. The open
subset $R^n \setminus E_n$ is isomorphic to $Q^n\setminus
V(z_1,z_2,z_3)$. We know that $-(n+1)\tilde V(z_4)$ is the canonical
divisor of $Q^n\setminus V(z_1,z_2,z_3)$. Hence we conclude that
$$K_{R^{n+1}} = -(n+2)\tilde V(z_4) + z E_{n+1}$$ for some $z \in \Z$. We have that $R^n \sim
\tilde V(z_4)$ in $Cl(R^{n+1})$. By the induction hypothesis, we have
$$\sO_{R^n}(-(n+1)\tilde V(z_4)) \cong \omega_{R^n} \cong \sO_{R^{n+1}}(\tilde V(z_4))\otimes\omega_{R^{n+1}}
\otimes \sO_{R^n}$$
such that $z = 0$ and $-(n+2)\tilde V(z_4)$ is the canonical divisor of $R^{n+1}$.
\end{proof}

Since we want to construct a family of Calabi-Yau manifolds, we note:

\begin{Theorem}
The hypersurface $\tilde C_{m-1} \subset R^{m-1}$ is a Calabi-Yau manifold.
\end{Theorem}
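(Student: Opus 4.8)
The plan is to verify the three defining properties of a Calabi--Yau manifold from Definition~\ref{defCala} for $X := \tilde C_{m-1}$, which has dimension $m-1$. Since $\tilde C_{m-1}$ is smooth (as established above) and is a closed subvariety of the projective manifold $R^{m-1}$, it is a smooth projective, hence compact K\"ahler, manifold. It therefore remains to prove that $\omega_{\tilde C_{m-1}} \cong \sO_{\tilde C_{m-1}}$ and that $\Gamma(\Omega^i_{\tilde C_{m-1}}) = 0$ for $i = 1, \ldots, m-2$. The triviality of the canonical sheaf I would obtain from the adjunction formula together with Theorem~\ref{kanone}; the vanishing of the intermediate holomorphic forms I would obtain from the rationality of $R^{m-1}$ via a Lefschetz--type argument.

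For the canonical sheaf, first I would identify the divisor class of $\tilde C_{m-1}$ in $R^{m-1}$. Writing $\pi \colon R^{m-1} \to Q^{m-1}$ for the blowing up of the singular locus $V = V(z_1,z_2,z_3)$ and $E$ for its exceptional divisor, the degree $m$ polynomials $h$ and $z_4^m$ define a rational function $h/z_4^m$ on the normal variety $Q^{m-1}$ whose divisor is $C_{(m-1)} - m\,(V(z_4)\cap Q^{m-1})$. Pulling back this principal divisor gives
\begin{equation*}
\tilde C_{m-1} - m\,\tilde V(z_4) + c\,E \sim 0,
\end{equation*}
where $c$ is the order of $h/z_4^m$ along $E$, i.e. the multiplicity of $C_{(m-1)}$ along $V$ minus $m$ times the multiplicity of $V(z_4)$ along $V$. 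Since $h|_V = z_{m+2}^m + \ldots + z_4^m \not\equiv 0$ and $z_4|_V \not\equiv 0$, neither $C_{(m-1)}$ nor $V(z_4)$ contains the codimension two center $V$, so both multiplicities vanish and $c = 0$. Hence $\sO_{R^{m-1}}(\tilde C_{m-1}) \cong \sO_{R^{m-1}}(m\,\tilde V(z_4))$, and combining this with $\omega_{R^{m-1}} \cong \sO_{R^{m-1}}(-m\,\tilde V(z_4))$ from Theorem~\ref{kanone}, the adjunction formula yields
\begin{equation*}
\omega_{\tilde C_{m-1}} \cong \bigl(\omega_{R^{m-1}} \otimes \sO_{R^{m-1}}(\tilde C_{m-1})\bigr)\big|_{\tilde C_{m-1}} \cong \sO_{\tilde C_{m-1}}.
\end{equation*}

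For the vanishing $\Gamma(\Omega^i_{\tilde C_{m-1}}) = h^{i,0}(\tilde C_{m-1}) = 0$ with $0 < i < m-1$, I would use that $R^{m-1}$ is rational (it is birational to the weighted projective space $Q^{m-1}$), so that $h^{i,0}(R^{m-1}) = 0$ for all $i > 0$, the numbers $h^{i,0}$ being birational invariants of smooth projective varieties. It then suffices to transport this vanishing across the inclusion $\tilde C_{m-1} \hookrightarrow R^{m-1}$ by the Lefschetz hyperplane theorem, which gives an isomorphism of Hodge structures $H^k(R^{m-1}) \cong H^k(\tilde C_{m-1})$ for $k \le m-2$ and hence $h^{i,0}(\tilde C_{m-1}) = h^{i,0}(R^{m-1}) = 0$ for $0 < i < m-1$.

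The main obstacle is precisely this last step: the divisor $\tilde C_{m-1} = m\,\pi^*\bigl(V(z_4)\cap Q^{m-1}\bigr)$ is nef and big but, being trivial on the curves contracted by $\pi$, is not ample, so the classical Lefschetz theorem does not apply verbatim. I would circumvent this either by invoking a version of the Lefschetz hyperplane theorem valid for big and nef (semiample) divisors, or by working downstairs: applying Lefschetz to the genuinely ample hypersurface $C_{(m-1)}$ inside the (singular, but rational) variety $Q^{m-1}$ and checking that the resolution $\tilde C_{m-1} \to C_{(m-1)}$ introduces no new holomorphic forms. Verifying that $C_{(m-1)}$ has only rational singularities, so that $h^{i,0}$ is unaffected by the resolution, is the technical heart of this alternative and the place where most care is required.
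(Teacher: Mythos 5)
Your treatment of the canonical sheaf is correct and coincides with the paper's: adjunction together with $\omega_{R^{m-1}} \cong \sO_{R^{m-1}}(-m\tilde V(z_4))$ from Theorem \ref{kanone} and the linear equivalence $\tilde C_{m-1} \sim m\tilde V(z_4)$ gives $\omega_{\tilde C_{m-1}} \cong \sO_{\tilde C_{m-1}}$. Your explicit check that no multiple of the exceptional divisor enters this equivalence (because neither $C_{(m-1)}$ nor $V(z_4)\cap Q^{m-1}$ contains the blow-up center $V(z_1,z_2,z_3)$) is a detail the paper leaves implicit, and it is sound.

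The gap is in the second half. You correctly diagnose that $\tilde C_{m-1}$ is big and nef but not ample on $R^{m-1}$, so the classical Lefschetz hyperplane theorem does not apply, and you then offer two possible repairs (a Lefschetz statement for semiample divisors, or Lefschetz for $C_{(m-1)}\subset Q^{m-1}$ combined with a rational-singularities analysis of the resolution) without carrying either one out; as written, the vanishing $\Gamma(\Omega^i_{\tilde C_{m-1}})=0$ for $0<i<m-1$ is not established. The paper sidesteps the issue entirely with an argument for which you already have every ingredient: since $\omega_{R^{m-1}} \cong \sO_{R^{m-1}}(-\tilde C_{m-1})$, the ideal sheaf sequence reads
$$0 \to \omega_{R^{m-1}} \to \sO_{R^{m-1}} \to \sO_{\tilde C_{m-1}} \to 0,$$
and its long exact cohomology sequence places $H^i(\tilde C_{m-1}, \sO)$ between $H^i(R^{m-1},\sO)$ and $H^{i+1}(R^{m-1},\omega_{R^{m-1}})$. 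Both vanish for $1 \leq i \leq m-2$: the first because $h^{0,i}(R^{m-1}) = h^{i,0}(R^{m-1}) = 0$ by the rationality of $R^{m-1}$ (which you invoke anyway), and the second by Serre duality, $h^{i+1}(R^{m-1},\omega_{R^{m-1}}) = h^{m-i-1}(R^{m-1},\sO) = 0$ since $m-i-1 \geq 1$. Hodge symmetry on the compact K\"ahler manifold $\tilde C_{m-1}$ then turns $h^{0,i}(\tilde C_{m-1})=0$ into $h^{i,0}(\tilde C_{m-1}) = \dim\Gamma(\Omega^i_{\tilde C_{m-1}}) = 0$. I recommend replacing the Lefschetz discussion by this computation; it is both shorter and avoids the unproved technical step you flag as the heart of your alternative.
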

\begin{proof}
By Theorem $\ref{kanone}$, $-m\tilde V(z_4)$ is the canonical divisor of $R^{m-1}$.
Hence \cite{hart}, {\bf II}. Proposition 8.20 and $\tilde C_{m-1} \sim m\tilde V(z_4)$ imply that
$$\omega_{\tilde C_{m-1}} = \sO_{\tilde C_{m-1}}.$$
By the fact that $h^{q,0}$ is a birational invariant of non-singular projective
varieties (see \cite{hart}, page 190), and $R^{m-1}$ is birationally
equivalent to $\bP^{m}$, we obtain that $h^{q,0}(R^{m-1}) = 0$ for all $1
\leq q\leq m$. By Hodge symmetry and Serre duality, we obtain that
$h^{q}(R^{m-1}, \sO) = 0$ for all $1 \leq q\leq m$ and $h^{q}(R^{m-1},
\omega) = 0$ for all $0 \leq q\leq m-1$. Since the canonical divisor
of $R^{m-1}$ is linearly equivalent to $-\tilde C_{m-1}$, we obtain
the exact sequence
\begin{equation*}
0 \to \omega_{R^{m-1}} \to \sO_{R^{m-1}} \to \sO_{\tilde C_{m-1}} \to 0.
\end{equation*}
This implies that $h^i(\tilde C_{m-1}, \sO) = 0$ for $1 \leq i < m-1 = \dim(\tilde C_{m-1})$.
Hence $\tilde C_{m-1}$ is a Calabi-Yau manifold.
\end{proof}

\begin{pkt} \label{wer}
The projection $\bP^{n+2} \setminus \{(1:0: \ldots: 0)\} \to \bP^{n+1}$ given by
$$(z_{n+3}: \ldots:z_1) \to (z_{n+2} : \ldots: z_1)$$
induces a cyclic cover $C_{(n+1)} \to Q^n$ of degree $m$ ramified over $C_{(n)}$. The Galois group is
generated by
$$(z_{n+3}: z_{n+2}: \ldots: z_1) \to (\xi z_{n+3}: z_{n+2}: \ldots:z_1),$$
where $\xi$ is a primitive $m$-th. root of unity.

Recall the commutative diagram of Lemma $\ref{pups}$. Let $\A^4$ be given by
$\{z_{4} = 1\} \subset \bP^{4}$ and $\A^3$ be given by
$\{z_{4} = 1\} \subset \bP^{3}$. Then the projection above yields a morphism
\begin{equation} \label{lebal}
f: \A^{4} \times \bP^2 \to \A^{3} \times \bP^2.
\end{equation}
Since the blowing up yields natural embeddings of open subsets of $\tilde C_{2}$ and $R^1$ into
the varieties of $(\ref{lebal})$, $f$ induces a rational map $\tilde C_{2} \to R^1$.
Now this rational map $\tilde C_{2} \to R^1$ is again a cyclic cover of degree $m$ with
the Galois group as above (on the open locus of definition). On the complements of the exceptional
divisors it coincides with the cyclic cover $C_{(2)} \to Q^1$ above. Hence by glueing, one has a
cyclic cover $\tilde C_{2} \to R^1$ ramified over $C_{(1)}$.
\end{pkt}

\section{A modified version of the method of Viehweg and Zuo}

The following construction is a modified version of the construction
in \cite{VZ5}, Section 5. That means here we show that $\tilde C_2$ has $CM$, if $C_{(1)}$ has
$CM$. In the next section we will use the construction of the preceding section to define a family
of $K3$-surfaces. In this section we give the argument that this family of $K3$-surfaces will be
a $CMCY$ family of 2-manifolds.

For our application, it is sufficient to
consider the situation fiberwise and to work with $\bP^1$-bundles
over $\bP^1$ resp., with rational ruled surfaces. Let $\pi_n: \bP_n \to \bP^1$ denote
the rational ruled surface given by $\bP(\sO_{\bP^1} \oplus
\sO_{\bP^1}(n))$ and $\sigma$ denote a non-trivial global section
of $\sO_{\bP^1}(6)$, which has the six different zero points represented by a point $q \in \sM_3$.
The sections $E_{\sigma}$, $E_0$ and $E_{\infty}$ of $\bP(\sO \oplus \sO(6))$ are
induced by
$${\rm id}\oplus \sigma:\sO \to \sO\oplus \sO(6), \ \ 
{\rm id}\oplus 0:\sO \to \sO\oplus \sO(6)$$
$$\mbox{and} \  \ 0\oplus {\rm id}:\sO(6) \to \sO\oplus \sO(6)$$
resp., by the corresponding surjections onto the cokernels of these embeddings as described in
\cite{hart}, {\bf II}. Proposition 7.12.

\begin{remark}
The divisors $E_{\sigma}$ and $E_0$ intersect each other transversally over the 6 zero points of
$\sigma$. Recall that ${\rm Pic}(\bP_6)$ has a basis given by a fiber and an arbitrary section.
Hence by the fact that $E_{\sigma}$ and $E_0$ do not intersect $E_{\infty}$, one concludes that
they are linearly equivalent with self-intersection number 6. Since $E_{\infty}$
is a section, it intersects each fiber transversally. Thus one has that
$E_{\infty} \sim  E_0-(E_0.E_0)F$, where $F$ denotes a fiber. Hencefore one concludes
$$E_{\infty}.E_{\infty} = E_{\infty}.(E_0-(E_0.E_0)F) = -(E_0.E_0) = -6.$$
\end{remark}

Next we establish a morphism $\mu:\bP_2 \to \bP_6$ over $\bP^1$. By
\cite{hart}, ${\bf II}$. Proposition 7.12., this is the same as to
give a surjection $\pi_2^*(\sO \oplus \sO(6)) \to \sL$, where $\sL$
is an invertible sheaf on $\bP_2$. By the composition
$$\pi_2^*(\sO \oplus \sO(6)) = \pi_2^*(\sO) \oplus \pi_2^*\sO(6) \hookrightarrow
\bigoplus\limits_{i = 0}^3 \pi_2^*\sO(2i) = Sym^3(\pi_2^*(\sO \oplus \sO(6))) \to
\sO_{\bP_2}(3),$$
where the last morphism is induced by the natural surjection $\pi_2^*(\sO \oplus \sO(2)) \to
\sO_{\bP_2}(1)$ (see \cite{hart}, {\bf II}. Proposition 7.11), we obtain a morphism $\mu^*$ of
sheaves. This morphism $\mu^*$ is not a surjection onto $\sO_{\bP_2}(3)$, but onto its image
$\sL \subset \sO_{\bP_2}(3)$. Locally over $\A^1 \subset \bP^1$ all rational ruled surfaces are
given by ${\rm Proj}(\C[x])[y_1,y_2]$, where $x$ has the weight 0. Hence we have locally that
$\pi_2^*(\sO \oplus \sO(6)) = \sO e_1\oplus \sO e_2$. Over $\A^1$ the morphism $\mu^*$ is given by
$$e_1 \to y_1^{3}, e_2 \to y_2^{3}$$
such that the sheaf $\sL = im(\mu^*) \subset \sO_{\bP_2}(3)$ is invertible.
Thus the morphism $\mu: \bP_2 \to \bP_6$ corresponding to $\mu^*$ is locally
given by the ring homomorphism
$$(\C[x])[y_1,y_2] \to (\C[x])[y_1,y_2] \ \ \mbox{via} \  \ y_1 \to y_1^3 \ \ \mbox{and}
\ \ y_2 \to y_2^3.$$

\begin{construction}\label{4.1}
One has a commutative diagram
$$\xymatrix{
{\sY'} \ar[rr]^{\tau'} &   & {\bP'_2} \ar[rr]^{\mu'} &   & {\bP^1 \times \bP^1}\\
{\hat \sY} \ar[rr]^{\hat \tau} \ar[d]_{\rho} \ar[u]^{\delta} &  & {\hat \bP_2} \ar[rr]^{\hat \mu}
\ar[d]^{\rho_2} \ar[u]_{\delta_2} &  & {\hat \bP_6} \ar[d]^{\rho_6} \ar[u]_{\delta_6}\\
{\sY} \ar[rr]^{\tau}_{\sqrt[3]{\frac{\mu^*E_\sigma}{3\cdot (\mu^*E_0)_{red}}}}
\ar[d]_{\pi} &  & {\bP_2} \ar[rr]^{\mu}_{\sqrt[3]{\frac{E_\infty + 6 \cdot F}{E_0}}} \ar[d]^{\pi_2}
&  & {\bP_6} \ar[d]^{\pi_6}\\
{\bP^1} \ar[rr]^{\rm id} &   & {\bP^1} \ar[rr]^{\rm id} &   & {\bP^1}
}$$
of morphisms between normal varieties with:
\begin{enumerate}
\item[(a)] $\delta$, $\delta_2$, $\delta_6$, $\rho$, $\rho_2$ and $\rho_6$
are birational.
\item[(b)] $\pi$ is a family of curves, $\pi_2$ and $\pi_d$
are $\bP^1$-bundles.
\item[(c)] All the horizontal arrows (except for the ones in the bottom line) are Kummer coverings
of degree $3$.
\end{enumerate}
\end{construction}
\begin{proof}
One must only explain $\delta_6$ and $\rho_6$. Recall that
$E_{\sigma}$ is a section of $\bP(\sO \oplus \sO(6))$, which
intersects $E_0$ transversally in exactly 6 points. The morphism $\rho_6$ is the
blowing up of the six intersection points of $E_0 \cap E_{\sigma}$.
The preimage of the six points given by $q \in \sM_3$ with respect to $\pi_6
\circ \rho_6$ consists of the exceptional divisor $\hat D_1$ and the
proper transform $\tilde D_2$ of the preimage of these six points
with respect to $\rho_6$ given by 6 rational curves with self-intersection number $-1$. The
morphism $\delta_6$ is obtained by blowing down $\tilde D_2$.
\end{proof}

\begin{remark}
The section $\sigma$ has the zero divisor given by some $q \in \sP_3$. Hence one obtains
$\mu^*(E_{\sigma}) \cong \sC_q$, where $\sC \to \sP_3$ denotes the family of cyclic covers onto
$\bP^1$ with a pure $(1,3)-VHS$ of degree 3. Since $\tau$ is the unique cyclic degree 3
covering of $\bP_2 \cong R^1$ ramified over $\mu^*(E_{\sigma}) \cong \sC_q$, the surface $\sY$
is isomorphic to some $K3$-surface $\tilde C_2$ of the preceding section.
\end{remark}

Recall that $\F_n$ denotes the Fermat curve of degree $n$.

\begin{proposition} \label{proposition}
The surface $\sY$ is birationally equivalent to
$\sC_q \times \F_3/\langle(1, 1)\rangle$.\footnote{Similarly to
\cite{VZ5}, Construction 5.2, we show that $\sY'$ is birationally equivalent to
$\sC_q \times \F_3/\langle(1, 1)\rangle$.}
\end{proposition}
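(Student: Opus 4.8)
The plan is to make the two cyclic covers $\mu$ and $\tau$ of Construction~\ref{4.1} explicit in an affine chart, and then to exhibit $\sC_q\times\F_3$ as a triple cover of $\sY$ whose deck group is the diagonal $\Z/3$. First I would read off an affine equation for $\sY$. Fix the chart $\{x_0=1\}$ of the base, write $x$ for the affine coordinate and $\sigma(x)=\prod_{i=1}^{6}(x-a_i)$ for the section cutting out $E_\sigma$. Normalise the fibre coordinate $s$ on $\bP_2$ so that $\mu$ is $(x,s)\mapsto(x,t=s^3)$; then $(\mu^*E_0)_{red}=\{s=0\}$ while $\mu^*E_\sigma=\{s^3=\sigma(x)\}$, the zero locus of $s^3-\sigma(x)$. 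Hence the Kummer cover $\tau=\sqrt[3]{\mu^*E_\sigma/(3\,(\mu^*E_0)_{red})}$ is in this chart the normalisation of
\begin{equation*}
v^3=\frac{s^3-\sigma(x)}{s^3},
\end{equation*}
so that $\sY$ is birational to $\{(x,s,v):v^3s^3=s^3-\sigma(x)\}$. In passing this re-proves $\mu^*E_\sigma\cong\sC_q$, since $s^3=\sigma(x)$ is an affine model of $\sC_q$.

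Next I would write down the product. Realise $\sC_q$ as $\{(x,u):u^3=\sigma(x)\}$ and $\F_3$ as the affine Fermat cubic $\{(w_1,w_2):w_1^3-w_2^3=1\}$, which over $\C$ is isomorphic to $V(X^3+Y^3+Z^3)\subset\bP^2$ after scaling one coordinate by a sixth root of unity. Define the rational map
\begin{equation*}
\Phi:\sC_q\times\F_3\dashrightarrow\sY,\qquad
\bigl((x,u),(w_1,w_2)\bigr)\longmapsto\bigl(x,\;s=u w_1,\;v=w_2/w_1\bigr).
\end{equation*}
A single substitution, using $u^3=\sigma(x)$ and $w_2^3=w_1^3-1$, gives $v^3s^3=w_2^3u^3=(w_1^3-1)u^3=s^3-\sigma(x)$, so $\Phi$ maps into $\sY$, and it is dominant because $(x,s,v)$ is recovered generically.

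Then I would compute the fibres of $\Phi$. Over a generic $(x_0,s_0,v_0)\in\sY$ one needs $u^3=\sigma(x_0)$, giving the three cube roots $u,\zeta_3u,\zeta_3^2u$ with $\zeta_3=e^{2\pi i/3}$; once $u$ is chosen, $w_1=s_0/u$ and $w_2=v_0w_1$ are forced. Thus $\deg\Phi=3$, and the three preimages form a single orbit of the order-three automorphism
\begin{equation*}
g:\;(x,u)\mapsto(x,\zeta_3u),\qquad (w_1,w_2)\mapsto(\zeta_3^{-1}w_1,\zeta_3^{-1}w_2).
\end{equation*}
One checks directly that $\Phi\circ g=\Phi$, so $\Phi$ descends; being invariant of degree $3={\rm ord}(g)$, it yields a birational isomorphism $(\sC_q\times\F_3)/\langle g\rangle\to\sY$. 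Finally $g$ is exactly the pair of Galois generators: $(x,u)\mapsto(x,\zeta_3u)$ generates $\Gal(\sC_q/\bP^1)$, and $(w_1,w_2)\mapsto(\zeta_3^{-1}w_1,\zeta_3^{-1}w_2)$ generates the cyclic group of automorphisms of $\F_3$ with invariant field $\C(w_2/w_1)$. Hence $\sY$ is birational to $\sC_q\times\F_3/\langle(1,1)\rangle$; since $\sY$ and $\sY'$ are birational through the birational morphisms $\rho$ and $\delta$ of Construction~\ref{4.1}, the statement for $\sY'$ follows as well.

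The only delicate point is the double bookkeeping. On the one hand I must extract the affine equation with the correct normalisation: the factor $3$ and the reduction $(\mu^*E_0)_{red}$ together fix the zero and pole divisor of the cube-rooted function, hence the ramification of $\tau$ along $s=0$, and a wrong normalisation would change the surface. On the other hand I must match the induced automorphism $(w_1,w_2)\mapsto(\zeta_3^{-1}w_1,\zeta_3^{-1}w_2)$ with a chosen generator of ${\rm Aut}(\F_3)$ so that the deck group is literally the diagonal $\langle(1,1)\rangle$ and not $\langle(1,2)\rangle$; this is purely a matter of fixing compatible generators of the two cyclic Galois groups. Everything else reduces to the substitution above together with the standard fact that a dominant, $g$-invariant map of degree equal to the order of $g$ presents the target as the quotient by $\langle g\rangle$.
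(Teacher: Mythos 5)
Your proof is correct, and it reaches the conclusion by a genuinely different route than the paper. The paper argues entirely at the level of Kummer extensions of function fields: it rewrites $\hat\mu$ and $\tau'$ as explicit Kummer data on $\hat\bP_6$ and $\bP^1\times\bP^1$, observes that the function defining $\tau'$ is the cube root of a \emph{pullback} of a function from the base $\bP^1\times\bP^1$, and therefore reverses the order of the two cyclic field extensions; the reversed tower is then recognised as the normalisation defining $\F_3\times\sC_q/\langle(1,1)\rangle$. You instead write everything in an affine chart, obtain the equation $v^3s^3=s^3-\sigma(x)$ for $\sY$, and exhibit an explicit dominant degree-$3$ map $\Phi:\sC_q\times\F_3\dashrightarrow\sY$ whose generic fibres are orbits of a diagonal order-$3$ automorphism, so that $\C(\sY)=\C(\sC_q\times\F_3)^{\langle g\rangle}$. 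Your substitution $v^3s^3=w_2^3u^3=(w_1^3-1)u^3=s^3-\sigma(x)$ is the concrete incarnation of the paper's ``interchange of extensions,'' and the fibre count plus $g$-invariance correctly forces $\sY\sim\sC_q\times\F_3/\langle g\rangle$ birationally. What the paper's formulation buys is independence from coordinates and immediate applicability to the whole relative situation over $\sM_3$ (and it parallels Viehweg--Zuo's Construction 5.2, which is why the author phrases it that way); what yours buys is a self-contained, checkable computation that also re-derives $\mu^*E_\sigma\cong\sC_q$ along the way. The two caveats you flag yourself are exactly the right ones: the normalisation of the Kummer datum $\sqrt[3]{\mu^*E_\sigma/(3(\mu^*E_0)_{\rm red})}$ fixes the affine model of $\sY$, and identifying $\langle g\rangle$ with $\langle(1,1)\rangle$ rather than $\langle(1,2)\rangle$ is a choice of compatible generators of the two cyclic Galois groups, which is all the proposition's notation pins down.
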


\begin{proof}
Let $\tilde  E_{\bullet }$ denote the proper transform of the section $E_{\bullet }$ with respect
to $\rho_6$. Then $\hat \mu$ is the Kummer covering given by
$$\sqrt[3]{\frac{\tilde E_\infty + 6 \cdot F}{\tilde E_0+ \hat D_1}},$$
where $\hat D_1$ denotes the exceptional divisor of $\rho_6$. Thus the morphism $\mu'$ is the
Kummer covering
$$\sqrt[3]{\frac{ (\delta_6)_* \tilde E_\infty + 6 \cdot (\delta_6)_* F}
{(\delta_6)_*\tilde E_0+ (\delta_6)_*\hat D_1}}
=\sqrt[3]{\frac{ \bP^1 \times \{\infty\} + 6 \cdot (P \times \bP^1)}
{\bP^1 \times \{0\} + \Delta  \times\bP^1}},$$
where $\Delta $ is the divisor of the 6 different points in $\bP^1$ given by $q \in \sM_3$
and $P \in \bP^1$ is the point with the fiber $F$. Since $E_0 + E_{\sigma}$ is a normal crossing
divisor, $\tilde E_{\sigma}$ neither meets $\tilde E_{0}$
nor $\tilde D_2$, where $\tilde D_2$ is the proper transform of $\pi_6^*(\Delta)$. Therefore
$(\delta_6)_*\tilde E_{\sigma }$ neither meets
$$(\delta_6)_*\tilde E_0 = \bP^1 \times \{0\} \  \ \mbox{nor} \  \
(\delta_6)_*\tilde E_{\infty} = \bP^1 \times \{\infty\}.$$
Hence one can choose coordinates in $\bP^1$ such that
$(\delta_6)_*\tilde E_{\sigma} = \bP^1 \times \{1\}$.

By the definition of $\tau$, we obtain that $\hat \tau$ is given by
$$\sqrt[3]{\frac{ \rho_2^*\mu^*(E_{\sigma} )}{\rho_2^*\mu^*(E_0)}}
= \sqrt[3]{\frac{ \hat \mu^*(\tilde E_{\sigma} )}{\hat \mu^*(\tilde E_0)}},$$
and $\tau'$ is given by

$$\sqrt[3]{\frac{ \mu'^*(\bP^1 \times \{1\})}
{\mu'^*(\bP^1 \times \{0\})}}.$$
By the fact that the last function is the third root of the pullback of a function
on $\bP^1 \times \bP^1$ with respect to $\mu'$, it is possible to reverse the order of the field
extensions corresponding to $\tau'$ and $\mu'$ such that the resulting varieties obtained by
Kummer coverings are birationally equivalent. Hence we have the composition of $\beta:
\bP^1 \times \bP^1 \to \bP^1 \times \bP^1$ given by
$$\sqrt[3]{\frac{ \bP^1 \times \{1\}}{\bP^1 \times \{0\}}}$$
with

$$\sqrt[3]{\frac{ \beta^*(\bP^1 \times \{\infty\}) + 6 \cdot  (P \times \bP^1)}
{\beta^*(\bP^1 \times \{0\}) + (\Delta \times\bP^1)}},$$
which yields the covering variety isomorphic to $\F_3 \times \sC_q/\langle(1,1)\rangle$.
\end{proof}

Hence $\tilde C_2 \cong \sY$ is birationally equivalent to
the algebraic manifold $\hat \sY$ in the diagram ($\ref{diag}$) with
$\sZ = C_{(1)}$ and $\Sigma = \F_3$. Therefore by Corollary
$\ref{blowblow}$, we obtain:

\begin{corollary} \label{cmcm}
If the curve $\mu^*(E_{\sigma})$ has complex multiplication, then the $K3$-surface $\sY$ has only
commutative Hodge groups.
\end{corollary}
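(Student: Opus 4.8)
The plan is to identify $\sY$, up to birational equivalence, with the smooth variety produced by the basic construction of Section $7.1$ applied to $\sZ = C_{(1)}$ and $\Sigma = \F_3$, to run the $CM$-propagation of that construction, and then to push the conclusion across the birational equivalence by exploiting that $\sY$ is a $K3$-surface.

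First I would assemble the two complex-multiplication inputs. By hypothesis the curve $\mu^*(E_\sigma) \cong \sC_q \cong C_{(1)}$ has complex multiplication, so $\Hg(H^k(C_{(1)},\Q))$ is commutative for every $k$ (only $H^1$ is at issue for a curve); likewise the Fermat curve $\F_3$ has complex multiplication by Lemma \ref{jojo}, so each $\Hg(H^k(\F_3,\Q))$ is commutative. The centre blown up in diagram \eqref{diag} is zero-dimensional, being a product of the branch-point sets of the two degree-$3$ covers, so its components carry trivial and in particular commutative Hodge groups. These are exactly the hypotheses of Theorem \ref{hiercm} (taken with $V_1 = V_2 = {\rm Spec}(\C)$, $\sZ = C_{(1)}$, $\Sigma = \F_3$, and resting on Proposition \ref{tensorp} together with the fact that a tensor product of polarized Hodge structures is of $CM$ type precisely when both factors are), so I conclude that $\Hg(H^k(\tilde\sY,\Q))$ is commutative for all $k$, where $\tilde\sY$ is the smooth variety of that diagram.

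Next I would transfer this to $\sY$. By Proposition \ref{proposition}, $\sY$ is birationally equivalent to $\sC_q \times \F_3/\langle(1,1)\rangle$, and hence to its desingularization $\tilde\sY$. Both $\sY$ and $\tilde\sY$ are smooth projective surfaces, so they are dominated by a common surface through chains of blow-ups of points, and Corollary \ref{blowblow} applied along these chains gives $\Hg(H^2(\sY,\Z)) \cong \Hg(H^2(\tilde\sY,\Z))$, which is therefore commutative. As $\sY$ is a $K3$-surface, the Hodge groups of $H^0,H^1,H^3,H^4$ are trivial, so every Hodge group of $\sY$ is commutative, which is the assertion.

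The step I expect to carry the actual content is the final transfer, where I use that commutativity of $\Hg(H^2)$ is a birational invariant among smooth surfaces. This is valid only because a birational map of smooth surfaces factors through blow-ups of points: precisely the zero-dimensional, codimension-$2$ centres treated by Corollary \ref{blowblow}, whose extra $H^2$-classes are of Tate type $(1,1)$ and hence invisible to the Hodge group. The analogous transfer for the higher-dimensional members of the Borcea-Voisin tower would instead require the full Lemma \ref{blowlow} and independent control over the Hodge groups of positive-dimensional blown-up centres, which is exactly why the construction must track its ramification loci.
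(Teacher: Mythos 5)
Your proof is correct and follows essentially the same route as the paper: Proposition \ref{proposition} identifies $\sY$ birationally with the output of the basic construction for $\sZ = C_{(1)}$, $\Sigma = \F_3$, Theorem \ref{hiercm} propagates the commutativity of the Hodge groups (the blown-up centres being zero-dimensional), and Corollary \ref{blowblow} carries the conclusion across the birational equivalence of smooth surfaces. The paper states this in two sentences; you have merely made the implicit steps explicit.
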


\section{The resulting family and its involutions}
\begin{pkt} \label{8.3.1}
Let us summarize the things we have done. By using the Veronese embedding, the weighted projective
space $Q^2 =\bP_{\C}(2,2,1,1)$ is given by $V(z_1z_3 = z_2^2) \subset \bP^4$. Moreover there
exists a homogeneous polynomial $G_{(a_1,a_2,a_3)} \in \C[z_1,z_2,z_3]$ of degree 3 such that
$$
G_{(a_1,a_2,a_3)}(x^2,x,1) = x(x-1) (x-a_1)(x-a_2)(x-a_3)
$$ for each $(a_1,a_2,a_3) \in \sM_3$. Let $W \hookrightarrow  Q^2 \times \sM_3 \stackrel{pr_2}{\to}
\sM_3$ be the family with the fibers
given by $W_q =V(z_1z_3 - z_2^2,\ \ z_5^3+z_4^3 + G_q)$ for all $q \in \sM_3$. Moreover let
$\sW \hookrightarrow  R^2 \times \sM_3 \to \sM_3$ be the smooth family obtained by the proper
transform of $W$ with respect to the blowing up of $V(z_1,z_2,z_3) \times \sM_3$. Since the
family $\sC \to \sM_3$ given by
$$R^1 \supset V(y^3 - x_1(x_1-x_0) (x_1-a_1x_0)(x_1-a_2x_0)(x_1-a_3x_0)x_0)  \to (a_1,a_2,a_3)
\in \sM_3$$ has dense set of complex multiplication fibers, Corollary $\ref{cmcm}$ implies that
$\sW$ is a $CMCY$ family of 2-manifolds.
\end{pkt}

Next we will find and study involutions on $\sW$ over $\sM_3$ satisfying the assumptions for the
construction of a Borcea-Voisin tower.

\begin{remark} \label{1r}
We have the involutions on $W$ over $\sM_3$ given by
\begin{eqnarray*}
\gamma^{(1)}(z_5:z_4:z_3:z_2:z_1) = (z_4:z_5:z_3:z_2:z_1),\\
\gamma^{(2)}(z_5:z_4:z_3:z_2:z_1) = (\xi z_4:\xi^2 z_5:z_3:z_2:z_1),\\
\gamma^{(3)}(z_5:z_4:z_3:z_2:z_1) = (\xi^2 z_4:\xi z_5:z_3:z_2:z_1),
\end{eqnarray*}
where $\xi$ is a fixed primitive cubic root of unity. For simplicity we write $\gamma$ instead of
$\gamma^{(1)}$, too. Since the ideal sheaf of
$V(z_1,z_2,z_3)\cap W$ coincides with its inverse image ideal sheaf with respect to $\gamma^{(i)}$
(for all $i = 1, 2, 3$), each $\gamma^{(i)}$ induces an involution on $\sW$ over the basis $\sM_3$
denoted by $\gamma^{(i)}$, too. 
\end{remark}

\begin{remark}
We have the $\sM_3$-automorphism $\kappa $ of $W$ given by
$$\kappa (z_5:z_4:z_3:z_2:z_1) = (\xi z_5:z_4:z_3:z_2:z_1) \mbox{ with }$$
$$\kappa^{-1} (z_5:z_4:z_3:z_2:z_1) = (\xi^2 z_5:z_4:z_3:z_2:z_1)$$
such that by the same argument as in Remark $\ref{1r}$, we obtain an automorphism of $\sW$ over
$\sM_3$ denoted by $\kappa $, too. On $W$ and hencefore on $\sW$ one has 
$$\gamma^{(2)} = \kappa \circ \gamma \circ \kappa^{-1} \  \ \mbox{and} \  \  \gamma^{(3)} =
\kappa^{-1} \circ \gamma \circ \kappa.$$
Hence these involutions act by the same character on the global differential forms of the fibers
of $\sW$, and all quotients $\sW/\gamma^{(i)}$ are
isomorphic. Therefore it is sufficient to consider the quotient by $\gamma$.
\end{remark}

\begin{proposition} \label{sw1}
On each fiber of $\sW$ the involution $\gamma$ fixes exactly the points on the divisor given
by $V(z_4 = z_5)$ and one exceptional line over one singular point of the corresponding fiber of
$W$.
\end{proposition}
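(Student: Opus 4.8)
The plan is to read off the fixed locus of $\gamma$ directly in the projective model $W_q\subset\bP^4$ and then to control what happens over the points that are blown up to form the fibre $\sW_q$. First I would compute the fixed points of $\gamma$ on all of $\bP^4$: a point $[z_5:z_4:z_3:z_2:z_1]$ is fixed iff $(z_4,z_5,z_3,z_2,z_1)=\lambda(z_5,z_4,z_3,z_2,z_1)$ for some $\lambda$. Reading the last three coordinates shows that if $(z_1,z_2,z_3)\neq(0,0,0)$ then $\lambda=1$, forcing $z_4=z_5$; whereas if $z_1=z_2=z_3=0$ the first two relations give $\lambda^2=1$, so $z_4=z_5$ or $z_4=-z_5$. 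Intersecting with $W_q=V(z_1z_3-z_2^2,\,z_5^3+z_4^3+G_q)$ and recalling (as for $C_{(n)}$) that the singular locus of $W_q$ is $W_q\cap V(z_1,z_2,z_3)$, I would conclude that away from this singular line the fixed locus is exactly the curve $D:=W_q\cap\{z_4=z_5\}$, while on the line the point $[1:1:0:0:0]$ is excluded (there $z_5^3+z_4^3=2\neq0$), so the only fixed point of $W_q$ lying on the singular line is $p:=[-1:1:0:0:0]$. Since the three singular points of $W_q$ are $[w:1:0:0:0]$ with $w^3=-1$, i.e. $w\in\{-1,-\xi,-\xi^2\}$, I would note that $\gamma$ fixes the one with $w=-1$ and swaps the other two, because $\gamma[-\xi:1:0:0:0]=[-\xi^2:1:0:0:0]$.

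The main step, and the place where the real work lies, is the local analysis at $p$ showing that the exceptional curve over $p$ in $\sW_q$ is fixed \emph{pointwise}. Working in the chart $z_4=1$ with coordinates $(z_5,z_1,z_2,z_3)$, the equation $z_5^3+1+G_q(z_1,z_2,z_3)=0$ has $\partial/\partial z_5=3z_5^2\neq0$ at $p$, so I can solve $z_5=\phi(z_1,z_2,z_3)$ with $\phi(0)=-1$ and identify $W_q$ near $p$ with the $A_1$ cone $\{z_1z_3=z_2^2\}\subset\A^3$. In the same chart $\gamma$ reads $z_i\mapsto z_i/z_5$ for $i=1,2,3$, which on $W_q$ is the common scaling $(z_1,z_2,z_3)\mapsto \tfrac1\phi(z_1,z_2,z_3)$; this manifestly preserves the cone. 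Blowing up the origin (equivalently, restricting the blow-up of $V(z_1,z_2,z_3)$ that defines $R^2$) resolves the node with a single exceptional $\bP^1$ parametrising the directions $[z_1:z_2:z_3]$ on the cone. Because $\gamma$ scales all three coordinates by the \emph{same} factor $1/\phi$, it acts trivially on these projectivised directions and hence fixes the exceptional $\bP^1$ pointwise. This cancellation of the common scalar — turning an isolated fixed point downstairs into a fixed line upstairs — is exactly the crux I expect to require care.

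Finally I would assemble the pieces and check disjointness. Over the two swapped singular points $[-\xi:1:\cdots]$ and $[-\xi^2:1:\cdots]$ the blow-up produces two exceptional curves interchanged by $\gamma$, so they contain no fixed points. The curve $D=W_q\cap\{z_4=z_5\}$ lies in the smooth locus and misses the singular line (there $z_4=z_5$ would give $z_5^3+z_4^3=2z_5^3\neq0$), so its proper transform equals $D$ and, lying inside $\{z_4=z_5\}$, is fixed pointwise. Combining, the fixed locus of $\gamma$ on $\sW_q$ is precisely the divisor $D$ together with the single exceptional line over $p$; these are disjoint since $p\notin\{z_4=z_5\}$ (at $p$ one has $z_4=1\neq -1=z_5$). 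I would close by remarking that every computation above is uniform in $q\in\sM_3$, so the description holds on each fibre of $\sW$, which is the assertion of the proposition.
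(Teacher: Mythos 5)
Your proof is correct and follows essentially the same route as the paper: identify the fixed locus on $W_q\setminus S$ as $V(z_4=z_5)$, observe that $\gamma$ fixes the singular point $(-1:1:0:0:0)$ and swaps the other two, and then argue that the exceptional line over the fixed singular point is fixed pointwise. Your local computation in the chart $z_4=1$ — exhibiting $\gamma$ near that point as a uniform scaling of $(z_1,z_2,z_3)$ by $1/\phi$, hence trivial on the projectivised directions — is in fact a more complete justification of the pointwise fixing than the paper's one-line appeal to the $\gamma$-invariance of the generators of the blow-up ideal, which by itself only shows the exceptional line is preserved as a set.
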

\begin{proof}
Let $q \in \sM_3$ and let $S$ denote the singular locus of $W_q$. On $W_q \setminus S$ the points
fixed by $\gamma$
are given by the divisor $V(z_4 = z_5)$. Now let us consider the exceptional divisors of the
blowing up, which turns $W$ into the family $\sW$ of smooth $K3$-surfaces. There are exactly 3
points of $S$ given by $z_1 = z_2 = z_3 = 0$ and $z_4^3 + z_5^3 = 0$. The involution $\gamma$
fixes $(1:-1:0:0:0)$ and interchanges the other two singular
points. Since the generators of the ideal of the blowing up are invariant under $\gamma$, one
concludes that each point on the exceptional line over $(1:-1:0:0:0)$ is fixed by $\gamma$.
\end{proof} 

Since the divisor on $W_q$ given by $V(z_4 = z_5)$ is isomorphic to $\sC_q$ and the projective
line providing the fixed exceptional divisor has $CM$, one has by Corollary $\ref{cmcm}$:

\begin{Theorem} \label{sw2}
By the involution $\gamma$, the family $\sW$ can be used to be some $\sZ_1$ or $\Sigma_i$ in the
construction of a Borcea-Voisin tower.
\end{Theorem}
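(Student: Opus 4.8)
The plan is to verify directly that the pair $(\sW,\gamma)$ satisfies every hypothesis imposed on $\sZ_1$ (or on a $\Sigma_i$) in Construction~\ref{bvtower}, namely: that $\sW\to\sM_3$ is a $CMCY$ family of $2$-manifolds; that $\gamma$ is an $\sM_3$-involution whose fixed locus on each fibre is a disjoint union of smooth hypersurfaces of codimension $1$; and that $\sM_3$ carries a dense set of points $q$ for which all Hodge groups $\Hg(H^k(\sW_q,\Q))$ and all Hodge groups of the connected components of the fixed locus $R_q$ are commutative. The first point is already settled in~\ref{8.3.1}, where Corollary~\ref{cmcm} is used to see that $\sW$ is a $CMCY$ family of $K3$-surfaces, and $\gamma$ is an honest $\sM_3$-involution by Remark~\ref{1r}; so only the statements about the fixed locus of $\gamma$ and the density of complex multiplication remain.

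Next I would analyse the geometry of the fixed locus. By Proposition~\ref{sw1} the locus fixed by $\gamma$ on a fibre $\sW_q$ is exactly the curve $V(z_4=z_5)\cong\sC_q$ together with the exceptional line over the single singular point $(1:-1:0:0:0)$ of $W_q$. Both pieces are smooth (the former is a smooth fibre of $\sC\to\sM_3$, the latter a projective line), and they are disjoint: intersecting the condition $z_4=z_5$ with the equations $z_1=z_2=z_3=0,\ z_4^3+z_5^3=0$ cutting out the singular locus $S$ forces $z_4=z_5=0$, which is impossible in projective space, so no point of $V(z_4=z_5)$ lies on $S$ and its proper transform $\sC_q$ avoids every exceptional curve. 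Hence the fixed locus is a non-trivial smooth divisor without isolated fixed points; consequently $\sW_q\to\sW_q/\gamma$ is a cyclic degree $2$ cover of smooth surfaces ramified over a smooth divisor, and Lemma~\ref{ampel} guarantees that $\gamma$ acts by $-1$ on $H^0(\sW_q,\omega_{\sW_q})$. This is precisely the datum required of $\sZ$ resp.\ $\Sigma$ in diagram~\eqref{diag}, and, since Proposition~\ref{sw1} is fibrewise, it holds for every $q\in\sM_3$.

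Finally I would produce the dense set of complex multiplication points. The family $\sC\to\sM_3$ of~\eqref{fami} has a pure $(1,3)$-$VHS$, hence by Theorem~\ref{jnvz} a dense set of fibres $\sC_q$ with complex multiplication. For such $q$ the curve $\sC_q\cong V(z_4=z_5)$ has complex multiplication, so Corollary~\ref{cmcm} shows that $\sW_q$ has only commutative Hodge groups, while $\sC_q$ itself, being a curve with $CM$, has commutative $\Hg(H^k(\sC_q,\Q))$ in every degree. The remaining connected component of $R_q$, the fixed exceptional line, is a $\bP^1$ and therefore of Hodge--Tate type with trivial Hodge groups in all degrees. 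Thus on this dense set every $\Hg(H^k(\sW_q,\Q))$ and every $\Hg(H^k(Z,\Q))$ for $Z$ a connected component of $R_q$ is commutative, which is exactly the density hypothesis of Construction~\ref{bvtower}. Assembling the three verifications yields the theorem.

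I would expect no genuinely deep obstacle here; the argument is bookkeeping against the hypotheses of Construction~\ref{bvtower}, with all the substantive input already packaged into Proposition~\ref{sw1}, Corollary~\ref{cmcm}, Lemma~\ref{ampel} and Theorem~\ref{jnvz}. The one point demanding care is the disjointness claim of the second paragraph together with the absence of isolated fixed points, because the passage to a Calabi--Yau quotient (Proposition~\ref{trivcan}) collapses if the fixed locus fails to be a smooth divisor or if its two components were allowed to meet; this is why the explicit check that $V(z_4=z_5)$ misses the singular points of $W_q$ is the load-bearing computation of the proof.
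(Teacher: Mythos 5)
Your proposal is correct and follows essentially the same route as the paper, which deduces the theorem from Proposition \ref{sw1} (the fixed locus is $V(z_4=z_5)\cong\sC_q$ plus one exceptional line) together with Corollary \ref{cmcm} and the dense set of $CM$ fibers of $\sC\to\sM_3$. The extra verifications you supply — the disjointness of the two components of the fixed locus and the appeal to Lemma \ref{ampel} for the character of $\gamma$ on $\Gamma(\omega_{\sW_q})$ — are left implicit in the paper but are exactly the right points to check.
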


\begin{remark} \label{cm3}
By Example $\ref{ellcm}$, Example $\ref{ellcm2}$ and Example $\ref{ellcm3}$, one has 6
explicitely given elliptic curves with $CM$ and explicitely given involutions. Theorem $\ref{geilomat}$ yields the $K3$ surfaces isomorphic to 
$$V(y_2^3+y_1^3+x_1^6+x_0^6), \  \ V(y_2^3+y_1^3+x_1(x_1^5+x_0^5)),
\  \ V(y_2^3+y_1^3+x_1(x_1^4+x_0^4)x_0) \subset  R^2$$
with complex multiplication. Thus by using the method of C. Voisin \cite{Voi2}, one
obtains 18 explicitely given fibers with $CM$ for the resulting $CMCY$ family of 3-manifolds. 
\end{remark}

\chapter{Other examples and variations}

In this chapter we consider the automorphism groups of our examples of $CMCY$ families. We want to
find some new examples of $CMCY$ families of $n$-manifolds by quotients by cyclic subgroups of
these automorphism groups. By using \cite{EV1}, Lemma $3.16,d)$, one can easily determine
the character of the action of these cyclic groups on the global sections of the canonical sheaves
of the fibers. In this chapter we state this character with respect to the pull-back action.

\section{The degree 3 case}
Let $\xi$ denote a fixed primitive cubic root of unity. In $\ref{8.3.1}$ we have constructed the
$CMCY$ family $\sW \to \sM_3$ given by
$$R^2 := \tilde \bP_{\C}(2,2,1,1) \supset \tilde V(y_2^3+ y_1^3 + x_1(x_1-1) (x_1-a_1x_0)(x_1-a_2x_0)(x_1-a_3x_0)x_0)$$
$$\to(a_1,a_2,a_3)\in \sM_3.$$
First we introduce an $\sM_3$-automorphism group $\G_3$ of the family $\sW$. The elements
$g\in \G_3$ can be uniquely written as a product $g = abc$ with $a \in \langle\alpha\rangle$, $b
\in \langle\beta\rangle$ and $c \in \langle\gamma\rangle$, where:
$$\alpha (z_5:z_4:z_3:z_2:z_1) = (\xi z_5:z_4:z_3:z_2:z_1),$$
$$\beta (z_5:z_4:z_3:z_2:z_1) = ( z_5:\xi z_4:z_3:z_2:z_1),$$
$$\gamma (z_5:z_4:z_3:z_2:z_1) = (z_4:z_5:z_3:z_2:z_1)$$
The group $\G_3$ contains exactly 18 elements. The action of $\G_3$ on the global
sections of the canonical sheaves of the fibers induces a surjection of $\G_3$ onto the
multiplicative group of the 6-th. roots of unity. Its kernel is the cyclic group of order 3
generated by $\alpha \beta^{-1}$.

\begin{remark}
Since $\alpha \beta^{-1}$ is an $\sM_3$-automorphism, one obtains the quotient family
$\sW/\langle\alpha \beta^{-1}\rangle \to \sM_3$. One checks easily that $\alpha \beta^{-1}$ leaves
exactly the sections given by $z_5 = z_4 = 0$
invariant. Let $q \in \sM_3$. The fiber $(\sW/\langle\alpha \beta^{-1}\rangle)_q$ of
$\sW/\langle\alpha \beta^{-1}\rangle$ has quotient singularities of the type $A_{3,2}$
(see \cite{Bart}, {\bf III.}  Proposition 5.3). We blow up the sections of fixed points on
$\sW$ and call the resulting exceptional divisor $E_1$. On each connected component of $E_1$ one
has two disjoint sections of fixed points again. But on a fiber the quotient map sends any fixed
point onto a singularity of the
type $A_{3,1}$.\footnote{For this description consider the corresponding action of the cyclic
group on an analytic open neighborhood of a fixed point.} Hence let us blow up these latter
sections of fixed points with exceptional divisor $E_2$. The canonical divisor of the resulting
fibers $\tilde\sW_q$ is given by
$$K_{\tilde\sW_q} = (\tilde E_1)_q + 2(E_2)_q,$$
where quotient map $\varphi$ induced by $\alpha \beta^{-1}$ has ramification on $E_2$. Thus by the
Hurwitz formula, one calculates that $\varphi^*(\omega_q) = \sO((\tilde E_1)_q)$. Note that
the irreducible components of the exceptional curve $(E_1)_q$ have selfintersection-number $-1$.
Since $(E_2)_q$ is the exceptional
divisor of the blowing up of two points of each irreducible component of $(E_1)_q$, each
irreducible component of $(\tilde E_1)_q$ has selfintersection-number $-3$. By the fact that the
quotient map $\varphi: \tilde\sW_q \to (\tilde \sW/\langle\alpha \beta^{-1}\rangle)_q$ is not
ramified over $\varphi((\tilde E_1)_q)$, the irreducible components of $\varphi((\tilde E_1)_q)$
have selfintersection-number $-1$.
\end{remark}

From now on let $\sX := \tilde \sW/\langle\alpha \beta^{-1}\rangle$.

\begin{proposition} \label{pabu}
One can blow down $\varphi(\tilde E_1)$ such that the blowing down morphism $\phi :\sX \to \sY$
yields a $CMCY$ family $\sY \to \sM_3$ of 2-manifolds.
\end{proposition}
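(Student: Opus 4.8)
The plan is to verify that $\phi$ is a well-defined blowing-down morphism and that the resulting family $\sY \to \sM_3$ is again a $CMCY$ family of 2-manifolds, i.e.\ that its fibers are smooth Calabi-Yau surfaces with commutative Hodge groups over a dense set of points. First I would check the blow-down is legitimate fiberwise: by the preceding remark, each irreducible component of $\varphi(\tilde E_1)_q$ is a smooth rational curve with self-intersection number $-1$ on the smooth surface $\sX_q$. By Castelnuovo's criterion (see \cite{hart}, {\bf V}. Theorem 5.7), each such $(-1)$-curve can be contracted to a smooth point, and since these components are disjoint the contraction can be carried out simultaneously. This produces a smooth surface $\sY_q$ with a morphism $\phi_q:\sX_q \to \sY_q$. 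I would argue that these contractions fit together in families over $\sM_3$ because the sections $\varphi(\tilde E_1)$ vary flatly (they arise from the exceptional divisors $E_1$, which are themselves obtained by blowing up sections of fixed points), so $\phi:\sX\to\sY$ is a morphism of families.

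Next I would show each fiber $\sY_q$ is a Calabi-Yau surface. Since blowing down a $(-1)$-curve is the inverse of blowing up a smooth point, Corollary \ref{blowblow} applies in reverse: $\sY_q$ has complex multiplication if and only if $\sX_q$ does, and $\Hg(H^2(\sY_q,\Z)) \cong \Hg(H^2(\sX_q,\Z))$. For the Calabi-Yau property I would use the Hurwitz computation from the preceding remark: the quotient map $\varphi$ satisfies $\varphi^*(\omega_q) = \sO((\tilde E_1)_q)$, and since $\varphi$ is not ramified over $\varphi((\tilde E_1)_q)$, the canonical class of $(\tilde \sW/\langle \alpha\beta^{-1}\rangle)_q$ is supported on $\varphi((\tilde E_1)_q)$; contracting precisely this divisor via $\phi$ yields $\omega_{\sY_q}\cong \sO_{\sY_q}$. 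The vanishing $h^{1,0}(\sY_q)=0$ follows because $h^{q,0}$ is a birational invariant of smooth projective surfaces (see \cite{hart}, page 190) and $\sY_q$ is birational to the rational quotient, hence to a rational surface, giving $h^{1,0}=0$; combined with the trivial canonical bundle this makes $\sY_q$ a $K3$ surface, in particular a Calabi-Yau 2-manifold.

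Finally I would assemble the $CMCY$ statement. Since $\sW \to \sM_3$ is a $CMCY$ family (as established in \ref{8.3.1}), there is a dense set of points $q\in\sM_3$ for which $\Hg(H^k(\sW_q,\Q))$ is commutative for all $k$. Passing to the quotient $\tilde \sW/\langle\alpha\beta^{-1}\rangle$ preserves commutativity of the Hodge groups over this dense set, because the Hodge structure of the quotient is a sub-Hodge structure of that of $\tilde\sW_q$ (invariant under the finite group action) and a sub-Hodge structure of a structure with commutative Hodge group again has commutative Hodge group, by the same argument as in Lemma \ref{7.1} and Lemma \ref{nachbarin}. The blow-ups and the final blow-down only alter the Hodge structure by Tate twists of point and curve contributions (via Lemma \ref{blowlow} and Corollary \ref{blowblow}), whose Hodge groups are commutative over the same dense set; hence $\Hg(H^k(\sY_q,\Q))$ is commutative for all $k$ over a dense set of $q$. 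I expect the main obstacle to be the family-theoretic bookkeeping: verifying that the fiberwise contractions of the $(-1)$-curves genuinely glue to a single morphism $\phi:\sX\to\sY$ of families over $\sM_3$ (rather than merely fiberwise), and tracking the exceptional divisors $\tilde E_1, E_2$ carefully enough through the successive blow-ups to confirm the self-intersection numbers and the exact support of the canonical class claimed in the preceding remark.
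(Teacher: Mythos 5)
Your outline matches the paper's proof: fiberwise Castelnuovo contraction of the disjoint $(-1)$-curves in $\varphi(\tilde E_1)$, triviality of the canonical sheaf via the Hurwitz computation of the preceding remark, and inheritance of the dense set of $CM$ fibers from $\sW$ through the quotient and the blow-ups/blow-down. The one step you explicitly defer --- gluing the fiberwise contractions into a single morphism $\phi:\sX\to\sY$ over $\sM_3$ --- is exactly the step the paper actually carries out, and "the sections vary flatly" is not by itself an argument for it. The paper's device is to take a relatively very ample invertible sheaf $\sA=\sO_{\sX}(D)$ and twist it to $\sL:=\sA(\sum_{P\subset\varphi(\tilde E_1)}\mu_P P)$ with $\mu_P:=D_q.P_q$; since each connected component $P$ is a family of rational curves over the base, an invertible sheaf on $P$ is determined by its fiberwise degree, so $\mu_P$ is independent of $q$ and $\sL$ restricts to degree zero on every component of every fiber of $\varphi(\tilde E_1)$. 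Running the proof of Castelnuovo's theorem (\cite{hart}, {\bf V}. Theorem 5.7) with this globally defined sheaf produces the contraction simultaneously on all fibers, i.e.\ a morphism of families with smooth image. With that inserted, your argument is complete; the remaining points (the $K3$ property from the character-$1$ action of $\alpha\beta^{-1}$ on $\Gamma(\omega_{\sW_q})$, and the $CM$ density) are handled in the paper just as you propose, if more tersely.
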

\begin{proof}
By the construction of the projective family, one has an invertible relatively very ample
sheaf $\sA:=\sO_{\sX}(D)$ on $\sX$. Let $P$ denote some connected component of
$\varphi(\tilde E_1)$. Note that $\varphi(\tilde E_1)$ consists of different
copies of $\bP_{{\rm Spec}(R)}^1$ with ${\rm Spec}(R) = \sP_n$ such that each invertible sheaf on
$P$ is uniquely determined by its degree. Thus the intersection number $\mu_P := D_q.P_q$ is
independent of $q \in \sP_n$.  As in the proof of the Castelnuovo
Theorem in \cite{hart}, {\bf V}. Theorem $5.7$ the invertible sheaf
$$\sL := \sA(\sum\limits_{P \subset \varphi(\tilde E_1)}\mu_PP)$$
yields the blowing down morphism on the fibers. Since this $\sP_n$-morphism is globally defined,
one obtains a global blowing down morphism $f$ such that the resulting
family $\sY = f(\sX)$ is smooth.

By the fact that $\alpha\beta$ acts by the character $1$ on $\Gamma(\omega_{\sW_q})$, one
concludes easily that $\sY \to \sM_3$ is a family of $K3$ surfaces.
Since $\sW$ has a dense set of $CM$ fibers, one concludes that $\sX = \tilde
\sW/\langle\alpha\beta\rangle$ and $\sY$ have dense sets of $CM$ fibers, too.
\end{proof}

By the blowing down of $\varphi(\tilde E_1)$, we get the following situation:
$$\xymatrix{
{\tilde E_1 \cup E_2} \ar@{^{(}->}[d]_{} \ar@{->>}[rr]^{\varphi} &   &
{\varphi(\tilde E_1 \cup E_2)} \ar @{^{(}->}[d]_{} \ar@{->>}[rr]^{\phi} &   &
{\phi \circ\varphi(E_2)} \ar@{^{(}->}[d]^{}\\
{\tilde \sW} \ar[rr]_{{\rm mod}\langle\alpha \beta^2\rangle}^{\varphi} &   &
{ \sX} \ar[rr]_{\rm{Bl}(\varphi(\tilde E_1))}^{\phi} &   & { \sY} 
}$$

\begin{proposition} \label{sy1}
The $\sM_3$-automorphism $\gamma$ of $\sW$ yields an involution on $\sY$, which makes it suitable
for the construction of a Borcea-Voisin tower.
\end{proposition}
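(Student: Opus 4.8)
The plan is to descend $\gamma$ to $\sY$ and then to verify the two conditions demanded by Construction \ref{bvtower}: that the descended involution acts by $-1$ on the canonical sheaves of the fibres (equivalently, that its fixed locus is a smooth divisor), and that this fixed locus consists of smooth, pairwise disjoint hypersurfaces whose cohomology carries commutative Hodge groups on a dense subset of $\sM_3$. First I would check the descent. A direct computation with the coordinate formulas gives $\gamma\circ(\alpha\beta^{-1})\circ\gamma = \beta\alpha^{-1} = (\alpha\beta^{-1})^{-1}$, since both send $(z_5:z_4:z_3:z_2:z_1)$ to $(\xi^2 z_5:\xi z_4:z_3:z_2:z_1)$. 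Hence $\gamma$ normalizes the cyclic group $\langle\alpha\beta^{-1}\rangle$, inverting its generator, so $\gamma$ induces an involution $\bar\gamma$ on $\sX=\tilde\sW/\langle\alpha\beta^{-1}\rangle$. Because $\gamma$ preserves the fixed sections $z_4=z_5=0$ of $\alpha\beta^{-1}$, it preserves the centres of the blow-ups producing $\tilde\sW$ and the centre $\varphi(\tilde E_1)$ of the blow-down $\phi\colon\sX\to\sY$; therefore $\bar\gamma$ descends further to a nontrivial involution of $\sY$ over $\sM_3$.

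Next I would establish that $\bar\gamma$ acts by $-1$ on $\Gamma(\omega_{\sY_q})$ for every $q\in\sM_3$. By Theorem \ref{sw2} the original $\gamma$ acts by $-1$ on $\Gamma(\omega_{\sW_q})$, while $\alpha\beta^{-1}$ lies in the kernel of the character describing the $\G_3$-action on canonical sections, so it acts trivially there. Since $\sY_q$ is a $K3$ surface (Proposition \ref{pabu}) and $h^{2,0}$ is a birational invariant, the pullback isomorphisms through the quotient map and the blow-down identify $\Gamma(\omega_{\sY_q})$ with the $\alpha\beta^{-1}$-invariant line $\Gamma(\omega_{\tilde\sW_q})$ compatibly with the $\gamma$-action, so $\bar\gamma$ inherits the sign $-1$. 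Invoking Example \ref{datda} (Voisin, $1.1$), an involution acting by $-1$ on the holomorphic $2$-form of a $K3$ surface has fixed locus equal to a disjoint union of smooth curves; applying this fibrewise shows that the fixed divisor of $\bar\gamma$ on each $\sY_q$ is a union of smooth, pairwise disjoint curves, which supplies the smoothness and disjointness hypothesis of Construction \ref{bvtower} without any further hand computation.

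It then remains to verify the Hodge-theoretic condition on a dense set. The family $\sY\to\sM_3$ is already a $CMCY$ family of $2$-manifolds (Proposition \ref{pabu}), so there is a dense set of $q$ with $\Hg(H^k(\sY_q,\Q))$ commutative for all $k$. For the ramification divisor I would identify its components through the coset $\gamma\langle\alpha\beta^{-1}\rangle$, which consists of the three involutions $\gamma(\alpha\beta^{-1})^k$ $(k=0,1,2)$. Their fixed loci on $\tilde\sW_q$ are the three mutually conjugate curves $V(z_4=\xi^j z_5)$, each of which is a degree-$3$ cover of $\bP^1$ branched over $G_q=0$ and hence isomorphic to $\sC_q$, together with the exceptional data coming from Proposition \ref{sw1} and from $E_1,E_2$. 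Since $\alpha\beta^{-1}$ permutes the three curves $V(z_4=\xi^j z_5)$ cyclically and freely, their image in $\sY_q$ is a single curve isomorphic to $\sC_q$, while the remaining components are rational curves with trivial $H^1$. By Theorem \ref{jnvz} the family $\sC\to\sM_3$ (a pure $(1,3)$-VHS) has a dense set of $CM$ fibres, so $\Hg(H^1(\sC_q,\Q))$ is commutative on a dense set; the rational components contribute only trivial, hence commutative, Hodge groups. Intersecting the two dense sets yields the dense set required for both the fibres and the ramification loci, which is exactly the input needed to use $\sY$ as a $\sZ_1$ or a $\Sigma_i$ in Construction \ref{bvtower}.

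The main obstacle I expect is the careful bookkeeping of the fixed locus of $\bar\gamma$ through the tower $\tilde\sW\to\sX\to\sY$: one must track the three conjugate involutions $\gamma(\alpha\beta^{-1})^k$, determine precisely how their fixed curves and the exceptional divisors $\tilde E_1, E_2$ (and the $\gamma$-fixed exceptional line of Proposition \ref{sw1}) interact near the centres $z_4=z_5=0$, and confirm that after the quotient and the blow-down the images remain of the claimed isomorphism types. Example \ref{datda} removes the need to check smoothness and disjointness by hand once the $-1$ action is secured, so the genuinely delicate point is the identification of the components, in particular singling out the $\sC_q$-component from the finitely many isolated $\gamma$-fixed points on $V(z_4=-z_5)$ (where $G_q=0$) that get absorbed into exceptional curves, so that the Hodge-group condition can finally be read off from Theorem \ref{jnvz}.
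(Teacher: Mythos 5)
Your argument follows the paper's proof essentially step for step: the conjugation relation $\gamma\circ(\alpha\beta^{-1})\circ\gamma=(\alpha\beta^{-1})^{-1}$ to descend $\gamma$ to $\sX$ and then to $\sY$, the character $-1$ on the canonical sections, and the identification of the fixed divisor as one copy of $\sC_q$ together with rational curves, so that the dense set of $CM$ fibers of $\sC\to\sM_3$ supplies the Hodge-group condition. The only cosmetic differences are that the paper extends the involution across the blown-down sections by Hartogs' theorem rather than by observing that $\varphi(\tilde E_1)$ is preserved, and that it describes the fixed locus by direct computation in $\ref{sy2}$ rather than by quoting Voisin's classification; your aside that $\alpha\beta^{-1}$ permutes the three curves $V(z_4=\xi^j z_5)$ \emph{freely} is not quite accurate (they all pass through the points with $z_4=z_5=0$ and $G_q=0$), but this does not affect the conclusion that their common image is a single copy of $\sC_q$.
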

\begin{proof}
One has the following commutative diagram:
$$\xymatrix{
{\tilde \sW} \ar[d]_{\gamma} \ar[rr]^{\alpha \beta^{-1}} &   & {\tilde \sW} \ar[d]^{\gamma}\\
{\tilde \sW} \ar[rr]^{\alpha^{-1} \beta} &   & {\tilde \sW}
}$$
Thus $\gamma$ yields an involution on $\sX = \tilde \sW/\langle\alpha \beta^{-1}\rangle$. By the
fact that $\gamma(E_1) = E_1$, it induces an involution on the complement of the sections of $\sY$
obtained by blowing down $\varphi(\tilde E_1)$.
Since these sections have codimension 2, the involution extends to a holomorphic involution on
$\sY$ (by Hartogs' Extension Theorem \cite{Voi}, Theorem $1.25$). By the fact that $\gamma$ acts
by $-1$ on $\Gamma(\omega_{\sW_q})$, the same holds true for $\sX_q$ and $\sY_q$.

Let $\sC \to \sM_3$ denote the family of degree 3 covers with a pure $(1,3)-VHS$. We have seen
that $\sW_q$ has $CM$, if $\sC_q$ has $CM$. Hencefore $H^k(\sY_q,\Q)$ has a commutative Hodge group
for all $k$, if $\sC_q$ has $CM$. Thus the following point describes  the ramification divisor of
$\gamma_q$ on $\sY_q$ and ensures that there is a dense set of $CM$ fibers $\sY_q$ such that
the ramification divisor of $\gamma_q$ has $CM$, too.
\end{proof}

\begin{pkt} \label{sy2}
Now we describe the divisor of points of $\sY_q$ fixed by $\gamma$ for some $q \in \sM_3$. Each
point of $\sY_q \setminus (\phi \circ\varphi(E_2))$ can be given by the image $[p]$ of a point
$p \in \sW_q$
with respect to the quotient map according to $\langle\alpha\beta^{-1}\rangle$.
One has that a point $[p] \in \sY_q \setminus (\phi \circ\varphi(E_2))$ is fixed by $\gamma$,
if and only if $\gamma(p) \in \langle\alpha\beta^2\rangle\cdot p$. These points $p \in \sW_q$ are
exactly given by $\langle\alpha\beta^2\rangle\cdot V(y_2 = y_1)$ and the exceptional divisor of
$\sW_q \to W_q$. 

By the fact that $\langle\alpha\beta^2\rangle\cdot V(y_2 = y_1)$ interchanges all 3 irreducible
components of $\langle\alpha\beta^2\rangle\cdot V(y_2 = y_1)$ and all 3 irreducible components of
the exceptional divisor of
$\sW \to W$, one obtains a divisor of fixed points on $\sY_q$ given by $\sC_q$ and one copy of
$\bP^1$. Since $\gamma$ is given by $(y_2:y_1) \to (y_1:y_2)$ on $E_1$ and $\alpha \beta^2$ is
given by $(y_2:y_1) \to (y_2:\xi y_1)$ on $E_1$, $\gamma$ interchanges each two irreducible
components of $E_2$, which intersect the same irreducible component of $\tilde E_1$. Thus the
ramification divisor of $\sY \to \sY/\gamma$ given by a family of rational curves and $\sC$, where
$\sC$ denotes the example of a family of degree 3 covers with a pure $(1,3)-VHS$.
\end{pkt}

\section{Calabi-Yau 3-manifolds obtained by quotients of degree 3}
We have seen that the family $\sW$ of $K3$-surfaces given by
$$R^2 := \tilde \bP_{\C}(2,2,1,1) \supset \tilde V(y_2^3+ y_1^3 + x_1(x_1-1) (x_1-a_1x_0)(x_1-a_2x_0)(x_1-a_3x_0)x_0)$$
$$\to(a_1,a_2,a_3)\in \sM_3$$
has a dense set of fibers $\sW_q$ such that $H^k(\sW_q,\Q)$ has a commutative Hodge group for all
$k$.

Recall that the canonical divisor of $R^1 \cong \bP(\sO \oplus \sO(2))$ is given by $-2\tilde V(z_4)$.
Now we consider the up to isomorphisms unique cyclic  cover of degree 3 given by $\sW_q \to R^1$
ramified over $\sC_q$, whose Galois group is generated by $\alpha$. Moreover consider the cyclic
degree 3 cover $\F_3 \to \bP^1$, where $\F_3 = V(x^3 + y^3 + z^3) \subset \bP^2$ denotes the
Fermat curve of degree 3 and $\alpha_{\F_3}$ given by
$$(x:y:z) = (x: y:\xi z),$$
is a generator of the Galois group, which acts by the character $\xi$ on $\Gamma(\omega_{\F_3})$.

Let $X$ be a singular variety of dimension $n$ such that each irreducible component of its
singular locus $S$ has at least the codimension 2. Then we call $X$ a singular Calabi-Yau
$n$-manifold, if $h^0(X\setminus S,\Omega_{X\setminus S}^k) = 0$ for all $k = 1, \ldots, n-1$ and
$\omega_{X\setminus S} \cong  \sO_{X\setminus S}$. With the notation of diagram $\eqref{diag}$ one
gets: 

\begin{proposition}
The quotient of $\sW \times \F_3$ by $\langle(1,2)\rangle$ yields a family of singular Calabi-Yau
3-manifolds with a dense set of $CM$ fibers.
\end{proposition}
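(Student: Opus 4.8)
The plan is to apply the general machinery of the Borcea-Voisin construction (the "basic construction" of Section 7.1 and the diagram $\eqref{diag}$) to the two families $\sW \to \sM_3$ and $\F_3 \to \operatorname{Spec}(\C)$, and to verify that the quotient by $\langle(1,2)\rangle$ produces singular Calabi-Yau 3-manifolds with the desired dense set of $CM$ fibers. The point is that this is \emph{exactly} the situation governed by Proposition $\ref{trivcan}$ and Theorem $\ref{hiercm}$, except that we take the quotient by the subgroup generated by $(1,2)$ rather than $(1,1)$, so I first need to check that the two characters still match up to yield a trivial canonical sheaf. Concretely, $\alpha$ acts on $\Gamma(\omega_{\sW_q})$ by a primitive cubic root of unity (this was computed in Section 9.1: the action of $\G_3$ on the canonical sections surjects onto the $6$-th roots of unity, and $\alpha$ maps to $\xi$), while $\alpha_{\F_3}$ acts on $\Gamma(\omega_{\F_3})$ by $\xi$ as recorded just above the statement. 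The generator $(1,2)$ of the Galois group therefore acts on the tensor product $\Gamma(\omega_{\sW_q}) \otimes \Gamma(\omega_{\F_3})$ by $\xi \cdot \xi^2 = 1$, which is precisely the compatibility needed for the quotient to inherit a trivial canonical class on its smooth locus.

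First I would set up the covers in the form required by the basic construction: both $\sW \to R^1$ (with Galois group generated by $\alpha$, ramified over $\sC_q$, as described in $\ref{wer}$ and Section 8.3) and $\F_3 \to \bP^1$ (ramified over $3$ points) are cyclic degree $3$ Kummer coverings whose branch loci are smooth disjoint divisors, so they fit the hypotheses on $\sZ$ and $\Sigma$ in $\eqref{diag}$. Next I would take the quotient $\gamma$ of $\sW \times \F_3$ by $G = \langle(1,2)\rangle \subset \Gal(\sW;R^1) \times \Gal(\F_3;\bP^1)$. The key local computation is the behavior near the fiber product of the branch divisors: there the $G$-action is of the type $(x_1,x_2)\to(\xi x_1, \xi^2 x_2)$, and I would need to identify the resulting quotient singularities and confirm that the singular locus has codimension $\geq 2$ in each fiber — this is what allows the variety to be called a \emph{singular} Calabi-Yau manifold in the sense just defined, \emph{without} performing the blow-up $\beta,\delta$ of the diagram. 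On the complement of the singular locus $S$, the Hurwitz-type argument of Proposition $\ref{trivcan}$ (using that $(1,2)$ acts by the trivial character on the product of canonical sections, via Lemma $\ref{ampel}$ applied to each factor) gives $\omega_{X\setminus S}\cong\sO_{X\setminus S}$, and the vanishing $h^0(X\setminus S, \Omega^k_{X\setminus S})=0$ for $1\le k\le 2$ follows from the vanishing of the mixed Hodge pieces, exactly as in the proof of Proposition $\ref{trivcan}$.

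For the complex multiplication statement I would invoke Proposition $\ref{tensorp}$ and the tensor-product criterion for $CM$ Hodge structures: the fibers of the quotient carry the Galois-invariant part of the Hodge structure on $H^\bullet(\sW_q \times \F_3)$, which decomposes as a sum of tensor products $H^a(\sW_q)\otimes H^b(\F_3)$. Since $\F_3$ has $CM$ (it is a Fermat curve, see Lemma $\ref{jojo}$) and since $\sW$ has a dense set of fibers $\sW_q$ for which every $\Hg(H^k(\sW_q,\Q))$ is commutative (established in $\ref{8.3.1}$ via Corollary $\ref{cmcm}$), the tensor products are of $CM$ type over that same dense set; passing to a Galois-invariant sub-Hodge structure preserves commutativity of the Hodge group by Lemma $\ref{7.1}$ and Lemma $\ref{nachbarin}$. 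I expect the main obstacle to be the \emph{local analysis of the singularities} of the degree-$3$ quotient with the twisted $(1,\xi^2)$-action: one must verify that they are genuinely of codimension $\geq 2$ on each fiber (so that the definition of singular Calabi-Yau manifold applies and the canonical-sheaf triviality on the smooth locus is meaningful), and that no worse phenomenon occurs along the exceptional divisors coming from the desingularization $R^2 \to Q^2$. This is the step where the twist by $2$ rather than $1$ genuinely changes the picture compared with Proposition $\ref{trivcan}$, and it is what forces the conclusion to be stated only for \emph{singular} Calabi-Yau manifolds rather than smooth ones (the smoothing being deferred to the subsequent construction of the family $\sQ$ of Theorem $\ref{nunuu}$).
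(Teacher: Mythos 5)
Your proposal is correct and follows essentially the same route as the paper's proof: the character computation $\xi\cdot\xi^2=1$ for the action of $(1,2)$ on $\Gamma(\omega_{\sW_q\times\F_3})$, the Hurwitz argument off the codimension-two singular locus (the three copies of $\sC_q$) to trivialize the canonical sheaf, the vanishing of the invariant parts of $H^{1,0}$ and $H^{2,0}$, and the density of $CM$ fibers via the tensor-product decomposition together with the complex multiplication of $\F_3$ and the dense set of fibers of $\sW$ with commutative Hodge groups. The extra framing through diagram $\eqref{diag}$ is harmless but not needed; the paper works directly with the quotient.
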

\begin{proof}
Note that the $VHS$ of the family $\sW \times \F_3/\langle(1,2)\rangle$ is the sub-$VHS$ fixed by
$\langle(1,2)\rangle$.\footnote{For a short introduction to such orbifolds and their
Hodge theory see \cite{CK}, Appendix $A.3$.} Since $\F_3$ has complex multiplication, a $CM$ fiber
of $\sW$ yields a corresponding $CM$ fiber of $\sW \times \F_3/\langle(1,2)\rangle$.

Let $\varphi$ denote the quotient map
$$\varphi :\sW \times \F_3\to \sW \times \F_3/\langle(1,2)\rangle$$
and $S$ denote the singular locus of $\sW \times \F_3/\langle(1,2)\rangle$. Over each point, which lies
not in the singular locus given by 3 copies of $\sC$, one does not have ramification. Hence by the
Hurwitz formula, $\varphi^*(\omega_{(\sW \times \F_3/\langle(1,2)\rangle)\setminus S})$ is given
by the structure sheaf. Since
$\langle(1,2)\rangle$ acts on $\Gamma(\omega_{\sW \times \F_3})$ by the character 1, the sheaf
$\omega_{(\sW \times \F_3/\langle(1,2)\rangle)\setminus S}$ has global sections. Hence $$ \omega_{(\sW \times \F_3/\langle(1,2)\rangle)\setminus S} =
\sO_{(\sW \times \F_3/\langle(1,2)\rangle)\setminus S}.$$
In addition the reader checks easily that $\langle(1,2)\rangle$ does not act by the character 1 on
a non-trivial sub-vector space of $H^{1,0}(\sW \times \F_3)$ or $H^{2,0}(\sW \times \F_3)$.
Thus $\sW \times \F_3/\langle(1,2)\rangle$ is a family of singular Calabi-Yau 3-manifolds.
\end{proof}

\begin{pkt} \label{23}
Now consider a fiber $(\sW \times \F_3/\langle(1,2)\rangle)_q$ of
$\sW \times \F_3/\langle(1,2)\rangle$ and its
singularities in the complex analytic setting. For the construction of the blowing up of a complex
submanifold we refer to \cite{Voi}, $3.3.3$. As in \cite{Voi}, $3.3.3$ described, one constructs
first the blowing up over open sets. The global blowing up is given by glueing the local blowing
ups. Here were consider the situation on sufficiently small complex open submanifolds.

The $\sM_3$-automorphism $\alpha$ acts on $y_2$ by $\xi$. On each fiber $\sW_q$
the curve $\sC_q$ defines the ramification locus of $\sW_q \to R^2$, which is fixed
by $\alpha$. A local parameter $p_{\sC_q}$ on $\sC_q$ yields a local parameter on $\sW_q$ fixed
by $\alpha$. By $z$, one has a local parameter for the neighborhoods of the ramification points of
$\F_3$. On a small open subset, which intersects the ramification locus of
$$\varphi_q:(\sW \times \F_3)_q \to (\sW \times \F_3/\langle(1,2)\rangle)_q,$$
one has the three local parameters given by $y_2$, $p_{\sC_q}$ and $z$. By the action of
$\langle(1,2)\rangle$ on these three local parameters, the singularities
$\sW \times \F_3/\langle(1,2)\rangle$ are
locally given by the product of the 1-ball $\B_1$ with a surface, which has a singularity of the
type $A_{3,2}$ (with the notation in \cite{Bart}, {\bf III}. Subsection 5). Let us blow up the family of fixed curves on
$\sW \times \F_3$ with respect to $\langle(1,2)\rangle$ and let $E_1$ denote the exceptional
divisor. On each connected component of $E_1$ one has two disjoint families of fixed curves with
respect to the action of $\langle(1,2)\rangle$ again. Again this follows from the consideration of
the action of $\langle(1,2)\rangle$ on local parameters of a small open subset. On a fiber the
quotient map sends any neighborhood of a point on
these latter curves onto the product of the 1-ball $\B_1$ with a surface with a singularity of the
type $A_{3,1}$. Hence let us blow up these latter two families of curves with exceptional
divisor $E_2$. The canonical divisor of the resulting fibers $(\widetilde{\sW \times \F_3})_q$ is
given by
$$K_{(\widetilde{\sW \times \F_3})_q} = (\tilde E_1)_q + 2(E_2)_q,$$
where quotient map $\varphi$ by $\langle(1,2)\rangle$ is ramified over $E_2$. Thus by the
Hurwitz formula, one calculates that $\varphi^*(\omega_q) = \sO((\tilde E_1)_q)$.\footnote{The
author has searched for an opportunity of a smooth blowing down similar to Proposition
$\ref{pabu}$. He considered a fiber $\sW_q$, which is a family of curves given by
$$\sW_q \to R^2 \to \bP^1.$$
But here we do not blow up sections of $\sW_q \times \F_3 \to \bP^1$. Hence here one can not
formulate a relative version of the Castelnuovo Theorem as in Proposition $\ref{pabu}$.}
\end{pkt}

On the other hand $\alpha \beta$ acts by the character $\xi^2$ on
$\Gamma(\omega_{\sW_q})$ for all $q \in \sM_3$. Moreover we have a Galois
cover $\F_3 \to \bP^1$ of degree 3 with a
generator $\alpha_{\F_3}$ given by
$$(x:y:z) = (x: y:\xi z),$$
which acts by the character $\xi$ on $\gamma(\omega_{\F_3})$.
Hence $\alpha_2 := (\alpha \beta ,\alpha_{\F_3})$ leaves $\Gamma(\omega_{\sW_q \times \F_3})$
invariant.

The automorphism $\alpha_2$ fixes a finite number of points on $\sW_q \times \F_3$ given by
$$\{z_5 = z_4 = 0\}\times \{z= 0\},$$
and $\alpha_2$ fixes in addition the points on the curves given by the fiber product of $\{z= 0\}$
with the
exceptional divisor of the blowing up $\sW_q \to W_q$. The latter statement about the exceptional
divisor of $\sW_q \to W_q$ follows from the fact that $\alpha\beta$ fixes the generators
of the corresponding ideal sheaf of the blowing up  and the singular points of $W_q$ given by
$$ (1:-1:0:0:0), \  \ (1:-\xi :0:0:0) \ \ \mbox{and} \  \ (1:-\xi^2 :0:0:0).$$

\begin{pkt}
Now we determine the action of $\alpha \beta$ on the local parameters, whose zero-loci
are given by the exceptional divisor $E_{\sW_q}$ of $\sW_q \to W_q$. The
action of $\alpha\beta$ on $W_q \subset \bP^4$ is given by
$$(z_5:z_4:z_3:z_2:z_1) \to (\xi z_5: \xi z_4:z_3:z_2:z_1) \  \ \mbox{resp.},$$
$$(z_5:z_4:z_3:z_2:z_1) \to
(z_5:z_4:\xi^{-1}z_3:\xi^{-1}z_2:\xi^{-1}z_1).$$
By using the explicit equations for $W_q$ in $\ref{8.3.1}$, one can very easily calculate that
$\alpha\beta$ acts by $\xi^{-1}$ on these local parameters.\footnote{The singular locus of $W_q$
is contained in $W_q \cap \{z_5 = 1\}$. Thus one can calculate the desingularization with the usual
equations $z_it_j = z_jt_i$ for $i,j = 1,2,3$. On $\{t_i =1\}$ the zero locus of the local
parameter $z_i$ yields the exceptional divisor. The local parameter fixed by $\alpha\beta$ can
be given by $t_1/t_i$ or $t_3/t_i$.}

Hence the singularities of $\sW_q \times \F_3/\langle\alpha_2\rangle$, which result by the
exceptional divisor of $\sW_q \to W_q$, are locally given by the product of $\B_1$ with a
singularity of the type $A_{3,2}$.
\end{pkt}

Now we construct a desingularisation of ${\sW \times \F_3}/\langle\alpha_2\rangle$, which is a
$CMCY$ family of 3-manifolds. Let $E_{\sW}$ denote the exceptional divisor of $\sW \to W$. We
start with the blowing up of the family of rational curves
given by the fiberproduct of $E_{\sW}$ with the points on $\F_3$ fixed by $\alpha_{\F_3}$.
This yields the exceptional divisor $E_C$ consisting of 9 rational ruled surfaces. By the same
arguments as in $\ref{23}$, each connected component of $E_C$ contains two families of rational
curves of fixed points. The blowing up $\widetilde{\sW \times \F_3}$ of these latter
families has a quotient
$$\sR: = \widetilde{\sW \times \F_3}/\alpha_2$$
with quotient map given by $\varphi$ such that on the complement of the isolated sections fixed by
$\varphi$
$$\varphi^*\omega_q = \sO((\tilde E_C)_q).$$

\begin{pkt}
Recall that $R^1$ is a rational ruled surface, where the exceptional divisor $E_{R^1}$ of the
blowing up $R^1 \to
Q^1$ is a section of $R^1 \to \bP^1$ (see Remark $\ref{regeli}$). A fiber $\sW_q$ can be
considered as a family
$$\sW_q \stackrel{f}{\to} R^1 \to \bP^1$$
of curves, where $f$ is constructed in $\ref{wer}$. By $\ref{8.3.1}$ and the projection
$R^2 \to R^1$, the morphism $f$ extends to
a morphism $f: \sW \to R^1 \times \sM_3$ such that the exceptional divisor $E_{\sW}$ of the
blowing up $\sW \to W$ is send to the exceptional divisor $E_{R^1\times \sM_3} =
E_{R^1}\times \sM_3$ of the blowing up $R^1\times \sM_3 \to Q^1\times \sM_3$. The following
commutative diagram describes the situation:
$$\xymatrix{
{E_{\sW}} \ar[d]_{f} \ar[rr]^{} &   & {\sW} \ar[d]_{f} \ar[rr]^{}
&   & {W} \ar[d]_{f}\\
{E_{R^1}\times \sM_3} \ar[d]_{} \ar[rr]^{} &   & {R^1\times \sM_3} \ar[d]_{} \ar[rr]^{}
&   & {Q^1\times \sM_3}\\
{\bP^1\times \sM_3} \ar[rr]^{\rm id} &   & {\bP^1\times \sM_3}
}$$
\end{pkt}

\begin{pkt}
Thus
$$g: \sW \stackrel{f}{\to} R^1 \times \sM_3 \to \bP^1 \times \sM_3$$
is a family of curves, which has 3 distinguished sections given by the exceptional divisor
$E_{\sW}$ of $\sW \to W$. Moreover by the description of $f:\sW_q \to R^1$ as degree 3
cover, one can easily see that the fibers of $g$ are given by the Fermat curve of degree
3 or consist of 3 smooth rational curves intersecting each other in exactly one point, which does
not lie on $(E_{\sW})_q$. Over $\bP^1\setminus \{\infty\} \times \sM_3$ and
$\bP^1\setminus \{0\} \times \sM_3$ one can embed the restricted family into some copy of
$\bP^2_{\A^1\times \sM_3}$.

Hencefore we obtain the family
$$\sW \times \F_3 \to \bP^1 \times \sM_3$$
of surfaces, which has sections given by the fiberproduct of the exceptional divisors of
$\sW \to W$ with the points fixed by $\alpha_{\F_3}$, which do not meet any singular point of
a fiber. In addition $\alpha_2$ is a $\bP^1 \times \sM_3$-automorphism of this family.
Hence by the same arguments as in the proof of Proposition $\ref{pabu}$, we can blow down
$\varphi(\tilde E_C)$ over $(\bP^1\setminus \{\infty\}) \times \sM_3$ and
$(\bP^1\setminus \{0\}) \times \sM_3$. By glueing, we obtain the family $\hat \sQ$. Note that the
singular fibers of $\sW \times \F_3 \to \bP^1 \times \sM_3$ are given by 3 copies of
$\bP^1 \times \F_3$. Hence by the restriction of the sheaf, which yields the blowing down morphism,
to the corresponding copies of $\widetilde{\bP^1 \times \F_3}/\langle\alpha_2\rangle$, one obtains
smooth blowing down morphisms on these copies.
\end{pkt}

\begin{construction}
But $\hat \sQ$ has 18 sections of singular points given by the 18 isolated sections fixed by
$\alpha_2$ on $\widetilde{\sW \times \F_3}$. Recall that these sections are given by
$$\{z_5 = z_4 = 0\}\times \{z= 0\}.$$
Let $\sQ \to \hat \sQ$ denote the blowing up of the singular sections of $\hat \sQ$ and
$$\widetilde{\widetilde{\sW \times \F_3}}\to \widetilde{\sW \times \F_3}$$
denote the blowing up of these 18 sections. By the same arguments as in Remark $\ref{buahaha}$,
we obtain the following commutative diagram:
$$\xymatrix{
{\widetilde{\widetilde{\sW}}} \ar[d]_{} \ar[rr]^{\tilde \varphi} &   & {\sQ} \ar[d]_{}\\
{\widetilde{\sW}} \ar[rr]^{\varphi} &   & {\hat \sQ}
}$$
Note that $\tilde \varphi$ is a cyclic cover on the complement of $\tilde E_C$. Thus by the
Hurwitz formula and the fact that $\alpha_2$ acts by the character 1 on
$\Gamma(\omega_{\sW_q \times \F_3})$ for each $q \in \sM_3$, one concludes that $\sQ$ is a family
of Calabi-Yau 3-manifolds.
\end{construction}

\begin{proposition} \label{pubc}
The family $\sQ \to \sM_3$ is a $CMCY$ family of 3-manifolds.
\end{proposition}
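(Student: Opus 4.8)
The smoothness of $\sQ \to \sM_3$ and the fact that its fibres are Calabi-Yau $3$-manifolds have already been established in the construction preceding the statement, so the plan is to verify only the remaining condition: that there is a dense subset of $\sM_3$ over which all Hodge groups of the fibres are commutative. First I would fix the dense set $D \subseteq \sM_3$ consisting of those $q$ for which the fibre $\sC_q$ of the degree $3$ family with pure $(1,3)$-$VHS$ has complex multiplication; this set is dense by Theorem \ref{jnvz}, and by Corollary \ref{cmcm} each corresponding $K3$ fibre $\sW_q$ then has commutative Hodge groups $\Hg(H^k(\sW_q,\Q))$ for all $k$. It then suffices to show $\Hg(H^k(\sQ_q,\Q))$ is commutative for every $k$ and every $q \in D$.

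The first main step is to control the cohomology of the product $\sW_q \times \F_3$. The Fermat curve $\F_3$ has complex multiplication, so $\Hg(H^b(\F_3,\Q))$ is commutative for all $b$. By Proposition \ref{tensorp} the Hodge structure $H^n(\sW_q \times \F_3,\Q)$ decomposes as a direct sum of tensor products $H^a(\sW_q,\Q) \otimes H^b(\F_3,\Q)$; since both tensor factors are of $CM$ type for $q \in D$, the proposition on tensor products of Hodge structures (immediately preceding Lemma \ref{blowlow}) shows each summand is of $CM$ type, and by Lemma \ref{7.1} a direct sum of $CM$ Hodge structures again has commutative Hodge group. Hence $H^n(\sW_q \times \F_3,\Q)$ is of $CM$ type for all $n$.

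The second step passes to the quotient by $\langle\alpha_2\rangle$ and through the tower of blow-ups and blow-downs that produces $\sQ$. The rational cohomology of $\sQ_q$ is assembled, by iterated use of the blow-up formula (Lemma \ref{blowlow}) and of the identity $H^*(X/G,\Q) = H^*(X,\Q)^G$ for a finite group $G$, from two kinds of pieces: the $\langle\alpha_2\rangle$-invariant sub-Hodge structure of $H^*(\sW_q \times \F_3,\Q)$, and the shifted cohomology of the centres of the successive blow-ups. The invariant part is a rational sub-Hodge structure of a $CM$ structure and is therefore itself of $CM$ type (Lemma \ref{nachbarin} together with Lemma \ref{7.1}, its Hodge group being a quotient of a torus). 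The blow-up centres, as identified in the construction, are copies of the curve $\sC_q$, which has $CM$ for $q \in D$, copies of the Fermat curve $\F_3$, which has $CM$, and rational varieties ($\bP^1$, rational ruled surfaces, exceptional divisors of blow-ups of points), all of Tate type and hence with trivial Hodge group. Applying Lemma \ref{blowlow} at each stage — exactly the reasoning behind Theorem \ref{hiercm} — then yields that $\Hg(H^k(\sQ_q,\Q))$ is commutative for all $k$ and all $q \in D$, which together with the Calabi-Yau property completes the proof.

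I would expect the main obstacle to be the careful bookkeeping in this last step: because $\alpha_2$ has order $3$ rather than $2$, the construction does not fall under the Borcea-Voisin template of Proposition \ref{trivcan}, and one must track each centre through the mixed sequence of blow-ups, the finite quotient, and the Castelnuovo-type blow-downs, checking that every exceptional locus contributing cohomology is either rational or a $CM$ curve. The blow-downs cause no real difficulty, since a blow-down is inverse to a blow-up and only removes Tate-type exceptional cohomology; but confirming that the fixed loci of $\alpha_2$ are precisely the asserted copies of $\sC_q$, Fermat curves and rational curves — and that no new non-$CM$ cohomology is introduced at the $18$ isolated singular sections — is where the genuine work lies.
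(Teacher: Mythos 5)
Your proposal is correct and follows essentially the same route as the paper: the paper's own proof simply invokes Theorem \ref{hiercm} together with the facts that $\F_3$ has $CM$, that $\sW$ has a dense set of fibers with commutative Hodge groups, and that the blow-up centres are points and rational curves (hence of $CM$ type). Your version merely spells out the bookkeeping the paper leaves implicit; the only slight imprecision is listing copies of $\sC_q$ among the blow-up centres for $\sQ$ (those occur in the $\langle(1,2)\rangle$-quotient, not here, where the centres are rational curves and the $18$ isolated sections), but this is harmless since $\sC_q$ has $CM$ on the dense set anyway.
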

\begin{proof}
Note that on each fiber we blow up some points and several copies of $\bP^1$, which have $CM$.
Hence by Theorem $\ref{hiercm}$, we must only
apply the facts that $\F_3$ has $CM$ and $\sW$ has a dense set of fibers $\sW_q$ such
that $\Hg(H^k(\sW_q,\Q))$ is commutative for all $k$.
\end{proof}

\section{The degree 4 case}
Consider the $CMCY$ family $\sC_2 \to \sM_1$ of 2-manifolds given by
$$\bP^3 \supset V(y_2^4 + y_1^4 + x_1(x_1-x_0)(x_1-\lambda x_0)x_0) \to \lambda \in \sM_1$$
of Section $7.4$. In this section we construct quotients of $\sC_2$ by cyclic subgroups of its
group of $\sM_1$-automorphisms, which will be suitable to obtain new $CMCY$ families of
2-manifolds. In the next section we will see that these new examples are endowed with involutions,
which make them suitable for the construction of the Borcea-Voisin tower. Hence by the Hurwitz
formula and some other obvious reasons, one has:

\begin{claim} \label{invos}
Let $C$ be a $K3$ surface and $\alpha$ be an involution on $C$,
which admits a finite set $S$ of fixed
points on $C$. Then the quotient $\tilde C/\alpha$, where $\tilde C$
denotes the blowing up of $C$ with respect to the subvariety given
by $S$, is a $K3$ surface, too. Moreover $\tilde C/\alpha$ has complex multiplication resp., only
commutative Hodge groups, if $C$ has complex multiplication
resp., only commutative Hodge groups.
\end{claim}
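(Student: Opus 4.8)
The plan is to first pin down the local geometry of $\alpha$, then verify the $K3$ property of the quotient, and finally transfer commutativity of the Hodge groups back and forth using the tools already established.

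First I would observe that the hypothesis that $\alpha$ fixes only finitely many points forces $\alpha$ to act by $+1$ on $\Gamma(\omega_C)$. Indeed, an involution is locally linearizable near each fixed point, so in suitable coordinates it is given by one of $\diag(1,-1)$ or $\diag(-1,-1)$ (the identity being excluded by isolatedness of the fixed point). The first type fixes a curve, contradicting finiteness, so every fixed point is of type $\diag(-1,-1)$, on which $dx\wedge dy$ is invariant; hence $\alpha$ is symplectic. In the local chart $(x,t)$ on $\tilde C$ with $y=tx$, the lift $\tilde\alpha$ reads $(x,t)\to(-x,t)$, so $\tilde\alpha$ fixes each exceptional line $E_i\subset\tilde C$ pointwise and the quotient $(x,t)\to(x^2,t)$ is smooth. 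Thus $C':=\tilde C/\alpha$ is a smooth surface and $f:\tilde C\to C'$ is a double cover ramified exactly along $R=\sum_i E_i$.

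Next I would check that $C'$ is a $K3$ surface. Since $\omega_C\cong\sO_C$ one has $K_{\tilde C}=\sum_i E_i=R$, so the Hurwitz formula $K_{\tilde C}=f^*K_{C'}+R$ gives $f^*K_{C'}=0$ and hence $\omega_{C'}\cong\sO_{C'}$. The holomorphic $2$-form of $\tilde C$ is $\tilde\alpha$-invariant (a direct coordinate computation shows the local form $x\,dx\wedge dt$ is preserved), while $H^{1,0}(\tilde C)=H^{1,0}(C)=0$; since $f^*$ identifies $H^{p,0}(C')$ with the invariant part of $H^{p,0}(\tilde C)$, we get $h^{1,0}(C')=0$ and $h^{2,0}(C')=1$, so $C'$ is a $K3$ surface. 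Note that Lemma \ref{ampel} does not apply directly, as $\tilde C$ is not Calabi--Yau, which is why the $+1$-action above is consistent with ramification in codimension one.

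For the statement on Hodge groups I would exploit that, as $f$ has degree $2$, the pullback identifies $H^2(C',\Q)$ with the invariant sub-Hodge structure $H^2(\tilde C,\Q)^{\tilde\alpha}$; moreover Corollary \ref{blowblow} gives $\Hg(H^2(C,\Q))\cong\Hg(H^2(\tilde C,\Q))$, so throughout I may replace $C$ by $\tilde C$. If $C$ (equivalently $\tilde C$) has commutative Hodge groups, then $H^2(\tilde C,\Q)^{\tilde\alpha}$ is a rational sub-Hodge structure of $H^2(\tilde C,\Q)$, hence by Lemma \ref{nachbarin} a direct summand, and Lemma \ref{7.1} shows its Hodge group is a quotient of the commutative $\Hg(H^2(\tilde C,\Q))$, so $\Hg(H^2(C',\Q))$ is commutative; as $C'$ is $K3$ this is complex multiplication. \textbf{The main obstacle is the converse.} Here I would use that $\tilde\alpha$ acts by $+1$ on $H^{2,0}(\tilde C)$, so the transcendental lattice $T=T(\tilde C)$, i.e. the smallest rational sub-Hodge structure whose complexification contains $H^{2,0}(\tilde C)$, is contained in $H^2(\tilde C,\Q)^{\tilde\alpha}\cong H^2(C',\Q)$. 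If $\Hg(H^2(C',\Q))$ is commutative then, again by Lemmas \ref{nachbarin} and \ref{7.1}, so is $\Hg(T)$; and the standard $K3$ fact that $\Hg(H^2(\tilde C,\Q))=\Hg(T)$ (the N\'eron--Severi part being of pure type $(1,1)$ and contributing trivially) then yields that $\Hg(H^2(C,\Q))$ is commutative, i.e. $C$ has complex multiplication. The delicate points to justify carefully are the identification $H^2(C',\Q)\cong H^2(\tilde C,\Q)^{\tilde\alpha}$ as Hodge structures and the reduction of $\Hg(H^2)$ to $\Hg(T)$ for a $K3$ surface.
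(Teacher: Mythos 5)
Your proof is correct and follows the route the paper itself indicates: it supplies the details behind the paper's one-line appeal to ``the Hurwitz formula and some other obvious reasons,'' namely the local linearization at the isolated fixed points (forcing the symplectic action and the explicit blow-up chart), the canonical-bundle computation via Hurwitz, and the transfer of Hodge groups through the invariant sub-Hodge structure using Lemmas \ref{7.1} and \ref{nachbarin} together with Corollary \ref{blowblow}. The only remark worth making is that the converse you label as ``the main obstacle'' (commutativity for $\tilde C/\alpha$ implying it for $C$) is not actually required by the statement, which asserts only the forward implication.
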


Now we introduce a group $\G_4$ of $\sM_1$-automorphisms of the $CMCY$ family $\sC_2 \to \sM_1$.
The elements $g\in \G_4$ can be uniquely written as a product $g = abc$ with
$a \in \langle\alpha\rangle$, $b \in \langle\beta\rangle$, and $c \in \langle\iota_4\rangle$, where:
$$\alpha (y_2: y_1: x_1: x_0) = (iy_2: y_1: x_1: x_0), \ \
\beta  (y_2: y_1: x_1: x_0) = (y_2: iy_1: x_1: x_0),$$
$$\iota_4 (y_2: y_1: x_1: x_0) = (y_1: y_2: x_1: x_0)$$
Therefore the group $\G_4$ contains exactly 32 elements. The action of $\G_4$ on the global
sections of the canonical sheaves of the fibers induces a surjection of $\G_4$ onto the
multiplicative group of the 4-th. roots of unity.

Its kernel $\KK_4$ is a normal subgroup of order 8. It contains the following automorphisms of
order 4:
$$\delta  (y_2: y_1: x_1: x_0) = (-y_1: y_2: x_1: x_0),\ \
\epsilon   (y_2: y_1: x_1: x_0) = (iy_2: -iy_1: x_1: x_0),$$
$$\eta  (y_2: y_1: x_1: x_0) = (iy_1: iy_2: x_1: x_0)$$
One has that
$$\iota_3 = \delta^2 = \epsilon^2 = \eta^2 = (\alpha \beta)^2.$$
Moreover one checks easily that $\KK_4$ is isomorphic to the quarternion group and has the
generators $\delta$, $\epsilon$ and $\eta$. Thus one has
\begin{equation} \label{klein}
\KK_4/\langle\iota_3 \rangle = (\Z/2)^2.
\end{equation}

One can easily calculate that
$$\alpha \langle\delta\rangle \alpha^{-1} = \langle\eta\rangle.$$
By the fact that $\KK_4$ has 2 residue classes with respect to $\langle\delta\rangle$ resp.,
$\langle\epsilon\rangle$ resp., $\langle\eta\rangle$, one concludes that $\langle\delta\rangle$
resp., $\langle\epsilon\rangle$ resp., $\langle\eta\rangle$ is a normal subgroup of
$\KK_4$. Since $[\alpha ]_{\KK_4}$ generates $\G_4/\KK_4$ and
$$\alpha \langle\epsilon\rangle \alpha^{-1} = \langle\epsilon\rangle,$$
$\langle\epsilon\rangle$ is a normal subgroup of $\G_4$.

\begin{pkt} \label{hier}
Recall that $\iota_3$ denotes the involution given by
$$\iota_3(y_2: y_1: x_1: x_0) = (-y_2: -y_1: x_1: x_0).$$
Let $\sC_{\langle\iota_3\rangle}$ be the $CMCY$ family of 2-manifolds given by the quotient
$\tilde \sC_2/\langle\iota_3\rangle$, where $\tilde \sC_2$
denotes the blowing up of $\sC_2$ with respect to the 8 sections fixed by $\iota_3$. Four
sections fixed by $\iota_3$ are given by $(1:\zeta :0:0)$, where $\zeta$ runs through the
primitive 8-th. roots of unity. The other 4 sections are given by
$$(0:0:0:1), \ \  (0:0:1:1), \  \ (0:0:\lambda :1) \  \ \mbox{and} \  \ (0:0:1:0).$$
Since the generators $\alpha$, $\beta$ and $\iota_4$ of $\G_4$ leave the ideal sheaf corresponding
to these 8 sections invariant, all automorphisms of $\G_4$ induce automorphisms on $\tilde \sC_2$.
Note that $\iota_3$ commutes with each $\tau \in \G_4$. For each $\tau \in \G_4$ one finds open
affine subsets invariant under $\langle\tau, \iota_3\rangle$. On these affine sets the global
sections of the structure sheaf invariant under $\langle\tau, \iota_3\rangle$ are contained in
$\sO^{\langle\iota_3\rangle}$, where
$\tau$ leaves $\sO^{\langle\iota_3\rangle}$ invariant. Hencefore $\tau$ induces an
automorphism on $\sC_{\langle\iota_3\rangle}$. One checks easily that $\delta$, $\eta$ and $\epsilon$
yield involutions on $\sC_{\langle\iota_3\rangle}$ leaving only finitely many sections fixed.
Thus by using Claim $\ref{invos}$, these involutions yield the $CMCY$ families of 2-manifolds
$$\sC_{\langle\delta\rangle} \cong \sC_{\alpha \langle\delta\rangle \alpha^{-1}} =
\sC_{\langle\eta\rangle} \  \ \mbox{and} \  \ \sC_{\langle\epsilon\rangle}.$$
\end{pkt}

\section{Involutions on the quotients of the degree 4 example}
In Section $7.4$ we introduced several $\sM_1$-involutions $\iota_1, \ldots, \iota_7$ of $\sC_2$.
We have seen that $\iota_3$ acts by the character $1$ on the global sections of the canonical
sheaves of the fibers. Moreover $\iota_1, \iota_2,\iota_4,\ldots, \iota_7$ act by the character
$-1$ on the global sections of the canonical sheaves of the fibers. Here we show that each for
each $i=1,2,4, \ldots, 7$ the involution $\iota_i$ induces $\sM_1$-involutions on the quotient
families of $\ref{hier}$, which make them suitable for the construction of a Borcea-Voisin tower.

\begin{remark} \label{4fall}
One can use Example $\ref{ellcm}$, Example $\ref{ellcm2}$ and Example $\ref{ellcm3}$ and determine
some explicitely given $CM$ fibers of the new quotient families. By using the method of C. Voisin \cite{Voi2},
these new $K3$ surfaces with complex multiplication and our explicite examples of elliptic curves
with complex multiplication yield new Calabi-Yau 3-manifolds with complex muyltiplication
\end{remark}

We fix some new notation: Let $C_2$ be an arbitrary fiber of $\sC_2$, $p \in C_2$, where $p$ is
not fixed by $\iota_3$, and $F_i$ denote the curve of fixed points on $C_2$ with respect to
$\iota_i$ for all $i = 1, 2, 4,\ldots, 7$.

\begin{pkt} \label{nr1}
The involutions $\iota_1$ and $\iota_2$ induce the same involution on
$\sC_{\langle\iota_3\rangle}$. One has that $\iota_1([p]_{\langle\iota_3\rangle}) =
[p]_{\langle\iota_3\rangle}$, if and only if $p \in F_1 \cup F_2$. The involution $\iota_3$
induces an involution on the curve $F_1$ and on the curve $F_2$. Each of
the covers induced by these involutions has 4 ramification points. Hence by the Hurwitz formula,
$\iota_1$ induces an involution on $\sC_{\langle\iota_3\rangle}$, which has a divisor of fixed
points containing two families of elliptic curves. By \cite{Voi2}, $1.1$, the ramification divisor
of our involution on a fiber of $\sC_{\langle\iota_3\rangle}$ has at most one irreducible
component of genus $g>0$ or consists of two elliptic curves. Thus it consists of two elliptic
curves. It is quite easy to check that by this involution $\iota_1$, the family
$\sC_{\langle\iota_3\rangle}$ is suitable for the construction of a Borcea-Voisin tower.
\end{pkt}

\begin{pkt} \label{nr2}
The involutions $\iota_4$ and $\iota_6$ induce the same involution on
$\sC_{\langle\iota_3\rangle}$. One has that $\iota_4([p]_{\langle\iota_3\rangle}) =
[p]_{\langle\iota_3\rangle}$, if and only if $p \in F_4 \cup F_6$.
The involution $\iota_3$ induces an involution on the curve $F_4$ and on the curve $F_6$. Each of
the covers induced by these involutions have 4 ramification points. Hence by the same arguments as
in $\ref{nr1}$, the involution $\iota_4$ induces an involution on $\sC_{\langle\iota_3\rangle}$,
which has a divisor of fixed points consisting of two families of elliptic curves. It is quite
easy to check that by this involution $\iota_1$, the family $\sC_{\langle\iota_3\rangle}$ is
suitable for the construction of a Borcea-Voisin tower.

Since $\alpha \iota_4 \alpha^{-1} =\iota_5$ and $\alpha \iota_3 \alpha^{-1} =\iota_3$, the
involutions $\iota_5$ and $\iota_7$ induce up isomorphisms the same involution as $\iota_4$ and
$\iota_6$ on $\sC_{\langle\iota_3\rangle}$.
\end{pkt}

Recall the $\sM_1$-automorphisms
$$\delta  (y_2: y_1: x_1: x_0) = (-y_1: y_2: x_1: x_0),\ \
\epsilon   (y_2: y_1: x_1: x_0) = (iy_2: -iy_1: x_1: x_0)$$
of $\sC_2$ of order 4.

\begin{remark}
Now we consider the quotient families $\sC_{\langle\delta\rangle}$ and
$\sC_{\langle\epsilon\rangle}$ in $\ref{hier}$.
One checks easily that $\delta$ and $\epsilon$ act as involutions on the 4 sections given by
$(1:\zeta :0:0)$, where $\zeta$ runs through the primitive 8-th. roots of unity, and leave the
sections given by
$$(0:0:0:1), \ \  (0:0:1:1), \  \ (0:0:\lambda :1), \  \ (0:0:1:0)$$
invariant.

Moreover there does not exist a point $p\in C_2$ such
that $\delta(p) = \iota_3(p)$ or $\epsilon(p) = \iota_3(p)$. This follows from the facts
that $\iota_3 = \delta^{2} = \epsilon^{2}$
and $\delta^{2}(p) = \delta(p)$ resp., $\epsilon^{2}(p)= \epsilon(p)$ would imply that $\delta$
resp., $\epsilon$ is not bijective.

Hencefore either $p$ is contained in one of the 8 sections fixed by $\iota_3$ or
$\langle\delta\rangle \cdot p$ and $\langle\epsilon\rangle  \cdot p$ contain 4 different elements.
For our notation we will assume that $p$ is not fixed by $\iota_3$ as above.
\end{remark}

\begin{pkt} \label{pupu}
The involutions $\iota_1$ and $\iota_2$ commute with $\epsilon$. Thus the same holds true with
respect to the involutions on $\sC_{\langle\iota_3\rangle}$ induced by $\iota_1$, $\iota_2$ and
$\epsilon$. Hence one concludes that $\iota_1$ and $\iota_2$ induce an involution an
$\sC_{\langle\epsilon\rangle}$. Since $\iota_1$ and
$\iota_2$ induce the same involution on $\sC_{\langle\iota_3\rangle}$, the involutions $\iota_1$
and $\iota_2$ induce the same involution on $\sC_{\langle\epsilon\rangle}$.

A point $[p]$ on the fiber $C_{\langle\epsilon\rangle}$ of $\sC_{\langle\epsilon\rangle}$ is fixed by
$\iota_1$, if $\iota_1(p) = \epsilon^i(p)$ for $i = 0, \ldots, 3$. This is exactly satisfied on
$F_1$ and $F_2$ for $i = 0$ or $i = 2$. The automorphism $\epsilon$ yields a quotient of $F_1$
resp., $F_2$ of degree 4 fully ramified over 4 points. Hence by the Hurwitz formula,
$F_1/\langle\epsilon\rangle$ and $F_2/\langle\epsilon\rangle$ are rational curves.

By the definitions of $\iota_1$ and $\epsilon$, one checks easily that their actions coincide
on the exceptional divisor on $\tilde \sC_2$ over the four sections given by $V(y_2,y_1)$.
Moreover by the definitions of $\iota_1$ and $\epsilon$, one checks easily that for each primitive
$8$-th. root $\zeta$ of unity
$$\iota_1(1:\zeta :0:0) = \epsilon(1:\zeta :0:0) = (1:-\zeta :0:0).$$
Both $\sM_1$-automorphisms fix the local parameters $x_1$ and $x_2$.

Thus altogether the involution $\iota_1$ induces an involution on $\sC_{\langle\epsilon\rangle}$,
which has a divisor of fixed points consisting of 8 disjoint families of rational curves. It is
quite easy to check that $\sC_{\langle\epsilon\rangle}$ is suitable for the construction of a
Borcea-Voisin tower by this involution. 
\end{pkt}

\begin{pkt} \label{epi4}
The involutions $\iota_4, \ldots, \iota_7$ do not commute with $\epsilon $. But one has
$\epsilon \iota_i = \iota_i \epsilon^3$ for all $i = 4, \ldots, 7$. Hence $\iota_i$ ($i = 4,
\ldots, 7$) induces an involution on $\sC_{\langle\iota_3\rangle}$. Since
$\iota_5 = \epsilon \iota_4$, $\iota_6 = \epsilon^2 \iota_4$ and $\iota_7 = \epsilon^3 \iota_4$,
these involutions induce the same involution on $\sC_{\langle\epsilon\rangle}$.

A point $[p] \in C_{\langle\epsilon\rangle}$ is invariant under
$\iota_4$, if $\iota_4(p) = \epsilon^i(p)$ for $i = 0, \ldots, 3$. One has that
$\iota_4(p) = (p)$ on $F_4$, $\iota_4(p) = \epsilon^1(p)$ on $F_7$, $\iota_4(p) = \epsilon^2(p)$
on $F_6$ and $\iota_4(p) = \epsilon^3(p)$ on $F_5$. Note that $\epsilon(F_4) = F_6$,
$\epsilon(F_6) = F_4$, $\epsilon^2(F_4)= F_4$ and $\epsilon^2(F_6)= F_6$. Moreover one has
$\epsilon(F_5) = F_7$, $\epsilon(F_7) = F_7$, $\epsilon^2(F_5)= F_5$ and
$\epsilon^2(F_5)= F_5$. The automorphism
$\epsilon^2 = \iota_3$ yields a quotient of $F_4$, $F_5$, $F_6$ resp., $F_7$ of degree 2 ramified
over 4 points, where $F_4$ and $F_6$ resp., $F_5$ and $F_7$ are mapped onto the same quotient by
$\epsilon$. Hence by the Hurwitz formula, the quotient consists of two families of elliptic curves.

By \cite{Voi2}, $1.1$, the ramification divisor of our involution on $C_{\langle\epsilon\rangle}$
has at most one irreducible component of genus $g>0$ or consists of two elliptic curves. Thus
$\iota_4$ induces an involution on $\sC_{\langle\epsilon\rangle}$, which has a divisor of
fixed points consisting of 2 families of elliptic curves. It is quite easy to check
that this involution makes $\sC_{\langle\epsilon\rangle}$ suitable for the construction of a
Borcea-Voisin tower. 
\end{pkt}

\begin{pkt} \label{del1}
The involutions $\iota_4$ and $\iota_6$ do not commute with $\delta $. But one has
$\delta \iota_4 = \iota_4 \delta^3$ and $\delta \iota_6 = \iota_6 \delta^3$. Moreover one has
$$\iota_1  = \delta \circ \iota_4, \ \ \iota_6  = \delta^2 \circ \iota_4, \ \ \mbox{and} \  \
\iota_2  = \delta^3 \circ \iota_4.$$
Hence $\iota_1$, $\iota_2$, $\iota_4$ and $\iota_6$ induce the same involution on
$\sC_{<\delta >}$.

A point $[p] \in C_{\langle\delta\rangle}$ is  invariant under $\iota_4$,
if $\iota_4(p) = \delta^i(p)$. This occurs, if and only if
$$p \in F_1 \cup F_2 \cup  F_4 \cup  F_6.$$
Note that $\delta(F_4) = F_6$ and $\delta(F_1) = F_2$.
Moreover $\delta$ yields a degree 4 quotient of $F_4 \cup F_6$, and a degree 4 quotient of
$F_1 \cup F_2$. Thus the divisor of fixed points contains
two families of elliptic curves.

By the same arguments as in $\ref{epi4}$, the involution $\iota_4$ induces an involution
on $\sC_{\langle\delta\rangle}$, which has a divisor of fixed points consisting of 2 families of
elliptic curves and makes $\sC_{\langle\delta\rangle}$ suitable for the construction of a
Borcea-Voisin tower.  
\end{pkt}

\begin{pkt} \label{del5}
The involution $\iota_5$ commutes with $\delta$. One has that $p = \iota_5(p)$, if $p \in F_5$ and
$\delta^2(p) = \iota_5(p)$, if $p \in F_7$. Note that $\delta$ acts as degree 4 automorphism on
$F_5$ resp., $F_7$. Each of the corresponding quotient maps is fully ramified over 4 points. By
the same arguments as in $\ref{pupu}$, the $\sM_1$-automorphisms $\iota_5$ and $\delta$ act in the
same way on the exceptional divisor of $\tilde \sC_2$. Thus $\iota_5$ induces an
involution on $\sC_{\langle\delta\rangle}$, which fixes a divisor consisting of 8 families of
rational curves. Moreover it is quite easy to check that this
involution makes $\sC_{\langle\delta\rangle}$ suitable for the construction of a Borcea-Voisin
tower.
\end{pkt}

\begin{pkt}
Since $\alpha \iota_1 \alpha^{-1} = \iota_1$ and $\alpha \delta  \alpha^{-1} = \eta$, one
concludes that the involution induced by $\iota_1$ on $\sC_{\langle\eta\rangle}$ coincides up to
an isomorphism with the involution induced by $\iota_1$ on $\sC_{\langle\delta\rangle}$.

Since $\alpha \iota_5 \alpha^{-1} = \iota_6$ and $\alpha \delta  \alpha^{-1} = \eta$, one
concludes that the involution induced by $\iota_6$ on $\sC_{\langle\eta\rangle}$ coincides up to
an isomorphism with the involution induced by $\iota_5$ on $\sC_{\langle\delta\rangle}$.

\end{pkt}

\section{The extended automorphism group of the degree 4 example}
The group $\G_4$ of $\sM_1$-automorphisms of $\sC_2$ does not contain all
$\sM_1$-automorphisms of $\sC_2$. In this section we give an additional group $\E_4$ of
$\sM_1$-automorphisms such that $\G_4$ and $\E_4$ generate an extended $\sM_1$-automorphism group
$\bar \G_4$. Moreover we will make some remarks about $\bar \G_4$ and $\E_4$.

We obtain due to \cite{HS}, Proposition 9 and the notations of \cite{HS}, Section 2:

\begin{proposition} \label{ff4}
The family $\sC_2$ has a group $\E_4$ of $\sM_1$-automorphisms consisting of 16 different
automorphisms given by $(\alpha\beta)^{\nu}$ with $\nu = 0,\ldots,3$ and:
$$\alpha_{\zeta}(y_2:y_1:x_1:x_0) = (\zeta y_2:\zeta y_1:x_1-\lambda x_0:x_1-x_0), \  \
\zeta^4 = (1-\lambda)^2$$
$$\beta_{\varsigma}(y_2:y_1:x_1:x_0) =
(\varsigma  y_2:\varsigma  y_1:x_1-x_0:\frac{1}{\lambda}x_1-x_0), \ \
\varsigma^4 = (1-\frac{1}{\lambda})^2$$
$$\gamma_{\kappa}(y_2:y_1:x_1:x_0) = (\kappa y_2:\kappa y_1:\lambda x_0:x_1), \  \
\kappa^4 = \lambda^2$$
The involutions of $\E_4$ are given by $(\alpha \beta )^{\nu}$, $\alpha_{\zeta}$,
$\beta_{\varsigma}$ and $\gamma_{\kappa}$ for $\nu= 2$, $\zeta^2 = 1-\lambda$, $\varsigma^2 =
1-\frac{1}{\lambda}$ and $\kappa^2 = \lambda$. The group $\E_4$ has a subgroup isomorphic to the
quarternion group given by $(\alpha \beta )^{\nu}$, $\alpha_{\zeta}$,
$\beta_{\varsigma}$ and $\gamma_{\kappa}$ for $\nu= 0,2$, $\zeta^2 = -1+\lambda$, $\varsigma^2 =
-1+\frac{1}{\lambda}$ and $\kappa^2 = -\lambda$. 
\end{proposition}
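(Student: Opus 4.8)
The plan is to realize each element of $\E_4$ as a linear automorphism of $\bP^3$ that preserves the defining quartic $F_\lambda := y_2^4 + y_1^4 + x_1(x_1-x_0)(x_1-\lambda x_0)x_0$ up to a scalar and that fixes $\lambda$, so that it descends to an $\sM_1$-automorphism of the total family. The geometric backbone is the fibration of each $K3$ fiber over the base $\bP^1$ with coordinates $(x_1:x_0)$, branched over the four points $0,1,\lambda,\infty$. First I would determine the subgroup of $\PGL_2$ acting on this base that stabilizes the branch divisor $\{0,1,\lambda,\infty\}$ and fixes the cross-ratio (equivalently, descends to an automorphism of $\sM_1$): this is the Klein four-group of double transpositions, whose three nontrivial elements are the $x$-parts of $\alpha_\zeta$, $\beta_\varsigma$ and $\gamma_\kappa$. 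A direct check confirms, for instance, that $(x_1:x_0)\mapsto(\lambda x_0:x_1)$ realizes $(0\,\infty)(1\,\lambda)$ and fixes $\lambda$, and similarly for the other two.

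Second, for each such base transformation $g$ I would compute the scalar $c_g$ by which the product $x_1(x_1-x_0)(x_1-\lambda x_0)x_0$ is multiplied under substitution; the three computations give $c_\gamma=\lambda^2$, $c_\alpha=(1-\lambda)^2$ and $c_\beta=(1-\tfrac1\lambda)^2$. To lift $g$ to an automorphism of the fiber I would rescale $(y_2,y_1)\mapsto(\zeta y_2,\zeta y_1)$ with $\zeta^4=c_g$; then both the quartic part and the product part acquire the common factor $c_g$, so $F_\lambda$ is sent to $c_g F_\lambda$ and the hypersurface is preserved. This yields exactly the normalizations $\zeta^4=(1-\lambda)^2$, $\varsigma^4=(1-\tfrac1\lambda)^2$, $\kappa^4=\lambda^2$. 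Since the four admissible fourth roots for a fixed $g$ differ by multiplication of $(y_2,y_1)$ by powers of $i$, i.e. by the central element $\alpha\beta$ of order $4$, the lifts of each $g$ form a coset of $\langle\alpha\beta\rangle$; counting the identity coset and the three nontrivial ones gives $4\cdot 4=16$ elements and exhibits $\E_4$ as a central extension of the Klein four-group by $\Z/4$.

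Third, to identify the involutions I would simply square each lift. A short computation shows $\alpha_\zeta^2$ acts on $(y_2,y_1)$ by the scalar $\zeta^2/(1-\lambda)$ and trivially on the base, so $\alpha_\zeta$ is an involution precisely when $\zeta^2=1-\lambda$ and equals $\iota_3=(\alpha\beta)^2$ precisely when $\zeta^2=-(1-\lambda)$; the analogous identities $\gamma_\kappa^2 = (\kappa^2/\lambda)\,\id$ and the $\beta_\varsigma$ version give the remaining involution conditions $\kappa^2=\lambda$, $\varsigma^2=1-\tfrac1\lambda$. For the quaternion subgroup I would take the opposite sign in each case ($\zeta^2=\lambda-1$, $\varsigma^2=\tfrac1\lambda-1$, $\kappa^2=-\lambda$), so that $\alpha_\zeta^2=\beta_\varsigma^2=\gamma_\kappa^2=\iota_3$ plays the role of $-1$, and then verify the relation $\alpha_\zeta\beta_\varsigma=\pm\gamma_\kappa$: composing the base parts already gives $(x_1:x_0)\mapsto(\lambda x_0:x_1)$, the base part of $\gamma_\kappa$, so $\alpha_\zeta\beta_\varsigma$ differs from $\gamma_\kappa$ only by a $y$-scalar, which is then pinned down by the chosen square roots.

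The main obstacle I anticipate is bookkeeping of the projective scalars: closure of $\E_4$ and the precise quaternion relations hinge on a cocycle-type consistency of the fourth-root choices (one must check that $c_{gh}$ agrees with $c_g$ times the $g$-transform of $c_h$ modulo fourth powers, and that the signs in $\alpha_\zeta\beta_\varsigma=\pm\gamma_\kappa$ come out as required by $Q_8$). None of this is conceptually hard, but it is exactly where sign and normalization errors creep in; organizing the computation through the central extension $1\to\langle\alpha\beta\rangle\to\E_4\to(\Z/2)^2\to 1$ and tracking only the induced action on $(y_2,y_1)$ keeps it manageable and is the route I would follow, matching the cited computation of \cite{HS}.
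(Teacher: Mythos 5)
Your proposal is correct, and it is worth noting that the paper itself offers no argument here: Proposition \ref{ff4} is simply quoted from \cite{HS}, Proposition 9, so your proof supplies a verification that the paper omits. Your route --- identify the Klein four-group of M\"obius transformations preserving the branch divisor $\{0,1,\lambda,\infty\}$ and the cross-ratio, compute the scalar $c_g$ by which $x_1(x_1-x_0)(x_1-\lambda x_0)x_0$ transforms, and lift by rescaling $(y_2,y_1)$ by a fourth root of $c_g$ --- is the natural one, and the three scalars do come out as $(1-\lambda)^2$, $(1-\tfrac1\lambda)^2$ and $\lambda^2$, matching the stated normalizations. Your analysis of the squares is also right, with the one bookkeeping subtlety you correctly anticipate: since all coordinates of $\bP^3$ have weight $1$, squaring the base substitution multiplies the homogeneous representative of $(x_1:x_0)$ by $1-\lambda$, $1-\tfrac1\lambda$, resp.\ $\lambda$, so $\alpha_\zeta^2$ acts on the point by the scalar $\zeta^2/(1-\lambda)$ (and analogously for the others); this is exactly what separates the involution condition $\zeta^2=1-\lambda$ from the quaternionic condition $\zeta^2=\lambda-1$, where the square equals $\iota_3=(\alpha\beta)^2$. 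The cocycle check you flag does close up: composing the homogeneous $x$-parts of $\alpha_\zeta$ and $\beta_\varsigma$ gives $\tfrac{\lambda-1}{\lambda}\,(\lambda x_0:x_1)$, so $\alpha_\zeta\beta_\varsigma=\gamma_{\kappa'}$ with $\kappa'=\zeta\varsigma\,\tfrac{\lambda}{\lambda-1}$ and hence $\kappa'^2=-\lambda$, confirming the $Q_8$ relations for the stated square-root choices. Organizing everything through the central extension $1\to\langle\alpha\beta\rangle\to\E_4\to(\Z/2)^2\to 1$, as you propose, gives the count of $16$ and closure simultaneously; this is a complete and self-contained proof.
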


One can ask for the character of the action of the involutions of $\E_4$ on
$\Gamma(\omega_{(\sC_2)_q})$ for each $q \in \sM_1$ and the possibilities to use these involutions
for the construction of Borcea-Voisin towers. For example one has:

\begin{example} \label{examp}
One checks easily that $\gamma_{\sqrt{\lambda}}$ resp., $\gamma_{-\sqrt{\lambda}}$ fixes the
family curves on $\sC_2$ given by
$$x_1 = \sqrt{\lambda} x_0 \  \ \mbox{resp.,} \  \ x_1 = -\sqrt{\lambda}x_0.$$
This family of curves is isomorphic to the constant
family with universal fiber given by the Fermat curve $\F_4$ of degree 4, which has the genus 3.
Thus it acts by the character $-1$ on $\Gamma(\omega_{(\sC_2)_q})$ for each $q \in \sM_1$. Since
$\F_4$ has complex multiplication, $\gamma_{\sqrt{\lambda}}$ and $\gamma_{-\sqrt{\lambda}}$ make
$\sC_2$ suitable for the construction of a Borcea-Voisin tower.

The following claim implies that $\gamma_{\sqrt{\lambda}}$ and $\gamma_{-\sqrt{\lambda}}$ yield
isomorphic families by the Borcea-Voisin tower:
\end{example}

\begin{claim}
One can conjugate $\gamma_{\sqrt{\lambda}}$ and $\gamma_{-\sqrt{\lambda}}$ in $\E_4$.
\end{claim}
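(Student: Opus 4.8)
The plan is to realize $\E_4$ concretely as a group of matrices in $\GL_4(\C)$ acting on the coordinates $(y_2:y_1:x_1:x_0)$ of $\bP^3$, and to exhibit an explicit conjugating element. First I would isolate the structural feature that makes the two involutions so close: both $\gamma_{\sqrt{\lambda}}$ and $\gamma_{-\sqrt{\lambda}}$ induce on the base $\bP^1$ (coordinate $x = x_1/x_0$) the \emph{same} M\"obius involution $x \mapsto \lambda/x$, which is one of the three non-trivial elements of the Klein four group $V_4 \subset \PGL_2(\C)$ permuting the branch points $\{0,1,\lambda,\infty\}$; the two automorphisms differ only by the sign of the scalar acting on the block $(y_2,y_1)$. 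The idea is therefore to conjugate by a lift of a \emph{different} element of $V_4$, namely $\alpha_\zeta$, whose base action $x \mapsto (x-\lambda)/(x-1)$ is another non-trivial element of $V_4$.

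For the computation I would lift every automorphism of $\E_4$ to a genuine matrix in $\GL_4(\C)$ and split it into a $2\times 2$ "$y$-block" on $(y_2,y_1)$ and a $2\times 2$ "$x$-block" on $(x_1,x_0)$. Every generator scales $(y_2,y_1)$ by a single scalar, so each $y$-block is scalar, hence central, and conjugation leaves the $y$-block of $\gamma_{\sqrt{\lambda}}$ equal to $\sqrt{\lambda}\cdot I_2$ untouched. The essential step is then a $2\times 2$ calculation on the $x$-blocks: writing $\Gamma = \left(\begin{smallmatrix}0 & \lambda\\ 1 & 0\end{smallmatrix}\right)$ for the $x$-block of $\gamma_{\sqrt{\lambda}}$ and $B = \left(\begin{smallmatrix}1 & -\lambda\\ 1 & -1\end{smallmatrix}\right)$ for the $x$-block of $\alpha_\zeta$, one checks directly that $B\Gamma = -\Gamma B$, i.e. $B\Gamma B^{-1} = -\Gamma$. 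Thus conjugation by $\alpha_\zeta$ fixes the $y$-block and negates the $x$-block of $\gamma_{\sqrt{\lambda}}$.

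It remains to reconcile this with the \emph{normalized} form of the elements $\gamma_\kappa$, and this projective bookkeeping is where the sign is actually produced. The matrix $\alpha_\zeta\,\gamma_{\sqrt{\lambda}}\,\alpha_\zeta^{-1}$ computed in $\GL_4(\C)$ has $y$-block $\sqrt{\lambda}\,I_2$ and $x$-block $-\Gamma$; multiplying the whole $4\times 4$ matrix by $-1$, which is the identity in $\PGL_4$, restores the canonical $x$-block $\Gamma$ while flipping the $y$-block to $-\sqrt{\lambda}\,I_2$. Since $\Gamma$ together with $y$-scalar $-\sqrt{\lambda}$ is exactly $\gamma_{-\sqrt{\lambda}}$, this gives $\alpha_\zeta\,\gamma_{\sqrt{\lambda}}\,\alpha_\zeta^{-1} = \gamma_{-\sqrt{\lambda}}$ with $\alpha_\zeta \in \E_4$, proving the claim.

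I expect the only real subtlety to be precisely this passage between $\GL_4$ and $\PGL_4$. Because $V_4$ is abelian, conjugating $\gamma_{\sqrt{\lambda}}$ by any lift of an element of $V_4$ must return some lift of $x\mapsto \lambda/x$, hence some $\gamma_\kappa$ with $\kappa^2=\lambda$, i.e. either $\gamma_{\sqrt{\lambda}}$ or $\gamma_{-\sqrt{\lambda}}$; the anticommutation $B\Gamma B^{-1} = -\Gamma$ is what decides between the two, and tracking the compensating projective rescaling is what transfers that sign onto the $y$-block. This is the same quaternionic non-splitting of the lift of $V_4$ that underlies the quaternion subgroup identified in Proposition \ref{ff4}, so no estimate or deeper input is needed beyond the explicit $2\times 2$ verification.
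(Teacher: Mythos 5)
Your proof is correct, but it takes a genuinely different route from the paper's. The paper argues purely inside the abstract group structure established in Proposition~\ref{ff4}: it writes $\gamma_{\sqrt{\lambda}}=(\alpha\beta)g$ and $\gamma_{-\sqrt{\lambda}}=(\alpha\beta)^3g=(\alpha\beta)g^{-1}$ for an order-$4$ element $g$ of the quaternion subgroup (concretely $g=\gamma_{\kappa}$ with $\kappa^2=-\lambda$), and then invokes the standard fact that every element of the quaternion group is conjugate to its inverse, together with the centrality of $\alpha\beta$ in $\E_4$, to conclude. You instead name an explicit conjugator, $\alpha_\zeta$, lift everything to $\GL_4(\C)$, and reduce the claim to the $2\times 2$ anticommutation $B\Gamma B^{-1}=-\Gamma$ with $\Gamma=\left(\begin{smallmatrix}0&\lambda\\ 1&0\end{smallmatrix}\right)$ and $B=\left(\begin{smallmatrix}1&-\lambda\\ 1&-1\end{smallmatrix}\right)$, which checks out, after which the global rescaling by $-1$ (trivial in $\PGL_4$) correctly moves the sign onto the scalar $y$-block and yields $\alpha_\zeta\,\gamma_{\sqrt{\lambda}}\,\alpha_\zeta^{-1}=\gamma_{-\sqrt{\lambda}}$. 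Your version buys an explicit conjugating element and makes visible exactly where the sign comes from (the non-split lift of the Klein four group, i.e.\ the same quaternionic phenomenon the paper's $g_2$ encodes); the paper's version is shorter and avoids any choice of lifts to $\GL_4$ and the attendant projective bookkeeping, which is indeed the one place in your argument where care is required and which you handle correctly.
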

\begin{proof}
There exists some $g$ of order 4 contained in the quarternion subgroup of $\E_4$ such that
$$\gamma_{\sqrt{\lambda}}= (\alpha \beta) g \  \ \mbox{and} \ \  \gamma_{-\sqrt{\lambda}}
= (\alpha \beta)^3 g = (\alpha \beta )(\alpha \beta)^2 g =(\alpha \beta) g^{-1}.$$
It is a well-known fact that there is a $g_2$ contained in the quarternion group such that
$$g^{-1} = g_2\circ g \circ g_2^{-1}.$$
Since $(\alpha \beta)$ is contained in the center of $\E_4$, one obtains the result.
\end{proof}

Finally the question for isomorphy between $\sC_2/\iota_1$ and $\sC_2/\iota_4$ resp., the
corresponding $CMCY$ families of 3-manifolds constructed by the method of C. Voisin \cite{Voi2}
remains open, since we have:

\begin{remark}
By the description of $\E_4$ in Proposition $\ref{ff4}$, one checks easily that the generators
$\alpha, \beta, \iota_4$ of $\G_4$ commute with each element of $\E_4$. Hence each element of
$\bar \G_4$, which is the group generated by $\G_4$ and $\E_4$, can be written as $\kappa \tau$
with $\kappa \in \E_4$ and $\tau \in \G_4$. Thus for each $\sigma\in \G_4$ one obtains
\begin{equation} \label{conju}
(\kappa \tau)^{-1}  \sigma (\kappa \tau)  = \tau^{-1}  \sigma \tau.
\end{equation}
Hence the fact that $\iota_1$ and $\iota_4$ are not conjugate in $\G_4$ implies that $\iota_1$ and
$\iota_4$ are not conjugate in $\bar \G_4$.

Moreover $\eqref{conju}$ implies that $\gamma_{\sqrt{\lambda}}$ is not conjugate to $\iota_1$ or
$\iota_4$ in $\bar \G_4$.
\end{remark}

\begin{remark}
One may search for additional involutions in $\bar \G_4$ and try to determine the character
of the actions of all involutions on $\Gamma(\omega_{(\sC_2)_q})$ for each $q \in \sM_1$. In
addition one can try to
determine the involutions, which are suitable for the construction of a Borcea-Voisin tower and
try to repeat the construction of the preceding section for arbitrary induced involutions on
suitable quotients by cyclic subgroups of $\bar \G_4$. 
\end{remark}

\section{The automorphism group of the degree 5 example by Viehweg and Zuo}

We consider the $CMCY$ family $\sF_3$
$$
\bP^4 \supset V(y_3^5+y_2^5 + y_1^5 + x_1(x_1-x_0) (x_1-\alpha  x_0)(x_1-\beta   x_0) x_0)
\to (\alpha ,\beta)\in \sM_2
$$
of 3-manifolds constructed by E. Viehweg and K. Zuo. Let $\xi$ denote a fixed primitive 5-th. root
of unity. We introduce an $\sM_2$-automorphism group $\G_5$ of the family $\sF_3 \to \sM_2$. The
elements $g\in \G_5$ can be uniquely written as a product $g = abcd$ with $a \in
\langle\alpha\rangle$, $b \in \langle\beta\rangle$, $c \in \langle\gamma\rangle$ and $d \in S_3$,
where:
$$\alpha (y_3: y_2: y_1: x_1: x_0) = (\xi y_3: y_2: y_1: x_1: x_0),$$
$$\beta  (y_3: y_2: y_1: x_1: x_0) = (y_3: \xi y_2: y_1: x_1: x_0),$$
$$\gamma  (y_3: y_2: y_1: x_1: x_0) = (y_3: y_2: \xi y_1: x_1: x_0),$$
$$d (y_3: y_2: y_1: x_1: x_0) = (y_{d(3)}: y_{d(2)}: y_{d(1)}: x_1: x_0)$$
Therefore the group $\G_5$ contains exactly $5\cdot 5 \cdot 5 \cdot 6 = 750$ elements. The
action of $\G_5$ on the global sections of the canonical sheaves of the fibers induces a
surjection of $\G_5$ onto the multiplicative group of the 10-th. roots of unity.\footnote{Note
that $S_3$ is generated by the involutions given by the cycles $(1,2)$ and
$(2,3)$, which act by the character $-1$ on the global sections of the canonical sheaves of
the fibers.}

Its kernel $\KK_5$ is a normal subgroup of order 75. It contains the subgroup
$\langle\alpha \beta^{-1}, \beta \gamma^{-1}\rangle$ of automorphisms of order 5. Moreover it
contains the cyclic group of order 3 given by the permutations of $A_3$. Therefore all elements
of $\KK_5$ are determined.

\begin{pkt}
Let us consider all cyclic groups $\langle g\rangle \subset \KK_5$ with $g = abc \neq e$ as above.
If $a = e$
or $b = e$ or $c = e$, $\langle g\rangle$ is given by $\langle\alpha \beta^{-1}\rangle$,
$\langle\beta \gamma^{-1}\rangle$ or $\langle\alpha \gamma^{-1}\rangle$. These groups are
conjugate by $(1,2),(1,3),(2,3)\in S_3$.

Now consider the cyclic group $\langle g\rangle \subset \KK_5$ with $g = abc$ and $a, b, c \neq e$.
One has that $\langle g\rangle$ contains an element $\alpha \beta^b \gamma^{4-b}$ with $b = 1,
2, 3$. Hence by $e \in S_3$ or $(2,3) \in S_3$, it is conjugate to $\langle\alpha \beta
\gamma^3\rangle$ or $\langle\alpha \beta^2 \gamma^2\rangle$. By the cycle $(1,3) \in S_3$,
these both groups are conjugate. By the fact that $\langle\alpha \beta \gamma^3\rangle$
leaves only finitely many points invariant on each fiber, but $\langle\alpha \beta^{-1}\rangle$
leaves a curve invariant on each fiber, both groups can not be conjugate.

Hencefore we have two conjugacy classes of cyclic subgroups $\langle g\rangle \subset \KK_5$ with
$g = abc \neq e$ represented by $\langle\alpha \beta^{-1}\rangle$ and $\langle\alpha \beta
\gamma^3\rangle$.
\end{pkt}

\begin{claim}
Any automorphism $\tau \in \KK_5$, which is not given by
$$\tau (y_3: y_2: y_1: x_1: x_0) = (\xi^s y_3: \xi^t y_2: \xi^{5-s-t} y_1: x_1: x_0)$$
for some $s, t \in \Z$, satisfies $\tau^3 = \id$.
\end{claim}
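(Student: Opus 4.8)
The plan is to exploit the fact that every element of $\G_5$ acts on a fibre $(\sF_3)_q$ through the homogeneous coordinates $(y_3,y_2,y_1,x_1,x_0)$ while fixing $x_1$ and $x_0$, so that $\G_5$ is realised as a group of $3\times 3$ monomial matrices (generalized permutation matrices) in the variables $y_3,y_2,y_1$ with nonzero entries in $\mu_5=\langle\xi\rangle$; concretely $\G_5\cong\mu_5^{3}\rtimes S_3$. Any $\tau\in\G_5$ can thus be written as $\tau=DP$ (the order of the factors being immaterial for what follows), where $D=\diag(\xi^{s},\xi^{t},\xi^{u})$ is the diagonal part coming from $\alpha^{s}\beta^{t}\gamma^{u}$ and $P$ is the permutation matrix of some $d\in S_3$.

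First I would pin down $\KK_5$ explicitly. Using that $\alpha,\beta,\gamma$ each act by the character $\xi$ on $\Gamma(\omega_{(\sF_3)_q})$ and that a transposition acts by $-1$ (as recorded in the footnote, via the transformation of the residue $(3,0)$-form), the character $\chi\colon\G_5\to\mu_{10}$ is $\chi(\alpha^{s}\beta^{t}\gamma^{u}d)=\xi^{s+t+u}\,\mathrm{sgn}(d)$. Since $\gcd(2,5)=1$ we have $\mu_5\cap\mu_2=\{1\}$, so $\chi(\tau)=1$ forces both $s+t+u\equiv 0\pmod 5$ and $\mathrm{sgn}(d)=1$, i.e. $d\in A_3$. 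Hence $\KK_5=\{\alpha^{s}\beta^{t}\gamma^{u}d : s+t+u\equiv 0\ (5),\ d\in A_3\}$, of order $25\cdot 3=75$, and its diagonal elements ($d=e$) are precisely those of the asserted form $(\xi^{s}y_3:\xi^{t}y_2:\xi^{5-s-t}y_1:x_1:x_0)$. Thus an element of $\KK_5$ fails to be of this diagonal form exactly when its permutation part $d$ is a nontrivial $3$-cycle, so that $P^{3}=I$.

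The core computation is then the cube of a non-diagonal $\tau=DP\in\KK_5$. Using $P^{3}=I$ one has $\tau^{3}=D\,(PDP^{-1})\,(P^{2}DP^{-2})$, a product of three diagonal matrices. Each of them is obtained from $D=\diag(a_1,a_2,a_3)$ (with $\{a_1,a_2,a_3\}=\{\xi^{s},\xi^{t},\xi^{u}\}$) by the cyclic permutation $\sigma$ corresponding to the $3$-cycle $d$, so the $j$-th diagonal entry of $\tau^{3}$ equals $a_{j}\,a_{\sigma(j)}\,a_{\sigma^{2}(j)}$. As $\sigma$ is a $3$-cycle, $\{j,\sigma(j),\sigma^{2}(j)\}=\{1,2,3\}$, whence this entry is $a_1a_2a_3=\xi^{s+t+u}$, independently of $j$. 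Since $s+t+u\equiv 0\pmod 5$ gives $\xi^{s+t+u}=1$, we obtain $\tau^{3}=\id$ on the $y$-coordinates, and $\tau$ fixes $x_1,x_0$ throughout, so $\tau^{3}=\id$.

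The only genuinely delicate point is the bookkeeping in this last step: one must check that the conjugation $P^{k}DP^{-k}$ permutes the diagonal entries by $\sigma^{k}$, and that over the three factors $I,PDP^{-1},P^{2}DP^{-2}$ each coordinate slot receives each of $\xi^{s},\xi^{t},\xi^{u}$ exactly once, so that the product collapses to the scalar $\xi^{s+t+u}I$. Everything else is the routine identification of the character $\chi$ and of the subgroup $\KK_5$.
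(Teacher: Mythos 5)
Your proof is correct and follows essentially the same route as the paper's: both identify the non-diagonal elements of $\KK_5$ as monomial transformations of $(y_3,y_2,y_1)$ whose permutation part is a nontrivial $3$-cycle and whose diagonal exponents sum to $0$ modulo $5$, and then verify $\tau^3=\id$ by direct computation. Your packaging via $\tau^3=D\,(PDP^{-1})(P^2DP^{-2})$, collapsing to the scalar $\xi^{s+t+u}=1$, is a cleaner way of organizing the coordinate chase that the paper carries out explicitly after first reducing to the case that $\tau$ itself (rather than $\tau^{-1}$) has the displayed $3$-cycle form, but the underlying argument is the same.
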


\begin{proof}
If $\tau$ satisfies the assumptions of the Claim, then $\tau$ or $\tau^{-1}$
is given by
\begin{equation} \label{tau}
(y_3: y_2: y_1: x_1: x_0) \to (\xi^s y_1: \xi^t y_3: \xi^{5-s-t} y_2: x_1: x_0)
\end{equation}
for some $s, t \in \Z$. Hence assume without loss of generality that $\tau$ is given by
($\ref{tau}$) and verify the statement by calculation:
$$\tau^3 (y_3: y_2: y_1: x_1: x_0) = \tau^2 (\xi^s y_1: \xi^t y_3: \xi^{-s-t} y_2: x_1: x_0)$$
$$= \tau (\xi^{-t} y_2: \xi^{s+t} y_1: \xi^{-s} y_3: x_1: x_0)= (y_3: y_2: y_1: x_1: x_0)$$
\end{proof}

For each $\tau$ as in ($\ref{tau}$) one can easily calculate that
$\alpha^{-s}\beta^{-s-t} \circ \tau \circ \alpha^{s}\beta^{s+t}$ is given by
$$(y_3: y_2: y_1: x_1: x_0) \to (y_1: y_3: y_2: x_1: x_0).$$

Therefore all cyclic subgroups of $\KK_5$ are up to conjugation determined. Hence:

\begin{proposition}
The family $\sF_3$ has up to isomorphisms the following quotient families of Calabi-Yau orbifolds
with dense sets of $CM$ fibers:
$$\sF_3/\langle\alpha \beta^4\rangle, \  \ \sF_3/\langle\alpha \beta \gamma^{3}\rangle, \  \
\sF_3/\langle(1,2,3)\rangle$$
\end{proposition}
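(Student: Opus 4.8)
The plan is to reduce the statement to the classification of cyclic subgroups of $\KK_5$ already carried out in the paragraphs above, supplemented by two structural observations: that conjugate subgroups yield isomorphic quotient families, and that every non-trivial cyclic subgroup of $\KK_5$ produces a family of Calabi-Yau orbifolds with a dense set of $CM$ fibers. The starting point is that a quotient $\sF_3/\langle g\rangle$ is a family of (singular) Calabi-Yau $3$-manifolds exactly when $g$ acts by the character $1$ on $\Gamma(\omega_{(\sF_3)_q})$ for every $q\in\sM_2$, i.e. exactly when $\langle g\rangle\subset\KK_5$. Indeed, $H^0(\sF_3/\langle g\rangle,\omega)=H^0(\sF_3,\omega)^{\langle g\rangle}$, so for $g\notin\KK_5$ the invariant part vanishes, $h^{n,0}=0$, and the quotient cannot be Calabi-Yau. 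Hence it suffices to run through the cyclic subgroups of $\KK_5$.

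First I would assemble the classification. By the discussion preceding the proposition, every cyclic $\langle g\rangle\subset\KK_5$ with $g=abc$ (no permutation factor) is conjugate in $\G_5$ to $\langle\alpha\beta^{-1}\rangle$ or to $\langle\alpha\beta\gamma^3\rangle$, and these are distinct conjugacy classes since the first fixes a curve on each fiber while the second fixes only finitely many points. Using $\beta^5=\id$ one identifies $\langle\alpha\beta^4\rangle=\langle\alpha\beta^{-1}\rangle$. Any remaining $g\in\KK_5$ involves a non-trivial permutation of $y_1,y_2,y_3$; by the Claim it satisfies $g^3=\id$, and the subsequent computation conjugates $\langle g\rangle$ into $\langle(1,2,3)\rangle$, which is a third class (it has order $3$, not $5$). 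Thus every non-trivial cyclic subgroup of $\KK_5$ is conjugate to exactly one of $\langle\alpha\beta^4\rangle$, $\langle\alpha\beta\gamma^3\rangle$, $\langle(1,2,3)\rangle$.

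Next I would record why conjugation gives the isomorphism statement: if $\langle g_1\rangle=h\langle g_2\rangle h^{-1}$ for some $h\in\G_5$, then $h$ is an $\sM_2$-automorphism of $\sF_3$ carrying $\langle g_2\rangle$-orbits to $\langle g_1\rangle$-orbits, hence descends to an $\sM_2$-isomorphism $\sF_3/\langle g_2\rangle\to\sF_3/\langle g_1\rangle$. For the Calabi-Yau orbifold property I would argue fiberwise as in the degree $3$ case: each $(\sF_3)_q$ is a smooth projective Calabi-Yau $3$-fold, so $h^{1,0}=h^{2,0}=0$ already, and the invariant forms of the quotient inherit this vanishing; on the locus where the quotient map is étale one has $\omega_{\text{quot}}\cong\sO$, and the canonical section descends because $g$ acts by the character $1$, so $\omega\cong\sO$ on the smooth locus. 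Finally, by the orbifold Hodge theory used in the degree $3$ case the $VHS$ of the quotient is the sub-$VHS$ of $\sF_3$ fixed by $\langle g\rangle$; hence any $CM$ fiber of $\sF_3$, for which every $\Hg(H^k(\,\cdot\,,\Q))$ is commutative, restricts to a $CM$ fiber of the quotient, and since $\sF_3$ is a $CMCY$ family its dense set of $CM$ fibers descends to each quotient.

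The genuinely new input, namely the conjugacy classification, is already supplied above, so the only remaining effort is the orbifold bookkeeping in the last paragraph. The main obstacle I anticipate is precisely there: verifying that the fixed loci of $\alpha\beta\gamma^3$ and of $(1,2,3)$ on the fibers are of codimension $\geq2$ (so that the singular Calabi-Yau definition applies), which comes down to computing the projectivized eigenspaces of each automorphism and intersecting them with $\sF_3$, together with checking via the ramification argument underlying Lemma \ref{ampel} that no non-trivial element of $\KK_5$ can fix a divisor; this is routine but must be done for each representative.
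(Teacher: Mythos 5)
Your proposal is correct and follows essentially the same route as the paper: the conjugacy classification of cyclic subgroups of $\KK_5$ established in the preceding paragraphs does the real work, conjugate subgroups give isomorphic quotients, and the dense set of $CM$ fibers descends because the $VHS$ of the quotient is the sub-$VHS$ of $\sF_3$ fixed by the group. The paper's own proof records only this last sentence, so your additional bookkeeping (identifying $\langle\alpha\beta^4\rangle=\langle\alpha\beta^{-1}\rangle$, the canonical-sheaf argument on the smooth locus, and the observation via Lemma $\ref{ampel}$ that no element of $\KK_5$ fixes a divisor) is consistent elaboration rather than a different method.
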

\begin{proof}
The existence of dense sets of $CM$ fibers follows, since the $VHS$ of a quotient family of
$\sF_3$ is a sub-$VHS$ of $\sF_3$.
\end{proof}

\chapter{Examples of $CMCY$ families of 3-manifolds and their invariants}

\section{The {\em length} of the Yukawa coupling}

First let us construct the Yukawa coupling. A little bit later in this short section we will give
a motivation to consider it and describe how to calculate its {\em length} for our examples of $CMCY$
families of 3-manifolds.

\begin{construction} \label{yuka} \index{Yukawa coupling}
Assume that $U$ is a quasi projective variety and $\sV$ is a complex polarized variation of Hodge
structures of weight $n$ on $U$. It is a well-known fact that there exists a
suitable finite cover of $U$ such that the pullback of $\sV$ has local unipotent monodromy. We
replace $U$ by this finite cover. There exists a  smooth projective compactification $Y$ of $U$
such that $S := Y \setminus U$ is a normal crossing divisor.
Then one can construct the Deligne extension $\sH$ of $\sV \otimes_{\C}\sO_U$ (i. e., the
unique extension such that the Gau\ss-Manin
connection yields the structure of a logarithmic Higgs bundle $(F,\theta)$ on the associated
graded bundle and the real components of eigenvalues of the residues are contained in $[0,1)$).
The graduation gives a decomposition of $F$ into locally free sheaves $E^{p,n-p}$ and the
Gau\ss-Manin connection induces an $\sO_Y$-linear morphism
$$E^{p,n-p} \to E^{n-1,n-p+1} \otimes \Omega^1_Y({\rm log} S),$$
called Higgs field. The Yukawa coupling $\theta_i$ (for $i \leq n$) is defined by the composition
$$\theta_i: E^{n,0} \stackrel{\theta_{n,0}}{\longrightarrow } E^{n-1,1} \otimes
\Omega^1_Y({\rm log} S)
\stackrel{\theta_{n-1,1}}{\longrightarrow } E^{n-2,2} \otimes {\rm Sym}^2\Omega^1_Y({\rm log} S)
\stackrel{\theta_{n-2,2}}{\longrightarrow } \ldots$$
$$\stackrel{\theta_{n-i+1,i-1}}{\longrightarrow }
E^{n-i,i} \otimes {\rm Sym}^i\Omega^1_Y({\rm log} S).$$
\end{construction}

\begin{definition} \index{$\zeta(f)$} \index{Yukawa coupling!{\em length} of the|see{$\zeta(f)$}}
Let $f: V \to U$ be a family with fibers of dimension $n$ as in Construction $\ref{yuka}$. The
{\em length} $\zeta(f)$ of the Yukawa coupling is given by
$$\zeta(f) := {\rm min}\{i\geq 1;\theta_i = 0\}-1.$$
We say that the Yukawa coupling has {\em maximal length}, if $\zeta(f) = n$.

The family $f: V \to U$ is rigid, if there does not exist a non-trivial deformation of $f$ over
a nonsingular quasi-projective curve $T$.
\end{definition}

The following proposition yields our motivation to consider the {\em length} of the Yukawa coupling:

\begin{proposition}
If the Yukawa coupling has {\em maximal length}, the family is rigid.
\end{proposition}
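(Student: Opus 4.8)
The plan is to argue by contradiction, turning a non-trivial deformation of $f$ over a smooth quasi-projective curve $T$ into an enlargement of the variation of Hodge structures whose Higgs field acquires one extra direction, and then to use the maximality of the Yukawa coupling to show that this extra direction carries no information. First I would make a putative non-trivial deformation geometric: it yields a family $\mathcal V' \to \mathcal U'$ over $T$ restricting to $f$ over a point $0 \in T$, with total base $\mathcal U'$ of dimension $\dim U + 1$. Its primitive weight-$n$ cohomology gives a polarized variation $\tilde{\sV}$ on $\mathcal U'$; taking the Deligne extension and the associated logarithmic Higgs bundle $(\tilde E, \tilde\theta)$ on a smooth compactification $\tilde Y$ as in Construction $\ref{yuka}$, I would restrict to the fibre over $0$ and decompose the Higgs field as $\tilde\theta|_{t=0} = \theta + \tau\otimes dt$, where $\theta$ is the Higgs field of the original family $f$ (so that the $\theta_i$ are its Yukawa couplings) and $\tau\colon E^{p,q}\to E^{p-1,q+1}$ records the Kodaira--Spencer class of the deformation in the $T$-direction.

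Next I would invoke the two structural facts about Higgs fields. Integrability $\tilde\theta\wedge\tilde\theta = 0$, which is exactly the property making each iterated coupling $\theta_i$ factor through ${\rm Sym}^i\Omega^1_Y({\rm log} S)$, forces $\tau$ to commute with $\theta_v$ for every tangent direction $v$ of $U$, that is $\theta_v\circ\tau = \tau\circ\theta_v$ on each $E^{p,q}$. Maximality $\zeta(f) = n$ means that $\theta_n = \theta^n\colon E^{n,0}\to E^{0,n}\otimes{\rm Sym}^n\Omega^1_Y({\rm log} S)$ is non-zero; choosing a local generator $\omega$ of the line bundle $E^{n,0}$ at a general point, the chain $\omega,\ \theta(\omega),\ \dots,\ \theta^n(\omega)$ consists of non-zero sections realizing every graded piece $E^{p,n-p}$ that occurs in the Yukawa chain. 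I would then step down this chain, using the commutation relation at each stage to conclude that $\tau$ is a multiple of $\theta$ along the $U$-directions; equivalently, the $T$-direction of the period map of $\tilde{\sV}$ is tangent to the image of the period map of $f$ itself. Hence $\tau$ is trivial transverse to $U$, the Kodaira--Spencer class of the deformation vanishes modulo reparametrization, and the deformation is trivial, contradicting the hypothesis.

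The main obstacle is this middle step: extracting, from commutation with a Higgs field of maximal length, the proportionality of $\tau$ and $\theta$. This is clean when the Hodge bundles $E^{p,n-p}$ occurring in the Yukawa chain are of rank one, which maximality of the coupling essentially forces in the Calabi--Yau range $n\le 3$ of interest here, since commuting maps between rank-one source and target are automatically proportional. In the general case one must first pass to the saturated sub-Higgs-bundle generated by $E^{n,0}$ under $\theta$ and run the argument there. A secondary point requiring care is the identification of \emph{non-triviality} of the deformation of $f$ with non-vanishing of the Kodaira--Spencer class $\tau$ on $E^{n,0}$; this rests on the infinitesimal-Torelli-type injectivity that is available for the families of cyclic covers of $\bP^1$ and the Borcea--Voisin type fibres considered in this paper, and I would isolate exactly which form of local Torelli is needed so that triviality of $\tau$ genuinely forces triviality of the deformation.
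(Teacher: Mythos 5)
The paper does not actually prove this proposition; it defers entirely to \cite{VZ7}, Section 8, so your sketch is an attempt to reconstruct the Viehweg--Zuo argument rather than to reproduce anything in the text. Your overall strategy --- extend the Higgs bundle over $U\times T$, split $\tilde\theta|_{t=0}=\theta+\tau\otimes dt$, and use $\tilde\theta\wedge\tilde\theta=0$ to obtain $\tau\circ\theta_v=\theta_v\circ\tau$ --- is indeed the right starting point. But the decisive step, deducing from this commutation together with $\theta_n\neq 0$ that $\tau|_{E^{n,0}}$ lies in the span of the $\theta_v|_{E^{n,0}}$, is not established, and both of your proposed repairs fail. First, maximality of the Yukawa coupling does not force the intermediate Hodge bundles to have rank one, even for Calabi--Yau threefolds: Borcea's family, discussed in Chapter 11 of this very paper, has Yukawa coupling of {\em length} $3$ and $h^{2,1}=3$, so $E^{2,1}$ has rank $3$. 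Second, passing to the sub-Higgs sheaf generated by $E^{n,0}$ is circular: to know that $\tau$ preserves $\bigoplus_i\theta^i(E^{n,0}\otimes S^iT_U)$ you must already know that $\tau(E^{n,0})\subset\theta(E^{n,0}\otimes T_U)$, which is exactly what you are trying to prove.

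Concretely, the relation $\theta_v(\tau\omega)=\tau(\theta_v\omega)$ constrains $\tau\omega$ only through its images under the maps $\theta_v$; if $E^{n-1,1}$ contains a direction annihilated by every $\theta_v$, nothing in your argument prevents $\tau\omega$ from having a component there, i.e.\ from pointing transversally to the image of the period map of $f$. Ruling this out is the actual content of the cited proof and requires input you have not used: the polarization (which identifies $E^{0,n}$ with $(E^{n,0})^{\vee}$) and the semi-negativity of kernels of Higgs fields, equivalently the curvature estimates underlying the Arakelov-type inequalities of Viehweg--Zuo. A useful sanity check is weight one, where the commutation relation is vacuous for degree reasons and where ``maximal length'' (that is, non-isotriviality) visibly does not imply rigidity for reducible variations; so no purely formal manipulation of integrability can succeed, and the Calabi--Yau hypothesis $\rk E^{n,0}=1$ together with the polarization must enter essentially. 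Your closing observation --- that vanishing of $\tau$ must still be converted into triviality of the deformation via a local Torelli statement --- is a genuine additional debt, but it is secondary to the gap above.
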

\begin{proof}
(see \cite{VZ7}, Section 8)
\end{proof}

The statements of the following lemma, which allow the computation of {\em length} of the Yukawa
couplings of our examples of $CMCY$ families of 3-manifolds by their construction, are well-known:

\begin{lemma} \label{bem}
For two variations of Hodge structures $\V$ and $\W$ on a holomorphic manifold one has
$$\zeta (\V \otimes \W) = \zeta (\V) + \zeta (\W) \  \ \mbox{and} \ \
\zeta (\V \oplus \W) = {\rm max}\{\zeta (\V), \zeta (\W)\}.$$
\end{lemma}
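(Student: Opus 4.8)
The statement to prove is Lemma \ref{bem}, which asserts the additivity of the Yukawa-coupling length under tensor product and its max-behavior under direct sum. Let me think about how I would prove this.

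The key is understanding what the length $\zeta$ measures: it is the largest number of times we can apply the Higgs field $\theta$ nontrivially, starting from the top piece $E^{n,0}$, before the composition vanishes. So the Higgs field structure is the essential object. Let me think about how the Higgs field behaves under tensor product and direct sum of VHS.

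For the tensor product: if $\mathcal{V}$ has Higgs bundle $(F,\theta_F)$ with pieces $E^{p,n-p}$ and $\mathcal{W}$ has $(G,\theta_G)$ with pieces $E'^{q,m-q}$, then the tensor product VHS $\mathcal{V}\otimes\mathcal{W}$ has Higgs field given by the Leibniz/tensor rule $\theta_{F\otimes G} = \theta_F \otimes \mathrm{id} + \mathrm{id}\otimes\theta_G$. The graded pieces of the tensor product in bidegree $(r,s)$ are $\bigoplus_{p+q=r} E^{p,n-p}\otimes E'^{q,m-q}$, exactly as in Proposition \ref{tensorp}. So the top piece $E^{n,0}\otimes E'^{m,0}$ maps under iterated $\theta$, and by the binomial expansion $(\theta_F\otimes\mathrm{id}+\mathrm{id}\otimes\theta_G)^k$, the composition $\theta_k$ is nonzero as long as we can distribute the $k$ applications as $a$ applications to the first factor and $b$ to the second with $a+b=k$, $a\le\zeta(\mathcal{V})$, $b\le\zeta(\mathcal{W})$. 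The largest such $k$ is $\zeta(\mathcal{V})+\zeta(\mathcal{W})$, giving additivity.

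My plan is as follows. First I would recall explicitly, from Construction \ref{yuka}, that the iterated Yukawa coupling $\theta_k$ is the $k$-fold composition of the graded Higgs field $\theta$ acting on symmetric powers of $\Omega^1_Y(\log S)$. Then I would establish the two structural facts: the Higgs field of a tensor product is $\theta_F\otimes\mathrm{id}+\mathrm{id}\otimes\theta_G$ (using that the Gau\ss-Manin connection on a tensor product obeys the Leibniz rule, compatible with the Hodge decomposition of Proposition \ref{tensorp}), and the Higgs field of a direct sum is the direct sum $\theta_F\oplus\theta_G$ respecting the decomposition $E^{p,n-p}\oplus E'^{p,m-p}$. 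For the tensor product, I would expand the $k$-fold iterate by the multinomial formula; because cross-terms land in the appropriate graded pieces and the symmetric-power target combines compatibly, the iterate $\theta_k$ restricted to the top piece is (up to a combinatorial coefficient) the sum over $a+b=k$ of $\theta_a^{\mathcal{V}}\otimes\theta_b^{\mathcal{W}}$; this is nonzero precisely when some summand with $a\le\zeta(\mathcal{V})$, $b\le\zeta(\mathcal{W})$ survives, so the minimal vanishing index is $\zeta(\mathcal{V})+\zeta(\mathcal{W})+1$, i.e. $\zeta(\mathcal{V}\otimes\mathcal{W})=\zeta(\mathcal{V})+\zeta(\mathcal{W})$. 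For the direct sum, the iterate $\theta_k$ splits as $\theta_k^{\mathcal{V}}\oplus\theta_k^{\mathcal{W}}$, which vanishes iff both summands vanish, giving $\zeta(\mathcal{V}\oplus\mathcal{W})=\max\{\zeta(\mathcal{V}),\zeta(\mathcal{W})\}$.

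The main obstacle I anticipate is the tensor-product case, specifically verifying that no unexpected cancellation occurs among the multinomial cross-terms: a priori distinct summands $\theta_a^{\mathcal{V}}\otimes\theta_b^{\mathcal{W}}$ could interfere since they all map into $E^{n+m-k,k}\otimes\mathrm{Sym}^k\Omega^1_Y(\log S)$. I would resolve this by observing that the summands land in \emph{different} graded pieces $E^{n-a,a}\otimes E'^{m-b,b}$ of the tensor product before symmetrizing the $\Omega^1$-factors, so they are linearly independent in the relevant direct-sum decomposition and cannot cancel; hence the composition survives exactly when the bidegree-wise constituent $\theta_{\zeta(\mathcal{V})}^{\mathcal{V}}\otimes\theta_{\zeta(\mathcal{W})}^{\mathcal{W}}$ is nonzero, which it is by definition of length. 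Since this lemma is stated as "well-known," a clean and correct proof along these lines, with the grading argument made precise, should suffice.
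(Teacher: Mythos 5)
Your argument is correct. Note that the paper itself gives no proof of this lemma (it is simply declared well-known), so there is nothing to compare against; your write-up supplies the standard argument that is being implicitly invoked. The two structural inputs you identify are exactly the right ones: the Higgs field of a direct sum is the direct sum of the Higgs fields (so the iterate splits and vanishes iff both summands do), and the Higgs field of a tensor product is $\theta_F\otimes\mathrm{id}+\mathrm{id}\otimes\theta_G$, inherited from the Leibniz rule for the Gau\ss-Manin connection and compatible with the graded decomposition of Proposition \ref{tensorp}. Your resolution of the potential cancellation among cross-terms is the key point and is handled correctly: the summands indexed by $(a,b)$ with $a+b=k$ land in the pairwise distinct direct summands $E^{n-a,a}\otimes E'^{m-b,b}\otimes\mathrm{Sym}^k\Omega^1_Y(\log S)$, so they cannot interfere, and the multinomial coefficients are nonzero in characteristic zero.

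One fine point worth making explicit: within a single summand, the component map is $\theta_a^{\mathcal V}\otimes\theta_b^{\mathcal W}$ followed by the multiplication $\mathrm{Sym}^a\Omega^1\otimes\mathrm{Sym}^b\Omega^1\to\mathrm{Sym}^{a+b}\Omega^1$, which is not injective when $\dim Y>1$, so a priori the composite could vanish even when both factors are nonzero. It does not, because $\mathrm{Sym}^{\bullet}$ of a vector space is an integral domain: at a general point where $\theta_a^{\mathcal V}(v)=\sum e_i\otimes P_i$ and $\theta_b^{\mathcal W}(w)=\sum e'_j\otimes Q_j$ are nonzero, the coefficient of $e_{i_0}\otimes e'_{j_0}$ in the product is $P_{i_0}Q_{j_0}\neq 0$. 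With that observation added, your proof is complete.
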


\section{Examples obtained by degree 2 quotients}

Let $\sZ_1 \to \sM$ be one of the examples of a $CMCY$ family of 2-manifolds, which we have
constructed in the preceding chapters, with a suitable involution $\iota$ such that it satisfies
the assumptions for $\sZ_1$ in the construction of a Borcea-Voisin tower. Here we list all
examples of $CMCY$ families $\sZ_2$ of 3-manifolds obtained by the Borcea-Voisin tower starting
with such a family $\sZ_1$ and $\Sigma_2$ given by the family $\sE \to \sM_1$ of
elliptic curves endowed with its natural involution. By the definition of Calabi-Yau manifolds,
Serre duality and Hodge symmetry, all Hodge numbers of the fibers of the resulting $CMCY$ family
$\sZ_2$ of 3-manifolds are determined by $h^{1,1}$ and $h^{2,1}$.

\begin{claim} \label{doppeln}
Keep the assumptions above. Let $(\sZ_1)_p \to (\sZ_1)_p/\iota$ be ramified over $N$ curves with
genus $g_1, \ldots, g_N$ for all $p \in \sM$. Then the fibers of $\sZ_2$ have the Hodge numbers
$$h^{1,1} = 11 +5N-N' \mbox{ and }
h^{2,1} = 11+5N'-N, \  \ \mbox{where} \  \
N' = \sum g_i.$$
\end{claim}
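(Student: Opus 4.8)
Fix a point of the base and write $S := (\sZ_1)_p$ for the corresponding $K3$ fiber, equipped with the involution $\iota$ whose fixed locus $\mathrm{Fix}(\iota) = C_1 \sqcup \cdots \sqcup C_N$ is a disjoint union of smooth curves with $g(C_i)=g_i$, and write $E := \sE_q$ for an elliptic fiber with its natural involution $\tau$ acting as $-1$, whose fixed locus is the four $2$-torsion points. The corresponding fiber $X$ of $\sZ_2$ is the resolution $\tilde\sY$ of $(S\times E)/\langle(\iota,\tau)\rangle$ produced by the diagram $\eqref{diag}$. By the remark preceding Theorem $\ref{hiercm}$, $H^k(X,\Q)$ is the $(\iota,\tau)$-invariant part of $H^k(\widetilde{S\times E},\Q)$, where $\widetilde{S\times E}$ denotes the blow-up of $S\times E$ along the center $Z := \mathrm{Fix}(\iota)\times\mathrm{Fix}(\tau)$; note that $Z$ consists of $4N$ disjoint copies of the curves $C_i$. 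Since $X$ is Calabi--Yau, Serre duality and Hodge symmetry reduce everything to computing $h^{1,1}(X)=\dim H^2(X)$ and $h^{2,1}(X)$ inside $H^3(X)$, so the plan is to isolate the $(2,1)$- and $(1,1)$-classes in these two invariant groups.

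\textbf{The blow-up contribution.} First I would apply the blow-up formula underlying Lemma $\ref{blowlow}$ (Voisin, Theorem 7.31): as $Z$ has codimension $2$ in the threefold $S\times E$, one gets $H^k(\widetilde{S\times E}) \cong H^k(S\times E) \oplus H^{k-2}(Z)$, the second summand Tate-twisted by $(1,1)$. The crucial equivariant point is that $(\iota,\tau)$ fixes $Z$ pointwise and acts as $-1$ on the normal bundle $N_{Z}$, hence acts \emph{trivially} on the projectivized exceptional divisor $\bP(N_{Z})$ and therefore trivially on the whole summand $H^{k-2}(Z)$. Consequently the exceptional cohomology is entirely invariant: $H^0(Z)$ (dimension $4N$, of type $(1,1)$) contributes $4N$ to $h^{1,1}(X)$, and $H^1(Z)$ (with $h^{1,0}=h^{0,1}=4N'$, where $N'=\sum g_i$), shifted by $(1,1)$, contributes $4N'$ to $h^{2,1}(X)$.

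\textbf{The K\"unneth contribution.} Next I would decompose the cohomology of $S\times E$ using Proposition $\ref{tensorp}$. Since $H^1(S)=H^3(S)=0$, one has $H^2(S\times E)=H^2(S)\otimes H^0(E)\ \oplus\ H^0(S)\otimes H^2(E)$ and $H^3(S\times E)=H^2(S)\otimes H^1(E)$. Split $H^2(S)=H^2(S)^+\oplus H^2(S)^-$ into $\iota$-eigenspaces; by Lemma $\ref{ampel}$, $\iota$ acts by $-1$ on $H^{2,0}(S)\oplus H^{0,2}(S)$, so these one-dimensional pieces lie in $H^2(S)^-$, and I set $m^\pm := \dim H^{1,1}(S)^\pm$. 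As $\tau$ acts by $+1$ on $H^0(E),H^2(E)$ and by $-1$ on $H^1(E)$, taking $(\iota,\tau)$-invariants yields in $H^2(X)$ the classes $H^2(S)^+$ (contributing $m^+$, all of type $(1,1)$) together with $H^0(S)\otimes H^2(E)$ (one more $(1,1)$-class), and in $H^3(X)$ the invariant group $H^2(S)^-\otimes H^1(E)$, whose $(2,1)$-part has dimension $1+m^-$ (arising from $H^{2,0}(S)^-\otimes H^{0,1}(E)$ and $H^{1,1}(S)^-\otimes H^{1,0}(E)$).

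\textbf{Pinning down $m^\pm$ and assembling.} The remaining input is the relation of $m^\pm$ to $N$ and $N'$, which I would obtain from the topological Lefschetz fixed point theorem for $\iota$ on $S$: the left side equals $\chi(\mathrm{Fix}(\iota))=\sum_i(2-2g_i)=2N-2N'$, while the right side equals $1+\tr(\iota^*\mid H^2(S))+1 = m^+-m^-$. Combined with $m^++m^-=20$ this gives $m^+=10+N-N'$ and $m^-=10-N+N'$. Substituting into the two counts yields $h^{1,1}(X)=m^++1+4N=11+5N-N'$ and $h^{2,1}(X)=(1+m^-)+4N'=11+5N'-N$, as claimed. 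I expect the main obstacle to be the equivariant bookkeeping of the second paragraph: justifying rigorously that $(\iota,\tau)$ acts trivially on the exceptional cohomology, and checking that the two-step blow-up ($\delta$ followed by $\beta$) of diagram $\eqref{diag}$ coincides with the single blow-up along $Z$, so that the resolution is crepant with exactly the exceptional ruled surfaces used in the count.
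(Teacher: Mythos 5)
Your argument is correct, but it is worth noting that the paper does not actually prove Claim \ref{doppeln}: its ``proof'' is a bare citation of \cite{Voi2}, Corollaire 1.8. What you have written is, in effect, the standard derivation of that cited result, and all the steps check out. The three ingredients — (i) the K\"unneth decomposition $H^*(S\times E)=H^*(S)\otimes H^*(E)$ with $H^1(S)=H^3(S)=0$, split into $(\iota,\tau)$-eigenspaces using that $\iota$ acts by $-1$ on $H^{2,0}(S)$; (ii) the blow-up formula contributing $H^0(Z)$ and $H^1(Z)$ with a $(1,1)$-shift, where the equivariant triviality follows because $(\iota,\tau)$ acts by $-1$ on both normal directions of $Z$ and hence trivially on $\bP(N_Z)$ and on the summand $j_*\circ(\pi|_Z)^*H^{*}(Z)$ (the same mechanism the paper exploits in Proposition \ref{fixhodge}); and (iii) the topological Lefschetz fixed point formula giving $m^{+}-m^{-}=2N-2N'$ and hence $m^{\pm}=10\pm(N-N')$ — assemble exactly to $h^{1,1}=m^{+}+1+4N=11+5N-N'$ and $h^{2,1}=1+m^{-}+4N'=11+5N'-N$. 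The one worry you flag at the end, that the two-step resolution of diagram $\eqref{diag}$ agrees with the single blow-up of $S\times E$ along $Z=\mathrm{Fix}(\iota)\times\mathrm{Fix}(\tau)$, is already settled by the paper in Remark \ref{buahaha}, so nothing is missing. Compared with simply invoking Voisin, your route has the advantage of making visible exactly where each of the terms $11$, $5N$, $-N'$ comes from, and of being reusable for the equivariant counts the paper performs later (e.g.\ Proposition \ref{eigeleb}).
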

\begin{proof}
(see \cite{Voi2}, Corollaire 1.8)
\end{proof}

Hence for our examples of $CMCY$ families of 3-manifolds obtained by using the Borcea-Voisin tower
and $CMCY$ families of 2-manifolds with suitable involutions, we have the following table:
\begin{center}
\begin{tabular}{|c|c|c||c|c|c|c|c||c|} \hline
family $\sZ_1$ & basis $\sM$ & involution $\iota$ & $N$ & $N'$ & $h^{1,1}$ &
$h^{2,1}$ & $\zeta$ & reference \\ \hline \hline
$\sC_2$ & $\sM_1$ & $\iota_1$ & 1 & 3 & 13 & 25 & 2 & $\ref{iotata}$ \\ \hline
$\sC_2$ & $\sM_1$ & $\iota_4$ & 1 & 3 & 13 & 25 & 2 & $\ref{iotata}$ \\ \hline
$\sC_2$ & $\sM_1$ & $\gamma_{\sqrt{\lambda}}$, $\gamma_{\sqrt{-\lambda}}$ & 1 & 3 & 13 & 25 & 2 & $\ref{examp}$ \\ \hline
$\sC_{ \langle \iota_3 \rangle }$ & $\sM_1$ & $\iota_1$ & 2 & 2 & 19 & 19 & 2 & $\ref{nr1}$ \\ \hline
$\sC_{\langle\iota_3 \rangle}$ & $\sM_1$ & $\iota_4$ & 2 & 2 & 19 & 19 & 2 & $\ref{nr2}$ \\ \hline
$\sC_{\langle\epsilon \rangle}$ & $\sM_1$ & $\iota_1$ & 8 & 0 & 51 & 3 & 2 & $\ref{pupu}$ \\ \hline
$\sC_{\langle\epsilon\rangle}$ & $\sM_1$ & $\iota_4$ & 2 & 2 & 19 & 19 & 2 & $\ref{epi4}$ \\ \hline
$\sC_{\langle\delta\rangle}$ & $\sM_1$ & $\iota_1= \iota_4$ & 2 & 2 & 19 & 19 & 2 & $\ref{del1}$ \\ \hline
$\sC_{\langle\delta\rangle}$ & $\sM_1$ & $\iota_5$ & 8 & 0 & 51 & 3 & 2 & $\ref{del5}$ \\ \hline
$\sW$ & $\sM_3$ & $\gamma$ & 2 & 4 & 17 & 29 & 2 & $\ref{sw1}, \ref{sw2}$ \\ \hline
$\sY$ & $\sM_3$ & $\gamma$ & 2 & 4 & 17 & 29 & 2 & $\ref{sy1}, \ref{sy2}$ \\ \hline
\end{tabular}
\end{center}

\section{The Example obtained by a degree 3 quotient and its maximality}
In this section we determine the Hodge numbers of the $CMCY$ family $\sQ$ of 3-manifolds obtained
by Proposition $\ref{pubc}$.

\begin{remark}
In the case of the $CMCY$ family of Proposition $\ref{pubc}$ one has
$\zeta = 1$ for the {\em length} of the Yukawa coupling as one concludes by its construction and
using Lemma $\ref{bem}$.
\end{remark}

Let $X$ be a complex manifold and $\gamma$ an automorphism of $X$ of order $m$. Then
$H^k(X,\C)_{\ell}$ denotes the eigenspace of $H^k(X,\C)$, on which $\gamma$ acts via pullback by
the character $e^{2 \pi i \frac{\ell}{m}}$. For the calculation of the Hodge numbers of this
family we will need the following proposition:

\begin{proposition} \label{fixhodge}
Let $X$ be a K\"ahler manifold of dimension 3. Moreover let $\varphi$ be an automorphism of $X$
fixing a finite set of some isolated points $Z_0$ and a finite set $Z_1$ of disjoint curves such
that $\varphi^m = \id$ for some $m\in \N$. Then one has the following eigenspaces:
$$H^2(\tilde X_{Z_1\cup Z_0}, \Z)_0\cong H^2(X,\Z)_0 \oplus H^{0}(Z_1, \Z)\oplus H^{0}(Z_0, \Z),$$
$$H^3(\tilde X_{Z_1\cup Z_0}, \Z)_0 \cong H^3(X,\Z)_0 \oplus  H^{1}(Z_1, \Z)$$
\end{proposition}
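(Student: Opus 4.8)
The proposition asserts that for a Kähler 3-fold $X$ with an order-$m$ automorphism $\varphi$ whose fixed locus consists of isolated points $Z_0$ and disjoint curves $Z_1$, the invariant (character-$0$) parts of the cohomology of the blow-up $\tilde X_{Z_1 \cup Z_0}$ along the fixed locus decompose as $H^2(\tilde X,\Z)_0 \cong H^2(X,\Z)_0 \oplus H^0(Z_1,\Z) \oplus H^0(Z_0,\Z)$ and $H^3(\tilde X,\Z)_0 \cong H^3(X,\Z)_0 \oplus H^1(Z_1,\Z)$.

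\medskip

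\emph{The plan is to} combine the blow-up formula for cohomology with an eigenspace (character) analysis under the lifted action of $\varphi$. First I would recall the standard blow-up decomposition (as in \cite{Voi}, Theorem 7.31, already invoked in Lemma \ref{blowlow}): blowing up a smooth center $Z$ of codimension $c$ inserts shifted copies of the cohomology of $Z$, with the projectivized normal bundle contributing $\bigoplus_{j=1}^{c-1} H^{k-2j}(Z,\Z)$. Here the center is the disjoint union $Z_1 \sqcup Z_0$: the curves $Z_1$ have codimension $2$ in the $3$-fold, and the isolated points $Z_0$ have codimension $3$. So I would write the blow-up as an iterated (or simultaneous, since the centers are disjoint) blow-up and assemble
$$H^k(\tilde X,\Z) \cong H^k(X,\Z) \oplus \bigl(H^{k-2}(Z_1,\Z)\bigr) \oplus \bigl(H^{k-2}(Z_0,\Z) \oplus H^{k-4}(Z_0,\Z)\bigr),$$
being careful about which shifted summands survive for $k=2,3$ given $\dim Z_1 = 1$, $\dim Z_0 = 0$.

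\medskip

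\emph{Next} I would track the action of the lifted automorphism on each summand. Since $\varphi$ fixes $Z_0$ and $Z_1$ pointwise, the induced action of the lift on the exceptional contributions $H^*(Z_i,\Z)$ is trivial \emph{on the base cohomology factor}, but the fibre classes (the hyperplane class of the projectivized normal bundle) carry a nontrivial character coming from the action of $\varphi$ on the normal directions. The key point is to extract precisely the \emph{invariant} part: a shifted class $H^{k-2j}(Z_i)\otimes(\text{power }h^j \text{ of the relative hyperplane class})$ lies in the character-$0$ eigenspace only when $\varphi$ acts trivially on $h^j$. For the codimension-$2$ curves $Z_1$, the single exceptional $j=1$ contribution $H^{k-2}(Z_1,\Z)$ must be checked for invariance; for $k=2$ this is $H^0(Z_1)$ and for $k=3$ it is $H^1(Z_1)$, and I would argue these survive into the invariant part (using that the degree-$2$ nature over a curve gives $\mathbb P^1$-fibres on which $\varphi$ acts, contributing the stated summands). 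For the isolated points $Z_0$ (codimension $3$, exceptional fibre $\mathbb P^2$), only the appropriate power of $h$ is $\varphi$-invariant, which selects exactly the $H^0(Z_0)$ summand in degree $2$ and contributes nothing in degree $3$ (since $H^1$ and $H^3$ of a point vanish and the surviving powers of $h$ land in even degree).

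\medskip

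\emph{The hard part will be} the character bookkeeping on the exceptional divisors — determining which powers of the relative hyperplane class are $\varphi$-invariant, which is where the local structure of $\varphi$ acting on the normal bundle enters. This is exactly the subtlety flagged in the footnote to Theorem \ref{hiercm} (the remark that one needs each $\Hg(H^k(Z_i,\Q))$ commutative, with an argument "similar to the argument in the proof of Proposition \ref{fixhodge}"). Concretely, I expect to linearize $\varphi$ near each component of the fixed locus, diagonalize its action on the normal bundle, and read off the eigenvalues on $\mathcal O(1)$ of the projectivized normal bundle; the invariant graded pieces then correspond to the characters that multiply out to $1$. Once that local computation is in hand, the global decomposition follows by summing the invariant pieces of the blow-up formula, and matching them against the claimed right-hand sides gives the result. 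I would finish by verifying the two displayed isomorphisms are compatible with the Hodge (bidegree) shift by $(j,j)$, so that the statement holds at the level of Hodge structures and not merely additively.
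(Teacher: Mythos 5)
Your skeleton --- the blow-up formula of \cite{Voi}, Theorem 7.31, applied to the disjoint union of the codimension-2 curves and the codimension-3 points, followed by degree bookkeeping to see which shifted summands survive in $H^2$ and $H^3$ --- is exactly the first half of the paper's proof, and your assembled decomposition is correct. The divergence, and the problem, lies in how you propose to establish that the exceptional summands land in the invariant part. You expect a nontrivial ``character bookkeeping'': linearize $\varphi$ on the normal bundle, read off eigenvalues on $\sO_E(1)$, and keep only those powers $h^j$ whose characters ``multiply out to $1$''. This rests on a misconception. The class $h = c_1(\sO_E(1))$ is automatically invariant under the lifted automorphism $\tilde\varphi$, because $\sO_E(-1) \cong \sO_{\tilde X_{Z_1\cup Z_0}}(E)|_E$ and $\tilde\varphi$ preserves the exceptional divisor $E$; the eigenvalues of $d\varphi$ on the normal directions determine the fixed locus inside $E$ as a variety, but they have no effect on the induced action on $H^*(E,\Z)$. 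If you carried out your plan literally --- discarding powers of $h$ on which the normal-bundle character is nontrivial --- you would wrongly delete the summands $H^0(Z_1,\Z)$ and $H^0(Z_0,\Z)$ whenever $\varphi$ acts on the normal bundle by a nontrivial root of unity, which is the generic situation here. Likewise, the reason the isolated points contribute nothing to $H^3$ and only a single copy of $H^0(Z_0,\Z)$ to $H^2$ is pure degree bookkeeping ($H^{-2}$, $H^{-1}$, $H^1$ and $H^3$ of a point vanish), not character selection.

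The paper's actual invariance argument is much shorter and sidesteps the issue entirely: in degrees $2$ and $3$ the only nonvanishing exceptional summands are the images of $j_* \circ (\pi|_{Z_1\cup Z_0})^*$ (the terms involving $h^i$ with $i>0$ vanish because the weight is too small), this map is equivariant for the actions of $\varphi$ on $Z_1\cup Z_0$, on $E$ and on the blow-up, and $\varphi$ acts as the identity on $Z_1 \cup Z_0$; hence these summands are contained wholesale in the character-$0$ eigenspace, and the claimed isomorphisms follow by intersecting the blow-up decomposition with the invariant part. If you replace your local eigenvalue analysis by this one-line equivariance observation (or by the remark above that $h$ is invariant in cohomology anyway, which would also cover the general situation where terms with $i>0$ do appear), your proof closes.
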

\begin{proof}
Let $Y$ be a K\"ahler manifold and $Z$ be a submanifold of codimension $r$. Then the Hodge
structure of the blowing up $\tilde Y_Z$ along $Z$ is given by
$$H^k(Y,Z) \oplus \bigoplus\limits_{i = 0}^{r-2} H^{k-2i-2}(Z, \Z) \cong H^k(\tilde Y_Z, \Z),$$
where $H^{k-2i-2}(Z, \Z)$ shifted by $(i+1,i+1)$ in bi-degree (see \cite{Voi},
Th\'eor$\grave{\rm e}$me
$7.31$).

Thus one has:
$$H^2(\tilde X_{Z_1\cup Z_0}, \Z) \cong H^2(X,\Z) \oplus  H^{0}(Z_1, \Z)\oplus  H^{0}(Z_0, \Z),$$
$$H^3(\tilde X_{Z_1\cup Z_0}, \Z) \cong H^3(X,\Z) \oplus  H^{1}(Z_1, \Z)$$
Hence it remains to show that $H^{0}(Z_1, \Z)$, $H^{0}(Z_0, \Z)$ and $H^{1}(Z_1, \Z)$ are
invariant as sub-Hodge structures by $\varphi$. Hencefore one considers the proof of \cite{Voi},
Th\'eor$\grave{\rm e}$me $7.31$. These sub-Hodge structures are given by the image of
$j_* \circ(\pi|_{Z_1\cup Z_0})^*(H^0(Z_1\cup Z_0,\Z))$ and
$j_* \circ(\pi|_{Z_1\cup Z_0})^*(H^1(Z_1\cup Z_0,\Z))$, where $j$ denotes the embedding of the
exceptional divisor $E$ of the blowing up morphism $\pi : \tilde X_{Z_1\cup Z_0}
\to X$.\footnote{In general one has $\bigoplus_{i=0}^{r-2}j_* \circ h^i \circ
(\pi|_{Z_1\cup Z_0})^*$ instead of $j_* \circ (\pi|_{Z_1\cup Z_0})^*$ for $i = 0, \ldots, r-2$ in
\cite{Voi}, Th\'eor$\grave{\rm e}$me $7.31$, where $h$ denotes the cup-product with $c_1(\sO_E(1))$
and the sheaf $\sO_E(1)$ of the projective bundle $E$ is described in \cite{Voi}, Subsection
$3.3.2$. But here the weight of the Hodge structures is to small for $i > 0$.} One has the
following commutative diagram:
$$\xymatrix{
{\tilde X_{Z_1\cup Z_0}} \ar[rr]^{\varphi} &   & {\tilde X_{Z_1\cup Z_0}} \\
{E} \ar[rr]^{\varphi} \ar[d]^{\pi|_E} \ar[u]^{j} &  & {E} \ar[d]^{\pi|_E} \ar[u]^{j}\\
{Z_1\cup Z_0} \ar[rr]^{\varphi} &   & {Z_1\cup Z_0}
}$$
Since $\varphi$ acts as the identity on $Z_1\cup Z_0$, the same holds true for the Hodge
structures on $Z_1\cup Z_0$. Hence by the commutative diagram, the same holds true for the
sub-Hodge structures on $\tilde X$ given by $j_* \circ(\pi|_{Z_1\cup Z_0})^*$.
\end{proof}

\begin{proposition} \label{eigeleb}
For all $q \in \sM_3$ the action of the cyclic group $\langle\alpha \beta \rangle$ on $\sW$
yields an eigenspace decomposition of $H^{1,1}(\sW_q)$ of the dimensions
$$h^{1,1}(\sW_q)_0 = 14, \  \ h^{1,1}(\sW_q)_1 = 3, \  \ h^{1,1}(\sW_q)_2 = 3.$$
\end{proposition}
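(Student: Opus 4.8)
The plan is to compute the eigenspace decomposition of $H^{1,1}(\sW_q)$ under the cyclic group $\langle \alpha\beta\rangle$ of order $3$ by tracking how the $K3$ surface $\sW_q$ arises from the two successive blow-ups of the weighted projective hypersurface $W_q = V(z_1z_3-z_2^2,\ z_5^3+z_4^3+G_q) \subset \bP^4$ described in $\ref{8.3.1}$. First I would recall that $\sW_q$ is a $K3$ surface, so $h^{1,1}(\sW_q) = 20$ and the three eigenspace dimensions must sum to $20$; this gives one sanity check at the end ($14+3+3 = 20$). The key observation is that $\alpha\beta$ acts on $\Gamma(\omega_{\sW_q})$ by the character $\xi^2$ (as recorded in the degree 3 chapter, since $\alpha$ and $\beta$ each act by $\xi$ on $z_5$ resp.\ $z_4$ and $\xi^{-1}$ can be read off the equation), so the holomorphic $2$-form lies in the $\ell=1$ or $\ell=2$ eigenspace, not the invariant part; this forces the invariant part $H^{1,1}(\sW_q)_0$ to behave like the $(1,1)$-classes descending from a quotient.

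The main computational route I would take is the following. The surface $W_q$ is itself a cyclic degree $3$ cover of $R^1 \cong \bP(\sO\oplus\sO(2))$ branched over $\sC_q$ (see $\ref{wer}$ and Proposition $\ref{proposition}$, which identifies $\sY \cong \sW_q$ birationally with $\sC_q \times \F_3/\langle(1,1)\rangle$). Using Proposition $\ref{proposition}$ and Proposition $\ref{tensorp}$, I would decompose $H^2(\sW_q,\C)$ via the K\"unneth/tensor-product structure on $H^\bullet(\sC_q)\otimes H^\bullet(\F_3)$, restricting to the $\langle(1,1)\rangle$-invariant part and then reading off the further $\langle\alpha\beta\rangle$-action. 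The Hodge numbers of $\sC_q$ (a genus $3$ curve with a degree $3$ cover structure and pure $(1,3)$-VHS) and of $\F_3$ (genus $1$) are known from Proposition $\ref{1.27}$ and the eigenspace formulas of Chapter 2, so the $(1,1)$-piece of the tensor product splits into pieces $H^{1,0}\otimes H^{0,1}$ and $H^{0,1}\otimes H^{1,0}$ plus $H^{1,1}$-type contributions from the blow-up exceptional divisors. The character of $\alpha\beta$ on each tensor factor is governed by the Galois characters $j \in \Z/3$, and I would match these up to obtain the distribution of $(1,1)$-classes across $\ell = 0,1,2$.

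The most delicate step, and the one I expect to be the main obstacle, is correctly accounting for the exceptional divisors introduced by the desingularization $R^2 \to Q^2$ and hence $\sW_q \to W_q$. The singular locus of $W_q$ consists of the three points $z_1=z_2=z_3=0,\ z_4^3+z_5^3=0$ (Proposition on the singular locus of $C_{(n)}$), and $\alpha\beta$ permutes these three points cyclically (it fixes none of them, since it scales $z_4,z_5$ by $\xi$ while fixing the relation $z_4^3=-z_5^3$ only as a set). Each singular point contributes exceptional $(1,1)$-classes; because the three points form a single $\langle\alpha\beta\rangle$-orbit, their exceptional classes assemble into a regular representation of $\Z/3$, contributing exactly one dimension to each of $\ell=0,1,2$ — this is precisely the mechanism I would expect to produce the asymmetry $14$ versus $3,3$. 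I would verify this orbit-counting against Proposition $\ref{fixhodge}$ applied to the relevant blow-ups, being careful that the ambient $A_{3,2}$-type quotient singularities (cf.\ \cite{Bart}, {\bf III}.\ Proposition 5.3) resolve into chains whose classes transform correctly. Assembling the tensor-product contribution (which I expect to give $14$ invariant classes plus the holomorphic-form-related classes in $\ell=1,2$) with the balanced exceptional contribution should yield $h^{1,1}(\sW_q)_0 = 14$ and $h^{1,1}(\sW_q)_1 = h^{1,1}(\sW_q)_2 = 3$, with the symmetry between $\ell=1$ and $\ell=2$ following from complex conjugation interchanging the two eigenspaces.
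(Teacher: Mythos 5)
Your overall strategy (decompose $H^{1,1}(\sW_q)$ via the birational model $\sC_q\times\F_3/\langle(1,1)\rangle$ of Proposition $\ref{proposition}$ and track the exceptional divisors of $\sW_q\to W_q$) is not the route the paper takes, and as written it contains a genuine error at exactly the step you yourself flag as the most delicate: the orbit structure of the three singular points of $W_q$ under $\alpha\beta$. Since $\alpha\beta$ acts by $(z_5:z_4:z_3:z_2:z_1)\mapsto(\xi z_5:\xi z_4:z_3:z_2:z_1)$, it multiplies $z_4$ and $z_5$ by the \emph{same} scalar, hence restricts to the identity on the line $\{z_1=z_2=z_3=0\}$ and fixes all three singular points $(1:-1:0:0:0)$, $(1:-\xi:0:0:0)$, $(1:-\xi^2:0:0:0)$; this is stated explicitly in Section $9.2$, where it is moreover computed that $\alpha\beta$ fixes the three exceptional $(-2)$-curves pointwise, acting by $\xi^{-1}$ on the transverse parameter. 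The automorphism that permutes the three singular points cyclically is $\alpha\beta^{-1}$ (used in Section $9.1$); you have conflated the two. Consequently the three exceptional classes contribute $(3,0,0)$ to $(h^{1,1}_0,h^{1,1}_1,h^{1,1}_2)$ rather than the regular representation $(1,1,1)$, and the mechanism you propose for producing the asymmetry $14$ versus $3,3$ collapses. The remaining K\"unneth bookkeeping is also only sketched; note in addition that matching $\langle\alpha\beta\rangle$ with a subgroup of the deck group of $\sC_q\times\F_3$ modulo the diagonal is not immediate, because the birational equivalence of Proposition $\ref{proposition}$ is obtained by reversing the order of two field extensions.

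For comparison, the paper's proof avoids all of this. It blows up the six isolated fixed points $\{z_5=z_4=0\}\cap W_q$ of $\alpha\beta$ to get $\tilde\sW_q$, passes to the smooth quotient surface $M=\tilde\sW_q/\langle\alpha\beta\rangle$, and computes $\varphi^*(K_M)=-2E-E^{(2)}$ by the Hurwitz formula, the ramification divisor being the three $(-2)$-curves $E$ over the singular points of $W_q$ together with the six new exceptional curves $E^{(2)}$. From $3K_M^2=(\varphi^*K_M)^2=-30$ one gets $K_M^2=-10$, Noether's formula gives $c_2(M)=22$, i.e. $b_2(M)=20=h^{1,1}(\tilde\sW_q)_0$, and subtracting the six invariant exceptional classes yields $h^{1,1}(\sW_q)_0=14$; finally $h^{1,1}(\sW_q)_1=h^{1,1}(\sW_q)_2=3$ follows from $h^{1,1}(\sW_q)=20$ and the conjugation symmetry you correctly invoke. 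If you want to repair your approach, you must replace the free-orbit claim by the correct fixed-point data (six isolated fixed points plus three pointwise-fixed exceptional curves) and then actually carry out the character count on the K\"unneth pieces.
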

\begin{proof}
Let $\tilde \sW \to \sW$ be the blowing up of the six sections fixed by $\alpha \beta $. By
the same arguments as in the proof of the preceding proposition, each fiber $\tilde \sW_q$ has the
Hodge numbers
$$h^{2,0} = 1, \  \ h^{1,1} = 26, \  \ h^{0,2} = 1.$$
Let $M := \tilde \sW_q/\langle\alpha \beta \rangle$. Now we consider the quotient morphism
$\varphi : \tilde \sW_q \to M$. By the Hurwitz formula, one concludes that
$$\varphi^*(K_M) = -2 E -E^{(2)},$$
where $E$ is the exceptional divisor of $\sW_q \to W_q$ given by three $-2$ curves and $E^{(2)}$
is the exceptional divisor of $\tilde \sW_q \to \sW_q$.
From \cite{Voi}, Proposition $21.14$, we have that $3\cdot K_M^2 = (\varphi^*(K_M))^2$. Since
$$(\varphi^*(K_M))^2 = (-2 E -E^{(2)})^2 = 4\cdot(-6) -6 =-30$$
and $c_1(M)^2 = K_M^2$ (see \cite{hart}, Appendix A, Example $4.1.2$), one obtains
$$c_1(M)^2 = K_M^2 = -10.$$
By the Noether formula (compare to \cite{hart}, Appendix A, Example $4.1.2$ and \cite{Voi},
Remarque 23.6), one has
$$\chi(\sO_M) = \frac{1}{12}(c_1(M)^2 + c_2(M)) \  \ \mbox{with}  \  \ c_2(M)-2 = b_2(M)$$
in our case. From the fact that $\chi(\sO_M) = 1$, one calculates that
$$h^{1,1}(\tilde \sW_q)_0 = b_2(M) = 20.$$
By the fact that the blowing up morphism $\tilde \sW_q \to \sW_q$ has an exceptional divisor
consisting of 6 rational curves, we conclude similar to Proposition $\ref{fixhodge}$ that
$$h^{1,1}(\sW_q)_0 = h^{1,1}(\tilde \sW_q)_0 - 6 = 20 -6 = 14.$$
Since the $K3$ surface $\sW_q$ has the Hodge number
$$h^{1,1}(\sW_q) = 20 \  \ \mbox{and}  \  \ h^{1,1}(\sW_q)_1 =  h^{1,1}(\sW_q)_2,$$
one concludes that
$$h^{1,1}(\sW_q)_1 =  h^{1,1}(\sW_q)_2 = 3.$$
\end{proof}

\begin{proposition}
For all $q \in \sM_3$ one has
$$h^{1,1}(\sQ_q) = 51.$$
\end{proposition}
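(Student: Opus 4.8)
The plan is to compute $h^{1,1}(\sQ_q)$ as the dimension of the $(1,1)$-part of $H^2(\sQ_q)$, splitting it into a ``bulk'' contribution coming from the $\langle \alpha_2\rangle$-invariant cohomology of the product $\sW_q\times\F_3$ (where $\alpha_2 = (\alpha\beta,\alpha_{\F_3})$) and a sum of ``exceptional'' contributions coming from the divisors introduced while resolving the fixed locus of $\alpha_2$. For the bulk part I would decompose $H^2(\sW_q\times\F_3)$ by the tensor formula of Proposition $\ref{tensorp}$. Since $\sW_q$ is a $K3$ surface, $H^1(\sW_q)=0$, so only $H^2(\sW_q)\otimes H^0(\F_3)$ and $H^0(\sW_q)\otimes H^2(\F_3)$ contribute to the $(1,1)$-part. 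Using that $\alpha_{\F_3}$ acts trivially on $H^0(\F_3)$ and on the fundamental class $H^2(\F_3)=H^{1,1}(\F_3)$ (and by $\xi$, resp. $\xi^2$, on $H^{1,0}(\F_3)$, resp. $H^{0,1}(\F_3)$), the $\alpha_2$-invariant $(1,1)$-classes in $H^2(\sW_q\times\F_3)$ are exactly $H^{1,1}(\sW_q)_0\otimes H^0(\F_3)$ together with $H^0(\sW_q)\otimes H^2(\F_3)$. By Proposition $\ref{eigeleb}$ the first has dimension $h^{1,1}(\sW_q)_0=14$ and the second dimension $1$, for $15$ bulk classes, which survive on $\sQ_q$ as the image of the pullback of the invariant cohomology (cf. Proposition $\ref{fixhodge}$).

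Next I would analyse the fixed locus of $\alpha_2$ on $\sW_q\times\F_3$. Since $\alpha\beta$ fixes six points together with the three rational components of the exceptional divisor $E_{\sW}$ of $\sW_q\to W_q$, while $\alpha_{\F_3}$ fixes the three points of $\F_3$ with $z=0$, the fixed locus is a disjoint union of $18$ isolated points and $9$ rational curves. Using the local computation recorded in the construction, that $\alpha\beta$ acts by $\xi^{-1}$ on the parameters transverse to $E_{\sW}$, one identifies the transverse singularity types of the quotient: type $A_{3,2}$ along the nine curves and the corresponding $\tfrac{1}{3}$-type at the eighteen points. These determine how many exceptional divisors each component contributes to $H^2(\sQ_q)$.

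I would then track these divisors through the explicit sequence building $\sQ$ — blow up the nine curves to $E_C$, blow up the secondary fixed curves, quotient by $\alpha_2$, blow down $\varphi(\tilde E_C)$, and finally blow up the eighteen singular sections — applying Proposition $\ref{fixhodge}$ at each stage to extract the $\alpha_2$-invariant $(1,1)$-classes. I expect the nine curves each to yield two surviving invariant exceptional divisors (two ruled surfaces over a $\bP^1$, matching the $A_{3,2}$ resolution after the superfluous $\varphi(\tilde E_C)$ is blown down) and the eighteen points each to yield one (an exceptional $\bP^2$), contributing $9\cdot 2 + 18\cdot 1 = 36$. Adding the bulk gives $h^{1,1}(\sQ_q)=15+18+18=51$.

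The main obstacle is precisely the exceptional-divisor bookkeeping: because the intermediate blow-ups in the construction are not crepant, one must verify that the blow-down of $\varphi(\tilde E_C)$ removes exactly the superfluous divisors, that every surviving exceptional divisor is $\alpha_2$-invariant and of type $(1,1)$, and that no class is double-counted where the point-resolutions meet the curve-resolutions. Confirming the transverse weights ($A_{3,2}$ along the curves and the $\tfrac{1}{3}$-type at the points) from the explicit local equations is what pins down the multiplicities $2$ and $1$; once these are secured, the additivity of the blow-up formula of Proposition $\ref{fixhodge}$ yields the claimed value.
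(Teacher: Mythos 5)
Your proposal is correct and follows essentially the same route as the paper: the invariant $(1,1)$-part of $H^2(\sW_q\times\F_3)$ is computed as $h^{1,1}(\sW_q)_0+1=15$ via Proposition $\ref{eigeleb}$ and the K\"unneth decomposition, and the exceptional contributions are counted with Proposition $\ref{fixhodge}$ as $18$ from the isolated fixed points and $18$ from the nine fixed curves. Your bookkeeping $9\cdot 2=18$ for the curves is just the paper's $27-9=18$ (nine first-stage ruled surfaces plus eighteen second-stage ones, minus the nine that are blown down) organized by the $A_{3,2}$ transverse resolution, so the two arguments coincide.
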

\begin{proof}
Since
$$h^{0,0}(\sW_q)_0 = h^{0,0}(\F_3)_0 = h^{1,1}(\F_3)_0 = 1, \  \ b_1(\sW_q) = 0$$
and Proposition $\ref{eigeleb}$ tells us that
$$h^{1,1}(\sW_q)_0 = 14,$$
one concludes that $h^{1,1}(\sW_q \times\F_3)_0 = 15$. Note
that $\alpha_2$ fixes $6 \cdot 3 = 18$ points. Moreover we have an additional exceptional
divisor consisting of $3\cdot 3\cdot 3 = 27$ rational ruled surfaces. In the construction of $\sQ$
we blow down 9 of these families of ruled surfaces. Hence by
Proposition $\ref{fixhodge}$,
$$h^{1,1}(\sQ_q) = 15+ 18+27-9 = 51.$$
\end{proof}

Recall that $\alpha\beta$ acts by the character $e^{2\pi i\frac{2}{3}}$ on the global sections of
$\omega_{\sW_q}$ for all $q \in \sP_n$ and $\alpha_{\F_3}$ acts by the character
$e^{2\pi i\frac{1}{3}}$ on the global sections of $\omega_{\F_3}$. Hence one obtains
$$h^{1,0}(\F_3)_1 = h^{0,1}(\F_3)_2 = h^{2,0}(\sW_q)_2 = h^{0,2}(\sW_q)_1 = 1$$
and
$$h^{1,0}(\F_3)_2 =h^{0,1}(\F_3)_1 = h^{2,0}(\sW_q)_1 = h^{0,2}(\sW_q)_2 = 0.$$
Note that $b_1(\sW_q) = b_3(\sW_q) = 0$, $h^{1,1}(\sW_q)_0 = 14$ and $h^{1,1}(\sW_q)_1 =
h^{1,1}(\sW_q)_2 = 3$. Thus
$$H^3(\sW_q \times\F_3,\C)_0 = \bigoplus\limits_{t = 0}^2 H^2(\sW_q,\C)_t \otimes
H^1(\F_3,\C)_{[3-t]_3}.$$
Hence one concludes that
$$H^3(\sW_q \times\F_3,\C)_0 = (H^{2,0}(\sW_q)_2 \oplus H^{1,1} (\sW_q)_2) \otimes
H^{1,0}(\F_3)_1$$
$$\oplus (H^{1,1} (\sW_q)_1 \oplus H^{0,2}(\sW_q)_1) \otimes H^{0,1}(\F_3)_2.$$
This implies that
$$H^{2,1}(\sW_q \times\F_3)_0 = H^{1,1} (\sW_q)_2 \otimes H^{1,0}(\F_3)_1 \  \ \mbox{such that}
\ \ h^{2,1}(\sW_q \times\F_3)_0 = 3.$$
Hence by Proposition $\ref{fixhodge}$ and the fact that $b_1(\bP^1) = 0$, one concludes easily:

\begin{proposition}
For all $q \in \sM_3$ one has
$$h^{1,2}(\sQ_q) = h^{2,1}(\sQ_q) = 3.$$
\end{proposition}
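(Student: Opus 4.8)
The plan is to leverage the computation just completed, namely $h^{2,1}(\sW_q \times \F_3)_0 = 3$ for the $\alpha_2$-invariant part of the third cohomology of the product, and to show that passing to the desingularized quotient $\sQ_q$ leaves this number unchanged. Recall that $\sQ$ is obtained from $\sW \times \F_3/\langle\alpha_2\rangle$ by the sequence of blow-ups and blow-downs described above, so that on each fiber $H^3(\sQ_q, \C)$ is assembled from the invariant part $H^3(\sW_q \times \F_3, \C)_0$ together with contributions coming from the exceptional loci introduced along the way. The task is to check that these exceptional loci contribute nothing in degree $3$.

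First I would identify all the centers that are blown up in the construction of $\sQ_q$: the fixed locus of $\alpha_2$ on $\sW_q \times \F_3$ consists of the $18$ isolated points $\{z_5 = z_4 = 0\}\times\{z = 0\}$ together with families of curves arising as fiber products of the exceptional divisor $E_{\sW}$ of $\sW_q \to W_q$ with the $\alpha_{\F_3}$-fixed points of $\F_3$; and the further blow-ups (resp. blow-downs) in passing to $\widetilde{\sW \times \F_3}$, $\widetilde{\widetilde{\sW \times \F_3}}$, $\hat\sQ$ and finally $\sQ$ have exceptional divisors supported on copies of $\bP^1$ and on rational ruled surfaces (the $E_C$ consisting of $9$ rational ruled surfaces, the $\tilde E_C$, and the exceptional loci of the $18$ isolated sections). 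The essential observation is that every curve appearing as a blow-up center is rational.

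Next I would apply Proposition $\ref{fixhodge}$. For a K\"ahler threefold with an automorphism fixing isolated points $Z_0$ and disjoint curves $Z_1$, that proposition gives $H^3(\tilde X_{Z_1\cup Z_0}, \Z)_0 \cong H^3(X,\Z)_0 \oplus H^1(Z_1,\Z)$, so that the isolated fixed points contribute nothing to $H^3$ and the fixed curves contribute only through their first cohomology. Since $b_1(\bP^1) = 0$, each rational curve center contributes $H^1 = 0$; applying this at every stage of the construction shows that the exceptional loci add nothing to the invariant third cohomology. Hence $H^3(\sQ_q,\C) \cong H^3(\sW_q \times \F_3,\C)_0$ as Hodge structures, and in particular $h^{2,1}(\sQ_q) = h^{2,1}(\sW_q \times \F_3)_0 = 3$. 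Finally, Hodge symmetry $h^{1,2} = h^{2,1}$ on the compact K\"ahler (indeed Calabi-Yau) manifold $\sQ_q$ yields $h^{1,2}(\sQ_q) = 3$ as well.

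The main obstacle will be bookkeeping: one must track the exceptional divisors through the multi-step blow-up/blow-down description of $\sQ$ (via $\widetilde{\sW \times \F_3}$, $\widetilde{\widetilde{\sW \times \F_3}}$, $\hat\sQ$ and $\sQ$) and confirm uniformly that no nonrational curve is ever blown up, and that the blow-downs, taking place over rational centers, do not reintroduce any $H^1$-contribution. Once the rationality of all curve centers is secured, the vanishing $b_1(\bP^1) = 0$ makes the conclusion immediate, which is why the paper records that one "concludes easily."
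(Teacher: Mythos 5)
Your proposal matches the paper's argument: the paper computes $h^{2,1}(\sW_q \times \F_3)_0 = 3$ via the eigenspace/K\"unneth decomposition immediately before the statement and then concludes "by Proposition $\ref{fixhodge}$ and the fact that $b_1(\bP^1) = 0$," which is precisely the bookkeeping over rational exceptional loci that you spell out. Your added care about the blow-downs and the appeal to Hodge symmetry for $h^{1,2}=h^{2,1}$ are consistent with, and fill in, the paper's "one concludes easily."
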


Next we show that $\sQ$ is a maximal family of Calabi-Yau manifolds. First let us define
maximality. For this definition recall:

\begin{proposition}
Each Calabi-Yau manifold $X$ has a local universal deformation $\sX \to B$, where
$$\dim(B) = h^{2,1}(X).$$
\end{proposition}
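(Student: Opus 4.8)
The statement to prove is the classical Bogomolov–Tian–Todorov theorem: every Calabi-Yau manifold $X$ has a local universal deformation $\sX \to B$ with $\dim(B) = h^{2,1}(X)$. The plan is to establish two separate facts, namely the existence of the local universal deformation and the computation of the dimension of its base, and to combine them.

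First I would recall the deformation-theoretic setup. The infinitesimal deformations of $X$ are governed by the tangent space $H^1(X, T_X)$, where $T_X$ denotes the holomorphic tangent sheaf, and the obstructions lie in $H^2(X, T_X)$. The heart of the argument is the \emph{unobstructedness} of deformations of a Calabi-Yau manifold: although $H^2(X,T_X)$ need not vanish, the Kuranishi obstruction map vanishes identically, so that the Kuranishi family is smooth of dimension $\dim_{\C} H^1(X,T_X)$. The standard route here is the $T^1$-lifting principle (following Ran, Kawamata, and the analytic arguments of Tian and Todorov), which reduces unobstructedness to checking that certain obstruction cocycles are coboundaries order by order; this uses crucially that $\omega_X \cong \sO_X$, which lets one identify $T_X \cong \Omega_X^{n-1}$ via contraction with the holomorphic volume form. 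I would treat this as the main input, citing the appropriate source rather than reproving it, since it is the deep analytic step and genuinely the main obstacle: the existence of a \emph{universal} (not merely versal) family additionally requires that $X$ have no infinitesimal automorphisms acting on the deformations, i.e. one must control $H^0(X, T_X)$, which for a Calabi-Yau manifold with the given cohomological vanishing is handled by the vanishing of global holomorphic vector fields.

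Next I would carry out the dimension count, which is the routine Hodge-theoretic part. Using $\omega_X \cong \sO_X$, contraction with a nowhere-vanishing holomorphic $n$-form $\eta \in H^0(X,\omega_X)$ gives an isomorphism of sheaves $T_X \xrightarrow{\sim} \Omega_X^{n-1}$, hence an isomorphism
\[
H^1(X, T_X) \cong H^1(X, \Omega_X^{n-1}) = H^{n-1,1}(X).
\]
For $n = 3$, as in the present chapter, this reads $H^1(X,T_X) \cong H^{2,1}(X)$, so $\dim H^1(X,T_X) = h^{2,1}(X)$. By the unobstructedness established in the first step, the Kuranishi space $B$ is smooth of exactly this dimension, giving $\dim(B) = h^{2,1}(X)$.

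Finally I would assemble the two ingredients: the smoothness of the Kuranishi family yields the local universal deformation $\sX \to B$, and the Hodge-theoretic identification yields its dimension. The one subtlety I would flag is that the statement as phrased uses $h^{2,1}$, which is appropriate in the dimension-$3$ setting of this chapter; in arbitrary dimension the correct statement is $\dim(B) = h^{n-1,1}(X)$, and I would note that the contraction isomorphism above is what makes these agree. I expect no difficulty in the dimension count; the entire weight of the proof rests on the unobstructedness theorem, which I would invoke as a cited black box rather than establish from scratch.
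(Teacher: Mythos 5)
Your proposal is correct and is essentially the argument the paper relies on: the paper's ``proof'' is simply a citation to Voisin's book (\cite{Voi}, 10.3.2), which contains exactly the Tian--Todorov unobstructedness theorem, the vanishing of $H^0(X,T_X)$ needed for universality, and the contraction isomorphism $H^1(X,T_X)\cong H^{n-1,1}(X)$ that you outline. Your remark that $h^{2,1}$ is the dimension-$3$ specialization of $h^{n-1,1}$ is a fair and accurate caveat about the statement as phrased.
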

\begin{proof}
(see \cite{Voi}, $10.3.2$)
\end{proof}

\begin{definition} \index{maximal family}
A family $\sF \to Y$ of Calabi-Yau manifolds is maximal in $0 \in Y$, if the universal
property of the local universal deformation $\sX \to B$ of $\sF_0$ yields a surjection of a
neighborhood of $0$ onto $B$. The family $\sF \to Y$ is maximal, if it is maximal in all
$0 \in Y$. 
\end{definition}

\begin{remark}
If the family $\sF \to Y$ of Calabi-Yau manifolds is maximal in some $0 \in Y$, its restriction to
the complement of a closed analytic subvariety of $Y$ is maximal.
\end{remark}

\begin{remark}
Since $\sW_q$ is birationally equivalent to $\F_3 \times \sC_q/\langle(1,1)\rangle$ (see
Proposition $\ref{proposition}$), one has 
$$H^{2,0}(\sW_q) = H^{1,0}(\F_3)_1 \otimes H^{1,0}(\sC_q)_2,$$
where $\sC$ denotes the family of degree 3 covers with a pure $(1,3)-VHS$. Thus by our former
notation with respect to the push forward action, the $VHS$ of $\sW$ depends uniquely on the
fractional $VHS$ of the eigenspace $\sL_1$ of the $VHS$ of $\sC$.

In Section $9.2$ we have seen that $\sQ$ is birationally equivalent to a quotient of
$\sW \times\F_3$. It differs by some blowing up morphism with respect to some families of rational
curves and some isolated sections. Thus by similar arguments, the $VHS$ of $\sQ$ depends on the
$VHS$ of $\sW$. Hence the $VHS$ of $\sQ$ depends uniquely on the fractional $VHS$ of $\sL_1$. Thus
the period map of $\sQ$ can be considered as a multivalued map to the ball $\B_3$.
\end{remark}

The preceding remark tells us the period map of the family $\sQ \to \sM_3$ is locally injective.
Hence by the Torelli theorem for Calabi-Yau manifolds, one concludes:

\begin{Theorem}
The family $\sQ \to \sM_3$ is maximal.
\end{Theorem}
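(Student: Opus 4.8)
The plan is to combine the observation of the preceding remark---that the (multivalued) period map $p_\sQ \colon \sM_3 \to \B_3$ is locally injective---with a dimension count and the local Torelli theorem for Calabi--Yau threefolds. Maximality of $\sQ \to \sM_3$ is, by the definition given just above, the assertion that the classifying map into the local universal deformation space is a local surjection; I would obtain this by showing that this classifying map is in fact a local biholomorphism.

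First I would record the two dimension facts already available: the base $\sM_3$ has dimension $3$, and each fiber $\sQ_q$ satisfies $h^{2,1}(\sQ_q) = 3$, so by the cited existence of a local universal deformation $\sX \to B$ of $\sQ_q$ one has $\dim B = h^{2,1}(\sQ_q) = 3$. Next, by the universal property of $\sX \to B$, the family $\sQ \to \sM_3$ induces near any point $0 \in \sM_3$ a holomorphic classifying map $\kappa \colon (\sM_3, 0) \to (B,0)$, and the period map of $\sQ$ factors as $p_\sQ = p_B \circ \kappa$, where $p_B \colon B \to \B_3$ is the period map of the universal deformation. Local Torelli for Calabi--Yau threefolds makes $p_B$ an immersion; since $\dim B = 3 = \dim \B_3$, it is even a local biholomorphism onto an open subset of $\B_3$. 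Because the preceding remark gives that $p_\sQ$ is locally injective and $p_B$ is locally injective, the factorization forces $\kappa$ to be locally injective; as $\kappa$ is a holomorphic map between complex manifolds of the same dimension $3$, local injectivity implies that its differential is everywhere nonsingular, so $\kappa$ is a local biholomorphism and in particular surjects onto a neighborhood of $0 \in B$. This is precisely maximality.

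The main obstacle is not any single computation but the careful bookkeeping behind the factorization $p_\sQ = p_B \circ \kappa$ and the invocation of Torelli. One must ensure that the relevant period domain for the weight-$3$ Hodge structures in play really is the $3$-ball $\B_3$ (equivalently, that the horizontal directions exhausted by the $(1,3)$-part of the variation account for all of $H^{2,1}$), so that local Torelli yields an immersion of the \emph{full} deformation space and not merely of a sub-locus; this is where the preceding remark's reduction of the $VHS$ of $\sQ$ to the fractional $VHS$ of $\sL_1$ does the essential work. A secondary point to handle cleanly is the multivaluedness of $p_\sQ$: I would pass to a simply connected open $W \subset \sM_3$ so that $p_\sQ$ is genuinely single-valued there, run the local argument, and note that maximality is a pointwise, hence local, condition, so the conclusion propagates over all of $\sM_3$.
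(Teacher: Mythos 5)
Your argument is correct and is essentially the paper's own proof, written out in more detail: the paper likewise deduces maximality from the local injectivity of the period map (established in the preceding remark via the reduction to the fractional $VHS$ of $\sL_1$ and the ball $\B_3$) together with the Torelli theorem for Calabi--Yau manifolds and the dimension count $\dim \sM_3 = 3 = h^{2,1}(\sQ_q)$. Your explicit factorization $p_{\sQ} = p_B \circ \kappa$ through the Kuranishi space just makes precise what the paper leaves implicit.
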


\section{Outlook onto quotients by cyclic groups of high order}
Recall that we used $K3$ surfaces $S$ and elliptic curves $E$ with cyclic degree $m$ covers
$S \to R$ and $E \to \bP^1$ to construct
Calabi-Yau 3-manifolds by a quotient, where $m = 2,3$. In this chapter we give an outlook on the
possibilities to use of cyclic groups of higher order for the construction of Calabi-Yau
3-manifolds by an elliptic curve and a $K3$-surface.

First the following Lemma shows that there are only finitely many elliptic
curves with an action of a cyclic group with order $m >2$, which could be suitable:

\begin{lemma}
Let $E$ be an elliptic curve, and $f: E \to \bP^1$ be a cyclic cover. Then one obtains
$$m : = deg(f) = 2, \  \ 3, \  \ 4 \   \  \mbox{or} \   \ 6.$$
For each $m >2$ there is at most only one elliptic curve having a cyclic cover
$f:E \to \bP^1$ of degree $m$.\footnote{The well-educated reader knows the automorphism group of
the abelian variety given by one elliptic curve. But the quotient map by a cyclic subgroup of this
automorphism group is fully ramified at the zero-point. There may be cyclic covers, which are
not fully ramified over all branch points. Hence for the proof of this lemma, it is not
sufficient to know the automorphism group of this Abelian variety.}
\end{lemma}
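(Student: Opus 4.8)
The plan is to combine the Riemann--Hurwitz formula with the connectedness (irreducibility) constraint on the monodromy, and then to exploit the rigidity of three points on $\bP^1$ for the uniqueness statement. First I would write the cyclic cover in the normal form of Chapter 2, say $y^m = \prod_{k=1}^r (x-a_k)^{d_k}$ with $0 < d_k < m$, after moving a branch point to $\infty$ if necessary; connectedness of $E$ is then equivalent to $\gcd(d_1,\dots,d_r,m)=1$. At the branch point $a_k$ every point of the fibre has the same ramification index $e_k := m/\gcd(m,d_k) \geq 2$, and there are $\gcd(m,d_k) = m/e_k$ of them. Since $E$ has genus $1$ and the base is $\bP^1$, Riemann--Hurwitz gives
\[
0 \;=\; 2\cdot 1 - 2 \;=\; m(-2) + \sum_{k=1}^r \frac{m}{e_k}(e_k-1),
\]
so that, dividing by $m$, one gets $\sum_{k=1}^r (1 - 1/e_k) = 2$, i.e. $\sum_{k=1}^r 1/e_k = r-2$. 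As each $e_k \geq 2$ forces $1/e_k \leq 1/2$, this yields $r-2 \leq r/2$, hence $r \leq 4$, while $r-2 = \sum 1/e_k > 0$ forces $r \geq 3$.

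Next I would solve the resulting Diophantine conditions. For $r = 4$ the equality $\sum 1/e_k = 2$ with four summands $\leq 1/2$ forces $e_1 = \dots = e_4 = 2$; then every $d_k$ is divisible by $m/2$, so $m/2 \mid \gcd(d_1,\dots,d_4,m) = 1$ and $m = 2$. For $r = 3$ the equation $1/e_1 + 1/e_2 + 1/e_3 = 1$ with integers $e_k \geq 2$ has, up to order, only the solutions $(3,3,3)$, $(2,4,4)$ and $(2,3,6)$. In each of these the largest entry equals $\lcm(e_1,e_2,e_3)$, so every $e_k$ divides $e_{\mathrm{max}}$ and hence $m/e_{\mathrm{max}}$ divides each $m/e_k$, therefore each $d_k$ as well as $m$; connectedness then gives $m/e_{\mathrm{max}} \mid 1$, i.e. $m = e_{\mathrm{max}} \in \{3,4,6\}$. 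Together with the case $m=2$ this yields the first assertion $m \in \{2,3,4,6\}$.

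Finally, for the uniqueness when $m > 2$ (so necessarily $r = 3$) I would use that $\PGL_2(\C)$ acts transitively on ordered triples of distinct points of $\bP^1$, so after normalising the three branch points to $0,1,\infty$ no moduli remain and the cover is determined by its branch indices. By Remark $\ref{drehen}$, multiplying all $d_k$ by a unit of $(\Z/m)^*$ produces an equivalent cover, and permuting the three branch points is realised inside $\PGL_2(\C)$ by the copy of $S_3$ permuting $0,1,\infty$. A direct enumeration then shows that in each of the cases $m=3,4,6$ the admissible triples $(d_1,d_2,d_3)$ --- those with the prescribed values of $\gcd(m,d_k)$ and with $\sum_k d_k \equiv 0 \pmod m$ --- form a single equivalence class under these two operations, so the cover, and hence $E$, is unique up to isomorphism.

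The degree bound is routine once Riemann--Hurwitz is in place, the only slightly delicate point there being the passage from the triple $(e_1,e_2,e_3)$ to the exact value of $m$ via connectedness. I expect the real care to be needed in the uniqueness step: verifying that the finitely many admissible branch-index triples in each case $m=3,4,6$ genuinely collapse to one class under unit-multiplication together with the $S_3$-relabelling, and making sure that the footnote's concern --- that a cyclic cover need not be totally ramified --- is fully absorbed into the ramification-index bookkeeping, since it is precisely the non-totally-ramified covers that the automorphism-group heuristic would miss.
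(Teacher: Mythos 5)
Your proof is correct, but it takes a genuinely different route from the paper's. The paper never invokes the global Riemann--Hurwitz formula or the list of Euclidean ramification types; instead it works eigenspace by eigenspace, using Corollary $\ref{rangi}$ ($h^1_j = |S_j|-2$) to force the number of branch points down to $3$ for $m>2$, then Proposition $\ref{1.27}$ to turn the condition ``$\LL_1$ is of type $(0,1)$'' into $\mu_1+\mu_2+\mu_3=1$, and finally the requirement that $\LL_2$ and $\LL_3$ be of type $(0,0)$ to force successive $\mu_k$ to equal $\frac12$ and $\frac13$; this hands over the local monodromy data $(\frac13,\frac13,\frac13)$, $(\frac12,\frac14,\frac14)$, $(\frac12,\frac13,\frac16)$ explicitly, so uniqueness of $E$ is immediate once the three branch points are normalised to $0,1,\infty$. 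You instead apply Riemann--Hurwitz to the whole cover, reduce to the classical solutions $(2,2,2,2)$, $(3,3,3)$, $(2,4,4)$, $(2,3,6)$, and then need two extra steps the paper does not: the observation that $e_{\max}=\lcm(e_1,e_2,e_3)$ combined with irreducibility ($\gcd(d_1,\dots,d_r,m)=1$) to conclude $m=e_{\max}$, and a finite enumeration of admissible index triples modulo the $(\Z/m)^*$-action of Remark $\ref{drehen}$ and the $S_3$-relabelling to get uniqueness. Your version is more elementary and self-contained (and treats the $m=2$, four-branch-point case inside the same framework, where the paper just recognises the hyperelliptic double cover), at the cost of the two extra bookkeeping arguments; the paper's eigenspace method is less standard but produces the branch data, and hence uniqueness, in one pass. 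Both correctly absorb the footnote's worry about non-totally-ramified covers, you through the ramification indices $e_k=m/\gcd(m,d_k)$, the paper through the sets $S_j$.
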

\begin{proof}
We use Proposition $\ref{1.27}$ and Corollary $\ref{rangi}$. Let $f: E \to \bP^1$ be be a cyclic
cover of degree $m > 2$. Moreover if $f$ has $n$ branch points, then $\LL_1$ is of type
$(p,q)$ with $p+q = n -2$. Thus there must be at least 2 branch points. If there are 2 branch
points, we are in the case of the cover $\bP^1 \to \bP^1$ given by $x \to x^m$. Since $\LL_1$ is
of type $(p,q)$ with $p+q = n -2$, $C$ can  be an elliptic curve for $m > 2$, only if $n = 3$.

For $n=3$ and $m > 2$ we have that $\LL_1$ is of type $(p,q)$ with $p+q = 1$. Without loss of
generality we assume that $p = 0$ and $q = 1$. Hence by Proposition $\ref{1.27}$, one concludes
that
$$\mu_1 + \mu_2 + \mu_3 = 1.$$
If $m = 3$, one has only the case of the Fermat curve of degree $3$ given by
$$\mu_1 = \mu_2 = \mu_3 = \frac{1}{3}.$$
If $m > 3$, $\LL_2$ must be of type $(0,0)$, which implies without loss of generality that
$\mu_1 = \frac{1}{2}$. Hence for $m = 4$ we have only the case of the cover given by
$$\mu_1 = \frac{1}{2}, \   \ \mu_2 = \mu_3 = \frac{1}{4}.$$
If $m > 4$, $\LL_2$ and $\LL_3$ must be of type $(0,0)$, which implies without loss of generality
that $\mu_1 = \frac{1}{2}$ and $\mu_2 = \frac{1}{3}$. Hence we obtain the only additional case
given by the degree $6$ cover with the local monodromy data
$$\mu_1 = \frac{1}{2}, \   \ \mu_2 = \frac{1}{3}, \   \ \mu_3 = \frac{1}{6}.$$
\end{proof}

Let $S$ be a $K3$-surface, $E$ be an elliptic curve and the cyclic groups
$\langle\gamma_S\rangle$ and $\langle\gamma_E\rangle$ of order $m >1$ acting on $S$ and $E$ with
the loci $F_S$ and $F_E$ of fixed points such that $\gamma_S$ and $\gamma_E$ act by $-1$ on the
global sections of the respective canonical sheaves. The aim is the construction of a Calabi-Yau
3-manifold by a desingularisation of $S \times E/\langle(\gamma_S,\gamma_E)\rangle$. The following
proposition tells us that there are singularities on $S\times E/\langle(\gamma_S,\gamma_E)\rangle$,
if $m >2$:

\begin{proposition}
Let $m >2$. Then $\gamma_S$ must fix some points.
\end{proposition}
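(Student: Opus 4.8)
The plan is to derive a contradiction from the assumption $F_S = \emptyset$ by means of the holomorphic Lefschetz fixed point formula. Here I read the hypothesis in the way forced by the construction: $\gamma_S$ generates the Galois group of the degree $m$ cyclic cover $S \to R$, so it acts on the one-dimensional space $H^0(S,\omega_S) = H^{2,0}(S)$ by a \emph{primitive} $m$-th root of unity $\zeta$ (the symbol ``$-1$'' in the set-up being the special value $\zeta = -1$ of the case $m = 2$). By Serre duality, equivalently via $H^2(S,\sO_S) = \overline{H^{2,0}(S)}$, the induced action of $\gamma_S^*$ on $H^2(S,\sO_S)$ is then multiplication by $\zeta^{-1}$.

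First I would compute the holomorphic Lefschetz number $L(\gamma_S) := \sum_{i=0}^{2}(-1)^i \tr(\gamma_S^*\mid H^i(S,\sO_S))$. Since $S$ is a $K3$ surface one has $H^0(S,\sO_S) = \C$ with trivial action, $H^1(S,\sO_S) = 0$, and $H^2(S,\sO_S)$ one-dimensional with $\gamma_S^*$ acting by $\zeta^{-1}$; hence $L(\gamma_S) = 1 + \zeta^{-1}$. On the other hand, the holomorphic Lefschetz fixed point formula expresses $L(\gamma_S)$ as a sum of local contributions indexed by the connected components of $F_S$. If $F_S = \emptyset$ this sum is empty, forcing $L(\gamma_S) = 0$. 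Comparing the two computations gives $1 + \zeta^{-1} = 0$, i.e. $\zeta = -1$, which is impossible for a primitive $m$-th root of unity once $m > 2$. This contradiction proves $F_S \neq \emptyset$. I would also note that the same computation gives $L(\gamma_S) = 0$ exactly when $m = 2$, which is consistent with, and explains, the one genuine exception: the fixed-point-free Enriques involutions, for which $\gamma_S$ acts by $-1$ on $\omega_S$ and $S/\gamma_S$ is an Enriques surface.

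As an independent corroboration, and in fact a complete argument whenever $m$ is prime (in particular for the elliptic datum $m = 3$), I would run the global quotient argument. When $m$ is prime, $F_S = \emptyset$ forces every non-trivial power of $\gamma_S$ to be fixed-point-free, since each generates $\langle\gamma_S\rangle$; thus $\langle\gamma_S\rangle$ acts freely and $T := S/\langle\gamma_S\rangle$ is a smooth surface with an \'etale quotient map $\pi \colon S \to T$. Then $\chi_{\mathrm{top}}(T) = \chi_{\mathrm{top}}(S)/m = 24/m$, while $\kappa(T) = \kappa(S) = 0$ and $\omega_T$ is $m$-torsion because $\pi^*\omega_T = \omega_S = \sO_S$. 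A short adjunction computation (a $(-1)$-curve on $T$ would pull back to disjoint smooth rational curves of self-intersection $-2$, contradicting $(\pi^*C)^2 = m\,C^2$) shows that $T$ is minimal, so by the Enriques--Kodaira classification $\chi_{\mathrm{top}}(T) \in \{0,12,24\}$; since $24/m \notin \{0,12,24\}$ for $m > 2$, this is absurd.

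The main obstacle is conceptual rather than computational: everything hinges on knowing that $\gamma_S$ acts on $H^0(S,\omega_S)$ by a primitive $m$-th root of unity. If one assumed only the literal ``$-1$'' together with $m > 2$, i.e. a non-faithful action on the canonical sheaf, then both $L(\gamma_S)$ and the $24/m$-type numerics degenerate, and one would have to replace the clean Lefschetz input by a finer lattice-theoretic study of the $\gamma_S^*$-action on $H^2(S,\Z)$ in the spirit of Nikulin's classification of involutions. In the situation at hand this difficulty does not arise, because $\gamma_S$ is by construction a generator of the Galois group of a degree $m$ cyclic cover, and its action on the holomorphic two-form is therefore faithful.
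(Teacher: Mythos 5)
Your proof is correct, and it takes a genuinely different route from the paper. The paper argues on the quotient: assuming $\gamma_S$ is fixed-point free, it forms $S/\langle\gamma_S\rangle$, notes that this surface has $b_1=0$, torsion canonical sheaf and no $(-1)$-curves, and then invokes the Enriques--Kodaira classification of minimal surfaces of Kodaira dimension zero (essentially your ``corroborating'' argument, without the explicit Euler-number count $24/m\notin\{0,12,24\}$). You instead work directly on $S$ with the holomorphic Lefschetz fixed point formula, getting $L(\gamma_S)=1+\zeta^{-1}\neq 0$ for a primitive $m$-th root $\zeta$ with $m>2$, hence $F_S\neq\emptyset$. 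Your approach buys two real advantages. First, it sidesteps a gap in the paper's argument that you correctly identify: for composite $m$, fixed-point freeness of $\gamma_S$ alone does not make $\langle\gamma_S\rangle$ act freely, so $S/\langle\gamma_S\rangle$ need not be smooth and the classification of \emph{minimal models} cannot be applied as written; the Lefschetz formula needs no quotient at all, and your restriction of the classification argument to prime $m$ is exactly the right repair if one insists on that route. Second, your explicit verification of minimality (pulling back a $(-1)$-curve to $(-2)$-curves) fills in a step the paper only asserts. The one point of divergence in hypotheses is also handled sensibly: the paper's set-up literally says $\gamma_S$ acts by $-1$ on $\Gamma(\omega_S)$, under which the Lefschetz number degenerates to $0$; you flag this and justify the faithful (primitive $m$-th root) reading from the covering construction, which is indeed what the paper's own proof tacitly uses when it needs $\omega^{\otimes m}=\sO$ with torsion order exceeding $2$.
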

\begin{proof} 
If $\gamma_S$ does not fix any point, one concludes by the Hurwitz formula that
$\varphi_S^*\omega = \sO$. Thus the quotient has a canonical sheaf $\omega$ with
$\omega^{\otimes m} = \sO$ for $m > 2$. Moreover it has the Betti number $b_1 = 0$. In addition it
must be a minimal model, since a rational $-1$ curve would (up to linear equivalence) be in the
support of the canonical divisor $K$ and forbid any torsion of $K$. But by the Enriques-Kodaira
classification (compare to \cite{Bart}, {\bf VI}), such a minimal model does not exist.
\end{proof}

\begin{remark}
The branch points of the degree 4 resp., the degree 6 cover $E \to \bP^1$ have different
branch indeces. Hence for the degree 4 and degree 6 case this yields some problems to find a
desingularisation of
$$S \times E/\langle(\gamma_S,\gamma_E)\rangle,$$
which is a Calabi-Yau manifold. \end{remark}

\chapter{Maximal families of $CMCY$ type}
In this chapter we use the classification of involutions on $K3$ surfaces by V. V. Nikulin
\cite{Niku}. We will see that certain involutions on the integral cohomology of $K3$ surfaces
yield a possibility to construct $CMCY$ families of 3-manifolds with maximal variations of
Hodge structures. For each $n \in \N$ with $n \leq 11$ we will obtain a holomorphic maximal $CMCY$
family over a basis of dimension $n$.

\section{Facts about involutions and quotients of $K3$-surfaces}
In this section we collect some known facts about $K3$ surfaces and their
involutions, which we will need in the sequel.

\begin{pkt}
The integral cohomology $H^2(S, \Z)$ is a lattice of rank 22. We have the cup-product
$(\cdot,\cdot)$ on $H^2(S, \Z)$. Let $L := (H^2(S, \Z),(\cdot ,\cdot ))$. It is a well-known fact
that one has the orthogonal direct sum decomposition
$$L \cong (-E_8) \oplus (-E_8) \oplus H \oplus H\oplus H,$$
where $-E_8$ consists of $\Z^8$ endowed with a certain negative definite integral bilinear form
and $H$ denotes the hyperbolic plane, i. e. $H = (\Z^2,\langle\cdot ,\cdot \rangle)$, where
$\langle\cdot ,\cdot \rangle$ is given by the matrix
$$\left(\begin{array}{cc}
0 & 1 \\
1 & 0
\end{array} \right)$$
(see \cite{Bart}, ${\bf VIII}.$ 1 and also \cite{Bart},
${\bf I}$. Examples $2.7$ for details).
\end{pkt}

\begin{remark}
Let $S$ be a $K3$-surface and $L = H^2(S,\Z)$, where $L$ is endowed with an involution $\iota$.
Assume that $\iota$ corresponds to an involution on $S$, which acts by the character $-1$
on $\Gamma(\omega_S)$. Then the involution induces a degree $2$ cover $\gamma :S \to R$ onto a
smooth surface $R$. Moreover the divisor of fixed points, which yields the ramification divisor
of $\gamma$, consists of a disjoint union of smooth curves or it is the zero-divisor. Moreover
$\iota$ yields integral sub-Hodge structures $H^2(S,\Z)_0$ and $H^2(S,\Z)_1$ of $H^2(S,\Z)$
such that $\iota$ acts by $(-1)^i$ on $H^2(S,\Z)_i$. Since $\iota$ acts by $-1$
on $\Gamma(\omega_S)$ and
$$H^2(R,\Q) = H^2(S,\Q)_0,$$
one has that
$$H^{2,0}(S),H^{0,2}(S) \subset H^2(S,\C)_1.$$
Moreover the intersection form has the signature $(2, r)$ on $H^2(S,\Z)_1$ (compare to
\cite{Voi2}, $\S 1$ and \cite{Voi2}, $2.1$).
\end{remark}

\begin{remark}
Let
$$D = \{[\omega] \in \bP(H^2(S,\C)_1)|(\omega, \omega) = 0, (\omega, \bar \omega) >0\}.
\index{$D$}$$
By the Torelli theorem, each marked $K3$ surface $(S',\phi_{S'})$ endowed with an involution,
which yields the the same involution $\iota$ on his cohomology lattice, yields a unique one
dimensional vector space $H^{2,0}(S') \subset H^2(S,\C)_1$ corresponding to some $p\in D$.
\end{remark}

\section{The associated Shimura datum of $D$}
The Hodge structure of a $K3$ surface $S$ with a cyclic degree 2 cover onto a rational surface
resp., Enriques surface $R$
has a decomposition into two rational Hodge structures $H^2(S, \Q)_1$ and $H^2(S, \Q)_0$. We
consider $H^2(S, \Q)_1$, since the variation of Hodge structures given by $H^2(S, \Q)_0$ is
trivial.

The Hodge decomposition of $H^2(S,\C)$ is orthogonal with respect to the Hermitian form
$(\cdot ,\bar \cdot)$. Hencefore the corresponding embedding
$$h : S^1 \to \SL(H^2(S, \R)_1)$$
factors trough the special orthogonal group $\SO(H^2(S,\R)_1)$ with respect to the
symmetric form given by the cup product pairing, where $\SO(H^2(S,\R)_1)$ is isomorphic to
$\SO(2,r)_{\R}$. Let $\omega \in \omega_S \setminus \{0\}$, 
$$\Re \omega := \frac{1}{2}(\omega + \bar \omega), \  \ \Im \omega :=
\frac{i}{2}(\omega - \bar \omega)$$
and $\{v_1, \ldots v_r\}$ be a basis of $H^{1,1}(X,\R)_1$. One has the basis
$$\{\Re \omega, \Im \omega, v_1, \ldots, v_r\}$$
of $H^{1}(X,\R)_1$ such that the intersection form is without loss of generality given by the
matrix $\diag(1,1,-1, \ldots, -1)$ with respect to this basis. The subgroup, whose elements
are invariant under
$$g \to h(i) g h(i^{-1}),$$
is given by ${\rm S}({\rm O}(2) \times {\rm O}(r))$, where
$$h(i) = h(i^{-1}) = \diag(-1,-1,1, \ldots, 1).$$
Since $h^2(i) = h(-1) = \diag(1, \ldots, 1)$, 
the action of $i$ is an involution. This
implies that one has a decomposition of $\fso_{2,r}(\R)$ into 2 eigenspaces with respect to the
eigenvalues 1 and $-1$. Hence $h(\sqrt{i})$ yields a complex structure on the
eigenspace with eigenvalue $-1$. The eigenspace for the eigenvalue 1 is given by the Lie algebra
of ${\rm S}({\rm O}(2) \times {\rm O}(r))$.  Thus we have a decomposition
$$\fso_{2,r}(\C) = \fh_+ \oplus \fh_0 \oplus \fh_-$$
such that $S^1$ acts by the characters $z/\bar z$, 1 and $\bar z/z$ on the respective complex
sub-vector spaces.

We continue our consideration of the involution $\iota$ given by
$$\iota(g) = h(i)g h^{-1}(i).$$
The matrices $M_1 \in \SO(2,r)(\C)$ with $\bar M_1  = \iota(M_1)$ satisfy
that
$$\bar M_1 = \diag(-1,-1,1, \ldots, 1) \cdot M_1 \cdot \diag(-1,-1,1, \ldots, 1)$$
$$= \diag(1,1,-1, \ldots, -1) \cdot M_1 \cdot \diag(1,1,-1, \ldots, -1).$$
Since $\SO(2,r)(\C)$ is given by the matrices $M$ satisfying
$$M^t \cdot \diag(1,1,-1, \ldots, -1) \cdot M = \diag(1,1,-1, \ldots, -1)$$
$$\Leftrightarrow M^{-1} = \diag(1,1,-1, \ldots, -1) \cdot M^t \cdot \diag(1,1,-1, \ldots, -1),$$
each matrix $M_1$ satisfies
$$M_1^{-1} = \bar M_1^t.$$
Thus $M_1$ is contained in the compact group $\SU(2+r)$, and one concludes:

\begin{proposition}
Our morphism
$$h: S^1 \to \SO(H^2(S,\R)_1)_{\R}$$
yields a Shimura datum.
\end{proposition}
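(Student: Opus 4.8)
The plan is to verify the three defining conditions of a Shimura datum for the pair $(G,h)$, where $G := \SO(H^2(S,\Q)_1)$ is the special orthogonal $\Q$-group of the cup product form on the anti-invariant part $H^2(S,\Q)_1$, and $h$ is taken up to $G(\R)$-conjugacy. Since that form is nondegenerate of signature $(2,r)$ with $r \geq 1$, the group $G$ is connected and semisimple, hence reductive, and the computation above shows that $h$ factors through $G_\R = \SO(2,r)_\R$. Because the adjoint representation carries weight $0$, the remarks following Proposition~\ref{delig} let me pass freely between the $S^1$-picture used here and the $\BS$-picture of the definition; in particular $(\ad \circ h)(i) = \diag(-1,-1,1,\ldots,1)$ in the basis $\{\Re\omega, \Im\omega, v_1, \ldots, v_r\}$ determines the involution $\theta := \ad(h(i))$ on $G_\R$ that is to serve as the Cartan involution.

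First I would record that conditions (1) and (3) are essentially already established by the computations displayed above. For condition (3), the decomposition $\fso_{2,r}(\C) = \fh_+ \oplus \fh_0 \oplus \fh_-$, on which $S^1$ acts through the characters $z/\bar z$, $1$ and $\bar z/z$, is exactly a Hodge structure of type $(1,-1)\oplus(0,0)\oplus(-1,1)$ on $Lie(G)_\C$; here $\fh_+$ and $\fh_-$ are nonzero precisely because $r \geq 1$, so the two outer pieces genuinely occur. For condition (1), the twisted real form attached to $\theta$, namely $\{M \in G(\C) : \theta(M) = \bar M\} = G(\C) \cap \SU(2+r)$, is compact by the computation above; its Lie algebra is precisely $\mathfrak{u} = \mathfrak{t} + i\mathfrak{p}$ with $\mathfrak{t} = \Eig(\theta,1)$ and $\mathfrak{p} = \Eig(\theta,-1)$, so the very definition of a Cartan involution (together with Proposition~\ref{dxi} and \cite{Sat}) shows that $\theta$ is a Cartan involution on $G_\R$, and it descends to one on $G^{\ad}_\R$.

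The remaining and genuinely new point is condition (2): that $G^{\ad}$ has no direct $\Q$-factor $H$ on which $\theta$ is trivial. For a semisimple factor, triviality of the Cartan involution on $H_\R$ is equivalent to $H(\R)$ being compact, so the condition asks that $G^{\ad}$ admit no $\Q$-factor that is compact at the real place. The hard part will be that I do not want to analyze the $\Q$-isogeny factorization of $\SO(q)$ directly; instead I would reduce to the real group, noting that any compact $\Q$-factor of $G^{\ad}$ base-changes to a compact simple factor of $\SO(2,r)_\R^{\ad}$, and that no such factor exists for $r \geq 1$. Indeed $\SO(2,1) \sim \SL_2(\R)$ and $\SO(2,r)$ for $r \geq 3$ are simple and non-compact, while $\SO(2,2) \sim \SL_2(\R) \times \SL_2(\R)$ has two non-compact simple factors; the decisive feature is that the signature is $(2,r)$ rather than definite, which forces every simple factor of the real group to be non-compact. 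Assembling these three verifications yields that $(G,h)$ is a Shimura datum, which is the assertion of the proposition.
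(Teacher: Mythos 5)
Your verification of conditions (1) and (3) follows exactly the paper's route: the same eigenspace decomposition $\fso_{2,r}(\C) = \fh_+ \oplus \fh_0 \oplus \fh_-$ with $S^1$ acting by $z/\bar z$, $1$, $\bar z/z$ for the Hodge-type condition, and the same computation showing that the $\ad(h(i))$-twisted real form lands in the compact group $\SU(2+r)$ for the Cartan-involution condition. Where you go beyond the paper is condition (2): the paper stops after the compactness computation and never addresses the absence of compact $\Q$-factors, whereas you supply the (correct) argument that a compact $\Q$-factor of $G^{\ad}$ would force a compact simple factor of $\SO(2,r)_{\R}^{\ad}$, which the signature $(2,r)$ with $r \geq 1$ rules out since $\SO(2,1) \sim \SL_2(\R)$, $\SO(2,2) \sim \SL_2(\R)\times\SL_2(\R)$, and $\SO(2,r)$ is simple and non-compact for $r \geq 3$. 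Your version is therefore the more complete one; the paper's argument, read strictly, has a small gap at condition (2) that your paragraph closes.
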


\begin{remark} \label{zx}
Note that the simple Lie group $\SO(2,r)(\R)$ consists of two connected components (see \cite{FH},
Exercise $7.2$). Since the Lie group $\SO(2+r)(\C) \cong \SO(H^2(S,\R)_1)(\C)$ is connected (see
\cite{Helga}, {\bf IX}. Lemma $4.2$), the algebraic group $\SO(H^2(S,\R)_1)$ is connected, too.
Recall that all Cartan involutions of the simple algebraic group $\SO(H^2(S,\R)_1)$ are conjugate.
The action of $S^1$ on $H^2(S,\R)_1$ is given by its action on
$\langle\Re \omega, \Im \omega\rangle$ and $S^1$ fixes all vectors of $H^{1,1}(S,\R)_1$. This
implies that all morphisms
$$h: S^1 \to \SO(H^2(S,\R)_1),$$
which yield the Hodge structure of a $K3$ surface, satisfy that their images $h(S^1)$ are
conjugate. The definition of the Hodge structure on $H^2(S,\R)_1$ implies that the $\R$-valued
points of the kernel of $h$ are given by $\{1,-1\} \in S^1(\R)$. 
Let $\iota_{S^1}: S^1 \to S^1$ be the involution given by $x \to x^{-1}$. For each morphism $h_1$
in the conjugacy class of $h$, there exists exactly one other morphism $h_2$ with
$h_1(S^1) = h_2(S^1)$ and kernel given by $\{1,-1\} \in S^1(\R)$, which is given by
$h_2 = h_1 \circ \iota_{S^1}$. The conjugation by
$\diag(-1,1,-1,1,\ldots,1)$ yields an inner automorphism $\varphi$ of $\SO(H^2(S,\R)_1)$ such that
$h_2 = \varphi \circ h_1$. Thus each Hodge structure of a $K3$ surface obtained by some $p \in D$
is obtained by some element of the conjugacy class of our morphism $h: S^1 \to \SO(H^2(S,\R)_1)$.
Moreover note that the holomorphic $VHS$ over the
bounded symmetric domain associated with $\SO(H^2(S,\R)_1)(\R)^+/K$, which is induced by the natural
embedding $\SO(H^2(S,\Q)_1) \to \GL(H^2(S,\Q)_1)$, is uniquely determined by the variation of the
subbundle of rank 1 given by $H^{2,0}$. Since
$$r = \dim(D) = \dim(\SO(H^2(S,\R)_1)(\R)/K),$$
this $VHS$ yields a biholomorphic map from the bounded symmetric domain associated with
$\SO(H^2(S,\R)_1)(\R)^+/K)$ onto $D^+$.
\end{remark}

The preceding remark and Theorem $\ref{commul}$ imply:

\begin{Theorem}\label{densecm}
There is a dense set of $CM$ points on $D$ with respect to the $VHS$ on $D$ obtained by Remark
$\ref{zx}$.
\end{Theorem}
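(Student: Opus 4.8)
The plan is to deduce the statement directly from the density result for Shimura data, Theorem \ref{commul}, combined with the biholomorphic identification established in Remark \ref{zx}. First I would invoke the preceding proposition, which shows that $h : S^1 \to \SO(H^2(S,\R)_1)_{\R}$ is a Shimura datum. Setting $G := \SO(H^2(S,\Q)_1)$, the pair $(G,h)$ is a Shimura datum and the tautological inclusion $G \hookrightarrow \GL(H^2(S,\Q)_1)$ is a closed embedding defined over $\Q$. Applying Theorem \ref{commul} to $(G,h)$ together with this embedding yields that the set of $CM$ points with respect to the induced $VHS$ is dense in $G(\R)/K$. Since a dense subset of a manifold remains dense in every open piece, these $CM$ points are in particular dense in the open subset of $G(\R)/K$ that realizes the bounded symmetric domain appearing in Remark \ref{zx}.

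Second, I would transport this density along the biholomorphism of Remark \ref{zx}. That remark produces a biholomorphic map from the bounded symmetric domain associated with $\SO(H^2(S,\R)_1)(\R)^+/K$ onto the connected component $D^+$ of $D$, and this map is exactly the one induced by the variation of the rank-one subbundle $H^{2,0}$; hence it carries the $VHS$ on $G(\R)^+/K$ coming from $G \hookrightarrow \GL(H^2(S,\Q)_1)$ to the natural $VHS$ on $D^+$. As a biholomorphism intertwining the two variations matches $CM$ fibers with $CM$ fibers (commutativity of the Hodge group is an intrinsic property of the fiber, independent of the chosen parametrization), the image of the dense set of $CM$ points produced above is a dense set of $CM$ points of $D^+$.

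Finally I would pass from the component $D^+$ to all of $D$. The period domain $D = \{[\omega] \in \bP(H^2(S,\C)_1) : (\omega,\omega)=0,\ (\omega,\bar\omega)>0\}$ has the two connected components $D^+$ and $D^-$, interchanged by the homeomorphism $[\omega] \mapsto [\bar\omega]$. Because the Hodge group of a Hodge structure and of its complex conjugate coincide, this homeomorphism preserves $CM$ points; thus the density on $D^+$ forces density on $D^-$, and therefore density on all of $D$.

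The genuine mathematical content has already been absorbed into Theorem \ref{commul} and Remark \ref{zx}, so this argument is essentially an assembly. Accordingly, the only points that require care — and which constitute the main, if modest, obstacle — are the verification that the biholomorphism of Remark \ref{zx} is compatible with the two variations of Hodge structures, so that ``$CM$ point of the abstract $VHS$'' and ``$K3$ surface with commutative Hodge group'' designate the same points of $D^+$, together with the bookkeeping for the two connected components of $D$.
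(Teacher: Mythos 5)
Your argument is exactly the paper's: the theorem is stated there as an immediate consequence of Remark \ref{zx} and Theorem \ref{commul}, with no further proof given. Your write-up simply makes explicit the two points the paper leaves implicit — that the biholomorphism of Remark \ref{zx} intertwines the two variations of Hodge structures, and the passage from $D^+$ to the second connected component — so it is correct and follows the same route.
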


\section{The examples}

First we construct a holomorphic family of marked $K3$-surfaces with a global involution over its
basis: 

\begin{construction}
There exists a universal family $u:\sX \to B$ of marked analytic $K3$-surfaces, whose basis is not
Hausdorff (see \cite{Bart}, {\bf VIII}. 12). Let $\phi$ denote the global marking of the family
$\sX \to B$. We consider an involution $\iota$ on a marked $K3$ surface $(S,\phi)$, which acts by
$-1$ on $H^{2,0}(S)$. This involution yields an involutive isometry $\iota$ on the
lattice $L$. Thus the involution $\iota$ endows $\sX \to B$ with a new marking $\iota \circ \phi$.
By the universal property of the universal family, this new marking yields an involution of the
family:
$$\xymatrix{
{\sX} \ar[d]_{u} \ar[rr]^{\iota_{\sX}} &   & {\sX} \ar[d]_{u} \\
{B} \ar[rr]^{\iota_B} &   & {B}\\
}$$
Let $\Delta : B \to B \times B$ denote the diagonal embedding. We define
$$B_{\iota} = {\rm Graph}(\iota_B) \cap \Delta(B) \subset B \times B.$$
Note that each point $b \in B_{\iota}$ has an analytic neighborhood $U \subset B$ such that $\sX_U
\to U$ is given by the Kuranishi family and yields an injective period map for $U$. Thus on
$U \times U$ the diagonal $\Delta(U)$ and ${\rm Graph}(\iota_B|_U)$ are closed analytic
submanifolds. Hence $B_{\iota}$ has the structure of an analytic variety, which is not necessarily
Hausdorff, and can have singularities. The
composition $\Delta \circ u$ allows to consider $\Delta(B)$ as basis of the universal family of
the marked $K3$ surfaces. By the restricted family $\sX_{B_{\iota}} \to B_{\iota}$, we obtain a
holomorpic family with a global involution over the basis $B_{\iota}$. For simplicity we
write $\sX_{\iota} \to B_{\iota}$ \index{$\sX_{\iota} \to B_{\iota}$} instead of $\sX_{B_{\iota}}
\to B_{\iota}$. 
\end{construction}

\begin{remark}
The fibers of $\sX_{{\iota}} \to B_{\iota}$ have by the involution $\iota$ a cyclic covering
onto a projective surface (compare to \cite{Voi2}, $2.1$). Thus the fibers of
$\sX_{\iota} \to B_{\iota}$ are algebraic.
\end{remark}

\begin{proposition} \label{nix}
Assume that for all $b \in B_{\iota}$ the involution $\iota_{\sX_b}$ on $\sX_b$ has a locus of
fixed points consisting of rational curves. Then the holomorphic family $\sX_{{\iota}} \to
B_{\iota}$ is due to its global involution suitable for the construction of a holomorphic
Borcea-Voisin tower.
\end{proposition}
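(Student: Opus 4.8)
The plan is to verify directly that $\sX_{\iota} \to B_{\iota}$ satisfies every hypothesis imposed on a family $\sZ_1$ (or some $\Sigma_i$) in Construction \ref{bvtower}: (i) it is a $CMCY$ family of $2$-manifolds; (ii) its global involution $\iota_{\sX}$ acts by $-1$ on the canonical sections of each fiber and so realises each fiber as a degree $2$ cyclic cover of a smooth surface, as required for $\sZ$ in diagram \eqref{diag}; (iii) the ramification locus consists fiberwise of smooth disjoint hypersurfaces; and (iv) there is a dense set of points $b \in B_{\iota}$ at which, for all $k$, both $\Hg(H^k((\sX_{\iota})_b,\Q))$ and $\Hg(H^k((R)_b,\Q))$ are commutative. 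Most of these are immediate from the construction or collapse to a single density statement; the genuine content lies in (iv).

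First I would dispose of the conditions that come for free. Each fiber is a $K3$ surface, hence a Calabi-Yau $2$-manifold, and the morphism is smooth as a restriction of the universal family. By hypothesis $\iota$ acts by $-1$ on $H^{2,0}(S) = \Gamma(\omega_{\sX_b})$, so by Example \ref{datda} (citing \cite{Voi2}, $1.1$) its fixed locus is a disjoint union of smooth curves and $\iota_{\sX_b}$ exhibits $\sX_b$ as a degree $2$ cyclic cover of a smooth surface ramified exactly over these curves; this gives (ii) and (iii), the ramification curves being rational by the present assumption. For the $(R)_b$ half of (iv) observe that a rational curve $\cong \bP^1$ has $H^1 = 0$ and $H^0$, $H^2$ of pure type $(0,0)$, $(1,1)$, so all its Hodge groups are trivial, hence commutative, for every $b \in B_{\iota}$. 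Thus (iv) reduces to producing a dense set of $CM$ fibers. Since $H^2(S,\Q)_0 = H^2(R,\Q)$ carries a trivial Hodge structure, $\Hg(H^2(\sX_b,\Q))$ is commutative precisely when $\Hg(H^2(\sX_b,\Q)_1)$ is, that is, precisely when the corresponding point of $D$ is a $CM$ point; and for a $K3$ surface only $H^2$ has a nontrivial Hodge group, so $\sX_b$ is $CM$ exactly in this case.

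The remaining and decisive step is therefore to pull the density of $CM$ points of $D$ back to $B_{\iota}$. I would invoke Theorem \ref{densecm}, which furnishes a dense set of $CM$ points on $D$ for the $VHS$ determined by the variation of $H^{2,0}$ (Remark \ref{zx}), together with the period map $B_{\iota} \to D$ sending $b$ to $[H^{2,0}((\sX_{\iota})_b)] \in D$. By the local Torelli theorem for $K3$ surfaces the period map of the universal family $\sX \to B$ is a local isomorphism; its restriction to $B_{\iota}$ takes values in $D$ and, since $\dim B_{\iota} = r = \dim D$, is again a local biholomorphism, hence an open map. An open continuous map pulls a dense subset of the target back to a dense subset of the source, so the $CM$ fibers are dense in $B_{\iota}$. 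This establishes (iv) and completes the verification that $\sX_{\iota} \to B_{\iota}$ meets the assumptions for $\sZ_1$ in Construction \ref{bvtower}.

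The main obstacle is the transfer of density through the period map in this analytic, non-algebraic setting: one must confirm that $B_{\iota} \to D$ is genuinely open even though $B$ (and hence $B_{\iota}$) is non-Hausdorff and $B_{\iota}$ may be singular. This is handled by working locally, using that each $b \in B_{\iota}$ has a Kuranishi neighbourhood on which the period map is an injective local isomorphism onto an open subset of $D$, and matching $\dim B_{\iota}$ with $\dim D$ so that this local map is biholomorphic; density then follows purely topologically. A secondary point to keep track of is that although the individual fibers are projective, the base $B_{\iota}$ is only an analytic space, so the resulting object is merely a holomorphic family, which is exactly why the conclusion is phrased in terms of a holomorphic Borcea-Voisin tower.
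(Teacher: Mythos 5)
Your proposal is correct and follows essentially the same route as the paper: an open (locally injective) period map from $B_{\iota}$ into $D$, the density of $CM$ points on $D$ from Theorem $\ref{densecm}$, and the observation that the rational ramification curves trivially have commutative Hodge groups. Your write-up merely makes explicit the checklist of hypotheses from Construction $\ref{bvtower}$ and the local Torelli/dimension argument that the paper leaves implicit in the phrase ``open injection of $U$ into $D$.''
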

\begin{proof}
Let $b_0 \in B_{\iota}$ and $U \subset B_{\iota}$ be a small open neighborhood of $b_0$. The
eigenspace decomposition with respect to $\iota$ yields a variation of Hodge structures on the
eigenspace with respect to $-1$. The corresponding period map yields an open injection of $U$ into
$D$. By the fact that $D$ has a dense set of $CM$ points, the family $\sX_{\iota} \to B_{\iota}$
has a dense set of $CM$ fibers. Since the locus of fixed points with respect to $\iota_{\sX_b}$
consists of rational curves, this locus of fixed points has complex multiplication, too. Hence
$\sX_{\iota} \to B_{\iota}$ can be used for the construction of a holomorphic Borcea-Voisin tower. 
\end{proof}

Assume that $\sX_{\iota} \to B_{\iota}$ satisfies the assumptions of Proposition $\ref{nix}$.
Then let $\mathfrak{X_{\iota}} \to B_{\iota} \times \sM_1$ denote the family obtained by the
holomorphic Borcea-Voisin tower from $\sX_{\iota} \to B_{\iota}$ and $\sE \to \sM_1$ denote the
family of elliptic curves.

\begin{definition} \index{maximal family}
A family $\sF \to Y$ of Calabi-Yau manifolds is maximal in $0 \in Y$, if the universal
property of the local universal deformation $\sX \to B$ of $\sF_0$ yields a surjection of a
neighborhood of $0$ onto $B$. The family $\sF \to Y$ is maximal, if it is maximal in all
$0 \in Y$. 
\end{definition}

\begin{Theorem} \label{jaaah}
The family $\mathfrak{X_{\iota}}$ is maximal.
\end{Theorem}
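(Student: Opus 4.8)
The plan is to reproduce the strategy used for the family $\sQ$ in the degree~$3$ case: show that the period map of $\mathfrak{X}_\iota$ is locally injective, match the dimension of the base $B_\iota \times \sM_1$ with the Hodge number $h^{2,1}$ of the fibres, and then invoke infinitesimal Torelli for Calabi-Yau manifolds together with the definition of a maximal family. Writing $(\mathfrak{X}_\iota)_{(b,\lambda)}$ for the Calabi-Yau $3$-fold obtained from the $K3$ surface $(\sX_\iota)_b$ and the elliptic curve $\sE_\lambda$, the first task is to describe its variation of Hodge structures. Since $\mathfrak{X}_\iota$ carries the Hodge substructure of $\widetilde{(\sX_\iota)_b \times \sE_\lambda}$ invariant under $(\iota,\iota_{\sE})$, Proposition \ref{tensorp} and the K\"unneth formula give that the moving part of $H^3$ is exactly $H^2((\sX_\iota)_b)_1 \otimes H^1(\sE_\lambda)$, because $H^1$ and $H^3$ of a $K3$ surface vanish and $\iota$ acts by $(-1)^i$ on $H^2((\sX_\iota)_b)_i$ while $\iota_{\sE}$ acts by $-1$ on $H^1(\sE_\lambda)$.

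Next I would show that the period map of $\mathfrak{X}_\iota$ factors through the product of the period map of $\sX_\iota \to B_\iota$ into $D$ and the period map of $\sE \to \sM_1$ into the upper half plane. By construction $\sX_\iota \to B_\iota$ is locally an open immersion into $D$: a neighborhood of each $b \in B_\iota$ is a Kuranishi family with injective period map, and by Remark \ref{zx} this period map identifies $B_\iota$ locally with $D^+$. The family $\sE \to \sM_1$ is the universal family of elliptic curves, so it has locally injective period map as well. Since the tensor Hodge structure $H^2((\sX_\iota)_b)_1 \otimes H^1(\sE_\lambda)$ recovers the line $H^{2,0}((\sX_\iota)_b) \subset H^2((\sX_\iota)_b)_1$ and the line $H^{1,0}(\sE_\lambda) \subset H^1(\sE_\lambda)$, the two factor period maps are recovered from the period map of $\mathfrak{X}_\iota$; local injectivity of the product period map then forces local injectivity of the period map of $\mathfrak{X}_\iota$.

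For the dimension count, the decisive point is that, by the hypothesis of Proposition \ref{nix}, the fixed curves on the $K3$ fibre are rational, so they carry no $H^1$; hence the blow-up along the product of the fixed loci adds nothing to $H^3$, and $H^3((\mathfrak{X}_\iota)_{(b,\lambda)})$ equals the invariant part $H^2((\sX_\iota)_b)_1 \otimes H^1(\sE_\lambda)$ computed above. Consequently $h^{2,1} = h^{2,0}((\sX_\iota)_b)_1 \cdot h^{0,1}(\sE_\lambda) + h^{1,1}((\sX_\iota)_b)_1 \cdot h^{1,0}(\sE_\lambda) = 1 + r$, where $(2,r)$ is the signature of the intersection form on $H^2(S,\Z)_1$; this agrees with Claim \ref{doppeln} upon setting $N' = \sum g_i = 0$. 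Since $\dim B_\iota = \dim D = r$ and $\dim \sM_1 = 1$, we obtain $\dim(B_\iota \times \sM_1) = r + 1 = h^{2,1}$. Combining this equality with the local injectivity of the period map established above, infinitesimal Torelli for Calabi-Yau manifolds shows that the classifying map from a neighborhood of a point of $B_\iota \times \sM_1$ into the local universal deformation space, which has dimension $h^{2,1}$, is an injective holomorphic map of equidimensional complex spaces, hence a local biholomorphism and in particular surjective. This is precisely the assertion that $\mathfrak{X}_\iota$ is maximal.

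The main obstacle I expect is the precise identification of the period map of $\mathfrak{X}_\iota$ with the product of the two factor period maps, i.e. verifying that the variation of the invariant part of $H^3(\widetilde{(\sX_\iota)_b \times \sE_\lambda})$ is exactly the tensor product variation and that the exceptional divisors introduced by the blow-up contribute only constant classes of type $(1,1)$, so that they enlarge $h^{1,1}$ but leave the moving part of $H^{2,1}$ untouched; this is the analogue of the remark preceding the maximality theorem for $\sQ$. A secondary difficulty is that $B_\iota$ need not be Hausdorff and may be singular, so all statements above must be read locally near each point, where $B_\iota$ is identified with an open subset of $D$ via the Kuranishi family. Once the argument is localised in this way, the Hodge-theoretic bookkeeping gives the dimension equality and the factorisation yields local injectivity, and the theorem follows as above.
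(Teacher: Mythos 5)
Your proposal is correct and follows essentially the same route as the paper: the same Künneth/blow-up identification $H^3((\fX_{\iota})_{p\times q}) \cong H^2((\sX_{\iota})_p,\Q)_1 \otimes H^1(\sE_q,\Q)$ (the paper's Lemma on $H^3$), the same dimension count $h^{2,1} = r+1 = \dim(B_{\iota}\times\sM_1)$, and the same local injectivity argument via the decomposition $H^{3,0} = H^{2,0}\otimes H^{1,0}$ recovering the two factor period maps, concluded by Torelli for Calabi-Yau manifolds. No substantive differences to report.
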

\begin{proof}
By the following lemma, we start to prove Theorem $\ref{jaaah}$:

\begin{lemma} \label{wups}
$$H^3((\fX_{\iota})_{p\times q}) = H^2( (\sX_{\iota})_p ,\Q)_1 \otimes H^1(\sE_q,\Q)$$
\end{lemma}
\begin{proof}
Due to Proposition $\ref{fixhodge}$ and the fact that the exceptional divisors consist
of some rational curves, one only needs to determine $H^3((\sX_{{\iota}})_p\times \sE_q,\Q)_0$.
Since $b_1((\sX_{\iota})_p) = b_3((\sX_{\iota})_p)= 0$ and $H^1(\sE_q,\Q)=H^1(\sE_q,\Q)_1$, we are
done.
\end{proof}

By using the preceding lemma, we prove the following proposition.

\begin{proposition}
One has that $\dim(B_{\iota} \times \B_1) $ and $h^{2,1}((\fX_{\iota})_{p\times q})$ coincide.
\end{proposition}
\begin{proof}
By Proposition $\ref{wups}$,
$$H^3((\fX_{\iota})_{p\times q}) = H^2((\sX_{\iota})_p,\Q)_1 \otimes H^{1,0}(\sE_q,\Q) \oplus 
H^2((\sX_{\iota})_p,\Q)_1 \otimes H^{0,1}(\sE_q,\Q).$$
Hencefore
$$h^{2,1}((\fX_{\iota})_{p\times q}) = h^{1,1}((\sX_{\iota})_p,\Q)_1 \cdot h^{1,0}(\sE_q,\Q) +
h^{2,0}((\sX_{\iota})_p,\Q)_1 \cdot h^{0,1}(\sE_q,\Q)$$
$$= h^{1,1}((\sX_{\iota})_p,\Q)_1 +  h^{2,0}((\sX_{\iota})_p,\Q)_1 = h^{1,1}((\sX_{\iota})_p,\Q)_1
+ 1.$$
Recall that $D^+$ is the bounded symmetric domain obtained by $\SO(2,r)^+(\R)$, where $r =
h^{1,1}((\sX_{\iota})_p,\R)_1$. By \cite{Helga}, {\bf IX}. Table {\bf II}, $D$ has the
complex dimension $r$.\footnote{By \cite{Helga}, {\bf IX}. Table {\bf II}, $D$ has the dimension
$2r$ as real manifold.} Since the period map $p: B_{\iota} \to D$ of $\sX_{\iota} \to B_{\iota}$
is locally bijective, one concludes
$$h^{1,1}((\sX_{\iota})_p,\Q)_1 = r =\dim(D) = \dim(B_{\iota}),$$
which yields the result.
\end{proof}

By the following proposition, we finish the proof of Theorem $\ref{jaaah}$:
\end{proof}

\begin{proposition}
The period map yields a multivalued map from $\sM_1 \times B_{\iota}$ to the period domain, which
is locally injective. 
\end{proposition}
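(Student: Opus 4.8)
The plan is to use the tensor-product description of Lemma \ref{wups} to factor the period map of $\fX_{\iota}$ through the period maps of its two building blocks, and then to combine the local Torelli property for $K3$ surfaces with the classical one for elliptic curves. For $(p,q) \in B_{\iota} \times \sM_1$ Lemma \ref{wups} identifies the weight-$3$ Hodge structure
$$H^3((\fX_{\iota})_{p\times q}) = H^2((\sX_{\iota})_p,\Q)_1 \otimes H^1(\sE_q,\Q),$$
and the Hodge filtration on the tensor product is determined by the filtrations on the two factors. In particular the top piece
$$F^3 = H^{2,0}((\sX_{\iota})_p)_1 \otimes H^{1,0}(\sE_q)$$
is a line spanned by a decomposable tensor. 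Hence the first step is to recover its two factor lines $H^{2,0}((\sX_{\iota})_p)_1 \subset H^2((\sX_{\iota})_p,\C)_1$ and $H^{1,0}(\sE_q) \subset H^1(\sE_q,\C)$ from the single line $F^3$; this is immediate, since a nonzero decomposable tensor determines its factor lines uniquely (injectivity of the Segre map on the level of lines).

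With the two factor period points in hand I would invoke the factorwise local Torelli statements. For the $K3$ factor the period map $B_{\iota} \to D$ is locally bijective, as recorded after Remark \ref{zx}, so $H^{2,0}((\sX_{\iota})_p)_1 \in D$ pins down $p$ locally; for the elliptic factor the period map $\sM_1 \to \B_1$ is locally injective, the classical fact that the period ratio depends locally biholomorphically on the cross-ratio parameter, so $H^{1,0}(\sE_q)$ pins down $q$ locally. Combining these with the decomposability step, two nearby points of $\sM_1 \times B_{\iota}$ having the same image under the period map of $\fX_{\iota}$ must share both factor period points and therefore coincide, which is the asserted local injectivity. Since the period map of $\fX_{\iota}$ factors through the classifying map into the local universal deformation space of a fiber, local injectivity forces this classifying map to be locally injective as well; as its source and target both have dimension $h^{2,1}((\fX_{\iota})_{p\times q}) = \dim(\sM_1 \times B_{\iota})$ by the preceding proposition, it is a local biholomorphism, hence surjective, which is the maximality of Theorem \ref{jaaah}.

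The main obstacle will be making the factorization rigorous: one must check that the period map of $\fX_{\iota}$ genuinely remembers the full pair of factor Hodge structures rather than only the product line $F^3$, and that the rational exceptional loci introduced by the Borcea-Voisin blow-ups contribute nothing to the variation of $H^3$. The first point is exactly the decomposability argument above. The second is already built into Lemma \ref{wups}, which via Proposition \ref{fixhodge} discards the constant, Tate-type contributions of the exceptional rational curves, so that the entire transcendental variation lives in the tensor factor $H^2((\sX_{\iota})_p,\Q)_1 \otimes H^1(\sE_q,\Q)$ and is controlled by the two factor period maps.
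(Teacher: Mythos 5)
Your argument is correct and is essentially the paper's own proof: both identify $H^{3,0}((\fX_{\iota})_{p\times q})$ with the decomposable line $H^{2,0}((\sX_{\iota})_p)\otimes H^{1,0}(\sE_q)$ inside the tensor product of Lemma \ref{wups} and then invoke the local injectivity of the two factor period maps. You make explicit the one point the paper leaves implicit, namely that a nonzero decomposable tensor determines its factor lines uniquely, which is a welcome clarification rather than a deviation.
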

\begin{proof}
Let $\B$ be a small open subset of $\sM_1 \times B_{\iota}$ and let $x_1, x_2 \in \B$. Note tha
the period map $p$ on $\sM_1 \times B_{\iota}$ yields different image points $p(x_1)$ and $p(x_2)$,
if the classes of $H^{3,0}((\fX_{\iota})_{x_1})$ and $H^{3,0}((\fX_{\iota})_{x_2})$ in
$\bP(H^3((\fX_{\iota})_{x_1},\C))$ do not coincide. The respective period maps on
$B_{\iota}$ and $\sM_1$ are locally injective and depend only on $\omega_{\sE_q}$ and
$\omega_{(\sX_{\iota})_p}$. Since
$$H^{3,0}((\fX_{\iota})_{p\times q}) \subset H^3((\fX_{\iota})_{p\times q}) =
H^2( (\sX_{\iota})_p ,\Q)_1 \otimes H^1(\sE_q,\Q)$$
is given by $H^{2,0}((\sX_{\iota})_p) \otimes H^{1,0}(\sE_q)$, the period map concerning
$\fX_{\iota}$ is locally injective, too.
\end{proof}

It remains to classify the possible involutions $\iota$ on $L$, which provide our families
$\sX_{\iota} \to B_{\iota}$ with a global involution.

\begin{remark}
The involutions on $L$, which yield involutions on certain $K3$ surfaces, are characterized by the
triples of the following integers (compare to \cite{Niku}):
\begin{itemize} \item The integer $t$ is the rank of the sublattice ${\rm Pic}(S)_0$ of the Picard lattice of an
arbitrary fiber $S$ of $\sX_{\iota}$, which is invariant under the global involution. 
\item By the intersection pairing, one obtains a homomorphism ${\rm Pic}(S)_0 \to
{\rm Pic}(S)_0^{\vee}$. The integer $a$ is given by $(\Z/(2))^a \cong
{\rm Pic}(S)_0^{\vee}/{\rm Pic}(S)_0$.
\item By the morphism ${\rm Pic}(S)_0 \to
{\rm Pic}(S)_0^{\vee}$, the intersection form on ${\rm Pic}(S)_0$ yields a quardratic form $q$ on
${\rm Pic}(S)_0^{\vee}$ with values in $\Q$. The integer $\delta$ is 0, if $q$ has only values in
$\Z$ and 1 otherwise.
\end{itemize}
\end{remark}

For a fixed triple $(t,a,\delta)$ we write $\sX_{(t,a,\delta)} \to B_{(t,a,\delta)}$ instead of
$\sX_{\iota} \to B_{\iota}$ and $\fX_{(t,a,\delta)}$ instead of $\fX_{\iota}$.

\begin{remark}
The ramification locus of the fibers with respect to the involution on $\sX \to \sB_{t,a,\delta}$
is given by two elliptic curves, if $(t,a,\delta) = (10,8,0)$, is empty, if
$(t,a,\delta) =(10,10,0)$, and otherwise given by $C_{N'} + E_1 + \ldots + E_{N-1}$, where
$C_{N'}$ is a curve of genus
$$N' = \frac{1}{2}(22-t-a), \  \ \mbox{and} \  \ N = \frac{1}{2}(t-a)+ 1.$$
(compare to \cite{Niku})

Hencefore the triples
$$(t,a,\delta) =(10,10,0) \ \ \mbox{and} \  \ (t,a,\delta) \ \ \mbox{with} \  \  t+a = 22$$
yield the examples of families $\sX_{{(t,a,\delta)}} \to B_{(t,a,\delta)}$ with global involutions
over the basis, whose locus of fixed points consists at most of families of rational curves. Hence
by Proposition $\ref{nix}$, these triples yield maximal holomorphic $CMCY$ families of 3-manifolds.
\end{remark}

\begin{pkt}
By \cite{Niku}, Figure 2, one gets the following complete list of holomorphic maximal $CMCY$
families $\fX_{(t,a,\delta)} \to B_{(t,a,\delta)} \times \sM_1$ of 3-manifolds obtained by this
method. By Claim $\ref{doppeln}$, we obtain the Hodge numbers $h^{1,1}$ and $h^{2,1}$ of the fibers of
$\fX_{(t,a,\delta)}$.
$$
\begin{tabular}{|c|c|c||c||c|c|} \hline
$t$ & $a$ & $\delta$ & $N$ & $h^{1,1}$ & $h^{2,1}$ \\ \hline \hline
10 & 10 & 0 & 0 & 11 & 11 \\ \hline
11 & 11 & 1 & 1 & 16 & 10 \\ \hline
12 & 10 & 1 & 2 & 21 & 9 \\ \hline
13 & 9 & 1 & 3 & 26 & 8 \\ \hline
14 & 8 & 1 & 4 & 31 & 7 \\ \hline
15 & 7 & 1 & 5 & 36 & 6 \\ \hline
16 & 6 & 1 & 6 & 41 & 5 \\ \hline
17 & 5 & 1 & 7 & 46 & 4 \\ \hline
18 & 4 & 1 & 8 & 51 & 3 \\ \hline
18 & 4 & 0 & 8 & 51 & 3 \\ \hline
19 & 3 & 1 & 9 & 56 & 2 \\ \hline
20 & 2 & 1 & 10 & 61 & 1 \\ \hline
\end{tabular}\\[0.7cm]
$$
\end{pkt}

\begin{remark}
C. Borcea \cite{Bc} has constructed Calabi-Yau manifolds of dimension 3 with $CM$ by using
3 elliptic curves with involutions. This construction yields a $CMCY$ family of 3-manifolds over
$\sM_1 \times \sM_1 \times \sM_1$. The fibers have the Hodge numbers $h^{1,1} = 51$ and $h^{2,1}
= 3$. By similar arguments as in Theorem $\ref{jaaah}$, this family is maximal. The associated
period domain is given by $\B_1 \times \B_1 \times \B_1$.

As we have seen in Section $10.3$, the family $\sQ \to \sM_3$ is a maximal $CMCY$ family of
3-manifolds, whose fibers have the same Hodge numbers $h^{1,1} = 51$ and $h^{2,1} = 3$. The
associated period domain is given by $\B_3$

Moreover by Theorem $\ref{jaaah}$ and the preceding point, we have two additional holomorphic
maximal $CMCY$ families of 3-manifolds, whose fibers have the same Hodge numbers $h^{1,1} = 51$ and
$h^{2,1} = 3$. The associated period domain is given by $\B_1 \times D$, where $D$ denotes the
bounded domain given by $\SO(2,2)(\R)/K$

Hence there exist 4 maximal $CMCY$ families of 3-manifolds, whose fibers have the Hodge numbers
$h^{1,1} = 51$ and $h^{2,1} = 3$. One can easily check that the example of \cite{Bc} has a
Yukawa coupling of {\em length} 3, where the Yukawa coupling of the family $\sQ \to \sM_3$
constructed in Section $9.2$ has the {\em length} 1. Hence there are not any open sets of the
respective bases, which allow a local identification of these two families.

By using the involutions on elliptic curves, one gets a local identification between
$\sE \times \sE/\langle(\iota_{\sE},\iota_{\sE})\rangle \to \sM_1 \times \sM_1$, which yields the
example of \cite{Bc}, with one of our
examples $\sX_{(t,a,\delta)} \to B_{(t,a,\delta)}$ with $t = 18$ and $a = 4$. This implies a
local identification between the resulting $CMCY$ families of 3-manifolds obtained by the
Borcea-Voisin tower.
\end{remark}

\begin{remark} \label{cm20}
By Example $\ref{ellcm3}$, there are 13 explicite examples of elliptic curves with $CM$.
Thus for the $CMCY$ family of C. Borcea \cite{Bc}, which we have discussed in the preceding remark, one obtains up to
birational equivalence 455 different examples of $CM$ fibers. For 6 of these 13 elliptic curves, we
have an explicitely given involution. Thus we can at least describe the 56 Calabi-Yau 3-manifolds,
which are obtained by some of the latter 6 elliptic curves, by local equations.
\end{remark}

\begin{remark}
It would be interesting to consider the following question: Is the maximal $CMCY$ family
$\fX_{(10,10,0)}$ its own mirror family?
 
Let $S$ denote a $K3$ surface with an involution, which acts by $-1$ on $\Gamma(\omega_S)$. In
\cite{Voi2} the triples $(t,a,\delta)$, which yield our families
$\sX_{{(t,a,\delta)}} \to B_{(t,a,\delta)}$ satisfying the assumptions of Proposition $\ref{nix}$,
do not satisfy the assumptions of the technical Lemma \cite{Voi2}, Lemme $2.5$. This Lemma guarantees
the existence of a hyperbolic plane $H \subset H^2(S,\Z)_1$, which is needed for the mirror
construction in \cite{Voi2}. Hence these triples $(t,a,\delta)$
do not satisfy the assumptions of the Mirror Theorem \cite{Voi2}, Th\'eor\`eme $2.17$. But by
\cite{CK}, Lemma $4.4.4$, there is a hyperbolic plane $H \subset H^2(S,\Z)_1$ for these triples,
too.

In her construction of a Calabi-Yau 3-manifold (\cite{Voi2}, Lemme $1.3$)
C. Voisin assumes that the involution on the $K3$ surface is not given by the triple $(10,10,0)$,
since it is easy to see that the resulting 3-manifold is not simply connected
in this case. But by Proposition
$\ref{trivcan}$ the resulting 3-manifold satisfies our definition of a Calabi-Yau manifold
(Definition $\ref{defCala}$) in this case, too.

The mirror of a fiber of
$\fX_{(10,10,0)}$ must have the same Hodge numbers $h^{1,1} = h^{2,1} = 11$. By Claim
$\ref{doppeln}$, this implies for an involution on a $K3$ surface:
$$ 5N-N'= 5N'-N = 0$$
Hence one calculates easily that $N = N' = 0$. Thus by V. V. Nikulins \cite{Niku} classification
of involutions on $K3$ surfaces, the Voisin-Borcea Mirror (in the notation of \cite{CK}) of
a fiber of $\fX_{(10,10,0)}$ should be obtained by the triple $(10,10,0)$, too. Hence the author has
the impression that one can consider the maximal $CMCY$ family $\fX_{(10,10,0)}$ of 3-manifolds as
its own mirror family, but one must check the details.
\end{remark}

\newpage
\addcontentsline{toc}{chapter}{Index}
\printindex

\end{document}